\newtheorem{theorem}{Theorem}[section]
\newtheorem{lemma}[theorem]{Lemma}
\newtheorem{corollary}[theorem]{Corollary}
\newtheorem{proposition}[theorem]{Proposition}
\theoremstyle{definition}
\newtheorem{definition}[theorem]{Definition}
\newtheorem{remark}[theorem]{Remark}
\newtheorem{example}[theorem]{Example}
\newcommand{\eps}{\varepsilon}
\newcommand{\EE}{\mathbb{E}}
\newcommand{\PP}{\mathbb{P}}
\newcommand{\CC}{\mathbb{C}}
\newcommand{\RR}{\mathbb{R}}
\newcommand{\NN}{\mathbb{N}}
\newcommand{\QQ}{\mathbb{Q}}
\newcommand{\cA}{\mathcal{A}}
\newcommand{\cB}{\mathcal{B}}
\newcommand{\cD}{\mathcal{D}}
\newcommand{\cG}{\mathcal{G}}
\newcommand{\cI}{\mathcal{I}}
\newcommand{\cL}{\mathcal{L}}
\newcommand{\cM}{\mathcal{M}}
\newcommand{\cN}{\mathcal{N}}
\newcommand{\cT}{\mathcal{T}}
\newcommand{\bX}{\bm{X}}
\newcommand{\bY}{\bm{Y}}
\newcommand{\bZ}{\bm{Z}}
\newcommand{\bU}{\bm{U}}
\newcommand{\bV}{\bm{V}}
\newcommand{\bS}{\bm{S}}
\newcommand{\bA}{\bm{A}}
\newcommand{\bB}{\bm{B}}
\newcommand{\bC}{\bm{C}}
\newcommand{\bD}{\bm{D}}
\newcommand{\beps}{\bm{\varepsilon}}
\newcommand{\hbeta}{{\widehat{\beta}}}
\newcommand{\hSigma}{{\widehat{\bm{\Sigma}}}}
\newcommand{\hsigma}{\widehat{\sigma}}
\newcommand{\hkappa}{\widehat{\kappa}}
\newcommand{\hxi}{{\widehat{\xi}}}
\newcommand{\hmu}{\widehat{\mu}}
\newcommand{\hf}{{\widehat{f}}}
\newcommand{\hR}{{\widehat{R}}}
\newcommand{\hQ}{{\widehat{Q}}}
\newcommand{\tf}{{\widetilde{f}}}
\newcommand{\tbeta}{{\widetilde{\beta}}}
\newcommand{\tsigma}{{\widetilde{\sigma}}}
\newcommand{\tv}{\widetilde{v}}
\newcommand{\argmin}{\mathop{\mathrm{arg\,min}}}
\newcommand{\tr}{\mathop{\mathrm{tr}}}
\newcommand{\Var}{\mathop{\mathrm{Var}}}
\newcommand{\op}{\mathop{op}}
\newcommand{\asto}{\xrightarrow{\text{a.s.}}}
\newcommand{\pto}{\xrightarrow{\text{p}}}
\newcommand{\dto}{\xrightarrow{\text{d}}}
\def\1{\mathbbm{1}}
\newcommand{\onestep}{\mathrm{os}}
\newcommand{\zerostep}{\mathrm{zs}}
\newcommand{\mnls}{\mathrm{mn2}}
\newcommand{\ridge}{\mathrm{ridge}}
\newcommand{\lasso}{\mathrm{lasso}}
\newcommand{\MOM}{\texttt{MOM}}
\newcommand{\test}{\mathrm{te}}
\newcommand{\train}{\mathrm{tr}}
\newcommand{\add}{\mathrm{add}}
\newcommand{\mul}{\mathrm{mul}}
\newcommand{\cv}{\mathrm{cv}}
\newcommand{\deter}{\mathrm{det}}
\newcommand{\mnla}{\mathrm{mn1}}
\newcommand{\init}{\mathrm{init}}
\newcommand{\ols}{\mathrm{ols}}
\newcommand{\CEN}{\texttt{CEN}}
\newcommand{\AVG}{\texttt{AVG}}
\definecolor{yuting}{RGB}{255,69,0}
\renewcommand{\Re}{\operatorname{Re}}
\renewcommand{\Im}{\operatorname{Im}}
\newcommand{\asympequi}{\simeq}
\DeclareMathOperator{\sign}{sgn}
\newcommand\ddfrac[2]{\frac{\displaystyle #1}{\displaystyle #2}}
\title{
\texorpdfstring{\vspace{-2em}}{}
Mitigating multiple descents: \\ 
A model-agnostic framework for risk monotonization}
\newcommand{\footremember}[2]{%
    \footnote{#2}
    \newcounter{#1}
    \setcounter{#1}{\value{footnote}}%
}
\newcommand{\footrecall}[1]{%
    \footnotemark[\value{#1}]%
} 
\author{
Pratik Patil\footremember{cmustats}{Department of Statistics and Data Science, 
Carnegie Mellon University, Pittsburgh, PA 15213, USA.}\footremember{cmumld}{Machine Learning Department, Carnegie Mellon University,
Pittsburgh, PA 15213, USA.}
\and Arun Kumar Kuchibhotla\footrecall{cmustats}
\and Yuting Wei\footnote{Department of Statistics and Data Science,
The Wharton School, University of Pennsylvania, Philadelphia, PA 19104, USA.}
\and Alessandro Rinaldo\footrecall{cmustats}
}
\date{\today \vspace{-12pt}}
\begin{document}

\cftsetindents{section}{1em}{2em}
\cftsetindents{subsection}{1em}{3em}

\maketitle

\begin{abstract}
    Recent empirical and theoretical analyses 
    of several commonly used prediction procedures
    reveal a peculiar risk behavior in high dimensions, 
    referred to as double/multiple descent, 
    in which the asymptotic risk is a non-monotonic function 
    of the limiting aspect ratio of the number of features 
    or parameters to the sample size. 
    To mitigate this undesirable behavior, 
    we develop a general framework for risk monotonization 
    based on cross-validation that takes as input 
    a generic prediction procedure and returns a modified procedure
    whose out-of-sample prediction risk is, asymptotically,
    monotonic in the limiting aspect ratio.
    As part of our framework,
    we propose two data-driven methodologies, 
    namely zero- and one-step, that are akin to bagging and boosting,
    respectively, and show that, under very mild assumptions, 
    they provably achieve monotonic asymptotic risk behavior.
    Our results are applicable to a broad variety of
    prediction procedures and loss functions, 
    and do not require a well-specified (parametric) model. 
    We exemplify our framework with concrete analyses of 
    the minimum   $\ell_2$, $\ell_1$-norm least squares prediction procedures. 
    As one of the ingredients in our analysis,
    we also derive novel additive and multiplicative forms of 
    oracle risk inequalities  for split cross-validation 
    that are of independent interest.
\end{abstract}

\textbf{Keywords:}
Risk monotonicity,
cross-validation,
proportional asymptotics,
bagging,
boosting.

\setcounter{tocdepth}{2}

\tableofcontents

\section{Introduction}
\label{sec:introduction}

Modern machine learning models deploy a large number of parameters
relative to 
the number of observations.
Even though such overparameterized models typically have the capacity
to (nearly) interpolate noisy training data, they often generalize well
on unseen test data in practice \citep{zhang_bengio_hardt_recht_vinyals_2016,zhang2021understanding}.
The striking and widespread successes of interpolating models
has been a topic of growing interest in the recent mathematical statistics  
literature~\citep[see, e.g.,][]{belkin2019reconciling,belkin_hsu_mitra_2018,belkin2019does,bartlett_long_lugosi_tsigler_2020}, as it seemingly defies the widely-accepted statistical wisdom 
that interpolation will generally lead to over-fitting 
and poor generalization~\citep[Figure 2.11]{hastie2009elements}. 
A body of recent work has both empirically and theoretically investigated 
this surprising phenomenon for different models,
including linear regression \citep{hastie_montanari_rosset_tibshirani_2019,muthukumar_vodrahalli_subramanian_sahai_2020,belkin_hsu_xu_2020,bartlett_long_lugosi_tsigler_2020},
kernel regression \citep{liang2020just},
nearest neighbor methods \citep{xing_song_cheng_2018,xing_song_cheng_2022},
boosting algorithms \citep{liang2020precise},
among others.
See the survey papers by \cite{bartlett_montanari_rakhlin_2021}
and \cite{dar_muthukumar_baraniuk_2021} for more related references.

A closely related and equally striking feature of overparameterized models
is the so-called ``double/multiple descent'' behavior
in the generalization error curve
when plotted against the number of parameters
or as a function of the aspect ratio
 of the number of parameters to the sample size. 
In a typical double descent scenario, 
the generalization or test error initially increases as a function of the aspect ratio.
It peaks and in some cases explodes as this ratio crosses
the {\it interpolation threshold},
where the learning algorithm achieves a degree of complexity 
that allows for perfect interpolation of the data. 
Past the interpolation threshold, the test error tapers down
as the complexity of the algorithm increases relative to the sample size. 
Furthermore, for some algorithms and settings, 
e.g., the lasso and the minimum $\ell_{1}$-norm least square~\citep[e.g.,][]{li_wei_2021} 
or  various structures of the design matrix \citep{adlam2020neural,chen2020multiple},
multiple descents may occur. 
Double and multiple descent phenomena have been first demonstrated empirically,
e.g., for decision trees, random features and two-layer and deep neural networks,
and some of these findings have now been corroborated by rigorous theories 
in a growing body of work: 
see, e.g.,~\cite{neyshabur2014search,nakkiran2019deep,belkin2018understand,belkin2019reconciling,Mei2019generalization,adlam2020neural,chen2020multiple,li_wei_2021},
among others.
However, in general, the shape and number of local minima
associated with a non-monotonic risk  profile due to double descent depend non-trivially on the learning problem, the algorithm deployed, and to an extent, 
the properties of the data generating distribution in ways that are only partially understood. 

The non-monotonic behavior of the generalization error as a function of the aspect ratio in the over-parameterized settings 
suggests the jarring conclusion that, in high dimensions, 
increasing the sample size might actually yield
a worse generalization error. 
In contrast, it is highly desirable to rely on prediction procedures 
that are guaranteed to deliver, at least asymptotically, a risk profile 
that is monotonically increasing in the aspect ratio, 
over a large class of data generating distributions. 
(Note that increasing in aspect ratio is same as decreasing in sample size for a given number of features.) To that effect, 
some authors have considered ridge-regularized estimators; 
see \cite{nakkiran_venkat_Kakade_Ma-2020,hastie_montanari_rosset_tibshirani_2019}. 
In those cases, under fairly restrictive settings and distributional assumptions, 
a monotonic risk profile can be assured. 
However, in general settings and for any given procedure, 
it is unclear how to determine whether the associated risk profile 
is at least approximately non-monotonic and, if so, how to mitigate it. 
The ubiquity of the double and multiple descent phenomenon 
in over-parameterized settings begs the question: 
\begin{center}
\emph{Is it possible to modify any given prediction procedure 
in order to achieve a monotonic risk behavior?} 
\end{center}

In this paper, we answer this question in the affirmative. 
More specifically, we develop a simple, general-purpose framework 
that takes as input an arbitrary learning algorithm and returns a modified version whose out-of-sample risk will be asymptotically no larger than the  smallest risk achievable
beyond the aspect ratio for the problem at hand. In particular, the asymptotic risk of the returned procedure, as a function of the aspect ratio, will stay below the ``monotonized'' asymptotic risk profile of the original procedure corresponding to its largest non-decreasing minorant (see \Cref{figure:illustrate-intro} for an illustration).
As a result, when the risk function of the original procedure exhibits double or multiple descents,
our modification will guarantee, asymptotically, a far smaller out-of-sample risk near the peaks of the risk function. Our approach is applicable to a large class of data generating distributions and learning problems, with mild to no assumptions on the learning algorithm of choice.

To illustrate the type of guarantees obtained in this paper, we provide a preview of one of our main results from~\Cref{sec:zerostep-overparameterized} and comment on its implication. Adopting a standard regression framework, we assume that the data $\cD_n = \{ (X_1, Y_1), \ldots, (X_n,Y_n) \}$ are comprised of $n$ i.i.d.\ pairs of  a $p$-dimensional covariate and a response variable from an unknown distribution. 
Using $\cD_n$,  suppose one fits a predictor $\hf$ --- a random function that maps $x \in \mathbb{R}^p \mapsto \hf(x) \in \mathbb{R}$. Given a loss function $\ell \colon \mathbb{R} \times \mathbb{R} \to \mathbb{R}_{\ge 0}$, we evaluate the performance of $\hf$ by its conditional predictive risk given the data, defined by $R(\hf; \cD_{n}) = \EE[\ell(Y_0, \hf(X_0)) \mid \cD_n]$, where $(X_0, Y_0)$ is an unseen data point, drawn independently from the data generating distribution. Note the risk is a random variable, as it depends on the data $\cD_n$. 
We are interested in the limiting behavior of the risk under the proportional asymptotic regime 
in which $n, p \to \infty$
with the aspect ratio $p / n$ converging to a constant $\gamma \in (0,\infty)$.
As noted above, in such regime the asymptotic risk profile of  $\hf$
has been recently shown to be non-monotonic 
for a wide variety of problems and procedures. 
In order to mitigate such behavior, we devise a modification of the original procedure $\hf$ that results into a new procedure $\hf^\zerostep$, called zero-step procedure (described in \Cref{alg:zero-step}),
 whose asymptotic risk profile is provably monotonic in $\gamma$.
The following informal result
can be derived
as a consequence of 
results in \Cref{sec:zerostep-overparameterized}.

\begin{theorem}
    [Informal monotonization result]
    \label{thm:asymptotic-risk-tuned-zero-step-informal}
    Suppose
    there exists a deterministic function $R^\deter(\cdot; \hf) : (0, \infty] \to [0, \infty]$
    such that
    for any $\phi \in (0, \infty]$
    for any dataset $\cD$ consisting of $m$ i.i.d.\ observations
    with $p_m$ features,
    $ R(\hf; \cD) \pto R^\deter(\phi; \hf)$,
    whenever $m, p_m \to \infty$
    and $p_m / m \to \phi$.
    Then, 
    under mild assumptions on $R^\deter$, the loss function $\ell$, and the data generating distribution, the zero-step  procedure $\hf^\zerostep$ satisfies 
     \[
         \Big|
            R(\hf^\zerostep;\cD_n)
            - \min_{\zeta \ge \gamma} R^\deter(\zeta; \hf)
        \Big|
        ~\pto~ 0
    \]
    as $n, p \to \infty$  and $p / n \to \gamma \in (0, \infty)$.
\end{theorem}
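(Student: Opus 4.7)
The zero-step procedure $\hf^\zerostep$ (\Cref{alg:zero-step}) is most naturally built by (a) choosing a grid $\cG_n \subset [\gamma, \infty]$ of target sub-sample aspect ratios, (b) for each $\zeta \in \cG_n$ fitting $\hf$ on a random sub-sample $\cD_{n_\zeta}$ of $\cD_n$ with $n_\zeta \approx \lfloor p/\zeta \rfloor$, and (c) selecting among these candidate predictors by sample-split cross-validation on the held-out portion of $\cD_n$. With this structure in mind, I would bound the target quantity $|R(\hf^\zerostep;\cD_n) - \min_{\zeta \ge \gamma} R^\deter(\zeta;\hf)|$ via the triangle inequality through the two intermediate quantities $\min_{\zeta \in \cG_n} R(\hf;\cD_{n_\zeta})$ and $\min_{\zeta \in \cG_n} R^\deter(\zeta;\hf)$, and then show that each of the three resulting gaps is $o_\PP(1)$.

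The \emph{candidate-consistency} gap $|\min_{\zeta \in \cG_n} R(\hf;\cD_{n_\zeta}) - \min_{\zeta \in \cG_n} R^\deter(\zeta;\hf)|$ is handled by noting that for each fixed $\zeta \in \cG_n$ one has $n_\zeta \to \infty$ and $p/n_\zeta \to \zeta$, so the hypothesis on $R^\deter$ gives $R(\hf;\cD_{n_\zeta}) \pto R^\deter(\zeta;\hf)$; a union bound over the (finite) grid upgrades this to uniform convergence of the minima. The \emph{cross-validation} gap $|R(\hf^\zerostep;\cD_n) - \min_{\zeta \in \cG_n} R(\hf;\cD_{n_\zeta})|$ is handled by the split-CV oracle inequality announced in the abstract (and proved earlier in the paper), which delivers the upper bound $R(\hf^\zerostep;\cD_n) \le \min_{\zeta \in \cG_n} R(\hf;\cD_{n_\zeta}) + o_\PP(1)$; the matching lower bound is automatic since CV selects a single candidate, whose true conditional risk is no smaller than the minimum over the grid.

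The \emph{discretization} gap $|\min_{\zeta \in \cG_n} R^\deter(\zeta;\hf) - \min_{\zeta \ge \gamma} R^\deter(\zeta;\hf)|$ is handled by continuity of $\zeta \mapsto R^\deter(\zeta;\hf)$ on the compactification $[\gamma,\infty]$ (part of the ``mild assumptions''): any sequence of grids $\cG_n$ whose fill-in distance in $[\gamma,\infty]$ tends to zero satisfies $\min_{\zeta \in \cG_n} R^\deter(\zeta;\hf) \to \min_{\zeta \ge \gamma} R^\deter(\zeta;\hf)$, and a diagonal argument lets $\cG_n$ refine with $n$ while preserving the $o_\PP(1)$ bounds of the previous two steps. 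The main obstacle is precisely this balancing: the minimum ranges over the unbounded set $[\gamma,\infty]$, so one cannot shrink the sub-sample size $n_\zeta$ past the regime in which the hypothesis on $R^\deter$ applies, nor refine the grid too quickly without inflating both the union-bound and the CV remainders. Coupling grid refinement, truncation of the tail at large $\zeta$ (via the behavior of $R^\deter$ near $\infty$), and the tail bounds inherent in the sample-split CV oracle inequality is where the real technical effort of the full proof must concentrate.
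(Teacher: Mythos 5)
Your three-term decomposition through $\min_{\zeta\in\cG_n} R(\hf;\cD_{n_\zeta})$ and $\min_{\zeta\in\cG_n} R^\deter(\zeta;\hf)$ is exactly the one the paper uses (in the proof of Theorem~\ref{thm:asymptotic-risk-tuned-zero-step}), and your handling of the cross-validation gap (via the oracle inequality of Proposition~\ref{prop:general-model-selection-guarantee}, with the automatic lower bound from Remark~\ref{rem:lower-bound-risk-hfcv}) and of the discretization gap (continuity plus grid space-filling, as in Lemma~\ref{lem:spacefilling-grid-zerostep-general} and Lemma~\ref{lem:grid-minimization-metric-space}) match the paper's route. However, your treatment of the \emph{candidate-consistency} gap has a genuine hole.

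You propose to handle $|\min_{\cG_n} R(\hf;\cD_{n_\zeta}) - \min_{\cG_n} R^\deter(\zeta;\hf)|$ by fixing $\zeta\in\cG_n$, invoking the hypothesis $R(\hf;\cD_{n_\zeta})\pto R^\deter(\zeta;\hf)$, and then applying ``a union bound over the (finite) grid.'' This does not go through as stated. First, the grid $\cG_n$ grows with $n$ (in Algorithm~\ref{alg:zero-step}, $|\Xi_n|$ is of order $n^{1-\nu}$), so there is no ``fixed $\zeta\in\cG_n$'' to let $n\to\infty$ against, and a union bound over a growing collection requires explicit tail-probability rates, which the hypothesis $\pto$ does not supply. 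Second, and more fundamentally, the aspect ratio $p_n/n_{\xi_n^\star}$ of the grid \emph{minimizer} is an $n$-dependent quantity that need not converge, so the pointwise-convergence hypothesis cannot be applied directly even at the single index that matters.

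The paper's actual argument is both weaker in what it demands and cleaner. For the candidate-consistency gap it uses only a one-sided inequality: bound $\min_{\xi\in\Xi_n} R(\hf^\xi)$ by $R(\hf^{\xi_n^\star})$ for the deterministic optimizing sequence $\xi_n^\star \in \argmin_{\xi\in\Xi_n} R^\deter(p_n/n_\xi;\tf)$, and control the relative error at this one sequence of indices (assumption \eqref{eq:rn-deterministic-approximation-2-prop-asymptotics}). The non-convergence of $p_n/n_{\xi_n^\star}$ is then resolved by a subsequence argument: by compactness of $[\gamma,\infty]$ and lower semicontinuity of $R^\deter$, every subsequence of $\{p_n/n_{\xi_n^\star}\}$ admits a further subsequence converging to a point of $\argmin_{\zeta\ge\gamma} R^\deter(\zeta;\hf)$ (Lemma~\ref{lem:limit-argmins-metricspace}), and then one applies the hypothesis along that sub-subsequence and reassembles via Lemma~\ref{lem:subsequence-to-sequence}. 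This is the content of Lemma~\ref{lem:rn-deterministic-approximation-4-prop-asymptotics}, and it is precisely the ``real technical effort'' you flag at the end of your proposal without carrying out. Note also that this one-sided approach yields an upper bound $(R(\hf^\zerostep) - \min_\zeta R^\deter)_+ = o_\PP(1)$ for general $M\ge 1$; the two-sided statement in Theorem~\ref{thm:asymptotic-risk-tuned-zero-step-informal} additionally requires $M=1$ and the uniform version \eqref{eq:rn-deterministic-approximation-2-prop-asymptotic-star}, under which your union-bound idea \emph{would} be justified --- so your proposal is implicitly proving the result under a stronger hypothesis than the paper assumes for the one-sided conclusion. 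Finally, because $R^\deter$ typically diverges at the interpolation threshold, the ``continuity on the compactification'' you invoke for the discretization gap has to be weakened to lower semicontinuity together with continuity only at the argmin; the paper certifies the first via Proposition~\ref{prop:lower-semicontinuity-divergence} and the second via Proposition~\ref{prop:continuity-from-continuous-convergence-rdet}.
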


\begin{figure}[!t]
\centering
    \includegraphics[width=0.45\textwidth]{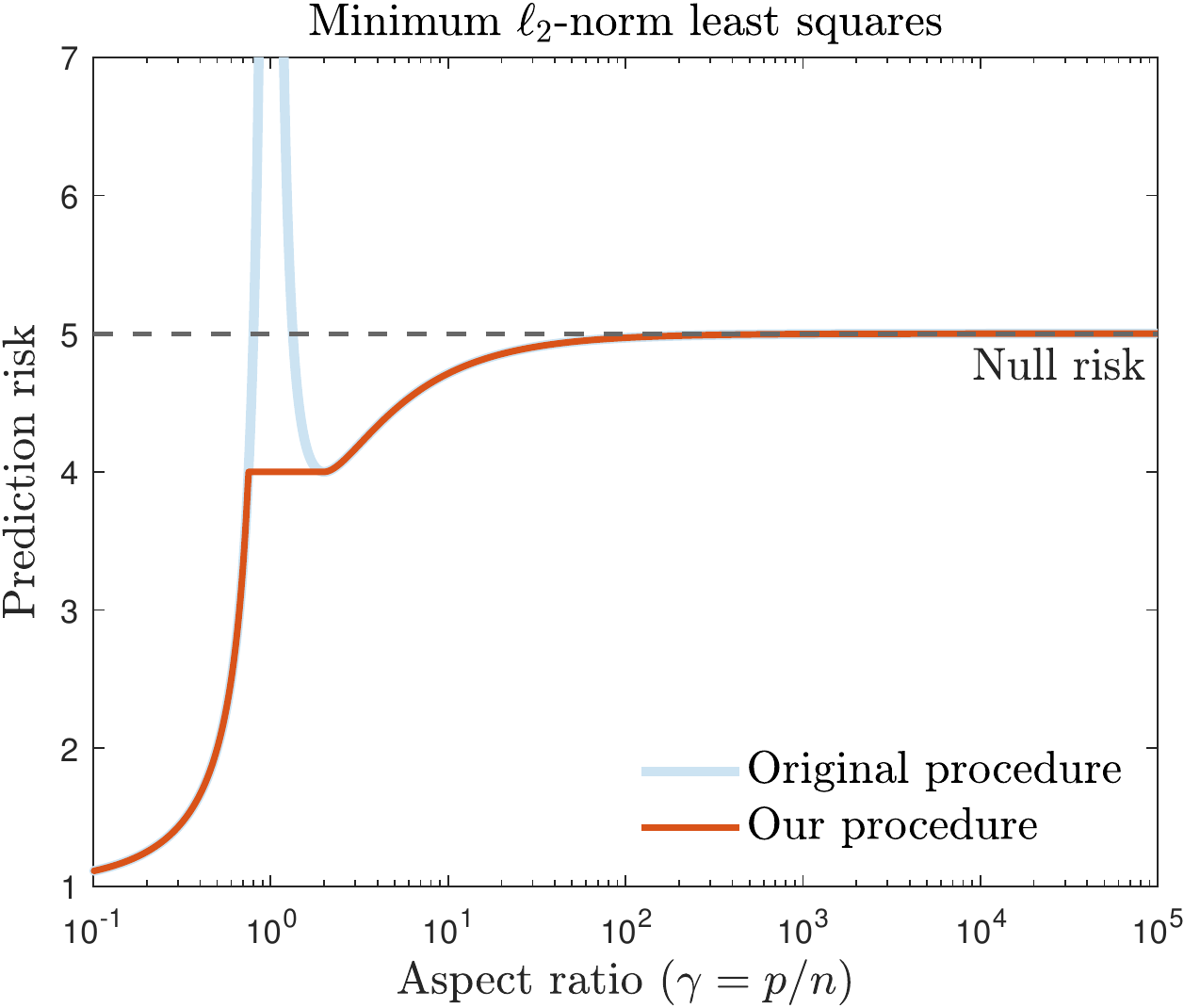}
    \quad
    \includegraphics[width=0.45\textwidth]{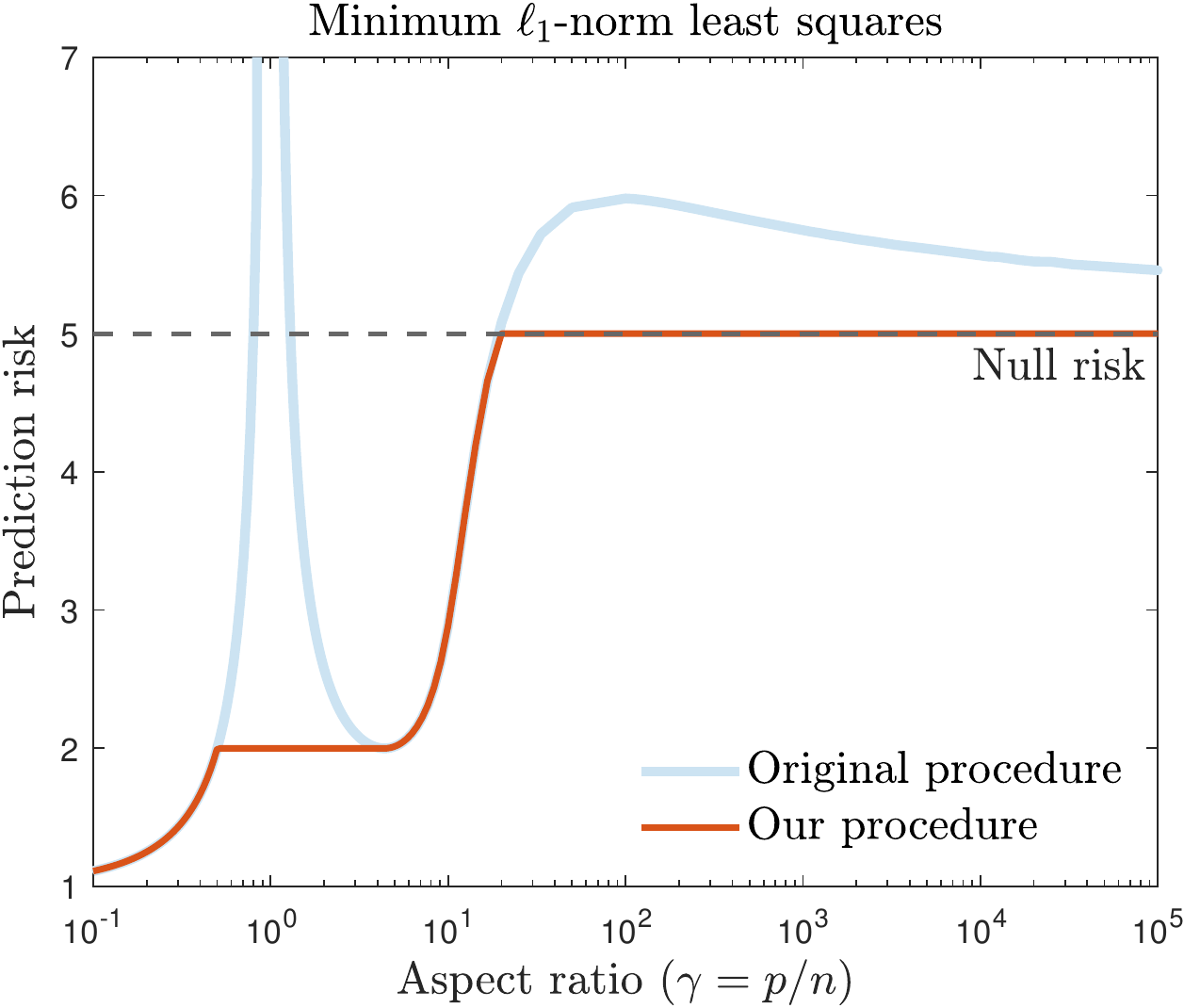}
    \caption{Monotonized asymptotic conditional prediction risk of the zero-step procedure (described in \Cref{alg:zero-step}) for 
    the minimum $\ell_2$-norm and $\ell_1$-norm
    least squares procedures.
    The figure in the left panel follows the setup of 
    Figure 2 of \cite{hastie_montanari_rosset_tibshirani_2019},
    and the figure in the right panel follows the setup of
    Figure 3 of \cite{li_wei_2021} (at sparsity level = 0.01). 
    Both settings assume isotropic features and a linear model with noise variance $\sigma^2 = 1$ and linear coefficients of squared Euclidean norm  $\rho^2 = 4$.
    Note that the risk is lower bounded by 
    $\sigma^2 = 1$
    and the risk of the null predictor (null risk) is 
    $\rho^2 + \sigma^2 = 5$. 
    }
\label{figure:illustrate-intro}
\end{figure}

\Cref{figure:illustrate-intro}
illustrates the above result for 
the minimum $\ell_2$-norm least squares estimator
\citep{hastie_montanari_rosset_tibshirani_2019}
and 
the minimum $\ell_1$-norm least squares estimator
\citep{li_wei_2021}. The light-blue lines show the asymptotic risk profiles of the two procedures, which are non-monotonic as they diverge to infinity around the interpolation threshold of $1$, at which the sample size and the number of features are equal. The red lines depict the risk profiles of the zero-step procedure $\hf^\zerostep$, which corresponds to the map
\begin{equation}\label{eq:monotonized}
\gamma \in (0,\infty) ~\mapsto~ \min_{\zeta \ge \gamma} R^\deter(\zeta; \hf).
\end{equation}
The function \eqref{eq:monotonized} is a monotonically non-decreasing function of $\gamma$, regardless of whether $\gamma \mapsto R^\deter(\gamma; \hf)$ is non-monotonic. Furthermore, since 
\[
\smash{\min_{\zeta \ge \gamma} R^\deter(\zeta; \hf)
\le R^\deter(\gamma; \hf)},   \mbox{ for all } \gamma > 0,
\]
the asymptotic risk of $\hf^{\zerostep}$
is no worse than that of $\hf$.
We refer to the function described in  \eqref{eq:monotonized} as the {\it monotonized risk of the base procedure} $\hf$.

The assumptions required in
\Cref{thm:asymptotic-risk-tuned-zero-step-informal} 
are very mild, and apply to a broad range of procedures and settings. 
Indeed, as remarked above, 
the risk profile $R^\deter(\cdot; \hf)$ of several estimators have been recently identified under proportional asymptotics
regime; see \Cref{rem:prop_asymptotics_risk_examples}.
The requirements on the loss functions are also mild
and can be verified for common loss functions.
In fact,
our results do not require
proportional asymptotics
and hold more generally.

We also develop a more sophisticated methodology whose asymptotic risk profile is not only monotonic in the aspect ratio but can be strictly smaller than the monotonized risk profile \eqref{eq:monotonized}, a fact that we again verify for the  minimum $\ell_2$, $\ell_1$-norm least squares procedures. See \Cref{sec:one-step}.

\paragraph{Core idea: the zero-step procedure.}
Our methodology is conceptually straightforward, as it relies on a combination of sample splitting, sub-sampling, and cross-validation. The core principle is as follows. Starting off  with an aspect ratio of $p/n$, if the risk were to be lower at, say, twice this aspect ratio $2p/n$, then we could just use half the data to evaluate the predictor, enjoying a smaller risk than the one obtained when training with the entire data. To decide whether the out-of-sample error is lower at any larger aspect ratio, we use cross-validation to ``glean at'' the values of the risk function at all aspect ratios larger than the one for the full data.
To elaborate, we next give an informal description of one of our main methods, the zero-step procedure that we study in \Cref{sec:zero-step}. 

We initially  split the data into  a training and a validation set in such a way that the size of the validation set is a vanishing proportion of that of the training set.
In the first step, we 
compute a collection of predictors,
each resulting from applying the same base prediction procedure on a
sub-sample of size $k_n$ varying over a grid of values in $\mathcal{K}_n$. Depending on the size of the sub-sample, we are able to mimic the behavior of the risk at larger aspect ratios ($p/k_n$, $k_n\in\mathcal{K}_n$). 
In the second step, we estimate the out-of-sample risk of each of these predictors using the validation set. 
With $\{p/k_n: k_n\in\mathcal{K}_n\}$ approximating the set $[p/n, \infty]$, these estimated out-of-sample risks act as proxies for the true generalization error at larger aspect ratios.
In the final step, we  perform model selection by minimizing the estimated test error across the candidate aspect ratios.
In order to make full use of the data, one can use more than one sub-sample for each $k_n\in\mathcal{K}_n$, a practice that closely resembles bagging.
To prove the ``correctness'' of the split-sample cross-validation,
we develop novel oracle inequalities in additive and multiplicative forms
that are of independent interest.

Because the core components of our approach are sub-sampling and cross-validation, our methodology is applicable to virtually any algorithm -- even the black-box type -- and its validity holds under minimal assumptions on the data generating distribution.

\subsection{Summary of results}
\label{sec:summary-contributions}

Below we summarize the main contributions of this paper.

\begin{itemize}
    \item 
    \textbf{Novel guarantees for split-sample cross-validation.} At its core, our methodology performs model selection of arbitrary learning procedures built over sub-samples of different sizes, with the size of the sub-samples treated as a tuning parameter to optimize. Towards that goal, we rely on split-sample cross-validation, which we analyze in \Cref{sec:general-crossvalidation-modelselection}.  In \Cref{prop:general-model-selection-guarantee}, we provide deterministic inequalities for
    the risk of 
    split cross-validated predictors 
    in both 
    additive and multiplicative form. We remark that multiplicative oracle inequalities allow for the possibility of unbounded oracle risk values, and are therefore well suited to incorporate prediction procedures exhibiting the double descent phenomena around the interpolating threshold. Leveraging
 concentration inequalities for both the  mean estimator of the prediction risk and the median-of-means estimator, in \Cref{sec:common-loss-functions}, we show how these bounds
    imply finite-sample oracle inequalities
    for split-sample cross-validation
    that are applicable to a broad range of loss functions and under minimal assumptions on the learning procedure. In particular, our results do not require well-specified (parametric) models. We exemplify our bounds on various loss functions for both regression and classification, and in \Cref{thm:oracle-bound-linear-predictor-squared-error}, we give a general  multiplicative oracle inequality for arbitrary linear predictors under mild distributional assumptions.
    
    \item
    \textbf{Zero-step procedure.} 
    Using oracle inequalities for split-sample cross-validation, we put forth a general methodology that takes as input an arbitrary prediction procedure and minimizes the prediction risk of its bagged version over a grid of 
    sub-sample sizes.
    We call this the ``zero-step'' prediction procedure.
    We analyze the asymptotic risk behavior of the zero-step
    procedure  under proportional asymptotics,
    in which the number of features grows proportionally with the number
    of observations.
    In  \Cref{thm:asymptotic-risk-tuned-zero-step}, we prove that
    the risk of predictor returned by the zero-step procedure
    is upper bounded
    by the monotonized risk given in \eqref{eq:monotonized}. Unlike most contributions in the literature on over-parameterized learning,
 our results do not depend on well-specified (parametric) models and only require the existence of a sufficiently well-behaved asymptotic risk profile.  
    
    \item
    \textbf{One-step procedure.}
In \Cref{sec:one-step},  we further generalize the zero-step procedure by considering an adjustment of the original predictor that is  inspired by the one-step estimation method used in parametric statistics to improve efficiency~\citep[Section 5.7]{vanderVaart_2000}. This modification, which can be thought of as a single-iterate boosting of the baseline procedure, is shown, both in theory and in simulations, to produce an asymptotic monotonized risk that is smaller than the monotonized risk of the zero-step procedure; see \Cref{thm:asymptotic-risk-tuned-one-step}. We derive explicit expressions of the asymptotic risk profile of the one-step procedure for the minimum $\ell_2$, $\ell_1$-norm least squares prediction procedures.
The main insight we draw from the minimum $\ell_2$-norm least squares
example is that the one-step procedure in addition
to changing the aspect ratio of the predictor
also reduces the signal energy
leading to a smaller asymptotic risk;
see \Cref{rem:zerostep-vs-onestep-isotropic}.

    \item 
    \textbf{Risk profiles}.
    In our study of the performance of the
    zero-step and one-step
    procedures,
    we derive several auxiliary results
    that might of independent interest.
    Specifically, we provide a systematic
    way to certify the continuity or lower semicontinuity
    of the asymptotic risk profile of any prediction procedure, 
    assuming only point-wise convergence of the conditional prediction risk under proportional asymptotics; see~\Cref{prop:continuity-from-continuous-convergence-rdet}. This is often hard to prove directly from the asymptotic risk profiles as they are usually defined implicitly via one or more fixed-point equations.
    Also of independent interest
    is a representation that we prove,
    for the conditional prediction risk
    of an arbitrary linear predictor with a one-iterate
    boosting with minimum $\ell_2$-norm least squares,
    using the recent tools from random matrix theory.
    This, in particular, involves
    deriving deterministic equivalents for
    the generalized bias and variance
    of the ridgeless predictor which may be of independent interest; see~\Cref{lem:conditional-deterministic-convergence-onestep,lem:one-step-predrisk-decomposition}.
    
\end{itemize}

We corroborate our theoretical results with several illustrative simulations. An intriguing finding emerging from our numerical studies is the fact that bagging, i.e., aggregation over sub-sample, appears to have a significant positive impact on the asymptotic risk profile of both the zero- and one-step procedure: averaging over an increasing number of sub-samples results in a downward shift of the risk asymptotic profile, especially around the interpolation threshold: see, e.g., \Cref{fig:gaufeat_isocov_sparsesig_linmod_n500_nv80_nruns250_zerostep_mnla_gamma100,fig:gaufeat_isocov_isosig_linmod_n1000_nv100_nruns100_zerostep_mnls_gamma10}.  Though we do not provide a theoretical justification for this interesting phenomenon, we offer some conjectures in the discussion section; see~\Cref{sec:discussion}. 

\subsection{Other related work}
\label{sec:related-work}

In this section, we review some related work on risk non-monotonicity, cross-validation, as well as exact asymptotic risk characterization. 
Explicit references to these works, when appropriate, are also made in the main sections of the paper.

\paragraph{Non-monotonicity of generalization performance.}
The study of non-monotone risk behavior is largely motivated by empirical evidence in standard 
statistical learning tasks such as classification and prediction, where instances of non-monotonic risk profiles were 
originally  discovered and reported. See \cite{trunk1979problem,duin1995small,opper1996statistical} and \cite{loog_viering_mey_krijthe_tax_2020} for some earlier findings on the double descent
risk behavior.
Recently, it has garnered growing interest due to the remarkable successes of neural networks  where similar non-monotonic behavior has also been observed; see \cite{lecun1990second,geiger2019jamming,zhang_bengio_hardt_recht_vinyals_2016,zhang2021understanding} and references therein. 
The non-monotonic behavior of the test error as a function of the model size in general context was brought up by \cite{belkin2019reconciling} and has since been theoretically established for many other classical estimators such as linear/kernel regression, ridge regression, logistic regression, and under stylized models such as linear model or random features model.
Besides the work discussed in our main sections, see also \cite{kini2020analytic,Mei2019generalization,mitra2019understanding,derezinski2020exact,frei2022benign} and the survey paper \cite{bartlett_montanari_rakhlin_2021}.
When it comes to the sample-wise non-monotonic performances, 
a recent line of work asks and provides partial answers to the question: 
given additional observation points, when and to what extend 
will the generalization performance improve \citep{viering2019open,nakkiran2019more,nakkiran_venkat_Kakade_Ma-2020,mhammedi2021risk}.
In particular, \cite{nakkiran_venkat_Kakade_Ma-2020} investigates the role of optimal tuning in the context of ridge regression, and for a class of linear models, demonstrated that the optimally-tuned $\ell_{2}$ regularization achieves monotonic generalization performance. 

\paragraph{Data-splitting and cross-validation.}
The framework developed in the current paper crucially depends on split-sample cross-validation, which compares different predictors trained on one part of the sample using out-of-sample risk estimates from the remaining part. The split-sample cross-validation is a well-known methodology studied in several works (e.g.,~\cite{stone1974cross,gyorfi2002distribution,yang2007consistency,arlot2010survey}). Split-sample cross-validation is theoretically easier to analyze compared to the $k$-fold cross-validation and is shown to yield optimal rates in the context of non-parametric regression~\citep{yang2007consistency,van2007super,van2006oracle}. These works have derived oracle inequalities that show that split-sample cross-validation based predictor has asymptotically the smallest risk among the collection of predictors up to an additive error (that converges to zero). 
The oracle inequalities are either called exact or inexact depending on whether the constant multiplying the smallest risk is 1 or $1 + \delta$ (for an arbitrarily $\delta$); see, e.g.,~\cite{lecue2012general}. All these works have used split-sample cross-validation for the purpose of choosing predictors with good prediction risk,
and the existing oracle inequalities are all additive in nature. 

Application of cross-validation for over-parameterized learning is more recent and here special care is required in choosing the split sizes because splitting in half would change the aspect ratios in the proportional asymptotics regime. In contrast to the low dimensional or non-parametric setting, it is well-known that the classical $k$-fold cross-validation framework suffers from severe bias and thus requires careful modification or a diverging choice of $k$ (see, e.g.,~\cite{MRRK-2021,rad2020scalable}). 
In particular, when $k$ is taken to be $n$, the resulting procedure is also known as leave-one-out cross-validation (LOOCV), 
which mitigates these bias issue 
and has proven to be effective in a variety of settings; 
see ~\cite{beirami2017optimal,wang2018approximate,giordano2019swiss,stephenson2020approximate,wilson2020approximate,austern_zhou_2020,xu2021consistent,patil2021uniform,patil2022estimating} and references therein.

Our use of cross-validation is slightly different: the goal is to choose the ``optimal'' sub-sample size for a single prediction procedure. Furthermore, supplementing the existing oracle inequalities for cross-validation, we also provide a multiplicative oracle inequality which shows that the split-sample cross-validated predictor attains the smallest risk in the collection up to a factor converging to $1$ with the sample size. This multiplicative version is crucial for our study, allowing us to consider ingredient predictors whose risk might diverge with sample size.

\paragraph{Risk characterization.}
In developing our zero-step and one-step procedures, we assume existence of a deterministic risk profile function for every aspect ratio. 
As discussed, the exact formulas for the risk profile functions have been obtained for various estimators in both classification and regression settings.
In the past decade, several distinct techniques and tools have been developed to explicitly describe and analyze these risk functions. 
Prominent examples include the leave-one-out type perturbation analysis (e.g.,~\cite{karoui2013asymptotic,karoui2018impact}), 
the approximate message passing machinery (e.g.,~\cite{donoho2009message,donoho2016high,bayati2011dynamics}),
and the convex Gaussian min-max theorem (e.g.,~\cite{stojnic2013framework,thrampoulidis2015regularized,thrampoulidis2018precise}). 
These techniques 
rely critically upon a well-specified model, as well as the assumption that the entries of the design matrix are drawn i.i.d.\ from standard normal distribution, while some restricted universality results are developed in \cite{bayati2015universality,montanari2017universality,chen2021universality,hu2020universality}.
In this work, however, we take a more direct approach and develop some non-asymptotic oracle risk inequalities.
Leveraging upon these oracle inequalities, our results do not require well-specified models, and only assume the existence of a relatively well-behaved risk profile, which presumably allows for weaker distributional assumptions. 

\subsection{Organization and notation}

\paragraph{Organization.}
The rest of the paper is organized as follows.
\begin{itemize}
    \item 
    In \Cref{sec:general-crossvalidation-modelselection},
    we describe the general cross-validation and model selection algorithm,
    derive associated oracle risk inequalities,
    and provide probabilistic bounds on the error terms.
    We then obtain concrete results 
    for a variety of classification and regression loss functions.
    \item
    In \Cref{sec:zero-step}, 
    we describe the zero-step prediction procedure,
    and provide its risk monotonization guarantee.
    We then explicitly verify the related assumptions
    for the ridgeless and lassoless prediction procedures,
    and show corresponding numerical illustrations.
    \item
    In \Cref{sec:one-step},
    we describe the one-step prediction procedure,
    and provide its risk monotonization guarantee.
    We then explicitly verify assumptions
    for arbitrary linear predictors,
    the special cases of ridgeless and lassoless prediction procedures,
    and show corresponding numerical illustrations.
    \item
    In \Cref{sec:discussion},
    we conclude the paper 
    and provide three concrete directions for future work.
\end{itemize}

Nearly all the proofs in the paper are deferred to the Supplementary Material.
The sections and the equation numbers in the Supplementary Material
are prefixed with the letters ``S'' and ``E'', respectively.

\paragraph{Notation.}

We use $\NN$ to denote the set of natural numbers,
$\RR$ to denote the set of real numbers,
$\RR_{\ge 0}$ to denote the set of non-negative real numbers,
$\RR_{> 0}$ to denote the set of positive real numbers,
and  $\overline{\RR}$ to denote the extended real number system,
i.e., $\overline{\RR} = \RR \cup \{ -\infty, +\infty \}$.
For a real number $a$,
$(a)_+$ denotes its positive part,
$\lfloor a \rfloor$ denotes its floor, 
$\lceil a \rceil$ denotes its ceiling.
For a set $\cA$,
we use $\smash{\1_{\cA}}$ to denote its indicator function.
We denote 
convergence in probability by $\smash{\pto}$, 
almost sure convergence by $\smash{\asto}$,
and weak convergence by $\smash{\dto}$.
We use generic letters $C, C_1, C_2, \dots$ to denote constants
whose values may change from line to line.

For a comprehensive list of notation used in the paper,
see \Cref{sec:notation}.

\section{General cross-validation and model selection}
\label{sec:general-crossvalidation-modelselection}

The primary focus of this paper is 
to develop a framework to improve upon 
prediction procedures in the overparameterized regime
in which the number of features $p$ is comparable to
and often exceeds the number of observations $n$,
and where the predictive risk may be non-monotonic in the aspect ratio $p/n$.
As discussed in \Cref{sec:introduction}, 
a fundamental component of our methodology is 
the selection of an optimal size of the sub-samples 
through cross-validation.  
To that effect, we begin by deriving some general, 
non-asymptotic oracle risk inequalities
for split-sample cross-validation, as described in
\Cref{alg:general-cross-validation-model-selection},
that hold under minimal assumptions.
While our bounds apply to a wide range of learning problems
and may be of independent interest, they are crucial
in demonstrating the risk monotonization properties of the procedures
presented in \Cref{sec:zero-step,sec:one-step}.

Though cross-validation is a well-known and well-studied procedure
~\citep[see, e.g.,][]{van2007super,gyorfi2002distribution,yang2007consistency},
our work extends the previous results on cross-validation
in a couple of ways:
(1) We derive two forms of oracle risk inequalities:
the additive form that is better suited for bounded loss functions 
(especially classification losses), 
and the multiplicative form that is better suited unbounded loss functions
(especially regression losses); 
(2) In addition to common sample mean based
estimation of the prediction risk,
we also analyze the median-of-means based
estimation of the prediction risk
that proves to be useful
in relaxing strong moment 
assumption on the predictors.

\begin{algorithm}
    \caption{General cross-validation and model selection procedure}
    \label{alg:general-cross-validation-model-selection}
    \textbf{Inputs}:\\
    \begin{itemize}[noitemsep]
        \item[--] a dataset $\cD_n = \{ (X_i, Y_i) \in \RR^{p} \times \RR : 1 \le i \le n \}$; 
        \item[--] a positive integer $n_\test < n$; 
        \item[--] an index set $\Xi$; 
        \item[--] a set of prediction procedures $\{ \hf^\xi$: $\xi \in \Xi \}$; 
        \item[--] a loss function $\ell : \RR \times \RR \to \RR_{\ge 0}$; 
        \item[--] a centering procedure $\CEN \in \{ \AVG, \MOM \}$;
        \item[--] a real number $\eta > 0$ if $\CEN$ is $\MOM$. 
    \end{itemize}
    \textbf{Output:}\\
        \vspace{-1em}
        \begin{itemize}[noitemsep]
            \item[--] a predictor $\hf^\cv(\cdot; \cD_n) : \RR^{p} \to \RR$.
        \end{itemize}
    \textbf{Procedure:}
    \begin{enumerate}
        \item
        Randomly split the index set $\cI_n = \{ 1, \dots, n \}$
        into two disjoint sets
        $\mathcal{I}_\train$ and $\mathcal{I}_\test$ such that
        $| \mathcal{I}_\train | = n - n_\test$ (which we denote by $n_\train$),
        $| \mathcal{I}_\test | = n_\test$.
        Denote the corresponding splitting of the dataset $\cD_n$ by
        $\cD_\train = \{ (X_i, Y_i) : i \in \mathcal{I}_\train \}$ (for training)
        and
        $\cD_\test = \{ (X_j, Y_j) : j \in \mathcal{I}_\test \}$ (for testing).
        \item
        For each $\xi \in \Xi$,
        fit the prediction procedure $\hf^\xi$ on $\cD_\train$
        to obtain the predictor $\hf^\xi(\cdot; \cD_\train) : \RR^{p} \to \RR$.
        \item
        For each $\xi \in \Xi$,\\
        \begin{itemize}
            \item
            if $\CEN~=~\AVG$,
            estimate the conditional prediction risk of $\hf^\xi$ using
            \begin{align}
            \label{eqn:avg-risk-pre}
                \hR(\hf^\xi(\cdot; \cD_\train))
                = \frac{1}{|\cD_\test|} \sum_{j \in \mathcal{I}_\test}
                \ell(Y_j, \hf^\xi(X_j; \cD_\train)).
            \end{align}
                 
            \item 
            if $\CEN~=~\MOM$,
            estimate the conditional prediction risk of $\hf^\xi$ using
            \begin{align}
            \label{eqn:mom-risk-pre}
              \hR(\hf^\xi(\cdot; \cD_\train)) 
                = \MOM
                \big(
                    \big\{ \ell(Y_j, \hf^\xi(X_j; \cD_\train)), \, j \in \mathcal{I}_\test \big\},
                    \, \eta
                \big).  
            \end{align}
            See discussion after \Cref{lem:mom-concentration}
            for the definition of $\MOM(\cdot, \cdot)$.
        \end{itemize}
        \item Set $\widehat{\xi}\in\Xi$ to be the index
        that minimizes the estimated prediction risk using
        \begin{align}
        \label{eqn:model-selection-xi}
            \widehat{\xi}
            \in \argmin_{\xi \in \Xi} \hR(\hf^\xi(\cdot; \cD_\train)).
        \end{align}
        Note that $\widehat{\xi}$ need not be unique
        (hence the set notation)
        and any choice
        that leads to the minimum estimated risk
        enjoys the subsequent theoretical guarantees in the paper.
        
        \item Return the predictor $\hf^\cv(\cdot; \cD_n) = \hf^{\widehat{\xi}}(\cdot; \cD_\train)$.
    \end{enumerate}
\end{algorithm}

\subsection{Oracle risk inequalities}
\label{sec:oracle-risk-inequalities}

Setting the stage, suppose we are given $n$ samples of labeled data
\smash{$\cD_n = \{ (X_1, Y_1), (X_2, Y_2), \dots, (X_n, Y_n) \}$},
where $X_i \in \RR^p$ is a $p$-dimensional feature vector
and $Y_i \in \RR$ is a scalar response variable
for $i = 1, \dots, n$.
Let $\hf$ be a prediction procedure that maps $\cD_n$
to a predictor $\hf(\cdot; \cD_n) : \RR^{p} \to \RR$
(a measurable function of the data $\cD_n$).
For any predictor 
$\hf(\cdot; \cD_n)$,
trained on the data set $\cD_n$,
that takes in a feature vector $x \in \RR^{p}$
and outputs a real-valued prediction $\hf(x; \cD_n)$,
we measure its predictive accuracy 
via a non-negative loss function
$\ell : \RR \times \RR \to \RR_{\ge 0}$.
Given a new feature vector $X_0 \in \RR^{p}$
with associated response variable $Y_0 \in \RR$
so that $(X_0, Y_0)$ is independent of $\cD_n$,\footnote{We will reserve the notation $(X_0, Y_0)$
to denote a random variable that is drawn independent
of $\cD_n$.}
the prediction error or out-of-sample error incurred by $\hf(\cdot; \cD_n)$ is $\ell(Y_0, \hf(X_0; \cD_n))$. Note that the prediction error $\ell(Y_0, \hf(X_0; \cD_n))$
is a random variable that is a function of both $\cD_n$ and $(X_0, Y_0)$.

We will quantify the performance of $\hf(\cdot; \cD_n)$
using the conditional expected prediction loss.
The conditional expected prediction loss
given the data $\cD_n$, or the conditional prediction risk for short,
of $\hf(\cdot; \cD_n)$ is defined as
\begin{align}
\label{eq:prediction-risk}
    R(\hf(\cdot; \cD_n))
    ~:=~ \EE_{X_0, Y_0} [ \ell(Y_0, \hf(X_0; \cD_n)) \mid \cD_n ]
    ~=~ \int \ell(y, \hf(x; \cD_n)) \; \mathrm{d}P(x, y),
\end{align}
where $P$ denotes the joint probability distribution of $(X_0, Y_0)$.
Note that $  R(\hf(\cdot; \cD_n))$ is a random variable
that depends on $\cD_n$.
An empirical estimator of $R(\hf(\cdot; \cD_n))$
is denoted by $\hR(\hf(\cdot; \cD_n))$. In this paper, we mainly consider two such estimators: the average estimator and the median-of-means estimator 
as defined in \eqref{eqn:avg-risk-pre} and \eqref{eqn:mom-risk-pre}, respectively.

Consider any prescribed index set $\Xi$, where each $\xi \in \Xi$ corresponds
to a specific model that will be clear from the context. 
Based on the training data, a predictor $\hf^\xi(\cdot; \cD_\train)$ is fitted
for each model $\xi$ and estimated risks of $\hf^\xi$, $\xi \in \Xi$ 
are compared on a validation data set as described
in \Cref{alg:general-cross-validation-model-selection}. 
Let $\hf^\cv(\cdot; \cD_n)$ be the final predictor returned by
\Cref{alg:general-cross-validation-model-selection}.
We shall consider two types of oracle inequalities: one in an additive form
and the other in a multiplicative form. More specifically, for any prescribed model set $\Xi$, 
define the additive error term and multiplicative error term respectively as follows:
\begin{subequations}
    \begin{align}
        \Delta_n^{\add}
        &:= \max_{\xi \in \Xi}
        \Big|
        \hR(\hf^\xi(\cdot; \cD_\train))
        -
         R(\hf^\xi(\cdot; \cD_\train)) 
         \Big|,
        \label{eq:Delta_n_add}
        \\
        \Delta_n^\mul
        &:= \max_{\xi \in \Xi}
        \Big\vert
            \frac{\hR(\hf^\xi(\cdot; \cD_\train))}{R(\hf^\xi(\cdot; \cD_\train))} - 1 
        \Big\vert. \label{eq:Delta_n_mul}
    \end{align}
\end{subequations}
The following proposition relates
the performance of $\hf^\cv(\cdot; \cD_n)$ 
to the ``oracle'' prediction risk 
in terms of these errors terms.

\begin{proposition}
    [Deterministic oracle risk inequalities]
    \label{prop:general-model-selection-guarantee}
    The prediction risk of $\hf^\cv(\cdot; \cD_n)$ satisfies
    the following deterministic oracle inequalities:
    \begin{enumerate}
        \item additive form: 
    \begin{equation}\label{eq:model-free-guarantee-general-model-selection}
    \begin{split}
        R(\hf^\cv(\cdot; \cD_n))
        ~&\leq~ \min_{\xi \in \Xi} R(\hf^{\xi}(\cdot; \cD_\train) + 2\Delta_n^{\add}, \\
        \EE[R(\hf^\cv(\cdot; \cD_n))]
        ~&\leq~ \min_{\xi \in \Xi} \EE[R(\hf^\xi(\cdot; \cD_\train)]
        + 2\mathbb{E}[\Delta_n^{\add}].
        \end{split}
    \end{equation}
    \item 
     multiplicative form:
    \begin{align}
        \label{eq:oracle-risk-inequality-multiplicative-form}
        R(\hf^\cv(\cdot; \cD_n))
        ~\leq~ 
        \frac{1 + \Delta_n^\mul}{(1 - \Delta_n^\mul)_{+}}
        \cdot \min_{\xi \in \Xi} R(\hf^\xi(\cdot; \cD_\train).
    \end{align}
    \end{enumerate}
\end{proposition}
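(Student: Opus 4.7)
The plan is to give a short, pathwise argument rooted in the defining property of $\widehat{\xi}$, namely that $\widehat{\xi} \in \argmin_{\xi \in \Xi} \widehat{R}(\widehat{f}^\xi(\cdot; \cD_\train))$. Throughout I will let $\xi^\star \in \argmin_{\xi \in \Xi} R(\widehat{f}^\xi(\cdot; \cD_\train))$ for the additive bound; its existence can be arranged by picking any near-minimizer or, if $\Xi$ is uncountable, by working with a fixed $\xi_0 \in \Xi$ and taking an infimum at the end. For the expectation version of the additive bound, I will instead fix an arbitrary deterministic $\xi_0 \in \Xi$ and conclude via a $\min$ over $\xi_0$ after taking expectations.

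For the additive form, the first step is the pathwise sandwich
\begin{equation*}
    R(\widehat{f}^{\widehat{\xi}}(\cdot; \cD_\train))
    \;\le\; \widehat{R}(\widehat{f}^{\widehat{\xi}}(\cdot; \cD_\train)) + \Delta_n^{\add}
    \;\le\; \widehat{R}(\widehat{f}^{\xi^\star}(\cdot; \cD_\train)) + \Delta_n^{\add}
    \;\le\; R(\widehat{f}^{\xi^\star}(\cdot; \cD_\train)) + 2\Delta_n^{\add},
\end{equation*}
where the first and third inequalities use the definition of $\Delta_n^{\add}$ in \eqref{eq:Delta_n_add} (namely the uniform bound $|\widehat{R}(\widehat{f}^\xi) - R(\widehat{f}^\xi)| \le \Delta_n^{\add}$ for every $\xi \in \Xi$), and the middle inequality is the defining optimality property of $\widehat{\xi}$ from \eqref{eqn:model-selection-xi}. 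Since $\widehat{f}^\cv(\cdot; \cD_n) = \widehat{f}^{\widehat{\xi}}(\cdot; \cD_\train)$, this is exactly the first inequality in \eqref{eq:model-free-guarantee-general-model-selection}. For the expectation version, I will run the same chain with an arbitrary deterministic $\xi_0 \in \Xi$ in place of $\xi^\star$, take expectations, and then minimize the right-hand side over $\xi_0$; this avoids the measurability subtlety of $\xi^\star$ being a random element.

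For the multiplicative form, I will first dispense with the boundary case: if $\Delta_n^\mul \ge 1$, then $(1 - \Delta_n^\mul)_+ = 0$ and the right-hand side of \eqref{eq:oracle-risk-inequality-multiplicative-form} is $+\infty$ (interpreting $0$ in the denominator accordingly), so the bound holds vacuously. Assume henceforth $\Delta_n^\mul < 1$, so that $1 - \Delta_n^\mul > 0$. The definition of $\Delta_n^\mul$ in \eqref{eq:Delta_n_mul} yields, for every $\xi \in \Xi$,
\begin{equation*}
    (1 - \Delta_n^\mul)\, R(\widehat{f}^\xi(\cdot; \cD_\train))
    \;\le\; \widehat{R}(\widehat{f}^\xi(\cdot; \cD_\train))
    \;\le\; (1 + \Delta_n^\mul)\, R(\widehat{f}^\xi(\cdot; \cD_\train)).
\end{equation*}
Applying the left inequality at $\xi = \widehat{\xi}$, the optimality of $\widehat{\xi}$, and the right inequality at any fixed $\xi \in \Xi$ then gives
\begin{equation*}
    (1 - \Delta_n^\mul)\, R(\widehat{f}^{\widehat{\xi}}(\cdot; \cD_\train))
    \;\le\; \widehat{R}(\widehat{f}^{\widehat{\xi}}(\cdot; \cD_\train))
    \;\le\; \widehat{R}(\widehat{f}^\xi(\cdot; \cD_\train))
    \;\le\; (1 + \Delta_n^\mul)\, R(\widehat{f}^\xi(\cdot; \cD_\train)),
\end{equation*}
and dividing by $1 - \Delta_n^\mul > 0$ and minimizing over $\xi \in \Xi$ produces \eqref{eq:oracle-risk-inequality-multiplicative-form}.

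There is no real obstacle here; the argument is an exercise in unpacking the defining inequalities of $\Delta_n^{\add}$, $\Delta_n^\mul$, and $\widehat{\xi}$. The only items warranting care are: (i) handling the degenerate case $\Delta_n^\mul \ge 1$ cleanly via the $(\cdot)_+$ convention so that the multiplicative bound is stated unconditionally; (ii) avoiding measurability issues for $\xi^\star$ when passing to expectations by fixing a deterministic $\xi_0$ first and taking a $\min$ outside the expectation; and (iii) ensuring the existence (or use of an infimizing sequence) of the minimizer in $\min_{\xi \in \Xi} R(\widehat{f}^\xi)$ — if $\Xi$ is such that no minimizer exists, both displays remain valid upon replacing $\min$ with $\inf$ via a standard limiting argument.
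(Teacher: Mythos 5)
Your proof is correct and follows essentially the same logic as the paper's: the additive bound is obtained by chaining the definitional uniform error bound $|\hR(\hf^\xi)-R(\hf^\xi)|\le\Delta_n^\add$ with the optimality of $\hxi$ (the paper phrases this as a triangle-inequality decomposition into terms (a) and (b), but that reduces to your sandwich), and the multiplicative bound likewise uses the two-sided ratio control $(1-\Delta_n^\mul)R(\hf^\xi)\le\hR(\hf^\xi)\le(1+\Delta_n^\mul)R(\hf^\xi)$ together with $\hR(\hf^\hxi)\le\hR(\hf^\xi)$. The only differences are cosmetic refinements on your part: you treat the degenerate case $\Delta_n^\mul\ge 1$ explicitly via the $(\cdot)_+$ convention, and you note the measurability point by running the chain with a deterministic $\xi_0$ before minimizing outside the expectation, whereas the paper simply takes expectations in the established pathwise inequality and applies $\EE[\min_\xi R(\hf^\xi)]\le\min_\xi\EE[R(\hf^\xi)]$; both are sound.
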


\Cref{prop:general-model-selection-guarantee}
provides oracle bounds on the prediction risk of $\hf^\cv(\cdot; \cD_n)$
in terms of the error terms $\Delta_n^{\add}$ and $\Delta_n^{\mul}$.
Note that \Cref{prop:general-model-selection-guarantee} does not make any assumptions
about the underlying model of the data or the dependence structure between the observations.
Under some general conditions on the data, one can show that $\Delta_n^{\add}$
and/or $\Delta_n^{\mul}$ converge to zero in probability as $n\to\infty$.
The exact rate of convergence depends on the number of observations $n_{\test}$
in the test data and also on the tail behavior of
$\ell(Y_0, \widehat{f}^{\xi}(X_0; \cD_\train))$ conditional on 
$\widehat{f}^{\xi}(\cdot; \cD_\train)$.
For notational convenience, from now,
we will write $\hf^\cv$ and $\hf^\xi$
to  denote $\hf^\cv(\cdot; \cD_n)$
and $\hf^\xi(\cdot; \cD_\train)$, respectively.

\begin{remark}
    [Lower bound on $R(\hf^\cv)$]
    \label{rem:lower-bound-risk-hfcv}
    \Cref{prop:general-model-selection-guarantee}
    provides upper bounds on the (conditional)
    prediction risk of $\hf^\cv$
    in terms of the minimum risk of $\hf^\xi$.
    It can be readily seen that
    the risk of $\hf^\cv$ is always lower bounded
    by the minimum risk.
    More formally,
    note that $\hf^\cv = \sum_{\xi \in \Xi} \hf^\xi \1_{\hxi = \xi}$, and, therefore,
    \[
        R(\hf^\cv)
        ~=~
        \sum_{\xi \in \Xi} R(\hf^\xi) \1_{\hxi = \xi}
        ~\ge~
        \min_{\xi \in \Xi} R(\hf^\xi) \sum_{\xi \in \Xi} \1_{\hxi = \xi}
        ~=~
        \min_{\xi \in \Xi} R(\hf^\xi).
    \]
    Combined with \Cref{prop:general-model-selection-guarantee},
    we conclude that
    \[
        \min_{\xi \in \Xi} R(\hf^\xi)
        ~\le~
        R(\hf^\cv)
        ~\le~
        \begin{cases}
            \min_{\xi \in \Xi} R(\hf^\xi) + \Delta_n^\add \\
            \min_{\xi \in \Xi} R(\hf^\xi)
            \cdot
            (1 + \Delta_n^\mul) / (1 - \Delta_n^\mul)_+.
        \end{cases}
    \]
    Thus,
    convergence (in probability) of either
    $\Delta_n^\add$ or $\Delta_n^\mul$
    to $0$
    implies that
    the risk of $\hf^\cv$ is asymptotically the same
    as the minimum risk of $\hf^\xi$, $\xi \in \Xi$
    in either additive or multiplicative sense, respectively.
\end{remark}
The additive and multiplicative form of oracle inequalities have their own advantages.
Traditionally, the additive form is more common. 
The additive oracle inequality for the prediction risk
readily implies the additive oracle inequality on the excess risk.
In other words,
\[
    R(\hf^\cv) - R(f^\star)
    \le
    \min_{\xi \in \Xi} R(\hf^\xi) - R(f^\star) + \Delta_n^\add,
\]
for any predictor $f^\star$.
In particular, this will hold for the best (oracle) predictor
for the prediction risk.
This is not true of the multiplicative oracle inequality,
which instead only implies the bound
\[
    R(\hf^\cv) - R(f^\star)
    \le
    c_n
    \big\{ \min_{\xi \in \Xi} R(\hf^\xi) - R(f^\star) \big\}
    + (c_n - 1) R(f^\star),
\]
where $f^\star$ is any predictor (in particular,
the one with the best prediction risk) and
\[
    c_n = \frac{1 + \Delta_n^\mul}{(1 - \Delta_n^\mul)_+},
    \quad
    c_n - 1  = \frac{2 \Delta_n^\mul}{(1 - \Delta_n^\mul)_+}.
\]

In terms of claiming that $\hf^\cv$ has prediction risk
close to the best in the collection of predictors
$\{ \hf^\xi, \xi \in \Xi \}$,
the multiplicative form has certain advantages compared to the additive form.
In the case that $\min_{\xi \in \Xi} R(\hf^\xi)$ converges to $0$,
the additive oracle inequality~\eqref{eq:model-free-guarantee-general-model-selection} implies that the risk of the selected predictor $\widehat{f}^{\cv}$ asymptotically matches the risk of the \emph{best} predictor among the collection $\{ \widehat{f}^{\xi}, \xi\in\Xi \}$
only if $\Delta_n^{\add}$ converges to zero faster than $\min_{\xi\in\Xi}R(\widehat{f}^{\xi})$.
If, however, $\Delta_n^{\add}$ converges to zero slower than the minimum risk in the collection, then the additive oracle inequality does not imply a favorable result. In this case, a multiplicative oracle inequality helps.
As long as $\Delta_n^\mul$ converges to 0,
the multiplicative oracle inequality implies
that $\hf^\cv$ matches in risk with the best predictor in the collection, irrespective of whether the minimum risk converges to zero or not.
Note that $\Delta_n^{\add}$ only controls the additive error of the risk estimator $\widehat{R}(\widehat{f}^{\xi})$, which is easier to control than the multiplicative error; think of controlling the error of sample mean of $\mbox{Bernoulli}(p)$ random variables with $p = p_n\to0$;
See \Cref{rem:compare-Delta_n-add-vs-Delta_n-mul} for a more mathematical discussion.
Even when $\min_{\xi \in \Xi} R(\hf^\xi)$
does not converge to zero,
the multiplicative form might be advantageous compared to the additive form.
Indeed, suppose that $\hf^{\xi_0}$ is in the collection
and its risk diverges as $n \to \infty$.
Then, it may not be true that
\[
    \big|
        \hR(\hf^{\xi_0})
        - R(\hf^{\xi_0})
    \big|
    \overset{p}{\to} 0,
\]
because
both $\hR(\hf^{\xi_0})$ and $R(\hf^{\xi_0})$
are diverging.
This implies that $\Delta_n^\add$ does not converge to $0$
and in fact, might diverge.
However,
the minimum risk in the collection could still be finite,
and the additive oracle inequality fails to capture this.
On the other hand,
$\hR(\hf^{\xi_0}) / R(\hf^{\xi_0})$ can still converge to $1$
as $n \to \infty$ even if $R(\hf^{\xi_0})$ diverges to $\infty$.
In our applications
in overparameterized learning,
we will encounter this situation where the number of features
($p$)
is close to the number of observations ($n$),
i.e., $p/n \approx 1$.
See \Cref{rem:divergence-Delta_n^add} for more details.

\begin{remark}
    [From multiplicative to additive oracle inequality]
    Note that if $\Delta_n^{\mul} = o_p(1)$, then $(1 + \Delta_n^{\mul})/(1 - \Delta_n^{\mul})_+ = 1 + O_p(1)\Delta_n^{\mul} = 1 + o_p(1)$,
    then the multiplicative oracle inequality~\eqref{eq:oracle-risk-inequality-multiplicative-form} 
    yields
    \[
    R(\widehat{f}^{\cv}) ~\le~ (1 + O_p(1)\Delta_n^{\mul})\min_{\xi\in\Xi} R(\widehat{f}^{\xi}) ~=~ (1 + o_p(1))\min_{\xi\in\Xi}R(\widehat{f}^{\xi}).
    \]
    Observe that this multiplicative form can be converted into an additive form as
    \[
    R(\widehat{f}^{\cv}) ~\le~ \min_{\xi\in\Xi}R(\widehat{f}^{\xi}) ~+~ O_p(1)\Delta_n^{\mul}\min_{\xi\in\Xi}R(\widehat{f}^{\xi}),
    \]
    where the second term on the right hand side is always smaller order compared to the first term as long as $\Delta_n^{\mul}$ converges in probability to zero.
\end{remark}

From this discussion, it follows that one can choose a predictor with the \emph{best} prediction risk in a collection if either $\Delta_n^{\add}$ or $\Delta_n^{\mul}$ converges in probability to zero. The application of \Cref{alg:general-cross-validation-model-selection} for risk monotonizing procedures will be discussed in the next three sections. In the next two subsections, we provide some general sufficient conditions to verify $\Delta_n^{\add} = o_p(1)$ and $\Delta_n^{\mul} = o_p(1)$ for independent data. We also provide examples of common loss functions and show that under some mild moment assumptions, they satisfy $\Delta_n^{\add} = o_p(1)$ and $\Delta_n^{\mul} = o_p(1)$.

\subsection
[Control of error terms]
{Control of $\Delta_n^{\add}$ and $\Delta_n^{\mul}$}
\label{subsec:control-of-error-terms}

In order to characterize $R(\hf^\cv)$, 
by \Cref{prop:general-model-selection-guarantee}
it is sufficient to control 
$\Delta_n^{\add}$ and $\Delta_n^{\mul}$.  
In this section, we demonstrate that under certain assumptions
on the loss function $\ell$,
the error terms are small both in probability and in expectation, which in turn yields optimality of $\hf^\cv$ among the predictors in $\{ \widehat{f}^{\xi}, \xi\in\Xi \}$.

To facilitate our discussion, for each $\xi \in \Xi$,
define the conditional $\psi_1$-Orlicz norm
of $\ell(Y_0, \hf^\xi(X_0))$ given  $\cD_n$ as
\begin{equation}\label{eq:conditional-Orlicz}
    \| \ell(Y_0, \hf^\xi(X_0)) \|_{\psi_1 \mid \cD_n}
    := \inf
    \big\{
        C > 0: \,
        \mathbb{E}
        \big[
            \exp\big( |\ell(Y_0, \hf^\xi(X_0))| / C \big) \mid \mathcal{D}_n
        \big] 
        \le 2
    \big\}.
\end{equation}
Similarly, for $r \ge 1$, define the conditional $L_r$-norm as
\begin{equation}\label{eq:conditional-Lp}
    \|\ell(Y_0, \hf^{\xi}(X_0))\|_{L_r \mid \cD_n}
    :=
    \big(
         \mathbb{E}
         \big[
             \big|\ell(Y_0, \hf^{\xi}(X_0))\big|^r \mathrel{\big|} \cD_n
         \big]
    \big)^{1/r}.
\end{equation}
It is well-known~\citep[Proposition 2.7.1]{vershynin_2018} that 
\[
\|\ell(Y_0, \widehat{f}^{\xi}(X_0))\|_{\psi_1|\mathcal{D}_n} ~\asymp~ \sup_{r \ge 1} r^{-1}\|\ell(Y_0, \widehat{f}^{\xi}(X_0))\|_{L_r|\mathcal{D}_n},
\]
i.e.,
there are absolute constants $C_l$ and $C_u$ such that
\[
    0< C_l
    \le
    \frac
    {\| \ell(Y_0, \hf^\xi(X_0)) \|_{\psi_1 \mid \cD}}
    {\sup_{r \ge 1} r^{-1} \| \ell(Y_0, \hf^\xi(X_0)) \|_{L_r \mid \cD_n}}
    \le
    C_u<\infty.
\]

\subsubsection
[Additive form]
{Control of $\Delta_n^{\add}$}

Let $\hf^\xi$, $n_\test$,
and $\CEN$ be as defined in \Cref{alg:general-cross-validation-model-selection},
and $\Delta_n^{\add}$ be as defined in  \eqref{eq:Delta_n_add}.
    
\begin{lemma}
    [Control of $\Delta_n^{\add}$ and its expectation
    for losses with bounded conditional $\psi_1$ norm]
    \label{lem:bounded-orlitz-error-control}
    Suppose $(X_i, Y_i), i \in \mathcal{I}_\test$ are sampled i.i.d.\
    from $P$.
    Suppose the loss function $\ell$ is such that
    $$\| \ell(Y_0, \hf^\xi(X_0)) \|_{\psi_1 \mid \cD_n} \le \hsigma_\xi$$
    for $(X_0, Y_0) \sim P$ and 
    set
    $\hsigma_{\Xi} := \max_{\xi \in \Xi} \hsigma_\xi$.
    Fix any $0 < A < \infty$.
    Then,
    for $\CEN = \AVG$,
    or $\CEN = \MOM$ with $\eta = n^{-A} / | \Xi |$,
    \footnote{See \Cref{rem:restrict_A_eta}.}
    there exists an absolute constant $C_1 > 0$
    such that
    \[
        \PP
        \left(
            \Delta_n^{\add}
            \ge
            C_1
            \hsigma_{\Xi}
            \max
            \left\{
                \sqrt{\frac{\log\left(|\Xi| n^A\right)}{n_\test}},
                \frac{\log\left(|\Xi| n^A\right)}{n_\test}
            \right\}
        \right)
        \le
        n^{-A}.
    \]
    Additionally, if for some $A > 0$, there exists a $C_2 > 0$ such that $\PP(\hsigma_{\Xi} \ge C_2) \le n^{-A}$, then
    there exists an absolute constant $C_3 > 0$
    such that
    \begin{equation}\label{eq:bounded-orlicz-expectation-bound}
        \begin{split}
           \mathbb{E}[\Delta_n^{\add}]
           &\le C_1 C_2 
           \max
           \left\{
                \sqrt{\frac{\log\left(|\Xi| n^A\right)}{n_\test}},
                \frac{\log\left(|\Xi| n^A\right)}{n_\test}
            \right\}
            + C_3
            n^{-A/r}
            |\Xi|^{1/t}
            \max
            \left\{
                 \sqrt{\frac{t}{n_\test}}, \frac{t}{n_\test}
            \right\}
            \max_{\xi \in \Xi}
            \| \hsigma_\xi \|_{L_t}
       \end{split}
    \end{equation}
for every  $r, t \ge 2$ and $1/r + 1/t = 1$.
\end{lemma}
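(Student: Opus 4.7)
The overall strategy is to condition on $\cD_\train$ and exploit that, for each fixed $\xi \in \Xi$, the quantities $\{\ell(Y_j, \hf^\xi(X_j; \cD_\train)) : j \in \mathcal{I}_\test\}$ are i.i.d.\ given $\cD_\train$ with conditional $\psi_1$-Orlicz norm uniformly bounded by $\hsigma_\xi$. Under $\CEN = \AVG$, the deviation $|\hR(\hf^\xi) - R(\hf^\xi)|$ is the fluctuation of an i.i.d.\ sample mean of sub-exponential random variables; under $\CEN = \MOM$, it is the fluctuation of a median-of-means estimator of the same variables. In both cases the fluctuation admits a sub-exponential deviation bound with scale $\hsigma_\xi$.

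First, I would establish a fixed-$\xi$ conditional tail bound. In the $\AVG$ case, Bernstein's inequality for sub-exponential random variables yields, for every $u > 0$ and an absolute constant $c > 0$,
\[
    \PP\!\left(
        \bigl| \hR(\hf^\xi) - R(\hf^\xi) \bigr|
        \ge c\, \hsigma_\xi \max\!\Bigl\{ \sqrt{u/n_\test},\, u/n_\test \Bigr\}
        \,\Bigm|\, \cD_\train
    \right)
    \le 2 e^{-u}.
\]
In the $\MOM$ case, the concentration statement referenced after \Cref{lem:mom-concentration} gives an analogous inequality with right-hand side $\eta$ and scale $c\, \hsigma_\xi \max\{ \sqrt{\log(1/\eta)/n_\test},\, \log(1/\eta)/n_\test \}$. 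Choosing $u = \log(|\Xi| n^{A})$ (respectively $\eta = n^{-A}/|\Xi|$) and applying a union bound over $\xi \in \Xi$, then marginalizing over $\cD_\train$ by the tower property, produces the first assertion.

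Next, for the expectation bound, I would decompose on the event $E := \{ \hsigma_\Xi \le C_2 \}$. On $E$, the tail bound from the first part holds with $\hsigma_\Xi$ deterministically replaced by $C_2$; integrating the tail (using $\EE Z \le \int_0^\infty \PP(Z > u)\, du$) produces the first summand of \eqref{eq:bounded-orlicz-expectation-bound}. On $E^c$, H\"older's inequality with conjugate exponents $r, t \ge 2$, $1/r + 1/t = 1$, gives
\[
    \EE\bigl[ \Delta_n^{\add} \1_{E^c} \bigr]
    \;\le\;
    \bigl\| \Delta_n^{\add} \bigr\|_{L_t} \cdot \PP(E^c)^{1/r}
    \;\le\;
    \bigl\| \Delta_n^{\add} \bigr\|_{L_t} \cdot n^{-A/r}.
\]
To control $\| \Delta_n^{\add} \|_{L_t}$, I would use the crude inequality $\| \max_{\xi} Z_\xi \|_{L_t} \le |\Xi|^{1/t} \max_{\xi} \| Z_\xi \|_{L_t}$ (since $\max_\xi |Z_\xi|^t \le \sum_\xi |Z_\xi|^t$) and then apply the moment form of the fixed-$\xi$ concentration: converting the sub-exponential tail above into moments via $\EE|Z|^t = t \int_0^\infty u^{t-1} \PP(|Z| > u)\, du$ and the tower property yields
\[
    \bigl\| \hR(\hf^\xi) - R(\hf^\xi) \bigr\|_{L_t}
    \;\le\; C\, \max\!\Bigl\{ \sqrt{t/n_\test},\, t/n_\test \Bigr\} \, \| \hsigma_\xi \|_{L_t},
\]
and assembling the pieces produces the second summand of \eqref{eq:bounded-orlicz-expectation-bound}.

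The most delicate step is the tail-to-moment conversion, particularly in the $\MOM$ case, where no closed-form moment expression is available; this is handled by the sub-exponential tail integral above, which transfers the concentration scale to $L_t$-norms of matching order. The remaining work is essentially bookkeeping: tracking absolute constants through Bernstein/MOM concentration, verifying that the choices $u = \log(|\Xi| n^A)$ and $\eta = n^{-A}/|\Xi|$ yield the claimed failure probability $n^{-A}$ after the union bound over $\Xi$, and combining the two summands with a single constant $C_3$.
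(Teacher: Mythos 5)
Your tail-bound argument is the same as the paper's: Bernstein (resp.\ the MOM concentration) conditionally on $\cD_\train$ at scale $\hsigma_\xi$, a union bound over $\Xi$, and the choice $u = \log(|\Xi|n^A)$ (resp.\ $\eta = n^{-A}/|\Xi|$). For the expectation bound the structure is also essentially the paper's, with a minor stylistic difference: you split on $E = \{\hsigma_\Xi \le C_2\}$ and integrate the conditional tail on $E$, whereas the paper splits on $\cB_n^c := \{\Delta_n^\add \ge C_1 C_2 \max\{\cdot\}\}$, shows $\PP(\cB_n^c) \le 2n^{-A}$, and reads off $\EE[\Delta_n^\add \1_{\cB_n}] \le C_1 C_2 \max\{\cdot\}$ with no integration. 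Both choices lead to the first summand; the paper's is a bit lighter. The H\"older step on the complementary event, the crude $\ell_t$-bound on the max, and the tower argument for $\|\hR(\hf^\xi) - R(\hf^\xi)\|_{L_t}$ all match.

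There is, however, one concrete gap in your expectation bound for $\CEN = \MOM$. You propose to bound $\|\hR(\hf^\xi) - R(\hf^\xi)\|_{L_t}$ by integrating "the sub-exponential tail above." That works for $\CEN = \AVG$, because Bernstein gives a conditional tail estimate valid at every scale $u > 0$ and of the correct sub-exponential shape. But the median-of-means estimator in \Cref{alg:general-cross-validation-model-selection} is constructed with a \emph{fixed} number of blocks $B = \lceil 8\log(1/\eta)\rceil$, and the concentration statement \Cref{lem:mom-concentration} then certifies a deviation bound only at the single threshold $\sigma\sqrt{4B/n}$ with probability $e^{-B/8}$; it is not a full tail bound in $u$ at matching sub-exponential rate, so the identity $\EE|Z|^t = t\int_0^\infty u^{t-1}\PP(|Z|>u)\,du$ does not transfer the scale as cleanly as you claim. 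The paper sidesteps this by noting that the $\psi_1$ hypothesis implies the $L_2$ hypothesis of \Cref{lem:bounded-variance-error-control} and deferring the MOM expectation bound there, where $\EE[|\hR(\hf^\xi)-R(\hf^\xi)|^2\mid\cD_\train]$ is controlled directly through a variance bound on the median of i.i.d.\ block means (citing \cite{gribkova_2020}). You would need either this order-statistics variance bound or a genuinely uniform-in-scale tail estimate for $\hmu_B^\MOM$ to close the MOM case.
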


\begin{lemma}[Control of $\Delta_n^{\add}$ and its expectation
for losses with bounded conditional $L_2$ norm]
\label{lem:bounded-variance-error-control}
    Suppose $(X_i, Y_i), i \in \mathcal{I}_{\test}$ are sampled i.i.d.\
    from $P$.
    Suppose the loss function $\ell$ is such that
    $$\| \ell(Y_0, \hf^\xi(X_0)) \|_{L_2 \mid \cD_n} \le \hsigma_\xi$$
    for $(X_0, Y_0) \sim P$ and set
     $\hsigma_\Xi := \max_{\xi \in \Xi} \hsigma_\xi$.
    Fix any $0 < A < \infty$.
    Then,
    for $\CEN = \MOM$ with $\eta = n^{-A} / | \Xi |$,
    there exists an absolute constant $C_1 > 0$
    such that
    \begin{equation}\label{eq:probabilistic.bound.Delta.add}
        \PP
        \left(
            \Delta_n^{\add}
            \ge
            C_1
            \hsigma_{\Xi}
            \sqrt{\frac{\log(|\Xi| n^A)}{n_\test}}
        \right)
        \le
        n^{-A}.
    \end{equation}
    Additionally, if for some $A > 0$ there exists a $C_2 > 0$ such that $\PP(\hsigma_\Xi \ge C_2) \le n^{-A}$, then for $\CEN = \MOM$,
    \begin{equation}\label{eq:bounded-variance-expectation-bound}
        \EE\left[ \Delta_n^{\add} \right]
        \le
        C_1 C_2
        \sqrt{\frac{\log(|\Xi| n^A)}{n_\test}}
        +
        C_3
        n^{-A/2}
        |\Xi|^{1/2} 
        \sqrt{\frac{\log^2(|\Xi| n^A)}{n_\test}}
        \max_{\xi \in \Xi}
        \| \hsigma_\xi \|_{L_2}
    \end{equation}
    for some absolute constant $C_3 > 0$.
\end{lemma}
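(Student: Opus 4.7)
The plan is to combine a concentration inequality for the median-of-means (MOM) estimator---which requires only a second-moment hypothesis and hence delivers sub-Gaussian-type deviations under the weaker $L_2$ condition assumed here---with a union bound over $\xi \in \Xi$. Recall from \Cref{lem:mom-concentration} and the accompanying discussion that MOM with parameter $\eta$ uses $B \asymp \log(1/\eta)$ blocks and produces, for an i.i.d.\ sample of size $m$ with variance at most $\sigma^2$, a deviation bounded by $C\sigma\sqrt{\log(1/\eta)/m}$ with probability at least $1 - \eta$. Conditionally on $\cD_\train$, the random variables $\{\ell(Y_j, \hf^\xi(X_j; \cD_\train)) : j \in \mathcal{I}_\test\}$ are i.i.d.\ with $\Var(\cdot \mid \cD_\train) \le \hsigma_\xi^2$, so applying the above with $\eta = n^{-A}/|\Xi|$ and union-bounding over $\xi \in \Xi$ yields \eqref{eq:probabilistic.bound.Delta.add} upon marginalizing over $\cD_\train$.

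For the expectation bound, introduce the concentration event
$$G = \Big\{\Delta_n^{\add} \le C_1 \hsigma_\Xi \sqrt{\log(|\Xi| n^A)/n_\test}\Big\}, \qquad \PP(G^c) \le n^{-A},$$
and decompose $\EE[\Delta_n^{\add}] = \EE[\Delta_n^{\add} \mathbb{1}_G] + \EE[\Delta_n^{\add} \mathbb{1}_{G^c}]$. On $G$, further split according to whether $\hsigma_\Xi \le C_2$: the contribution from $\{\hsigma_\Xi \le C_2\}$ is bounded pointwise by $C_1 C_2 \sqrt{\log(|\Xi| n^A)/n_\test}$, producing the first term in \eqref{eq:bounded-variance-expectation-bound}; the complementary piece $C_1 \sqrt{\log(|\Xi| n^A)/n_\test} \cdot \EE[\hsigma_\Xi \mathbb{1}_{\hsigma_\Xi > C_2}]$ is controlled by Cauchy--Schwarz using the hypothesis $\PP(\hsigma_\Xi > C_2) \le n^{-A}$ together with $\EE[\hsigma_\Xi^2] \le |\Xi| \max_\xi \|\hsigma_\xi\|_{L_2}^2$, and yields a term carrying $\sqrt{\log(|\Xi| n^A)}$ in place of $\log(|\Xi| n^A)$, which is strictly smaller than---and hence absorbed into---the second claimed term.

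The main (and most delicate) obstacle is bounding $\EE[\Delta_n^{\add} \mathbb{1}_{G^c}]$ with the correct scaling; this is where the characteristic $\log(|\Xi| n^A)/\sqrt{n_\test}$ factor in the second term of \eqref{eq:bounded-variance-expectation-bound} arises. I would again use Cauchy--Schwarz, $\EE[\Delta_n^{\add} \mathbb{1}_{G^c}] \le \sqrt{\EE[(\Delta_n^{\add})^2]\,\PP(G^c)}$, together with the deterministic pointwise bound
$$|\hR(\hf^\xi) - R(\hf^\xi)| \le \max_{1 \le b \le B} |\bar{\ell}^\xi_b - R(\hf^\xi)|,$$
which holds because the median always lies between two of the $B$ block means $\bar{\ell}^\xi_b$. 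Consequently, conditionally on $\cD_\train$,
$$\EE\big[(\hR(\hf^\xi) - R(\hf^\xi))^2 \mid \cD_\train\big] \le \sum_{b=1}^{B} \Var(\bar{\ell}^\xi_b \mid \cD_\train) \le B^2 \hsigma_\xi^2 / n_\test,$$
since each block mean has conditional variance at most $B \hsigma_\xi^2 / n_\test$. A union bound over $\Xi$ then gives $\EE[(\Delta_n^{\add})^2] \le |\Xi| \log^2(|\Xi| n^A) \max_\xi \|\hsigma_\xi\|_{L_2}^2 / n_\test$, and combining with $\PP(G^c) \le n^{-A}$ delivers the second term in \eqref{eq:bounded-variance-expectation-bound} up to an absolute constant. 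The only delicate point is tracking the precise dependence $B \asymp \log(|\Xi| n^A)$ arising from the choice $\eta = n^{-A}/|\Xi|$, which is what makes the $L_2$-based variance bound on each block mean yield the correct $\log^2/n_\test$ scaling under the square root.
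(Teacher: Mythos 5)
Your tail-bound argument matches the paper's exactly: invoke \Cref{lem:mom-concentration} conditionally on $\cD_\train$ with $\eta = n^{-A}/|\Xi|$, integrate out $\cD_\train$, and union bound over $\Xi$. Your event decomposition for the expectation bound is a minor cosmetic rearrangement of the paper's (you put the random $\hsigma_\Xi$ inside the good-event threshold and then split again by $\{\hsigma_\Xi \le C_2\}$, whereas the paper fixes the threshold at $C_1 C_2\sqrt{\log(|\Xi| n^A)/n_\test}$ from the start and absorbs the probability of $\{\hsigma_\Xi \ge C_2\}$ into the bad-event probability); both yield $\PP$-bounds of order $n^{-A}$ and the same leading term.

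The genuine departure is in your second-moment estimate for the MOM estimator. You bound the median deterministically by the worst block mean, $|\hR(\hf^\xi) - R(\hf^\xi)| \le \max_{1\le b\le B}|\bar{\ell}^\xi_b - R(\hf^\xi)|$, then pass to $\max_b(\cdot)^2 \le \sum_b(\cdot)^2$ to obtain $\EE[(\hR-R)^2\mid\cD_\train] \le B \cdot (B\hsigma_\xi^2/n_\test) = B^2\hsigma_\xi^2/n_\test$. With $B \asymp \log(|\Xi| n^A)$ this gives $\log(|\Xi| n^A)/\sqrt{n_\test}$ in the final Cauchy--Schwarz term, which is exactly the $\sqrt{\log^2(|\Xi| n^A)/n_\test}$ appearing in \eqref{eq:bounded-variance-expectation-bound}. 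The paper instead invokes Theorem~1 of \cite{gribkova_2020} on absolute moments of order statistics (applied to the $B$ i.i.d.\ block means at the median position), yielding the sharper bound $\EE[(\hR-R)^2\mid\cD_\train] \le CB\hsigma_\xi^2/n_\test$ --- a full factor of $B$ better --- which in turn produces $\sqrt{\log(|\Xi| n^A)/n_\test}$ in the final term. So the paper's route buys a stronger conclusion (a $\sqrt{B}$ saving), while your route is entirely elementary and self-contained, avoids the external order-statistics result, and still reproduces the lemma as stated, since the stated bound only claims the weaker $\log$ rate. Both proofs are correct; yours is simpler at the cost of being slightly lossy relative to what the paper's technique can deliver.
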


\begin{remark}
    [Comparison of assumptions for $\CEN = \AVG$ and $\CEN = \MOM$.]
    Comparing 
    \Cref{lem:bounded-orlitz-error-control,lem:bounded-variance-error-control},
    we note that the median-of-means method of risk estimation
    only requires control of
    the $L_2$ moments of the loss function
    compared to the $\psi_1$ (exponential) moments
    of the loss function.
    This is not surprising
    given that the median-of-means
    was developed as a sub-Gaussian
    estimator of the mean,
    only assuming finite variance
    (\Cref{lem:mom-concentration}).
    The $L_2$
    moment assumption
    in \Cref{lem:bounded-variance-error-control}
    can be further relaxed
    to an $L_{1+\alpha}$ moment
    assumption
    for $\alpha \in (0, 1]$
    \citep[Theorem 3]{lugosi2019mean}
    at the cost of weaker rate
    of convergence of $\Delta_n^\add$.
    One can, of course, replace
    the median-of-means estimator
    with any other sub-Gaussian
    or sub-exponential mean estimator
    \citep{catoni_2012, minsker_2015, fan_li_wang_2017}
    and obtain a similar weakening
    of the moment assumptions.
    Same remark continues to hold
    for $\Delta_n^\mul$
    discussed in \Cref{sec:control-of-Delta-mul}.
\end{remark}

\begin{remark}
    [Restriction on $A$ for $\CEN = \MOM$]
    \label{rem:restrict_A_eta}
    In \Cref{lem:bounded-orlitz-error-control,lem:bounded-variance-error-control},
    we allow for a free parameter $A$.
    However,
    in order for the choice of $\eta$
    to be feasible in the MOM construction (see, e.g., \Cref{lem:mom-concentration} in \Cref{sec:useful-concentration-results}),
    we need $B = \lceil 8 \log(1/\eta) \rceil \le n_\test$,
    which puts the following constraint on $A$:
    \[
        8 \log(n^{A} | \Xi |) \le n_\test
        \quad
        \iff
        \quad
        A \log n \le \frac{n_\test}{8} - \log(| \Xi |)
        \quad
        \iff
        \quad
        A \le \frac{n_\test}{8 \log n} - \frac{\log(| \Xi |)}{\log n}.
    \]
    For a large enough $n$,
    this allows for a large range of $A$.
    In addition, the right hand side is large enough
    to imply exponentially small probability bound
    for the event that $\Delta_n^\add$ is large.
    The same remark holds for
    \Cref{lem:bounded-orlitz-error-control-mul-form,lem:bounded-variance-error-control-mul-form} below.
\end{remark}

The key quantities that drive the tail probability 
and expectation bound on $\Delta_n^{\add}$ in both
\Cref{lem:bounded-orlitz-error-control,lem:bounded-variance-error-control}
are $\hsigma_\Xi$ and $|\Xi|$.
The following remark specifies the permissible growth rates
on $\hsigma_\Xi$ and $|\Xi|$
to ensure that $\Delta_n^{\add}$ is asymptotically small
in probability.

\begin{remark}
    [Tolerable growth rates on $\hsigma_\Xi$
    for $\Delta_n^{\add} = o_p(1)$]
    \label{rem:growth-rates-probabilistic-bound}
    Suppose $|\Xi| \le n^{S}$ for some constant $S > 0$ independent of $n, p$.
    If
    \[
        \hsigma_\Xi = o_p\left(\sqrt{\frac{n_\test}{\log n }}\right),
    \]
    then under the setting of
    \Cref{lem:bounded-orlitz-error-control,lem:bounded-variance-error-control},
    $\Delta_n^{\add} = o_p(1)$ as $n \to \infty$.
    The remark follows simply by noting that
    the dominating term in the probabilistic bound
    on $\Delta_n^{\add}$ in \eqref{eq:probabilistic.bound.Delta.add} is of order
    \[
       \hsigma_{\Xi}
            \sqrt{\frac{\log(|\Xi| n^A)}{n_\test}} \le \hsigma_\Xi \sqrt{\frac{(S+A)\log n }{n_\test}} = O\left(\hsigma_{\Xi}\sqrt{\frac{\log n}{n_{\test}}}\right).
    \]
    See 
    \Cref{sec:growth-rates-expectation-bound}
    for feasible rates for $\hsigma_\Xi$
    to ensure that $\EE[\Delta_n^\add] = o(1)$.
\end{remark}

\subsubsection
[Multiplicative form]
{Control of $\Delta_n^{\mul}$}
\label{sec:control-of-Delta-mul}

Moving on to $\Delta_n^{\mul}$, 
analogously to \Cref{lem:bounded-orlitz-error-control,lem:bounded-variance-error-control},
the following results provide high probability bounds on $\Delta_n^{\mul}$
in terms of a coefficient of variation parameter $\kappa$ which is the relative standard deviation of $\ell(Y_0, \widehat{f}^{\xi}(X_0))$ conditional on $\mathcal{D}_n$.
Let $\hf^\xi$, $n_\test$, $\CEN$ be as defined
    \Cref{alg:general-cross-validation-model-selection},
    and $\Delta_n^\mul$ be as in
    \eqref{eq:Delta_n_mul}.

\begin{lemma}
    [Control of $\Delta_n^{\mul}$ for losses with bounded conditional $\psi_1$ norm]
    \label{lem:bounded-orlitz-error-control-mul-form}
    Suppose $(X_j, Y_j)$, $j \in \cI_\test$ are sampled i.i.d.\ from $P$.
    Suppose the loss function $\ell$ is such that
$$    \| \ell(Y_0, \hf^\xi(X_0)) \|_{\psi_1 \mid \cD_n} \le \hsigma_\xi ~\text{ for } (X_0, Y_0) \sim P.$$
    Define $\hkappa_\xi = \hsigma_\xi / R(\hf^\xi)$
    and $\hkappa_\Xi = \max_{\xi \in \Xi} \hkappa_\xi$.
    Fix any $0 < A < \infty$.
    Then, for $\CEN = \AVG$,
    or $\CEN = \MOM$ with $\eta = n^{-A} / | \Xi |$,
    \[
        \PP
        \left(
            \Delta_n^{\mul}
            \ge
            C
            \hkappa_\Xi
            \max
            \left\{
            \sqrt{\frac{\log(|\Xi| n^{A})}{n_\test}},
            \frac{\log(|\Xi| n^{A})}{n_\test}
            \right\}
        \right)
        \le n^{-A}
    \]
    for a positive constant $C$.
\end{lemma}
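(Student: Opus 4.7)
The structure of the argument mirrors that of \Cref{lem:bounded-orlitz-error-control}, with the additional step of normalizing by $R(\hf^\xi)$. I would first condition on the training portion of the data, $\cD_\train$, so that each predictor $\hf^\xi$ is treated as deterministic; the observations $\{(X_j, Y_j) : j \in \cI_\test\}$ are then i.i.d.\ from $P$, independent of $\cD_\train$, and the conditional $\psi_1$-Orlicz bound $\|\ell(Y_0, \hf^\xi(X_0))\|_{\psi_1 \mid \cD_n} \le \hsigma_\xi$ gives us a sub-exponential random variable with known scale for each fixed $\xi$.

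For a fixed $\xi$, I would obtain a two-sided concentration inequality of the form
\[
    \PP\!\Big(
        \big|\hR(\hf^\xi) - R(\hf^\xi)\big|
        \ge t
        \,\Bigm|\,
        \cD_\train
    \Big)
    \le
    2\exp\!\Big(-c\, n_\test \min\big\{t^2/\hsigma_\xi^2,\; t/\hsigma_\xi\big\}\Big),
\]
which for $\CEN = \AVG$ is Bernstein's inequality for centered sub-exponential variables (the centered loss still satisfies a $\psi_1$ bound, up to an absolute constant), and for $\CEN = \MOM$ with $\eta = n^{-A}/|\Xi|$ is the MOM concentration inequality already cited in \Cref{lem:mom-concentration}, whose $L_2$-moment requirement is automatic since $\psi_1$ control implies $L_2$ control. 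Inverting the tail gives, with probability at least $1 - \delta$ conditional on $\cD_\train$,
\[
    \big|\hR(\hf^\xi) - R(\hf^\xi)\big|
    \le
    C\,\hsigma_\xi \max\!\left\{
        \sqrt{\tfrac{\log(2/\delta)}{n_\test}},\;
        \tfrac{\log(2/\delta)}{n_\test}
    \right\}.
\]

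Dividing both sides by $R(\hf^\xi) > 0$ and using the definition $\hkappa_\xi = \hsigma_\xi / R(\hf^\xi)$ yields
\[
    \left|\frac{\hR(\hf^\xi)}{R(\hf^\xi)} - 1\right|
    \le
    C\,\hkappa_\xi \max\!\left\{
        \sqrt{\tfrac{\log(2/\delta)}{n_\test}},\;
        \tfrac{\log(2/\delta)}{n_\test}
    \right\}
    \le
    C\,\hkappa_\Xi \max\!\left\{
        \sqrt{\tfrac{\log(2/\delta)}{n_\test}},\;
        \tfrac{\log(2/\delta)}{n_\test}
    \right\}.
\]
Then, I would apply a union bound over the $|\Xi|$ indices, choosing $\delta = n^{-A}/|\Xi|$, so that the event failing for at least one $\xi$ has conditional probability at most $n^{-A}$. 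Taking the maximum over $\xi$ on the left produces $\Delta_n^\mul$, and the factor $\log(2|\Xi|/\delta) \lesssim \log(|\Xi| n^A)$ absorbs the $\log 2$ into the absolute constant $C$. Since the bound holds conditionally on $\cD_\train$ uniformly in $\cD_\train$, the marginal probability bound follows by the tower property.

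The only delicate point is ensuring that $\CEN = \MOM$ is admissible with the prescribed $\eta$: this requires $B = \lceil 8\log(|\Xi| n^A)\rceil \le n_\test$, which is exactly the constraint discussed in \Cref{rem:restrict_A_eta}, so no new condition is needed. The main ``obstacle'' — and it is a mild one — is just making sure that the centered loss inherits a $\psi_1$ bound proportional to $\hsigma_\xi$ (standard, since $\psi_1$ norm is within a constant factor of the max over $r$ of $r^{-1}\|\cdot\|_{L_r \mid \cD_n}$, and centering at most doubles this norm), and that the $R(\hf^\xi)$ in the denominator is strictly positive on the event of interest. If $R(\hf^\xi) = 0$ for some $\xi$, then $\hkappa_\xi = \infty$ and the bound is vacuous, so we may without loss of generality restrict to $\xi$ with $R(\hf^\xi) > 0$. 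Apart from these bookkeeping issues, the proof is a direct reduction to \Cref{lem:bounded-orlitz-error-control} rescaled by the conditional mean.
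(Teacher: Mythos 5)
Your proof is correct and follows essentially the same route as the paper's: condition on $\cD_\train$, apply Bernstein's inequality (or the median-of-means concentration) for the sub-exponential loss, divide through by $R(\hf^\xi)$ to convert $\hsigma_\xi$ into $\hkappa_\xi$, union-bound over $\Xi$, and integrate out $\cD_\train$. The bookkeeping points you flag (centering preserves the $\psi_1$ scale up to a constant; the feasibility constraint on $\eta$ when $\CEN = \MOM$; the vacuousness when $R(\hf^\xi) = 0$) are all consistent with the paper's treatment and do not introduce any gap.
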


\begin{lemma}
    [Control of $\Delta_n^{\mul}$ for losses with bounded conditional $L_2$ norm]
    \label{lem:bounded-variance-error-control-mul-form}
    Suppose $(X_j, Y_j)$, $j \in \cI_\test$ are sampled i.i.d.\ from $P$.
    Suppose the loss function $\ell$ is such that
    $$\| \ell(Y_0, \hf^\xi(X_0)) \|_{L_2 \mid \cD_n} \le \hsigma_\xi ~\text{ for }(X_0, Y_0) \sim P.$$
    Define $\hkappa_\xi := \hsigma_\xi / R(\hf^\xi)$
    and $\hkappa_\Xi := \max_{\xi \in \Xi} \hkappa_\xi$.
    Fix any $0 < A < \infty$.
    Then, for $\CEN = \MOM$ with $\eta = n^{-A} / | \Xi |$,
    \[
        \PP
        \left(
            \Delta_n^{\mul}
            \ge
            C
            \hkappa_\Xi
            \sqrt{\frac{\log(|\Xi| n^{A})}{n_\test}}
        \right)
        \le n^{-A}
    \]
    for a positive constant $C$.
\end{lemma}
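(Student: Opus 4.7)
The plan is to reduce the multiplicative control of $\Delta_n^{\mul}$ to a conditional application of the median-of-means (MOM) concentration inequality, exactly analogous to the argument that proves \Cref{lem:bounded-variance-error-control}, but rescaling by $R(\hf^\xi)$ at the end. Throughout, I would condition on $\cD_\train$, so that for each $\xi \in \Xi$ the predictor $\hf^\xi(\cdot;\cD_\train)$, the risk $R(\hf^\xi)$, and the bound $\hsigma_\xi$ are all deterministic. Under this conditioning, the random variables $\{\ell(Y_j, \hf^\xi(X_j;\cD_\train))\}_{j \in \cI_\test}$ are i.i.d.\ with mean $R(\hf^\xi)$ and second moment at most $\hsigma_\xi^2$, so their variance is at most $\hsigma_\xi^2$.

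The core step is to invoke the standard MOM concentration result (as referenced after \Cref{lem:mom-concentration} and used in the proof of \Cref{lem:bounded-variance-error-control}): for the MOM estimator on $n_\test$ i.i.d.\ samples with variance $\sigma^2$ and failure parameter $\eta$, one has
\[
    \big|\hR(\hf^\xi) - R(\hf^\xi)\big|
    \;\le\;
    C\, \hsigma_\xi \sqrt{\frac{\log(1/\eta)}{n_\test}}
\]
with conditional probability at least $1 - \eta$. Dividing both sides by $R(\hf^\xi) > 0$ yields
\[
    \left|\frac{\hR(\hf^\xi)}{R(\hf^\xi)} - 1\right|
    \;\le\;
    C\, \hkappa_\xi \sqrt{\frac{\log(1/\eta)}{n_\test}}
    \;\le\;
    C\, \hkappa_\Xi \sqrt{\frac{\log(1/\eta)}{n_\test}}.
\]
Now I would take a union bound over $\xi \in \Xi$, incurring a factor of $|\Xi|$ on the failure probability, and pick $\eta = n^{-A}/|\Xi|$ so that $|\Xi|\eta = n^{-A}$ and $\log(1/\eta) = \log(|\Xi| n^A)$. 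The resulting conditional tail bound on $\Delta_n^\mul$ holds a.s., so integrating over $\cD_\train$ preserves the same bound unconditionally.

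The only subtleties worth flagging are: (i) the MOM block count $B = \lceil 8 \log(1/\eta)\rceil$ must satisfy $B \le n_\test$ for the choice of $\eta$ to be admissible, which is precisely the restriction on $A$ discussed in \Cref{rem:restrict_A_eta}; and (ii) $\hkappa_\Xi$ is itself random through $\cD_\train$, but this is harmless because we upper bound by $\hkappa_\Xi$ before taking the union bound, and the final statement is an inequality inside a probability, so $\hkappa_\Xi$ is treated as a random coefficient on the right-hand side exactly as $\hsigma_\Xi$ is in \Cref{lem:bounded-variance-error-control}. I do not anticipate any real obstacle: the proof is mechanical given the MOM concentration lemma and the template set by \Cref{lem:bounded-variance-error-control}; essentially one just has to track the normalization by $R(\hf^\xi)$ to translate the additive MOM bound into a multiplicative bound via the coefficient of variation $\hkappa_\xi$.
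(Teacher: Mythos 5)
Your proposal is correct and follows the same route as the paper's proof: condition on $\cD_\train$, apply \Cref{lem:mom-concentration} to the i.i.d.\ losses on the test split, divide the additive deviation bound by $R(\hf^\xi)$ to obtain $\hkappa_\xi$, union-bound over $\xi \in \Xi$ with $\eta = n^{-A}/|\Xi|$, and then integrate out $\cD_\train$. Your two flagged subtleties (the admissibility constraint $B \le n_\test$ and the randomness of $\hkappa_\Xi$ through $\cD_\train$) are correctly handled and match the paper's \Cref{rem:restrict_A_eta}.
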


\begin{remark}
    [Tolerable growth rate on $\hkappa_\Xi$ for probabilistic bound]
    \label{rem:growth-rates-probabilistic-bound-kappa}
    Suppose $|\Xi| \le n^{S}$ for some $S < \infty$. If
    \[
        \hkappa_\Xi
        = o_p\left( \sqrt{\frac{n_\test}{\log n}} \right),
    \]
    then under the setting of
    \Cref{lem:bounded-orlitz-error-control-mul-form,lem:bounded-variance-error-control-mul-form},
    $\Delta_n^{\mul} = o_p(1)$ as $n \to \infty$.
\end{remark}

\begin{remark}
    [Comparing the control of $\Delta_n^\add$
    versus $\Delta_n^\mul$]
    \label{rem:compare-Delta_n-add-vs-Delta_n-mul}
    Note that from
    \Cref{lem:bounded-orlitz-error-control,lem:bounded-orlitz-error-control-mul-form},
    controlling $\Delta_n^\add$ requires
    controlling $\hsigma_\Xi$,
    while controlling $\Delta_n^\mul$
    requires controlling $\hkappa_\Xi$.
    The former is on the scale of the standard deviation
    of the loss,
    while the latter is normalized standard deviation
    (where the normalization is with respect to the expectation of the loss).
    The advantage of the latter is that, even if the standard deviation
    diverges, the normalized standard deviation can be finite.
    This, in fact, happens for the case of minimum $\ell_2$-norm 
    least squares predictor when $\gamma \approx 1$,
    in which case the control of $\Delta_n^\mul$ is feasible.
    See also the discussion in \Cref{rem:divergence-Delta_n^add}.
\end{remark}

\begin{remark}
    [Choice of $n_\test$]
    \label{rem:ntest_choice}
    The above results hold true
    as long as $n_\test \to \infty$.
    Of course, the choice $n_\test$ restricts
    the allowable growth rate of $\hsigma_\Xi$ and $\hkappa_\Xi$
    as discussed in \Cref{rem:growth-rates-probabilistic-bound,rem:growth-rates-probabilistic-bound-kappa}.
    In our later applications in overparameterized learning, we adopt the
    proportional asymptotics framework in which  the number of covariates
    to the number of  observations converges to a non-zero constant.
    For this reason, we restrict ourselves to
    the choices of $n_\test$ such that
    $n_\test / n \to 0$ as $n \to \infty$;
    for example, one can take
    $n_\test = n^{\nu}$ for some $\nu < 1$.
    This allows
    us to have training models with the same limiting aspect ratio
    (dimension/sample size)
    as that of the original data without splitting.
    However,
    the larger the $n_\test$,
    the more accurate our estimator of the prediction risk. 
    For this reason,
    we suggest $n_\test = O(n / \log n)$
    rather than $n_\test = n^\nu$.
\end{remark}

\subsection
{Applications to loss functions}
\label{sec:common-loss-functions}

Below we consider several examples of common predictors and loss functions,
and bound the corresponding 
conditional $\hsigma$ parameters
used in~\Cref{lem:bounded-orlitz-error-control,lem:bounded-variance-error-control},
and conditional $\hkappa$ parameters
used in
\Cref{lem:bounded-orlitz-error-control-mul-form,lem:bounded-variance-error-control-mul-form}.
Recall the conditional $\psi_1$ and $L_r$ norms from~\eqref{eq:conditional-Orlicz} and~\eqref{eq:conditional-Lp}, respectively.
In addition,
let $\psi_2$ denote the $\psi_2$-Orlicz norm.

Recall $\hsigma_\Xi$
is the maximum of either
$\| \ell(Y_0, \hf^\xi(X_0)) \|_{\psi_1 \mid \cD_n}$
or
$\| \ell(Y_0, \hf^\xi(X_0)) \|_{L_2 \mid \cD_n}$
over $\xi \in \Xi$.
Also recall $\hkappa_\Xi$
is the maximum of either
$\| \ell(Y_0, \hf^\xi(X_0)) \|_{\psi_1 \mid \cD_n} / \| \ell(Y_0, \hf^\xi(X_0)) \|_{L_1 \mid \cD_n}$
or
$\| \ell(Y_0, \hf^\xi(X_0)) \|_{L_2 \mid \cD_n} / \| \ell(Y_0, \hf^\xi(X_0)) \|_{L_1 \mid \cD_n}$
over $\xi \in \Xi$.
In the following,
we control
each of these quantities for one of the predictors $\hf^\xi$, $\xi\in\Xi$,
which we denote simply by $\hf$ for brevity.

\subsubsection
[Bounded classification loss functions]
{Bounded classification loss functions}

\begin{proposition}
    [Generic classifier and 0-1 loss and hinge loss]
    \label{prop:misclassification-hinge-psi1l2-bound}
    Let $\hf$ be any predictor.
    \begin{enumerate}
        \item
        Suppose
        $\ell(Y_0, \hf(X_0)) = \max\big\{ 0, 1 - Y_0 \hf(X_0) \big\}$
        is the hinge loss.
        Assume $|Y_0| \le 1$ and $|\hf(X_0)| \le 1$.
        Then,
        \[
            \|\ell(Y_0, \hf(X_0))\|_{\psi_1 | \cD_n}
            \le 2,
            \quad
            \text{ and }
            \quad
            \| \ell(Y_0, \hf(X_0)) \|_{L_2 \mid \cD_n}
            \le 2.
        \]
        \item
        Suppose
        $\ell(Y_0, \hf(X_0)) 
        = \mathbbm{1}\{Y_0 \ne \hf(X_0)\}$
        is the 0-1 loss.
        Then,
        \begin{equation}
        \label{eq:bounded-loss-bound}
            \| \ell(Y_0, \hf(X_0)) \|_{\psi_1 \mid \cD_n}
            \le 1,
            \quad
            \text{ and }
            \quad
            \| \ell(Y_0, \hf(X_0)) \|_{L_2 \mid \cD_n}
            \le 1.
        \end{equation}
    \end{enumerate}
    More generally,
    any loss function that is bounded by $1$
    satisfies \eqref{eq:bounded-loss-bound}.
\end{proposition}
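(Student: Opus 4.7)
The plan is to establish an almost-sure pointwise upper bound on each of the three losses and then convert that deterministic bound into the claimed bounds on the two Orlicz-type norms by entirely standard inequalities.

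First, I would handle the pointwise bounds. For the hinge loss, the assumptions $|Y_0| \le 1$ and $|\hf(X_0)| \le 1$ give $|Y_0 \hf(X_0)| \le 1$ and therefore $1 - Y_0 \hf(X_0) \in [0, 2]$; combined with the positive-part, this yields $0 \le \ell(Y_0, \hf(X_0)) \le 2$ pointwise. For the 0-1 loss, the indicator structure immediately gives $\ell(Y_0, \hf(X_0)) \in \{0,1\}$, and the general statement about any loss bounded by $1$ is assumed by hypothesis.

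Given an a.s.\ bound $\ell \le M$ (with $M = 2$ for the hinge loss and $M = 1$ in the other cases), the $L_2$ bound is immediate by conditional monotonicity of expectation: $\|\ell\|_{L_2 \mid \cD_n}^2 = \EE[\ell^2 \mid \cD_n] \le M^2$, so $\|\ell\|_{L_2 \mid \cD_n} \le M$. For the $\psi_1$ bound, I would use the pointwise comparison $\exp(\ell/C) \le \exp(M/C)$ to reduce the defining inequality \eqref{eq:conditional-Orlicz} to a deterministic condition on $C$. Since requiring $\exp(M/C) \le 2$ forces $C \ge M/\log 2$, I would then invoke the two-sided equivalence $\|\cdot\|_{\psi_1 \mid \cD_n} \asymp \sup_{r \ge 1} r^{-1} \|\cdot\|_{L_r \mid \cD_n}$ recorded just before the statement: because $\|\ell\|_{L_r \mid \cD_n} \le M$ for every $r \ge 1$, the supremum is at most $M$, which yields the claimed constants up to the absolute constant implicit in $\asymp$.

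There is no substantive obstacle; the proposition is essentially the translation of a pointwise deterministic bound into two standard norm bounds. The only minor care is matching the numerical constants (1 and 2) under whichever equivalent definition of the $\psi_1$ norm is used, which is handled cleanly by first bounding $\sup_{r \ge 1} r^{-1} \|\ell\|_{L_r \mid \cD_n}$ and then appealing to the equivalence.
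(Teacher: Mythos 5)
Your proof is correct and takes essentially the same approach as the paper: establish the deterministic pointwise bound $\ell \le M$ almost surely ($M=2$ for hinge, $M=1$ for $0$-$1$ or any loss bounded by $1$), then note that a bounded random variable has bounded conditional $\psi_1$ and $L_2$ norms. The only difference is cosmetic in the $\psi_1$ step — the paper routes through $\|Z\|_{\psi_2}\lesssim\|Z\|_\infty$ (Example 2.5.8 of Vershynin) and then $\|\cdot\|_{\psi_1}\lesssim\|\cdot\|_{\psi_2}$, whereas you work directly with the defining infimum in \eqref{eq:conditional-Orlicz} (obtaining $M/\log 2$) and with the $L_r$-moment characterization $\sup_{r\ge 1}r^{-1}\|\cdot\|_{L_r\mid\cD_n}$ — and in either route the numerical constants stated in the proposition hold only up to an absolute factor, a caveat the paper itself concedes by writing ``up to constants,'' which you have correctly flagged.
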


\Cref{prop:misclassification-hinge-psi1l2-bound}
implies that
the parameter $\hsigma_\Xi$
is bounded by $1$ (with probability $1$) for any collection of bounded
classifiers $\{ \hf^\xi, \xi \in \Xi \}$.
Hence,
\Cref{lem:bounded-orlitz-error-control,lem:bounded-variance-error-control}
imply
that
$\Delta_n^\add = O_p(\sqrt{\log(|\Xi|) / n_\test})$.
Therefore,
the additive form of oracle inequality from
\Cref{prop:general-model-selection-guarantee}
can be used to conclude
the following result.

\begin{theorem}
    [Oracle inequality for arbitrary classifiers]
    \label{thm:oracle-bound-classifier-miss-hinge=error}
    For any collection of classifiers
    $\{ \hf^\xi, \xi \in \Xi \}$
    with $\log(| \Xi |) = o(n_\test)$
    and the loss being the mis-classification or hinge loss
    with bounded response and predictor,
    \[
        \Big|
        R(\hf^\cv)
        -
        \min_{\xi \in \Xi}
        R(\hf^\xi)
        \Big|
        ~=~
        O_p\left(\sqrt{\frac{\log(| \Xi |)}{n_\test}}\right).
    \]
\end{theorem}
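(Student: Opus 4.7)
The plan is to combine three ingredients already in place: the additive oracle inequality from \Cref{prop:general-model-selection-guarantee}, the matching lower bound on $R(\hf^\cv)$ from \Cref{rem:lower-bound-risk-hfcv}, and the uniform boundedness of the conditional $\psi_1$ norm for 0-1 and hinge losses from \Cref{prop:misclassification-hinge-psi1l2-bound}. The argument is entirely a concatenation of these, so no new probabilistic estimate is required.

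First, combining the upper bound
\[
    R(\hf^\cv) ~\le~ \min_{\xi \in \Xi} R(\hf^\xi) + 2 \Delta_n^{\add}
\]
from \Cref{prop:general-model-selection-guarantee} with the deterministic lower bound
$R(\hf^\cv) \ge \min_{\xi \in \Xi} R(\hf^\xi)$
from \Cref{rem:lower-bound-risk-hfcv}, I obtain the two-sided deterministic inequality
\[
    0 ~\le~ R(\hf^\cv) - \min_{\xi \in \Xi} R(\hf^\xi) ~\le~ 2 \Delta_n^{\add}.
\]
Thus it is enough to show that $\Delta_n^{\add} = O_p(\sqrt{\log(|\Xi|) / n_\test})$ under the hypotheses of the theorem.

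Next, I invoke \Cref{prop:misclassification-hinge-psi1l2-bound}: both loss functions under consideration (0-1 loss, and hinge loss with $|Y_0|, |\hf(X_0)| \le 1$) satisfy $\| \ell(Y_0, \hf^\xi(X_0)) \|_{\psi_1 \mid \cD_n} \le 2$ almost surely, uniformly over $\xi \in \Xi$. Hence in the notation of \Cref{lem:bounded-orlitz-error-control} one can take $\hsigma_\Xi \le 2$ (a deterministic absolute constant). Applying \Cref{lem:bounded-orlitz-error-control} with any fixed $A > 0$ (say $A = 1$), one gets, with probability at least $1 - n^{-1}$,
\[
    \Delta_n^{\add} ~\le~ 2 C_1 \max\!\left\{ \sqrt{\frac{\log(| \Xi | n)}{n_\test}}, \; \frac{\log(| \Xi | n)}{n_\test} \right\}.
\]
The hypothesis $\log(| \Xi |) = o(n_\test)$ (together with the standing requirement $n_\test \to \infty$ fast enough that $\log n = o(n_\test)$, which is anyway needed for \Cref{rem:restrict_A_eta}) makes $\log(| \Xi | n)/n_\test \to 0$, so the square-root branch dominates. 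Under the mild additional convention that $\log |\Xi|$ absorbs the $\log n$ term (which is the regime of interest, since $|\Xi|$ indexes a grid whose cardinality is at least polynomial in $n$ in every application later in the paper), this yields $\Delta_n^{\add} = O_p(\sqrt{\log(|\Xi|)/n_\test})$, completing the argument.

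There is no real obstacle: every piece has been set up in the preceding subsections, and the proof is a one-paragraph synthesis. The only point worth a line of care is the reconciliation between the $\log(|\Xi| n^A)$ that literally appears in \Cref{lem:bounded-orlitz-error-control} and the $\log(|\Xi|)$ quoted in the theorem, which is handled either by the polynomial-size assumption on $|\Xi|$ mentioned above or, more pedantically, by replacing $\log(|\Xi|)$ with $\log(|\Xi| \vee n)$ in the statement.
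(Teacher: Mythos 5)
Your proof follows the same route as the paper's one-line synthesis: the additive oracle inequality from \Cref{prop:general-model-selection-guarantee} (with the deterministic lower bound from \Cref{rem:lower-bound-risk-hfcv} supplying the two-sided form), the uniform conditional $\psi_1$ bound for classification losses from \Cref{prop:misclassification-hinge-psi1l2-bound}, and the probabilistic control of $\Delta_n^{\add}$ from \Cref{lem:bounded-orlitz-error-control}.

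One correction: the concern you raise about reconciling $\log(|\Xi|\,n^A)$ with $\log(|\Xi|)$ is not an actual gap in the theorem, and neither a polynomial-size assumption on $|\Xi|$ nor a change in the statement is needed. The apparent mismatch is an artifact of invoking \Cref{lem:bounded-orlitz-error-control} with the substitution $\eta = n^{-A}$, which is the right parametrization for almost-sure-type rates but overkill for an $O_p$ statement. The $O_p$ claim only requires a bound holding with probability $1-\epsilon$ for each fixed $\epsilon > 0$, so one should instead use the intermediate tail bound established inside the proof of \Cref{lem:bounded-orlitz-error-control}, namely
\[
    \PP\!\left( \Delta_n^{\add} \ge C_1\,\hsigma_{\Xi}\, \max\!\left\{ \sqrt{\frac{\log(2|\Xi|/\eta)}{n_\test}}, \ \frac{\log(2|\Xi|/\eta)}{n_\test} \right\} \right) \le \eta,
\]
with $\eta = \epsilon$ held fixed. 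Since $\hsigma_\Xi \le 2$ deterministically for these losses, and $\log(2|\Xi|/\epsilon) = \log(2/\epsilon) + \log(|\Xi|)$ with the fixed additive constant $\log(2/\epsilon)$ absorbed into the $O_p$ constant whenever $|\Xi| \ge 2$, the hypothesis $\log(|\Xi|) = o(n_\test)$ makes the square-root branch dominate eventually and yields $\Delta_n^{\add} = O_p\big(\sqrt{\log(|\Xi|)/n_\test}\big)$ exactly as stated, with no further assumption on how $|\Xi|$ scales relative to $n$.
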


\Cref{thm:oracle-bound-classifier-miss-hinge=error}
can be used to argue
that tuning of hyperparameters in an arbitrary classifier
using \Cref{alg:general-cross-validation-model-selection}
leads to an ``optimal'' classifier
under the $0-1$ or hinge loss.
Moreover,
\Cref{prop:misclassification-hinge-psi1l2-bound}
extends
to arbitrary bounded loss functions.

For logistic or the cross-entropy loss,
being unbounded,
is not covered by \Cref{prop:misclassification-hinge-psi1l2-bound}.
However, we can use the multiplicative form of the oracle risk inequality 
\eqref{eq:oracle-risk-inequality-multiplicative-form}
as done in the next section in \Cref{prop:linear-predictor-logistic-loss}.

\subsubsection
[Unbounded regression loss functions]
{Unbounded regression loss functions}

\begin{proposition}
    [Linear predictor and square loss]\label{prop:subexp-ex-squared}
    Let $\hf$ be a linear predictor, i.e.,
    for any $x_0 \in \RR^{p}$, $\hf(x_0) = x_0^\top \hbeta$
    for some estimator $\hbeta \in \RR^{p}$ fitted on $\cD_n$.
    Suppose $\ell(Y_0, \hf(X_0)) = (Y_0 - \hf(X_0))^2$
    is the square loss.
    Let $(X_0, Y_0) \sim P$.
    Assume $\EE[X_0] = 0_p$ and let $\Sigma := \EE[X_0 X_0^\top]$.
    Then, the following statements hold:
    \begin{enumerate}
        \item If $(X_0, Y_0) \in \RR^{p} \times \RR$ 
        satisfies
        $\psi_2-L_2$ equivalence,
        i.e., $\| a Y_0 + b^\top X_0 \|_{\psi_2} 
        \le \tau \| a Y_0 + b^\top X_0 \|_{L_2}$
        for all $a \in \RR$ and $b \in \RR^{p}$,
        then
        \begin{align}
        \label{eqn:linear-case-one}
            \| \ell(Y_0, \hf(X_0)) \|_{\psi_1|\cD_n}
            &\le
            \tau^2 \inf_{\beta \in \RR^{p}}
            (\| Y_0 - X_0^\top \beta \|_{\psi_2} 
            + \| \hbeta - \beta \|_{\Sigma})^2,
            \quad
            \text{and}
            \quad
                \frac
                {\| \ell(Y_0, \hf(X_0)) \|_{\psi_1 \mid \cD_n}}
                {\EE[\ell(Y_0, \hf(X_0)) \mid \cD_n]}
                \le
                \tau^2.
        \end{align}

        \item If $(X_0, Y_0)$ satisfies the $L_4-L_2$ equivalence, i.e., 
        $\| a Y_0 + b^\top X_0 \|_{L_4}
        \le \tau \| a Y_0 + b^\top X_0 \|_{L_2}$
        for all $a \in \RR$ and $b \in \RR^{p}$,
        then
        \begin{align}
         \label{eqn:linear-case-two}
            \| \ell(Y_0, \hf(X_0)) \|_{L_2 | \cD_n}
            \le
            \tau^2 \inf_{\beta \in \RR^{p}}
            (\| Y_0 - X_0^\top \beta \|_{L_2} 
            + \| \hbeta - \beta \|_{\Sigma})^2,
            \quad
            \text{and}
            \quad
            \frac
            {\| \ell(Y_0, \hf(X_0)) \|_{L_2 \mid \cD_n}}
            {\EE[\ell(Y_0, \hf(X_0)) \mid \cD_n]}
            \le
            \tau^2.
        \end{align}
    \end{enumerate}
\end{proposition}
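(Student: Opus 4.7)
The plan is to reduce the squared-loss bounds to norm bounds on the residual $Y_0 - X_0^\top \hbeta$ via the elementary identities $\|Z^2\|_{\psi_1} = \|Z\|_{\psi_2}^2$ (for case 1) and $\|Z^2\|_{L_2} = \|Z\|_{L_4}^2$ (for case 2), applied to the \emph{conditional} distribution of $Z = Y_0 - X_0^\top \hbeta$ given $\cD_n$. So it suffices to show, for every pivot $\beta \in \RR^{p}$, that
\[
\|Y_0 - X_0^\top \hbeta\|_{\psi_2 \mid \cD_n}
~\le~ \tau\bigl(\|Y_0 - X_0^\top \beta\|_{\psi_2} + \|\hbeta - \beta\|_\Sigma\bigr)
\]
in case 1, and the analogous inequality with $\psi_2$ replaced by $L_4$ on the left and $L_2$ on the right in case 2; squaring and then taking the infimum over $\beta$ would deliver~\eqref{eqn:linear-case-one} and~\eqref{eqn:linear-case-two}.

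The main step is a triangle inequality around $\beta$: write $Y_0 - X_0^\top \hbeta = (Y_0 - X_0^\top \beta) + X_0^\top(\beta - \hbeta)$. For the first piece, independence of $(X_0, Y_0)$ from $\cD_n$ and determinism of $\beta$ give $\|Y_0 - X_0^\top \beta\|_{\psi_2 \mid \cD_n} = \|Y_0 - X_0^\top \beta\|_{\psi_2}$. For the second piece, conditionally on $\cD_n$ the vector $\beta - \hbeta$ is fixed, so the $\psi_2$--$L_2$ equivalence applied with $a = 0$ and $b = \beta - \hbeta$ yields $\|X_0^\top(\beta - \hbeta)\|_{\psi_2 \mid \cD_n} \le \tau \|X_0^\top(\beta - \hbeta)\|_{L_2 \mid \cD_n} = \tau \|\hbeta - \beta\|_\Sigma$ by definition of $\|\cdot\|_\Sigma$. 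The universal bound $\|Z\|_{L_2} \le \|Z\|_{\psi_2}$ (available from the definition of the Orlicz norm) forces $\tau \ge 1$, which lets me pull $\tau$ outside the sum. The ratio bound in~\eqref{eqn:linear-case-one} is then immediate upon applying the $\psi_2$--$L_2$ equivalence directly at $a = 1$, $b = -\hbeta$, which gives $\|Y_0 - X_0^\top \hbeta\|_{\psi_2 \mid \cD_n}^2 \le \tau^2 \|Y_0 - X_0^\top \hbeta\|_{L_2 \mid \cD_n}^2 = \tau^2\, \EE[\ell(Y_0, \hf(X_0)) \mid \cD_n]$. Case 2 follows the same pipeline verbatim after replacing $\psi_2$ by $L_4$ and the $\psi_2$--$L_2$ equivalence by its $L_4$--$L_2$ analogue; Jensen's inequality supplies $\tau \ge 1$ here.

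The main point to be careful about is the conditional use of the equivalence hypotheses, which are stated for deterministic $a \in \RR$ and $b \in \RR^{p}$. Since $(X_0, Y_0)$ is independent of $\cD_n$ and $\hbeta$ is $\cD_n$-measurable, the conditional law of $a Y_0 + b(\cD_n)^\top X_0$ given $\cD_n$ coincides with the unconditional law of $a Y_0 + b_0^\top X_0$ at the realized value $b_0 = b(\cD_n)$; so the hypothesis transfers pointwise on $\cD_n$ with $b = -\hbeta$ or $b = \beta - \hbeta$. Once this transfer is recorded, the remainder of the argument is just the triangle inequality in the appropriate Orlicz/$L_p$ norm together with the one-line identification $\|X_0^\top v\|_{L_2} = \|v\|_\Sigma$ for any fixed $v$.
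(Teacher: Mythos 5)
Your proof is correct and follows essentially the same route as the paper's: reduce the squared-loss norm to the $\psi_2$ (resp.\ $L_4$) norm of the residual $Y_0 - X_0^\top\hbeta$, decompose around an arbitrary pivot $\beta$, and apply the triangle inequality plus the moment equivalence to each piece. In fact you are slightly more careful than the printed argument: the paper's displayed chain jumps to $\tau^2(\|Y_0 - X_0^\top\beta\|_{\psi_2} + \|\hbeta-\beta\|_\Sigma)^2$ without noting that $\tau$ only naturally appears in front of the $\|\hbeta - \beta\|_\Sigma$ term (via the $\psi_2$--$L_2$ equivalence applied with $a = 0$), so that pulling $\tau$ outside the sum requires $\tau \ge 1$, which you correctly supply via $\|\cdot\|_{L_2} \le \|\cdot\|_{\psi_2}$ (resp.\ Jensen for the $L_4$--$L_2$ case). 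Your explicit handling of the conditional-to-unconditional transfer of the equivalence hypothesis also makes precise a point the paper only gestures at.
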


\begin{proposition}
    [Linear predictor and absolute loss]
    \label{prop:linear-predictor-absolute-loss}
    Let $\hf$ be a linear predictor corresponding
    to estimator $\hbeta$ fitted on $\cD_n$.
    Suppose $\ell(Y_0, \hf(X_0)) = | Y_0 - X_0^\top \hbeta |$
    is the absolute loss.
    Let $(X_0, Y_0) \sim P$.
    Assume $\EE[X_0] = 0_p$
    and let $\Sigma := \EE[X_0 X_0^\top]$.
    Then, the following statements hold:
    \begin{enumerate}
        \item
        If $(X_0, Y_0) \in \RR^{p} \times \RR$
        satisfies $\psi_1-L_1$ equivalence,
        i.e.,
        $\| a Y_0 + b^\top X_0 \|_{\psi_1}
        \le \tau \| a Y_0 + b^\top X_0 \|_{L_1}$
        for all $a \in \RR$ and $b \in \RR^{p}$,
        then
        \begin{equation}
            \label{eq:abserr-psi1l1}
            \| \ell(Y_0, \hf(X_0)) \|_{\psi_1 \mid \cD_n}
            \le \tau \inf_{\beta \in \RR^{p}}
            (\| Y_0 - X_0^\top \beta \|_{L_1} 
            + \| X_0^\top (\hbeta - \beta) \|_{L_1 \mid \cD_n}),
            \quad
            \frac
            {\| \ell(Y_0, \hf(X_0)) \|_{\psi_1 \mid \cD_n}}
            {\EE[\ell(Y_0, \hf(X_0)) \mid \cD_n]}
            \le \tau.
        \end{equation}
        \item If $(X_0, Y_0)$ satisfies $L_2-L_1$ equivalence,
        i.e.,
        $\| a Y_0 + b^\top X_0 \|_{L_2} \le \tau \| a Y_0 + b^\top X_0 \|_{L_1}$,
        for all $a \in \RR^{p}$ and $b \in \RR{p}$,
        then
        \begin{equation}
            \label{eq:abserr-l2l1}
            \| \ell(Y_0, \hf(X_0)) \|_{L_2 \mid \cD_n}
            \le \tau \inf_{\beta \in \RR^{p}} 
            (\| Y_0 - X_0^\top \beta \|_{L_1}
            + \| X_0^\top (\hbeta - \beta) \|_{L_1 \mid \cD_n}),
            \quad
            \frac
            {\| \ell(Y_0, \hf(X_0)) \|_{L_2 \mid \cD_n}}
            {\EE[\ell(Y_0, \hf(X_0)) \mid \cD_n]}
            \le \tau.
        \end{equation}
    \end{enumerate}
\end{proposition}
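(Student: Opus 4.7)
The plan is to mimic the structure used for the squared-loss result in \Cref{prop:subexp-ex-squared}, but replacing the $\psi_2$/$L_4$ equivalences with the weaker $\psi_1$/$L_2$ equivalences that are appropriate for the absolute (rather than squared) loss. The key ingredients are (a) the triangle inequality for the $\psi_1$ and $L_2$ norms, and (b) the fact that $(X_0,Y_0)$ is independent of $\cD_n$, so that conditional on $\cD_n$ the estimator $\hbeta$ may be treated as a deterministic vector.

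First I would prove the infimum bounds for both parts. Fix any $\beta \in \RR^{p}$ and write the decomposition
\[
Y_0 - X_0^\top \hbeta ~=~ (Y_0 - X_0^\top \beta) + X_0^\top(\beta - \hbeta).
\]
Applying the triangle inequality in the conditional $\psi_1$ norm gives
\[
\| Y_0 - X_0^\top \hbeta \|_{\psi_1 \mid \cD_n}
~\le~
\| Y_0 - X_0^\top \beta \|_{\psi_1 \mid \cD_n}
+
\| X_0^\top(\hbeta - \beta) \|_{\psi_1 \mid \cD_n}.
\]
Because $(X_0,Y_0)$ is independent of $\cD_n$, the first term collapses to the unconditional $\| Y_0 - X_0^\top \beta \|_{\psi_1}$. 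For the second term, on the event that $\cD_n$ is fixed, $v := \hbeta - \beta$ is a deterministic vector, so $\| X_0^\top v \|_{\psi_1 \mid \cD_n} = \| X_0^\top v \|_{\psi_1}$. I now invoke the $\psi_1$-$L_1$ equivalence with $(a,b) = (1,-\beta)$ and $(a,b) = (0, v)$ to bound each term by $\tau$ times its $L_1$ counterpart, and the same independence argument converts the $L_1$ norms back to conditional ones. Taking $\inf_{\beta \in \RR^{p}}$ yields the first display in \eqref{eq:abserr-psi1l1}. Part 2 follows by replacing the $\psi_1$ norm with the $L_2$ norm throughout and invoking $L_2$-$L_1$ equivalence.

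For the ratio bounds, I apply the relevant equivalence directly. Conditional on $\cD_n$, $\hbeta$ is deterministic, so the norm-equivalence assumption applied with $a=1$, $b=-\hbeta$ gives, in Part 1,
\[
\| Y_0 - X_0^\top \hbeta \|_{\psi_1 \mid \cD_n}
~\le~ \tau \| Y_0 - X_0^\top \hbeta \|_{L_1 \mid \cD_n}
~=~ \tau\, \EE[|Y_0 - X_0^\top \hbeta| \mid \cD_n]
~=~ \tau\, \EE[\ell(Y_0, \hf(X_0)) \mid \cD_n],
\]
which is exactly the ratio bound in \eqref{eq:abserr-psi1l1}; the analogous step with $L_2$-$L_1$ equivalence gives \eqref{eq:abserr-l2l1}.

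There is no real obstacle here; the proof is almost entirely a bookkeeping exercise. The only subtlety worth flagging explicitly is the passage from the unconditional equivalence (which is stated for deterministic $a \in \RR$, $b \in \RR^{p}$) to the conditional statements involving the random vector $\hbeta$. This is resolved by conditioning on $\cD_n$ and using that $\hbeta$ becomes deterministic together with the independence of $(X_0,Y_0)$ from $\cD_n$, so the equivalence may be applied realization by realization of $\cD_n$.
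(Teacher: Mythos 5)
Your proof is correct and takes essentially the same route as the paper: the same decomposition $Y_0 - X_0^\top\hbeta = (Y_0 - X_0^\top\beta) + X_0^\top(\beta - \hbeta)$, the same triangle inequality in the conditional $\psi_1$ (resp.\ $L_2$) norm, the same application of the $\psi_1$-$L_1$ (resp.\ $L_2$-$L_1$) equivalence to each term followed by an infimum over $\beta$, and the same direct application of the equivalence with $b=-\hbeta$ for the ratio bound. You are somewhat more explicit than the paper about why the unconditional norm equivalence can be applied to the random vector $\hbeta$ after conditioning on $\cD_n$, which is a minor but welcome clarification rather than a change of method.
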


\begin{proposition}
    [Linear predictor and logistic loss]
    \label{prop:linear-predictor-logistic-loss}
    Let $Y_0\in[0, 1]$ almost surely.
    Let $\hf$ be a linear predictor corresponding to
    an estimator $\hbeta$ fitted on $\cD_n$.
    Suppose $\ell(Y_0, \hf(X_0))$ is the logistic or cross-entropy loss:
    \[
        \ell(Y_0, \hf(X_0))
        = - Y_0 \log \left( \frac{1}{1 + e^{- X_0^\top \hbeta}} \right) 
        - (1 - Y_0) \log \left( 1- \frac{1}{1 + e^{- X_0^\top \hbeta}} \right).
    \]
    Assume there exists $p_{\min}\in(0, 1)$ such that 
    $p_{\min} \le \mathbb{E}[Y_0\mid X_0 = x] 
    \le 1 - p_{\min}$
    for all $x$.
    Then, the following statements hold:
    \begin{enumerate}
        \item If $X_0 \in \RR^{p}$ satisfies $\psi_1-L_1$ equivalence,
        i.e., $\| b^\top X_0 \|_{\psi_1} \le \tau \| b^\top X_0 \|_{L_1}$
        for all $b \in \RR^{p}$,
        then
        \[
            \frac{\| \ell(Y_0, \hf(X_0)) \|_{\psi_1 \mid \cD_n}}{\EE[ \ell(Y_0, \hf(X_0)) \mid \cD_n]}
            \le
            2\tau p_{\min}^{-1}.
        \]
        \item If $X_0\in\mathbb{R}^p$ satisfies $L_2-L_1$ equivalence,
        i.e., $\| b^\top X_0 \|_{L_2} \le \tau \| b^\top X_0 \|_{L_1}$
        for all $b \in \RR^{p}$,
        then
        \[
            \frac{\| \ell(Y_0, \hf(X_0)) \|_{L_2 \mid \cD_n}}{\EE[\ell(Y_0, \hf(X_0)) \mid \cD_n]}
            \le
            2\tau p_{\min}^{-1}.
        \]
    \end{enumerate}
\end{proposition}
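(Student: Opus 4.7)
The plan is to reduce both inequalities to a comparison between norms of the linear score $u := X_0^\top \hbeta$ and $\mathbb{E}[\ell(Y_0, \hf(X_0)) \mid \cD_n]$, so that the assumed $\psi_1$-$L_1$ (resp.\ $L_2$-$L_1$) equivalence applied with $b = \hbeta$ delivers the claim. First I would rewrite the cross-entropy loss in the algebraically convenient form
\[
\ell(Y_0, \hf(X_0)) = (1-Y_0)\,u + \log(1 + e^{-u}),
\]
which follows from $\log(1/(1+e^{-u})) = -\log(1+e^{-u})$ and $\log(1 - 1/(1+e^{-u})) = -u - \log(1+e^{-u})$. A simple case split on the sign of $u$ then gives the deterministic bound $0 \le \ell \le |u| + \log 2$, since $\log(1+e^{-u}) \le \log 2 + (-u)_+$ and $(1-Y_0)u \le u_+$.

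For the upper bound on $\|\ell\|_{\psi_1 \mid \cD_n}$, I would apply the triangle inequality for the $\psi_1$ norm, together with $\|\log 2\|_{\psi_1} = 1$ (since $\|c\|_{\psi_1} = c/\log 2$ for any constant $c$), to get $\|\ell\|_{\psi_1 \mid \cD_n} \le \|u\|_{\psi_1 \mid \cD_n} + 1$. Invoking the assumed $\psi_1$-$L_1$ equivalence on the (conditionally) fixed vector $\hbeta$ then yields $\|\ell\|_{\psi_1 \mid \cD_n} \le \tau \|u\|_{L_1 \mid \cD_n} + 1$. The $L_2$ case is entirely parallel, giving $\|\ell\|_{L_2 \mid \cD_n} \le \tau \|u\|_{L_1 \mid \cD_n} + \log 2$.

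For the lower bound, I would first condition on $X_0$. Writing $q := \mathbb{E}[Y_0 \mid X_0] \in [p_{\min}, 1-p_{\min}]$, we have $\mathbb{E}[\ell \mid X_0, \cD_n] = (1-q)\,u + \log(1+e^{-u})$. Two complementary bounds on this expression are then combined. For $u \ge 0$, $(1-q)u + \log(1+e^{-u}) \ge (1-q)u \ge p_{\min} u$, and for $u < 0$, $(1-q)u + \log(1+e^{-u}) = -qu + \log(1+e^u) \ge -qu \ge p_{\min}|u|$, yielding $\mathbb{E}[\ell \mid X_0, \cD_n] \ge p_{\min}|u|$ uniformly. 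Independently, the pointwise minimum over $u$ is attained at $u = \log(q/(1-q))$ and equals the binary entropy $H(q) := -q\log q - (1-q)\log(1-q)$; by concavity of $H$ together with $H(0)=0$ and $H(1/2)=\log 2$, we get $H(q) \ge H(p_{\min}) \ge 2\log 2 \cdot p_{\min}$ on $[p_{\min}, 1-p_{\min}]$. Applying $\max(a,b) \ge (a+b)/2$ and averaging over $X_0$ gives the key inequality
\[
\mathbb{E}[\ell \mid \cD_n] \;\ge\; \tfrac{p_{\min}}{2}\bigl(\|u\|_{L_1 \mid \cD_n} + 2\log 2\bigr).
\]

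The final step is to divide the upper bound by this lower bound and observe that $\tau \ge 1$ (which holds since $\|X\|_{\psi_1} \ge \|X\|_{L_1}$ and $\|X\|_{L_2} \ge \|X\|_{L_1}$), so that $1 \le 2\tau \log 2$ and $\log 2 \le 2\tau\log 2$. This guarantees $\tau\|u\|_{L_1|\cD_n} + 1 \le \tau(\|u\|_{L_1|\cD_n} + 2\log 2)$ in the $\psi_1$ case and the analogous dominance in the $L_2$ case, delivering the ratio $2\tau/p_{\min}$ in both. The main obstacle to anticipate is the need for the lower bound to contain \emph{both} a term proportional to $\|u\|_{L_1 \mid \cD_n}$ \emph{and} a positive constant, matching the $\tau\|u\|_{L_1\mid\cD_n} + O(1)$ shape of the upper bound; a purely linear-in-$|u|$ lower bound is insufficient when $\hbeta$ is close to zero, and it is precisely in this regime that the entropy bound $H(p_{\min}) \ge 2\log 2 \cdot p_{\min}$ becomes essential.
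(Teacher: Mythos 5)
Your proof is correct. The overall scaffolding matches the paper's: both derive the deterministic upper bound $\ell \le |X_0^\top\hbeta| + \log 2$, both obtain a lower bound on $\EE[\ell \mid \cD_n]$ of the form $\frac{p_{\min}}{2}\bigl(\|X_0^\top\hbeta\|_{L_1\mid\cD_n} + c_0\bigr)$ for an absolute constant $c_0$, and both close the gap by observing $\tau \ge 1$ so the additive constants in the numerator are absorbed. Where you differ is the derivation of the lower bound. The paper lower bounds the coefficients $q(X_0), 1-q(X_0)$ by $p_{\min}$ to get $\EE[\ell\mid X_0,\cD_n] \ge p_{\min}\log\bigl(1 + e^{|X_0^\top\hbeta|}\bigr)$ and then invokes the elementary pointwise inequality $\log(1+e^x) \ge \tfrac{1}{2}(\log 2 + x)$ for $x \ge 0$. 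You instead minimize $u \mapsto (1-q)u + \log(1+e^{-u})$ in closed form to obtain the binary entropy $H(q)$, chord-bound it by $H(q) \ge 2\log 2 \cdot p_{\min}$ on $[p_{\min}, 1-p_{\min}]$, combine with the linear floor $p_{\min}|u|$ via $\max(a,b)\ge(a+b)/2$, and average. Both routes produce constants that cancel to the same $2\tau/p_{\min}$; your entropy-based argument is a bit more structural (it isolates why a strictly positive additive constant must appear near $\hbeta = 0$) while the paper's one-line calculus inequality is more direct. Your version also computes the $\psi_1$-norm of the constant $\log 2$ exactly as $1$, slightly tightening the paper's bookkeeping without changing the result.
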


In the remarks that follow we offer  a discussion
of the different types of norm equivalences assumed in
\Crefrange{prop:subexp-ex-squared}{prop:linear-predictor-logistic-loss}.
\begin{remark}
    [Discussion of $\psi_2-L_2$ and $L_4-L_2$ equivalences]
    \label{rem:psi2l2-l4l2}
    A centered random vector $Z \in \RR^{p}$ is said to be 
    $\tau$-sub-Gaussian 
    if 
    \begin{equation}\label{def:sub-Gaussian}
        \sup_{a\in\RR^p}\frac{\| a^\top Z \|_{\psi_2}}{\| a \|_{\Sigma_Z}}
        \le \tau < \infty\quad\mbox{where}\quad \Sigma_Z := \mbox{Cov}(Z).
    \end{equation}
    See for instance Definition 1.2 and Remark 1.3
    of \cite{mendelson_zhivotovskiy_2020}
    for more details.
    The $L_4-L_2$ equivalence assumption is popular
    in robust estimation of covariance matrices.
    See, for example, 
    \cite{minsker_wei_2020,minsker_2018,mendelson_zhivotovskiy_2020}.
    This is weaker than 
    the sub-Gaussianity assumption in \eqref{def:sub-Gaussian}
    in the sense that $\psi_2-L_2$
    equivalence implies $L_4-L_2$ equivalence.
    This follows from the well-known fact that
    \[
        C_l
        \le
        \frac{\| W \|_{\psi_2}}{\sup_{r \ge 1} r^{-1/2} \| W \|_{L_r}}
        \le
        C_u
    \]
    for some universal constants $C_l$ and $C_u$;
    see \citet[Proposition 2.5.2]{vershynin_2018}.
    The $L_4-L_2$ equivalence assumption is also weaker than
    a commonly used assumption
    in the random matrix theory (RMT) literature.
    In RMT, one typically assumes
    features of the form 
    $\Sigma^{1/2} Z$,
    where $Z$ have i.i.d.\ entries
    and $\Sigma$ is feature covariance matrix.
    If the components of $Z$ are independent
    and have bounded kurtosis,
    then this typical RMT assumption implies
    $L_4-L_2$ equivalence.
\end{remark}

\begin{remark}
    [Discussion of $\psi_1-L_1$ and $L_2-L_1$ equivalences]
    \label{rem:psi1l1-l2l1}
    In \Cref{rem:psi2l2-l4l2},
    we have given examples of distributions 
    that satisfy $\psi_2-L_2$ and/or $L_4-L_2$ equivalence.
    From the fact that, for any random variable $W$,
    the function $r \mapsto \log \EE[|W|^r]$ ($r \ge 1$) is convex
    \citep[Section 9, inequality (b)]{loeve_2017},
    we can conclude that
    $\psi_2-L_2$ equivalence implies $\psi_1-L_1$ equivalence,
    and $L_4-L_2$ equivalence implies $L_2-L_1$ equivalence;
    see \Cref{prop:norm-equivalences}.
    We further note that
    distributions satisfying $\psi_1-L_2$ equivalence
    also satisfy $\psi_1-L_1$ and $L_2-L_1$ equivalence.
    See \Cref{fig:norm-equivalences-implications} for a visual summary
    of these equivalences and their proofs
    in \Cref{sec:norm-equivalence-implications}.
    
    We will now discuss other distributions
    that satisfy $\psi_1-L_2$
    equivalence
    (which implies $\psi_1-L_1$ equivalence).
    A random vector $Z \in \RR^{q}$ is log-concave if 
    for any two measurable subsets
    $A$ and $B$ of $\RR^{q}$,
    and for any $\theta \in [0, 1]$,
    \[
        \log \PP(Z \in \theta A + (1 - \theta) B)
        ~\ge~ \theta \cdot \PP(Z \in A)
        + (1 - \theta) \cdot \PP(Z \in B),
    \]
    whenever the set $\theta A + (1 - \theta) B
    = \{\theta x_1 + (1 - \theta) x_2 : x_1 \in A, x_2 \in B \}$ is measurable;
    see Definition~2.2 of \cite{adamczak_radoslaw_litvak_pajor_tomczakjaegermann_2010}.
   There exist a universal constant $C$ 
   such that all 
   log-concave random vectors $Z \in \RR^{q}$
   with mean $0$
   satisfy 
    \[
        \| a^\top Z \|_{\psi_1}
        \le C \| a^\top Z \|_{L_1}
    \]
    for all $a \in \RR^{q}$.
    This follows from the results of
    \cite{adamczak_radoslaw_litvak_pajor_tomczakjaegermann_2010}
    and \cite{latala_1999};
     see also
    \citet[Corollary 3]{nayar_oleszkiewicz_2012},
    Proposition 2.1.1 of \cite{warsaw_2003},
    and Proposition 2.14 of \cite{ledoux_2001}.
    In particular, Lemma~2.3 of \cite{adamczak_radoslaw_litvak_pajor_tomczakjaegermann_2010}
    implies that
    there exists a universal constant $C$ such that
    for all $a \in \RR^{q}$
    \[
        \| a^\top Z \|_{\psi_1} 
        \le C \| a^\top Z \|_{L_2}.
    \]
    Finally, note that since $L_4-L_2$ equivalence implies $L_2-L_1$ equivalence,
    and the RMT features as described in \Cref{rem:psi2l2-l4l2} satisfy $L_4-L_2$
    equivalence, they in turn satisfy $L_2-L_1$
    equivalence.

\end{remark}

\begin{remark}
    [Model-free nature of assumptions]
    It is worth emphasizing that
    we do not require a well-specified linear model
    for \Cref{prop:subexp-ex-squared,prop:linear-predictor-absolute-loss}.
    Hence, our results are model agnostic.
\end{remark}

\Crefrange{prop:subexp-ex-squared}{prop:linear-predictor-logistic-loss}
imply that, under the stated assumptions,
for any collection of predictors
$\{ \hf^\xi: \hf^\xi(x) = x^\top \hbeta^\xi, \xi \in \Xi \}$,
$\hkappa_\Xi$ is bounded
if
$(X_0, Y_0)$ satisfies a requisite moment equivalence assumption.
On the other hand,
the control of $\hsigma_\Xi$
depends crucially on behavior of $\max_{\xi \in \Xi} \| \hbeta^\xi - \beta_0 \|_{\Sigma}$.
Because $\hkappa_\Xi$ is bounded with probability $1$,
\Cref{lem:bounded-orlitz-error-control-mul-form,lem:bounded-variance-error-control-mul-form}
can be used to conclude
$\Delta_n^\mul = O_p(K_{X, Y} \sqrt{\log(| \Xi |) / n_\test})$,
where $K_{X, Y}$ is
the constant in the moment equivalence.
Hence,
the multiplicative form of the oracle inequality
from
\Cref{prop:general-model-selection-guarantee}
can used to conclude the following general result
for an arbitrary collection of linear predictors.
\begin{theorem}
    [Oracle inequality for arbitrary linear predictors]
    \label{thm:oracle-bound-linear-predictor-squared-error}
    Fix any collection of predictors  
    $\{ \hf^\xi: \hf^\xi(x) = x^\top \hbeta^\xi, \xi \in \Xi \}$.
    Let $\hf^\cv$ be the output of \Cref{alg:general-cross-validation-model-selection} with $\hf^\xi, \xi\in\Xi$ as the ingredient predictors.
    Suppose one of the following conditions hold:
    \begin{enumerate}
        \item The loss is squared error,
        $(X_0, Y_0)$ satisfies 
        $\psi_2-L_2$ equivalence  when $\texttt{CEN} = \texttt{AVE}$ 
        and $L_4-L_2$ equivalence when $\texttt{CEN} = \texttt{MOM}$.
        \item The loss is absolute error, 
        $(X_0, Y_0)$ satisfies 
        $\psi_1-L_2$ equivalence  when $\texttt{CEN} = \texttt{AVE}$ 
        and $L_2-L_1$ equivalence when $\texttt{CEN} = \texttt{MOM}$.        
        \item The loss is logistic error 
        and $p_{\min} \le \mathbb{E}[Y_0\mid X = x] \le 1 - p_{\min}$ for some $p_{\min} \in (0, 1)$, 
        $X_0$ satisfies 
        $\psi_1-L_1$ equivalence when $\texttt{CEN} = \texttt{AVE}$ 
        and $L_2-L_1$ equivalence when $\texttt{CEN} = \texttt{MOM}$.
    \end{enumerate}
    Then, there exists a constant $C$ depending only on the moment equivalence condition such that for any $A > 0$ and for $\hf^\cv$ returned by 
    \Cref{alg:general-cross-validation-model-selection},
    we have with probability at least $1 - n^{-A}$,
    \[
        \left|
        \frac{R(\hf^\cv)}{\min_{\xi \in \Xi} R(\hf^\xi)}
        -
        1
        \right|
        ~\le~
            C\sqrt{\frac{\log(|\Xi|n^A)}{n_\test}}.
    \]
    Here, for $\texttt{CEN} = \texttt{AVE}$, there are no restrictions on $A$. For $\texttt{CEN} = \texttt{MOM}$, we need $\eta$ to be $n^{-A}/|\Xi|$ in~\Cref{alg:general-cross-validation-model-selection}.
\end{theorem}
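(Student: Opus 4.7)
The plan is to assemble the statement from three results already proved in the paper: the multiplicative oracle risk inequality in \Cref{prop:general-model-selection-guarantee}, the high-probability bounds on $\Delta_n^\mul$ in \Cref{lem:bounded-orlitz-error-control-mul-form,lem:bounded-variance-error-control-mul-form}, and the conditional coefficient-of-variation bounds for linear predictors in \Cref{prop:subexp-ex-squared,prop:linear-predictor-absolute-loss,prop:linear-predictor-logistic-loss}. The overall strategy is: reduce the two-sided ratio bound to a one-sided control of $\Delta_n^\mul$, then reduce control of $\Delta_n^\mul$ to control of $\hkappa_\Xi$, and finally observe that under each moment-equivalence assumption the relevant proposition bounds $\hkappa_\Xi$ by a universal constant.

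First, I would combine \Cref{prop:general-model-selection-guarantee}(2) with the trivial lower bound $R(\hf^\cv) \ge \min_{\xi \in \Xi} R(\hf^\xi)$ from \Cref{rem:lower-bound-risk-hfcv} to obtain
\[
    \left|\frac{R(\hf^\cv)}{\min_{\xi \in \Xi} R(\hf^\xi)} - 1\right|
    \;\le\; \frac{2\Delta_n^\mul}{(1 - \Delta_n^\mul)_+}.
\]
On the event $\{\Delta_n^\mul \le 1/2\}$ the right-hand side is bounded by $4\Delta_n^\mul$, so it suffices to establish that with probability at least $1 - n^{-A}$,
\[
    \Delta_n^\mul \;\le\; C' \sqrt{\log(|\Xi| n^A) / n_\test}
\]
for a constant $C'$ depending only on the moment-equivalence parameter.

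Second, I would apply \Cref{lem:bounded-orlitz-error-control-mul-form} when $\CEN = \AVG$ and \Cref{lem:bounded-variance-error-control-mul-form} when $\CEN = \MOM$ (with $\eta = n^{-A}/|\Xi|$, which is feasible for large $n$ by \Cref{rem:restrict_A_eta}). Each lemma reduces the problem to producing a uniform almost-sure upper bound on $\hkappa_\Xi$, where the role of $\hsigma_\xi$ is played by $\|\ell(Y_0, \hf^\xi(X_0))\|_{\psi_1 \mid \cD_n}$ under $\AVG$ and by $\|\ell(Y_0, \hf^\xi(X_0))\|_{L_2 \mid \cD_n}$ under $\MOM$. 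Since $R(\hf^\xi) = \EE[\ell(Y_0, \hf^\xi(X_0)) \mid \cD_n]$, this is precisely the ratio controlled in the regression propositions.

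Third, I would dispatch the three cases individually. For the squared loss, \Cref{prop:subexp-ex-squared}(1) gives $\hkappa_\xi \le \tau^2$ under $\psi_2\text{--}L_2$ equivalence (matching $\AVG$), and part (2) gives the same bound under $L_4\text{--}L_2$ equivalence (matching $\MOM$). For the absolute loss, \Cref{prop:linear-predictor-absolute-loss}(1) yields $\hkappa_\xi \le \tau$ under $\psi_1\text{--}L_1$ equivalence, which is implied by the stated $\psi_1\text{--}L_2$ equivalence via \Cref{rem:psi1l1-l2l1}; part (2) yields the bound under $L_2\text{--}L_1$. For the logistic loss, \Cref{prop:linear-predictor-logistic-loss} gives $\hkappa_\xi \le 2\tau / p_{\min}$ under the respective equivalences on $X_0$. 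In every case the bound is uniform in $\xi$ and independent of both the data and the fitted coefficient $\hbeta^\xi$, so $\hkappa_\Xi$ is controlled by a deterministic constant. Plugging into Step 2 yields the required probability bound on $\Delta_n^\mul$, and Step 1 converts this into the claimed ratio bound.

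The only real obstacle is bookkeeping: one must verify that the norm-equivalence constants $\tau$ supplied by \Cref{prop:subexp-ex-squared,prop:linear-predictor-absolute-loss,prop:linear-predictor-logistic-loss} translate transparently into the constant $C$ of the lemmas (not requiring any further integrability of $\hbeta^\xi$), and that for the absolute-loss $\AVG$ case the nominal $\psi_1\text{--}L_2$ hypothesis indeed implies the $\psi_1\text{--}L_1$ hypothesis used in the proposition. The former is automatic because the propositions bound the \emph{ratio} $\hsigma_\xi / R(\hf^\xi)$ rather than $\hsigma_\xi$ itself, and the latter follows from the chain of implications summarized in \Cref{rem:psi1l1-l2l1}.
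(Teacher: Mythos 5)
Your proposal is correct and follows essentially the same route as the paper: combine the multiplicative oracle inequality of \Cref{prop:general-model-selection-guarantee} with the high-probability control of $\Delta_n^\mul$ from \Cref{lem:bounded-orlitz-error-control-mul-form,lem:bounded-variance-error-control-mul-form}, then bound $\hkappa_\Xi$ uniformly via \Cref{prop:subexp-ex-squared,prop:linear-predictor-absolute-loss,prop:linear-predictor-logistic-loss}. You make explicit two things the paper's one-paragraph proof leaves implicit — the use of \Cref{rem:lower-bound-risk-hfcv} to obtain the two-sided bound, and the conditioning on $\{\Delta_n^\mul \le 1/2\}$ to linearize $\frac{2\Delta_n^\mul}{(1-\Delta_n^\mul)_+}$ (which tacitly requires $n_\test$ large enough that the displayed rate is at most $1/(2C)$) — but these are fillings-in of the same argument rather than deviations. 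Your observation that the absolute-loss $\texttt{AVE}$ hypothesis $\psi_1$--$L_2$ needs to be downgraded to $\psi_1$--$L_1$ via \Cref{rem:psi1l1-l2l1} before invoking \Cref{prop:linear-predictor-absolute-loss}(1) is correct and is a detail the paper's proof text glosses over.
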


\Cref{thm:oracle-bound-linear-predictor-squared-error}
implies that a multiplicative form
of oracle inequality
holds true
for any collection of linear predictors
with three commonly used
loss functions -- 
square, absolute, or logistic loss -- 
under certain moment equivalence
conditions on the underlying data.
It is worth stressing that
\Cref{thm:oracle-bound-linear-predictor-squared-error}
does not require
any parametric model assumption
on the data.
The moment equivalence conditions
required are quite mild as indicated
in
\Cref{rem:psi2l2-l4l2,rem:psi1l1-l2l1}.
\Cref{thm:oracle-bound-linear-predictor-squared-error}    
can be used to argue
that tuning of hyperparameters
for an arbitrary linear predictor
using \Cref{alg:general-cross-validation-model-selection}
leads to an ``optimal'' linear predictor.
In particular,
this  includes
variable selection in linear regression,
and penalty selection in ridge regression
or lasso.

\begin{remark}
    [Divergence of $\Delta_n^\add$]
    \label{rem:divergence-Delta_n^add}
    As mentioned above,
    control of $\hsigma_\Xi$ for a collection
    of linear predictors
    depends crucially on $\max_{\xi \in \Xi} \| \hbeta^\xi - \beta_0 \|_{\Sigma}$.
    Controlling this maximum is not 
    difficult in the ``low-dimensional'' regime,
    where the number of features is asymptotically negligible
    compared to the number of observations.
    If,
    however,
    the collection of linear predictors involves 
    the least squares estimator
    with the number of features
    approximately same as the number of observations,
    then
    Corollaries 1 and 3
    of \cite{hastie_montanari_rosset_tibshirani_2019}
    implies that $\max_{\xi \in \Xi}\| \hbeta^\xi - \beta_0 \|_{\Sigma} \to \infty$ almost surely
    under some regularity assumptions.
    The case of number of features
    approximately the same as the number of observations
    can be seen in the problem of
    tuning the number of basis functions in series regression
    (see also \citet{Mei2019generalization,bartlett_montanari_rakhlin_2021}
    for similar results
    on random features regression
    and kernel regression).
    In this case,
    $\Delta_n^\add$ diverges
    while $\Delta_n^\mul$ is bounded
    hinting the advantages of the multiplicative
    form of the oracle inequality over the additive form.
\end{remark}

\subsection{Illustrative prediction procedures}

In the following two sections,
we provide concrete applications
of the results from this section
in the context of overparameterized learning.
The main motivation of our applications
is to synthesize a predictor whose prediction risk
is approximately monotonically non-increasing in the sample size. 
Although this represents the basic idea of 
``more data does not hurt,''
many commonly studied predictors
such as minimum $\ell_2$-norm least squares, minimum $\ell_1$-norm least squares
in the overparameterized regime
do not satisfy this property.
In the following sections,
we will provide two different ways to synthesize
a predictor with this property starting from any given base prediction procedure.

\begin{definition}
    [Prediction procedure]
    \label{def:pred-procedure}
    A prediction procedure,
    denoted by $\tf$ is a real-valued map,
    with two arguments:
    (1) a feature vector;
    and (2) a dataset.
    If $\cD_m = \{ (X_i, Y_i) : 1 \le i \le m \}$
    represents a dataset of size $m$,
    then $\tf(x; \cD_m)$
    represents prediction at $x$
    of the prediction procedure $\tf$
    trained on the dataset $\cD_m$.
\end{definition}

\begin{example}
[Minimum $\ell_2$-norm least squares prediction procedure]
\label{ex:mn2ls-ridge}
Suppose
$\cD_m = \{ (X_i, Y_i) \in \RR^{p} \times \RR : 1 \le i \le m \}$.
The minimum $\ell_2$-norm least squares (MN2LS) estimator 
trained on $\cD_m$ is defined as
\[
    \tbeta_\mnls(\cD_m)
    := \argmin_{\beta \in \RR^{p}}
    \bigg\{ \| \beta \|_2 : \beta \text{ is a minimizer of the function }
    \theta \mapsto \sum_{i=1}^{m} (Y_i - X_i^\top \theta)^2  \bigg\}.
\]
The estimator can be written explicitly in terms of $(X_i, Y_i)$, $i = 1, \dots, m$ as
\begin{equation}
    \label{eq:mn2ls}
    \tbeta_{\mnls}(\cD_m)
    = \left( \frac{1}{m} \sum_{i=1}^{m} X_i X_i^\top \right)^{\dagger}
    \left( \frac{1}{m} \sum_{i=1}^{m} X_i Y_i \right),
\end{equation}
where $A^\dagger$ denotes the Moore-Penrose inverse of $A$.
It is also the ``ridgeless" least squares estimator
because of the fact that
$\tbeta_\mnls(\cD_m) = \lim_{\lambda \to 0^+} \tbeta_{\ridge, \lambda}(\cD_m)$,
where $\tbeta_{\ridge, \lambda}(\cD_m)$ is the ridge estimator
at a regularization parameter $\lambda > 0$
trained on $\cD_m$:
\begin{equation}
    \label{eq:ridge}
    \tbeta_{\ridge, \lambda}(\cD_m)
    := \argmin_{\theta \in \RR^{p}} \bigg\{ \frac{1}{m} \sum_{i=1}^{m} (Y_i - X_i^\top \theta)^2 
    + \lambda \| \theta \|_2^2 \bigg\}.
\end{equation}
The MN2LS estimator
has been attracted attention in the last few years
and its risk behavior has been studied by
\cite{bartlett_long_lugosi_tsigler_2020,
belkin_hsu_xu_2020,
hastie_montanari_rosset_tibshirani_2019,
muthukumar_vodrahalli_subramanian_sahai_2020},
among others.
The MN2LS 
predictor
is now
defined as
\begin{equation}
    \label{eq:mn2ls-predictor}
    \tf_{\mnls}(x; \cD)
    := x^\top \tbeta_{\mnls}(\cD),
\end{equation}
for any vector $x \in \RR^{p}$
and dataset $\cD$ containing
random vectors from $\RR^{p} \times \RR$.
\end{example}

\begin{example}
[Minimum $\ell_1$-norm least squares prediction procedure]
\label{ex:mn1ls-lasso}
Suppose $\cD_m = \{ (X_i, Y_i) \in \RR^{p} \times \RR : 1 \le i \le m \}$.
The minimum $\ell_1$-norm least squares (MN1LS) estimator trained on $\cD_m$
is defined as
\begin{equation}
    \label{eq:mn1ls-def}
    \tbeta_{\mnla}(\cD_m)
    =
    \argmin_{\beta \in \RR^{p}}
    \bigg\{
        \| \beta \|_{1}
        : \beta \text{ is a minimizer of the function }
        \theta \mapsto \sum_{i=1}^{m} (Y_i - X_i^\top \theta)^2
    \bigg\}.
\end{equation}
It is also the ``lassoless" least squares estimator
because of the fact that
$\tbeta_\mnla(\cD_m) = \lim_{\lambda \to 0^+} \tbeta_{\lasso, \lambda}$,
where $\tbeta_{\lasso, \lambda}(\cD_m)$ is the lasso estimator
at a regularization parameter $\lambda > 0$ trained on $\cD_m$:
\begin{equation}
    \label{eq:lasso}
    \tbeta_{\lasso, \lambda}(\cD_m)
    :=
    \argmin_{\theta \in \RR^{p}}
    \bigg\{
        \frac{1}{2 m} \sum_{i=1}^{m}
        (Y_i - X_i^\top \theta)^2
        + \lambda \| \theta \|_1
    \bigg\}.
\end{equation}
The MN1LS estimator
connects naturally to the basis pursuit estimator in 
compressed sensing literature (e.g.~\cite{candes2006near,donoho2006compressed})
and its risk in the proportional regime has been recently analyzed in
\cite{mitra2019understanding,li_wei_2021}.
The MN1LS predictor is now defined as
\begin{equation}
    \label{eq:mn1ls-predictor}
    \tf_{\mnla}(x; \cD)
    := x^\top \tbeta_{\mnla}(\cD),
\end{equation}
for any vector $x \in \RR^{p}$
and dataset $\cD$ containing
random vectors from $\RR^{p} \times \RR$.
\end{example}

Note that the MN2LS and MN1LS estimators coincide
when there is a unique minimizer of the function
$\theta \mapsto \textstyle \sum_{i=1}^{m} (Y_i - X_i^\top \theta)^2$,
in which case both the estimators become the least squares estimator.

We focus mostly on the case of linear predictors
and squared error loss,
although all our results are easily extendable
to general predictors and loss functions.
(See \Cref{rem:prop_asymptotics_risk_examples}
later in the paper for more details.)

\section{Application 1: Zero-step prediction procedure}
\label{sec:zero-step}

\subsection{Motivation}
\label{sec:zerostep-motivation}

\begin{wrapfigure}[12]{r}{0.5\textwidth}
    \centering
    \vspace{-4mm}
    \includegraphics[width=0.36\columnwidth]{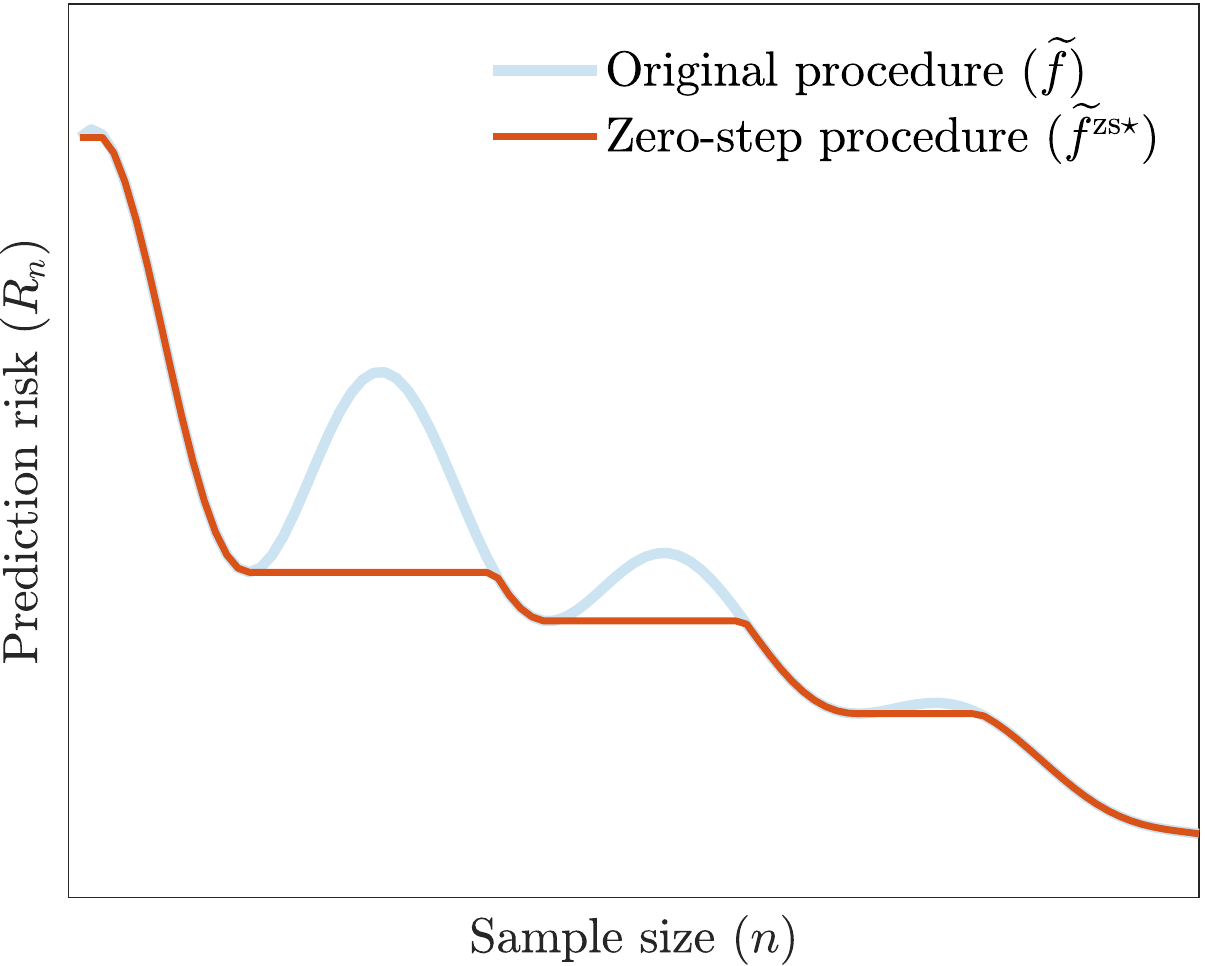}
    \vspace{-4mm}
  \caption{Illustration of risk monotonization.}
  \label{fig:risk-monotonization-illustration}
\end{wrapfigure}

Suppose $R_n$ represents
the prediction risk of a
given prediction procedure $\tf$
on a dataset containing $n$ i.i.d.\ observations.
It is desirable that
$R_n$ as a function of $n \ge 1$
is non-increasing.
As described above,
this however may not hold
for an arbitrary procedure $\tf$.
If we have access to $R_k$ for $1 \le k \le n$,
then one could just return
the predictor obtained
by applying the prediction procedure $\tf$
on a subset of $k^\star_n$ i.i.d.\ observations
where $k^\star_n = \argmin \{ R_k : 1 \le k \le n \}$.
This procedure, (denoted by, say) $\tf^{\zerostep\star}$,
essentially returns
a predictor whose risk
is the largest non-increasing function
that is below the risk of $\tf$;
see \Cref{fig:risk-monotonization-illustration} for an illustration.

It is trivially true that
the risk of the prediction procedure $\tf^{\zerostep\star}$
as a function of $n \ge 1$ is non-decreasing
and its risk at the sample size $n$ is given by
$\min_{k \le n} R_k$.
This procedure $\tf^{\zerostep\star}$ is, however,
not actionable in practice because
one seldom has access to the true risk $R_n$ of $\tf$.

The goal of this section
is to develop a prediction procedure $\hf^{\zerostep}$
starting with the base prediction procedure $\tf$
such that
the risk of $\hf^{\zerostep}$
is the largest non-increasing function
that is below the risk of $\tf$
(asymptotically).
We achieve this goal
by applying \Cref{alg:general-cross-validation-model-selection}
with the ingredient predictors being
the prediction procedure $\tf$ applied
on the subsets of the original data of varying sample sizes.

\begin{remark}
    [Conditional versus unconditional risk]
    There are two versions of the prediction risk $R_n$
    that one can consider:
    conditional (on the dataset $\cD_n$)
    and unconditional/non-stochastic.
    The conditional risk
    is not just a function of sample size,
    but also of the data $\cD_n$.
    Hence, the conditional risk $R_k$, for $k \le n$,
    is ill-defined as just a function of the sample size $k$.
    Therefore, the motivation above
    should be considered with respect to a non-stochastic
    approximation of the conditional risk.
    See \Cref{sec:risk-behavior-zerostep} for a precise definition
    of a non-stochastic approximation of the conditional risk
    which respect to which we talk of risk monotonization in the sample size.
\end{remark}

\subsection{Formal description}

Formally,
let the original dataset be denoted by
$\cD_n = \{ (X_1, Y_1), \dots, (X_n, Y_n) \}$.
As in \Cref{alg:general-cross-validation-model-selection},
consider the training and testing datasets
$\cD_\train$ and $\cD_\test$, respectively.
Note that our choice of $n_\test$
as described in \Cref{rem:ntest_choice}
satisfies $n_\test = o(n)$,
and hence,
the risk of $\tf$ trained on $\cD_\train$
is expected to be asymptotically the same
as the risk of $\tf$ trained on $\cD_n$.

To achieve the goal described in \Cref{sec:zerostep-motivation},
one can define the ingredient predictors
required in \Cref{alg:general-cross-validation-model-selection}
as follows:
Let $\cD_\train^{k}$ denote
a subset of $\cD_\train$ with  $n_\train - k$ observations
for $1 \le k \le n_\train$.
For $\Xi_n = \{ 1, 2, \dots, n_\train - 1 \}$
and $\xi \in \Xi_n$,
define $\tf^\xi(x) = \tf(x; \cD_\train^\xi)$
as the predictor obtained
by training $\tf$ on $\cD_\train^{\xi}$.
\Cref{prop:general-model-selection-guarantee}
along with
\Cref{lem:bounded-orlitz-error-control,lem:bounded-variance-error-control}
and \Cref{lem:bounded-orlitz-error-control-mul-form,lem:bounded-variance-error-control-mul-form}
can be used to imply that
$\hf^\cv$ thus obtained
has a non-increasing risk as a function of the sample size.

There are two important points to note here:
\begin{enumerate}
    \item
    The external randomness of choosing 
    a subset $\cD_\train^{\xi} \subseteq \cD_n$
    of size $\xi$.
    Observe that there are $\binom{n_\train}{\xi}$
    different subsets each with $n_\train - \xi$ i.i.d.\
    observations.
    Asymptotically,
    the prediction risk of
    $\tf$ trained on any of these subsets
    would be the same.
    To reduce such external randomness
    and make use of
    many different subsets of the same size,
    we take the ingredient predictor
    $\hf^\xi$ to be:
    \begin{equation}
        \label{eq:zerostep-averaged}
        \hf^\xi(x)
        =
        \frac{1}{M}
        \sum_{j=1}^{M}
        \tf(x; \cD_\train^{\xi, j}),
    \end{equation}
    where $\cD_\train^{\xi, j}$, $1 \le j \le M$
    are $M$ sets drawn independently
    (with replacement)
    from the collection of $\binom{n_\train}{\xi}$
    \footnote{Here, 
    $\binom{n}{r}$ denotes the binomial coefficient representing
    the number of distinct ways to pick $r$ elements from a set of $n$ elements
    for positive integers $n$ and $r$.}
    subsets of $\cD_\train$ of size $n_\train - \xi$.
    With $M = \infty$,
    $\hf^\xi$ becomes the average of
    $\tf$ trained on all possible subsets
    of $\cD_\train$ of size $n_\train - \xi$.
    This choice of $M$ removes
    any potential external randomness
    in defining $\hf^\xi$.
    The choice of $M = 1$ has the largest
    amount of external randomness.
    Based on the theory of $U$-statistics
    \citep[Chapter 5]{serfling_2009},
    we expect the choice $M = \infty$ to yield
    a predictor with the smallest variance;
    see \eqref{eq:sqauredrisk_decomposition_zerostep_bagged}. 
    Observe that the expected value
    $\hf^\xi(x)$ remains constant as $M$ changes
    because the distribution of $\cD_\train^{\xi, j}$
    remains identical across $j \ge 1$.
    However, the computation of
    $\hf^\xi$ with $M = \infty$ is infeasible,
    and hence, we use a finite $M \ge 1$.
    
    \item
    In the description above,
    we have $n_\train$ predictors
    to use in \Cref{alg:general-cross-validation-model-selection}.
    Note that the risk of a predictor trained on
    $m+1$ observations is asymptotically no different
    from that of a predictor trained on $m$ observations.
    The same comment holds true for predictors trained on
    $m + o(m)$ and $m$ observations.
    For this reason,
    we can replace $\Xi_n = \{ 1, 2, \dots, n_\train - 1 \}$
    with
    \begin{equation}
        \label{eq:indexset-zerostep}
        \Xi_n
        =
        \left\{
            1, 2, \dots,
            \left\lceil
                \frac{n_\train}{\lfloor n^\nu \rfloor}
                - 2
            \right\rceil
        \right\}
        \footnote{The subtraction of 2 in right end point in the
        definition \eqref{eq:indexset-zerostep} of $\Xi_n$ is for technical reasons.},
        \quad
        \text{ for some }
        \nu \in (0, 1),
    \end{equation}
    and consider predictors obtained by training
    $\tf$ on subsets of sizes
    $n_\train - \xi \lfloor n^\nu \rfloor$
    for $\xi \in \Xi_n$.
    This helps in reducing the computational cost
    of obtaining $\hf^\cv$ using
    \Cref{alg:general-cross-validation-model-selection}.
    This further helps in the theoretical properties
    of $\hf^\cv$
    in our application of union bound
    in the results of \Cref{sec:general-crossvalidation-modelselection}.
\end{enumerate}

Taking into account the remarks above,
with $\Xi$ as in \eqref{eq:indexset-zerostep},
for $\xi \in \Xi_n$,
we define $\hf^\xi$ as in \eqref{eq:zerostep-averaged},
but with an important change
that $\cD_\train^{\xi, j}$, $1 \le j \le M$,
now represent randomly drawn subsets
of $\cD_\train$ of size
$n_\xi = n_\train - \xi \lfloor n^\nu \rfloor$.
The ingredient predictors used in
\Cref{alg:general-cross-validation-model-selection}
are given by $\hf^\xi$, $\xi \in \Xi_n$.
We call the resulting predictor
obtained from
\Cref{alg:general-cross-validation-model-selection}
as the zero-step predictor based on $\tf$
and we denote the corresponding prediction procedure
to be $\hf^\zerostep$.
The zero-step procedure is summarized in \Cref{alg:zero-step}.

\begin{algorithm}
    \caption{Zero-step procedure}
    \label{alg:zero-step}
    \textbf{Inputs}:\\
    \begin{itemize}[noitemsep]
        \item[--] all inputs of \Cref{alg:general-cross-validation-model-selection}
        other than the index set $\Xi$;
        \item[--] a positive integer $M$.
    \end{itemize}
    \textbf{Output}:\\
    \vspace{-1em}
    \begin{itemize}
        \item[--] a predictor $\hf^\zerostep$
    \end{itemize}
    \textbf{Procedure}:
    \begin{enumerate}
        \item Let $n_\train = n - n_\test$.
        Construct an index set $\Xi_n$ per \eqref{eq:indexset-zerostep}.
        \item Construct train and test sets $\cD_\train$ and $\cD_\test$
        per Step 1 of \Cref{alg:general-cross-validation-model-selection}.
        \item Let $n_\xi = n_\train - \xi \lfloor n^\nu \rfloor$.
        For each $\xi \in \Xi_n$ and $j = 1, \dots, M$,
        draw random subsets $\cD_\train^{\xi, j}$ of size $n_\xi$ from $\cD_\train$.
        For each $\xi \in \Xi$, fit predictors $\hf^\xi$ per \eqref{eq:zerostep-averaged}
        using prediction procedure $\tf$ and $\{ \cD_\train^{\xi, j} : 1 \le j \le M \}$.
        \item Run Steps 3--5 of \Cref{alg:general-cross-validation-model-selection}
        using index set $\Xi = \Xi_n$
        and set of predictors $\{ \hf^\xi$, $\xi \in \Xi \}$.
        \item Return $\hf^\zerostep$ as the resulting $\hf^\cv$ from 
        \Cref{alg:general-cross-validation-model-selection}.
    \end{enumerate}
\end{algorithm}

\subsection[Risk behavior]{Risk behavior of $\hf^\zerostep$}
\label{sec:risk-behavior-zerostep}

As alluded to before,
in order to talk about risk monotonization,
one needs to consider a non-stochastic approximation
to the conditional risk that depends only
on the prediction procedure, the sample size,
and properties of the data distribution.
The definition below makes this precise.

\begin{definition}
    [Deterministic approximation of conditional prediction risk]
    \label{def:rn_nonstochastic-def}
    For any prediction procedure $\tf$,
    we call a map $R^\deter(\cdot; \tf) : \mathbb{N} \to \RR_{\ge 0}$
    a deterministic (or non-stochastic)
    approximation of the conditional risk
    of $\tf$
    if
    for all datasets $\cD_m$ of $m$
    i.i.d.\ random vectors,
    \begin{equation}
        \label{eq:rn_nonstochastic-def}
        \frac
        {| R(\tf(\cdot; \cD_m)) - R^\deter(m; \tf) |}
        {R^\deter(m; \tf)}
        = o_p(1),
    \end{equation}
    as $m \to \infty$.
    (Recall that
    $R(\tf(\cdot, \cD_m)) = \int \ell(y; \tf(x; \cD_m)) \mathrm{d}P(x,y)$.)
\end{definition}
It is important to recognize
that $R^\deter(m; \hf)$ is only a function
of the sample size $m$, the prediction \emph{procedure} $\tf$,
and the underlying distribution $P$,
and not the dataset $\cD_m$.
Note that we do not necessarily
require $R^\deter(m; \tf)$ to be the expected value of
$R(\tf(\cdot; \cD_m))$.
Furthermore,
a non-asymptotic approximation $R^\deter(\cdot; \tf)$
of the conditional risk may not be unique.

\begin{remark}
   [Relative convergence in
    \Cref{def:rn_nonstochastic-def}]
    In \eqref{eq:rn_nonstochastic-def},
    the division
    by $R^\deter(m; \tf)$
    ensures that
    the deterministic approximation
    to the conditional risk of
    $\tf(\cdot; \cD_m)$
    is non-trivial
    (i.e., non-zero)
    even if the conditional risk converges
    in probability to zero.
    If the conditional risk is bounded away from zero,
    asymptotically,
    then 
    \eqref{eq:rn_nonstochastic-def}
    is trivially implied by
    \[
        |
        R(\tf(\cdot; \cD_m))
        -
        R^\deter(m; \tf)
        |
        =
        o_p(1),
    \]
    as $m \to \infty$.
    In most settings of overparameterized learning,
    the conditional prediction risk is asymptotically
    bounded away from zero
    (see \eqref{eq:risk-lower-bound}, for example).
\end{remark}

Because $| \Xi_n | \le n$,
the results of
\Cref{sec:general-crossvalidation-modelselection}
imply that
with appropriate choices of
$\CEN$ and $\eta$ in
\Cref{alg:general-cross-validation-model-selection}
we obtain 
$\hf^{\zerostep}$
that satisfies the following risk bound:
\begin{equation}
    \label{eq:zerostep-nonasymp-risk-bound}
    R(\hf^{\zerostep})
    =
    \begin{cases}
        \min_{\xi \in \Xi_n}
        R(\hf^\xi)
        +
        O_p(1) \sqrt{\log n / n_\test}
        & \text{ if } \hsigma_\Xi = O_p(1) \\
        \min_{\xi \in \Xi_n} R(\hf^\xi)
        \big(1 + O_p(1) \sqrt{\log n / n_\test}\big)
        & \text{ if } \hkappa_\xi = O_p(1).
    \end{cases}
\end{equation}
Assume now
there exists a function $R^\deter: \mathbb{N} \to \RR_{\ge 0}$
such that the following holds:
\begin{equation}
    \label{eq:rn-deterministic-approximation-2}
    \tag{DET}
    \lim_{n \to \infty} \,
    \sup_{\xi_n \in \Xi_n} \,
    \PP
    \left(
        \frac{|R(\tf(\cdot; \cD_\train^{\xi_n, j})) - R^\deter(n_{\xi_n}; \tf)|}{R^\deter(n_{\xi_n}; \tf)}        
        > \epsilon
    \right)
    = 0
    \quad
    \text{for all }
    \epsilon > 0.
\end{equation}
Recall that
$\cD_\train^{\xi_n, j}$ for $1 \le j \le n$
are identically distributed,
and hence,
$\tf(\cdot, \cD_\train^{\xi_n, j})$
are also identically distributed predictors.
This implies
that assuming \eqref{eq:rn-deterministic-approximation-2}
for $j = 1$ is the same as assuming it
for all $1 \le j \le M$.
Note that \eqref{eq:rn-deterministic-approximation-2}
is essentially the same as \eqref{eq:rn_nonstochastic-def},
but with a different sequence of sample sizes
$\{ n_{\xi_n} \}_{n \ge 1}$ with $\xi_n \in \Xi_n$.
In accordance with our goal
of monotonizing the non-stochastic approximation
$R^\deter(\cdot; \tf)$ of
the prediction procedure $\tf$,
we aim to show
that the zero-step prediction procedure $\hf^\zerostep$
has its conditional prediction risk
approximated by $\min_{\xi \in \Xi_n} R^\deter(n_{\xi}; \tf)$.
For notational convenience,
set
\begin{equation}
    \label{eq:minimizer-rdeter-def}
    R^\deter_{\nearrow}(n; \tf)
    ~:=~
    \min_{\xi \in \Xi_n} R^\deter(n_\xi; \tf)
    \quad
    \text{and}
    \quad
    \xi^\star_n
    ~\in~
    \argmin_{\xi \in \Xi_n} R^\deter(n_\xi; \tf).
\end{equation}
Note the notation above is meant to reflect that
the index $\xi^\star_n$
can be chosen to be any element of the minimizing set.
If $\Xi_n = \{ 1, \dots, n_\train - 1 \}$,
and $\nu = 0$,
then $R^\deter_{\nearrow}(n; \tf) = \min \{ R^\deter(k; \tf) : 1 \le k \le n_\train - 1 \}$.
Although it might be tempting to
take $\Xi_n = \{ 1, \dots, n_\train - 1 \}$
and $\nu = 0$,
instead of the one in \eqref{eq:indexset-zerostep},
assumption \eqref{eq:rn-deterministic-approximation-2}
for all non-stochastic sequences $\{ n_{\xi_n} \}_{n \ge 1}$
with $\xi_n \in \Xi_n$
becomes almost certainly unreasonable.
To see this,
observe that
$\xi_n = n_\train - 1$
belongs to $\Xi_n$ for every $n$,
and for this choice,
$n_{\xi_n} = 1$.
Hence,
the predictor $\tf(\cdot; \cD_\train^{\xi, j})$
is computed based on one observation,
and cannot satisfy \eqref{eq:rn-deterministic-approximation-2}.
In the following calculations, however,
we only require assumption
\eqref{eq:rn-deterministic-approximation-2}
for the non-stochastic sequence
$\{ \xi^\star_n \}_{n \ge 1}$.
If $n_{\xi^\star_n}$ is known to diverge to $\infty$ and the distribution of the data stays constant,
then assumption \eqref{eq:rn-deterministic-approximation-2}
is reasonable and is exactly the same as
the existence of a deterministic approximation
to the conditional risk of $\tf$
in the sense of \Cref{def:rn_nonstochastic-def}.
In this favorable case of $n_{\xi_n^\star}$
diverging to $\infty$ with $n$,
one can take
$\Xi_n = \{ 1, \dots, n_\train - 1 \}$,
and $\nu = 0$.
Note that with $\Xi_n$ as defined in \eqref{eq:indexset-zerostep},
$n_{\xi_n} \to \infty$ for all $\xi_n \in \Xi_n$,
and thus in particular $n_{\xi_n^\star} \to \infty$
as $n \to \infty$.

It should be stressed that
\eqref{eq:rn-deterministic-approximation-2}
is an assumption on the base prediction procedure
$\tf$ and not on the ingredient predictors
$\hf^\xi$.
In general,
the risk behavior of $\tf$ does not necessarily
imply that of $\hf^\xi$ which is an average of $M$
predictors obtained from $\tf$.
However,
the risk of $\hf^\xi$
can be bounded in terms of the risk
$\tf$ for loss functions
$\ell(\cdot, \cdot)$ that are convex in the second argument.
Observe that
\begin{equation}
    \label{eq:bound-risk-hf-by-tf}
     R(\hf^\xi)
    ~=~
    R\left(\frac{1}{M}
    \sum_{j=1}^{M} \tf(\cdot; \cD_\train^{\xi, j}) \right)
    ~\le~
    \frac{1}{M}
    \sum_{j=1}^{M}
    R(\tf(\cdot; \cD_\train^{\xi, j})).
\end{equation}
The inequality \eqref{eq:bound-risk-hf-by-tf}
follows from Jensen's inequality.
It becomes an equality if $M = 1$
without the requirement that the loss function is convex.

Inequality \eqref{eq:bound-risk-hf-by-tf}
along with the non-stochastic
risk approximation
\eqref{eq:rn-deterministic-approximation-2}
can be used to control
$\min_{\xi \in \Xi_n} R(\hf^\xi)$
in \eqref{eq:zerostep-nonasymp-risk-bound}.
From \eqref{eq:minimizer-rdeter-def},
we obtain
\begin{equation}\label{eq:bound-minR-hf}
\begin{split}
    \min_{\xi \in \Xi_n} R(\hf^\xi)
    \overset{(a)}{\le}
    \min_{\xi \in \Xi_n}
    \frac{1}{M}
    \sum_{j=1}^{M}
    R(\tf(\cdot; \cD_\train^{\xi, j})) 
    &\overset{(b)}{\le}
    \frac{1}{M}
    \sum_{j=1}^{M}
    R(\tf(\cdot; \cD_\train^{\xi_n^\star, j})) \\
    &=
    R^\deter(n_{\xi_n^\star}; \tf)
    \left(
        1
        +
        \frac{1}{M}
        \sum_{j=1}^{M}
        \frac{R(\tf(\cdot, \cD_\train^{\xi_n^\star, j})) - R^\deter(n_{\xi_n^\star}; \tf)}{R^\deter(n_{\xi_n^\star}; \tf)}
    \right) \\
    &\overset{(c)}{=}
    \min_{\xi \in \Xi_n} R^\deter(n_\xi; \tf) 
    (1 + o_p(1)) \\
    &=
    R^\deter_\nearrow(n; \tf)
    (1 + o_p(1)).
\end{split}
\end{equation}
Inequality $(a)$ in \eqref{eq:bound-minR-hf}
follows from
using Jensen's inequality.
Inequality $(b)$ follows
because $\xi_n^\star \in \Xi_n$.
Equality $(c)$
follows for any fixed $M \ge 1$ from
the non-stochastic risk approximation
\eqref{eq:rn-deterministic-approximation-2};
this can be seen from the fact that
the sum of a  finite number of $o_p(1)$ random variables is $o_p(1)$.

All the inequalities in \eqref{eq:bound-minR-hf}
can be made equalities for $M = 1$,
if instead of \eqref{eq:rn-deterministic-approximation-2} 
we make the stronger assumption 
that 
\begin{equation}
    \label{eq:rn-deterministic-approximation-3}
    \tag{DET*}
    \lim_{n \to \infty} \,
    \PP
    \left(
    \sup_{\xi_n \in \Xi_n} \,
        \frac{|R(\tf(\cdot; \cD_\train^{\xi_n, j})) - R^\deter(n_{\xi_n}; \tf)|}{R^\deter(n_{\xi_n}; \tf)}        
        > \epsilon
    \right)
    = 0
    \quad
    \text{for all }
    \epsilon > 0.
\end{equation}
This is clearly a stronger assumption
than required for \eqref{eq:bound-minR-hf},
where
we only required such relative convergence
for a specific $\xi_n^\star \in \Xi_n$.
Under \eqref{eq:rn-deterministic-approximation-3},
we can write
\begin{align*}
    \min_{\xi \in \Xi_n}
    \frac{1}{M}
    \sum_{j=1}^{M}
    R(\tf(\cdot; \cD_\train^{\xi, j}))
    &=
    \min_{\xi \in \Xi_n}
    R^\deter(n_{\xi}; \tf)
    \left(
        1
        +
        \frac{1}{M}
        \sum_{j=1}^{M}
        \frac{R(\tf(\cdot; \cD_\train^{\xi, j})) - R^\deter(n_\xi; \tf)}{R^\deter(n_\xi; \tf)}
    \right) \\
    &\lessgtr
    R^\deter_\nearrow(n; \tf)
    \left(
        1
        \pm
        \frac{1}{M}
        \sum_{j=1}^{M}
        \sup_{\xi \in \Xi_n}
        \left|
        \frac{R(\tf(\cdot; \cD_\train^{\xi, j})) - R^\deter(n_\xi; \tf)}{R^\deter(n_\xi; \tf)}
        \right|
    \right) \\
    &=
    R^\deter_\nearrow(n; \tf)
    (1 + o_p(1)).
\end{align*}
We now conclude that for $M = 1$,
\begin{equation}
    \label{eq:nonasymp-bound-M=1}
    \min_{\xi \in \Xi_n} R(\hf^\xi)
    =
    \min_{\xi \in \Xi_n} R(\tf(\cdot; \cD_\train^{\xi, 1}))
    =
   R^\deter_\nearrow(n; \tf)
    (1 + o_p(1)).
\end{equation}
This proves that
all the inequalities in \eqref{eq:bound-minR-hf}
can be made equalities for $M = 1$
under the stronger assumption \eqref{eq:rn-deterministic-approximation-3}.
Combined with
\eqref{eq:zerostep-nonasymp-risk-bound},
this implies that
\begin{equation}
\label{eq:zerostep_risk-gaurantee-mindeter}
\begin{split}
    R(\hf^\zerostep)
    &=
    \begin{cases}
    R^\deter_\nearrow(n; \tf)
        (1 + o_p(1))
        + O_p(1) \sqrt{\log n / n_\test}
        & \text{ if } \hsigma_\Xi = O_p(1) \\
        R^\deter_\nearrow(n; \tf)
        (1 + o_p(1))
        & \text{ if } \hkappa_\Xi = O_p(1)
    \end{cases} \\
    &=
    R^\deter_\nearrow(n; \tf)
    \begin{cases}
        1 + o_p(1) +
       \sqrt{\log n / n_\test} /
         R^\deter_\nearrow(n; \tf)
        & \text{ if } \hsigma_\Xi = O_p(1) \\
        1 + o_p(1)
        & \text{ if } \hkappa_\Xi = O_p(1).
    \end{cases}
\end{split}
\end{equation}
As mentioned before,
assumption \eqref{eq:rn-deterministic-approximation-3}
is significantly stronger than
\eqref{eq:rn-deterministic-approximation-2}.
In the absence of
\eqref{eq:rn-deterministic-approximation-3},
inequality \eqref{eq:bound-minR-hf}
combined with
\eqref{eq:zerostep-nonasymp-risk-bound}
implies that
\eqref{eq:zerostep_risk-gaurantee-mindeter}
holds with inequalities instead of equalities.
For simplicity, denote:
\begin{enumerate}[label={\rm(O\arabic*)}]
    \item
    \label{cond:zerostep-add}
    $\hsigma_\Xi = O_p(1)$
    and
    $R^\deter_\nearrow(n; \tf) \sqrt{n_\test / \log n} \to \infty$.
    \item
    \label{cond:zerostep-mult}
    $\hkappa_\Xi = O_p(1)$.
\end{enumerate}
Hence,
we have proved the following result:
\begin{theorem}
    [Monotonization by zero-step procedure]
    \label{thm:monotonization-zerostep}
    For $M = 1$,
    if assumption~\eqref{eq:rn-deterministic-approximation-3} and either
    \ref{cond:zerostep-add}
    or
    \ref{cond:zerostep-mult}
    hold true,
    then
    $R^\deter_\nearrow(\cdot; \tf)$
    is a deterministic approximation of
    the prediction procedure $\hf^\zerostep$, i.e.,
    \[
        \frac{|R(\hf^\zerostep) - R^\deter_\nearrow(n; \tf)|}{R^\deter_\nearrow(n; \tf)}
        = o_p(1).
    \]
    For $M \ge 1$,
    if $\ell(\cdot, \cdot)$
    is convex in the second argument, assumption~\eqref{eq:rn-deterministic-approximation-2},
    and either
    \ref{cond:zerostep-add}
    or
    \ref{cond:zerostep-mult}
    hold true,
    then
    \[
        \frac{(R(\hf^\zerostep) - R^\deter_\nearrow(n; \tf))_+}{R^\deter_\nearrow(n; \tf)}
        = o_p(1).
    \]
\end{theorem}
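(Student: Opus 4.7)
The plan is to combine the oracle inequality of \Cref{prop:general-model-selection-guarantee} with the control of $\Delta_n^\add$ and $\Delta_n^\mul$ from \Cref{sec:common-loss-functions}, and then translate a bound on $\min_{\xi\in\Xi_n} R(\hf^\xi)$ into a bound on $R^\deter_\nearrow(n;\tf)$ via the non-stochastic approximation \eqref{eq:rn-deterministic-approximation-2} or \eqref{eq:rn-deterministic-approximation-3}. Both parts of the theorem share the same skeleton, differing only in whether we obtain a two-sided or one-sided estimate of $\min_{\xi\in\Xi_n} R(\hf^\xi)$.

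First, I would invoke \Cref{prop:general-model-selection-guarantee} applied to the index set $\Xi_n$ of \eqref{eq:indexset-zerostep} and the ingredient predictors $\hf^\xi$ of \eqref{eq:zerostep-averaged}. Since $|\Xi_n|\le n$, the discussion in \Cref{sec:control-of-Delta-mul} and \Cref{rem:growth-rates-probabilistic-bound,rem:growth-rates-probabilistic-bound-kappa} shows that under \ref{cond:zerostep-add}, $\Delta_n^\add = O_p(\sqrt{\log n/n_\test})$, while under \ref{cond:zerostep-mult}, $\Delta_n^\mul = O_p(\sqrt{\log n/n_\test})$. These yield the two cases displayed in \eqref{eq:zerostep-nonasymp-risk-bound}, reducing the entire analysis to controlling $\min_{\xi\in\Xi_n} R(\hf^\xi)$ relative to $R^\deter_\nearrow(n;\tf)$.

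Second, I would handle the $M\ge 1$ case with convex loss using the already-displayed chain \eqref{eq:bound-minR-hf}: Jensen's inequality gives $R(\hf^\xi) \le M^{-1}\sum_{j=1}^M R(\tf(\cdot;\cD_\train^{\xi,j}))$, and evaluating at the minimizer $\xi_n^\star\in\argmin_{\xi\in\Xi_n} R^\deter(n_\xi;\tf)$ yields an upper bound $R^\deter_\nearrow(n;\tf)(1+o_p(1))$ using assumption \eqref{eq:rn-deterministic-approximation-2} applied at the single sequence $\xi_n^\star$ (here finiteness of $M$ is critical so that a sum of $M$ $o_p(1)$ terms remains $o_p(1)$). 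Combining with \eqref{eq:zerostep-nonasymp-risk-bound} under either \ref{cond:zerostep-add} or \ref{cond:zerostep-mult} — and noting that in case \ref{cond:zerostep-add} the additive slack $\sqrt{\log n/n_\test}$ is of smaller order than $R^\deter_\nearrow(n;\tf)$ by hypothesis — gives the one-sided conclusion
\[
    \frac{(R(\hf^\zerostep)-R^\deter_\nearrow(n;\tf))_+}{R^\deter_\nearrow(n;\tf)} = o_p(1).
\]

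Third, for the $M=1$ case I would upgrade the upper bound to a two-sided one by exploiting the stronger uniform relative convergence \eqref{eq:rn-deterministic-approximation-3}. When $M=1$, $\hf^\xi = \tf(\cdot;\cD_\train^{\xi,1})$ exactly, so
\[
    \min_{\xi\in\Xi_n} R(\hf^\xi) = \min_{\xi\in\Xi_n} R^\deter(n_\xi;\tf)\Big(1 + \tfrac{R(\tf(\cdot;\cD_\train^{\xi,1})) - R^\deter(n_\xi;\tf)}{R^\deter(n_\xi;\tf)}\Big),
\]
and \eqref{eq:rn-deterministic-approximation-3} lets me pull a uniform $o_p(1)$ multiplicative factor out of the minimum to obtain $\min_{\xi\in\Xi_n} R(\hf^\xi) = R^\deter_\nearrow(n;\tf)(1+o_p(1))$; this is the content of \eqref{eq:nonasymp-bound-M=1}. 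Substituting into \eqref{eq:zerostep-nonasymp-risk-bound} and combining with the lower bound $R(\hf^\cv)\ge \min_{\xi\in\Xi_n}R(\hf^\xi)$ from \Cref{rem:lower-bound-risk-hfcv} yields $|R(\hf^\zerostep)-R^\deter_\nearrow(n;\tf)|/R^\deter_\nearrow(n;\tf) = o_p(1)$, as required.

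The main technical subtlety is of the bookkeeping variety rather than conceptual: ensuring that the additive slack $O_p(\sqrt{\log n/n_\test})$ from cross-validation is genuinely negligible relative to $R^\deter_\nearrow(n;\tf)$ in case \ref{cond:zerostep-add} — which is exactly why that case is stated with the extra requirement $R^\deter_\nearrow(n;\tf)\sqrt{n_\test/\log n}\to\infty$ — and checking that the relative oscillations of $\hf^\xi$ uniformly over $\Xi_n$ can be controlled only under the stronger uniform assumption \eqref{eq:rn-deterministic-approximation-3} needed for the two-sided $M=1$ statement. Everything else reduces to linear combinations of the earlier lemmas.
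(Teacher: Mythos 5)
Your proposal is correct and follows essentially the same route as the paper's proof: invoke the oracle inequalities and $\Delta_n$ control to obtain \eqref{eq:zerostep-nonasymp-risk-bound}, then bound $\min_{\xi\in\Xi_n}R(\hf^\xi)$ two-sidedly via \eqref{eq:rn-deterministic-approximation-3} when $M=1$ (using \eqref{eq:nonasymp-bound-M=1}) and one-sidedly via \eqref{eq:rn-deterministic-approximation-2} plus Jensen's inequality when $M\ge 1$ (using \eqref{eq:bound-minR-hf}). Your explicit invocation of \Cref{rem:lower-bound-risk-hfcv} to close the $M=1$ two-sided claim is a slightly cleaner bookkeeping than the paper's display, which uses ``='' in \eqref{eq:zerostep-nonasymp-risk-bound} and leaves the lower bound implicit.
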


\begin{remark}
    [Choice of $\Xi_n$]
    \label{rem:choice-of-index-set}
    All the calculations presented in this section hold for any set $\Xi_n$ with $|\Xi_n| \le n$. As long as either~\eqref{eq:rn-deterministic-approximation-2} (for $\xi_n = \xi_n^\star$ in~\eqref{eq:minimizer-rdeter-def}) or~\eqref{eq:rn-deterministic-approximation-3} holds true, then one can use $\Xi_n = \{1, 2, \ldots, n_{\train} - 1\}$ and $\nu = 0$. For this choice, $R^\deter_\nearrow(\cdot; \widehat{f})$ is the monotonized risk as illustrated in~\Cref{fig:risk-monotonization-illustration}. With the choice of $\Xi_n$ mentioned in~\eqref{eq:indexset-zerostep}, $R^\deter_{\nearrow}(\cdot; \widehat{f})$ is not a complete monotonization but it serves as an approximate monotone risk.
\end{remark}
\begin{remark}
    [Exact risk $\hf^\zerostep$]
    For $M = 1$ (under~\eqref{eq:rn-deterministic-approximation-3}),~\Cref{thm:monotonization-zerostep} essentially implies that
    the risk of the zero-step procedure
    closely tracks the monotonized deterministic approximation
    to the conditional prediction risk of $\tf$
    trained on $\cD_\train$.
    For $M \ge 1$ (under~\eqref{eq:rn-deterministic-approximation-2}),
    \Cref{thm:monotonization-zerostep}
    does not imply the risk of the zero-step predictor
    is monotonic
    or even that that a non-stochastic approximation
    of the risk exists
    in the sense of \Cref{def:rn_nonstochastic-def}.
    However,
    our simulations in limited settings
    presented in \Cref{sec:zerostep-illustration}
    suggest that 
    the risk of the zero-step prediction procedure
    is monotone even for $M \ge 1$.
\end{remark}

\begin{remark}
    [Verification of assumptions in
    \Cref{thm:monotonization-zerostep}]
    The bound on $\widehat{\sigma}_{\Xi}$ and $\hkappa_{\Xi}$
    in
    Assumptions \ref{cond:zerostep-add}
    and \ref{cond:zerostep-mult}
    can be verified for some common loss functions
    and predictors
    as discussed in \Cref{sec:common-loss-functions}.
    The verification of
    assumption \eqref{eq:rn-deterministic-approximation-2}
    or \eqref{eq:rn-deterministic-approximation-3}
    is very much tied to the exact prediction procedure.
    We verify \eqref{eq:rn-deterministic-approximation-2}
    in a specific setting
    in \Cref{sec:zerostep-overparameterized}.
\end{remark}

\subsubsection
[Overparameterized regime]
{{Risk behavior of $\hf^\zerostep$ under proportional asymptotics}}
\label{sec:zerostep-overparameterized}

In the discussion leading up to
\Cref{thm:monotonization-zerostep},
we have not made a specific reference
to the growth or non-growth
of the dimension of the features.
Technically, \Cref{thm:monotonization-zerostep}
does allow for
the dimension $p$ of the features to change
with the sample size $n$, i.e.,
one can have $p = p_n$.

Risk monotonization is 
an interesting phenomenon to study
in light of the double (or multiple)
descent results in the overparameterized setting
where $p_n / n \to \gamma$ as $n \to \infty$.
In our previous discussion of 
non-stochastic approximation of the conditional prediction
risk, we did not stress the dependence on the dimension of
features.
In the following,
we consider the implications of 
\Cref{thm:monotonization-zerostep}
in the context of overparameterized learning
and hence consider
the following setting.

Recall that
the original dataset
$\cD_n$ consists of
$n$ i.i.d.\ observations
$(X_i, Y_i) \in \RR^{p} \times \RR$,
$1 \le i \le n$ from distribution $P$.
In the following as we allow
the dimension $p$ of the features
to change with the sample size $n$
and assume that $p = p_n$ satisfies
\begin{enumerate}[label={\rm(PA($\gamma$))},leftmargin=1.35cm]
    \item
    \label{asm:prop_asymptotics}
    $p_n / n \to \gamma \in (0, \infty)$
    as $n \to \infty$.
\end{enumerate}
The above asymptotic regime, which is standard in random matrix theory \citep{bai_silverstein_2010}, is used 
in the overparameterized learning literature, where it has been referred to as
proportional asymptotics.
(see e.g.,
\cite{dobriban_wager_2018,
hastie_montanari_rosset_tibshirani_2019,Mei2019generalization,
bartlett_montanari_rakhlin_2021}).
Note that
under assumption \ref{asm:prop_asymptotics}
the underlying distribution $P$ of the observations
in $\cD_n$ should 
be indexed by the sample size $n$.
We suppress this dependence for convenience.
Under the proportional asymptotics regime
for commonly studied prediction procedures,
a deterministic approximation
to the conditional prediction risk
of a subset $\cD_m \subseteq \cD_n$
depends not on $m$ but on $p_n / m$,
among other properties of the distribution $P$.
For this reason,
in any discussion of the deterministic approximation
of the conditional prediction risk,
we write
$R^\deter(p_n / m ; \tf)$
instead of $R^\deter(m; \tf)$.
Now the goal of this subsection
is to derive the deterministic approximation
of the conditional risk of the zero-step
predictor under \ref{asm:prop_asymptotics}.

Recall that from the crucial calculation in \eqref{eq:bound-minR-hf}
leading to the risk of zero-step predictor, 
we require
\begin{equation}
\label{eq:cond-deter-nonpa}
\frac{R(\tf(\cdot, \cD_\train^{\xi_n^\star, j})) - R^\deter(n_{\xi_n^\star}; \tf)}{R^\deter(n_{\xi_n^\star}; \tf)}
= o_p(1),
\end{equation}
with $\xi^\star_n$ defined as in \eqref{eq:minimizer-rdeter-def}.
Except for \eqref{eq:cond-deter-nonpa},
all the remaining steps in 
\eqref{eq:bound-minR-hf} 
hold true
even in the overparameterized setting.
In the following, we will provide simple sufficient condition
for verification of \eqref{eq:cond-deter-nonpa} under \ref{asm:prop_asymptotics}.
As mentioned above, the deterministic risk  under \ref{asm:prop_asymptotics}
often depends not only on the sample size alone, but also on the ratio of the number of features
to the sample size.
Therefore, we find it helpful to rewrite \eqref{eq:cond-deter-nonpa}
as
\begin{equation}
    \label{eq:rn-deterministic-approximation-2-prop-asymptotics}
    \tag{DETPA-0}
    \frac
    {
        R(\tf(\cdot; \cD_\train^{\xi_n^\star, j}))
        - R^{\deter}(p_n / n_{\xi_n^\star}; \tf)
    }
    {R^{\deter}(p_n / n_{\xi_n^\star}; \tf)}
    =
    o_p(1),
    \quad
    \text{where}
    \quad
   \xi_n^\star
   \in \argmin_{\xi \in \Xi_n}
   R^\deter(p_n / n_{\xi}; \tf).
\end{equation}
Note that
assumption \ref{asm:prop_asymptotics}
does not imply that
$p_n / n_{\xi_n^\star}$ converges to a fixed limit
as $n \to \infty$.

Under assumption
\eqref{eq:rn-deterministic-approximation-2-prop-asymptotics},
\Cref{thm:monotonization-zerostep}
readily implies
the risk behavior of $\hf^\zerostep$.
However,
the possibility that
$p_n / n_{\xi_n^\star}$ does not converge to a fixed limit
necessitates a closer examination of
assumption \eqref{eq:rn-deterministic-approximation-2-prop-asymptotics}.
We provide a two-fold reduction
of assumption \eqref{eq:rn-deterministic-approximation-2-prop-asymptotics}.
Firstly,
it suffices to verify that the absolute difference
between
$R(\tf(\cdot; \cD_\train^{\xi_n^\star, j}))$
and $ R^{\deter}(p_n / n_{\xi_n^\star}; \tf)$
converges to $0$ when $R^\deter(\cdot; \tf)$
is uniformly bounded away from $0$.
This is a reasonable assumption in practice because
several loss functions under mild conditions on the response
have risk lower bounded by the unavoidable error which is strictly positive.
For example, assuming the loss $\ell$ is the squared loss
and that $\EE[(Y_0 - \EE[Y_0 \mid X_0])^2] > 0$, we have for any prediction procedure $\widetilde{f}$
and any training dataset $\cD_m$
containing $m$ observation,
\begin{equation}
\label{eq:risk-lower-bound}
R(\widetilde{f}(\cdot; \mathcal{D}_{m})) ~=~ \mathbb{E}[(Y_0 - \widetilde{f}(X_0; \mathcal{D}_{m}))^2\big|\mathcal{D}_{m}] ~\ge~ \mathbb{E}[(Y_0 - \mathbb{E}[Y_0|X_0])^2] > 0. 
\end{equation}
Hence, in this case, if there exists a deterministic function $R^\deter : (0, \infty] \to [0, \infty]$
such that under \ref{asm:prop_asymptotics},
as $n \to \infty$,
\begin{equation}
    \label{eq:rn-deterministic-approximation-prop-asymptotics-add}
        R(\tf(\cdot; \cD_\train^{\xi_n^\star, j}))
        - R^{\deter}(p_n / n_{\xi_n^\star}; \tf)
    =
    o_p(1),
    \quad
    \text{where}
    \quad
   \xi_n^\star
   \in \argmin_{\xi \in \Xi_n}
   R^\deter(p_n / n_{\xi}; \tf),
\end{equation}
then \eqref{eq:rn-deterministic-approximation-2-prop-asymptotics}
is satisfied.
Secondly,
the following lemma shows that
under \ref{asm:prop_asymptotics},
\eqref{eq:rn-deterministic-approximation-prop-asymptotics-add}
is satisfied
if there exists a deterministic approximation
for the conditional risk 
with datasets having a converging aspect ratio
(i.e., datasets for which the ratio of the number of features
to the sample size converges to a constant).

For any $\gamma > 0$, define
\[
    \cM_{\gamma}^{\zerostep}
    ~:=~ \argmin_{\zeta : \zeta \ge \gamma} R^\deter(\zeta; \tf).
\]
\begin{lemma}
    [Reduction of \eqref{eq:rn-deterministic-approximation-2-prop-asymptotics}]
    \label{lem:rn-deterministic-approximation-4-prop-asymptotics}
    Let $\cD_{k_m}$ be a dataset with $k_m$ observations and $p_m$ features.
    Consider a prediction procedure $\tf$ trained on $\cD_{k_m}$.
    Assume the loss function $\ell$ is such that 
    $R(\tf(\cdot; \cD_{k_m}))$ is uniformly bounded from below by $0$.
    Let $\gamma > 0$ be a real number.
    Suppose there exists a proper, lower semicontinuous function 
    $R^\deter(\cdot; \tf): [\gamma, \infty] \to [0, \infty]$ such that
    \begin{equation}
        \label{tag:detpar-0}
        \tag{DETPAR-0}
        R(\tf(\cdot; \cD_{k_m}))
        ~\pto~
        R^\deter(\phi; \tf),
    \end{equation}
    as $k_m, p_m \to \infty$
    and $p_m / k_m \to \phi \in \cM_{\gamma}^{\zerostep}$.
    Further suppose that $R^\deter(\cdot; \tf)$ is continuous
    on the set $\cM_{\gamma}^{\zerostep}$.
    Then, \eqref{eq:rn-deterministic-approximation-2-prop-asymptotics} 
    is satisfied.
\end{lemma}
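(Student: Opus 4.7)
The plan is to first establish the additive convergence
\begin{equation*}
R(\tf(\cdot; \cD_\train^{\xi_n^\star, j})) - R^\deter(p_n/n_{\xi_n^\star}; \tf) = o_p(1),
\end{equation*}
and then, invoking the uniform positive lower bound on $R^\deter$ (cf.\ the discussion around~\eqref{eq:risk-lower-bound}), divide through to obtain the relative form~\eqref{eq:rn-deterministic-approximation-2-prop-asymptotics}. I would argue via subsequences: a sequence converges to $0$ in probability iff every subsequence admits a further subsequence converging to $0$ in probability, so it suffices to extract, from an arbitrary subsequence, a further subsequence along which the additive difference vanishes in probability.

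Since $\{p_n/n_{\xi_n^\star}\}_{n\ge 1}$ lies in the compact space $[\gamma, \infty]$ (viewed as a subset of the extended reals), any subsequence admits a convergent sub-subsequence $p_{n_k}/n_{\xi_{n_k}^\star} \to \phi^\star$ for some $\phi^\star \in [\gamma, \infty]$. The crucial step is to identify $\phi^\star$ as an element of $\cM_\gamma^\zerostep$. To this end, I would first verify that the grid $\{p_n/n_\xi : \xi \in \Xi_n\}$ becomes dense in $[\gamma, \infty]$: consecutive grid values differ by $p_n \lfloor n^\nu \rfloor /(n_\xi n_{\xi+1})$, which vanishes uniformly on bounded subsets of $[\gamma, \infty)$, while the grid still reaches arbitrarily large aspect ratios because $n_\xi$ is allowed to be as small as order $n^\nu$. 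Thus for any fixed $\phi_0 \in \cM_\gamma^\zerostep$ one can select $\xi_n \in \Xi_n$ with $p_n/n_{\xi_n} \to \phi_0$. Continuity of $R^\deter$ at $\phi_0$ yields
\begin{equation*}
R^\deter(p_n/n_{\xi_n}; \tf) \to R^\deter(\phi_0; \tf) = \min_{\zeta \ge \gamma} R^\deter(\zeta; \tf),
\end{equation*}
while optimality of $\xi_n^\star$ forces $R^\deter(p_n/n_{\xi_n^\star}; \tf) \le R^\deter(p_n/n_{\xi_n}; \tf)$. Combined with the lower semicontinuity of $R^\deter$ at $\phi^\star$, this gives
\begin{equation*}
R^\deter(\phi^\star; \tf) \le \liminf_{k \to \infty} R^\deter(p_{n_k}/n_{\xi_{n_k}^\star}; \tf) \le \min_{\zeta \ge \gamma} R^\deter(\zeta; \tf),
\end{equation*}
so $\phi^\star$ realizes the minimum and hence belongs to $\cM_\gamma^\zerostep$.

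With $\phi^\star \in \cM_\gamma^\zerostep$ in hand, the remainder is immediate. Since every admissible $n_\xi$ is at least of order $n^\nu$, one has $n_{\xi_{n_k}^\star} \to \infty$; together with $p_{n_k}/n_{\xi_{n_k}^\star} \to \phi^\star \in \cM_\gamma^\zerostep$, the hypothesis~\eqref{tag:detpar-0} delivers $R(\tf(\cdot; \cD_\train^{\xi_{n_k}^\star, j})) \pto R^\deter(\phi^\star; \tf)$. Continuity of $R^\deter$ at $\phi^\star$ simultaneously gives $R^\deter(p_{n_k}/n_{\xi_{n_k}^\star}; \tf) \to R^\deter(\phi^\star; \tf)$, whence the additive difference tends to $0$ in probability along the sub-subsequence. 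Closing the subsequence argument and dividing by $R^\deter(p_n/n_{\xi_n^\star}; \tf)$, which is bounded away from $0$, yields~\eqref{eq:rn-deterministic-approximation-2-prop-asymptotics}.

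The main obstacle is the identification step $\phi^\star \in \cM_\gamma^\zerostep$. Lower semicontinuity alone yields only $R^\deter(\phi^\star; \tf) \le \liminf_k R^\deter(p_{n_k}/n_{\xi_{n_k}^\star}; \tf)$; without a matching upper bound along some sequence realizing the minimum, one cannot certify that the infimum value is attained in the limit. Density of the grid, together with continuity of $R^\deter$ specifically at minimizers, is precisely what supplies this matching upper bound and explains why the lemma imposes continuity on $\cM_\gamma^\zerostep$ in addition to global lower semicontinuity.
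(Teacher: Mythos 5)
Your proof is correct and takes essentially the same route as the paper: reduce to the additive form via the positive lower bound on $R^\deter$, pass to subsequences, use compactness of $[\gamma,\infty]$ to extract a convergent sub-subsequence, identify the limit as a minimizer via the grid density and the interplay of lower semicontinuity with continuity at the argmin, and then apply \eqref{tag:detpar-0}. The paper packages the minimizer-identification step into separate helper lemmas (the space-filling-grid and argmin-limit-point lemmas in the supplement), whereas you inline that argument, but the mathematical content is the same.
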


We prove \Cref{lem:rn-deterministic-approximation-4-prop-asymptotics}
using the real analysis fact
that a sequence $\{ a_n \}_{n \ge 1}$ 
converges to $0$ if and only if
for any subsequence $\{ a_{n_k} \}_{k \ge 1}$,
there exists a further subsequence
$\{ a_{n_{k_l}} \}_{l \ge 1}$
that converges to $0$
(see, for example, Problem 12 of \cite{royden_1988};
also see \Cref{lem:subsequence-to-sequence} for a self-contained proof).
We apply this fact to the sequence
\[
    a_n(\epsilon)
    =
    \PP
    \left(
    \left|
       R(\tf(\cdot; \cD_\train^{\xi_n^\star, j}))
        - R^{\deter}(p_n / n_{\xi_n^\star}; \tf) 
    \right|
    \ge \epsilon
    \right),
\]
for every $\epsilon > 0$.
A crucial component in applying this technique
is to first produce a subsequence
$\{ n_{k_{l}} \}_{l \ge 1}$
such that $p_{n_{k_{l}}} / n_{\xi^\star_{n_{k_{l}}}}$
converges to a point in $\argmin_{\zeta \in [\gamma, \infty]} R^\deter(\zeta; \tf)$.
A few remarks on the assumptions of \Cref{lem:rn-deterministic-approximation-4-prop-asymptotics}
are in order. 

\begin{itemize}[leftmargin=*]
\item 
In most cases,
the set of minimizers of $R^\deter(\cdot; \tf)$
is a singleton set.
For such a scenario,
\Cref{lem:rn-deterministic-approximation-4-prop-asymptotics}
only requires the deterministic approximation
of the conditional prediction risk
for a single limiting aspect ratio
(i.e., \eqref{tag:detpar-0} is only required for a single $\phi$).
Several commonly studied predictors
satisfy \eqref{tag:detpar-0} as discussed below.

\item Assuming lower semicontinuity of $R^\deter(\cdot; \tf)$
is a mild assumption.
In particular, 
it does not preclude the possibility that
$R^\deter$ diverges to $\infty$ at several values in the domain
as shown in \Cref{prop:lower-semicontinuity-divergence}. 
Such risk diverging behavior 
is a common occurrence for several popular predictors
in overparameterized learning, for example, MN2LS, MN1LS, etc.
The requirement of the lower semicontinuity stems from
the goal of monotonizing $R^\deter$ from \emph{below}.
\end{itemize}

\begin{proposition}
    [Verifying lower semicontinuity for diverging risk profiles]
    \label{prop:lower-semicontinuity-divergence}
    Suppose $h : [a, c] \to \RR$ is continuous on $[a, b) \cup (b, c]$
    and $\lim_{x \to b^{-}} h(x) = \lim_{x \to b^{+}} h(x) = \infty$.
    Then, $h$ is lower semicontinuous on $[a, c]$.
\end{proposition}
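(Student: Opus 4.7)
The plan is to verify the definition of lower semicontinuity pointwise on $[a,c]$, splitting into the cases $x_0 \ne b$ and $x_0 = b$. Recall that $h$ is lower semicontinuous at $x_0$ iff $\liminf_{x \to x_0} h(x) \ge h(x_0)$, where the liminf is taken over $x \in [a,c]$ (and $h$ is understood to take values in $\overline{\RR}$, with $h(b) = \infty$ being the natural interpretation given the hypothesis on the limits at $b$).

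First I would handle all $x_0 \in [a,b) \cup (b,c]$. At any such point $h$ is continuous by assumption, and continuity at $x_0$ is stronger than lower semicontinuity: it implies $\lim_{x \to x_0} h(x) = h(x_0)$, hence in particular $\liminf_{x \to x_0} h(x) = h(x_0) \ge h(x_0)$. So lower semicontinuity holds trivially at every point other than $b$.

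Next, at $x_0 = b$, the two one-sided hypotheses $\lim_{x \to b^-} h(x) = \infty$ and $\lim_{x \to b^+} h(x) = \infty$ combine to give $\lim_{x \to b,\, x \ne b} h(x) = \infty$, and therefore $\liminf_{x \to b} h(x) = \infty$. Since $h(b) \in \overline{\RR}$ and $\infty \ge h(b)$ always holds, lower semicontinuity at $b$ follows regardless of the particular value $h(b)$.

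Combining the two cases establishes lower semicontinuity on all of $[a,c]$. There is no real obstacle here; the only subtlety worth flagging explicitly is that the statement implicitly treats $h$ as $\overline{\RR}$-valued at $b$ (since $h(b) = \infty$ is consistent with the stated limits and is exactly the case needed for applications such as \Cref{lem:rn-deterministic-approximation-4-prop-asymptotics}, where the risk profile $R^{\deter}(\cdot; \tf)$ diverges at the interpolation threshold). Once this is acknowledged, the argument reduces to the one-line observation that $\liminf$ of $+\infty$ dominates any extended-real value.
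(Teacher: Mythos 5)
Your proof is correct, but it takes a genuinely different route from the paper's. The paper verifies lower semicontinuity via its topological characterization: it fixes a threshold $t$ and shows the sublevel set $\{x : h(x) \le t\}$ is closed, by using the divergence at $b$ to split it into two pieces contained in $[a, b_-(t)]$ and $[b_+(t), c]$ respectively, where $h$ restricts to a continuous (hence lower semicontinuous) function, and then observing that a finite union of closed sets is closed. You instead verify lower semicontinuity pointwise via the $\liminf$ characterization, splitting into $x_0 \neq b$ (where continuity directly gives it) and $x_0 = b$ (where $\liminf_{x \to b} h(x) = \infty$ dominates whatever $h(b)$ is). Your approach is more elementary and arguably cleaner for this one-dimensional statement; in particular it sidesteps the small bookkeeping the paper needs about whether $b$ itself lies in the sublevel set, a point the paper glosses over (and the paper's closing remark says ``intersection'' where it means ``union''). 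On the other hand, the paper's sublevel-set argument is the one that transfers most directly to the general-metric-space version stated later as \Cref{prop:semicontinuity-metricspace}, which is presumably why the authors phrased it that way. Your observation that the statement implicitly treats $h$ as $\overline{\RR}$-valued at $b$ is apt: it is exactly the regime of interest in the applications, and in that regime both proofs apply verbatim, while for a hypothetical finite $h(b)$ your argument requires no modification and the paper's sublevel-set decomposition would need the extra closed singleton $\{b\}$ adjoined.
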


\begin{itemize}[leftmargin=*]

\item[]
\Cref{prop:lower-semicontinuity-divergence}
implies that if $R^\deter$ is continuous
on a set except for a point where it diverges
to $\infty$,
then $R^\deter$ is lower semicontinuous on that set.
In this sense,
\Cref{prop:lower-semicontinuity-divergence}
relates the lower semicontinuity assumption
of \Cref{lem:rn-deterministic-approximation-4-prop-asymptotics}
to the continuity assumption of the lemma.

\item Continuity assumption 
on $R^\deter(\cdot; \tf)$ 
at the argmin set $\argmin_{\zeta \in [\gamma, \infty]} R^\deter(\zeta; \tf)$
is also mild.
\Cref{prop:continuity-from-continuous-convergence-rdet}
below shows that
\eqref{tag:detpar-0}
holding for $\phi$ in any open set $\cI$
implies continuity of $R^\deter$ on $\cI$.
In particular,
this implies continuity on the sets of the type
$\cI = (a, \infty]$.
If the set of minimizers of $R^\deter$
is a singleton set,
then \eqref{tag:detpar-0}
itself does not suffice
to guarantee the continuity of $R^\deter$
at the minimizer.
\Cref{prop:continuity-from-continuous-convergence-rdet}
in such a case requires
verifying \eqref{tag:detpar-0} on an open interval
containing the minimizer.
\end{itemize}

\begin{proposition}
    [Certifying continuity from continuous convergence]
    \label{prop:continuity-from-continuous-convergence-rdet}
    Let $\cD_{k_{m}}$ be a dataset with $k_m$ observations
    and $p_m$ features,
    and consider a prediction procedure $\tf$ trained on $\cD_{k_m}$.
    Let $\cI$ be an open set in $(0, \infty)$.
    Suppose there exists a function $R^\deter : (0, \infty] \to [0, \infty]$
    such that
    \begin{equation}
        \label{eq:pointiwise-convergence-rdet}
        R(\tf(\cdot; \cD_{k_m}))
        ~\pto~ R^\deter(\phi; \tf)
    \end{equation}
    as $k_m, p_m \to \infty$
    and $p_m / k_m \to \phi \in \cI$.
    Then, $R^\deter(\cdot; \tf)$ is continuous on $\cI$.
\end{proposition}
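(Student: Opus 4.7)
The plan is to argue by contradiction, exploiting the point-wise convergence hypothesis through a diagonal construction that produces a single sequence of datasets whose risk must converge to two distinct limits.

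To keep the arithmetic clean despite $R^\deter$ being $[0,\infty]$-valued, I first pass to the bounded case: let $h:[0,\infty]\to[0,1]$ be the homeomorphism $h(x)=x/(1+x)$ with $h(\infty)=1$. Since $h$ is a homeomorphism, continuity of $R^\deter$ on $\cI$ is equivalent to continuity of $h\circ R^\deter$ on $\cI$, and the hypothesis \eqref{eq:pointiwise-convergence-rdet} transfers to $h(R(\tf(\cdot;\cD_{k_m})))\pto h(R^\deter(\phi;\tf))$ in $[0,1]$. Thus I may assume $R^\deter$ and the risks take values in $[0,1]$.

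Fix $\phi_0\in\cI$. Since $\cI$ is open, any sequence $\phi_n\to\phi_0$ eventually lies in $\cI$, so I may assume $\phi_n\in\cI$ for all $n$. The goal is $R^\deter(\phi_n;\tf)\to R^\deter(\phi_0;\tf)$. If this fails, then by compactness of $[0,1]$, after passing to a subsequence I can assume $R^\deter(\phi_n;\tf)\to L$ for some $L\neq R^\deter(\phi_0;\tf)$.

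Now I carry out the diagonal construction. For each fixed $n$, the hypothesis applied to \emph{some} sequence $(k_{n,m},p_{n,m})_{m\ge 1}$ with $k_{n,m},p_{n,m}\to\infty$ and $p_{n,m}/k_{n,m}\to\phi_n$ yields $R(\tf(\cdot;\cD_{k_{n,m}}))\pto R^\deter(\phi_n;\tf)$ as $m\to\infty$. Pick $m_n$ so that simultaneously (i) $k_{n,m_n},p_{n,m_n}\ge n$; (ii) $|p_{n,m_n}/k_{n,m_n}-\phi_n|<1/n$; and (iii) $\PP(|R(\tf(\cdot;\cD_{k_{n,m_n}}))-R^\deter(\phi_n;\tf)|>1/n)<1/n$. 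Setting $K_n=k_{n,m_n}$, $P_n=p_{n,m_n}$, the sequence $\{\cD_{K_n}\}_{n\ge 1}$ satisfies $K_n,P_n\to\infty$ and $P_n/K_n\to\phi_0\in\cI$ by (i)-(ii). The hypothesis applied to this diagonal sequence gives $R(\tf(\cdot;\cD_{K_n}))\pto R^\deter(\phi_0;\tf)$; condition (iii) combined with $R^\deter(\phi_n;\tf)\to L$ gives $R(\tf(\cdot;\cD_{K_n}))\pto L$. Uniqueness of limits in probability forces $L=R^\deter(\phi_0;\tf)$, contradicting our assumption. Hence $R^\deter(\cdot;\tf)$ is continuous at $\phi_0$, and since $\phi_0\in\cI$ was arbitrary, the conclusion follows.

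The only delicate point is the diagonal extraction itself: one must verify that the chosen $(K_n,P_n)$ form a legitimate sequence to which the hypothesis applies, which reduces to the two requirements that $K_n,P_n\to\infty$ and $P_n/K_n\to\phi_0$; both are guaranteed by conditions (i) and (ii). The reduction to $[0,1]$-valued quantities sidesteps any ambiguity in writing differences involving $\infty$, which would otherwise be the main technical annoyance when stating (iii).
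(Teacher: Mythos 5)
Your proof is correct, and it takes a route that is genuinely different from—and arguably cleaner than—the paper's. The paper first proves continuity along rational sequences ($\QQ$-continuity), because it constructs blocks of datasets with \emph{exact} aspect ratio $p_m = \phi_r k_m$, which forces $\phi_r \in \QQ$, and then it invokes a separate lemma (that $\QQ$-continuity implies $\RR$-continuity) to finish. You bypass this two-stage structure entirely by observing that the hypothesis is stated for aspect ratios that merely \emph{converge} to $\phi$, not that equal $\phi$ exactly, so one can work directly with arbitrary real $\phi_n \to \phi_0$ by choosing integer pairs whose ratios approximate $\phi_n$. Your compactification via $h(x) = x/(1+x)$ is a further improvement: the paper manipulates absolute differences of $[0,\infty]$-valued quantities without explicitly ruling out expressions like $\infty - \infty$, whereas your reduction to $[0,1]$-valued objects (valid because $h$ is a homeomorphism of $[0,\infty]$ and convergence in probability is preserved under continuous maps) removes this delicacy cleanly. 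Both proofs share the same essential diagonalization idea—interleaving datasets at the approximating aspect ratios so that the hypothesis applies to the resulting single sequence and forces a contradiction via uniqueness of limits in probability—but yours achieves it in one pass with fewer moving parts.
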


Combining the results and the discussion above,
the verification of \eqref{eq:rn-deterministic-approximation-2-prop-asymptotics}
under \ref{asm:prop_asymptotics}
can proceed with the following two-step program.
\begin{enumerate}[label={\rm(PRG-0-C\arabic*)},leftmargin=2cm]
    \item 
    \label{prog:zerostep-cont-conv}
    For $\phi$ such that $R^\deter(\phi; \tf) < \infty$,
    verify that
    for all datasets $\cD_{k_m}$
    with limiting aspect ratio $\phi$,
    $R(\tf(\cdot; \cD_{k_m})) \pto R^\deter(\phi; \tf)$.
    \item
    \label{prog:zerostep-cont-infty}
    Whenever $R^\deter(\phi; \tf) = \infty$,
    \[
        \lim_{\phi' \to \phi^{-}} R^\deter(\phi'; \tf)
        = \lim_{\phi' \to \phi^{+}} R^\deter(\phi'; \tf)
        = \infty.
    \]
\end{enumerate}
The continuity of $R^\deter$ at points where it is finite follows from~\ref{prog:zerostep-cont-conv} via \Cref{prop:continuity-from-continuous-convergence-rdet}.
This kind of convergence is verified
in the literature for several commonly used prediction procedures,
such as ridge regression and MN2LS \citep{hastie_montanari_rosset_tibshirani_2019},
lasso and MN1LS \citep{li_wei_2021}, etc;
see \Cref{rem:prop_asymptotics_risk_examples} for more details.
This combined with~\ref{prog:zerostep-cont-infty}
via \Cref{prop:lower-semicontinuity-divergence}
implies lower semicontinuity of $R^\deter$
on $[\gamma, \infty]$.
If there is more than one $\phi$ at which $R^\deter$ is $\infty$,
then \Cref{prop:lower-semicontinuity-divergence}
should be applied separately by splitting the domain
to only contain one point of divergence.
A more general result
of this flavour can be found in \Cref{prop:semicontinuity-metricspace}
in \Cref{sec:onestep-overparameterized}.

We will follow these steps
to verify \eqref{eq:rn-deterministic-approximation-2-prop-asymptotics}
for the ridge and lasso prediction procedures
in \Cref{sec:verification-deterministicprofile-zerostep}.
But first we will complete
the derivation of the deterministic approximation
to the conditional risk of $\hf^\zerostep$
under
\eqref{eq:rn-deterministic-approximation-2-prop-asymptotics}
following \eqref{eq:bound-minR-hf}.
\Cref{lem:rn-deterministic-approximation-4-prop-asymptotics}
combined
with 
\Cref{thm:monotonization-zerostep}
proves that the zero-step prediction procedure
approximately monotonizes the risk of 
the base prediction procedure $\tf$
as shown in the following result:

\begin{theorem}
    [Asymptotic risk profile of zero-step predictor]
    \label{thm:asymptotic-risk-tuned-zero-step}
    For any prediction procedure $\tf$,
    suppose
    \ref{asm:prop_asymptotics},
    either
    \ref{cond:zerostep-add}
    or \ref{cond:zerostep-mult},
    and the assumptions of
    \Cref{lem:rn-deterministic-approximation-4-prop-asymptotics}
    hold true.
    In addition,
    if the loss function is convex in the second argument,
    then
    for any 
    $M \ge 1$,
    \[
        \left(
            R(\hf^\zerostep; \cD_n)
            - \min_{\zeta \ge \gamma} R^\deter(\zeta; \tf)
        \right)_+
        = o_p(1).
    \]
\end{theorem}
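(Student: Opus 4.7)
The plan is to reduce the theorem to the already-established \Cref{thm:monotonization-zerostep}, which controls $R(\hf^\zerostep)$ in terms of the discrete grid minimum $R^\deter_\nearrow(n;\tf) = \min_{\xi \in \Xi_n} R^\deter(p_n/n_\xi;\tf)$, and then to show, via a purely deterministic argument, that this grid minimum converges to the continuum minimum $\min_{\zeta \ge \gamma} R^\deter(\zeta;\tf)$. Combining the probabilistic and deterministic bounds will yield the conclusion.

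First, I would verify the probabilistic input. The hypotheses of \Cref{lem:rn-deterministic-approximation-4-prop-asymptotics} are precisely what is required to conclude that the relative approximation \eqref{eq:rn-deterministic-approximation-2-prop-asymptotics} holds, which is just \eqref{eq:rn-deterministic-approximation-2} in the proportional-asymptotics parametrization (with the argument of $R^\deter$ being the aspect ratio $p_n/n_{\xi_n^\star}$ rather than the sample size $n_{\xi_n^\star}$). Together with convexity of $\ell(\cdot,\cdot)$ in its second argument and either \ref{cond:zerostep-add} or \ref{cond:zerostep-mult}, \Cref{thm:monotonization-zerostep} applies and yields
\[
    \big(R(\hf^\zerostep;\cD_n) - R^\deter_\nearrow(n;\tf)\big)_+ = R^\deter_\nearrow(n;\tf) \cdot o_p(1).
\]

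Next, I would show the deterministic convergence $R^\deter_\nearrow(n;\tf) \to \min_{\zeta \ge \gamma} R^\deter(\zeta;\tf)$. Set $\cG_n = \{p_n/n_\xi : \xi \in \Xi_n\}$. Under \ref{asm:prop_asymptotics} and the construction \eqref{eq:indexset-zerostep} with $\nu \in (0,1)$, the smallest element of $\cG_n$ converges to $\gamma$, the largest diverges to $\infty$, and consecutive grid points on any compact subinterval of $(\gamma,\infty)$ are separated by $O(n^{\nu-1}) = o(1)$. For the $\limsup$ direction, pick any $\zeta^\star \in \cM_\gamma^\zerostep$ (assuming $\min_{\zeta \ge \gamma} R^\deter(\zeta;\tf) < \infty$, else the conclusion is vacuous); if $\zeta^\star < \infty$, choose a nearest grid point $\phi_n \in \cG_n$ and use continuity of $R^\deter$ at $\zeta^\star$ (hypothesized in \Cref{lem:rn-deterministic-approximation-4-prop-asymptotics}) to obtain $R^\deter(\phi_n;\tf) \to R^\deter(\zeta^\star;\tf)$, while the case $\zeta^\star = \infty$ is handled by the diverging tail of $\cG_n$. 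For the matching $\liminf$ direction, along any subsequence extract a further subsequence on which $p_n/n_{\xi_n^\star}$ converges to some $\phi^\infty \in [\gamma,\infty]$, and apply lower semicontinuity of $R^\deter$ to conclude $\liminf_k R^\deter(p_{n_k}/n_{\xi_{n_k}^\star};\tf) \ge R^\deter(\phi^\infty;\tf) \ge \min_{\zeta \ge \gamma} R^\deter(\zeta;\tf)$; this mirrors the subsequence-of-subsequence technique used in the proof of \Cref{lem:rn-deterministic-approximation-4-prop-asymptotics}.

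Combining the two parts, when $\min_{\zeta \ge \gamma} R^\deter(\zeta;\tf) < \infty$ the sequence $R^\deter_\nearrow(n;\tf)$ is bounded, so $R^\deter_\nearrow(n;\tf) \cdot o_p(1) = o_p(1)$, and a triangle inequality gives
\[
    \big(R(\hf^\zerostep;\cD_n) - \min_{\zeta \ge \gamma} R^\deter(\zeta;\tf)\big)_+
    \le \big(R(\hf^\zerostep;\cD_n) - R^\deter_\nearrow(n;\tf)\big)_+ + \big|R^\deter_\nearrow(n;\tf) - \min_{\zeta \ge \gamma} R^\deter(\zeta;\tf)\big| = o_p(1).
\]
The main obstacle is the deterministic step: since $R^\deter$ can diverge at isolated aspect ratios (notably the interpolation threshold), and since the grid $\cG_n$ has shrinking but non-uniform spacing together with an ever-growing tail, careful use of lower semicontinuity rather than full continuity at the divergence points is essential for the $\liminf$ bound. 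The construction \eqref{eq:indexset-zerostep} with $\nu \in (0,1)$ is tailored precisely so that the grid becomes dense in the interior of $(\gamma,\infty)$ and reaches arbitrarily large aspect ratios at its tail, while still satisfying $|\Xi_n| \le n$ so that the union bounds underlying \Cref{thm:monotonization-zerostep} deliver $o_p(1)$ errors.
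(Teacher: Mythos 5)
Your proposal is correct and takes essentially the same route as the paper. The paper's proof breaks $R(\hf^\zerostep) - \min_{\zeta \ge \gamma} R^\deter(\zeta;\tf)$ directly into three (for $M=1$) or four (for $M>1$) non-negative-part terms and controls each inline, whereas you package the first two (oracle inequality plus Jensen and the deterministic approximation) by invoking \Cref{thm:monotonization-zerostep} as a black box and then do the grid-to-continuum convergence as a separate deterministic step. The deterministic step itself is identical in content to the paper's Term 3, which applies the space-filling property of the grid (\Cref{lem:spacefilling-grid-zerostep-general}) together with \Cref{lem:grid-minimization-metric-space} and continuity of $R^\deter$ at the argmin set.

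Two small remarks worth keeping in mind. First, your $\liminf$ argument via lower semicontinuity is superfluous: since you only need the one-sided $(\cdot)_+$ bound, only the upper bound $\limsup_n R^\deter_\nearrow(n;\tf) \le \min_{\zeta \ge \gamma} R^\deter(\zeta;\tf)$ matters, and the complementary lower bound is trivial in any case because the grid $\cG_n$ lies asymptotically in $[\gamma,\infty]$. Second, \Cref{thm:monotonization-zerostep}'s stated hypothesis for $M\ge 1$ is \eqref{eq:rn-deterministic-approximation-2}, which is phrased with a $\sup$ over all $\xi_n \in \Xi_n$, whereas the conclusion delivered by \Cref{lem:rn-deterministic-approximation-4-prop-asymptotics} is \eqref{eq:rn-deterministic-approximation-2-prop-asymptotics}, which only concerns the minimizing sequence $\xi_n^\star$. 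If you cite \Cref{thm:monotonization-zerostep} verbatim you should flag that its proof only uses the relative convergence along $\xi_n^\star$ (as \eqref{eq:bound-minR-hf} and the surrounding text make explicit), so the weaker guarantee suffices; the paper sidesteps this bookkeeping issue by re-deriving the argument inline in its proof of \Cref{thm:asymptotic-risk-tuned-zero-step}.
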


\begin{remark}
    [Monotonicity in the limiting aspect ratio
    and improvement over base procedure]
    If we replace
    assumption \eqref{eq:rn-deterministic-approximation-2-prop-asymptotics}
    with the stronger version
    \begin{equation}
    \label{eq:rn-deterministic-approximation-2-prop-asymptotic-star}
    \tag{DETPA-0*}
    \sup_{\xi \in \Xi_n}
    \frac
    {
    |
    R(\tf(\cdot; \cD_\train^{\xi, j}))
    - R^{\deter}(p_n / n_{\xi}; \tf)
    |
    }
    {R^{\deter}(p_n / n_{\xi}; \tf)
    }
    =
    o_p(1),
\end{equation}
    as $n \to \infty$,
    then for $M = 1$,
    the conclusion of
    \Cref{thm:asymptotic-risk-tuned-zero-step}
    can be strengthened
    to
    \begin{equation}
        \label{eq:zerostep-exact-risk}
        \left|
            R(\hf^\zerostep; \cD_n)
            -
            \min_{\zeta \ge \gamma}
            R^\deter(\zeta; \tf)
        \right|
        =
        o_p(1).
    \end{equation}
    This implies
    that the risk of the zero-step
    procedure is monotonically non-decreasing in $\gamma$.
    Under the assumptions
    of \Cref{thm:asymptotic-risk-tuned-zero-step},
    one can only conclude that
    the risk of zero-step procedure
    is asymptotically bounded above
    by a monotonically non-decreasing function in $\gamma$
    in general.
    It is trivially true that
    $\min_{\zeta \le \gamma} R^\deter(\zeta; \tf)
    \le R^\deter(\gamma; \tf)$.
    Hence,
    the asymptotic risk of zero-step procedure
    is no worse than that of the base procedure.
\end{remark}

\begin{remark}
    [Finiteness of the risk of $\hf^\zerostep$]
    Predictors such the MN2LS or MN1LS
    undergo divergence in the prediction risk.
    The zero-step prediction procedure
    does not have such a divergence in the risk
    under general regularity conditions.
    In particular,
    as long as $\EE[\ell(y, 0)] < \infty$,
    then the risk of $\hf^\zerostep$ is asymptotically
    bounded by $\EE[\ell(y, 0)]$.
    Observe that $\EE[\ell(y, 0)]$
    is the risk of the null predictor
    which always returns $0$ as its prediction.
    By including the zero predictor
    in \Cref{alg:general-cross-validation-model-selection},
    the risk of $\hf^\zerostep$ will always be
    asymptotically bounded by this null risk.
\end{remark}

\subsubsection
[Verifying deterministic profiles]
{Verifying deterministic profile assumption 
\eqref{tag:detpar-0}
}
\label{sec:verification-deterministicprofile-zerostep}

In the following,
we will restrict ourselves
to the case of linear predictors
and squared error loss,
and verify assumption 
\eqref{tag:detpar-0}
for MN2LS and MN1LS base procedures.

Suppose $\cD_{k_m} = \{ (X_i, Y_i) \in \RR^{p_m} \times \RR : 1 \le i \le k_m \}$.
Recall the MN2LS and MN1LS
predictor procedures
defined in \Cref{ex:mn2ls-ridge,ex:mn1ls-lasso}.
It is now well-known that
the MN2LS and MN1LS prediction procedures
has a non-monotone risk
as a function of sample size $n$
\citep{nakkiran_venkat_Kakade_Ma-2020,hastie_montanari_rosset_tibshirani_2019, li_wei_2021}.
The following two results
verify assumption
\eqref{tag:detpar-0}
for these two procedures
under some regularity conditions
stated in
\cite{hastie_montanari_rosset_tibshirani_2019,li_wei_2021}.

\begin{proposition}
    [Verification of \eqref{tag:detpar-0}
    for MN2LS procedure]
    \label{prop:asymp-bound-ridge-main}
    Assume the setting of Theorem 3 of
    \cite{hastie_montanari_rosset_tibshirani_2019}.
    Then,
    there exists a function $R^\deter(\cdot; \tf_{\mnls}) : (0, \infty]
    \to [0, \infty]$
    such that
    \ref{prog:zerostep-cont-conv}
    holds for all $\phi \neq 1$
    and \ref{prog:zerostep-cont-infty}
    holds for $\phi = 1$.
\end{proposition}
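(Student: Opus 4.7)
}
My plan is to define $R^\deter(\cdot; \tf_{\mnls})$ explicitly using the closed-form expressions derived in Theorem 3 of \cite{hastie_montanari_rosset_tibshirani_2019} (HMRT). Concretely, for $\phi \in (0, \infty] \setminus \{1\}$ I set $R^\deter(\phi; \tf_{\mnls})$ equal to the sum of the asymptotic bias and variance given there, which is expressed through the Stieltjes transform of the limiting spectral distribution of $\Sigma$, the signal energy, and the noise level; and I set $R^\deter(1; \tf_{\mnls}) = +\infty$. With this definition in hand, I need to establish two things: (i) pointwise convergence of $R(\tf_{\mnls}(\cdot; \cD_{k_m}))$ in probability to $R^\deter(\phi; \tf_{\mnls})$ whenever $p_m / k_m \to \phi \neq 1$ (this is \ref{prog:zerostep-cont-conv}), and (ii) divergence of $R^\deter(\phi; \tf_{\mnls})$ to $+\infty$ as $\phi \to 1^{-}$ and as $\phi \to 1^{+}$ (this is \ref{prog:zerostep-cont-infty}).

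For (i), the bulk of the work is already done by Theorem 3 of HMRT, which asserts precisely this pointwise convergence under the stated assumptions (well-behaved population covariance spectrum, bounded signal, independent noise, sub-Gaussian features). The only minor subtlety is that HMRT is usually stated along a single sequence with $p/n \to \gamma$, while here I need convergence along an arbitrary sequence of datasets with aspect ratios approaching $\phi$; but since the underlying random matrix theoretic ingredients (Marchenko--Pastur type convergence of the sample covariance, trace convergence against $(\hSigma + z I)^{-1}$) hold along any such sequence, the conclusion of HMRT extends verbatim. The boundary case $\phi = \infty$ is handled separately and corresponds to the null risk $\EE[Y_0^2]$ since the predictor based on a vanishing fraction of observations relative to dimension collapses to zero.

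For (ii), I would inspect the HMRT formula and track its blow-up at $\phi = 1$. In the isotropic case the expressions $R^\deter(\phi) = \sigma^2/(1-\phi)$ for $\phi < 1$ and $R^\deter(\phi) = \|\beta\|_2^2(1 - 1/\phi) + \sigma^2 \phi/(\phi - 1)$ for $\phi > 1$ make the divergence transparent; in the anisotropic case, the variance term's prefactor $(1 - \phi \int \lambda^2 m'(\cdot)\, dH(\lambda))^{-1}$ arising from the companion Stieltjes fixed point likewise blows up as $\phi \to 1$, because the fixed point degenerates onto the edge of the spectrum. Once both one-sided limits are shown to equal $+\infty$, \Cref{prop:lower-semicontinuity-divergence} immediately upgrades the continuity of $R^\deter$ on $(0,1) \cup (1,\infty]$ (which itself follows from~\ref{prog:zerostep-cont-conv} via \Cref{prop:continuity-from-continuous-convergence-rdet}) to lower semicontinuity on all of $(0,\infty]$, yielding the hypothesis required later by \Cref{lem:rn-deterministic-approximation-4-prop-asymptotics}.

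The main obstacle I anticipate is the second step in the anisotropic regime: unlike the isotropic case, $R^\deter$ is only defined implicitly via a self-consistent equation for the companion Stieltjes transform, so verifying the blow-up at $\phi = 1$ requires a careful asymptotic analysis of that fixed-point equation as $\phi \to 1$. A convenient workaround is to use the lower bound on the variance term coming from the smallest eigenvalue of the sample covariance, which is known to concentrate at a distance of order $(1 - \sqrt{\phi})^2$ from zero as $\phi \to 1^{-}$; an analogous argument using the smallest nonzero eigenvalue handles $\phi \to 1^{+}$. Putting the two pieces together certifies both~\ref{prog:zerostep-cont-conv} and~\ref{prog:zerostep-cont-infty} and completes the proposition.
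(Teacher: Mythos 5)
Your plan is structured correctly — define $R^\deter$ via the HMRT formulas, set $R^\deter(1) = \infty$, verify pointwise convergence away from $1$, handle $\phi = \infty$ separately, and verify divergence as $\phi \to 1^\pm$ — and this is indeed the skeleton of the paper's proof. But there is a genuine gap in your treatment of Condition~\ref{prog:zerostep-cont-conv} that is worth flagging.

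You assert that ``the bulk of the work is already done by Theorem 3 of HMRT, which asserts precisely this pointwise convergence.'' It does not. Theorem 3 of \cite{hastie_montanari_rosset_tibshirani_2019} characterizes the limit of the prediction risk \emph{conditional only on the design matrix} $\bX$, i.e.\ with the noise $\beps$ integrated out. The quantity appearing in \ref{prog:zerostep-cont-conv} (via \Cref{def:rn_nonstochastic-def}) is the prediction risk conditional on the entire dataset $\cD_{k_m}$, i.e.\ conditional on both $\bX$ \emph{and} $\bY$. The two objects differ by a cross term $\bC_0 = -\beta_0^\top (I - \hSigma^\dagger\hSigma)\Sigma\hSigma^\dagger\bX^\top\beps/k_m$ and a fluctuation of the variance term $\bV_0 - \sigma^2\tr[\hSigma^\dagger\Sigma]/k_m$, both of which are random given $\bX$. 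The paper's proof spends most of its effort (\Cref{prop:cond-conv-mn2ls}) showing these concentrate to zero almost surely, using moment bounds for quadratic and linear forms together with operator-norm control of $\hSigma^\dagger$ away from the interpolation threshold. Your proposal would need this additional concentration step; citing HMRT alone gives the wrong conditional object. The same gap affects your $\phi = \infty$ case: HMRT does not cover $\phi \to \infty$ at all, and the paper establishes it from scratch (\Cref{prop:limits-infty-mn2ls}) by showing $\|\tbeta_{\mnls}\|_2 \asto 0$ via a lower bound on the smallest singular value of $\bX$.

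Your handling of Condition \ref{prog:zerostep-cont-infty} is reasonable. The paper analyzes the fixed-point equation directly (\Cref{lem:fixed-point-v-properties}) and shows $\lim_{\phi\to 1^+}\tv_g(0;\phi) = \infty$, whereas you propose lower-bounding the variance term via the smallest (nonzero) eigenvalue of $\hSigma$, which degenerates to zero at rate $(1-\sqrt{\phi})^2$. This is a workable alternative, but be aware that it bounds the \emph{empirical} risk from below; to conclude $R^\deter(\phi)\to\infty$ you must combine it with the already-established pointwise convergence for $\phi\neq 1$ and then take $\phi\to 1$. The paper's fixed-point route is self-contained and avoids this circularity, but either path works once written carefully.
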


\begin{proposition}
    [Verification of \eqref{tag:detpar-0} for MN1LS procedure]
    \label{prop:asymp-verif-mn1ls}
    Assume the setting of Theorem 2
    of \cite{li_wei_2021}.
    Then,
    there exists a function $R^\deter(\cdot; \tf_{\mnla})
    : (0, \infty] \to [0, \infty]$ such that
    \ref{prog:zerostep-cont-conv}
    holds for all $\phi \neq 1$
    and \ref{prog:zerostep-cont-infty}
    holds for $\phi = 1$.
\end{proposition}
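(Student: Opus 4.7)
The plan is to mirror the two-step program already executed for the MN2LS case in \Cref{prop:asymp-bound-ridge-main}, but drawing on the precise asymptotic characterization of the MN1LS estimator provided by Theorem 2 of \cite{li_wei_2021} in place of the random matrix analysis of \cite{hastie_montanari_rosset_tibshirani_2019}. Concretely, I would define $R^\deter(\phi; \tf_{\mnla})$ piecewise on $(0,1)$, $\{1\}$, and $(1,\infty)$, set its value at $\phi = 1$ to $+\infty$, and then verify \ref{prog:zerostep-cont-conv} and \ref{prog:zerostep-cont-infty} separately.

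For \ref{prog:zerostep-cont-conv} in the underparameterized regime $\phi \in (0,1)$, I would first observe that, under the absolutely continuous feature law assumed in \cite{li_wei_2021}, the sample Gram matrix $k_m^{-1} \sum_{i} X_i X_i^\top$ is almost surely of full column rank, so that the least squares problem in \eqref{eq:mn1ls-def} has a unique minimizer. Hence $\tbeta_\mnla(\cD_{k_m})$ coincides with the ordinary least squares estimator, and the prediction risk converges in probability to the standard limit given by Theorem 1 of \cite{hastie_montanari_rosset_tibshirani_2019} (the $\lambda = 0^+$ case of ridge regression in the underparameterized regime). This yields a finite, continuous deterministic profile $R^\deter(\phi;\tf_\mnla)$ on $(0,1)$. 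In the overparameterized regime $\phi \in (1,\infty)$, Theorem 2 of \cite{li_wei_2021} supplies a deterministic quantity, characterized via a system of fixed-point equations coming from the convex Gaussian min-max theorem, to which $\|\tbeta_\mnla(\cD_{k_m}) - \beta_0\|_\Sigma^2$ converges in probability. Adding the irreducible noise variance produces $R^\deter(\phi; \tf_\mnla)$ and the desired in-probability convergence of the conditional prediction risk, establishing \ref{prog:zerostep-cont-conv} for $\phi \neq 1$.

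For \ref{prog:zerostep-cont-infty} I need to show $R^\deter(\phi; \tf_\mnla) \to \infty$ as $\phi \to 1$ from either side. The left limit is straightforward: the OLS limit on $(0,1)$ contains a term proportional to $\sigma^2/(1-\phi)$ that diverges as $\phi \uparrow 1$, driven by the vanishing smallest eigenvalue of the limiting sample covariance matrix (Marchenko--Pastur). The right limit requires extracting divergent behavior from the Li--Wei fixed-point system as $\phi \downarrow 1$. Here I would argue by contradiction: if the fixed-point solution stayed bounded as $\phi \downarrow 1$, then passing to a convergent subsequence would produce a limiting fixed-point at $\phi = 1$, which is known to be infeasible (there is no $\ell_1$-interpolator with finite norm at the interpolation threshold under the Li--Wei data model, mirroring the $\ell_2$ case). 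This forces the variance component of the limit to blow up, and hence $R^\deter(\phi; \tf_\mnla) \to \infty$ as $\phi \downarrow 1$. Alternatively, a direct monotonicity or continuity argument applied to the fixed-point characterization of \cite{li_wei_2021}, combined with the empirical divergence observed in Figure~3 of that work at $\phi = 1$, gives the same conclusion.

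The main obstacle is the right-sided divergence at $\phi = 1$. Unlike the MN2LS case, where the limit $R^\deter(\phi; \tf_\mnls)$ is given in closed form through the Stieltjes transform of the Marchenko--Pastur law and the divergence at $\phi = 1$ is immediate from explicit formulas, the MN1LS limit is defined only implicitly through a coupled fixed-point system, so rigorous extraction of the $\phi \downarrow 1$ asymptotics requires careful continuity and contradiction arguments on that system. Once this is handled, continuity of $R^\deter(\cdot; \tf_\mnla)$ on $(0,1) \cup (1,\infty)$ follows automatically from the pointwise convergence via \Cref{prop:continuity-from-continuous-convergence-rdet}, and lower semicontinuity at $\phi = 1$ follows from \Cref{prop:lower-semicontinuity-divergence}, completing both required programs.
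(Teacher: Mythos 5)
Your overall plan matches the paper's: define $R^\deter(\cdot;\tf_{\mnla})$ piecewise, use the OLS/Marchenko--Pastur formula $\sigma^2/(1-\phi)$ on $(0,1)$, invoke Theorem~2 of \cite{li_wei_2021} to get in-probability (in fact almost-sure) convergence to $\tau^{\star 2}(\phi)$ on $(1,\infty)$, and verify \ref{prog:zerostep-cont-infty} by checking divergence from both sides of $\phi = 1$. The left-sided divergence argument is identical.

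Where you depart from the paper --- and where you correctly identify the crux --- is the right-sided divergence at $\phi = 1$. You propose to re-derive $\lim_{\phi\to 1^+}\tau^{\star2}(\phi)=\infty$ from scratch via a contradiction/compactness argument on the fixed-point system: if $\tau^\star$ stayed bounded along a sequence $\phi\downarrow 1$, one would extract a convergent subsequence and obtain a feasible fixed point at $\phi=1$, which you assert cannot exist. This is not a rigorous argument as written: it is not obvious that the fixed-point map is continuous in $(\tau,\alpha,\phi)$ jointly at the boundary $\phi=1$, and the infeasibility of a bounded-norm $\ell_1$-interpolator at the threshold is a separate claim that is not established by analogy to the $\ell_2$ case. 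The paper sidesteps this entirely: Lemma~2 (p.~50) of \cite{li_wei_2021} already proves $\lim_{\phi\to 1^+}\tau^{\star2}(\phi)=\infty$, and the paper simply cites it. Similarly, the value at $\phi=\infty$ is taken from Lemma~1 (p.~51) of \cite{li_wei_2021}, which gives $\lim_{\phi\to\infty}\tau^{\star2}=\sigma^2+\epsilon M^2$; your sketch does not pin this endpoint down. If you want to keep your self-contained contradiction route, you would need to supply the continuity and compactness lemmas for the fixed-point system; otherwise, citing the existing Li--Wei lemmas is both shorter and closes the gap.

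One further omission: the divergence of $\tau^\star$ at $\phi\to 1^+$ in \cite{li_wei_2021} is proved under the additional sparse-signal assumption~($\ell_1$A4), namely that the coordinates of $\beta_0$ are drawn i.i.d.\ from the two-point mixture $\epsilon\,\mathcal{P}_{M/\sqrt{p_m}} + (1-\epsilon)\,\mathcal{P}_0$. The paper states this assumption explicitly before invoking Li--Wei's lemmas, whereas your writeup only refers to ``the Li--Wei data model'' without singling it out. Since this prior is what makes the $\phi\downarrow 1$ blowup analysis tractable (and it is strictly stronger than the weak-convergence assumption~($\ell_1$A3) alone), it should be stated as part of the hypotheses underlying your profile construction.
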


\begin{remark}
    [Extending \Cref{prop:asymp-bound-ridge-main,prop:asymp-verif-mn1ls}
    to other predictors]
    \label{rem:prop_asymptotics_risk_examples}
    Theorem 3 of \cite{hastie_montanari_rosset_tibshirani_2019}
    only provides the asymptotic behavior
    of the prediction risk computed conditional
    only on $\{ X_i, 1 \le i \le k_m \}$.
    The proof in
    \Cref{sec:verif-asymp-profile-ridge}
    of \Cref{prop:asymp-bound-ridge-main}
    extends the calculations of
    of \cite{hastie_montanari_rosset_tibshirani_2019}
    for prediction risk conditional on $\cD_{k_m}$.
    These calculations can be further 
    extended in a straightforward manner to cover the case of $\lambda > 0$,
    i.e., the ridge regression procedure.
    See \Cref{prop:asymp-bound-ridge-main}
    for more details.
    Similar comments apply to \Cref{prop:asymp-verif-mn1ls}
    where the proposition can be easily extended to cover
    the case of $\lambda > 0$,
    i.e., the lasso prediction procedure.
    
    Additionally,
    most results in the literature
    under \ref{asm:prop_asymptotics}
    derive the risk behavior
    as $p_m / k_m \to \phi < \infty$.
    \Cref{prop:asymp-bound-ridge-main,prop:asymp-verif-mn1ls}
    also extend the existing results
    to the case when $p_m / k_m \to \infty$
    as $m \to \infty$.

    We present \Cref{prop:asymp-bound-ridge-main,prop:asymp-verif-mn1ls} as example results
    to show the verification of
    our assumptions follow rather easily
    from the existing asymptotic profile results
    in the literature.
    In the proportional asymptotic regime, the risk profiles have been characterized 
    for various other prediction procedures including, 
    high dimensional robust $M$-estimator \citep{karoui2013asymptotic,karoui2018impact,donoho2016high}, 
    the Lasso estimator \citep{miolane2021distribution,celentano2020lasso},
    and various classification procedures \citep{montanari2019generalization,liang2020precise,sur2019likelihood}.
    Our results can be suitably extended
    to verify
    \eqref{eq:rn-deterministic-approximation-2-prop-asymptotics}
    for these other predictors.
    Note that for our results, 
    we only need to know
    that the asymptotic risk exists,
    which can potentially hold true
    under weaker assumptions.
\end{remark}

\subsection{Numerical illustrations}
\label{sec:zerostep-illustration}

In this section,
we provide numerical illustration
of the risk monotonization of zero-step
prediction procedure
in the overparameterized setting,
when the base prediction procedures
are minimum $\ell_2$-norm least squares (MN2LS)
and minimum $\ell_1$-norm least squares (MN1LS).
In order to illustrate
risk monotonization as in
\Cref{thm:asymptotic-risk-tuned-zero-step},
we need to show
the risk behavior of $\hf^\zerostep$ 
at different aspect ratios.
We use the following simulation setups
for the two predictors.

\paragraph{Minimum $\ell_2$-norm least squares (MN2LS).}
We fix $n = 1000$
and vary the dimension $p$ of the features
from $100$ to $10000$
(for a total of $20$ values of $\gamma = p/n$ 
logarithmically spaced between $0.1$ to $10$).
This will show the risk behavior
of zero-step procedure
for aspect ratios between $0.1$ to $10$.
For every pair of sample size $n = 1000$
and dimension $p$,
we generate $100$ independent datasets
each with $n$ i.i.d.\ observations
from the linear model
$Y_i = X_i^\top \beta_0 + \eps_i$,
where
$X_i \sim \cN(0_p, I_p)$,
$\beta_0 \sim \cN(0_p, \rho^2 / p I_p)$
and
$\eps_i \sim \cN(0, \sigma^2)$
drawn independently of $X_i$.
The model represents
a dense signal regime
with average signal energy $\rho^2$.
We define the signal-to-noise ratio
(SNR) to be $\rho^2 / \sigma^2$.
On each dataset,
we apply the 
MN2LS
baseline procedure
as well as
the zero-step procedure.

In each run,
we additionally generate independent test datasets
each with $10000$ i.i.d.\ observations
from the same $p+1$ dimensional distribution described above
in order to approximate
the true risk of the zero-step and the base
prediction procedure.
\Cref{fig:gaufeat_isocov_isosig_linmod_n1000_nv100_nruns100_zerostep_mnls_gamma10}
shows the risks of the baseline MN2LS procedure
and the zero-step prediction procedure
for high (left, SNR = 4) and low (right, SNR = 1) SNR regimes;
we take $\sigma^2 = 1$ and $\rho^2$=SNR.
We also present the null risk
($\rho^2 + \sigma^2$),
i.e., the risk of the zero predictor
as a baseline in both the plots.
We observe from the figure that
the risk of the zero-step procedure
for every $M \ge 1$ is non-decreasing in $\gamma$.
\Cref{thm:asymptotic-risk-tuned-zero-step}
implies that the risk of the zero-step
prediction procedure for every $M \ge 1$
is \emph{asymptotically}
bounded by the risk of the base prediction procedure
at each aspect ratio $(\gamma)$.
Although this is
somewhat evident from
\Cref{fig:gaufeat_isocov_isosig_linmod_n1000_nv100_nruns100_zerostep_mnls_gamma10},
it is not satisfied for all $\gamma$,
especially for $M = 1$.
This primarily stems from
the smaller sample size at hand
and the fact that we are comparing
MN2LS trained on full data ($n = 1000$)
to the zero-step predictor computed on
the train data $(n_\train = 900)$.
With an increased sample size
(to say, $n = 2500$),
this finite-sample discrepancy vanishes.

\Cref{fig:gaufeat_isocov_isosig_linmod_n1000_nv100_nruns100_zerostep_mnls_gamma10}
shows that the zero-step procedure
with $M = 1$
attains risk monotonization
in a precise sense that its risk
is the largest non-increasing function (of $\gamma$)
below the risk of the MN2LS predictor.
For $M > 1$,
our results do not characterize the risk
of zero-step predictor,
but \Cref{fig:gaufeat_isocov_isosig_linmod_n1000_nv100_nruns100_zerostep_mnls_gamma10}
shows that averaging has a significant effect
in further reducing the risk.
As mentioned before,
this is expected from the theory of $U$-statistics
as $U$-statistics are UMVUE's of their expectations
(see, e.g., Chapter 5 of \cite{serfling_2009}).
All these comments hold
for both low and high SNR alike.

Note that the base predictor
has unbounded risk near $\gamma = 1$.
The risk of the zero-step procedure,
on the other hand,
is always bounded for all $M \ge 1$
and all $\gamma$.
In this sense,
the zero-step procedure
can also be used as a general procedure
for mitigating the surprising descent
behavior in the prediction risk.

\begin{figure}[!t]
    \centering
\includegraphics[width=0.45\columnwidth]{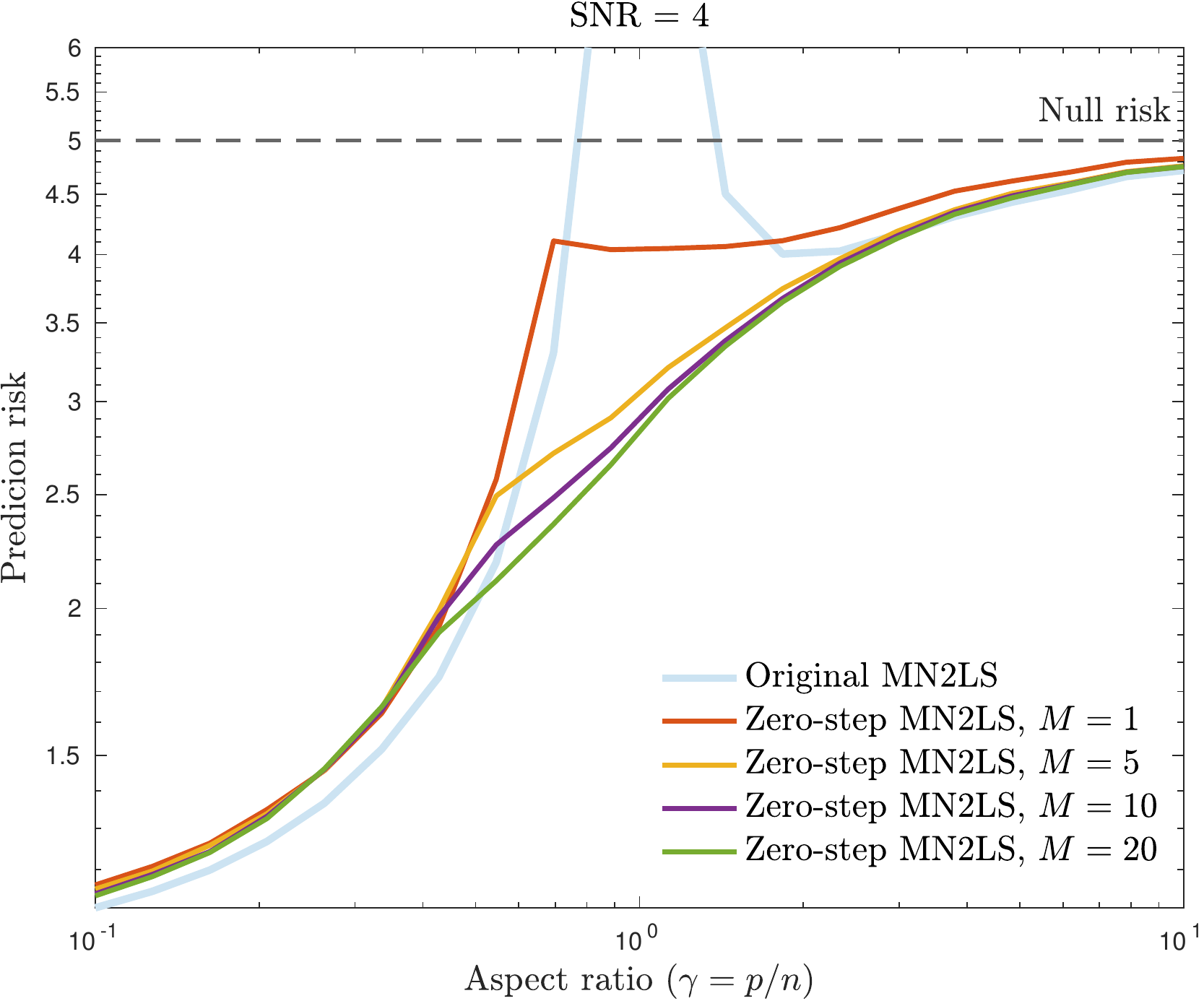}
    \quad
    \includegraphics[width=0.45\columnwidth]{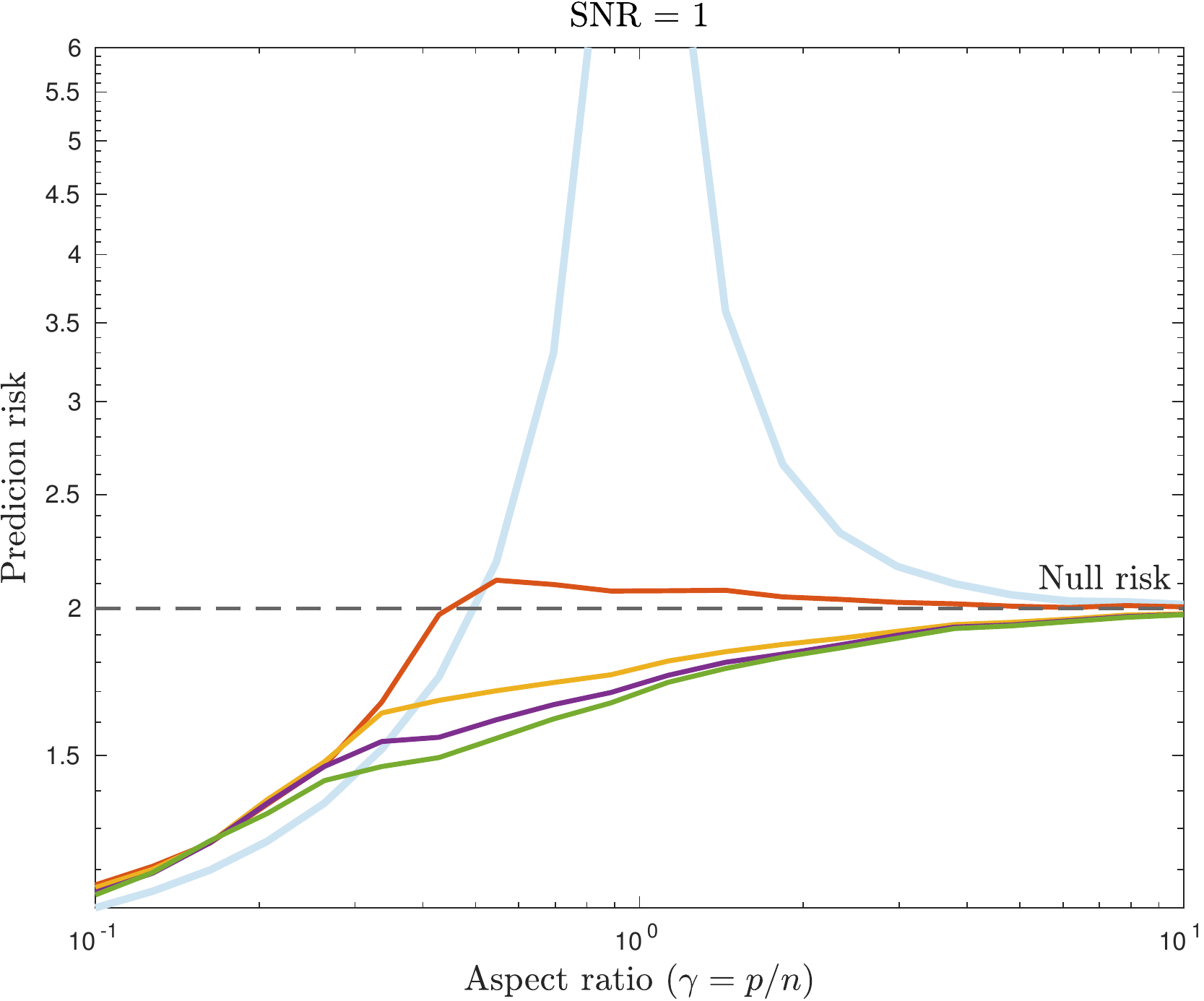}
    \caption{
    Illustration of the zero-step prediction procedure
    with MN2LS as the base predictor with varying $M$.
    The left panel shows a high SNR regime (SNR = 4),
    while the right panel shows a low SNR regime (SNR = 1).
    Here, $n = 1000$, $n_\train = 900$, $n_\test = 100$, $n^\nu = 50$.
    The features are drawn from an isotropic Gaussian distribution,
    the response follows a  linear model.
    The figure show averaged risk over 100 dataset repetitions.
    }
    \label{fig:gaufeat_isocov_isosig_linmod_n1000_nv100_nruns100_zerostep_mnls_gamma10}
\end{figure}

\paragraph{Minimum $\ell_1$-norm least squares (MN1LS).}

We fix $n = 500$
and vary the dimension $p$ of the features
from $50$ to $50000$
(for a total of $30$ values of $\gamma = p/n$
logarithmically spaced between 0.1 to 100).
This will show risk behavior of zero-step
procedure for aspect ratios between 0.1 and 100.
For every pair of sample size $n = 500$
and dimension $p$,
we generate $250$ independent dataset
each with $n$ i.i.d.\ observations
from the linear model
$Y_i = X_i^\top \beta_0 + \eps_i$,
where
$X_i \in \cN(0_p, I_p)$,
$\beta_0$ has coordinates
generated i.i.d.\ from the distribution
$B \delta_{r/\sqrt{p \pi}} + (1 - B) \delta_{0}$,
where $B \sim \textrm{Bernoulli}(\pi = 0.005)$
and $\eps_i \sim \cN(0, \sigma^2)$
is independent of $X_i$.
The model represents a sparse signal regime
(with linear sparsity level $\pi$)
with average signal energy $\rho^2$.
We again define SNR to be $\rho^2 / \sigma^2$.
On each dataset, we apply
MN1LS
baseline procedure
as well as
the zero-step procedure.

In each run,
we additionally generate independent test datasets
each with $10000$ i.i.d.\ observations
from the same $p+1$ dimensional distribution described above
in order to approximate
the true risk of the zero-step and the base
prediction procedure.
\Cref{fig:gaufeat_isocov_sparsesig_linmod_n500_nv80_nruns250_zerostep_mnla_gamma100}
shows the risks of the baseline MN1LS procedure
and the zero-step procedure for high
(left, SNR = 4)
and low (right, SNR = 1)
SNR regimes.
We take $\sigma^2 = 1$ and $\rho^2$=SNR.
We also present the null risk
($\rho^2 + \sigma^2$),
i.e., the risk of the zero predictor
as a baseline in both the plots.
We again observe that
the risk of the zero-step procedure
for every $M \ge 1$ is non-decreasing in $\gamma$.

Similar to 
\Cref{fig:gaufeat_isocov_isosig_linmod_n1000_nv100_nruns100_zerostep_mnls_gamma10},
we observe in
\Cref{fig:gaufeat_isocov_sparsesig_linmod_n500_nv80_nruns250_zerostep_mnla_gamma100}
that the zero-step procedure
with $M = 1$
attains precise risk monotonization
while
zero-step with $M > 1$ improves significantly upon
the $M = 1$ when $\gamma$ is near one.
All these comments hold
for both low and high SNR alike.

As with \Cref{fig:gaufeat_isocov_isosig_linmod_n1000_nv100_nruns100_zerostep_mnls_gamma10},
note that the base predictor MN2LS
has unbounded risk near $\gamma = 1$
in
\Cref{fig:gaufeat_isocov_sparsesig_linmod_n500_nv80_nruns250_zerostep_mnla_gamma100}.
The risk of the zero-step procedure,
on the other hand,
is always bounded for all $M \ge 1$
and all $\gamma$.

\begin{figure}[!t]
    \centering
    \includegraphics[width=0.45\columnwidth]{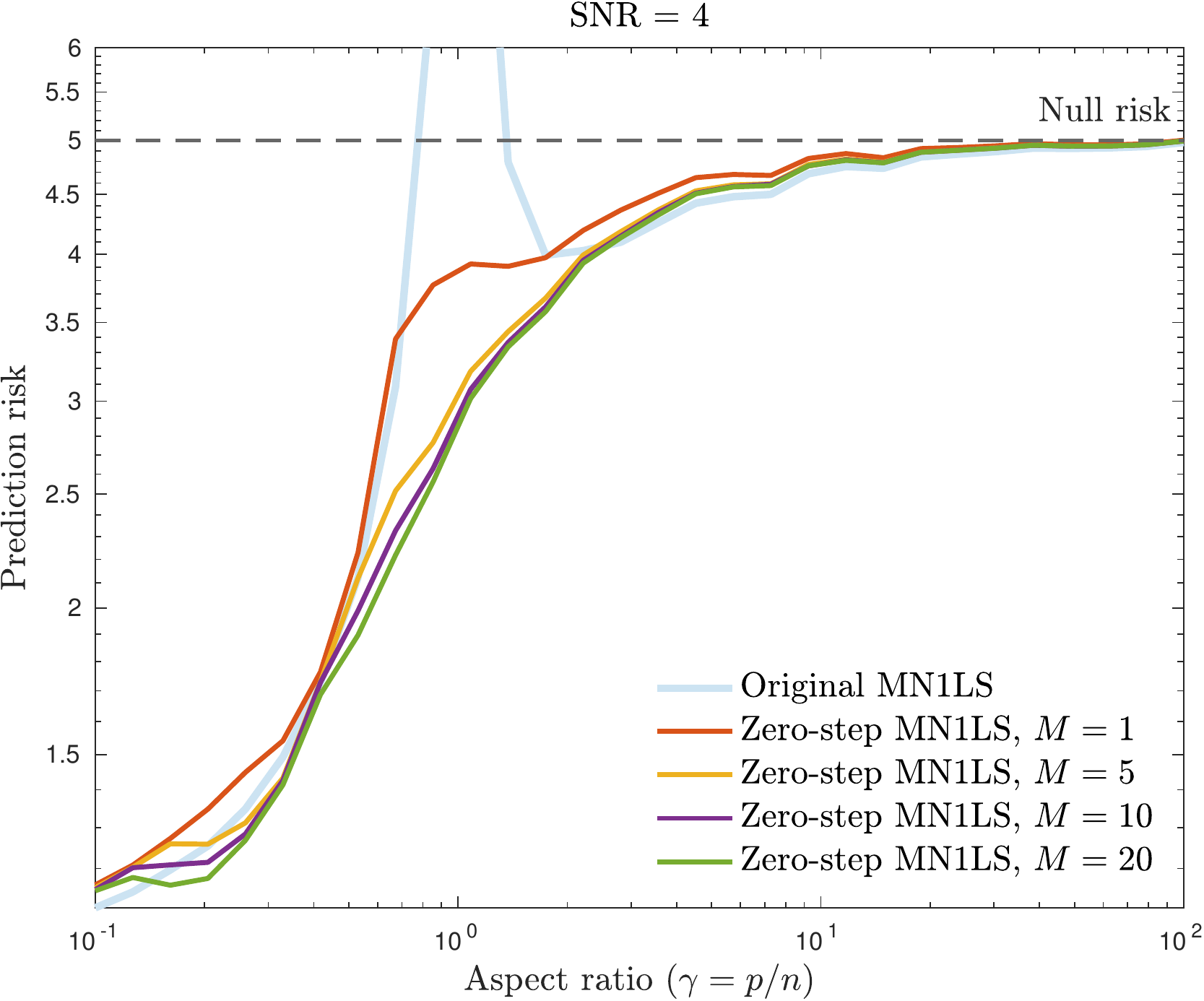}
    \quad
    \includegraphics[width=0.45\columnwidth]{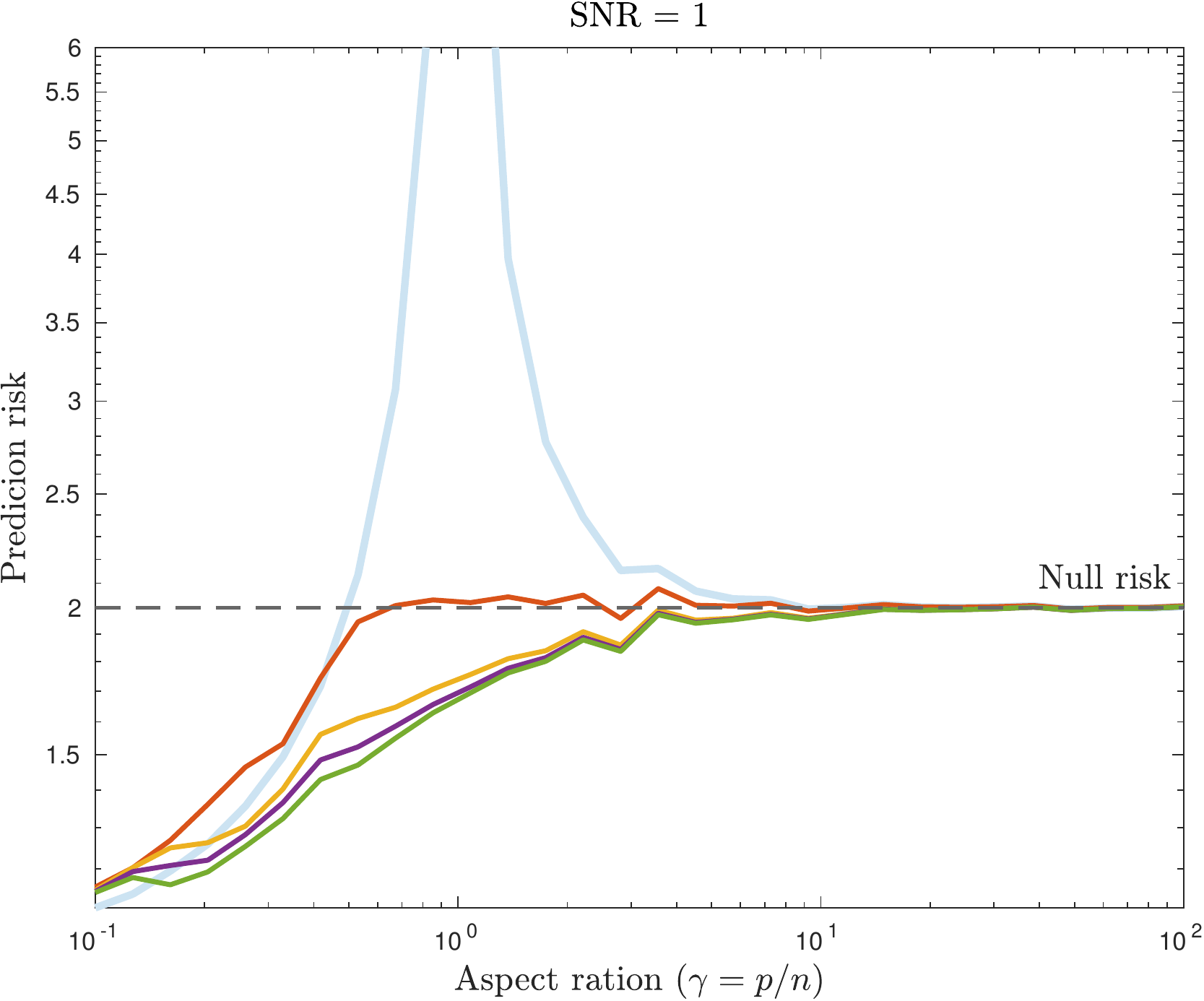}
    \caption{Illustration of 
    the zero-step prediction procedure with
    MN1LS as the base predictor with varying $M$.
    The left panel shows a high SNR regime (SNR = 4),
    while the right panel shows a low SNR regime (SNR = 1).
    Here, $n = 500$, $n_\train = 420$, $n_\test = 80$, $n^\nu = 42$.
    The features are drawn from an isotropic Gaussian distribution,
    the response follows a linear model
    with sparse signal (sparsity level = 0.005).
    The risks are averaged over 250 dataset repetitions.
    }
        \label{fig:gaufeat_isocov_sparsesig_linmod_n500_nv80_nruns250_zerostep_mnla_gamma100}
\end{figure}

\section{Application 2: One-step prediction procedure}

\label{sec:one-step}

\subsection{Motivation}

The zero-step procedure introduced in \Cref{sec:zero-step}
provides the desired asymptotic monotonization of the conditional
prediction risk under certain regularity conditions.
It takes advantage of the fact that we can train our predictors
on a smaller subset of the data when it is appropriate.
In addition, it uses repeated sampling and averaging
in order to remove the external randomness in the choice of the subset.

In this section,
we introduce a variant of the zero-step procedure
motivated by the classical statistical idea
of one-step estimation
\citep[see, e.g., Section 5.7 of][]{vanderVaart_2000}.
In the simplest case of linear regression
where the feature dimension is fixed,
the idea of one-step estimation is
that we can start with an arbitrary linear predictor
and add to it an adjustment
computed based on the residuals of the initial
linear predictor.
More precisely,
starting with any initial estimator $\tbeta^\init$
and the associated linear predictor 
$\tf(x) = x^\top \tbeta^\init$,
we have
\begin{equation}
    \label{eq:one-step-ols}
    \underbrace
    {
        \vphantom{\left( \frac{1}{n} \sum_{i=1}^{n} X_i X_i^\top \right)^{-1}} 
        X^\top \tbeta^\init
    }
    _{\text{initial predictor}}
    +
    ~
    \underbrace
    {
        X^\top
        \left(\frac{1}{n} \sum_{i=1}^{n} X_i X_i^\top\right)^{-1}
        \left(\frac{1}{n} \sum_{i=1}^{n} X_i (Y_i - X_i^\top \tbeta^\init)\right)
    }_{\text{one-step adjustment}}
    =
    X^\top \tbeta^{\ols},
\end{equation}
where the final resulting predictor
corresponds to the ordinary least squares (OLS) estimator $\tbeta^\ols$
that enjoys $n^{-1/2}$ rate and
risk optimality under a well-specified linear model.

This idea of one-step estimation
is not specific to ordinary least squares.
It can be generalized to other estimators
that are solutions to estimating
equation $\Psi_n(\beta) = 0$
where $\Psi_n : \RR^{p} \to \RR^{p}$.
The general idea is to solve
a linear approximation
to the estimating equation, i.e.,
given an initial estimator $\tbeta^\init$,
the one-step estimator is the solution
(in $\beta$) to 
the linearized estimating equation
(around $\tbeta^\init$)
\[
    \Psi_n(\tbeta^\init)
    +
    \nabla{\Psi_n}(\tbeta^\init)
    (\beta - \tbeta^{\init})
    =
    0.
\]
The solution can be expressed as
\begin{equation}
    \label{eq:one-step-general}
    \tbeta
    =
    \underbrace
    {
        \vphantom{\dot(\Psi)(\tbeta^\init)^{-1}}
        \tbeta^\init
    }
    _{\text{initial estimator}}
    -
    ~
    \underbrace
    {(\nabla{\Psi}(\tbeta^\init))^{-1} \Psi(\tbeta^\init)}_{\text{one-step adjustment}}.
\end{equation}
Here $\nabla{\Psi} : \RR^{p} \to \RR^{p} \times \RR^{p}$ denotes the Jacobian of $\Psi$.

One can also view the one-step estimator from the point of view of the Newton's algorithm.
The classical one-step estimator starts at an initial estimator $\tbeta^{\init}$
and takes a Newton's step on the empirical risk minimization problem.
For a parametric predictor $f(\cdot; \tbeta^{\init})$, 
starting with a base estimator $\tbeta^\init$,
we can define the corresponding
one-step predictor as $f(\cdot; \tbeta)$,
where $\tbeta$
is the Newton's step update
starting with $\tbeta^\init$
given by
\begin{equation}
    \label{eq:one-step-newton}
    \tbeta
    = 
    \underbrace
    {
        \vphantom{\left(\frac{1}{n} \sum_{i=1}^{n} \ddot{\ell} \right)^{-1}}
        \tbeta^{\init}
    }
    _{\text{initial estimator}}
    - 
    ~
    \underbrace
    {\left( \frac{1}{n} \sum_{i=1}^{n} \nabla^2{\ell}(Y_i, f(X_i; \tbeta^{\init})) \right)^{-1}
    \left( \frac{1}{n} \sum_{i=1}^{n} \nabla{\ell}(Y_i, f(X_i; \tbeta^{\init})) \right)
    }
    _{\text{Newton's step}}.
\end{equation}
Here, for $1 \le i \le n$,
$\nabla \ell(Y_i, f(X_i; \cdot)) : \RR^p \to \RR^p$ denotes the gradient
of the prediction loss function 
$\ell(Y_i, f(X_i; \beta))$
with respect to $\beta$,
and $\nabla^2{\ell}(Y_i, f(X_i; \cdot)) : \RR^{p} \to \RR^{p \times p}$
denotes the Hessian of the prediction loss function
with respect to $\beta$.
In the special case of a linear predictor,
where $f(x; \beta) = x^T \beta$,
the one-step estimator becomes
\[
    \tbeta
    = \tbeta^{\init}
    - \left( \frac{1}{n} \sum_{i=1}^{n} X_i X_i^T \ell''(Y_i, X_i^T \tbeta^{\init}) \right)^{-1}
    \left( \frac{1}{n} \sum_{i=1}^{n} X_i \ell'(Y_i, X_i^T \tbeta^{\init})\right),
\]
where 
$\ell': \RR \times \RR \to \RR$ is the first derivative of the loss function
$\ell(\cdot, \cdot)$ in the second coordinate,
and $\ell'': \RR \times \RR \to \RR$ is the second derivative
of the loss function in the second coordinate.

Our goal in this section
is to build upon this idea of one-step
estimation towards risk-monotonization
and improve on the zero-step procedure.
We will restrict ourselves
to one-step adjustment with respect to the square error loss
and linear predictors (per \eqref{eq:one-step-ols}).
We leave extension to a more general one-step adjustment
(per \eqref{eq:one-step-general} or \eqref{eq:one-step-newton})
for future work. 
For more discussion, see \Cref{sec:discussion}.

There are two points to note when defining \eqref{eq:one-step-ols}.
\begin{enumerate}
    \item
    The inverse 
    of the sample covariance matrix
    $\sum_{i=1}^{n} X_i X_i^\top / n$
    in \eqref{eq:one-step-ols}
    need not always exist.
    In particular,
    when the feature dimension $p > n$,
    the sample covariance matrix
    is guaranteed to be rank deficient.
    \item
    In the overparameterized regime,
    the residuals $Y_i - X_i^\top \tbeta^\init$
    for $i = 1, \dots, n$
    in \eqref{eq:one-step-ols}
    are identically zero for several commonly used estimators
    such MN2LS or MN1LS,
    if $\tbeta^\init$ and the residuals are computed on the same dataset.
\end{enumerate}

In order to overcome these two limitations,
we consider a variant of the idea of one-step estimation,
in which we make the following changes:
\begin{enumerate}[label=\arabic*$'$.]
    \item We use a Moore-Penrose pseudo-inverse
    in place of regular matrix inverse.
    Note that this is the same as adding
    a MN2LS component fitted on the residuals $Y_i - X_i^\top \tbeta^\init$.
    \item We split the training data
    and use one part to compute $\tbeta^\init$
    and use the other part to compute the residuals $Y_i - X_i^\top \tbeta^\init$.
    This ensures that the residuals are not identically zero
    in the overparameterized regime.
\end{enumerate}

In summary, to construct the one-step predictor,
we start with a base predictor computed
on a subset of data,
evaluate the residuals of this predictor
on a different subset of data,
and add to the base predictor
a MN2LS fit on the residuals.
We formalize this construction next.

\subsection{Formal description}

As before,
let the original dataset be denoted by
$\cD_n = \{ (X_1, Y_1), \dots, (X_n, Y_n) \}$
and let $\tf$ be a base prediction procedure.
As per \Cref{alg:general-cross-validation-model-selection},
let the train and test datasets be
$\cD_\train$ and $\cD_\test$, respectively.
We define the ingredient predictors
to be used in \Cref{alg:general-cross-validation-model-selection}
constructed using the one-step methodology as follows:
Define the index set $\Xi_n$ as
\[
    \Xi_n
    :=
    \Big\{
        (\xi_1, \xi_2)
        ~:~
        \xi_1 \in \{ 0, 1, \dots, n_\train - 1 \},
        \xi_2 \in \{ 0, 1, \dots, \xi_1 - 1 \}
    \Big\}.
\]
Let $\cD_\train^{\xi_1}$ and $\cD_\train^{\xi_2}$
be disjoint subsets of $\cD_\train$
with $n_\train - \xi_1$
(for $0 \le \xi_1 \le n_\train - 1$)
and $\xi_2$
(for $0 \le \xi_2 \le \xi_1$)
observations, respectively.
Let $\cI_\train^{\xi_1}$ and $\cI_\train^{\xi_2}$
denote the corresponding index sets of $\cD_\train^{\xi_1}$
and $\cD_\train^{\xi_2}$, respectively.
For each index $\xi = (\xi_1, \xi_2) \in \Xi_n$,
define the ingredient predictor $\tf^\xi$
to be used in \Cref{alg:general-cross-validation-model-selection}
in three steps:
\begin{enumerate}
    \item Fit a base prediction procedure
    $\tf$ on $\cD_\train^{\xi_1}$.
    Call this $\tf(\cdot; \cD_\train^{\xi_1})$.
    \item Compute the residuals of predictor
    $\tf(\cdot; \cD_\train^{\xi_1})$
    on $\cD_\train^{\xi_2}$,
    i.e.,
    $r_j = Y_j - \tf(X_j; \cD_\train^{\xi_1})$
    for $j \in \cI_\train^{\xi_2}$.
    \item
    Fit the MN2LS predictor on
    $\{ (X_j, r_j) : j \in \cI_\train^{\xi_2} \}$.
    This is the one-step adjustment.%
\end{enumerate}
The final ingredient predictor $\tf^\xi$
is given by
\[
    \tf^\xi(x; \cD_{\train}^{\xi_1}, \cD_{\train}^{\xi_2})
    ~:=~ \tf(x; \cD_\train^{\xi_1})
    +
    x^\top
    \left(
        \sum_{j \in \cI_\train^{\xi_2}}
        X_j X_j^\top
    \right)^{\dagger}
    \left(
        \sum_{j \in \cI_\train^{\xi_2}}
        X_j
        r_j
    \right).
\]
If $\xi_2 = 0$,
then $\cI_{\train}^{\xi_2}$ is an empty set
and there are no residuals $r_j$ computed.
In this case,
we adopt the convention that there is no one-step
adjustment.
Therefore,
the ingredient predictors for our one-step procedure
includes the ingredient predictors for the zero-step procedure.
As with the zero-step procedure,
two remarks are in order:

\begin{itemize}[leftmargin=*]
    \item There is external randomness
    in choosing subsets $\cD_\train^{\xi_1}$
    and $\cD_\train^{\xi_2}$ of sizes
    $n_\train - \xi_1$ and $\xi_2$, respectively.
    To reduce such randomness,
    we make use of many different subsets
    of the same sizes and average such different one-step predictors.
    More precisely,
    for each $\xi = (\xi_1, \xi_2) \in \Xi$,
    draw $m$ disjoint pairs of
    sets
    $(\cD_\train^{\xi_1, j}, \cD_\train^{\xi_2, j}), \dots, (\cD_\train^{\xi_1, j}, \cD_\train^{\xi_2, j})$
    from $\cD_\train$.
    Formally,
    for $1 \le j \le m$,
    we randomly draw a subset $\cD_\train^{\xi_1, j}$
    from $\cD_\train$ of size $n_\train - \xi_1$
    and a subset $\cD_\train^{\xi_2, j}$
    from $\cD_\train \setminus \cD_\train^{\xi_1, j}$
    of size $\xi_2$.
    We then fit different one-step predictors
    $\tf({\cdot; \cD_\train^{\xi_i, j}, \cD_\train^{\xi_2, j}})$
    on $(\cD_\train^{\xi_1, j}, \cD_\train^{\xi_2, j})$
    for $1 \le j \le M$,
    and take the final ingredient predictor $\hf^\xi$ to be
    the average of $M$ such predictors:
    \begin{equation}\label{eq:onestep-average}
        \hf^{\xi}(x)
        =
        \frac{1}{M}
        \sum_{j=1}^{M}
        \tf(x; \cD_\train^{\xi_1, j}, \cD_\train^{\xi_2, j}).
    \end{equation}
    As before, when $M = \infty$,
    $\hf^\xi$ becomes the average of all possible
    pairs of disjoints subsets $\cD_\train$ of sizes
    $n_\train - \xi_1$ and $\xi_2$,
    while the case of $M = 1$ has the largest amount of external randomness.
    Based on the theory of $U$-statistics,
    we again expect the choice of $M = \infty$
    to provide a predictor with the smallest variance.
    For computational reasons, we use a finite value of $M \ge 1$.
    
    \item In the description above,
    we have $n_\train (n_\train + 1) / 2$ predictors to use in \Cref{alg:general-cross-validation-model-selection}.
    Similar to the zero-step procedure,
    we replace $\Xi_n$ with
    \begin{equation}\label{eq:indexset-onestep}
        \Xi_n
        :=
        \left\{ 
            (\xi_1, \xi_2) ~:~
            \xi_1 \in
            \left\{
                2, \dots, \left\lceil \frac{n_\train}{\lfloor n^\nu \rfloor} - 2 \right\rceil
            \right\},
            \xi_2 \in
            \left\{ 
                1, \dots, \xi_1 -1
            \right\}
        \right\},
        \quad
        \text{for some }
        \nu \in (0, 1),
    \end{equation}
    and consider predictors obtained
    by training components of $\tf$
    on subsets of sizes
    $n_\train - \xi_1 \lfloor n^\nu \rfloor$
    and $\xi_2 \lfloor n^\nu \rfloor$.
    Such a change helps
    in reducing the cost
    of computing $\hf^\cv$ using 
    \Cref{alg:general-cross-validation-model-selection}.
    In addition, this also helps
    in the statistical properties of $\hf^\cv$
    when applying the union bound in the results
    of \Cref{sec:general-crossvalidation-modelselection}.
\end{itemize}

With these two modifications,
with $\Xi_n$ as defined in \eqref{eq:indexset-onestep},
for $\xi \in \Xi_n$,
we define $\hf^\xi$ as in \eqref{eq:onestep-average}
with the subsets
$\cD_\train^{\xi_1, j}$,
$\cD_\train^{\xi_2, j}$
(for $1 \le j \le M$)
now representing disjoints subsets of
sizes $n_\train - \xi_1 \lfloor n^\nu \rfloor$
and $\xi_2 \lfloor n^\nu \rfloor$, respectively.
The ingredients predictors to be used
in \Cref{alg:general-cross-validation-model-selection}
are given by $\hf^\xi$, $\xi \in \Xi_n$.
We call the resulting predictor obtained
from \Cref{alg:general-cross-validation-model-selection}
as the one-step predictor based on $\tf$,
and we denote the corresponding prediction procedure
to be $\hf^\onestep$.
The one-step procedure is summarized
in \Cref{alg:one-step}.

\begin{algorithm}
    \caption{One-step procedure}
    \label{alg:one-step}
    \textbf{Inputs}:\\
    \begin{itemize}[noitemsep]
        \item[--] all inputs of \Cref{alg:general-cross-validation-model-selection}
        other than the index set $\Xi$;
        \item[--] a positive integer $M$.
    \end{itemize}
    \textbf{Output}:\\
    \vspace{-1em}
    \begin{itemize}
        \item[--] a predictor $\hf^\onestep$
    \end{itemize}
    \textbf{Procedure}:
    \begin{enumerate}
        \item Let $n_\train = n - n_\test$.
        Construct an index set $\Xi_n$ per \eqref{eq:indexset-onestep}.
        \item Construct train and test sets $\cD_\train$ and $\cD_\test$
        per Step 1 of \Cref{alg:general-cross-validation-model-selection}.
        \item Let $n_{1,\xi_1} = n_\train - \xi_1 \lfloor n^\nu \rfloor$
        and $n_{2, \xi_2} = \xi_2 \lfloor n^\nu \rfloor$.
        For each $(\xi_1, \xi_2) \in \Xi_n$ and $j = 1, \dots, M$,
        draw random pairs of disjoint subsets
        $(\cD_\train^{\xi_1, j}, \cD_\train^{\xi_2, j})$ of sizes
        $n_{1,\xi_1}$ and $n_{2, \xi_2}$ from $\cD_\train$, respectively.
        For each $(\xi_1, \xi_2) \in \Xi_n$,
        fit predictors $\hf^\xi$ as described by \eqref{eq:onestep-average}
        using prediction procedure $\tf$ and
        $\{ (\cD_\train^{\xi_1, j}, \cD_\train^{\xi_2, j}) : 1 \le j \le M \}$.
        \item Run Steps 3--5 of \Cref{alg:general-cross-validation-model-selection}
        using index set $\Xi = \Xi_n$
        and set of predictors $\{ \hf^\xi$, $\xi \in \Xi \}$.
        \item Return $\hf^\onestep$ as the resulting $\hf^\cv$ from 
        \Cref{alg:general-cross-validation-model-selection}.
    \end{enumerate}
\end{algorithm}

\subsection[Risk behavior]{Risk behavior of $\hf^\onestep$}

In this section,
we examine the risk behavior of
one-step predictor $\hf^\onestep$.
Similar treatment
as done for the zero-step procedure
in \Cref{sec:risk-behavior-zerostep}
applies in general.
To avoid repetition,
we will primarily restrict ourselves to
overparameterized setting in this section.

\subsubsection
[Overparameterized regime]
{Risk behavior of $\hf^\onestep$ under proportional asymptotics}
\label{sec:onestep-overparameterized}

Define
$n_{1,\xi_1} = n_\train - \xi_1 \lfloor n^\nu \rfloor$
and $n_{2, \xi_2} = \xi_2 \lfloor n^\nu \rfloor$.
Assume that
there exists a deterministic profile $R^\deter(\cdot, \cdot; \tf): \RR \times \RR \to \RR$ of $\tf$
such that
the following holds:
\begin{equation}
    \label{eq:rn-deterministic-approximation-onestep-prop-asymptotics}
    \tag{DETPA-1}
    \left|
        R
        \big(
            \tf(\cdot;
            \cD_\train^{\xi_{1,n}^\star, j},
            \cD_\train^{\xi_{2,n}^\star, j})
        \big)
        -
        R^\deter
        \left(
            \frac{p}{n_{1, \xi_{1,n}^\star}},
            \frac{p}{n_{2, \xi_{2,n}^\star}};
            \tf
        \right)
    \right|
    ~=~
    o_p(1)
    R^\deter
    \left(
        \frac{p}{n_{1, \xi_{1,n}^\star}},
        \frac{p}{n_{2, \xi_{2,n}^\star}};
        \tf
    \right),
\end{equation}
where $(\xi_{1, n}^\star, \xi_{2, n}^\star)$ are the indices
that minimize the deterministic profile $R^\deter(\cdot, \cdot; \tf)$:
\begin{equation}
    \label{eq:minimizer-sequence-onestep}
    (\xi_{1, n}^\star, \xi_{2, n}^\star)
    \in
    \argmin_{(\xi_1, \xi_2) \in \Xi_n}
    R^\deter
    \left(
    \frac{p}{n_{1, \xi_1}},
    \frac{p}{n_{2, \xi_2}};
    \tf
    \right).
\end{equation}

Because $\log(|\Xi_n|) \le 2 \log(n)$,
following the arguments
in \Cref{sec:risk-behavior-zerostep},
we conclude that if
\eqref{eq:rn-deterministic-approximation-onestep-prop-asymptotics}
and
either \ref{cond:zerostep-add}\footnote{
Here, we need \ref{cond:zerostep-add}
with $R^\deter_\nearrow(n, \tf)$
replaced with the minimum appearing in
\eqref{eq:onestep-risk-gaurantee}.}
or \ref{cond:zerostep-mult}
hold,
then
\begin{equation}
    \label{eq:onestep-risk-gaurantee}
   \left( 
    R(\hf^\onestep)
    -
    \min_{(\xi_1, \xi_2) \in \Xi_n}
    R^\deter
    \left(
        \frac{p}{n_{1, \xi_1}},
        \frac{p}{n_{2, \xi_2}};
        \tf
    \right)
    \right)_+
    ~=~
    o_p(1)
    \cdot
    \min_{(\xi_1, \xi_2) \in \Xi_n}
    R^\deter
    \left(
        \frac{p}{n_{1, \xi_1}},
        \frac{p}{n_{2, \xi_2}};
        \tf
    \right).
\end{equation}

Just as we reduced verification of
\eqref{eq:rn-deterministic-approximation-2-prop-asymptotics}
to
\eqref{tag:detpar-0},
we state below a reduction of the verification of
\eqref{eq:rn-deterministic-approximation-onestep-prop-asymptotics}
that only considers non-deterministic sequences
for which the aspect ratios of the split datasets
for the constituent one-step predictors converge.

For any $\gamma > 0$, define
\[
    \cM_{\gamma}^{\onestep}
    ~:=~ \argmin_{(\zeta_1, \zeta_2) : \zeta_1^{-1} + \zeta_2^{-1} \le \gamma^{-1}} R^\deter(\zeta_1, \zeta_2; \tf).
\]

\begin{lemma}
    [Reduction of \eqref{eq:rn-deterministic-approximation-onestep-prop-asymptotics}]
    \label{lem:deterministic-approximation-reduction-onestep}
    Suppose $\cD_{k_{1,m}}$ and $\cD_{k_{2,m}}$
    are dataset with $k_{1,m}$ and $k_{2,m}$ observations
    and $p_m$ features.
    Assume the loss function $\ell$ is such that
    $R(\tf(\cdot; \cD_{k_{1, m}}, \cD_{k_{2, m}}))$
    is uniformly bounded away from $0$.
    Let $\gamma > 0$ be a real number.
    Suppose there exists a proper, lower semicontinuous function 
    $R^\deter : [\gamma, \infty] \times [\gamma, \infty] \to [0, \infty]$ such that
    the following holds true:
    \begin{equation}
        \label{eq:rn-deterministic-approximation-reduced-onestep-prop-asymptotics}
        \tag{DETPAR-1}
            R
            \big(
                \tf(\cdot; \cD_{k_{1,m}}, \cD_{k_{2,m}})
            \big)
        \pto
        R^\deter(\phi_1, \phi_2; \tf)
    \end{equation}
    as $k_{1,m}, k_{2,m}, p_m \to \infty$
    and 
    $(p_m / k_{1,m}, p_m / k_{2,m}) \to (\phi_1, \phi_2) \in \cM_{\gamma}^{\onestep}$.
    Furthermore, suppose that $R^\deter(\cdot, \cdot; \tf)$
    is continuous on the set
    $\cM_{\gamma}^{\onestep}$.
    Then,
    \eqref{eq:rn-deterministic-approximation-onestep-prop-asymptotics}
    is satisfied.
\end{lemma}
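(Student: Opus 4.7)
The plan is to adapt the proof template of \Cref{lem:rn-deterministic-approximation-4-prop-asymptotics} to the two-aspect-ratio setting via the subsequence-of-subsequence criterion for convergence of real sequences. Since the risk is assumed uniformly bounded away from zero (and so is $R^\deter$ on the relevant feasible region, because $R^\deter$ is the probability limit of such risks and is proper), it suffices to prove the absolute-error statement: for every $\epsilon > 0$,
\[
    a_n(\epsilon) ~:=~ \PP\!\left(\Big|R\big(\tf(\cdot;\cD_\train^{\xi_{1,n}^\star,j},\cD_\train^{\xi_{2,n}^\star,j})\big) - R^\deter\big(\tfrac{p_n}{n_{1,\xi_{1,n}^\star}}, \tfrac{p_n}{n_{2,\xi_{2,n}^\star}};\tf\big)\Big| \ge \epsilon\right) ~\to~ 0.
\]
I would prove this by showing that every subsequence $\{n_k\}$ admits a further subsequence $\{n_{k_l}\}$ along which $a_{n_{k_l}}(\epsilon) \to 0$; dividing by the uniformly positive deterministic profile then upgrades this to the relative form \eqref{eq:rn-deterministic-approximation-onestep-prop-asymptotics}.

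Given an arbitrary subsequence $\{n_k\}$, sequential compactness of the extended box $[\gamma,\infty]^2$ allows extraction of a further subsequence along which $(p_{n_{k_l}}/n_{1,\xi_{1,n_{k_l}}^\star},\, p_{n_{k_l}}/n_{2,\xi_{2,n_{k_l}}^\star}) \to (\phi_1^\star,\phi_2^\star)$. The constraint $\xi_2 \le \xi_1 - 1$ encoded in \eqref{eq:indexset-onestep}, combined with $p_n/n_\train \to \gamma$, passes to the limit and forces $(\phi_1^\star)^{-1} + (\phi_2^\star)^{-1} \le \gamma^{-1}$, so $(\phi_1^\star, \phi_2^\star)$ is feasible for the continuous problem. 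The key step is to upgrade feasibility to optimality, i.e.\ to place $(\phi_1^\star, \phi_2^\star) \in \cM_\gamma^\onestep$. For any candidate $(\zeta_1, \zeta_2)$ satisfying $\zeta_1^{-1} + \zeta_2^{-1} \le \gamma^{-1}$, the grid of sizes $n_\train - \xi_1 \lfloor n^\nu \rfloor$ and $\xi_2 \lfloor n^\nu \rfloor$ is asymptotically dense in the feasible region, so one can construct $(\xi_{1,n_{k_l}}, \xi_{2,n_{k_l}}) \in \Xi_{n_{k_l}}$ whose aspect-ratio pair tends to $(\zeta_1, \zeta_2)$. By the minimizing property of $(\xi_{1,n_{k_l}}^\star, \xi_{2,n_{k_l}}^\star)$, lower semicontinuity of $R^\deter$ on the left, and upper semicontinuity along the approximating sequence on the right (which holds at interior points by \Cref{prop:continuity-from-continuous-convergence-rdet}, and at boundary/infinite points by the divergence calculus in \Cref{prop:lower-semicontinuity-divergence}), one obtains $R^\deter(\phi_1^\star,\phi_2^\star;\tf) \le R^\deter(\zeta_1,\zeta_2;\tf)$, proving $(\phi_1^\star,\phi_2^\star) \in \cM_\gamma^\onestep$.

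Once the limit point is identified as a minimizer, assumption \eqref{eq:rn-deterministic-approximation-reduced-onestep-prop-asymptotics} applied at $(\phi_1^\star,\phi_2^\star) \in \cM_\gamma^\onestep$ gives
\[
    R\big(\tf(\cdot;\cD_\train^{\xi_{1,n_{k_l}}^\star,j},\cD_\train^{\xi_{2,n_{k_l}}^\star,j})\big) ~\pto~ R^\deter(\phi_1^\star,\phi_2^\star;\tf),
\]
while continuity of $R^\deter$ on $\cM_\gamma^\onestep$ yields the deterministic convergence $R^\deter(p_{n_{k_l}}/n_{1,\xi_{1,n_{k_l}}^\star}, p_{n_{k_l}}/n_{2,\xi_{2,n_{k_l}}^\star};\tf) \to R^\deter(\phi_1^\star,\phi_2^\star;\tf)$. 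Subtracting the two and invoking the continuous mapping theorem collapses $a_{n_{k_l}}(\epsilon)$ to $0$, closing the subsequence argument.

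The main obstacle is the argument in the second paragraph that places $(\phi_1^\star,\phi_2^\star)$ in $\cM_\gamma^\onestep$. Unlike the one-dimensional zero-step reduction, the feasible set here is a two-dimensional region with a nontrivial boundary $\{\zeta_1^{-1} + \zeta_2^{-1} = \gamma^{-1}\}$, and the grid of admissible $(\xi_1,\xi_2)$ pairs must be shown to approximate arbitrary feasible competitors, including those with $\zeta_i = \infty$ where $R^\deter$ may itself be infinite. It is precisely at this juncture that the continuity hypothesis on $\cM_\gamma^\onestep$ (rather than only global lower semicontinuity) is essential: it ensures that the deterministic profile $R^\deter$ along the grid aspect ratios cannot jump up at the limit point, so that the stochastic convergence from \eqref{eq:rn-deterministic-approximation-reduced-onestep-prop-asymptotics} and the deterministic convergence of the grid values line up and cancel inside $a_{n_{k_l}}(\epsilon)$.
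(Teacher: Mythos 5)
Your proposal follows the same blueprint as the paper's proof: reduce to an absolute-error statement using the uniform positivity of the risk, apply the subsequence-of-subsequence criterion (\Cref{lem:subsequence-to-sequence}), extract a convergent subsequence of the argmin aspect-ratio pairs by compactness of $[\gamma, \infty]^2$, argue that the limit lies in $\cM_\gamma^\onestep$, and close via \eqref{eq:rn-deterministic-approximation-reduced-onestep-prop-asymptotics} and continuity on $\cM_\gamma^\onestep$. The paper packages the step that places the limit in the argmin into \Cref{lem:limit-argmins-metricspace} (whose workhorse is \Cref{lem:grid-minimization-metric-space}), whereas you spell it out directly.

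The step you flag as the main obstacle has a gap as written. You compare the grid optimum against grid approximants to an \emph{arbitrary} feasible competitor $(\zeta_1, \zeta_2)$, and invoke ``upper semicontinuity along the approximating sequence'' on the right, citing \Cref{prop:continuity-from-continuous-convergence-rdet} and \Cref{prop:lower-semicontinuity-divergence}. Neither proposition supplies upper semicontinuity at a generic feasible point: the former (and its one-step analogue \Cref{prop:continuity-from-continuous-convergence-rdet-onestep}) yields continuity only where continuous convergence of the random risk is itself part of the hypotheses, and the latter yields \emph{lower} (not upper) semicontinuity at divergence points. If $R^\deter$ jumps upward at some feasible $(\zeta_1,\zeta_2) \notin \cM_\gamma^\onestep$ --- which the stated hypotheses do not preclude --- the $\limsup$ bound on the right-hand side fails. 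The clean fix, and what \Cref{lem:grid-minimization-metric-space} actually exploits, is to compare only against a minimizer $\zeta^\star \in \cM_\gamma^\onestep$: there the assumed continuity gives $R^\deter(\Pi_{\Xi_n}(\zeta^\star);\tf) \to R^\deter(\zeta^\star;\tf) = \inf$, hence the grid minimum tends to the infimum, and combining this with lower semicontinuity at the subsequential limit, $\liminf_l R^\deter(x_{n_{k_l}};\tf) \ge R^\deter(\phi_1^\star, \phi_2^\star;\tf)$, forces $(\phi_1^\star, \phi_2^\star) \in \cM_\gamma^\onestep$.
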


The proof of 
\Cref{lem:deterministic-approximation-reduction-onestep}
follows analogously to that of
\Cref{lem:rn-deterministic-approximation-4-prop-asymptotics}
where we show that even though
the sequence $\{ \bm{\Phi}_n = (p_n / n_{1, \xi_{1,n}^\star}, p_n / n_{2, \xi_{2,n}^\star}) \}_{n \ge 1}$ may not converge,
there exists a subsequence
$\{ \bm{\Phi}_{n_{k_{l}}} \}_{l \ge 1}$ that converges
to some $(\phi_1, \phi_2) \in \cM_{\gamma}^{\onestep}$.
Below we provide some commentary on the assumptions
of \Cref{lem:deterministic-approximation-reduction-onestep}.

\begin{itemize}[leftmargin=*]
\item 
We note that 
assuming lower semicontinuity of $R^\deter(\cdot, \cdot; \tf)$
is a mild assumption.
In particular, 
it does not preclude the possibility that
$R^\deter$ diverges to $\infty$ at several values in the domain
as shown in \Cref{prop:semicontinuity-metricspace}.
For example, the proposition
implies that if $R^\deter(\cdot, \cdot; \tf)$ is continuous
on a set 
except for when $\phi_1 = 1$ or $\phi_2 = 1$,
then $R^\deter$ is lower semicontinuous,
provided $R^\deter$ diverges to $\infty$
when either $\phi_1$ or $\phi_2$ converges to $1$.
The condition of lower semicontinuous
deterministic approximation $R^\deter(\cdot; \cdot; \tf)$
follows from the continuity of the domain
of $R^\deter(\cdot, \cdot; \tf)$
(i.e., points of finite function value).
This is similar to \Cref{prop:lower-semicontinuity-divergence}
discussed in the context of the zero-step predictor.
The formal statement for the one-step
predictor is as follows.
\end{itemize}
\begin{proposition}
    [Verifying lower semicontinuity 
    for diverging risk profiles]
    \label{prop:semicontinuity-metricspace}
    Let $(M, d)$ be a metric space.
    Let $C$ be a closed set.
    Suppose $h : M \to \overline{\RR}$
    is a function such that
    $h(x) < \infty$ for $x \in M \setminus C$,
    and $h(x) = \infty$ for $x \in C$.
    In addition,
    if $h$ restricted to $M \setminus C$
    (denoted by $h|_{M \setminus C}(\cdot)$)
    is continuous,
    and for any sequence $\{ x_n \}_{n \ge 1}$
    that converges to a point in $C$,
    $\{ h(x_n) \}_{n \ge 1}$ converges to $\infty$.
    Then,
    $h$ is lower semicontinuous on $M$.
\end{proposition}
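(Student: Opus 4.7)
My plan is to verify lower semicontinuity using the sequential characterization: for every $x \in M$ and every sequence $\{x_n\}_{n \ge 1} \subset M$ with $x_n \to x$, show that $\liminf_{n \to \infty} h(x_n) \ge h(x)$. I will split into two cases based on whether $x$ lies in $C$ or in its complement, exploiting the fact that $C$ closed implies $M \setminus C$ open.

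The first case, $x \in M \setminus C$, is the easy one. Since $M \setminus C$ is open and $x_n \to x$, there exists $N$ such that $x_n \in M \setminus C$ for all $n \ge N$. Applying the hypothesized continuity of $h|_{M \setminus C}$ to the tail $\{x_n\}_{n \ge N}$ gives $h(x_n) \to h(x)$, so in particular $\liminf_n h(x_n) = h(x)$.

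The second case, $x \in C$ (so $h(x) = \infty$), is where the ``divergence along sequences into $C$'' hypothesis enters. I plan to argue by contradiction: suppose $\liminf_n h(x_n) < \infty$, and pick a subsequence $\{x_{n_k}\}_{k \ge 1}$ with $h(x_{n_k})$ converging to a finite limit $L$. Since $h \equiv \infty$ on $C$, no term $x_{n_k}$ can lie in $C$ beyond finitely many indices, so after passing to a further subsequence I may assume $x_{n_k} \in M \setminus C$ for every $k$. But $x_{n_k} \to x \in C$, so the second standing hypothesis forces $h(x_{n_k}) \to \infty$, contradicting $h(x_{n_k}) \to L < \infty$. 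Hence $\liminf_n h(x_n) = \infty = h(x)$.

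The argument is essentially a bookkeeping exercise in the sequential definition, so I do not anticipate a substantive obstacle. The only subtlety to be careful about is that the contradiction in the second case uses the divergence hypothesis in its correct form: the hypothesis is about sequences in $M$ converging into $C$, and I apply it to the subsequence that, after discarding finitely many terms in $C$, lies entirely in $M \setminus C$ but still converges to $x \in C$. This is fine because the hypothesis is stated for arbitrary sequences, and it immediately rules out any finite cluster value of $h(x_n)$ along the original sequence. A brief remark at the end will note that the same proof shows $\{x \in M : h(x) \le \alpha\}$ is closed for every $\alpha \in \RR$, giving the standard topological formulation of lower semicontinuity if that is preferred.
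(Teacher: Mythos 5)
Your proof is correct, and it takes a genuinely different route from the paper's. You verify the sequential (liminf) characterization of lower semicontinuity directly, splitting on whether the limit point lies in $C$ or in $M \setminus C$, whereas the paper shows that every sublevel set $C_t = \{x : h(x) \le t\}$ (for finite $t$) is closed: it observes $C_t \subseteq M \setminus C$, takes a convergent sequence in $C_t$ with limit $p$, rules out $p \in C$ by the divergence hypothesis, and then uses continuity of $h|_{M\setminus C}$ to get $h(p) \le t$. The two characterizations are equivalent in metric spaces, and both arguments exploit exactly the same three ingredients: $M \setminus C$ is open, $h|_{M \setminus C}$ is continuous, and sequences entering $C$ force $h$ to blow up. Your version is perhaps the more elementary of the two, since it requires no appeal to the equivalence between the sublevel-set and liminf definitions; the paper's version is slightly tighter in exposition.

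One minor simplification you could make: in the second case, the contradiction argument is not actually needed. The divergence hypothesis is stated for arbitrary sequences converging to a point of $C$, so it applies directly to the original sequence $\{x_n\}$ (not just to a subsequence purged of terms in $C$): $x_n \to x \in C$ immediately gives $h(x_n) \to \infty = h(x)$, hence $\liminf_n h(x_n) = \infty$. The extra passage to a subsequence in $M \setminus C$ is belt-and-suspenders; it would only be necessary under the weaker hypothesis that divergence holds merely for sequences lying in $M \setminus C$.
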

\begin{itemize}[leftmargin=*]

\item 
Continuity assumption 
on $R^\deter(\cdot, \cdot; \tf)$
at the argmin set 
$\cM_{\gamma}^{\onestep}$
is also mild.
\Cref{prop:continuity-from-continuous-convergence-rdet-onestep}
below shows that
\eqref{tag:detpar-0}
holding for $(\phi_1, \phi_2)$ in any open set $\cI$
implies continuity of $R^\deter$ on $\cI$.

\end{itemize}
\begin{proposition}
    [Certifying continuity from continuous convergence]
    \label{prop:continuity-from-continuous-convergence-rdet-onestep}
    Let $\cD_{k_{1,m}}$ and $\cD_{k_{2,m}}$ be datasets 
    with $k_{1, m}$ and $k_{2, m}$ observations
    and $p_m$ features,
    and consider one-step ingredient prediction procedure
    $\tf$ trained on $\cD_{k_{1,m}}$ and $\cD_{k_{2,m}}$.
    Fix a open set $\cI \subseteq (0, \infty] \times (0, \infty]$.
    Suppose there exists a function $R^\deter : (0, \infty] \times (0, \infty] \to [0, \infty]$
    such that
    \begin{equation}
        \label{eq:pointiwise-convergen-rdet}
        R(\tf(\cdot; \cD_{k_{1,m}}, \cD_{k_{2,m}}))
        ~\pto~ R^\deter(\phi_1, \phi_2; \tf)
    \end{equation}
    as $k_{1,m}, k_{2,m}, p_m \to \infty$
    and $(p_m / k_{1,m}, p_m / k_{2,m}) \to (\phi_1, \phi_2) \in \cI$.
    Then, $R^\deter(\cdot, \cdot; \tf)$ is continuous on $\cI$.
\end{proposition}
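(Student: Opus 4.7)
The plan is to adapt the one-variable argument from \Cref{prop:continuity-from-continuous-convergence-rdet} to the bivariate setting via a diagonal sequence argument, proceeding by contradiction. Suppose $R^\deter(\cdot, \cdot; \tf)$ fails to be continuous at some $(\phi_1^\star, \phi_2^\star) \in \cI$. Then there exist $\epsilon > 0$ and a sequence $\{(\phi_1^{(k)}, \phi_2^{(k)})\}_{k \ge 1} \subset \cI$ with $(\phi_1^{(k)}, \phi_2^{(k)}) \to (\phi_1^\star, \phi_2^\star)$ such that
\[
    \big| R^\deter(\phi_1^{(k)}, \phi_2^{(k)}; \tf) - R^\deter(\phi_1^\star, \phi_2^\star; \tf) \big| > 2\epsilon
    \quad \text{for every } k.
\]
Openness of $\cI$ guarantees that $(\phi_1^{(k)}, \phi_2^{(k)}) \in \cI$ for all sufficiently large $k$, so the hypothesis \eqref{eq:pointiwise-convergen-rdet} applies at each such point.

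Next I would construct a single diagonal sequence of dataset sizes whose aspect ratios converge to $(\phi_1^\star, \phi_2^\star)$ but along which the risk is forced to track the wrong limit. For each fixed $k$, the hypothesis furnishes a sequence $\{(k_{1,m}^{(k)}, k_{2,m}^{(k)}, p_m^{(k)})\}_{m \ge 1}$ with all three indices diverging and aspect ratios converging to $(\phi_1^{(k)}, \phi_2^{(k)})$, along which $R(\tf(\cdot; \cD_{k_{1,m}^{(k)}}, \cD_{k_{2,m}^{(k)}}))$ converges in probability to $R^\deter(\phi_1^{(k)}, \phi_2^{(k)}; \tf)$. I would pick an index $m(k)$ large enough that: (i) $\min\{k_{1,m(k)}^{(k)}, k_{2,m(k)}^{(k)}, p_{m(k)}^{(k)}\} \ge k$; (ii) each of the two aspect ratios lies within $1/k$ of $(\phi_1^{(k)}, \phi_2^{(k)})$; and (iii) the realized conditional risk lies within $\epsilon/2$ of $R^\deter(\phi_1^{(k)}, \phi_2^{(k)}; \tf)$ with probability at least $1 - 1/k$. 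The diagonal sequence $\{(k_{1,m(k)}^{(k)}, k_{2,m(k)}^{(k)}, p_{m(k)}^{(k)})\}_{k \ge 1}$ then has all three indices diverging and aspect ratios converging to $(\phi_1^\star, \phi_2^\star)$.

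Invoking the hypothesis a second time along this diagonal sequence yields convergence in probability of the realized risk to $R^\deter(\phi_1^\star, \phi_2^\star; \tf)$. Combining this with (iii) via a triangle inequality in a single high-probability event forces $|R^\deter(\phi_1^{(k)}, \phi_2^{(k)}; \tf) - R^\deter(\phi_1^\star, \phi_2^\star; \tf)| \le \epsilon$ for all sufficiently large $k$, which contradicts the $2\epsilon$ separation imposed at the outset. The main subtlety I expect is handling the boundary case $R^\deter(\phi_1^\star, \phi_2^\star; \tf) = +\infty$: there the convergence-in-probability statement must be interpreted in the extended real line (i.e., $\PP(R \ge K) \to 1$ for every $K > 0$), and the failure of continuity must be formulated in the same topology. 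With this interpretation the same diagonal construction goes through by replacing the quantitative $\epsilon$-gap with a boundedness-versus-divergence mismatch between $R^\deter(\phi_1^{(k)}, \phi_2^{(k)}; \tf)$ and the realized diagonal risks.
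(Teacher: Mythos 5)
Your proposal is correct and uses the same contradiction strategy that the paper sketches (and spells out in detail for the univariate analogue, \Cref{prop:continuity-from-continuous-convergence-rdet}). The one genuine structural difference is in how the witnessing sequence of datasets is built: the paper constructs a \emph{staircase} sequence, in which the first $m_1$ datasets carry aspect ratios near $(\phi_{1,1},\phi_{2,1})$, the next block near $(\phi_{1,2},\phi_{2,2})$, and so on, and then shows directly that $R$ fails to converge in probability to $R^\deter(\phi_{1,\infty},\phi_{2,\infty})$ along the block-index subsequence. You instead extract a \emph{diagonal} sequence, one dataset triple per $k$, whose aspect ratios converge to $(\phi_1^\star,\phi_2^\star)$ and whose sample sizes diverge; you then invoke the hypothesis a second time on the diagonal sequence to obtain convergence in probability to $R^\deter(\phi_1^\star,\phi_2^\star)$ and close the contradiction by a triangle inequality on a common high-probability event. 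The two constructions are logically equivalent, but the diagonal version is tighter in two respects: it sidesteps any need to round aspect ratios to exact rational values (the paper's detailed univariate proof introduces a $\QQ$-continuity lift via \Cref{lem:rational-continuity-implies-real-continuity} for precisely this reason, a detour your approach renders unnecessary), and it makes the invocation of the hypothesis on the final contradicting sequence explicit rather than implicit. You also flag and address the boundary case $R^\deter(\phi_1^\star,\phi_2^\star)=+\infty$, where the paper's interval-separation formulation $[R^\deter-2\epsilon, R^\deter+2\epsilon]$ degenerates and the argument must be rephrased in terms of boundedness versus divergence; the paper's sketch does not comment on this. Your plan is sound as presented.
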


Combining the results and the discussion above,
the verification of \eqref{eq:rn-deterministic-approximation-reduced-onestep-prop-asymptotics}
under \ref{asm:prop_asymptotics}
can proceed the following three-point program: 
\begin{enumerate}[label={\rm(PRG-1-C\arabic*)},leftmargin=2cm]
    \item 
    \label{prog:onestep-cont-conv}
    For $(\phi_1, \phi_2)$
    such that $R^\deter(\phi_1, \phi_2; \tf) < \infty$,
    verify that
    for all datasets $\cD_{k_{1,m}}$ and $\cD_{k_{2,m}}$
    with limiting aspect ratios $(\phi_1, \phi_2)$,
    $R(\tf(\cdot, \cdot; \cD_{k_{1,m}, \cD_{k_{2,m}}})) 
    \pto R^\deter(\phi_1, \phi_2; \tf)$.
    \item
    \label{prog:onestep-cont-infty}
    Whenever $R^\deter(\phi_1, \phi_2; \tf) = \infty$, it obeys that 
    \[
        \lim_{(\phi'_1, \phi'_2) \to (\phi_1, \phi_2)} R^\deter(\phi'_1, \phi'_2; \tf)
        = \infty.
    \]
    \item
    \label{prog:onestep-divergence-closedset}
    The set of all points where
    $R^\deter(\phi_1, \phi_2; \tf) = \infty$
    is a closed set.
\end{enumerate}

We will follow these steps
to verify \eqref{eq:rn-deterministic-approximation-reduced-onestep-prop-asymptotics}
for the MN2LS and MN1LS prediction procedures
in \Cref{sec:verifying-deterministicprofiles-onestep}.
But we will first complete
the derivation of the deterministic approximation
to the conditional risk of $\hf^\onestep$
under
\eqref{eq:rn-deterministic-approximation-reduced-onestep-prop-asymptotics}.
Following similar arguments
as those in \Cref{sec:risk-behavior-zerostep}
for the zero-step procedure,
\Cref{lem:deterministic-approximation-reduction-onestep}
along with 
\eqref{eq:onestep-risk-gaurantee}
provides the following monotonization result for the one-step
procedure:

\begin{theorem}
    [Asymptotic risk profile of one-step predictor]
    \label{thm:asymptotic-risk-tuned-one-step}
    For any prediction procedure $\tf$
    suppose 
    \ref{asm:prop_asymptotics},
    either \ref{cond:zerostep-add} or \ref{cond:zerostep-mult},
    and the assumptions of \Cref{lem:deterministic-approximation-reduction-onestep}
    hold true.
    In addition, if the loss function is convex in the second argument,
    then
    for any $M \ge 1$,
    \begin{equation}
        \label{eq:onestep-main-gaurantee}
        \left(
            R(\hf^\onestep; \cD_n)
            -
            \min_{1/\zeta_1 + 1/\zeta_2 \le 1/\gamma}
            R^\deter(\zeta_1, \zeta_2; \tf)
        \right)_{+}
        = o_p(1).
    \end{equation}
\end{theorem}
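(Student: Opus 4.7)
The plan is to closely parallel the proof of \Cref{thm:asymptotic-risk-tuned-zero-step}, with the key novelty being that the discrete minimization over the two-dimensional index set $\Xi_n$ in \eqref{eq:indexset-onestep} must be shown to converge to the continuous minimization over the constraint set $\{(\zeta_1,\zeta_2):1/\zeta_1+1/\zeta_2\le 1/\gamma\}$. The constraint arises naturally because $\cD_\train^{\xi_1,j}$ and $\cD_\train^{\xi_2,j}$ are disjoint subsets of $\cD_\train$, so $n_{1,\xi_1}+n_{2,\xi_2}\le n_\train$, which in aspect-ratio coordinates reads $1/\zeta_1+1/\zeta_2\le 1/\gamma_n$ with $\gamma_n=p/n_\train\to\gamma$.

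First, I would establish inequality \eqref{eq:onestep-risk-gaurantee} rigorously by combining \Cref{prop:general-model-selection-guarantee} with the probabilistic control of $\Delta_n^\add$ or $\Delta_n^\mul$ from \Cref{lem:bounded-orlitz-error-control,lem:bounded-variance-error-control,lem:bounded-orlitz-error-control-mul-form,lem:bounded-variance-error-control-mul-form} (using $|\Xi_n|\le n^2$ and the assumed rate conditions in \ref{cond:zerostep-add} or \ref{cond:zerostep-mult}). The bound on $\min_{\xi\in\Xi_n}R(\hf^\xi)$ by $R^\deter(p/n_{1,\xi_{1,n}^\star},p/n_{2,\xi_{2,n}^\star};\tf)(1+o_p(1))$ follows by picking the index $(\xi_{1,n}^\star,\xi_{2,n}^\star)$ from \eqref{eq:minimizer-sequence-onestep}, applying Jensen's inequality on $\hf^\xi$ (exploiting convexity of $\ell$ in its second argument) to reduce to averages of $R(\tf(\cdot;\cD_\train^{\xi_1^\star,j},\cD_\train^{\xi_2^\star,j}))$, and invoking assumption \eqref{eq:rn-deterministic-approximation-onestep-prop-asymptotics} at $(\xi_{1,n}^\star,\xi_{2,n}^\star)$.

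Next, I would invoke \Cref{lem:deterministic-approximation-reduction-onestep} to obtain \eqref{eq:rn-deterministic-approximation-onestep-prop-asymptotics} from the DETPAR-1 hypothesis of the theorem, after verifying that $(\xi_{1,n}^\star,\xi_{2,n}^\star)$ corresponds (along any subsequence) to a limiting pair $(\phi_1,\phi_2)\in\cM_\gamma^\onestep$. At this point, display \eqref{eq:onestep-risk-gaurantee} yields
\[
R(\hf^\onestep;\cD_n)\le (1+o_p(1))\cdot\min_{(\xi_1,\xi_2)\in\Xi_n}R^\deter(p/n_{1,\xi_1},p/n_{2,\xi_2};\tf).
\]
The final step is to show that this discrete minimum converges to the continuous minimum $R_\gamma^\star:=\min_{1/\zeta_1+1/\zeta_2\le 1/\gamma}R^\deter(\zeta_1,\zeta_2;\tf)$, after which, assuming $R_\gamma^\star<\infty$, the factor $(1+o_p(1))$ collapses to an additive $o_p(1)$ error, yielding the claim.

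The main obstacle will be establishing this last convergence $\min_{\Xi_n}R^\deter\to R_\gamma^\star$. The proof splits into two directions. For the upper bound, given any $(\zeta_1^\star,\zeta_2^\star)\in\cM_\gamma^\onestep$, the spacing of the grid $\{(p/n_{1,\xi_1},p/n_{2,\xi_2}):(\xi_1,\xi_2)\in\Xi_n\}$ is of order $n^{\nu-1}\to 0$, so one can construct $(\xi_{1,n},\xi_{2,n})\in\Xi_n$ with $(p/n_{1,\xi_{1,n}},p/n_{2,\xi_{2,n}})\to(\zeta_1^\star,\zeta_2^\star)$; continuity of $R^\deter$ on $\cM_\gamma^\onestep$ (assumed in \Cref{lem:deterministic-approximation-reduction-onestep}) then gives $R^\deter(p/n_{1,\xi_{1,n}},p/n_{2,\xi_{2,n}};\tf)\to R^\deter(\zeta_1^\star,\zeta_2^\star;\tf)=R_\gamma^\star$. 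A subtlety here is that $(\zeta_1^\star,\zeta_2^\star)$ may lie on the boundary $1/\zeta_1+1/\zeta_2=1/\gamma$, but since $\xi_1-\xi_2\ge 1$ forces a strict interior approach with $(n_{1,\xi_1}+n_{2,\xi_2})/p\to 1/\gamma$, continuity at the boundary point is enough. For the lower bound, the discrete feasible set is contained in $\{(\zeta_1,\zeta_2):1/\zeta_1+1/\zeta_2\le 1/\gamma_n\}$, and using lower semicontinuity of $R^\deter$ together with the convergence $\gamma_n\to\gamma$, a subsequence argument (analogous to the one in the proof of \Cref{lem:rn-deterministic-approximation-4-prop-asymptotics}) shows that any limit point of $\min_{\Xi_n}R^\deter$ is at least $R_\gamma^\star$. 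Combining both directions completes the proof.
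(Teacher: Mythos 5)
Your proposal follows essentially the same architecture as the paper's proof: (i) control $|R(\hf^\onestep) - \min_{\xi\in\Xi_n}R(\hf^\xi)|$ via \Cref{prop:general-model-selection-guarantee} and the $\Delta_n^\add/\Delta_n^\mul$ bounds (using $|\Xi_n|\le n^2$), (ii) pass from $\min_{\xi\in\Xi_n}R(\hf^\xi)$ to $\min_{\xi\in\Xi_n}R^\deter(p/n_{1,\xi_1},p/n_{2,\xi_2};\tf)$ by Jensen + \eqref{eq:rn-deterministic-approximation-onestep-prop-asymptotics} along the minimizing sequence $(\xi_{1,n}^\star,\xi_{2,n}^\star)$, where \Cref{lem:deterministic-approximation-reduction-onestep} supplies \eqref{eq:rn-deterministic-approximation-onestep-prop-asymptotics} from DETPAR-1, and (iii) bridge the discrete grid minimum to $\min_{1/\zeta_1+1/\zeta_2\le 1/\gamma}R^\deter(\zeta_1,\zeta_2;\tf)$ via \Cref{lem:spacefilling-grid-onestep-general} (and implicitly \Cref{lem:grid-minimization-metric-space}) plus continuity of $R^\deter$ on $\cM_\gamma^\onestep$. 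The paper just packages the same three steps as three (or four, for $M>1$) explicit terms in a single $(\cdot)_+$ telescoping decomposition.

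Two small remarks. First, your final ``lower bound'' direction for $\min_{\Xi_n}R^\deter \to R_\gamma^\star$ is superfluous: the theorem asserts only $(R(\hf^\onestep;\cD_n)-R_\gamma^\star)_+=o_p(1)$, which needs $\min_{\Xi_n}R^\deter\le R_\gamma^\star+o(1)$ but not the reverse inequality, so the subtlety you flag about $\gamma_n=p_n/n_\train$ straddling $\gamma$ does not arise for what is being proved (it would only matter for the sharper version \eqref{eq:onestep-main-gaurantee-2}). Second, your attention to the boundary $1/\zeta_1+1/\zeta_2=1/\gamma$ and the fact that the disjointness constraint forces an approach from the strict interior is a genuinely useful observation; the paper's proof of \Cref{lem:spacefilling-grid-onestep-general} handles it via the explicit grid construction, but your phrasing makes the mechanism clearer. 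Modulo the unnecessary lower-bound paragraph, this is a correct proof taking the paper's own route.
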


\Cref{thm:asymptotic-risk-tuned-one-step} hinges on \eqref{eq:rn-deterministic-approximation-onestep-prop-asymptotics}
and continuity of $R^\deter(\cdot, \cdot; \tf)$
which we will verify below in a specific model setting.
Before doing that,
let us briefly remark about the extensions and implications of
\eqref{eq:onestep-main-gaurantee}.

\begin{remark}
    [Exact risk of $\hf^\onestep$]
    For $M = 1$
    under \eqref{eq:rn-deterministic-approximation-onestep-prop-asymptotics},
    \eqref{eq:onestep-main-gaurantee}
    only guarantees that the risk of $\hf^\onestep$
    is bounded above by the minimum in \eqref{eq:onestep-main-gaurantee}.
    Considering a stricter version
    (DETPA-1*)
    of
    \eqref{eq:rn-deterministic-approximation-onestep-prop-asymptotics}
    that requires the $o_p(1)$
    in \eqref{eq:rn-deterministic-approximation-onestep-prop-asymptotics}
    to be uniform over all $(\xi_{1,n}, \xi_{2,n}) \in \Xi_n$,
    conclusion \eqref{eq:onestep-main-gaurantee}
    can be extended to imply for $M = 1$ that
    \begin{equation}
        \label{eq:onestep-main-gaurantee-2}
        \left|
            R(\hf^\onestep; \cD_n)
            -
            \min_{1/\zeta_1 + 1/\zeta_2 \le 1/\gamma}
            R^\deter(\zeta_1, \zeta_2; \tf)
        \right|
        = o_p(1).
    \end{equation}
    This shows that
    the risk of the one-step procedure
    with $M = 1$
    under the stricter assumption of
    (DETPA-1*) is exactly the same
    as the minimum in the display above.
    This is the characterization
    of the risk of the one-step procedure
    in the same vein as
    \eqref{eq:zerostep-exact-risk}
    is the characterization of the risk
    of the zero-step procedure.
\end{remark}

\begin{remark}
    [Monotonicity in the limiting aspect ratio]
    Observe that
    the following map
    \[
        \min_{1/\zeta_1 + 1/\zeta_2 \le 1/\gamma}
        R^\deter(\zeta_1, \zeta_2; \tf)
    \]
    is non-decreasing in $\gamma$.
    This is because
    \[
        \{ (\zeta_1, \zeta_2) : 1/\zeta_1 + 1/\zeta_2 \le 1/\gamma_u \}
        \subseteq
        \{ (\zeta_1, \zeta_2) : 1/\zeta_1 + 1/\zeta_2 \le 1/\gamma_l \}
        \quad
        \text{ for }
        \gamma_l \le \gamma_u,
    \]
    and hence the minimum can only be larger as $\gamma$ increases.
    This implies that
    the risk of the one-step procedure in asymptotically
    bounded above by a monotonically non-decreasing function in $\gamma$
    under the assumptions
    of
    \Cref{thm:asymptotic-risk-tuned-one-step}.
\end{remark}

\begin{remark}
    [Comparison with $\hf^\zerostep$]
    Observe that
    \begin{equation}
        \label{eq:onestep-vs-zerostep}
        \min_{1/\zeta_1 + 1/\zeta_2 \le 1/\gamma}
        R^\deter(\zeta_1, \zeta_2; \tf)
        ~\le~
        \min_{1/\zeta_1 \le 1/\gamma}
        R^\deter(\zeta_1; \tf),
    \end{equation}
    where 
    the left hand side is the asymptotic risk
    of $\hf^\onestep$ (with $M = 1$ and under (DETPA-1*)),
    the right hand side is the asymptotic risk of
    $\hf^\zerostep$
    (with $M = 1$ under \eqref{eq:rn-deterministic-approximation-2-prop-asymptotic-star}).
    Hence,
    under some regularity conditions,
    the one-step procedure is as good as
    the zero-step procedure if not better.
    See \Cref{rem:zerostep-vs-onestep-isotropic} for more details.
    For $M > 1$
    such a comparison is not readily
    plausible from our results.
\end{remark}

\subsubsection
[Verifying deterministic profiles]
{Verification of \eqref{eq:rn-deterministic-approximation-reduced-onestep-prop-asymptotics}}
\label{sec:verifying-deterministicprofiles-onestep}

We now verify the assumption
\eqref{eq:rn-deterministic-approximation-reduced-onestep-prop-asymptotics}
in a specific model setting
when the base prediction procedure
is either  MN2LS or MN1LS.
But first, we provide
a general result
describing the asymptotic
risk profile of
$R(\tf(\cdot; \cD_{k_{1,m}}, \cD_{k_{2,m}}))$
when the base prediction procedure is linear.

Let $\tf$ be a linear base prediction procedure
given by $\tf(x; \cD_{k_{1,m}}) = x^\top \tbeta(\cD_{k_{1,m}})$,
for some $\tbeta(\cD_{k_{1,m}}) \in \RR^{p}$
computed on $\cD_{k_{1,m}}$.
If $\cD_{k_{2,m}} = \{ (X_i, Y_i) : 1 \le i \le k_{2,m} \}$,
the ingredient predictor
$\tf(\cdot; \cD_{k_{1,m}}, \cD_{k_{2,m}})$
for the one-step prediction procedure
is given by
\begin{equation}
    \label{eq:onestep-ingredient-decomp}
    \tf(x; \cD_{k_{1,m}}, \cD_{k_{2,m}})
    = x^\top \tbeta(\cD_{k_{1,m}})
    + x^\top \tbeta_{\mnls}(\{(X_i, Y_i - X_i^\top \tbeta(\cD_{k_{1,m}})): 1 \le i \le k_{2,m} \})).
\end{equation}
The following result characterizes the conditional prediction
risk 
of $\tf(\cdot; \cD_{k_{1,m}}, \cD_{k_{2,m}})$
for the squared error loss
in terms of
the risk behavior of $\tbeta(\cD_{k_{1,m}})$.
This is possible
because the one-step adjustment
is fixed to be the MN2LS prediction procedure 
and its risk behavior can be completely
characterized as done in \Cref{sec:zerostep-overparameterized}.

Consider the setting of
\Cref{prop:asymp-bound-ridge-main}.
Let $\Sigma = W R W^\top$
denote the eigenvalue decomposition of the covariance matrix $\Sigma = \mbox{Cov}(X_0)$,
where 
$R \in \RR^{p_m \times p_m}$ is a diagonal matrix
containing eigenvalues $r_1 \ge r_2 \ge \dots \ge r_{p_m} \ge 0$,
and
$W~\in~\RR^{p_m \times p_m}$ is an orthonormal matrix
containing the corresponding eigenvectors 
$w_1, w_2, \dots, w_{p_m}~\in~\RR^{p_m}$.
In preparation for the statement to follow,
define the following (random) probability distribution 
on $\RR_{\ge 0}$:
\begin{equation}
    \label{eq:generalied-predrisk-distribution-baseprocedure}
    \widehat{Q}_n(r)
    := \frac{1}{R(\tf(\cdot; \cD_{k_{1,m}})) - \sigma^2}
    \sum_{i=1}^{p_m} ((\tbeta(\cD_{k_{1,m}}) - \beta_0)^\top w_i)^2 r_i \1\{ r_i \le r \}.
\end{equation}
Let $H_{p_m}$ denote the empirical spectral distribution
of $\Sigma$, whose value at any $r \in \RR$ is given by
\begin{equation}
    \label{eq:empirical_distribution_eigenvalues_Sigma}
    H_{p_m}(r)
    = \frac{1}{p_m} \sum_{i=1}^{p_m} \1_{\{r_i \le r\}},
\end{equation}
and let $H$ denote the corresponding limiting spectral distribution,
i.e., $H_{p_m} \dto H$ as $p_m \to \infty$.
See \ref{asm:spectrum-spectrumsignproj-conv}
in the proof of \Cref{prop:asymp-bound-ridge-main}
for more details.

\begin{lemma}
    [Continuous convergence of squared risk for one-step procedure]
    \label{lem:one-step-predrisk-decomposition}
    Let $\tf$ be any linear prediction procedure,
    and assume the setting of \Cref{prop:asymp-bound-ridge-main}.
    Let
    $k_{1,m}, k_{2,m}, p_m \to \infty$
    such that 
    $(p_m  / k_{1,m}, p_m / k_{2,m})  \to (\phi_1, \phi_2)$.
    Suppose there exists a deterministic approximation
    $R^\deter(\phi_1; \tf)$
    to the conditional squared prediction risk
    of $\tf(\cdot; \cD_{k_{1,m}})$ such that
    $R(\tf(\cdot; \cD_{k_{1,m}})) \pto R^\deter(\phi_1; \tf)$
    for $\phi_1$ 
    that satisfy
    $R^\deter(\phi_1; \tf) < \infty$.
    Assume the distribution $\widehat{Q}_n$ 
    as defined in \eqref{eq:generalied-predrisk-distribution-baseprocedure}
    converges weakly to a fixed distribution $Q$,
    in probability.
    Then,
    for 
    $\phi_2 \in (0, 1) \cup (1, \infty]$,
    we have
    $
        R(\tf(\cdot; \cD_{k_{1,m}}, \cD_{k_{2,m}}))
        \pto
        R^\deter(\phi_1, \phi_2; \tf)
    $,
    where $R^\deter(\phi_1, \phi_2; \tf)$
    is given by
    \begin{equation}
        \label{eq:detapprox-onestep-general}
        R^\deter(\phi_1, \phi_2; \tf)
        =
        \begin{dcases}
            R^\deter(\phi_1; \tf) & \text{if} ~ \phi_2 = \infty \\
            R^\deter(\phi_1; \tf) \Upsilon_b(\phi_1, \phi_2)  
            + \sigma^2 (1 - \Upsilon_b(\phi_1, \phi_2))
            + \sigma^2 \tv_g(0; \phi_2)  
            & \text{if} ~ \phi_2 \in (1, \infty) \\
            \sigma^2 \left(\frac{1}{1 - \phi_2}\right) & \text{if} ~ \phi_2 \in (0, 1).
        \end{dcases}
    \end{equation}
    Here, the scalars 
    $v(0; \phi_2)$, 
    $\tv(0; \phi_2)$, 
    $\tv_g(0; \phi_2)$,
    and
    $\Upsilon_b(\phi_1, \phi_2)$,
    for $\phi_2 \in (1, \infty)$,
    are defined as follows:
    \begin{itemize}[label={--}]
        \item 
        $v(0; \phi_2)$ is the unique solution to the fixed-point equation:
        \begin{equation}
            \label{eq:fixed-point-v-onestep-main-phi2}
            v(0; \phi_2)
            = \left( \phi_2 \int \frac{r}{v(0; \phi_2) r + 1} \, \mathrm{d}H(r)  \right)^{-1},
        \end{equation}
        \item
        $\tv(0; \phi_2)$ is defined in terms of $v(0; \phi_2)$ by the equation:
        \begin{equation}
            \label{eq:tv-onestep-main-phi2}
            \tv(0; \phi_2)
            = \left( 
             \frac{1}{v(0; \phi_2)^2} 
             - \phi_2 \int \frac{r^2}{(v(0; \phi_2) r + 1)^2} \, \mathrm{d}H(r) \right)^{-1},
        \end{equation}
        \item
        $\tv_g(0; \phi_2)$
        is defined in terms of $v(0; \phi_2)$ and $\tv(0; \phi_2)$ by the equation:
        \begin{equation}
            \label{eq:tvg-onestep-main-phi2}
            \tv_g(0; \phi_2)
            =
            \tv(0; \phi_2)
            \phi_2 \int \frac{r^2}{(v(0; \phi_2) r + 1)^2} \, \mathrm{d}H(r),
        \end{equation}
    \item
    $\Upsilon_b(\phi_1, \phi_2)$
    is defined in terms of $v(0; \phi_2)$
    and $\tv_g(0; \phi_2)$ by the equation:
    \begin{equation}
        \label{eq:def-upsilonb-upsilonv}
        \Upsilon_b(\phi_1, \phi_2) 
        =  
        (1 + \tv_g(0; \phi_2))
        \int \frac{1}{(v(0; \phi_2) r + 1)^2} \, \mathrm{d}Q(r).
    \end{equation}
    \end{itemize}
\end{lemma}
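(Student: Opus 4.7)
The plan is to decompose the squared prediction risk of the one-step ingredient predictor conditionally on the first dataset $\cD_{k_{1,m}}$, so that the base estimator $\hbeta_1 := \tbeta(\cD_{k_{1,m}})$ is fixed while the one-step adjustment $\hbeta_2 := \tbeta_{\mnls}(\{(X_i, r_i) : i \in \cI_2\})$ operates on residuals $r_i := Y_i - X_i^\top \hbeta_1 = X_i^\top(\beta_0 - \hbeta_1) + \eps_i$ coming from $\cD_{k_{2,m}}$. Under the squared-error loss in the setting of \Cref{prop:asymp-bound-ridge-main}, the conditional prediction risk reduces to $\sigma^2 + \|(\hbeta_1 + \hbeta_2) - \beta_0\|_\Sigma^2 = \sigma^2 + \|\hbeta_2 - (-u)\|_\Sigma^2$, with $u := \hbeta_1 - \beta_0$. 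Hence the adjustment is precisely MN2LS on a regression problem with signal $-u$, noise $\eps$, and design taken from the second split; what remains is to characterize its excess risk in each aspect-ratio regime. Crucially, the independence of $\cD_{k_{1,m}}$ and $\cD_{k_{2,m}}$ means that, conditional on the first split, $u$ is a deterministic vector with respect to the second-split randomness.

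I would then split into the three cases based on $\phi_2$. When $\phi_2 = \infty$ we have $k_{2,m}/p_m \to 0$, so the adjustment is negligible in $\Sigma$-norm and the risk collapses to $R^\deter(\phi_1; \tf)$. When $\phi_2 \in (0,1)$, $k_{2,m} > p_m$ asymptotically and MN2LS coincides with OLS on the second split; a direct calculation gives $\hbeta_1 + \hbeta_2 - \beta_0 = \widehat\Sigma_2^{-1} k_{2,m}^{-1} \sum_{i \in \cI_2} X_i \eps_i$, and standard Marchenko--Pastur-type arguments applied conditionally yield $\|\hbeta_1 + \hbeta_2 - \beta_0\|_\Sigma^2 \pto \sigma^2 \phi_2/(1-\phi_2)$, giving total risk $\sigma^2/(1-\phi_2)$ as asserted. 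For the main case $\phi_2 \in (1,\infty)$, I would decompose $\|\hbeta_2 - (-u)\|_\Sigma^2$ into a bias contribution that depends on $u$ through its spectral alignment with $\Sigma$, and a variance contribution $\sigma^2 \tv_g(0;\phi_2)$ that is independent of $u$ and that follows from the classical asymptotic variance of MN2LS noise fluctuations (e.g.\ Hastie et al.). The bias contribution for an anisotropic signal $-u$ will admit a deterministic equivalent involving the resolvent $(I + v(0;\phi_2)\widehat\Sigma_2)^{-1}$, and this is where $\widehat{Q}_n$ enters naturally: the weighting $((\hbeta_1 - \beta_0)^\top w_i)^2 r_i$ is exactly the profile that the bias formula for MN2LS integrates against the resolvent kernel.

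The main obstacle will be establishing the anisotropic deterministic equivalent
\begin{equation*}
    u^\top (I + v(0;\phi_2)\widehat\Sigma_2)^{-1} \Sigma (I + v(0;\phi_2)\widehat\Sigma_2)^{-1} u
    \;\asympequi\;
    (1 + \tv_g(0;\phi_2)) \|u\|_\Sigma^2 \int \frac{1}{(v(0;\phi_2) r + 1)^2} \, \mathrm{d}\widehat{Q}_n(r),
\end{equation*}
so that the right-hand side depends on the second-split design only through the limiting spectrum $H$ and on the base predictor only through the (weakly converging) profile $\widehat{Q}_n$. Standard results on MN2LS bias usually require either a rotationally invariant signal or a deterministic energy profile aligned with $\Sigma$; here $u$ is random and its alignment with $\widehat\Sigma_2$'s eigenbasis is characterized only asymptotically through $\widehat{Q}_n$. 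I would invoke anisotropic local laws of the flavor used in the ridge-resolvent literature, exploiting independence of $u$ from $\cD_{k_{2,m}}$, and then pass to the limit using the assumed convergence $\widehat{Q}_n \dto Q$ (in probability) together with continuity and uniform integrability of the kernel $r \mapsto (v(0;\phi_2) r + 1)^{-2}$ on the support of $Q$. Combining the bias and variance limits with the conditional convergence $\|u\|_\Sigma^2 = R(\tf(\cdot;\cD_{k_{1,m}})) - \sigma^2 \pto R^\deter(\phi_1;\tf) - \sigma^2$, and using a standard subsequence-plus-Slutsky argument to lift conditional in-probability convergence to unconditional in-probability convergence, then yields the expression for $R^\deter(\phi_1,\phi_2;\tf)$ in \eqref{eq:detapprox-onestep-general} and completes the proof.
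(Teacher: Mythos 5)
Your overall strategy---condition on $\cD_{k_{1,m}}$, treat the one-step adjustment as MN2LS on a fresh regression problem with signal $-u = \beta_0 - \hbeta_1$ and noise $\eps$, get a deterministic equivalent for the resulting bias quadratic form over the second-split randomness, identify $\widehat Q_n$ as the spectral profile of $u$, and then pass $\widehat Q_n \dto Q$---matches the paper's two-stage plan (Lemmas~\ref{lem:onestep-ingredient-simplifications}--\ref{lem:conditional-deterministic-convergence-onestep} followed by the limit argument). However, the deterministic equivalent you designate as the ``main obstacle'' is mis-stated in a way that matters. The quadratic form that actually arises in the bias is
\[
u^\top\bigl(I_p - \hSigma_2^\dagger \hSigma_2\bigr)\,\Sigma\,\bigl(I_p - \hSigma_2^\dagger\hSigma_2\bigr)u,
\]
i.e.\ the MN2LS annihilator acting on $u$, not $u^\top(I + v(0;\phi_2)\hSigma_2)^{-1}\Sigma(I + v(0;\phi_2)\hSigma_2)^{-1}u$. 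These are genuinely different objects: $(I + v\hSigma_2)^{-1}$ equals $\lambda'(\hSigma_2 + \lambda' I_p)^{-1}$ with $\lambda' = 1/v$, whose deterministic equivalent involves $v(-\lambda')$ rather than $v(0;\phi_2)$, so the equivalence you displayed is not one that holds, nor is it the one you need. What is needed is the $\lambda \to 0^+$ limit of the \emph{ridge}-resolvent equivalents: $\lambda^2(\hSigma_2 + \lambda I_p)^{-1}\Sigma(\hSigma_2 + \lambda I_p)^{-1} \to (I_p - \hSigma_2^\dagger\hSigma_2)\Sigma(I_p - \hSigma_2^\dagger\hSigma_2)$, and on the deterministic side this produces $(1 + \tv_g(0;\phi_2))(v(0;\phi_2)\Sigma + I_p)^{-1}\Sigma(v(0;\phi_2)\Sigma + I_p)^{-1}$, a matrix function of the \emph{population} covariance. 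Establishing this ridgeless-limit equivalence (uniformly in $\lambda$ near $0$) is the real content; only after replacing the random projector by $g(\Sigma)$ can you rewrite $u^\top g(\Sigma) u = \|u\|_\Sigma^2\int \tfrac{g(r)}{r}\,\mathrm{d}\widehat Q_n(r)$ and invoke weak convergence.

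Two smaller points. First, the hypothesis $R(\tf(\cdot;\cD_{k_{1,m}}))\pto R^\deter(\phi_1;\tf)<\infty$ plus $\Sigma\succeq r_{\min}I_p$ gives $\limsup\|u\|_2<\infty$ almost surely, which is the explicit boundedness condition needed to trigger the concentration step over the second split; your proposal uses it implicitly but should state it. Second, uniform integrability of the kernel is more than you need: the limiting spectral distribution has compact support bounded away from $0$ and $\infty$ under the stated assumptions, so continuity of $r\mapsto (v(0;\phi_2)r + 1)^{-2}$ together with Portmanteau already gives $\int\cdot\,\mathrm{d}\widehat Q_n \pto \int\cdot\,\mathrm{d}Q$.
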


\Cref{lem:one-step-predrisk-decomposition}
provides a deterministic risk approximation
for the ingredient one-step predictor
$\tf(\cdot; \cD_{k_{1,m}}, \cD_{k_{2,m}})$
in terms of
the deterministic risk approximation
of the base prediction procedure $\tf$.
In case of isotropic covariates,
i.e., $\Sigma = I_{p_m}$,
the distribution $H$
is degenerate at $1$,
and $R^\deter(\phi_1, \phi_2; \tf)$
can be simplified
because
$\Upsilon_b(\phi_1, \phi_2) = (1 - 1 / \phi_2)$,
and $\tv_g(0; \phi_2) = 1 / (\phi_2 - 1)$.
See the proof of \Cref{prop:verif-riskprofile-mn1lsbase-mn2lsonestep}
for more details.

Note that
the assumed limiting distribution
$Q$ in general depends on $\phi_1$, $\phi_2$,
and hence $\Upsilon_b(\phi_1, \phi_2)$ is in general a function
of $\phi_1$, $\phi_2$, and the distribution
of the data.
On the other hand,
$v(0; \phi_2)$ defined in \eqref{eq:fixed-point-v-onestep-main-phi2},
is a function of $\phi_2$ alone,
and hence $\tv_g(0; \phi_2)$ is just a function of $\phi_2$.
Furthermore,
it can be verified that
$\tv_g(0; \cdot)$ is a continuous function
on $(1, \infty)$ and
$\lim_{\phi_2 \to 1^{+}} \tv_g(0; \phi_2) = \infty$;
see 
\Cref{lem:fixed-point-v-properties}~\eqref{lem:fixed-point-v-properties-item-tvg-properties}.
This implies
that $R^\deter(\phi_1, \phi_2; \tf)$
satisfies 
\ref{prog:onestep-cont-conv}--\ref{prog:onestep-divergence-closedset},
if
the base prediction procedure
satisfies
\ref{prog:zerostep-cont-infty}.
Hence,
any prediction procedure
that can be used for zero-step
can also be used for one-step
as long as
the convergence assumption on $\widehat{Q}_n$
is satisfied.
We make this precise in the following result.

\begin{corollary}
    [Verification of one-step deterministic profile program]
    \label{cor:verif-onestep-program}
    Assume the setting of 
    \Cref{lem:one-step-predrisk-decomposition}.
    In addition,
    suppose
    $R^\deter(\phi_1; \tf)$
    satisfies \ref{prog:zerostep-cont-infty}.
    Then,
    $\tf(\cdot; \cD_{k_{1,m}}, \cD_{k_{2,m}})$
    satisfies
    \ref{prog:onestep-cont-conv}--\ref{prog:onestep-divergence-closedset}
    and hence satisfies
    \eqref{eq:rn-deterministic-approximation-reduced-onestep-prop-asymptotics}.
\end{corollary}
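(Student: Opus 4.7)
The plan is to apply \Cref{lem:one-step-predrisk-decomposition} to obtain the explicit three-piece formula \eqref{eq:detapprox-onestep-general} for the limiting risk $R^\deter(\phi_1, \phi_2; \tf)$, and then check each of \ref{prog:onestep-cont-conv}, \ref{prog:onestep-cont-infty}, \ref{prog:onestep-divergence-closedset} by case analysis on the three regimes $\phi_2 \in (0,1)$, $\phi_2 \in (1,\infty)$, and $\phi_2 = \infty$.

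First, \ref{prog:onestep-cont-conv} is immediate: whenever $R^\deter(\phi_1, \phi_2; \tf) < \infty$ one necessarily has $\phi_2 \ne 1$ and $R^\deter(\phi_1; \tf) < \infty$, so the hypotheses of \Cref{lem:one-step-predrisk-decomposition} hold and yield the stated convergence in probability.

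For \ref{prog:onestep-cont-infty}, I would first catalogue the set $\cD := \{(\phi_1^\star, \phi_2^\star) : R^\deter(\phi_1^\star, \phi_2^\star; \tf) = \infty\}$ by reading off \eqref{eq:detapprox-onestep-general}. The only sources of blow-up are: (i) $\phi_2^\star = 1$, via $\sigma^2 / (1 - \phi_2) \to \infty$ as $\phi_2 \uparrow 1$, and via $\tv_g(0;\phi_2) \to \infty$ as $\phi_2 \downarrow 1$ (the latter being the content of the $\tv_g$-properties referenced from \Cref{lem:fixed-point-v-properties}); and (ii) $\phi_1^\star$ lying in the base divergence set $\cD_0 := \{\phi_1 : R^\deter(\phi_1; \tf) = \infty\}$ with $\phi_2^\star \in (1, \infty]$, via the product $R^\deter(\phi_1; \tf) \, \Upsilon_b(\phi_1, \phi_2)$. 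For any sequence $(\phi_{1,n}, \phi_{2,n}) \to (\phi_1^\star, \phi_2^\star) \in \cD$, I would split along the subsequences $\phi_{2,n} < 1$ and $\phi_{2,n} > 1$ and conclude divergence from: continuity of $\phi_2 \mapsto \sigma^2 / (1 - \phi_2)$ on $(0,1)$; continuity of $\phi_2 \mapsto \tv_g(0; \phi_2)$ on $(1, \infty)$ together with its blow-up at $1^+$; and \ref{prog:zerostep-cont-infty} applied to the base procedure, which transfers $R^\deter(\phi_{1,n}; \tf) \to \infty$ through the product in \eqref{eq:detapprox-onestep-general}.

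Finally, \ref{prog:onestep-divergence-closedset} follows by writing $\cD$ as the union of the closed line $\{\phi_2 = 1\}$ and the product-type set $\cD_0 \times ((0,1) \cup (1, \infty])$, intersected with the domain; the closedness of $\cD_0$ is exactly what \ref{prog:zerostep-cont-infty} furnishes for the base procedure (cf.\ the role of \Cref{prop:lower-semicontinuity-divergence}). The union of two closed sets being closed then gives the claim. The main obstacle I anticipate lies in case (ii) of \ref{prog:onestep-cont-infty}: to propagate divergence of $R^\deter(\phi_1; \tf)$ through the product one needs $\Upsilon_b(\phi_1, \phi_2)$ in \eqref{eq:def-upsilonb-upsilonv} to be bounded below by a positive constant in a neighborhood of $(\phi_1^\star, \phi_2^\star)$. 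This requires arguing that $v(0; \phi_2)$ is continuous in $\phi_2$ on $(1, \infty)$, that the limiting distribution $Q$ of $\widehat{Q}_n$ places positive mass on the essential support of $H$, and that the integrand $(v(0; \phi_2) r + 1)^{-2}$ is uniformly bounded below on compact $\phi_2$-intervals inside $(1,\infty)$; this step is the most technically delicate and depends on transporting the convergence assumption on $\widehat{Q}_n$ into a uniform lower bound for $\Upsilon_b$.
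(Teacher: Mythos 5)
Your approach matches the paper's: both verify the three-point program \ref{prog:onestep-cont-conv}--\ref{prog:onestep-divergence-closedset} by case analysis from the explicit one-step profile in \eqref{eq:detapprox-onestep-general}, catalogue the divergence set as $\{\phi_2 = 1\}$ together with $\phi_1$ in the base divergence set, and invoke the fixed-point properties (\Cref{lem:fixed-point-v-properties}) for the boundary behavior.

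The concern you raise in the final paragraph about lower-bounding $\Upsilon_b(\phi_1, \phi_2)$ is the right thing to worry about, but it resolves much more cheaply than you suggest. Under the standing assumptions inherited from \Cref{lem:one-step-predrisk-decomposition} (ultimately \ref{asm:covariance-bounded-eigvals}), the eigenvalues of $\Sigma$ lie in $[r_{\min}, r_{\max}]$ with $r_{\min} > 0$, which forces $\widehat Q_n$ --- and hence the weak limit $Q$ --- to be supported on $[r_{\min}, r_{\max}]$. With $v(0;\phi_2)$ finite and continuous on $(1,\infty)$ (\Cref{lem:fixed-point-v-properties}~\eqref{lem:fixed-point-v-properties-item-v-properties}), one gets $\int (1 + v(0;\phi_2) r)^{-2}\,\mathrm{d}Q(r) \ge (1 + v(0;\phi_2) r_{\max})^{-2} > 0$, and since $1 + \tv_g(0;\phi_2) \ge 1$, $\Upsilon_b$ is strictly positive and continuous on $(1,\infty]$ (this is the content of \Cref{lem:fixed-point-v-properties}~\eqref{lem:fixed-point-v-properties-Upsilonb-properties}). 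You do not need to transport any uniform-convergence control on $\widehat Q_n$, nor do you need $Q$ to charge the essential support of $H$; the compact-support structure of $Q$ alone suffices. One small bookkeeping slip: in the closedness argument you write the product factor as $(0,1)\cup(1,\infty]$, whereas your own case (ii) catalogues divergence only for $\phi_2 \in (1,\infty]$ when $\phi_1 \in \cD_0$ (for $\phi_2 < 1$ the profile $\sigma^2\phi_2/(1-\phi_2)$ is finite irrespective of $\phi_1$). Either formulation, after taking the union with the closed line $\{\phi_2 = 1\}$, yields a closed set --- the line absorbs the missing boundary at $\phi_2 = 1$ --- so the conclusion stands, but the two descriptions are not the same set.
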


Therefore,
the prediction procedures
mentioned in 
\Cref{rem:prop_asymptotics_risk_examples}
can be easily shown
to satisfy \eqref{eq:rn-deterministic-approximation-reduced-onestep-prop-asymptotics}.
Although we assume that $\widehat{Q}_n$
converges weakly to $Q$ in probability,
we only need in probability
convergence of
$\int f(r) \, \mathrm{d} \widehat{Q}_n(r)$
to $\int f(r) \, \mathrm{d} Q(r)$
for $f(r) = r / (v(0; \phi_2) r + 1)^2$,
which is a weaker requirement.
Intuitively,
this assumption
comes from
the representation
of $\tf(x; \cD_{k_{1,m}}, \cD_{k_{2,m}})$
in \eqref{eq:onestep-ingredient-decomp}
as $\tf(x; \cD_{k_{1,m}}, \cD_{k_{2,m}})
= x^\top \widehat{A} \tbeta(\cD_{k_{1,m}})
+ x^\top \tbeta_{\mnls}(\cD_{k_{2,m}})$
for some random matrix $\widehat{A}$;
see \Cref{lem:onestep-ingredient-simplifications}.
Hence, the risk
of $\tf$
can be written in terms of
a weighted prediction error
of $\tbeta(\cD_{k_{1,m}})$
with the weights
depending on $f(\cdot)$;
see \eqref{eq:generalized_prediction_risk_onestep}.

\begin{proposition}
    [Verification of 
    \eqref{eq:rn-deterministic-approximation-reduced-onestep-prop-asymptotics}
    for the MN2LS base procedure]
    \label{prop:verif-riskprofile-mnlsbase-mnlsonestep}
    Assume the setting of
    \Cref{prop:asymp-bound-ridge-main}.
    Then, the one-step ingredient predictor
    constructed from the MN2LS base prediction procedure
    satisfies
    \eqref{eq:rn-deterministic-approximation-reduced-onestep-prop-asymptotics}.
\end{proposition}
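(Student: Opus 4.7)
The plan is to apply \Cref{cor:verif-onestep-program} with $\tf = \tf_{\mnls}$. Three ingredients are needed: (i) linearity of the base procedure (immediate from \eqref{eq:mn2ls-predictor}); (ii) existence of a deterministic approximation $R^\deter(\phi_1; \tf_{\mnls})$ that fulfills \ref{prog:zerostep-cont-conv} and \ref{prog:zerostep-cont-infty}; and (iii) weak convergence in probability of the weighted spectral measure $\widehat{Q}_n$ defined in \eqref{eq:generalied-predrisk-distribution-baseprocedure} to a fixed probability distribution $Q$. Items (i)--(ii) are already handled by \Cref{prop:asymp-bound-ridge-main}, so the entire proof reduces to establishing (iii).

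For the MN2LS estimator trained on $\cD_{k_{1,m}}$, the standard bias--variance decomposition yields
\[
    \tbeta_{\mnls}(\cD_{k_{1,m}}) - \beta_0
    \;=\;
    (\hSigma^\dagger \hSigma - I_{p_m}) \beta_0
    \;+\;
    \hSigma^\dagger \Big(\tfrac{1}{k_{1,m}} \sum_{i=1}^{k_{1,m}} X_i \eps_i\Big),
    \qquad
    \hSigma = \tfrac{1}{k_{1,m}} \sum_{i=1}^{k_{1,m}} X_i X_i^\top.
\]
Substituting this into the numerator of \eqref{eq:generalied-predrisk-distribution-baseprocedure} and expanding, each weighted projection $((\tbeta_{\mnls} - \beta_0)^\top w_i)^2 r_i$ decomposes into a bias term, a variance term, and a cross term. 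Under the Gaussian design (or more generally the moment equivalence assumptions) of Theorem 3 of \cite{hastie_montanari_rosset_tibshirani_2019}, the cross term vanishes in probability after summation, while the bias and variance sums can be analyzed using resolvent identities for $(\hSigma + \lambda I)^{-1}$ and passing $\lambda \to 0^{+}$, mirroring the calculations already carried out in the proof of \Cref{prop:asymp-bound-ridge-main}. The result is that for every $r \ge 0$,
\[
    \sum_{i=1}^{p_m} ((\tbeta_{\mnls}(\cD_{k_{1,m}}) - \beta_0)^\top w_i)^2 \, r_i \1_{\{r_i \le r\}}
    \;\pto\;
    G(r; \phi_1),
\]
where $G(\cdot; \phi_1)$ is a deterministic, non-decreasing, bounded function of $r$ built from the limiting spectral distribution $H$ and the companion Stieltjes transform at the origin. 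Dividing by the scalar $R(\tf_{\mnls}(\cdot; \cD_{k_{1,m}})) - \sigma^2$, which also converges in probability to a deterministic strictly positive limit by \Cref{prop:asymp-bound-ridge-main} (for $\phi_1 \ne 1$), and invoking Slutsky's theorem together with the portmanteau lemma, we obtain $\widehat{Q}_n \dto Q$ in probability, where $Q$ is a proper probability distribution on $\RR_{\ge 0}$.

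Once (iii) is in hand, \Cref{lem:one-step-predrisk-decomposition} yields the pointwise in probability convergence of the conditional risk of the one-step ingredient predictor to $R^\deter(\phi_1, \phi_2; \tf_{\mnls})$ for $(\phi_1, \phi_2)$ satisfying $\phi_1 \ne 1$ and $\phi_2 \ne 1$, i.e., \ref{prog:onestep-cont-conv}. The blow-up conditions \ref{prog:onestep-cont-infty}--\ref{prog:onestep-divergence-closedset} at $\phi_1 = 1$ follow from the corresponding property \ref{prog:zerostep-cont-infty} of the base MN2LS procedure, while the blow-up at $\phi_2 = 1$ follows from $\tv_g(0; \phi_2) \to \infty$ as $\phi_2 \to 1^{+}$ (together with the explicit formula in \eqref{eq:detapprox-onestep-general}), as established in \Cref{lem:fixed-point-v-properties}. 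Since these divergent sets are closed by inspection, \Cref{cor:verif-onestep-program} applies and delivers \eqref{eq:rn-deterministic-approximation-reduced-onestep-prop-asymptotics} for $\tf = \tf_{\mnls}$.

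The principal obstacle in this plan is step (iii). While the scalar convergence of the excess risk is already known from \cite{hastie_montanari_rosset_tibshirani_2019}, the convergence of the full \emph{weighted} spectral measure $\widehat{Q}_n$ requires a deterministic equivalent not just for $\tr(A\, \mathrm{err}\, \mathrm{err}^\top)$ with $A = \Sigma$, but for $\tr(A_r \, \mathrm{err}\, \mathrm{err}^\top)$ with $A_r = \Sigma \1_{\{\Sigma \le r\}}$ uniformly in $r$. Establishing this uniform-in-$r$ convergence, and in particular confirming that the limit $G(\cdot; \phi_1)$ is non-decreasing and bounded so that the normalization produces a bona fide probability measure, is the main technical work; it ultimately rests on anisotropic local laws for sample covariance matrices, of the kind used in the ridge-regression literature.
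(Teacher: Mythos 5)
You correctly identify the reduction: linearity of the MN2LS procedure, the existence of $R^\deter(\cdot;\tf_{\mnls})$ satisfying \ref{prog:zerostep-cont-conv}--\ref{prog:zerostep-cont-infty} (from \Cref{prop:asymp-bound-ridge-main}), and the convergence $\widehat{Q}_n \dto Q$ in probability are exactly the hypotheses of \Cref{lem:one-step-predrisk-decomposition} needed to invoke \Cref{cor:verif-onestep-program}. The gap is entirely in your step (iii), and you yourself flag it at the end: you try to prove convergence of the random distribution function $r \mapsto \sum_i ((\tbeta_{\mnls}-\beta_0)^\top w_i)^2\, r_i \1\{r_i \le r\}$, i.e.\ you test against the discontinuous, $r$-parameterized family $A_r = \Sigma\,\1\{\Sigma \le r\}$. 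That route really would require anisotropic local laws and uniform-in-$r$ control, and the paper never goes there.

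The paper's proof avoids the obstacle by applying the Portmanteau theorem rather than the CDF characterization of weak convergence: it fixes a single continuous bounded test function $f$ (eventually $f(r) = e^{itr}$), observes that
\[
\int f(r)\,\mathrm{d}\widehat{Q}_n(r)
\]
is, up to the scalar normalizer $R(\tf_{\mnls}(\cdot;\cD_{k_{1,m}}))-\sigma^2$ (which converges in probability by \Cref{prop:asymp-bound-ridge-main}), exactly a generalized quadratic form $(\tbeta_{\mnls}-\beta_0)^\top f(\Sigma) (\tbeta_{\mnls}-\beta_0)$ with weight matrix $f(\Sigma)$. Because $f$ is bounded and $\Sigma$ has compactly supported spectrum, $f(\Sigma)$ commutes with $\Sigma$ and has bounded operator norm, so the decomposition into bias and variance functionals and the deterministic equivalents of \Cref{cor:limiting-resolvents-mn2ls} apply verbatim with $A = f(\Sigma)$ in place of $\Sigma$. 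The result is pointwise-in-$f$ (or pointwise-in-$t$) almost sure convergence of $\int f\,\mathrm{d}\widehat{Q}_n$ to an explicit limit depending only on $H$, $G$, $v(0;\phi_1)$, $\tv(0;\phi_1)$, $\tv_g(0;\phi_1)$, and this family of scalar limits identifies a fixed $Q$ and certifies $\widehat{Q}_n \dto Q$ in probability. No uniformity over $r$ (or over $f$) is needed, and no new random-matrix input beyond what the paper has already proved for \Cref{prop:asymp-bound-ridge-main} is used. So the architecture of your argument matches the paper, but your chosen technique for (iii) is both unnecessary and would indeed demand substantially stronger tools than the paper deploys.

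Two small additional remarks. First, you write "under the Gaussian design (or more generally the moment equivalence assumptions)"; the setting of Theorem 3 of \cite{hastie_montanari_rosset_tibshirani_2019} is the RMT-type design $X_i = \Sigma^{1/2} Z_i$ with i.i.d.\ entries and bounded moments, not Gaussian. Second, once (iii) is in hand, the blow-up conditions \ref{prog:onestep-cont-infty}--\ref{prog:onestep-divergence-closedset} at $\phi_2 = 1$ and $\phi_1 = 1$ do not need to be re-established by hand here: they are proved once and for all inside \Cref{cor:verif-onestep-program} (using \Cref{lem:fixed-point-v-properties} and the assumption that the base $R^\deter$ satisfies \ref{prog:zerostep-cont-infty}), so the proof of the present proposition should not repeat that work.
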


\begin{proposition}
    [Verification of 
    \eqref{eq:rn-deterministic-approximation-reduced-onestep-prop-asymptotics}
    for the MN1LS base procedure]
    \label{prop:verif-riskprofile-mn1lsbase-mn2lsonestep}
    Assume the setting of 
    \Cref{prop:asymp-verif-mn1ls}.
    Then, the one-step ingredient predictor
    constructed from the MN1LS base prediction
    procedure satisfies
    \eqref{eq:rn-deterministic-approximation-reduced-onestep-prop-asymptotics}.
\end{proposition}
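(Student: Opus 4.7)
The plan is to apply \Cref{cor:verif-onestep-program} with $\tf = \tf_{\mnla}$, which reduces the verification of \eqref{eq:rn-deterministic-approximation-reduced-onestep-prop-asymptotics} to verifying: (a) that the MN1LS base procedure admits a deterministic risk approximation $R^\deter(\phi_1; \tf_{\mnla})$ satisfying \ref{prog:zerostep-cont-conv}--\ref{prog:zerostep-cont-infty}; and (b) that the (random) distribution $\widehat{Q}_n$ defined in \eqref{eq:generalied-predrisk-distribution-baseprocedure} converges weakly, in probability, to a fixed distribution $Q$. Piece (a) is already given to us, at least implicitly, by \Cref{prop:asymp-verif-mn1ls}, which establishes \ref{prog:zerostep-cont-conv} for $\phi_1 \neq 1$ and \ref{prog:zerostep-cont-infty} at $\phi_1 = 1$, under the assumptions inherited from Theorem 2 of \cite{li_wei_2021}.

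The bulk of the work therefore lies in piece (b). First, I would note that the setting of Theorem 2 of \cite{li_wei_2021} assumes isotropic Gaussian features, so $\Sigma = I_{p_m}$ and all eigenvalues equal one; equivalently, the limiting spectral distribution is $H = \delta_1$. With $r_i \equiv 1$ in the definition \eqref{eq:generalied-predrisk-distribution-baseprocedure} of $\widehat{Q}_n$, the denominator simplifies: $R(\tf_{\mnla}(\cdot; \cD_{k_{1,m}})) - \sigma^2$ reduces to $\|\tbeta_{\mnla}(\cD_{k_{1,m}}) - \beta_0\|_2^2$, and the numerator becomes $\sum_i ((\tbeta_{\mnla} - \beta_0)^\top w_i)^2 \1\{1 \le r\}$ which, by Parseval, is again $\|\tbeta_{\mnla} - \beta_0\|_2^2 \1\{1 \le r\}$. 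Hence $\widehat{Q}_n = \delta_1$ almost surely (on the event that the denominator is nonzero, which happens with probability tending to one under the Li--Wei regularity assumptions guaranteeing a positive risk excess), and the required weak limit is $Q = \delta_1$ trivially.

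Once (a) and (b) are in place, I would apply \Cref{lem:one-step-predrisk-decomposition} to obtain pointwise convergence of $R(\tf(\cdot; \cD_{k_{1,m}}, \cD_{k_{2,m}}))$ to the explicit expression $R^\deter(\phi_1, \phi_2; \tf_{\mnla})$ in \eqref{eq:detapprox-onestep-general}. In the isotropic case the fixed-point quantities simplify explicitly: $v(0; \phi_2) = 1/(\phi_2 - 1)$, $\tv_g(0; \phi_2) = 1/(\phi_2 - 1)$, and $\Upsilon_b(\phi_1, \phi_2) = 1 - 1/\phi_2$ (using that $Q = \delta_1$), yielding a clean closed-form profile. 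Continuity of $R^\deter(\phi_1, \phi_2; \tf_{\mnla})$ on its domain of finiteness then follows from continuity of each factor, and divergence along the lines $\phi_1 = 1$ and $\phi_2 = 1$ is inherited from \Cref{prop:asymp-verif-mn1ls} and from $\tv_g(0; \phi_2) \to \infty$ as $\phi_2 \to 1^+$, respectively. This verifies \ref{prog:onestep-cont-conv}--\ref{prog:onestep-divergence-closedset}, and \Cref{cor:verif-onestep-program} then delivers \eqref{eq:rn-deterministic-approximation-reduced-onestep-prop-asymptotics}.

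The main obstacle I anticipate is not the algebra but a subtle measurability/probability issue in piece (b): one must ensure that the (random) excess risk denominator $R(\tf_{\mnla}(\cdot; \cD_{k_{1,m}})) - \sigma^2$ is bounded away from zero with probability tending to one, so that $\widehat{Q}_n$ is well-defined and the degenerate identification $\widehat{Q}_n = \delta_1$ is legitimate. Under the Li--Wei setup, the deterministic limit $R^\deter(\phi_1; \tf_{\mnla}) - \sigma^2$ is strictly positive for $\phi_1 \neq 1$ (reflecting a genuine excess risk), so this event has asymptotic probability one; a short continuity argument, together with the convergence in \Cref{prop:asymp-verif-mn1ls}, closes this gap. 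All other steps are routine invocations of the results already established in the paper.
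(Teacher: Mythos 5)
Your proposal matches the paper's proof: both invoke \Cref{cor:verif-onestep-program} (via \Cref{lem:one-step-predrisk-decomposition}), rely on \Cref{prop:asymp-verif-mn1ls} for the base-procedure profile, and observe that isotropic features force $\widehat{Q}_n = \delta_1$, which then yields the same closed-form expressions for $v(0;\phi_2)$, $\tv_g(0;\phi_2)$, and $\Upsilon_b(\phi_1,\phi_2)$. The measurability concern you flag is even milder than you suggest---under the Li--Wei Gaussian-noise model the excess risk $\|\tbeta_{\mnla}(\cD_{k_{1,m}})-\beta_0\|_2^2$ is strictly positive almost surely for every finite sample size, not merely with probability tending to one---so it does not affect the argument.
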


\begin{remark}
    [Comparison of zero and one-step procedure
    for isotropic covariance]
    \label{rem:zerostep-vs-onestep-isotropic}
    In order to get an intuition
    about the risk of one-step procedure,
    consider the case of isotropic features.
    In this case,
    $R^\deter(\phi_1, \phi_2; \tf)$
    simplifies to
    \begin{equation}
        \label{eq:limiting-risk-onestep-isotropic}        
        R^\deter(\phi_1, \phi_2; \tf)
        =
        \begin{dcases}
            R^\deter(\phi_1; \tf)
            & \text{ if } \phi_2 = \infty \\
            R^\deter(\phi_1; \tf) \left(1 - \frac{1}{\phi_2}\right)
            + \sigma^2 \left( \frac{1}{\phi_2} + \frac{1}{\phi_2 - 1}\right) 
            & \text{ if } \phi_2 \in (1, \infty) \\
            \sigma^2 \left(\frac{1}{1 - \phi_2}\right)
            & \text{ if } \phi_2 \in (0, 1).
        \end{dcases}
    \end{equation}
    
    Note that $\phi_2 = \infty$
    corresponds to
    simply using the base predictor
    without any one-step residual adjustment.
    This is the same as the ingredient
    predictor used in the zero-step prediction procedure.
    The one-step prediction procedure
    would minimize 
    the expression shown in \eqref{eq:limiting-risk-onestep-isotropic},
    over $\phi_1$ and $\phi_2$
    satisfying $\phi_1^{-1} + \phi_2^{-1} \le \gamma^{-1}$.
    If the optimal $\phi_2$ turned out to be $\infty$,
    then one-step predictor and the zero-step
    predictor become the same,
    and the resulting
    limiting risk 
    is $R^\deter(\phi_1; \tf)$.
    From \eqref{eq:limiting-risk-onestep-isotropic},
    the risk for
    $\phi_2 \in (1, \infty)$
    can be decomposed as
    \[
        R^\deter(\phi_1; \tf)
        + 
        \left(
        \frac{\sigma^2}{\phi_2}
        +
        \frac{\sigma^2}{\phi_2 - 1} 
        - \frac{R^\deter(\phi_1; \tf)}{\phi_2}
        \right).
    \]
    If the quantity in the parenthesis
    is negative for some $(\phi_1, \phi_2)$
    satisfying the condition $\phi_1^{-1} + \phi_2^{-1} \le \gamma^{-1}$,
    then
    the one-step prediction procedure will yield
    a strictly better risk than the zero-step
    prediction procedure (for $M = 1$).

   One can gain more insight into how one-step procedure improves on the zero-step by considering the case of
   isotropic covariance
   and MN2LS base prediction procedure. The intriguing finding in this case is that
    the one-step prediction procedure
    with base MN2LS procedure
    is effectively the same
    as applying MN2LS
    on new data with reduced signal energy
    and with a larger limiting aspect ratio.

    Formally, under isotropic covariance with MN2LS base procedure, 
    $R^\deter$
   can be written as follows.
   Recall $\rho^2$ denotes the limit of $\| \beta_0 \|_2^2$
   and $\sigma^2$ is the noise variance. Then,
   one has
       \begin{align*}
        &R^\deter(\phi_1, \phi_2; \tf_{\mnls}) \\
        &=
        \begin{dcases}
            \left[
            \rho^2 \left(1 - \frac{1}{\phi_1}\right)
            + \sigma^2 \left(\frac{1}{\phi_1 - 1}\right)
            \right]
            \left(1 - \frac{1}{\phi_2}\right)
            + \sigma^2 \left(\frac{1}{\phi_2 - 1}\right)
            + \sigma^2
            & \text{ if } (\phi_1, \phi_2) \in (1, \infty] \times (1, \infty] \\
            \left[
            \sigma^2 \left(\frac{\phi_1}{1 - \phi_1}\right)
            \right]
            \left(1 - \frac{1}{\phi_2}\right)
            + \sigma^2 \left(\frac{1}{\phi_2 - 1}\right)
            + \sigma^2
            & \text{ if } (\phi_1, \phi_2) \in (0, 1) \times (1, \infty) \\
            \sigma^2 \left(\frac{\phi_2}{1 - \phi_2}\right)
            + \sigma^2
            & \text{ if } (\phi_1, \phi_2) \in (0, \infty) \times (0, 1).
        \end{dcases}
    \end{align*}
    Here, we treat $1/x$ and $1/(x - 1)$ to be $0$ when $x = \infty$.
    
    Let $R^\deter_{\mnls}(\phi; \rho^2, \sigma^2)$
    denote the asymptotic risk profile of the MN2LS
    predictor at aspect ratio $\phi$,
    signal energy $\rho^2$,
    and noise energy $\sigma^2$; from the proof of \Cref{prop:asymp-bound-ridge-main} \citep[see also][Theorem 1]{hastie_montanari_rosset_tibshirani_2019}, we have
    \[
    R^\deter_{\mnls}(\phi; \rho^2, \sigma^2) = \begin{cases}
         \rho^2 \left( 1 - \frac{1}{\phi}\right) + \sigma^2 \left(\frac{1}{\phi - 1}\right) +\sigma^2 &  \text{ if } \phi \in (1,\infty] \\
         \sigma^2 \left(\frac{\phi}{1 - \phi}\right) + \sigma^2 & \text{ if } \phi \in (0,1).
    \end{cases}
    \]
    Let $R^\deter_{\mnls}(\phi_1, \phi_2; \rho^2, \sigma^2)$
    denote the asymptotic risk profile of the one-step
    ingredient predictor with MN2LS base predictor with signal and noise energy $\rho^2$ and $\sigma^2$, respectively -- which above we have denoted with $R^\deter(\phi_1, \phi_2; \tf_{\mnls})$.
    Then, we can write
    \begin{equation}
        \label{eq:rdet_mnls_onestep_iterated_formula}
        R^\deter_{\mnls}(\phi_1, \phi_2; \rho^2, \sigma^2)
        =
        R^\deter_{\mnls}
        (\phi_2; R^\deter_{\mnls}
        (\phi_1; \rho^2, \sigma^2) - \sigma^2, \sigma^2).
    \end{equation}
    Thus,
    the limiting risk of the one-step predictor
    computed on
    a data with limiting aspect ratio $\gamma$
    is given by
    \begin{equation}
        \label{eq:onestep-isotropic-dynamic-mn2ls}
        R^\deter_{\mnls}
        (\phi_2(\gamma); R^\deter_{\mnls}(\phi_1(\gamma); \rho^2, \sigma^2) - \sigma^2, \sigma^2),
    \end{equation}
    where $(\phi_1(\gamma), \phi_2(\gamma))$
    represents the minimizer
    of $R^\deter_{\mnls}(\zeta_1, \zeta_2; \rho^2, \sigma^2)$
    over $\zeta_1^{-1} + \zeta_2^{-1} \le \gamma^{-1}$.
    Now the risk expression
    \eqref{eq:onestep-isotropic-dynamic-mn2ls}
    can be interpreted as follows:
    The one-step prediction procedure
    with base MN2LS procedure
    is effectively the same
    as applying MN2LS
    on new data with reduced signal energy
    (because $R^\deter_{\mnls}(\phi_1(\gamma); \rho^2, \sigma^2) < \rho^2 + \sigma^2$)
    and with a larger limiting aspect ratio
    $\phi_2(\gamma) > \gamma$.
    Note that reducing the signal energy
    reduces the risk for MN2LS due to a reduction in the estimation bias;
    see 
    \Cref{fig:isotropic_risk}
    and
    \Cref{lem:riskprofile_overparam}~\eqref{lem:riskprofile_overparam-item-fixed-gamma-increasing-in-snr}.
    Recall that the effect
    of the zero-step procedure
    would just be 
    applying MN2LS
    on a data set with a large limiting aspect
    ratio, but with the original
    signal energy $\rho^2$.
    Hence, the improvement of the one-step procedure
    over the zero-step procedure (which only takes place in the overparametrized regime)
    essentially stems from
    reducing the signal energy and thus the bias,
    which ``boosts'' the asymptotic risk.

    In this case,
    we can also explicitly carry out the optimization
    of minimizing $R^\deter(\zeta_1, \zeta_2; \tf)$
    subject to the constraint $\zeta_1^{-1} + \zeta_2^{-1}
    \le \gamma^{-1}$.
    See \Cref{sec:onestep-risk-analysis-isotropic-features} for the details.
    See \Cref{fig:onestep-vs-zerostep-M=1}
    for an illustration
    of the comparison
    the limiting risk
    of the one-step prediction procedure
    with the
    the zero-step prediction procedure.
    
    Finally, we comment that for base predictors other than the MN2LS, the risk of one-step procedure may not have as nice an interpretation as ``boosting'' the asymptotic risk by reducing the signal energy in addition to increasing aspect ratio. However, the message is that the one-step procedure adds another knob to the zero-step procedure which leads to an improved risk.
\end{remark}

\begin{figure}[!t]
    \centering
    \includegraphics[width=0.31\columnwidth]{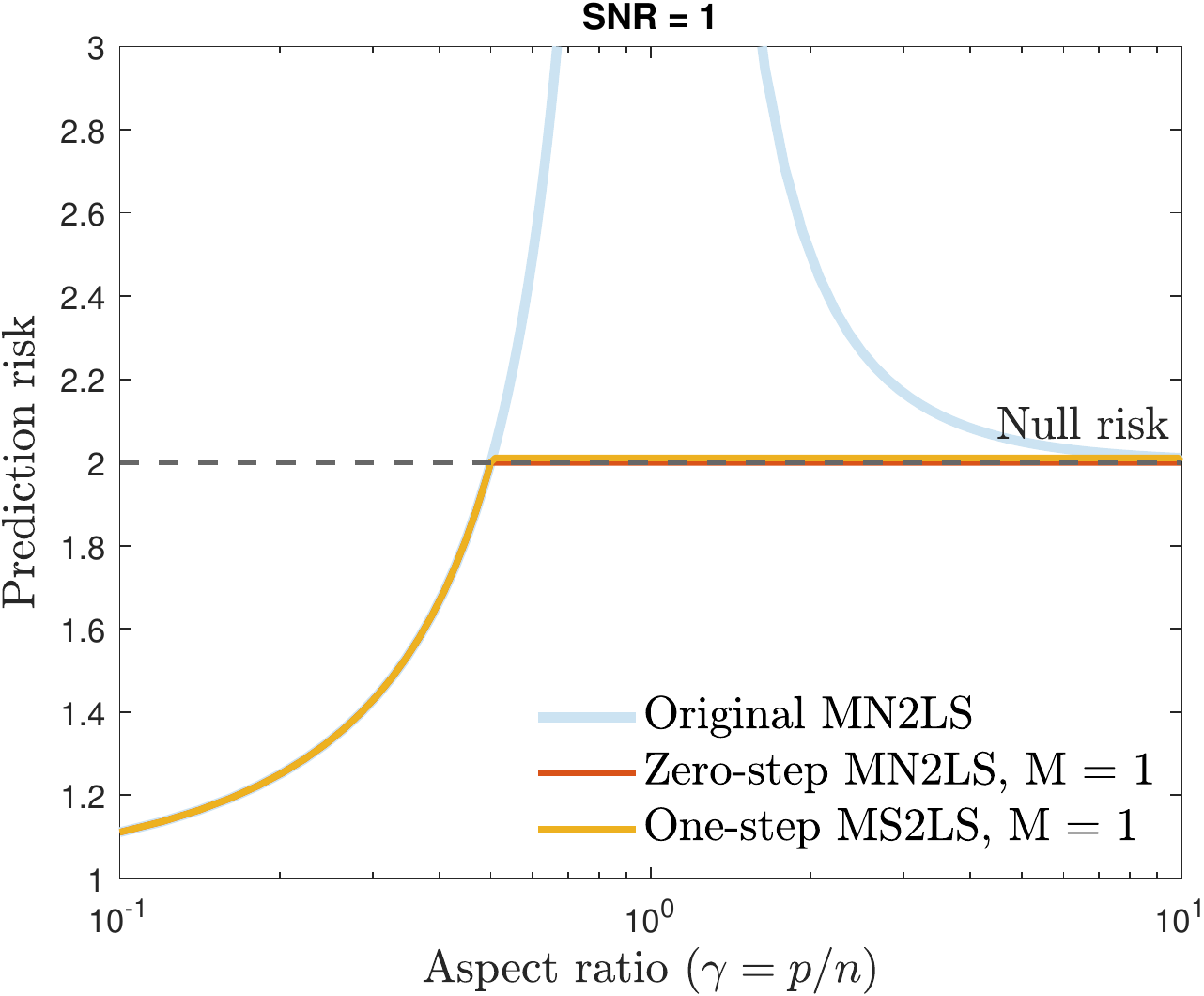}
    ~
    \includegraphics[width=0.31\columnwidth]{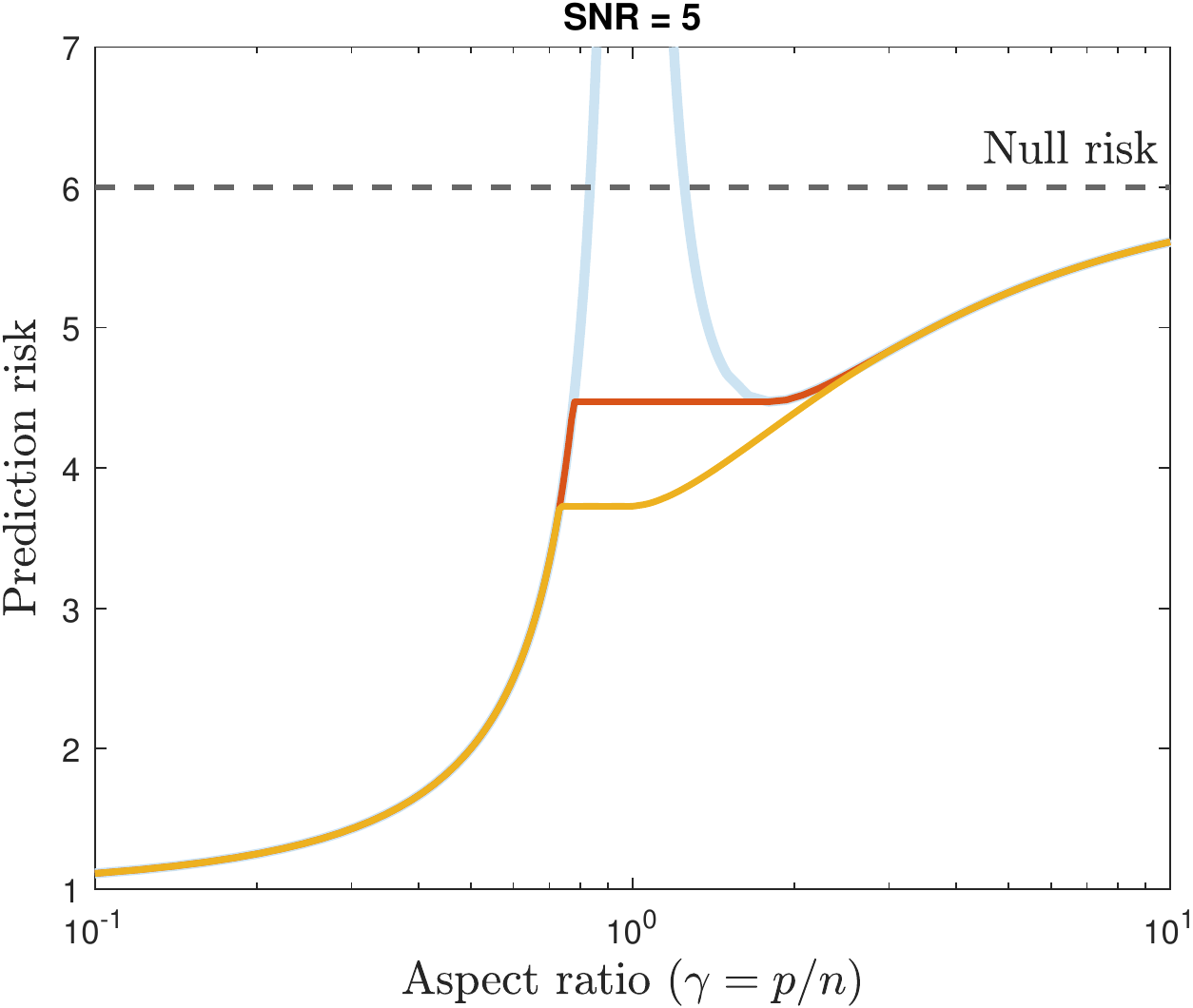}
    ~
    \includegraphics[width=0.31\columnwidth]{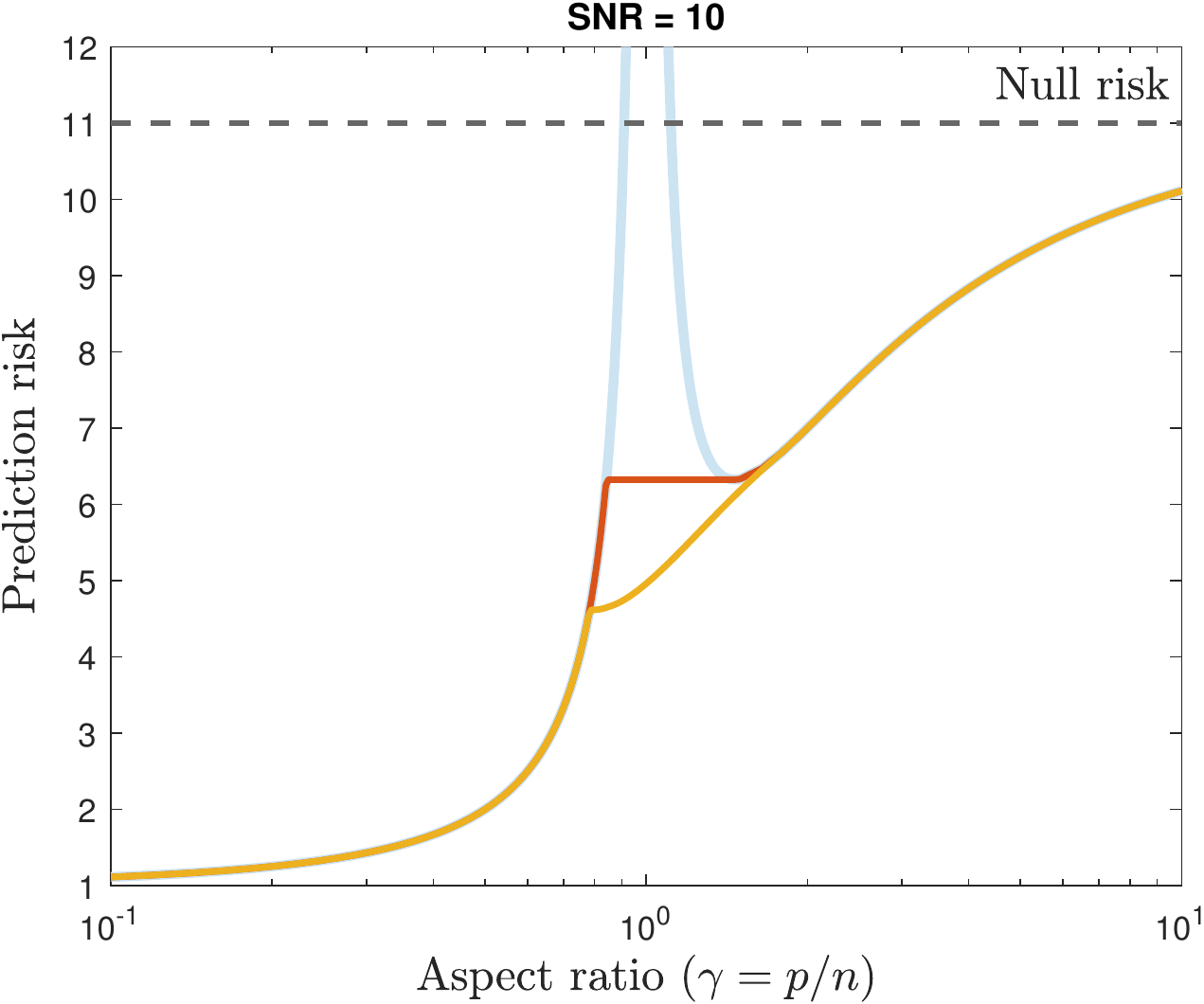}
    \caption{
    Comparison of zero-step and one-step procedures
    with MN2LS base procedures under isotropic feature covariance, 
    and low, moderate, and high SNR regimes.
    Observe that
    for SNR = 1,
    zero-step and one-step both have
    the same risk profile with $M = 1$.
    This holds true even for SNR $\le $ 1,
    as shown in \Cref{thm:esterrlim_optimized_onestep}.
    For SNR $> 1$,
    there exists a range of $\gamma$
    for which one-step is strictly better than zero-step.
    See \Cref{thm:esterrlim_optimized_onestep}
    for more details.
    }
    \label{fig:onestep-vs-zerostep-M=1}
\end{figure}

\subsection{Numerical illustrations}

In this section,
we provide numerical illustration
of the risk monotonization of one-step
prediction procedure
in the proportional asymptotic regime,
when the base prediction procedures
are MN2LS and MN1LS prediction procedures,
and the one-step adjustment is \emph{always} performed
via MN2LS.
In order to illustrate
risk monotonization as in
\Cref{thm:asymptotic-risk-tuned-one-step},
we need to show
the risk behavior of $\hf^\onestep$ 
at different aspect ratios.
We use the same simulation settings
used for the illustration
of the zero-step procedure
in \Cref{sec:zerostep-illustration}.
\Cref{fig:gaufeat_isocov_isosig_linmod_n1000_nv100_nruns100_onestep_mnls_gamma10,fig:gaufeat_isocov_sparsesig_linmod_n500_nv80_nruns100_onestep_mnla_gamma100}
present our simulation results.
The conclusions are essentially
the same as those stated
for the zero-step procedure
in \Cref{sec:zerostep-illustration}.

\paragraph{Minimum $\ell_2$-norm least squares (MN2LS).}

\Cref{fig:gaufeat_isocov_isosig_linmod_n1000_nv100_nruns100_onestep_mnls_gamma10}
shows the risks of the baseline MN2LS procedure
and the one-step prediction procedure
with MN2LS as the base prediction procedure
for high and low SNR regimes (left: SNR = 4; right: SNR = 1);
we take $\sigma^2 = 1$, so that $\rho^2$=SNR.
We also present the null risk
($\rho^2 + \sigma^2$),
i.e., the risk of the zero predictor
as a baseline in both the plots.

Similar to the behavior
of the zero-step procedure
we observe that
the risk of the one-step procedure
is non-decreasing in $\gamma$
for every $M \ge 1$.
Although
the risk of the one-step procedure
is close to being below
the risk of the base procedure,
Figure 6
shows the effects of
working with a finite sample.
(The risk of one-step
for $M = 1$
is sometimes above
the risk of the base procedure.)

\Cref{fig:gaufeat_isocov_isosig_linmod_n1000_nv100_nruns100_onestep_mnls_gamma10}
also shows
that the one-step prediction procedure
can be strictly better
than the zero-step prediction procedure.
In particular,
the left panel of Figure 6
shows that around the interpolation
threshold of $1$,
the risk of one-step prediction
procedure is not flat.
It is strictly increasing.
The risk of one-step procedure
for $M > 1$
is once again seen to be
a strict improvement over $M = 1$.

\begin{figure}[!thb]
    \centering
    \includegraphics[width=0.45\columnwidth]{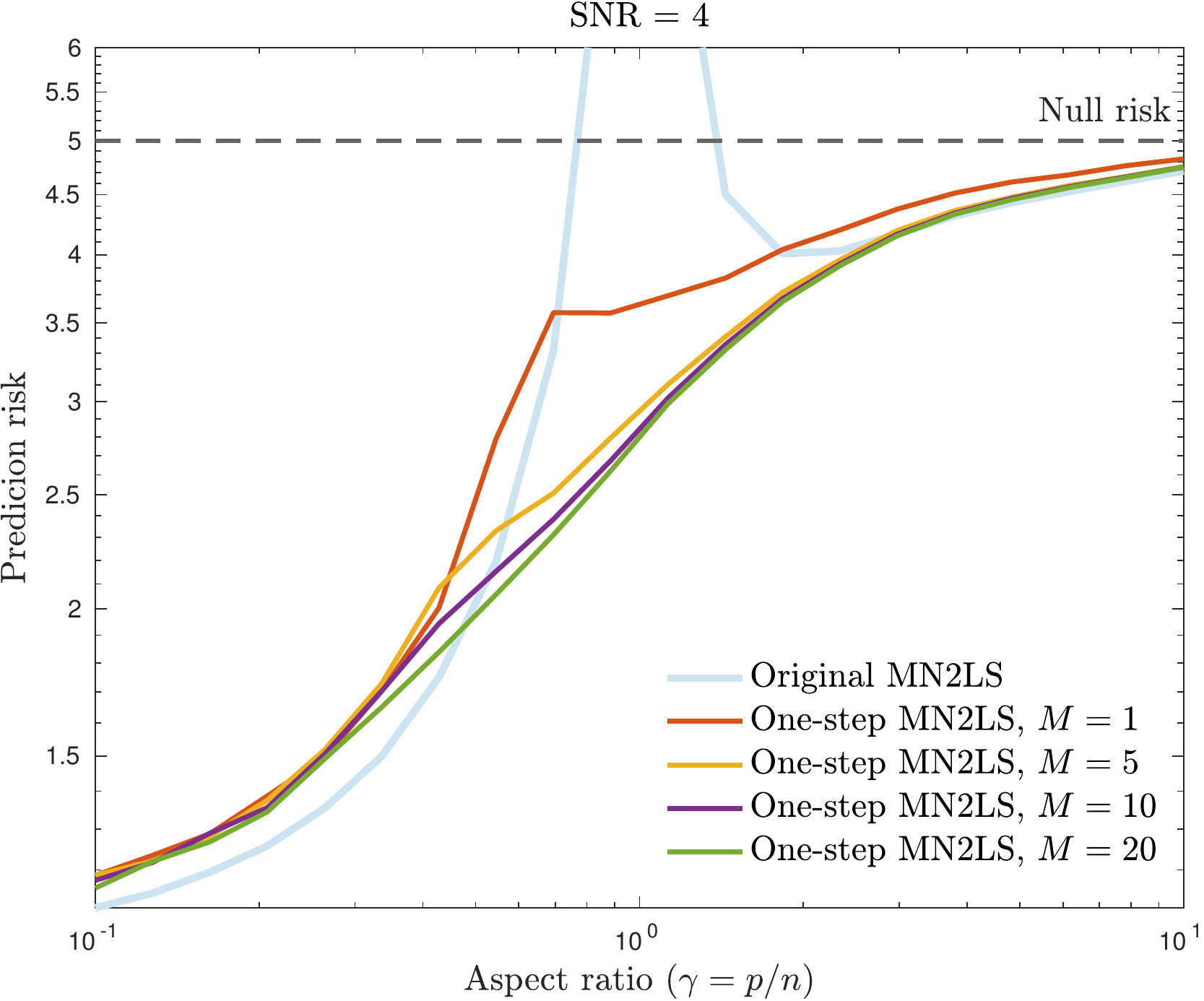}
    \quad
    \includegraphics[width=0.45\columnwidth]{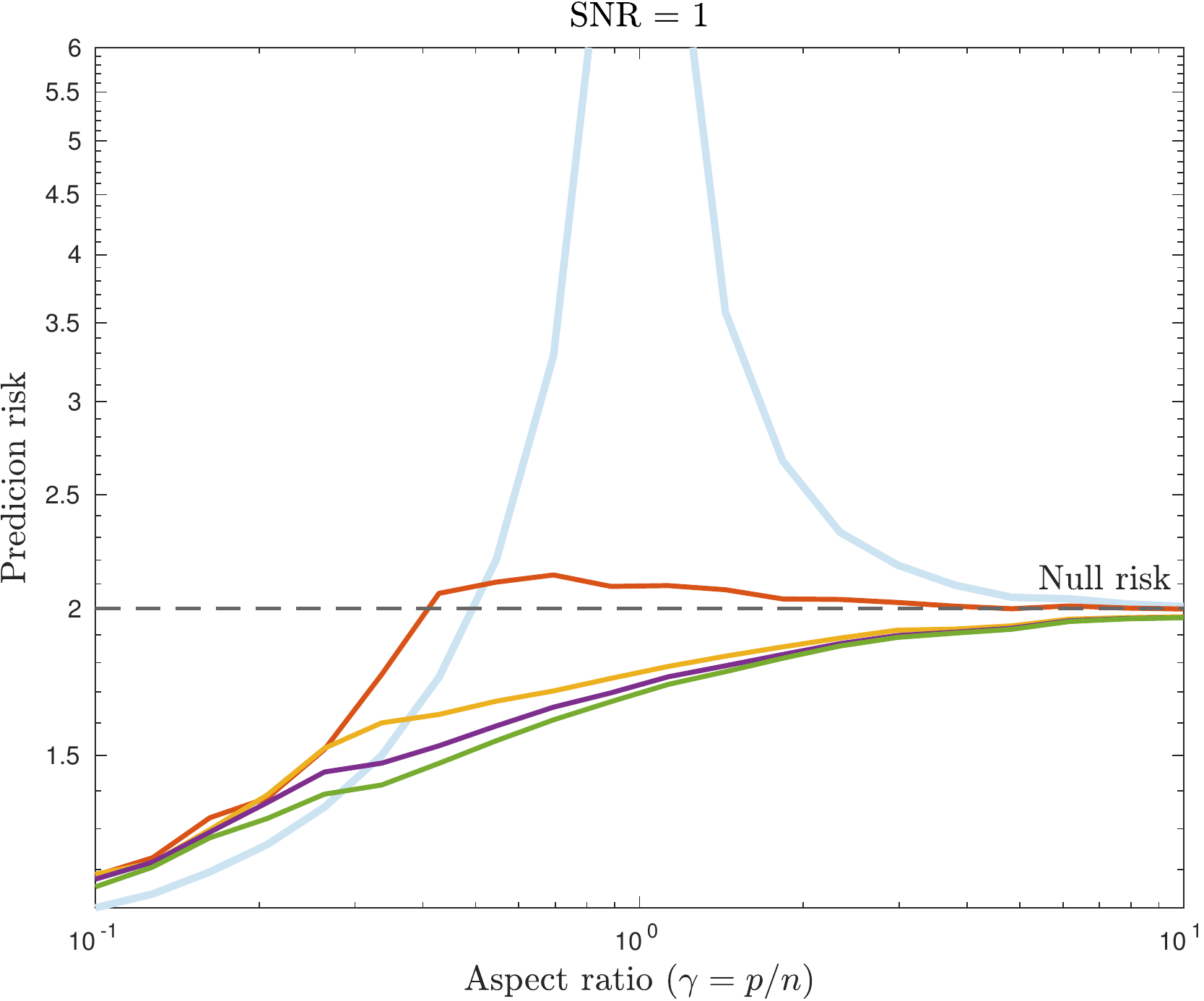}
    \caption{
    Illustration of the one-step procedure with the MN2LS as the base predictor
    and MN2LS one-step adjustment with varying $M$.
    The left panel shows a high SNR setting (SNR = 4),
    while the right panel shows a low SNR setting (SNR = 1).
    The setup has
    $n = 1000$, $n_\train = 900$, $n_\test = 100$, $n^\nu = 50$.
    The features are drawn from an isotropic Gaussian distribution, 
    the response follows a linear model with dense signal.
    The risks are averaged over 100 dataset repetitions.
    }
    \label{fig:gaufeat_isocov_isosig_linmod_n1000_nv100_nruns100_onestep_mnls_gamma10}
\end{figure}

\paragraph{Minimum $\ell_1$-norm least squares (MN1LS).}

\Cref{fig:gaufeat_isocov_sparsesig_linmod_n500_nv80_nruns100_onestep_mnla_gamma100}
shows the risks of the baseline MN1LS procedure
and the one-step procedure with MN1LS as the base
prediction procedure
for high
(left, SNR = 4)
and low (right, SNR = 1)
SNR regimes.
We take $\sigma^2 = 1$ and $\rho^2=$ SNR.
We also present the null risk
($\rho^2 + \sigma^2$),
i.e., the risk of the zero predictor
as a baseline in both the plots.
We again observe that
the risk of the one-step procedure
for every $M \ge 1$ is non-decreasing in $\gamma$.
As before,
once again
we observe in
\Cref{fig:gaufeat_isocov_sparsesig_linmod_n500_nv80_nruns100_onestep_mnla_gamma100}
that the one-step procedure
with $M = 1$
attains precise risk monotonization
while
zero-step with $M > 1$ improves significantly upon
the $M = 1$ case when $\gamma$ is near one.
All these comments hold
for both low and high SNR regimes.

\begin{figure}[!t]
    \centering
    \includegraphics[width=0.45\columnwidth]{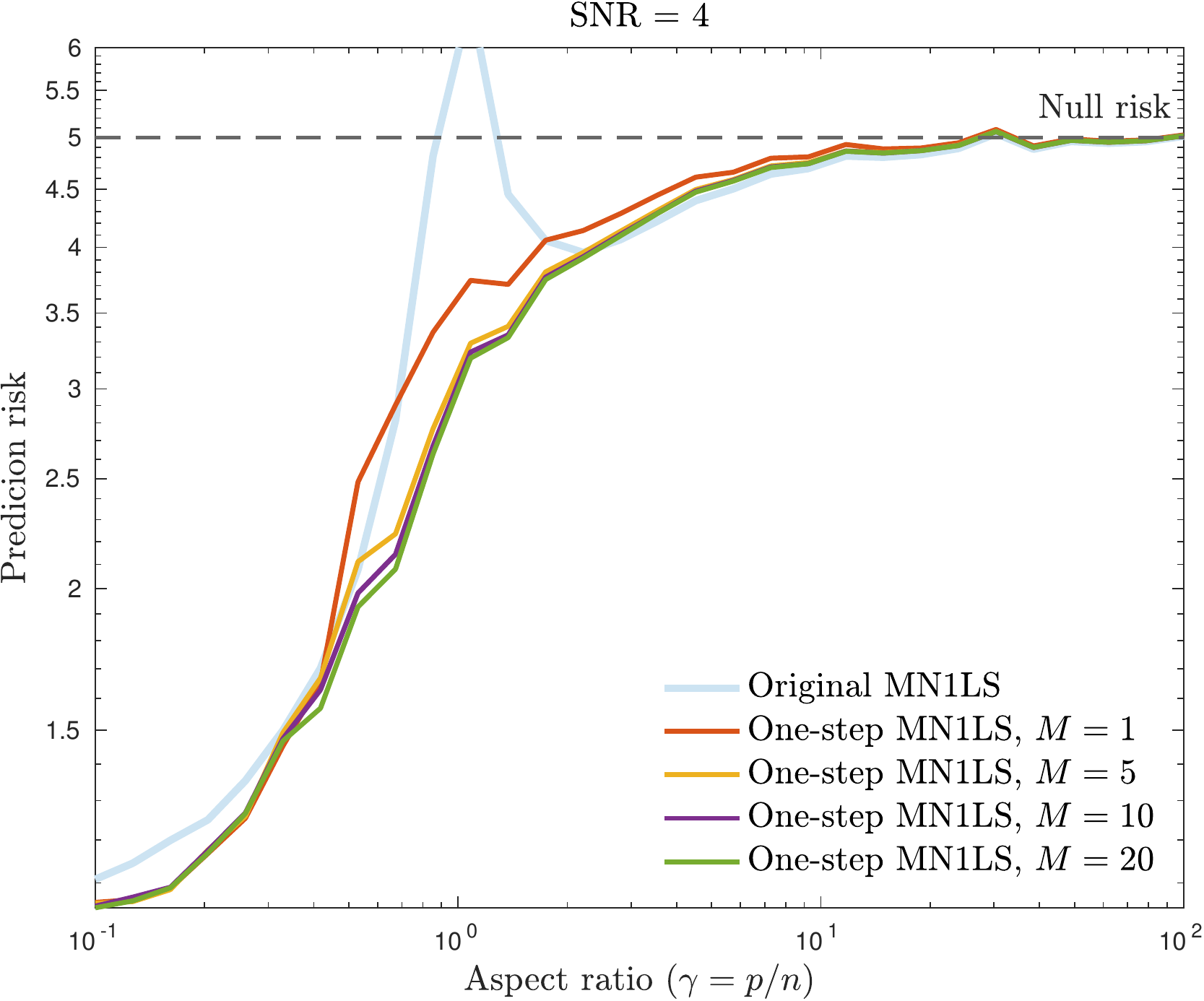}
    \quad
    \includegraphics[width=0.45\columnwidth]{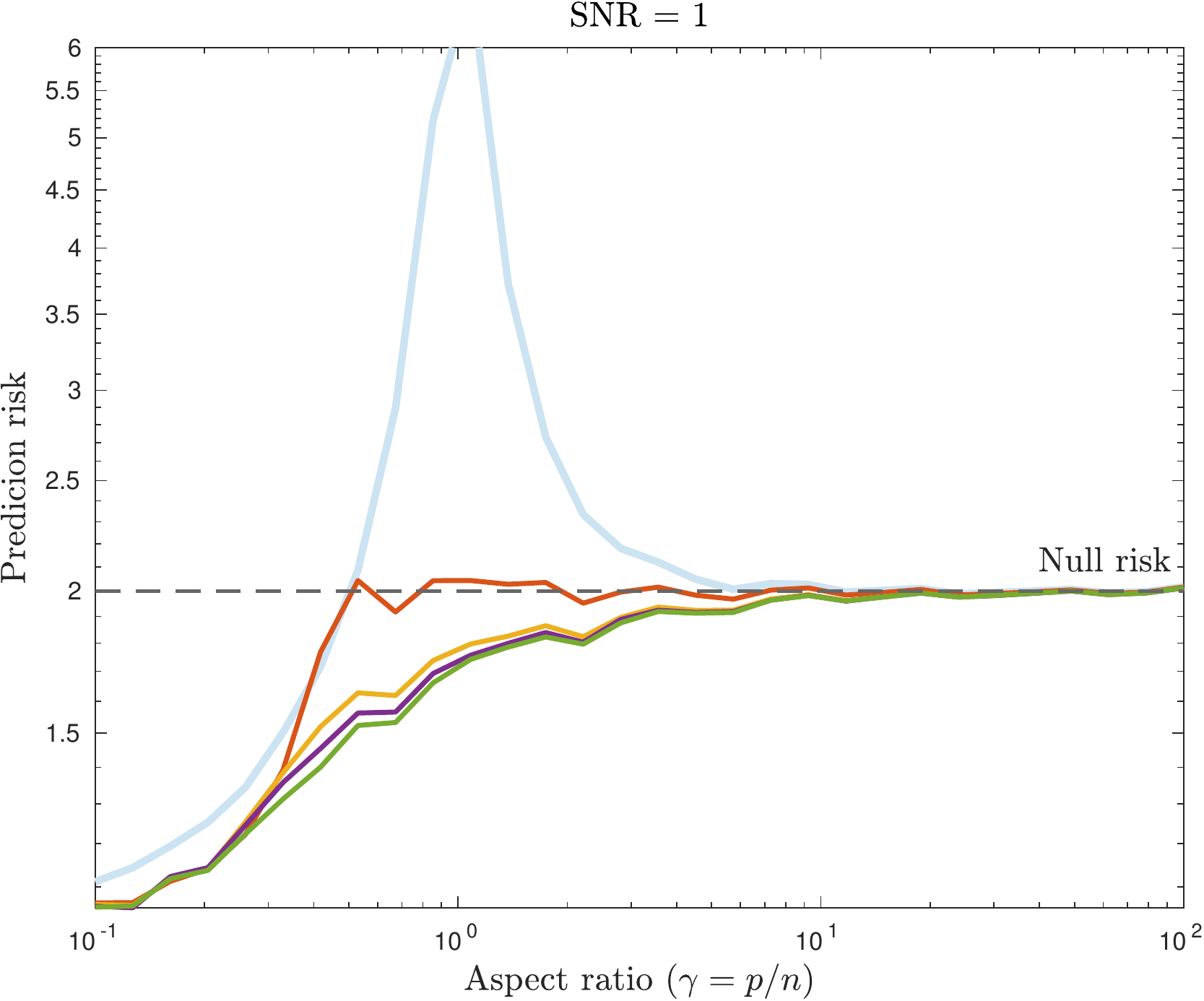}
    \caption{
    Illustration of the one-step procedure with MN1LS as the base procedure
    and MN2LS one-step adjustment with varying $M$.
    The left panel shows a high SNR setting (SNR = 4),
    while the right panel shows a low SNR setting (SNR = 1).
    In the setup,
    $n = 500$, $n_\train = 420$, $n_\test = 80$, $n^\nu = 42$.
    The features are drawn from an isotropic Gaussian distribution,
    the response follows a linear model with sparse signal (sparsity level = 0.0005).
    The risks are averaged over 100 dataset repetitions.
    }
    \label{fig:gaufeat_isocov_sparsesig_linmod_n500_nv80_nruns100_onestep_mnla_gamma100}
\end{figure}

\section{Discussion}
\label{sec:discussion}

In this paper, 
we have proposed 
a generic cross-validation framework
to monotonize any given prediction procedure
in terms of the sample size.
We studied two concrete methodologies:
zero-step and one-step prediction procedures.
The ingredient predictors
for the zero-step prediction procedure
is the base procedure applied on a subset
of the data.
The ingredient predictor
for the one-step prediction procedure
can be thought of as
boosting applied to the base procedure
learned on a subset of data (\cite{schapire2013boosting}).
In both cases,
we also introduced 
averaging over the subsets of the data
(via the parameter $M$).
This particular averaging step
can be seen as bagging,
which is known to have a variance reduction effect.

We have analyzed the properties
of zero-step and one-step prediction procedures
in a model-free setting under mild regularity assumptions.
This is in contrast to many other works
in this literature that require
strong distributional assumptions.
In part this is possible
because we assume
the existence of the limiting risk
and monotonize it
(in a data-driven way)
without requiring the knowledge/form
of the risk.

Monotonization of asymptotic risk also has implications for minimax risk. If the base prediction procedure
has a finite asymptotic risk $\underline{R}$
and $\overline{R}$, respectively,
at the limiting aspect
ratios of $0$ and $\infty$,
then both zero-step and one-step prediction procedures
applied to such a base procedure yield 
predictors whose asymptotic risk 
lies between $[\underline{R}, \overline{R}]$
for all limiting aspect ratios.
For example,
for the squared error loss
and a linear model,
the MN1LS and MN2LS predictors
have $\underline{R} = \sigma^2$
and $\overline{R} = \| \beta_0 \|_{\Sigma}^2 + \sigma^2$,
where $\sigma^2$ is the noise energy,
which is also the unavoidable prediction risk,
and $\| \beta_0 \|_{\Sigma}^2$
is the effective signal energy.
Because $\sigma^2$ is
the unavoidable prediction risk,
and hence a minimax lower bound,
the zero-step and one-step predictors
based on MN1LS and MN2LS
are minimax optimal up to a multiplicative factor
of $1~+~\text{SNR} = 1~+~\| \beta_0 \|_{\Sigma}^2 / {\sigma^2}$
over all aspect ratios ranging from $0$ to $\infty$.
Any base prediction procedure
that leads to the null predictor
(i.e., $\hf(x) = 0$ for all $x$)
for the limiting aspect ratio of $\infty$
also has the same property.
(Most reasonable prediction procedures
would yield the null predictor
as the limiting aspect ratio
tends to $\infty$.)
Furthermore,
for every procedure,
there exists another procedure
(such as the zero-step)
whose risk is at least as good 
and is monotone.
Thus, the minimax risk is a monotone function
of the limiting aspect ratio.
To our knowledge,
the minimax risk in the proportional asymptotics
regime
under generic signal structure
is not available in the literature.

Although the focus of the current paper is exclusively
on choosing optimal sample size,
one could apply the cross-validation
framework proposed
for selecting optimal 
predictors from any collection.
In particular,
one can use our methodology
to find optimal penalty parameter
for ridge regression or lasso.
It can also be used
to select the number of random features
in random features regression
or kernel features in kernel regression,
or more generally, the number of parameters
in a neural network.
In the latter case,
our procedures will yield
model-wise monotonicity
\citep{nakkiran2019deep}.

There are several interesting future directions
that one can pursue.
We will discuss three specific directions below.

\paragraph{Theoretical characterization of the effect of bagging.}

We have only characterized the risk of
the zero-step and one-step with $M = 1$
in terms of the limiting risk of the base procedure.
In this sense,
we did not fully analyze the effect of bagging
($M > 1$) for both zero-step and one-step procedures.
It is of interest
to characterize the effect of bagging:
\begin{center}
    What is the limiting risk of the zero-step and one-step
    procedures when $M > 1$?
\end{center}

From the theory of $U$-statistics,
it is expected that
the risk for $M > 1$
is non-increasing in $M$.
It is hard to however
argue that the risk of
zero/one-step predictors
is monotone in the limiting
aspect ratio when $M > 1$.
The main difficulty
lies in proving
that the ingredient predictors
for the zero-step procedure
have an asymptotic risk profile
for $M \ge 1$.
Once this is guaranteed,
the theory developed
in \Cref{sec:zerostep-overparameterized}
will readily imply
that the zero-step procedure
with $M > 1$
has an asymptotic monotonic risk profile.
We now briefly mention the difficulty
in proving the existence 
of the asymptotic risk profile for the ingredient
predictor when $M > 1$.

For concreteness,
consider the ingredient predictor
of the zero-step prediction procedure
with $M > 1$
that uses $k_{n} \le n$ observations.
This is given by
\[
    \tf_M(x)
    =
    \frac{1}{M}
    \sum_{j=1}^{M}
    \tf(x; \cD_{\train}^{j})
\quad
\text{ with } 
\quad
|\cD_{\train}^{j}| = k_n.
\]
Note that we take subsets $\mathcal{D}_{\train}^{j}$ as independent and identically distributed subsets of size $k_n$ from the data and hence for $M = \infty$, we get
\begin{equation}
\label{eq:predictor_M_infty}
\tf_{\infty}(x; \mathcal{D}_{\train}) = \frac{1}{\binom{n}{k_n}}\sum_{1\le i_1 < \ldots < i_{k_n} \le n_{\train}} \tf(x; \{(X_{i_j}, Y_{i_j}): 1\le j\le k_n\}).
\end{equation}
This is a $U$-statistics of order $k_n$ for every fixed $x$ in terms of the training data.
If $R(\tf(\cdot; \cD_{\train}^j)) \pto R^\deter(\phi)$
whenever $p / k_n \to \phi$,
then from the theory developed
in \Cref{sec:zerostep-overparameterized},
it follows that
$R(\hf^\zerostep_M) \pto \min_{\zeta \ge \gamma} R^\deter(\zeta)$ under~\ref{asm:prop_asymptotics}.
Hence,
the main difficulty in characterizing
the effect of bagging
lies in proving the existence of limit
of $R(\tf)$.
For the squared error loss,
it can be proved that
(see \Cref{sec:squaredrisk_decomposition_zerostep_bagged})
\begin{equation}
    \label{eq:sqauredrisk_decomposition_zerostep_bagged}
        R(\tf_M)
    =
    R(\tf_\infty(\cdot; \mathcal{D}_{\train}))
    +
    \frac{1}{M}
        \frac{1}{\binom{n}{k_n}}
        \sum_{i_1, \dots, i_{k_n}}
    \int
        \left(
            \tf(x; \{ (X_{i_j}, Y_{i_j}): 1 \le j \le k_n \})
            -
            \tf_\infty(x; \mathcal{D}_{\train})
        \right)^2
        \,
        \mathrm{d}P_{X_0}(x).
\end{equation}
It is interesting to note
that the risk of $\tf_M$
only depends on $M$
as a linear function of $1/M$.
If the base predictor $\tf$
is non-zero almost surely,
then the risk of $\tf_M$
is a strictly decreasing function of $M$.
Observe that
(53)
holds true
even for $M = 1$
and from our results,
we know that
the right hand side with $M = 1$
has a finite deterministic approximation.
This implies that
each of the components in (53)
is asymptotically bounded.
Hence,
as $M \to \infty$,
we can conclude that
$R(\tf_M) - R(\tf_\infty) \pto 0$.

Because $k_n \to \infty$
and $p / k_n \to \phi$,
the second term in~\eqref{eq:sqauredrisk_decomposition_zerostep_bagged} above
could be analyzed
using deterministic representation
for $\tf(X_0; \{ (X_{i_j}, Y_{i_j}) : 1 \le j \le k_n \})$
(e.g., Theorem 1 of \cite{liu_dobriban_2019} for ridge regression)
and the theory of $U$-statistics.
On the other hand,
$R(\tf_\infty)$
could also be similarly analyzed
using deterministic representations
and the theory of $U$-statistics.
We leave this for future work.

\paragraph{Other variants of boosting.}
In our empirical studies, we found that the one-step predictor (for $M = 1$) which is a boosted version of the subsampled predictor has a much better performance than the zero-step predictor (with $M = 1$), especially around the interpolation threshold. For reasons unclear to us currently, the performance of one-step predictor (for $M = 1$) can be matched, at least in shape, by a zero-step predictor with some $M > 1$. In this sense, the effect of one iterate boosting can be matched by the effect of multi-subsample bagging. Furthermore, as $M$ increases, both zero-step and one-step seem to approach the same limit in our empirical studies. The interesting aspect is that the work done by $M$ subsample bagging is achieved  by one boosting iterate. 
This begs the question: is there a better boosting mechanism that can match zero-step predictors performance at $M = \infty$. In particular:
\begin{center}
    What are the other choices of one-step residual adjustments? 
    And what is the ``best'' choice?
\end{center}
We have only analyzed the one-step residual adjustment
done via MN2LS. Other choices are certainly possible: for instance, one could do MN1LS or minimum $\ell_p$-norm least squares or minimum $\ell_2$ robust least squares in the context of linear regression. It seems cumbersome to analyze each one of these residuals adjustments case-by-case and find the best choice. For general models, one can think of the residuals adjustment we proposed as a variant of Newton's step for the squared error loss under homoscedasticity as mentioned in~\eqref{eq:one-step-general}. The discussion of the ``best'' choice of the residual adjustment very much hinges on the question of what is the best predictor in a given model in the proportional asymptotics regime. Although we do not know the answer to this question, one can potentially target the question of deriving a residual adjustment that yields an asymptotic risk performance similar to that of the zero-step predictor with $M = \infty$. 
For any given predictor, is there a one iterate boosted version (i.e., one-step predictor with $M = 1$) that achieves the same asymptotic performance as the $M$-subsample bagging with $M = \infty$?

Similar to the one-step predictor one can develop a $k$-step predictor by splitting the data into potentially $(k+1)$ batches and optimizing over the number of observations in each batch. This is analogues to $k$-iterate boosting as our one-step procedure (with $M = 1$) is analogues to the one iterate boosting. This gets computationally intensive very quickly as $k$ increases. Furthermore, we believe that $k$-step predictor combined with bagging would yield the same asymptotic risk profile as the zero- and one-step predictors with $M = \infty$. In this sense, it seems a worth problem to investigate a better one iterate booster than to investigate the $k$-step predictor precisely.

\paragraph{Comparison with other regularization strategies.}

On the surface,
zero-step and one-step procedures
might seem to use
only a subset of the data,
and hence might appear sub-optimal.
Along the same lines,
one might also wonder
why not employ
regularization techniques
and optimize over the regularization parameter.
To the first point,
note that
we make use of the whole data
in estimating the risk
and comparing predictors at different sample sizes,
and hence make use of the full data.
To the second point,
it is somewhat surprising to report
that optimally-regularized  procedures
such as ridge regression with optimal choice of penalty
need not have monotone risk (in the limiting aspect ratio);
see, for example, Figure 1 of \cite{hastie_montanari_rosset_tibshirani_2019}.
But our procedure
will always lead to a monotone risk
and hence makes better use of the data
compared to optimum regularization procedures in general.
Irrespective,
it is still interesting to consider
the relation between zero-step and one-step,
and the optimum regularization procedures
in cases where
the latter has a monotone risk.
In our empirical studies
we found that in a well-specified
linear model,
zero-step and one-step procedures
(with the MN2LS base procedure)
with a large enough $M$
have asymptotic risk very close
to the risk of the optimum ridge
regression procedure.
See the left panel of 
\Cref{fig:mnla_vs_optzerostep_vs_optonestep_vs_optridge_eps005_snr4_M20}.
In a sparse linear regression model,
zero-step and one-step procedures
(with the MN1LS base procedure)
with a large enough $M$
has asymptotic risk very close
to the risk of the optimum lasso regression.
It is also interesting to observe
that the risk is monotone for optimally tuned lasso.
See the right panel of 
\Cref{fig:mnla_vs_optzerostep_vs_optonestep_vs_optridge_eps005_snr4_M20}.
The effect of both bagging and boosting with large $M$
in this case appears to be similar.
In other words, thinking of the base procedures MN2LS and MN1LS as ridge and lasso, respectively, with zero penalty parameter, the zero- and one-step predictors with $M$ large attaining the same asymptotic risk as optimum ridge or lasso can be considered as finding optimal regularization for these procedures. Without  explicitly formalizing the regularization predictor, zero- and one-step perform ``optimal'' implicit regularization.  
To what extent such similarity extends
to other settings is an interesting future direction:
\begin{quote}
    Under what conditions, do zero- and one-step predictors with MN2LS/MN1LS base predictor match the asymptotic risk profile of optimized regularization of ridge/lasso regression? What other base predictors (and corresponding classes of regularized predictors) does this phenomenon extend to? 
\end{quote}

\begin{figure}[!t]
    \centering
    \includegraphics[width=0.45\columnwidth]{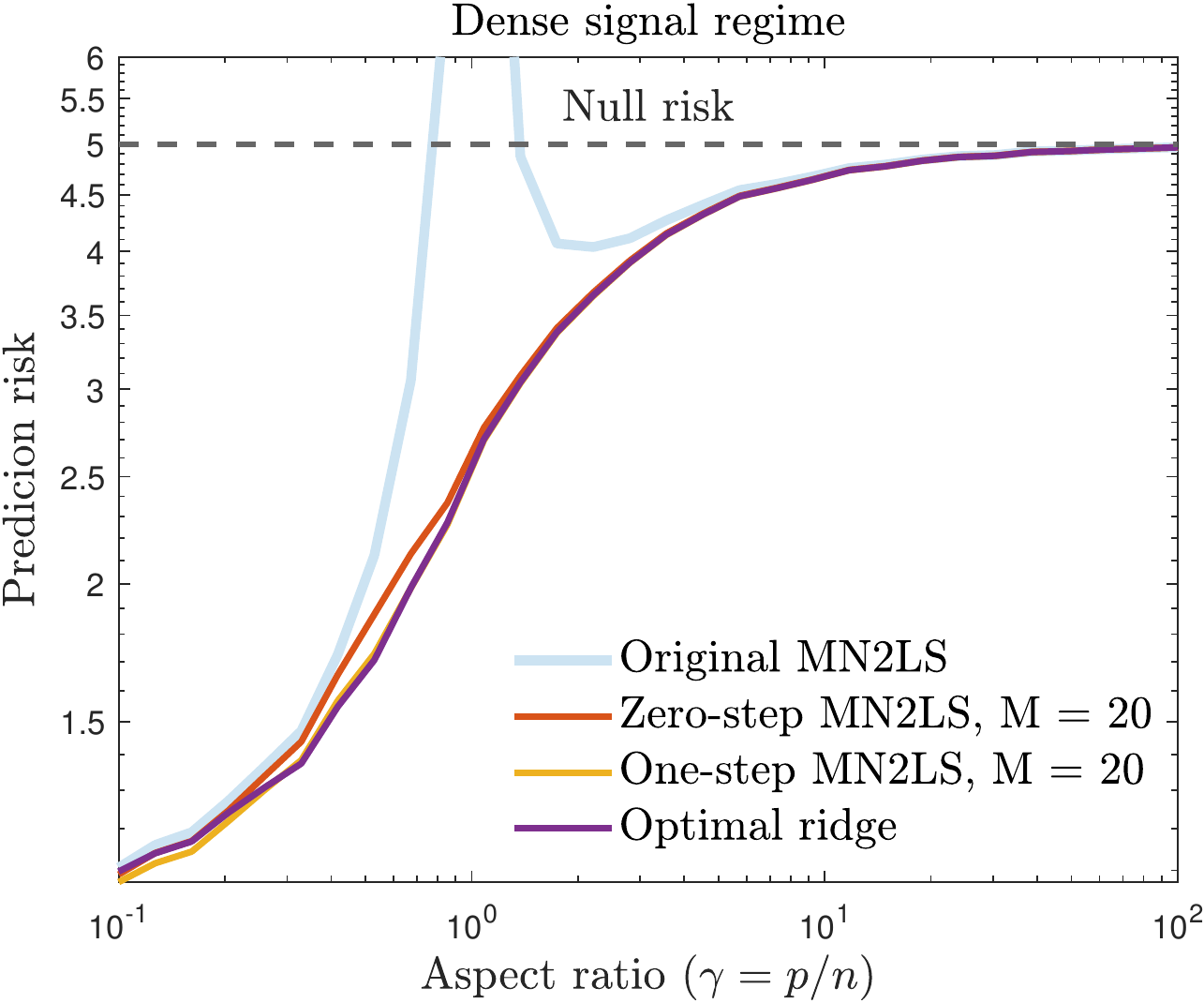}
    \quad
    \includegraphics[width=0.45\columnwidth]{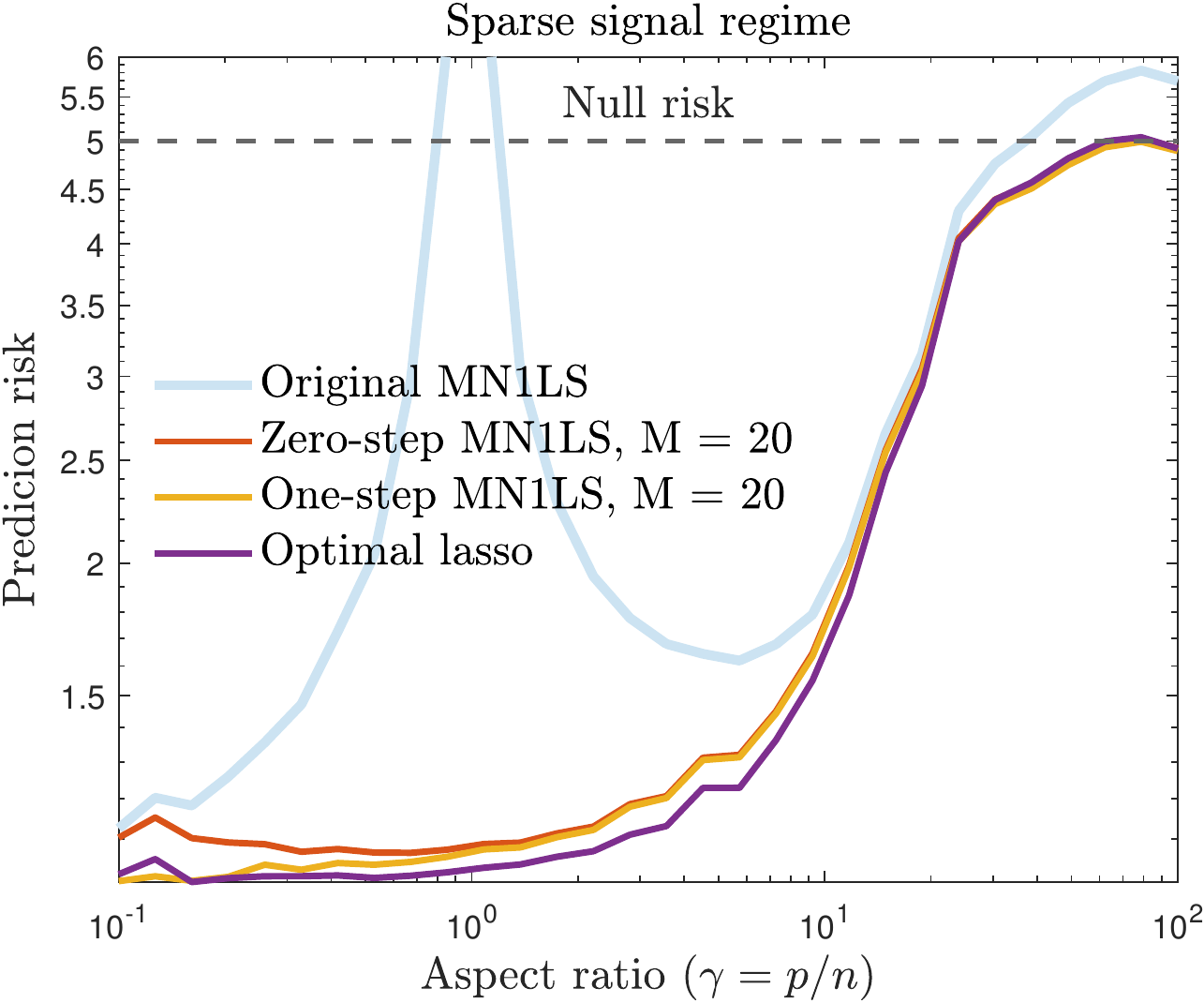}
    \caption{
    Comparison of different regularization strategies
    of zero-step, one-step, optimal ridge, and optimal lasso.
    The left panel shows a dense signal regime 
    and the right panel shows a sparse signal regime.
    The setup has
    $n=100$,
    SNR = 4.
    The features are drawn from an isotropic Gaussian distribution,
    the response follows a linear model with dense (left panel) 
    and sparse signal (right panel, sparsity level = $0.0005$).
    The risks are averaged over 100 dataset repetitions.
    }
    \label{fig:mnla_vs_optzerostep_vs_optonestep_vs_optridge_eps005_snr4_M20}
\end{figure}

\section*{Acknowledgements}

We are grateful to Ryan J.\ Tibshirani
for encouraging us to pursue this direction
and his technical help and advice throughout the process.
We thank Matey Neykov for numerous insightful discussions on this work.
We thank the participants of the Theory of Overparameterized Learning Workshoop (TOPML) 2022,
the Annual Conference on Information Sciences and Systems (CISS) 2022,
and the Deep Learning ONR MURI seminar series,
in particular Thomas Goldstein, Rob Nowak, Daniel LeJeune, Yehuda Dar,
for helpful discussions and feedback on the work.

P.\ Patil was partially supported by ONR grant N00014-20-1-2787.
Y.\ Wei was partially supported by NSF grants DMS 2147546/2015447 and CAREER award DMS-2143215.

\bibliographystyle{apalike}

\newpage
\newpage
\setcounter{section}{0}
\setcounter{equation}{0}
\setcounter{figure}{0}
\setcounter{section}{0}
\setcounter{equation}{0}
\setcounter{figure}{0}
\renewcommand{\thesection}{S.\arabic{section}}
\renewcommand{\theequation}{E.\arabic{equation}}
\renewcommand{\thefigure}{S.\arabic{figure}}
\begin{center}
\Large {\bf Supplement to ``Mitigating multiple descents: \\
A model-agnostic framework for risk monotonization''}
\end{center}

This document serves as a supplement
to the paper ``Mitigating multiple descents:
A model-agnostic framework for risk monotonization.''
The section and equation numbers in this document begin with the letters ``S'' and ``E''
to differentiate them from those in the main paper.
The content of the document is organized as follows.
\begin{itemize}
    \item 
    In \Cref{sec:proofs-crossvalidation-modelselection},
    we present proofs of results related to
    general cross-validation and model selection
    from \Crefrange{sec:oracle-risk-inequalities}{sec:common-loss-functions}.
    \item
    In \Cref{sec:proofs-riskmonotonization-zerostep},
    we present proofs of results related to 
    risk monotonization behavior of the zero-step procedure
    from 
    \Cref{sec:risk-behavior-zerostep}.
    \item
    In \Cref{sec:verif-asymp-profile-ridge},
    we present proofs for the verification of 
    the deterministic risk profile assumption
    for the MN2LS and MN1LS prediction procedures 
    from \Cref{sec:verification-deterministicprofile-zerostep}.
    \item
    In \Cref{sec:proofs-riskmonotonization-onestep},
    we present proofs of results related to
    risk monotonization behavior of the one-step procedure
    from \Cref{sec:onestep-overparameterized}.
    \item
    In \Cref{sec:verif-riskprofile-mnlsbase-mnlsonestep},
    we present proofs for the verification of
    the deterministic risk profile assumption
    for arbitrary linear prediction procedures,
    and the MN2LS and MN1LS prediction procedures
    from \Cref{sec:verifying-deterministicprofiles-onestep}.
    \item
    In \Cref{sec:useful-lemmas},
    we collect various technical helper lemmas and their proofs
    that are used in proofs in \Cref{sec:proofs-riskmonotonization-zerostep,sec:verif-asymp-profile-ridge,sec:proofs-riskmonotonization-onestep,sec:verif-riskprofile-mnlsbase-mnlsonestep},
    and other miscellaneous details.
    \item
    In \Cref{sec:calculus_deterministic_equivalents},
    we list calculus rules for a certain notion of 
    asymptotic equivalence of sequences of matrices
    that are used in proofs in \Cref{sec:verif-asymp-profile-ridge,sec:verif-riskprofile-mnlsbase-mnlsonestep}.
    \item
    In \Cref{sec:useful-concentration-results},
    we record statements of useful concentration results
    available in the literature
    that are used in proofs in \Cref{sec:proofs-crossvalidation-modelselection,sec:verif-asymp-profile-ridge,sec:verif-riskprofile-mnlsbase-mnlsonestep}.
    \item
    In \Cref{sec:notation},
    we list some of the main notation used in the paper.
\end{itemize}

\section{Proofs related to general cross-validation and model selection}
\label{sec:proofs-crossvalidation-modelselection}

\subsection{Proof of \Cref{prop:general-model-selection-guarantee}}
    \paragraph{Additive form.}
   We will first prove the oracle risk inequalities 
   \eqref{eq:model-free-guarantee-general-model-selection}
   in additive form.
    Recall
    \Cref{alg:general-cross-validation-model-selection}
    returns $\hf^\cv = \hf^\hxi$.
    Adding and subtracting
    $\min_{\xi \in \Xi} R(\hf^\xi)$
    and $\min_{\xi \in \Xi} \hR(\hf^\xi)$
    to $R(\hf^\cv)$,
    we can break $R(\hf^\cv)$ into the following additive form:
    \begin{align}
        R(\hf^\cv)
        &= \min_{\xi \in \Xi} R(\hf^\xi)
        + \min_{\xi \in \Xi} \hR(\hf^\xi)
        - \min_{\xi \in \Xi} R(\hf^\xi)
        - \min_{\xi \in \Xi} \hR(\hf^\xi)
        + R(\hf^\hxi).
    \end{align}
    An application of triangle inequality then
    lets us upper bound $R(\hf^\cv)$ into sum of three terms:
    \begin{equation}
        \label{eq:oracle-inequality-additive-form-triangle}
       R(\hf^\cv)
       \le \min_{\xi \in \Xi} R(\hf^\xi)
       + \underbrace{\Big| \min_{\xi \in \Xi} \hR(\hf^\xi)
       - \min_{\xi \in \Xi} R(\hf^\xi) \Big|}_{(a)}
       + \underbrace{\Big| R(\hf^\hxi) - \min_{\xi \in \Xi} \hR(\hf^\xi) \Big|}_{(b)}.
    \end{equation}
    We will next upper bound both terms (a) and (b) by $\Delta_n^\add$
    to finish the first inequality of \eqref{eq:model-free-guarantee-general-model-selection}.
        
        By definition \eqref{eq:Delta_n_add} of $\Delta_n^\add$,
        for every $\xi \in \Xi$,
        we can write
        \begin{equation}
            \label{eq:pointwise-bound-Delta_n^add}
            R(\hf^\xi) \le \hR(\hf^\xi) + \Delta_n^\add
            \quad
            \text{and}
            \quad
            \hR(\hf^\xi) \le R(\hf^\xi) + \Delta_n^\add.
        \end{equation}
        Taking minimum on both sides
        of the inequalities
        in \eqref{eq:pointwise-bound-Delta_n^add}
        then yields
        \[
            \min_{\xi \in \Xi} \hR(\hf^\xi)
            \le \min_{\xi \in \Xi} R(\hf^\xi)
            + \Delta_n^\add
            \quad
            \text{and}
            \quad
           \min_{\xi \in \Xi} R(\hf^\xi)
           \le \min_{\xi \in \Xi} \hR(\hf^\xi)
           + \Delta_n^\add.
        \]
        Combining the two inequalities,
        we arrive at the desired bound for term (a):
       \begin{equation}
            \label{eq:oracle-inequality-additive-form-term-a}
            \Big|
                \min_{\xi \in \Xi}
                \hR(\hf^\xi)
                -
                \min_{\xi \in \Xi}
                R(\hf^\xi)
            \Big|
            \le \Delta_n^\add.
       \end{equation}
        Since
        $\hxi \in \argmin_{\xi \in \Xi} \hR(\hf^\xi)$,
        we can obtain the following upper bound for term (b):
        \begin{equation}
        \label{eq:oracle-inequality-additive-form-term-b}
            \Big|
                R(\hf^\hxi) - \min_{\xi \in \Xi} \hR(\hf^\xi)
            \Big|
            =
            \left|
                R(\hf^\hxi) - \hR(\hf^\hxi) 
            \right|
            \le \Delta_n^\add,
        \end{equation}
    where the inequality follows from the definition of $\Delta_n^\add$.
    
    Substituting the bounds
    \eqref{eq:oracle-inequality-additive-form-term-a}
    and
    \eqref{eq:oracle-inequality-additive-form-term-b}
    into
    \eqref{eq:oracle-inequality-additive-form-triangle},
    we conclude that
    \begin{equation}
        \Big|
            R(\hf^\cv) - \min_{\xi\in\Xi} R(\widehat{f}^\xi)
        \Big|
        \le 2 \Delta_n^\add.
    \end{equation}
    This implies the first inequality
    of~\eqref{eq:model-free-guarantee-general-model-selection}.
    Taking expectations on the both sides of the first inequality 
    of~\eqref{eq:model-free-guarantee-general-model-selection},
    we obtain
    \begin{equation}
        \EE\big[R(\hf^\cv)\big]
        \le \mathbb{E}\big[\min_{\xi \in \Xi}
        R(\widehat{f}^\xi)\big]
        + 2\mathbb{E}\big[\Delta_n^\add\big].
    \end{equation}
    It is clear that
    the first term on the right hand side
    is bounded above by
    $\min_{\xi \in \Xi}\mathbb{E}[R(\widehat{f}^\xi)]$,
    and thus
    we obtain the second inequality
    of~\eqref{eq:model-free-guarantee-general-model-selection}.
    This completes the proof
    of the oracle risk inequalities
    in additive form.

\paragraph{Multiplicative form.}
We now turn to prove the oracle risk inequality
\eqref{eq:oracle-risk-inequality-multiplicative-form}
in multiplicative form.
Recall again that \Cref{alg:general-cross-validation-model-selection}
returns $\hf^\cv = \hf^\hxi$.
In contrast to the proof of \Cref{prop:general-model-selection-guarantee},
we now 
break $R(\hf^\cv)$ into the following multiplicative form:
\begin{align}
    R(\hf^\cv)
    = \frac{R(\hf^\cv)}{\widehat{R}(\hf^\cv)} \cdot \widehat{R}(\hf^\cv) 
    ~&=~ \frac{R(\hf^\cv)}{\widehat{R}(\hf^\cv)} \cdot \widehat{R}(\hf^\hxi)  \nonumber \\
    ~&\overset{(i)}{=}
    \frac{R(\hf^\cv)}{\hR(\hf^\cv)}
    \cdot \min_{\xi \in \Xi} \hR(\hf^\xi) \nonumber \\
    ~&=~ \frac{R(\hf^\cv)}{\hR(\hf^\cv)} 
    \cdot \min_{\xi\in\Xi}
    \left[
    \frac{\hR(\hf^{\xi})}{R(\hf^\xi)} \cdot R(\hf^\xi) \right] \nonumber \\
    ~&\overset{(ii)}{\le}~
    \frac{R(\hf^\cv)}{\hR(\hf^\cv)}
    \cdot \min_{\xi \in \Xi} 
    \left[
    \left(\max_{\rho \in \Xi} \frac{\hR(\hf^\rho)}{R(\hf^\rho)}\right) \cdot R(\hf^\xi) 
    \right]
    \nonumber \\
    ~&{\le}~
    \frac{R(\hf^\cv)}{\hR(\hf^\cv)}
    \cdot \left(\max_{\xi\in\Xi}\frac{\hR(\hf^\xi)}{R(\hf^\xi)}\right)
    \cdot \min_{\xi\in\Xi} R(\hf^\xi) \nonumber \\
    ~&\overset{(iii)}{\le}~ 
    \ddfrac{1}{\min_{\xi \in \Xi} \frac{\hR(\hf^\xi)}{R(\hf^\xi)}} 
    \cdot \left(\max_{\xi \in \Xi} \frac{\hR(\hf^\xi)}{R(\hf^\xi)}\right)
    \cdot \min_{\xi \in \Xi} R(\hf^\xi)
    \nonumber \\
    ~&=~
    \ddfrac{\max_{\xi \in \Xi} \frac{\hR(\hf^\xi)}{R(\hf^\xi)}}{\min_{\xi \in \Xi} \frac{\hR(\hf^\xi)}{R(\hf^\xi)}}
    \cdot \min_{\xi \in \Xi} R(\hf^\xi).
    \label{eq:oracle-inequality-multiplivative-form-decomposition}
\end{align}
In the chain above,
equality~$(i)$
follows from the definition of $\hxi$ in \Cref{alg:general-cross-validation-model-selection},
inequality~$(ii)$
follows from the inequality $a_ib_i \le (\max_{j} a_j)b_i$ for any two sequences $a_i, b_i, 1\le i\le m$,
and
inequality~$(iii)$
follows by noting that
\[
\frac{R(\hf^\cv)}{\hR(\hf^\cv)} 
= \ddfrac{1}{\frac{\hR(\hf^\cv)}{R(\hf^\cv)}} 
= \ddfrac{1}{\frac{\hR(\hf^\hxi)}{R(\hf^\hxi)}} 
\le \ddfrac{1}{\min_{\xi \in \Xi} \frac{\hR(\hf^\xi)}{R(\hf^\xi)}}.
\]

Now, from the definition of $\Delta_n^{\mul}$, 
for all $\xi \in \Xi$, we have
\[
1 - \Delta_n^{\mul} \le \frac{\hR(\hf^\xi)}{R(\hf^\xi)} \le 1 + \Delta_n^{\mul}.
\]
In addition, 
since the loss function is assumed to be non-negative,
both $R(\hf^\xi)$ and $\hR(\hf^\xi)$ are non-negative for all $\xi$.
Hence, we can bound
\begin{equation}
\label{eq:oracle-inequality-multiplivative-form-bounds}
(1 - \Delta_n^{\mul})_{+} \le \min_{\xi\in\Xi}\frac{\hR(\hf^\xi)}{R(\hf^\xi)} \le \max_{\xi\in\Xi} \frac{\hR(\hf^\xi)}{R(\hf^\xi)} \le 1 + \Delta_n^{\mul}.
\end{equation}
Using \eqref{eq:oracle-inequality-multiplivative-form-bounds}
in \eqref{eq:oracle-inequality-multiplivative-form-decomposition}
then implies the desired upper bound:
\[
R(\widehat{f}^\cv)
\le \frac{1 + \Delta_n^{\mul}}{(1 - \Delta_n^{\mul})_{+}}
\cdot \min_{\xi\in\Xi} R(\hf^\xi).
\]
This completes the proof
of the oracle risk inequality in multiplicative form.

\subsection
{Proof of \Cref{lem:bounded-orlitz-error-control}}
    \paragraph{Tail bound.}
    We begin by applying the Bernstein inequality
    (see Lemma~\ref{lem:bernstein-ineq} for the exact statement)
    on the random variables $\ell(Y_j, \hf^\xi(X_j)), j \in \cI_\test$
    with mean $R(\hf^\xi)$
    conditionally on $\cD_\train$.
    (Note that the random variables are i.i.d.\ conditionally on $\cD_\train$.)
    For any $0 < \eta < 1$ and $\xi \in \Xi$,
    we have the tail bound
    \begin{equation}\label{eq:conditional-probability-Bernstein-application}
        \PP
        \left(
            \left|
                \frac{1}{|\cD_\test|} \sum_{j \in \cI_\test}
                \ell(Y_j, \hf^\xi(X_j)) - R(\hf^\xi)
            \right|
            \ge C_1
            \max
            \left\{
                \sqrt{\hsigma_\xi^2 \frac{\log\left(2/\eta\right)}{|\cD_\test|}},
                \hsigma_\xi \frac{\log\left(2/\eta\right)}{|\cD_\test|}
            \right\}
            \, \mathrel{\Bigg|} \mathcal{D}_\train
        \right)
        \le \eta.
    \end{equation}
    Taking expectation on both sides,
    we get that the unconditional probability is also bounded by $\eta$.
    Denoting the prediction risk estimate by $\hR(\hf^\xi)$,
    and choosing $\eta = {\eta}/{|\Xi|}$,
    for any $\xi \in \Xi$,
    we can equivalently write the bound as
    \[
        \PP
        \left(
            \left| \hR(\hf^\xi) - R(\hf^\xi) \right|
            \ge C_1 \hsigma_\xi 
            \max
            \left\{
                 \sqrt{\frac{\log\left({2|\Xi|}/{\eta}\right)}{n_\test}},
                 \frac{\log\left({2|\Xi|}/{\eta}\right)}{n_\test}
            \right\}
        \right)
        \le \frac{\eta}{|\Xi|}.
    \]
    Applying union bound over $\xi \in \Xi$, for any $0 < \eta < 1/|\Xi|$,
    we get uniform bound
    \[
        \PP
        \left(
            \max_{\xi \in \Xi}
            \left| \hR(\hf^\xi) - R(\hf^\xi) \right|
            \ge C_1
            \max_{\xi \in \Xi} \hsigma_\xi
            \max
            \left\{
                 \sqrt{\frac{\log\left({2|\Xi|}/{\eta}\right)}{n_\test}},
                 \frac{\log\left({2|\Xi|}/{\eta}\right)}{n_\test}
            \right\}
        \right)
        \le \eta.
    \]
    Using the definition of $\Delta_n^\add$,
    and setting $\hsigma_\Xi := \max_{k \in \Xi} \hsigma_\xi$,
    so far we have that
    \begin{equation}\label{eq:maximum-tail-bound-Delta_n}
        \PP
        \left(
            \Delta_n^\add
            \ge C_1 \hsigma_\Xi
            \max
            \left\{
                \sqrt{\frac{\log\left({2|\Xi|}/{\eta}\right)}{n_\test}},
                \frac{\log\left({2|\Xi|}/{\eta}\right)}{n_\test}
            \right\}
        \right)
        \le \eta.
    \end{equation}
    Choosing $\eta = n^{-A}$ for $A > 0$ provides the desired tail bound
    (for a modified constant $C_1 > 0$)
    \[
        \PP
        \left(
            \Delta_n^\add
            \ge
            C_1
            \hsigma_{\Xi}
            \max
            \left\{
                \sqrt{\frac{\log\left(|\Xi| n^A\right)}{n_\test}},
                \frac{\log\left(|\Xi| n^A\right)}{n_\test}
            \right\}
        \right)
        \le
        n^{-A}.
    \]
    
   \paragraph{Expectation bound.}
   We now turn to bounding $\EE[\Delta_n^\add]$.
   Define the event
   \[
       \mathcal{B}_n^\complement
       :=
       \left\{
           \Delta_n^\add \ge C_1 C_2
           \max
           \left\{
                \sqrt{\frac{\log\left(|\Xi| n^A\right)}{n_\test}},
                \frac{\log\left(|\Xi| n^A\right)}{n_\test}
            \right\}
        \right\}.
   \]
   Since $\mathbb{P}(\hsigma_n \ge C_2) \le n^{-A}$,
   combining this with~\eqref{eq:maximum-tail-bound-Delta_n},
   we conclude that $\mathbb{P}(\mathcal{B}_n^\complement) \le 2n^{-A}$.
   For the case of $\CEN = \MOM$,
   the proof follows from that of
   \Cref{lem:bounded-variance-error-control}.
   This follows because
   bounded $\psi_1$ norm
   implies bounded $L_2$ norm.
   
   We can bound $\mathbb{E}[\Delta_n^\add]$ by breaking the expected value as
   \begin{equation}\label{eq:Delta_n-first-split-Orlicz-1}
       \begin{split}
           \mathbb{E}[\Delta_n^\add]
           &= \mathbb{E}[\Delta_n^\add\mathbbm{1}_{\mathcal{B}_n}]
           + \mathbb{E}[\Delta_n^\add\mathbbm{1}_{\mathcal{B}_n^\complement}] \\
           &\le C_1 C_2 
           \max
           \left\{
                \sqrt{\frac{\log\left(|\Xi| n^A\right)}{n_\test}},
                \frac{\log\left(|\Xi| n^A\right)}{n_\test}
           \right\}
            + \left(\mathbb{E}[(\Delta_n^\add)^t]\right)^{1/t}(\mathbb{P}(\mathcal{B}_n^c))^{1/r} \\
           &\le C_1 C_2 
           \max
           \left\{
            \sqrt{\frac{\log\left(|\Xi| n^A\right)}{n_\test}},
            \frac{\log\left(|\Xi| n^A\right)}{n_\test}
           \right\}
            + \left(\mathbb{E}[(\Delta_n^\add)^t]\right)^{1/t}(2n^{-A})^{1/r},
       \end{split}
   \end{equation}
   for H{\"o}lder conjugates $t, r \ge 2$ satisfying $1/t + 1/r = 1$.
   Observe now that 
   \begin{align*}
       \mathbb{E}[(\Delta_n^\add)^t]
       &\le
       |\Xi|
       \max_{\xi \in \Xi}
       \mathbb{E}
       \left[
             \big|\widehat{R}(\widehat{f}^\xi)
                - R(\widehat{f}^\xi)\big|^t
       \right] \\
       &\le |\Xi|
       \max_{\xi \in \Xi}
       \mathbb{E}
       \left[
         \mathbb{E}
         \left[
             \big|\widehat{R}(\widehat{f}^\xi)
                - R(\widehat{f}^\xi)\big|^t \mathrel{\big|} \mathcal{D}_\train
         \right]
       \right]\\
       &\le C_3 |\Xi|
       \max_{\xi \in \Xi}
       \mathbb{E}
       \left[
             \widehat{\sigma}_\xi^t
             \max
             \left\{
                 \left(\frac{t}{n_\test}\right)^{t/2},
                 \left(\frac{t}{n_\test}\right)^t
             \right\}
       \right],
   \end{align*}
   where the last inequality follows from
   integrating the quantile bound
   in~\eqref{eq:conditional-probability-Bernstein-application}
   and $C_3$ is a constant potentially larger than $C_1$.
   Substituting this bound in~\eqref{eq:Delta_n-first-split-Orlicz-1},
   we obtain the desired expectation bound
    \[
        \begin{split}
           \mathbb{E}[\Delta_n^\add]
           &\le C_1 C_2
           \max
           \left\{
                \sqrt{\frac{\log\left(|\Xi| n^A\right)}{n_\test}},
                \frac{\log\left(|\Xi| n^A\right)}{n_\test}
            \right\}
            + C_3
            n^{-A/r}
            |\Xi|^{1/t}
            \max
            \left\{
                 \sqrt{\frac{t}{n_\test}}, \frac{t}{n_\test}
            \right\}
            \max_{\xi \in \Xi}
            \left(\mathbb{E}[\widehat{\sigma}_\xi^t]\right)^{1/t}.
       \end{split}
    \]
    for $t, r \ge 2$ such that $1/r + 1/t = 1$.
    This completes the proof.

\subsection
{Proof of \Cref{lem:bounded-variance-error-control}}
    \paragraph{Tail bound.}
    The proof is similar to the proof of
    \Cref{lem:bounded-orlitz-error-control}.
    Our main workhorse is going to be
    \Cref{lem:mom-concentration}.
    We use
    $\eta = \big(|\Xi| n^A\big)^{-1}$
    in \Cref{alg:general-cross-validation-model-selection}.
    Applying the lemma with such $\eta$
    on the random variables
    $\ell(Y_j, \hf^\xi(X_j)), j \in \cI_\test$
    conditionally on $\cD_\train$,
    for each $\xi \in \Xi$
    we get the tail bound
    \[
        \PP
        \left(
            \left|
                \frac{1}{|\cD_\test|}
                \sum_{j \in \cI_\test}
                \ell(Y_j, \hf^\xi(X_j))
                - R(\hf^\xi)
            \right|
            \ge
            C_1 \hsigma_\xi
            \sqrt{\frac{\log(|\Xi| n^A)}{|\cD_\test|}}
            \, \mathrel{\Bigg|} \cD_\train
        \right)
        \le 
        \frac{n^{-A}}{|\Xi|}
    \]
    for some absolute constant $C_1 > 0$.
    In other words,
    \[
        \PP
        \left(
            \left|
                \hR(\hf^\xi) - R(\hf^\xi)
            \right|
            \ge
            C_1
            \hsigma_\xi
            \sqrt{\frac{\log(|\Xi| n^A)}{n_\test}}
            \mathrel{\Bigg |} \cD_\train
        \right)
        \le
        \frac{n^{-A}}{|\Xi|}.
    \]
    Integrating out $\cD_\train$
    and applying union bound over $\xi \in \Xi$
    then leads to the uniform bound 
    \begin{equation}\label{eq:tail-bound-mom}
        \PP
        \left(
            \max_{\xi \in \Xi}
            \left|
               \hR(\hf^\xi) - R(\hf^\xi)
            \right|
            \ge C_1 \max_{\xi \in \Xi} \hsigma_\xi
            \sqrt{\frac{\log(|\Xi| n^A)}{n_\test}}
        \right)
        \le n^{-A}.
    \end{equation}
    Substituting for the definitions of $\Delta_n^\add$ and $\hsigma_\Xi$
    gives the desired tail bound
    \begin{equation}\label{eq:final-tail-bound-mom}
        \PP
        \left(
            \Delta_n^\add
            \ge
            C_1
            \hsigma_{\Xi}
            \sqrt{\frac{\log(|\Xi| n^A)}{n_\test}}
        \right)
        \le
        n^{-A}.
    \end{equation}
    
    \paragraph{Expectation bound.}
    For bounding $\EE[\Delta_n^\add]$, we again follow similar strategy
    as in the proof of
    \Cref{lem:bounded-orlitz-error-control}.
    In order to bound certain expectations,
    we begin by extending the tail bound \eqref{eq:final-tail-bound-mom}.
    From the assumption,
    $\PP(\hsigma_\Xi \ge C_2) \le n^{-A}$
    for a constant $C_2 > 0$.
    For such a constant, consider the event
    \[
        \mathcal{B}_n^\complement
        :=
        \left\{ 
            \Delta_n^\add
            \ge
            C_1
            C_2
            \sqrt{\frac{\log(|\Xi| n^A)}{n_\test}}
        \right\}.
    \]
    Conditioning on the event $\{ \hsigma_\Xi \ge C_2 \}$,
    we can bound the probability of $\cB_n^\complement$ as follows:
    \begin{align*}
        \PP(\cB_n^\complement)
        &= \PP
        \left(
           \Delta_n^\add
           \ge 
            C_1 C_2
           \sqrt{\frac{\log(|\Xi| n^A)}{n_\test}},
           \hsigma_\Xi \le 
           C_2
        \right)
        + \PP
        \left(
           \Delta_n^\add \ge 
            C_1 C_2
           \sqrt{\frac{\log(|\Xi| n^A)}{n_\test}},
           \hsigma_\Xi \ge 
            C_2
        \right) \\
        &\le \PP
        \left(
            \Delta_n^\add \ge C_1 \hsigma_\Xi \sqrt{\frac{\log(|\Xi| n^A)}{n_\test}}
        \right)
        + \PP \left( \hsigma_n \ge C_2 \right) \le \frac{2}{n^A},
    \end{align*}
    where we used the bound from \eqref{eq:final-tail-bound-mom}.
    We are now ready to bound $\EE[\Delta_n^\add]$
    by splitting using the event $\cB_n^\complement$.
    We have
    \begin{align}
        \EE\left[\Delta_n^\add\right]
        &= \EE\left[\Delta_n^\add \mathbbm{1}_{\mathcal{B}_n}\right]
        + \EE\left[\Delta_n^\add \mathbbm{1}_{\mathcal{B}_n^\complement}\right] \nonumber \\
       &\le 
        C_1 C_2
       \sqrt{\frac{\log(|\Xi| n^A)}{n_\test}}
       + \left(\PP(\mathcal{B}_n^\complement)\right)^{1/2}
       \left(\EE[|\Delta_n^\add|^2]\right)^{1/2} \nonumber \\
       &\le 
        C_1 C_2
       \sqrt{\frac{\log(|\Xi| n^A)}{n_\test}}
       + \left(2 n^{-A}\right)^{1/2}
       \left(\EE[|\Delta_n^\add|^2]\right)^{1/2} \label{eq:expection-bound-mom-intermediate}
    \end{align}
    where in the first inequality,
    we used Cauchy-Schwartz inequality for the second term.
    It remains to bound $\EE[|\Delta_n^\add|^2]$, which we do below.
    We have
    \begin{align*}
        \EE[\left|\Delta_n^\add\right|^2]
        &= \EE
        \left[
                \max_{\xi \in \Xi}
                \left|\hR(\hf^\xi) - R(\hf^\xi)\right|^2
        \right] \le |\Xi| \max_{\xi \in \Xi}
        \EE\left[|\hR(\hf^\xi) - R(\hf^\xi)|^2\right].
    \end{align*}
    For bounding the second term,
    recall that the $\MOM$ procedure computes
    $\hR(\hf^\xi)$ as the median of empirical means computed on
    $B$ partitions of the test data.
    For each of the $B$ partitions,
    the variance of the empirical mean is
    $\hsigma_\xi^2 / (n_{\test} / B)$.
    To bound the variance of the median of means on $B$ partitions,
    we invoke Theorem 1 of 
    \cite{gribkova_2020}
    (with $k = 2$, $\rho = 1$, and $i$ corresponding to the median position).
    Note that each of the $B$ empirical means are
    independent and identically distributed.
    This provides
    \begin{align*}
        \EE
        \left[
            \left| \hR(\hf^\xi) - R(\hf^\xi) \right|^2
            \mathrel{\Big |}
            \cD_\train
        \right]
        &\le
        C
        \left( \frac{\hsigma_\xi^2}{n_\test / B} \right)
        \le C \frac{B \hsigma_\xi^2}{n_\test}.
    \end{align*}
    for some absolute constant $C$.
    Thus,
    \begin{align*}
        \left(
            \EE\left[|\Delta_n^\add|^2\right]
        \right)^{1/2}
        &\le
        C
        \left(
            |\Xi| \frac{B}{n_\test}
            \max_{\xi \in \Xi} \EE[\hsigma_\xi^2]
        \right)^{1/2} \\
        &\le
        C
        |\Xi|^{1/2}
        \sqrt{\frac{B}{n_\test}}
        \max_{\xi \in \Xi}
        \Big(\EE[\hsigma_\xi^2]\Big)^{1/2} \\
    \end{align*}
    Recalling $B = \lceil 8 \log(|\Xi| n^A) \rceil$
    and combining this bound with \eqref{eq:expection-bound-mom-intermediate},
    we finally have the desired expectation bound
    \[
        \EE\left[ \Delta_n^\add \right]
        \le
        C_1 C_2 
        \sqrt{\frac{\log(|\Xi| n^A)}{n_\test}}
        +
        C_3
        n^{-A/2}
        |\Xi|^{1/2}
        \sqrt{\frac{\log(|\Xi| n^A)}{n_\test}}
        \max_{\xi \in \Xi}
        \Big(\EE[\hsigma_\xi^2]\Big)^{1/2}.
    \]
    for some absolute constant $C_3 > 0$.
    This completes the proof.
    
\subsection
{Proof of \Cref{lem:bounded-orlitz-error-control-mul-form}}

As argued in the proof of \Cref{lem:bounded-orlitz-error-control},
using \Cref{lem:bernstein-ineq},
for any $A > 0$,
we have the tail bound:
\[
    \PP
    \left(
        \left|
            \hR(\hf^\xi) - R(\hf^\xi)
        \right|
        \ge
        C \hsigma_\xi
        \max
        \left\{
            \sqrt{\frac{\log(|\Xi| n^A)}{|\cD_\test|}},
            \frac{\log(|\Xi| n^{A})}{|\cD_\test|}
        \right\}
        \mathrel{\Bigg |} \cD_\train
    \right)
    \le
    \frac{n^{-A}}{|\Xi|}
\]
for some universal constant $C > 0$.
By diving $R(\hf^\xi)$ on the both side of error event,
and denoting $\hsigma_\xi / R(\hf^\xi)$ by $\hkappa_\xi$,
equivalently we have 
\[
    \PP
    \left(
        \left|
            \frac{\hR(\hf^\xi)}{R(\hf^\xi)} - 1
        \right|
        \ge
        C
        \hkappa_\xi
        \max
        \left\{
            \sqrt{\frac{\log(|\Xi| n^A)}{|\cD_\test|}},
            \frac{\log(|\Xi| n^A)}{|\cD_\test|}
        \right\}
        \mathrel{\Bigg |} \cD_\train
    \right)
    \le
    \frac{n^{-A}}{|\Xi|}.
\]
Integrating over randomness in $\cD_\train$,
and applying union bound over $\xi \in \Xi$,
we obtain
\[
    \PP
    \left(
        \max_{\xi \in \Xi}
        \bigg|
            \frac{\hR(\hf^\xi)}{R(\hf^\xi)} - 1
        \bigg|
        \ge
        C
        \max_{\xi \in \Xi} \hkappa_\xi
        \max
        \left\{
            \sqrt{\frac{\log(|\Xi| n^A)}{n_\test}},
            \frac{\log(|\Xi| n^{A})}{n_\test}
        \right\}
    \right)
    \le n^{-A}.
\]
In other words, in terms $\Delta_n^{\mul}$ and $\hkappa_\Xi$,
we have
\[
    \PP
    \left(
        \Delta_n^{\mul}
        \ge
        C
        \hkappa_\Xi
        \max
        \left\{
            \sqrt{\frac{\log(|\Xi| n^A)}{n_\test}},
            \frac{\log(|\Xi| n^A)}{n_\test}
        \right\}
    \right)
    \le n^{-A},
\]
as desired.
This completes the proof.

\subsection
{Proof of \Cref{lem:bounded-variance-error-control-mul-form}}

As argued in the proof of \Cref{lem:bounded-variance-error-control},
using \Cref{lem:mom-concentration},
for any $A > 0$,
we have the following tail bound:
\[
    \PP
    \left(
        \left|
            \hR(\hf^\xi) - R(\hf^\xi)
        \right|
        \ge
        C \hsigma_\xi
        \sqrt{\frac{\log(|\Xi| n^A)}{|\cD_\test|}}
        \mathrel{\Bigg |} \cD_\train
    \right)
    \le
    \frac{n^{-A}}{|\Xi|}
\]
for some universal constant $C > 0$.
By diving $R(\hf^\xi)$ on the both side of error event,
and denoting $\hsigma_\xi / R(\hf^\xi)$ by $\hkappa_\xi$,
we obtain
\[
    \PP
    \left(
        \left|
            \frac{\hR(\hf^\xi)}{R(\hf^\xi)} - 1
        \right|
        \ge
        C
        \hkappa_\xi
        \sqrt{\frac{\log(|\Xi| n^A)}{|\cD_\test|}}
        \mathrel{\Bigg |} \cD_\train
    \right)
    \le
    \frac{n^{-A}}{|\Xi|}.
\]
Integrating over randomness in $\cD_\train$,
and applying union bound over $\xi \in \Xi$,
this implies that
\[
    \PP
    \left(
        \max_{\xi \in \Xi}
        \bigg|
            \frac{\hR(\hf^\xi)}{R(\hf^\xi)} - 1
        \bigg|
        \ge
        C
        \max_{\xi \in \Xi} \hkappa_\xi
        \sqrt{\frac{\log(|\Xi| n^A)}{n_\test}}
    \right)
    \le n^{-A}.
\]
Writing in terms $\Delta_n^{\mul}$ and $\hkappa_\Xi$,
we arrive at the desired bound:
\[
    \PP
    \left(
        \Delta_n^{\mul}
        \ge
        C
        \hkappa_\Xi
        \sqrt{\frac{\log(|\Xi| n^A)}{n_\test}}
    \right)
    \le n^{-A}.
\]
This finishes the proof.

\subsection
{Proof of \Cref{prop:misclassification-hinge-psi1l2-bound}}

    \paragraph{Part 1.}
    For the first part,
    observe that
    $| \ell(Y_0, \hf(X_0)) | = \max\{ 0, 1 - Y_0 \hf(X_0) \} \le 2$
    assuming $|Y_0| \le 1$ and $|\hf(X_0)| \le 1$.
    For a bounded random variable $Z$,
    $\| Z \|_{\psi_2} \lesssim \| Z \|_{\infty}$
    (see, e.g., Example 2.5.8 of \cite{vershynin_2018}).
    Thus, the random variable $\ell(Y_0, \hf(X_0))$
    is conditionally sub-Gaussian 
    with sub-Gaussian norm $2$ (up to constants),
    and consequently sub-exponential
    with the same sub-exponential norm upper bound.
    The conditional $L_2$ norm bound follows similarly.
    
    \paragraph{Part 2.}
    The second part follows in the same vein by noting that
    $\ell(Y_0, \hf(X_0)) = \mathbbm{1}_{Y_0 \ne \hf(X_0)}$
    only takes values $0$ or $1$, and Bernoulli random variables are sub-Gaussian
    with sub-Gaussian norm $1$ (up to constants)
    and hence sub-exponential with the same sub-exponential norm
    upper bound.
    The bound on the conditional $L_2$ norm follows analogously.

\subsection
{Proof of \Cref{thm:oracle-bound-classifier-miss-hinge=error}}

An outline for the proof is already provided in \Cref{sec:common-loss-functions}.
The theorem follows by combining
the additive form of the oracle inequality from \Cref{prop:general-model-selection-guarantee},
along with
the probabilistic bounds on $\Delta^\add$ from
\Cref{lem:bounded-variance-error-control,lem:bounded-orlitz-error-control},
and the bounds on conditional $\psi_1$ and $L_2$ norm bounds from
\Cref{prop:misclassification-hinge-psi1l2-bound}.

\subsection
{Proof of \Cref{prop:subexp-ex-squared}}

\paragraph{Part 1.}
For the first part,
we bound the $\psi_1$ norm of the squared error
by the squared $\psi_2$ norm of the error to get
\begin{equation}
    \label{eq:squared-error-psi1-norm}
    \| \ell(Y_0, \hf(X_0)) \|_{\psi_1 \mid \cD_n}
    = \| (Y_0 - X_0^\top \hbeta)^2 \|_{\psi_1 \mid \cD_n}
    \le \| Y_0 - X_0^\top \hbeta \|_{\psi_2 \mid \cD_n}^2,
\end{equation}
where the inequality follows by Lemma 2.7.7 of \cite{vershynin_2018}.
Note that for any $\beta \in \RR^{p}$, we have
\begin{equation}
    \label{eq:error-decomposition-wrt-arbitrary-vector}
    (Y_0 - X_0^\top \hbeta)
    = (Y_0 - X_0^\top \beta) + X_0^\top (\beta - \hbeta).
\end{equation}
Because $\| Z_1 + Z_2 \|_{\psi_2} \le \| Z_1 \|_{\psi_2} + \| Z \|_{\psi_2}$
we can bound
\begin{equation}
    \label{eq:prederr-psi2-triangle}
    \| Y_0 - X_0^\top \hbeta \|_{\psi_2 \mid \cD_n}
    \le \| Y_0 - X_0^\top \beta \|_{\psi_2} 
    + \| X_0^\top (\beta - \hbeta) \|_{\psi_2 \mid \cD_n}.
\end{equation}
Noting that $Y_0 - X_0^\top \beta = (Y_0, X_0)^\top (1, -\beta)$
and $(\beta - \hbeta)$ is a fixed vector conditioned on $\cD_n$,
by using $\psi_2-L_2$ equivalence on $(X_0, Y_0)$,
we have
\begin{equation}
    \label{eq:prederr-terms-psi2l2}
    \| Y_0 - X_0^\top \beta \|_{\psi_2}
    \le \tau \| Y_0 - X_0^\top \beta \|_{L_2}
    \quad
    \text{and}
    \quad
    \| X_0^\top (\beta - \hbeta) \|_{\psi_2 \mid \cD_n}
    \le \tau \| X_0^\top (\beta - \hbeta) \|_{L_2 \mid \cD_n}
    = \tau \| \hbeta - \beta \|_{\Sigma},
\end{equation}
where in the last inequality we used the fact that
$\EE[X_0] = 0$ and $\EE[X_0 X_0^\top] = \Sigma$.
Thus, combining 
\eqref{eq:squared-error-psi1-norm},
\eqref{eq:prederr-psi2-triangle},
and
\eqref{eq:prederr-terms-psi2l2},
for $\beta \in \RR^{p}$, we have
\[
    \| \ell(Y_0 - X_0^\top \hbeta) \|_{\psi_1 \mid \cD_n}
    \le (\| Y_0 - X_0^\top \beta \|_{\psi_2} + \| \hbeta  - \beta \|_{\Sigma})^2.
\]
Taking infimum over $\beta$, we have that for squared loss
\[
    \| \ell(Y_0, \hf(X_0)) \|_{\psi_1 \mid \cD_n}
    \le \tau^2 \inf_{\beta \in \RR^{p}}
    (\| Y_0 - X_0^\top \beta \|_{\psi_2} + \| \hbeta - \beta \|_{\Sigma})^2,
\]
as desired. This completes the proof of the first inequality
in \eqref{eqn:linear-case-one}.
For the second inequality in \eqref{eqn:linear-case-one},
using the $\psi_2-L_2$ equivalence on the vector $(X_0, Y_0)$,
observe that
\begin{equation}
    \label{eq:squared-error-expectation}
    \EE[\ell(Y_0, \hf(X_0)) \mid \cD_n]
    = \EE[(Y_0 - X_0^\top \hbeta)^2 \mid \cD_n]
    = \| Y_0 - X_0^\top \|_{L_2 \mid \cD_n}^2.
\end{equation}
Hence, from 
\eqref{eq:squared-error-psi1-norm} 
and
\eqref{eq:squared-error-expectation},
we have
\[
    \frac
    {\| \ell(Y_0, \hf(X_0)) \|_{\psi_1 \mid \cD_n}}
    {\EE[ \ell(Y_0, \hf(X_0)) \mid \cD_n]}
    \le 
    \frac
    {\| Y_0 - X_0^\top \hbeta \|^2_{\psi_2 \mid \cD_n}}
    {\| Y_0 - X_0^\top \hbeta \|^2_{L_2 \mid \cD_n}}
    =
    \left(
        \frac
        {\| (Y_0, X_0) (1, -\hbeta) \|_{\psi_2 \mid \cD_n}}
        {\| (Y_0, X_0) (1, -\hbeta) \|_{L_2 \mid \cD_n}}
    \right)^2
    \le \tau^2,
\]
as desired.
This completes the proof of the first part.

\paragraph{Part 2.}

We now turn to the second part to bound the conditional $L_2$ norm of the square loss.
For the square loss, note that
\begin{equation}
    \label{eq:squared-error-l2-squared}
    \| \ell(Y_0, \hf(X_0)) \|_{L_2 \mid \cD_n}^2
    = \EE[(Y_0 - \hf(X_0))^4 \mid \cD_n].
\end{equation}
Using the decomposition \eqref{eq:error-decomposition-wrt-arbitrary-vector} 
and triangle inequality with respect to the $L_4$ norm,
we have
\begin{equation}
    \label{eq:prederr-l4-triangle}
    \EE[ (Y_0 - X_0^\top \hbeta)^4  \mid \cD_n]^{1/4}
    \le 
    \EE[ (Y_0 - X_0^\top \beta)^4 \mid \cD_n]^{1/4}
    + \EE[ X_0^\top (\beta - \hbeta)^4 \mid \cD_n]^{1/4} 
\end{equation}
Using the $L_4-L_2$ equivalence for $(Y_0, X_0)$,
we can bound
\begin{equation}
    \label{eq:prederr-terms-l4l2}
    \| Y_0 - X_0^\top \beta \|_{L_4}
    \le \tau \| Y_0 - X_0^\top \beta \|_{L_2}
    \quad
    \text{and}
    \quad
    \| X_0^\top (\beta - \hbeta) \|_{L_4 \mid \cD_n}
    \le \tau \| X_0^\top (\beta - \hbeta) \|_{L_2 \mid \cD_n}.
\end{equation}
Thus, combining 
\eqref{eq:squared-error-l2-squared},
\eqref{eq:prederr-l4-triangle},
and
\eqref{eq:prederr-terms-l4l2},
we have for any $\beta \in \RR^{p}$,
\[
    \| (Y_0, \hf(X_0)) \|_{L_2 \mid \cD_n}
    \le (\tau \| Y_0 - X_0^\top \beta \|_{L_2} + \tau \| \hbeta - \beta \|_{\Sigma})^2
    \le \tau^2 
    (\| Y_0 - X_0^\top \beta \|_{L_2} + \| \hbeta - \beta \|_{\Sigma})^2.
\]
This completes the proof of first inequality in \eqref{eqn:linear-case-two}.
For the second inequality of \eqref{eqn:linear-case-two},
note that
\[
    \frac
    {\| \ell(Y_0, \hf(X_0)) \|_{L_2 \mid \cD_n}}
    {\EE[\ell(Y_0, \hf(X_0)) \mid \cD_n]}
    \le
    \frac
    {\| Y_0 - \hf(X_0) \|_{L_4 \mid \cD_n}^2}
    {\| Y_0 - \hf(X_0) \|_{L_2 \mid \cD_n}^2}
    =
    \left(
        \frac
        {\| (Y_0, X_0) (1, -\hbeta) \|_{L_4 \mid \cD_n}}
        {\| (Y_0, X_0) (1, -\hbeta) \|_{L_2 \mid \cD_n}}
    \right)^2
    \le \tau^2.
\]
This concludes the proof of the second part.

\subsection
{Proof of \Cref{prop:linear-predictor-absolute-loss}}

The proof is similar to that of \Cref{prop:subexp-ex-squared}.

\paragraph{Part 1.}

From the decomposition \eqref{eq:error-decomposition-wrt-arbitrary-vector}
and the triangle inequality on $\psi_1$ norm,
we have for any $\beta \in \RR^{p}$,
\begin{equation}
    \label{eq:prederr-psi1-triangle}
    \| Y_0 - X_0^\top \hbeta \|_{\psi_1 \mid \cD_n}
    \le
    \| Y_0 - X_0^\top \beta \|_{\psi_1}
    + \| X_0^\top (\beta - \hbeta) \|_{\psi_1 \mid \cD_n}.
\end{equation}
Using the $\psi_1-L_1$ equivalence of $(X_0, Y_0)$,
note that
\begin{equation}
    \label{eq:prederr-terms-psi1l1}
    \| Y_0 - X_0^\top \beta \|_{\psi_1}
    \le \tau \| Y_0 - X_0^\top \beta \|_{L_1}
    \quad
    \text{and}
    \quad
    \| X_0^\top (\beta - \hbeta) \|_{\psi_1 \mid \cD_n}
    \le \tau \| X_0^\top (\beta - \hbeta) \|_{\psi_1 \mid \cD_n}.
\end{equation}
Thus, 
from 
\eqref{eq:prederr-psi1-triangle} 
and
\eqref{eq:prederr-terms-psi1l1},
for any $\beta \in \RR^{p}$,
we have
\[
    \| Y_0 - X_0^\top \hbeta \|_{\psi_1 \mid \cD_n}
    \le 
    \tau 
    ( \| Y_0 - X_0^\top \beta \|_{L_1} +  \| X_0^\top (\hbeta - \beta) \|_{L_1 \mid \cD_n} ).
\]
Now taking infimum over $\beta \in \RR^{p}$ yields the first inequality
of \eqref{eq:abserr-l2l1}.
To show the second inequality,
observe that
\[
    \frac
    {\| \ell(Y_0, \hf(X_0)) \|_{\psi_1 \mid \cD_n}}
    {\EE[ \ell(Y_0, \hf(X_0)) \mid \cD_n ]}
    \le
    \frac
    {\| Y_0 - X_0^\top \hbeta \|_{\psi_1 \mid \cD_n}}
    {\| Y_0 - X_0^\top \hbeta \|_{L_1 \mid \cD_n}}
    \le
    \tau,
\]
as desired. This finishes the proof.

\paragraph{Part 2.}

The second part follows analogously to the first part
by using the $L_2-L_1$ equivalence on $(X_0, Y_0)$.

\subsection
{Proof of \Cref{prop:linear-predictor-logistic-loss}}
\label{sec:bounds-norms-ratiosofnorms-logistic}

We start by writing the loss as
\begin{align*}
    \ell(Y_0, \hf(X_0))
    &= Y_0 \log(1 + e^{-X_0^\top \hbeta}) + (1 - Y_0) \log(1 + e^{X_0^\top \hbeta})\\
    &= \mathrm{KL}(Y_0,\,(1 + \exp(-X_0^{\top}\hbeta))^{-1}).
\end{align*}
Observe that the loss is non-negative
since $\log(1 + e^{t}) \ge 0$ for all $t$.

\paragraph{Upper bounds on $\psi_1$ and $L_2$ norms.}
We will first obtain an upper on the loss
and consequently on the $\psi_1$ and $L_2$ norms of the loss.
Because $Y_0$ takes values $0$ or $1$,
we have that
\begin{align*}
    \ell(Y_0, \hf(X_0))
    &\le \max \big\{ \log(1 + e^{-X_0^\top \hbeta}), \log(1 + e^{X_0^\top \hbeta}) \big\} \\
    &\le \log(1 + e^{|X_0^\top \hbeta|}),
\end{align*}
where the second inequality follows since $t \mapsto e^{t}$ is monotonically increasing in $t$.
Now 
using the following bound on $\log(1 + e^{|t|})$:
\[
    \log(1 + e^{|t|})
    \le
    \begin{cases}
        \log 2 & \text{ if } e^{|t|} \le 1 \\
        \log(2 e^{|t|}) = \log 2 + | t | & \text{ otherwise},
    \end{cases}
\]
we can upper bound the loss by
\[
    \ell(Y_0, \hf(X_0))
    \le | X_0^\top \hbeta | + \log 2.
\]
Hence,
we can upper bound the $\psi_1$ and $L_2$ norm of the loss as follows:
\begin{align}
\|\ell(Y_0, \widehat{f}(X_0))\|_{\psi_1 \mid \cD_n} &\le \log(2) + \|X_0^{\top}\hbeta\|_{\psi_1 \mid \cD_n}, 
\label{eq:logistic-psi1-ub} \\
(\mathbb{E}[\ell^2(Y_0, \widehat{f}(X_0)) \mid \cD_n])^{1/2} &\le \log(2) 
+ (\mathbb{E}[|X_0^{\top}\hbeta|^2 \mid \cD_n])^{1/2}.
\label{eq:logistic-l2-ub}
\end{align}

\paragraph{Lower bound on expectation.}
Next we obtain a lower bound on $\EE[\ell(Y_0, \hf(X_0)) \mid \cD_n]$.
Setting $p(x) = \mathbb{E}[Y_0|X_0 = x]$, it is clear that
\[
\mathbb{E}[\ell(Y_0, \hf(X_0)) \mid \cD_n, X_0] = p(X_0)\log(1 + \exp(-X_0^{\top}\hbeta)) + (1 - p(X_0))\log(1 + \exp(X_0^{\top}\hbeta)).
\]
Because $0 < p_{\min} \le \min\{p(x), 1 - p(x)\}$
for all $x$,
we have
\begin{align}
\mathbb{E}[\ell(Y_0, \hf(X_0)) \mid \cD_n] 
&\ge 
p_{\min}
\, \mathbb{E}[\max\{\log(1 + \exp(-X_0^{\top}\hbeta)),\, \log(1 + \exp(X_0^{\top}\hbeta))\} \mid \cD_n]
\nonumber \\ 
&= 
p_{\min}
\, \mathbb{E}[\log(1 + \exp(|X_0^{\top}\hbeta|)) \mid \cD_n] 
\nonumber \\
&\ge 
\frac{p_{\min}}{2}
\, \mathbb{E}[\log(2) + |X_0^{\top}\hbeta| \mid \cD_n] 
    = \frac{p_{\min}}{2}(\log(2) + \mathbb{E}|X^{\top}_0\hbeta|),
\label{eq:logistic-l1-lb}
\end{align}
where 
the second equality follows since $t \mapsto e^{t}$ is monotonically increasing in $t \in \RR$,
and the last inequality follows from the fact that $1/2 \le \log(1 + \exp(x))/(\log(2) + x) \le 1$ for all $x \ge 0$.

Using \eqref{eq:logistic-psi1-ub} and \eqref{eq:logistic-l1-lb},
we have
\[
    \frac
    {\| \ell(Y_0, \hf(X_0)) \|_{\psi_1 \mid \cD_n}}
    {\EE[\ell(Y_0, \hf(X_0)) \mid \cD_n]}
    \le 
    \frac
    {\| X_0^\top \hbeta \|_{\psi_1 \mid \cD_n} + \log(2)}
    {p_{\min}(\EE[|X_0^\top \hbeta| \mid \cD_n] + \log(2)) / 2}
    \le
    \frac
    {\tau \| X_0^\top \hbeta \|_{L_1 \mid \cD_n} + \log(2)}
    {p_{\min}(\tau \| X_0^\top \hbeta \|_{L_1 \mid \cD_n} + \log(2)) / 2}
    =
    2 \tau p_{\min}^{-1}.
\]
This proves the first part of \Cref{prop:linear-predictor-logistic-loss}.
A similar bound holds for the second inequality of \Cref{prop:linear-predictor-logistic-loss}
using upper bound from \eqref{eq:logistic-l2-ub} 
and lower bound \eqref{eq:logistic-l1-lb}.
This completes the proof.

\subsection
{Proof of \Cref{thm:oracle-bound-linear-predictor-squared-error}}

An outline for the proof is provided in \Cref{sec:common-loss-functions}.
The theorem follows by combining
the multiplicative form of the oracle inequality
from \Cref{prop:general-model-selection-guarantee},
along with probabilistic bounds on $\Delta^\mul$ from
\Cref{lem:bounded-orlitz-error-control-mul-form,lem:bounded-variance-error-control-mul-form},
and the bounds on ratio of conditional $\psi_1$ and $L_1$ norms,
and $L_2$ and $L_1$ norms from
\Cref{prop:subexp-ex-squared}.

\section{Proofs related to risk monotonization for zero-step procedure}
\label{sec:proofs-riskmonotonization-zerostep}

\subsection
{Proof of \Cref{thm:monotonization-zerostep}}

An outline for the proof is already provided in 
\Cref{sec:risk-behavior-zerostep}.
For the sake of completeness,
we briefly summarize the main steps below.

The deterministic additive and multiplicative oracle risk inequalities
from \Cref{prop:general-model-selection-guarantee},
along with probabilistic bounds from
\Cref{lem:bounded-orlitz-error-control,lem:bounded-variance-error-control,lem:bounded-orlitz-error-control-mul-form,lem:bounded-variance-error-control-mul-form},
provide the following bound on the
risk of the zero-step predictor 
\begin{equation}
    \label{eq:zerostep-nonasymp-risk-bound-proof}
    R(\hf^{\zerostep})
    =
    \begin{cases}
        \min_{\xi \in \Xi_n}
        R(\hf^\xi)
        +
        O_p(1) \sqrt{\log n / n_\test}
        & \text{ if } \hsigma_\Xi = O_p(1), \\
        \min_{\xi \in \Xi_n} R(\hf^\xi)
        \big(1 + O_p(1) \sqrt{\log n / n_\test}\big)
        & \text{ if } \hkappa_\xi = O_p(1).
    \end{cases}
\end{equation}
Depending on the value of $M$,
we now bound the term $\min_{\xi \in \Xi_n} R(\hf^\xi)$
under the assumptions
\eqref{eq:rn-deterministic-approximation-3}
or
\eqref{eq:rn-deterministic-approximation-2}.

\paragraph{Case of $M = 1$.}

Under \eqref{eq:rn-deterministic-approximation-3},
we have
from \eqref{eq:nonasymp-bound-M=1},
\begin{equation}
    \label{eq:nonasymp-bound-M=1-proof}
    \min_{\xi \in \Xi_n} R(\hf^\xi)
    =
    \min_{\xi \in \Xi_n} R(\tf(\cdot; \cD_\train^{\xi, 1}))
    =
   R^\deter_\nearrow(n; \tf)
    (1 + o_p(1)).
\end{equation}
Combining
\eqref{eq:nonasymp-bound-M=1-proof}
with
\eqref{eq:zerostep-nonasymp-risk-bound-proof}
yields
\begin{equation}
\label{eq:zerostep_risk-gaurantee-mindeter-proof}
\begin{split}
    R(\hf^\zerostep)
    &=
    \begin{cases}
    R^\deter_\nearrow(n; \tf)
        (1 + o_p(1))
        + O_p(1) \sqrt{\log n / n_\test}
        & \text{ if } \hsigma_\Xi = O_p(1) \\
        R^\deter_\nearrow(n; \tf)
        (1 + o_p(1))
        & \text{ if } \hkappa_\Xi = O_p(1)
    \end{cases} \\
    &=
    R^\deter_\nearrow(n; \tf)
    \begin{cases}
        1 + o_p(1) +
       \sqrt{\log n / n_\test} /
         R^\deter_\nearrow(n; \tf)
        & \text{ if } \hsigma_\Xi = O_p(1) \\
        1 + o_p(1)
        & \text{ if } \hkappa_\Xi = O_p(1).
    \end{cases}
\end{split}
\end{equation}
Thus,
under
\ref{cond:zerostep-add}
or 
\ref{cond:zerostep-mult},
we have
$
    {|R(\hf^\zerostep) - R^\deter_\nearrow(n; \tf)|}/{R^\deter_\nearrow(n; \tf)}
    = o_p(1)
$
as desired.

\paragraph{Case of $M > 1$.}

Under \eqref{eq:rn-deterministic-approximation-2},
we have from \eqref{eq:bound-minR-hf},
\begin{equation}
\label{eq:bound-minR-hf-proof}
\begin{split}
    \min_{\xi \in \Xi_n} R(\hf^\xi)
    \le
    R^\deter_\nearrow(n; \tf)
    (1 + o_p(1)).
\end{split}
\end{equation}
Now similar to the case of $M = 1$,
combining
\eqref{eq:bound-minR-hf-proof}
with
\eqref{eq:zerostep-nonasymp-risk-bound-proof},
and under 
\ref{cond:zerostep-add}
or 
\ref{cond:zerostep-mult},
we have that
$
    {(R(\hf^\zerostep) - R^\deter_\nearrow(n; \tf))_+}/{R^\deter_\nearrow(n; \tf)}
    = o_p(1)
$
as claimed.
This finishes the proof.

\subsection
{Proof of \Cref{lem:rn-deterministic-approximation-4-prop-asymptotics}}

Our goal is to verify \eqref{eq:rn-deterministic-approximation-2-prop-asymptotics},
i.e.,
existence of a deterministic profile $R^\deter(\cdot; \tf)$ such that
for all non-stochastic sequences 
$\xi^\star_n \in \argmin_{\xi \in \Xi_n} R^\deter(p_n / n_{\xi}; \tf)$
and $1 \le j \le M$,
\[
    \frac
    {
            R(\tf(\cdot; \cD_\train^{\xi_n^\star, j}))
            - R^\deter(p_n / n_{\xi_n^\star}; \tf)
    }
    {
        R^\deter(p_n / n_{\xi_n^\star}; \tf)
    }
    \pto 0,
\]
as $n \to \infty$ under \ref{asm:prop_asymptotics}.
Recall here $\tf(\cdot; \cD_\train^{\xi_n^\star, j})$, $1 \le j \le M$,
is a predictor trained on the dataset $\cD_\train^{\xi_n^\star, j}$
of sample size $n_{\xi_n^\star} = n_\train - \xi_n^\star \lfloor n^\nu \rfloor$ and feature dimension $p_n$.
We will make a series of reductions
to verify \eqref{eq:rn-deterministic-approximation-2-prop-asymptotics}
from the assumptions of \Cref{lem:rn-deterministic-approximation-4-prop-asymptotics}.

First, note that $R(\tf(\cdot; \cD_\train^{\xi_n, j}))$
for $1 \le j \le M$ are identically distributed.
It thus suffices to pick $j = 1$,
which we will do below and drop the index for notational brevity.
Second, since $R(\tf(\cdot; \cD_{k_m})) > 0$ for all $k_m$,
it suffices to show that 
as $n \to \infty$ under \ref{asm:prop_asymptotics},
\[
            R(\tf(\cdot; \cD_\train^{\xi_n^\star}))
            - R^\deter(p_n / n_{\xi_n^\star}; \tf)
    \pto 0,
    \quad
    \text{where}
    \quad
    \xi_n^\star \in \argmin_{\xi \in \Xi_n} R^\deter(p_n / n_{\xi}; \tf).
\]
More explicitly, that for all $\epsilon > 0$,
it suffices to verify that
as $n \to \infty$
under \ref{asm:prop_asymptotics},
\[
    \PP
    \big(
        {
            | R(\tf(\cdot; \cD_\train^{\xi_n^\star})) 
            - R^\deter(p_n/n_{\xi_n^\star}; \tf) |
        }
        \ge \epsilon
    \big)
    \to 0,
    \quad
    \text{where}
    \quad
    \xi_n^\star \in \argmin_{\xi \in \Xi_n} R^\deter(p_n / n_\xi; \tf).
\]
Now, we will do our final reduction.
Fix $\epsilon > 0$.
Define a sequence $\{ h_n(\epsilon) \}_{n \ge 1}$ as follows:
\[
    h_n(\epsilon)
    :=
    \PP
    \big(
        {| R(\tf(\cdot; \cD_\train^{\xi_n^\star})) 
        - R^\deter(p_n / n_{\xi_n^\star}; \tf)|}
        \ge \epsilon
    \big).
\]
From the discussion in \Cref{sec:zerostep-overparameterized},
we know that $p_n / n_{\xi_n^\star}$ may not necessarily converge as $n \to \infty$.
But applying \Cref{lem:subsequence-to-sequence}
on the sequence $\{ h_n(\epsilon) \}_{n \ge 1}$,
in order to verify that $h_n(\epsilon) \to 0$ as $n \to \infty$,
it suffices to show that
for any index subsequence $\{ n_{k} \}_{k \ge 1}$,
there exists a further subsequence $\{ n_{k_{l}} \}_{l \ge 1}$
such that $h_{n_{k_{l}}}(\epsilon) \to 0$ as $l \to 0$.
Towards that goal,
fix an arbitrary index subsequence $\{ n_{k} \}_{k \ge 1}$.
We will appeal to \Cref{lem:limit-argmins-metricspace}
to construct the desired subsequence $\{ n_{k_{l}} \}_{l \ge 1}$
along which we will argue that $h_{n_{k_{l}}} \to 0$
provided the assumptions of \Cref{lem:rn-deterministic-approximation-4-prop-asymptotics}
are satisfied.
In particular, 
from \Cref{lem:spacefilling-grid-zerostep-general},
note that since $n_\train / n \to 1$ as $n \to \infty$,
we have $\Pi_{\Xi_n}(\zeta) \to \zeta$ for any $\zeta \in [\gamma, \infty]$
as $n \to \infty$.
Now applying \Cref{lem:limit-argmins-metricspace} on $R^\deter(\cdot; \tf)$
and the grid $\Xi_n$
guarantees that 
for any subsequence $\{ p_{n_{k}} / n_{\xi_{n_k}^\star} \}_{k \ge 1}$,
there exists a subsequence $\{ p_{n_{k_{l}}} / n_{\xi_{n_{k_{l}}}^\star} \}_{l \ge 1}$
such that as $l \to \infty$,
\begin{equation}
    \label{eq:subsequence-limit-gaurantee}
    \frac{p_n}{n_{\xi^\star_{n_{k_{l}}}}}
    \to \phi \in \argmin_{\zeta \in [\gamma, \infty]} R^\deter(\zeta; \tf).
\end{equation}
We will now show that
$h_{n_{k_{l}}}(\epsilon) \to 0$ as $l \to \infty$
if the profile convergence assumption 
\eqref{tag:detpar-0}
of 
\Cref{lem:rn-deterministic-approximation-4-prop-asymptotics}
is satisfied,
i.e., for a dataset $\cD_{k_{m}}$ with $k_m$ observations
and $p_m$ features,
there exists $R^\deter(\cdot; \tf)$ such that
\begin{equation}
    \label{eq:hypothesis-detpar0}
    R(\tf(\cdot; \cD_{k_m}))
    \pto R^\deter(\phi; \tf)
\quad
\text{whenever}
\quad
    \frac{p_m}{k_m}
    \to \phi \in \argmin_{\zeta \in [\gamma, \infty]} R^\deter(\zeta; \tf).
\end{equation}
This follows easily because the profile convergence condition \eqref{eq:hypothesis-detpar0} implies that
as $l \to \infty$,
\[
    \PP
    \left(
        \left|
        R(\tf(\cdot; \cD_{\tr}^{\xi^\star_{n_{k_{l}}}} ))
        - R^\deter(\phi; \tf)
        \right|
        \ge \epsilon
    \right)
    \to 0
\quad
\text{whenever}
\quad
    \frac{p_n}{n_{\xi^\star_{n_{k_{l}}}}}
    \to \phi \in \argmin_{\zeta \in [\gamma, \infty]} R^\deter(\zeta; \tf).
\]
But since $R^\deter(\cdot; \tf)$ is continuous at $\phi$,
and $p_n / n_{\xi^\star_{n_{k_{l}}}} \to \phi \in \argmin_{\zeta \in [\gamma, \infty]} R^\deter(\zeta; \tf)$ 
as $l \to \infty$ from \eqref{eq:subsequence-limit-gaurantee}
this implies that, as $l \to \infty$,
\[
    \PP
    \left(
        \left|
        R(\tf(\cdot; \cD_{\tr}^{\xi^\star_{n_{k_{l}}}} ))
        - R^\deter(p_{n} / n_{\xi^\star_{n_{k_{l}}}} ; \tf)
        \right|
        \ge \epsilon
    \right)
    = h(n_{k_{l}})
    \to 0.
\]
This concludes the proof.

\subsection
{Proof of \Cref{prop:lower-semicontinuity-divergence}}

    In order to verify lower semicontinuity of $h$, 
    if suffices to show that for any $t \in \RR_{\ge 0}$,
    the set $\{ x : h(x) \le t \}$ is closed.
    Because $\lim_{x \to b^{-}} h(x) = \infty$
    and $h$ continuous on $[a, b)$,
    there exists $b_{-}(t) < b$
    such that $h(x) > t$ for all $x > b_{-}(t)$.
    Similarly, there exists $b_{+}(t) > b$ such that
    $h(x) > t$ for all $x < b_{+}(t)$.
    Note that
    \[
        \{ x : h(x) \le t \}
        =
        \{ x : h|_{[a, b_{-}(t)]}(x) \le t \}
        \cup
        \{ x : h|_{[b_{+}(t), c]}(x) \le t \}.
    \]
    Because $h$ is continuous on $[a, b_{-}(t)]$
    and $[b_{+}(t), c]$, it is also lower semicontinuous
    on these intervals, and hence the corresponding level sets are closed.
    Because the intersection of two closed sets is closed,
    the statement follows.

\subsection
{Proof of \Cref{prop:continuity-from-continuous-convergence-rdet}}

The proof builds on similar idea as that in the proof of
\Cref{lem:continuity-from-continuous-convergence-random-functions}
and employs a proof by contradiction.
However, since the random functions in this case (which are conditional prediction risks)
are not simply indexed by $n$ (but also by other properties of the data distributions), 
we will need to do a bit more work.

We wish to show that $R^\deter(\cdot; \tf)$ is continuous on $\cI \in (0, \infty)$.
We will first show that $R^\deter(\cdot; \tf)$ is $\QQ$-continuous
(see \Cref{def:q-continuity})
on $\cI$
and use \Cref{lem:rational-continuity-implies-real-continuity}
to lift $\QQ$-continuity to $\RR$-continuity.
Towards showing $\QQ$-continuity, for the sake of contradiction,
suppose $R^\deter(\cdot; \tf)$ is $\QQ$-discontinuous at some point $\phi_\infty \in \cI$.
This implies that there exists a sequence $\{ \phi_r \}_{r \ge 1}$
in $\QQ_{> 0}$
such that $\phi_r \to \phi_\infty$,
but for some $\epsilon > 0$
and all $r \ge 1$,
\begin{equation}
    \label{eq:qqdiscontinuity-Rdet-sequence-existence-implication}
    R^\deter(\phi_r; \tf)
    \notin [R^\deter(\phi_\infty; \tf) - 2 \epsilon, R^\deter(\phi_\infty; \tf) + 2 \epsilon].
\end{equation}
(Note that $R^\deter(\phi_r; \tf) \not\to  R^\deter(\phi_\infty; \tf)$
as $\phi_r \to \phi_\infty$.)
The proof strategy is now to construct a sequence of datasets 
$\{ \cD'_{k_m} \}_{m \ge 1}$
whose aspects ratios $p_m / k_m$
converge to $\phi_\infty$,
but the conditional prediction risks 
$R(\tf(\cdot; \cD'_{k_m}))$
of 
predictors $\tf(\cdot; \cD'_{k_m})$ 
trained on these datasets
do not converge to $R^\deter(\phi_\infty; \tf)$,
thereby supplying a contradiction to the hypothesis 
of continuous convergence of $R(\tf(\cdot; \cD'_{k_m}))$
to $R^\deter(\phi_\infty; \tf)$.
We will construct such a sequence of datasets below.

For every $r \ge 1$,
construct a sequence of datasets
$\{ \cD_{k_m}^{\phi_r} \}_{m \ge 1}$
with $k_m$ observations
and $p_m = \phi_i k_m$ features.
(Since $\phi_r \in \QQ_{> 0}$, the resulting $p_m$ is a positive integer.)
See \Cref{fig:continuity-from-continuous-convergence-zerostep} for a visual illustration.
For every $r \ge 1$,
from the assumption of \Cref{prop:continuity-from-continuous-convergence-rdet},
we have that
\begin{equation}
    \label{eq:contradict-proof-prob-converge}
    R(\tf(\cdot; \cD^{\phi_r}_{k_m}))
    \pto
    R^\deter(\phi_r; \tf)
\end{equation}
as $k_m, p_m \to \infty$
because
$p_m / k_m \to \phi_r$
as $m \to \infty$.
Now, fix $p \in (0, 1)$.
For $r = 1$,
the convergence in \eqref{eq:contradict-proof-prob-converge}
guarantees that
there exists an integer $m_1 \ge 1$ such that
the event
\begin{equation}
    \label{eq:phi1-prob-convergence-implication}
    \Omega_{m_1}
    := \{ |R(\tf(\cdot; \cD^{\phi_1}_{k_{m_1}})) - R^\deter(\phi_1; \tf)| \le \epsilon \}
\end{equation}
has probability at least $p$.
In addition, on the event $\Omega_{m_1}$,
by the triangle inequality
we have that
\begin{equation}
    \label{eq:triangle-RtoRdet-phi1}
    |R(\tf(\cdot; \cD^{\phi_1}_{k_{m_1}})) - R^\deter(\phi_\infty; \tf) |
    \ge
    |R^\deter(\phi_1; \tf) - R^\deter(\phi_\infty; \tf)|
    -
    |R(\tf(\cdot; \cD^{\phi_1}_{k_{m_1}})) - R^\deter(\phi_1; \tf)| 
    > \epsilon,
\end{equation}
where the second inequality follows by using
\eqref{eq:qqdiscontinuity-Rdet-sequence-existence-implication}
and
\eqref{eq:phi1-prob-convergence-implication}.
Next, for $r \ge 2$,
let $m_r > m_{r - 1}$
be an integer such that
the event
\begin{equation}
    \label{eq:phi2-prob-convergence-implication}
    \Omega_{m_r}
    := \{ |R(\tf(\cdot; \cD^{\phi_r}_{k_{m_r}})) - R^\deter(\phi_r; \tf)| \le \epsilon \}
\end{equation}
has probability at least $p$.
Such sequence of integers $\{ m_r \}_{r \ge 2}$ and the associated events $\{ \Omega_{m_r} \}_{r \ge 2}$
indeed exist as a consequence of the convergence in \eqref{eq:contradict-proof-prob-converge}
for $r \ge 2$.
On each $\Omega_{m_r}$
\[
    |R(\tf(\cdot; \cD^{\phi_r}_{k_{m_r}})) - R^\deter(\phi_\infty; \tf)| > \epsilon
\]
by similar reasoning as that for \eqref{eq:triangle-RtoRdet-phi1}
using 
\eqref{eq:qqdiscontinuity-Rdet-sequence-existence-implication}
and 
\eqref{eq:phi2-prob-convergence-implication}
for $r \ge 2$.
Moreover, note that since $m_r > m$, $m_r \to \infty$ 
as $r \to \infty$.

Consider now a sequence of datasets $\{ \cD'_{k_{m}}\}_{m \ge 1}$ such that:
\begin{enumerate}
    \item The first $m_1$ datasets 
    are $\{ \cD^{\phi_1}_{k_{m}} \}_{m = 1}^{m_1}$
    that have $k_{m}$ number of observations 
    and $p_{m} = \phi_1 k_m$ number of features
    for $m = 1, \dots, m_1$.
    \item The next $m_2 - m_1$ datasets
    are $\{ \cD^{\phi_2}_{k_{m}} \}_{m = m_1 + 1}^{m_2}$
    that have $k_{m}$ number of observations
    and $p_{m} = \phi_2 k_m$ number of features
    for $m = m_1 + 1, \dots, m_2$.
    \item The next $m_3 - m_2$ datasets
    are $\{ \cD_{k_m}^{\phi_3} \}_{m = m_2 + 1}^{m_3}$
    that have $k_{m}$ number of observations
    and $p_{m} = \phi_3 k_m$ number of features
    for $m = m_2 + 1, \dots, m_3$.
    \item And so on ...
\end{enumerate}
We will argue now that the sequence of datasets 
$\{ \cD'_{k_m} \}_{m \ge 1}$ works for our promised contradiction. 
Observe that in the construction above
the aspect ratios $p_{m} / k_{m} \to \phi_\infty$
because $\phi_r \to \phi_\infty$.
However,
we have that
for all $r \ge 1$,
\[
   \PP( | R(\tf(\cdot; \cD'_{k_{m_r}})) - R^\deter(\phi_\infty; \tf) | > \epsilon)
   = 
   \PP( | R(\tf(\cdot; \cD_{k_{m_r}})) - R^\deter(\phi_\infty; \tf) | > \epsilon)
   \ge p.
\]
Therefore,
there exists an $\epsilon > 0$
for which there is no $M \ge 1$
such that for $m \ge M$,
\[
    \PP
    (
        |
            R(\tf(\cdot; \cD'_{k_m}))
            - R^\deter(\phi_\infty; \tf)
        |
        > \epsilon
    )
    < p / 2.
\]
Hence, we get the desired contraction that
\[
    R(\tf(\cdot; \cD_{k_m}')) \not\pto R^\deter(\phi_\infty, \tf)
\]
as $k_m, p_m \to \infty$ and $p_m / k_m \to \phi_\infty$.
This completes the proof.

It is worth pointing out
that the proof above bears similarity
to the proof of \Cref{lem:rational-continuity-implies-real-continuity}.
It is possible to combine the two and not have to go
through the route of $\QQ$-continuity.
We, however, find it easier to break them
so that the main ideas are easier to digest
even though it leads to some repetition
of overall proof strategies.

\begin{figure}[!ht]
    \centering
    \includegraphics[width=\columnwidth]{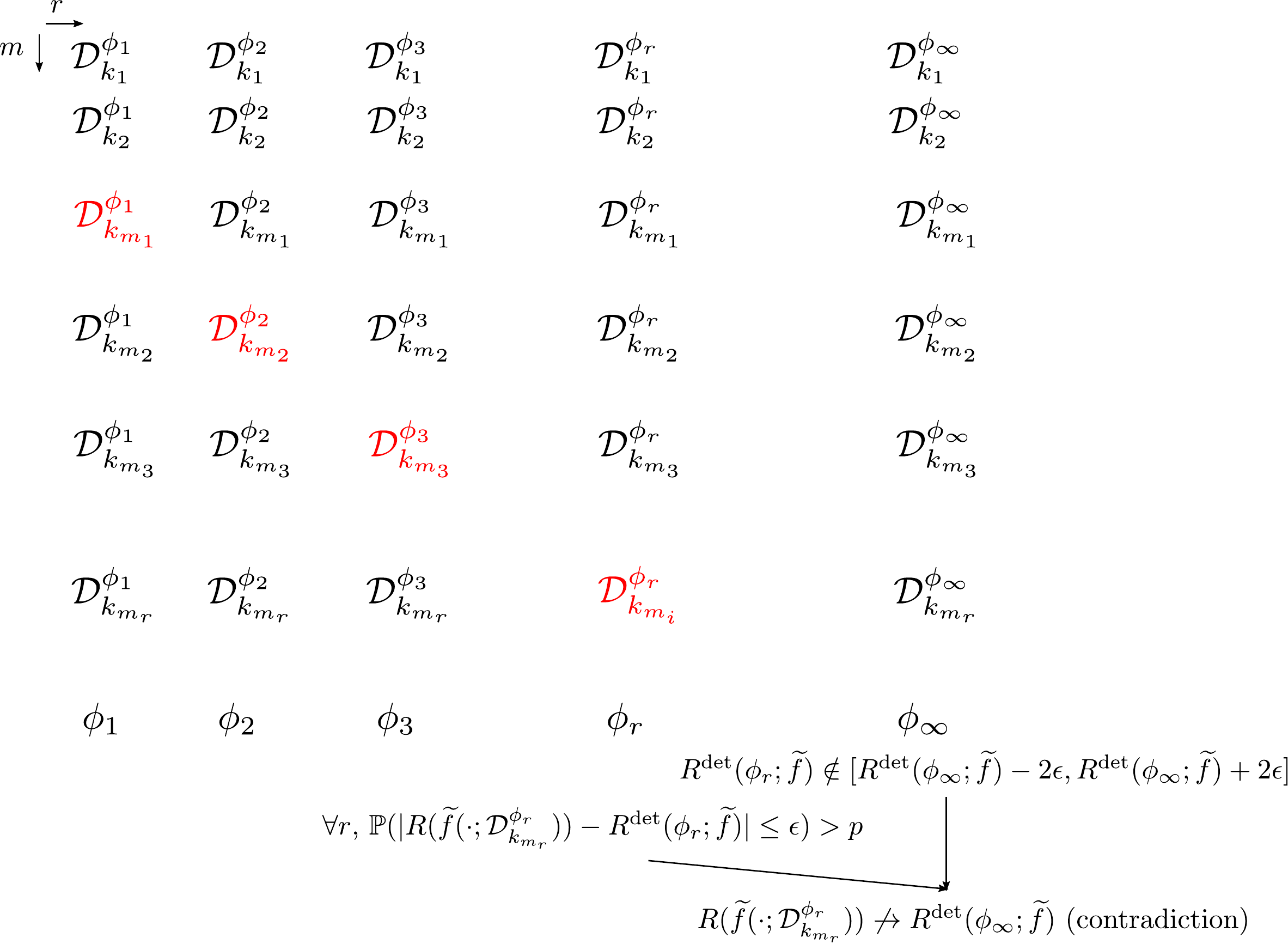}
    \caption{Illustration of construction of grid of datasets used in the proof of \Cref{prop:continuity-from-continuous-convergence-rdet}.
    (Side note: as can be seen from the figure, the argument bears
    similarity to the standard diagonalization argument.)}
    \label{fig:continuity-from-continuous-convergence-zerostep}
\end{figure}

\subsection
{Proof of \Cref{thm:asymptotic-risk-tuned-zero-step}}

We will split the proof depending on the value of $M$.

    \paragraph{Case of $M = 1$.}
    Consider first the case when $M = 1$.
    In this case,
    for every $\xi \in \Xi$,
    $\hf^\xi = \tf_1^\xi$
    (and thus, $\tf^\star = \hf^\cv$),
    which we denote by $\tf^\xi$ for simplicity of notation.
    To bound the desired difference,
    we break it into three terms:
    \begin{equation}\label{eq:fhat-star-zero-step-m=1}
    \begin{split}
            \left(R(\hf^\cv)
            - \min_{\zeta \ge p/n} R^\deter(\tf; \zeta)\right)_+ 
        &=
            \left(
            R(\hf^\cv)
            - \min_{\xi \in \Xi} R(\tf^\xi)
    \right)_+
    \\        &\quad +
            \left(
            \min_{\xi \in \Xi} R(\tf^\xi)
            - \min_{\xi \in \Xi}
            R^\deter
            \left(
                \tf; \frac{p_n}{n_\xi}
            \right)
            \right)_+
      \\&\quad  +
        \left(
            \min_{\xi \in \Xi}
            R^\deter
            \left(
                \tf; \frac{p_n}{n_\xi}
            \right)
            - \min_{\zeta \ge p/n} R^\deter(\tf; \zeta)
            \right)_+.
        \end{split}
    \end{equation}
    This inequality follows from the fact that $(a + b + c)_+ \le (a)_+ + (b)_+ + (c)_+$ for any $a, b, c\in\mathbb{R}$.
    We show below that each of the three terms
    asymptotically vanish in probability as
    $n \to \infty$ with $p/n \le \Gamma$.\\

    \underline{Term 1:}
        Because $| \Xi | \le n^{1-\nu} \le n$,
        and $\hsigma_\Xi = o_p(\sqrt{n^\nu/\log(n)})$,
        following
        \Cref{rem:growth-rates-probabilistic-bound},
        under the assumptions of
        \Cref{lem:bounded-orlitz-error-control}
        or \Cref{lem:bounded-variance-error-control},
        we have
        \begin{equation}\label{eq:first-term-zero-step-m=1}
            \left|
                R(\tf^\cv)
                - \min_{\xi \in \Xi} R(\tf^\xi)
            \right|
            = o_p(1),
        \end{equation}
        which proves that the first term on the right hand side of~\eqref{eq:fhat-star-zero-step-m=1} converges to zero in probability.
        
        \underline{Term 2:}
        To deal with the second term on the right hand side of~\eqref{eq:fhat-star-zero-step-m=1}, define
        \[
        \xi_n^\star ~\in~ \argmin_{\xi\in\Xi}\,R^\deter\left(\widetilde{f};\,\frac{p_n}{n_\xi}\right).
        \]
        Because $R^\deter(\cdot; \cdot)$ is a non-stochastic function, $\{\xi_n^\star\}_{n\ge1}$ is a non-stochastic sequence and further, trivially, $\xi_i^\star\in\Xi$ for all $n\ge1$. Observe now that
        \begin{equation}
            \begin{split}
                \min_{\xi\in\Xi} R(\widetilde{f}^{\xi}) &\le R(\widetilde{f}^{\xi_n^\star})\\ 
                &= R(\widetilde{f}^{\xi_n^\star}) - R^\deter\left(\widetilde{f};\,\frac{p_n}{n_{\xi_n^\star}}\right) + \min_{\xi\in\Xi}\,R^\deter\left(\widetilde{f};\,\frac{p_n}{n_\xi}\right).
            \end{split}
        \end{equation}
        Hence, assumption~\eqref{eq:rn-deterministic-approximation-2-prop-asymptotics}
        implies that
        \begin{equation}\label{eq:second-term-zero-step-m=1}
        \left(\min_{\xi\in\Xi} R(\widetilde{f}^{\xi}) - \min_{\xi\in\Xi}\,R^\deter\left(\widetilde{f};\,\frac{p_n}{n_\xi}\right)\right)_+ = o_p(1),
        \end{equation}
        as $n\to\infty$.
        
        \underline{Term 3:}
        Finally,
        because the risk profile $\zeta \mapsto R^\deter(\tf; \zeta)$
        is assumed to be continuous at $\zeta^\star$,
        \Cref{lem:spacefilling-grid-zerostep-general}
        with the grid $\Xi$ yields
        \begin{equation}\label{eq:third-term-zero-step-m=1}
            \left|
                \min_{\xi \in \Xi}
                R^\deter\left(\tf; \frac{p_n}{n_\xi}\right)
                - \inf_{\zeta \ge \gamma} R^\deter(\tf; \zeta)
            \right|
            = o(1).
        \end{equation}
    Combining~\eqref{eq:first-term-zero-step-m=1},~\eqref{eq:second-term-zero-step-m=1}, and~\eqref{eq:third-term-zero-step-m=1},
    we have the desired result that
    \[
        \left|
            R(\hf^\cv)
            - \min_{\zeta \ge \gamma} R^\deter(\tf; \zeta)
        \right|
        \pto 0.
    \]
    
    \paragraph{Case of $M > 1$.}
    Consider now the case when $M > 1$. 
    Note that $(x + y)_+ \le (x)_+ + (y)_+$
    since $\max\{z, 0\}$ is a convex function of $z$.
    Thus, we can break and bound the desired difference as:
    \begin{align*}
        &\left(
            R(\hf^\cv)
            - \min_{\zeta \ge p/n} R^\deter(\tf; \zeta)
        \right)_+ \\
        &\le
        \left(
            R(\hf^\cv)
            - \min_{\xi \in \Xi} R(\hf^\xi)
        \right)_+
        +
        \left(
            \min_{\xi \in \Xi} R(\hf^\xi)
            - \min_{\xi \in \Xi}\frac{1}{M} \sum_{j=1}^{M}  R(\tf_j^\xi)
        \right)_+ \\
        &\quad +
        \left(
            \min_{\xi \in \Xi}\frac{1}{M} \sum_{j=1}^{M}
             R(\tf_j^\xi)
            - \min_{\xi \in \Xi}
            R^\deter
            \left(\tf^\xi; \frac{p_n}{n_\xi}\right)
        \right)_+ \\
        &\quad +
        \left(
            \min_{\xi \in \Xi}
            R^\deter
            \left(\tf; \frac{p_n}{n_\xi}\right)
            - \min_{\zeta \ge \gamma} R^\deter(\tf; \zeta)
        \right)_+.
    \end{align*}
    As before,
    we show below that each of these terms
    are asymptotically vanishing in probability.
    
    \underline{Term 1:}
        Note that $\hsigma_\Xi \le \tsigma_\Xi$
        (from the triangle inequality for $L_2$ and $\psi_1$ norms).
        Thus, as argued above for the case of $m = 1$,
        the first term is $o_p(1)$.
        
    \underline{Term 2:}
        For the second term, observe that, for all $\xi \in \Xi$,
        \begin{align*}
           R\left(\hf^\xi\right)
           = R
           \left(
               \frac{1}{M} \sum_{j=1}^{M}
               \tf_j^\xi
           \right) 
           &= \EE
           \left[
                \ell\left(Y_0, \frac{1}{M} \sum_{i = 1}^{M} \tf_j^\xi(X_0)\right)
                \mathrel{\Big |} \cD_1
           \right] \\
           &\le \frac{1}{M} \sum_{j=1}^{M}
           \EE
           \left[
                \ell(Y_0, \tf_j^\xi(X_0))
                \mathrel{\big |} \cD_1
           \right] \\
           &\le \frac{1}{M} \sum_{j=1}^{M} R(\tf_j^\xi).
        \end{align*}
        Therefore,
        we have
        \[
            \min_{\xi \in \Xi} R(\hf^\xi)
            \le \min_{\xi\in\Xi}\frac{1}{M} \sum_{j=1}^{M} R(\tf_j^\xi)
        \]
        and the second term is 0. 
        
        \underline{Term 3:}
        For the third term, as before, note that
        \[
        \left(
                \min_{\xi \in \Xi}\frac{1}{M} \sum_{j=1}^{M}
                 R(\tf_j^\xi)
                - \min_{\xi \in \Xi}
                R^\deter
                \left(\tf^\xi; \frac{p}{n_\xi}\right)
            \right)_+ \le \left(
                \frac{1}{M} \sum_{j=1}^{M}
                 R(\tf_j^{\xi_n^\star})
                - R^\deter
                \left(\tf; \frac{p_n}{n_{\xi_n^\star}}\right)
            \right)_+,
        \]
        with the right hand side being $o_p(1)$
        because of~\eqref{eq:rn-deterministic-approximation-2-prop-asymptotics}.
        
        \underline{Term 4:}
        Analogous to the argument for the $m = 1$ case,
        the fourth term is $o(1)$.
    
    Combined together, we have the final result.
    This completes the proof.
    For an overview, a schematic for the proof of \Cref{thm:asymptotic-risk-tuned-zero-step}
    is provided in \Cref{fig:monotonization-theorem-illustration}.
    
\begin{figure}[!ht]
    \centering
    \includegraphics[width=0.5\columnwidth]{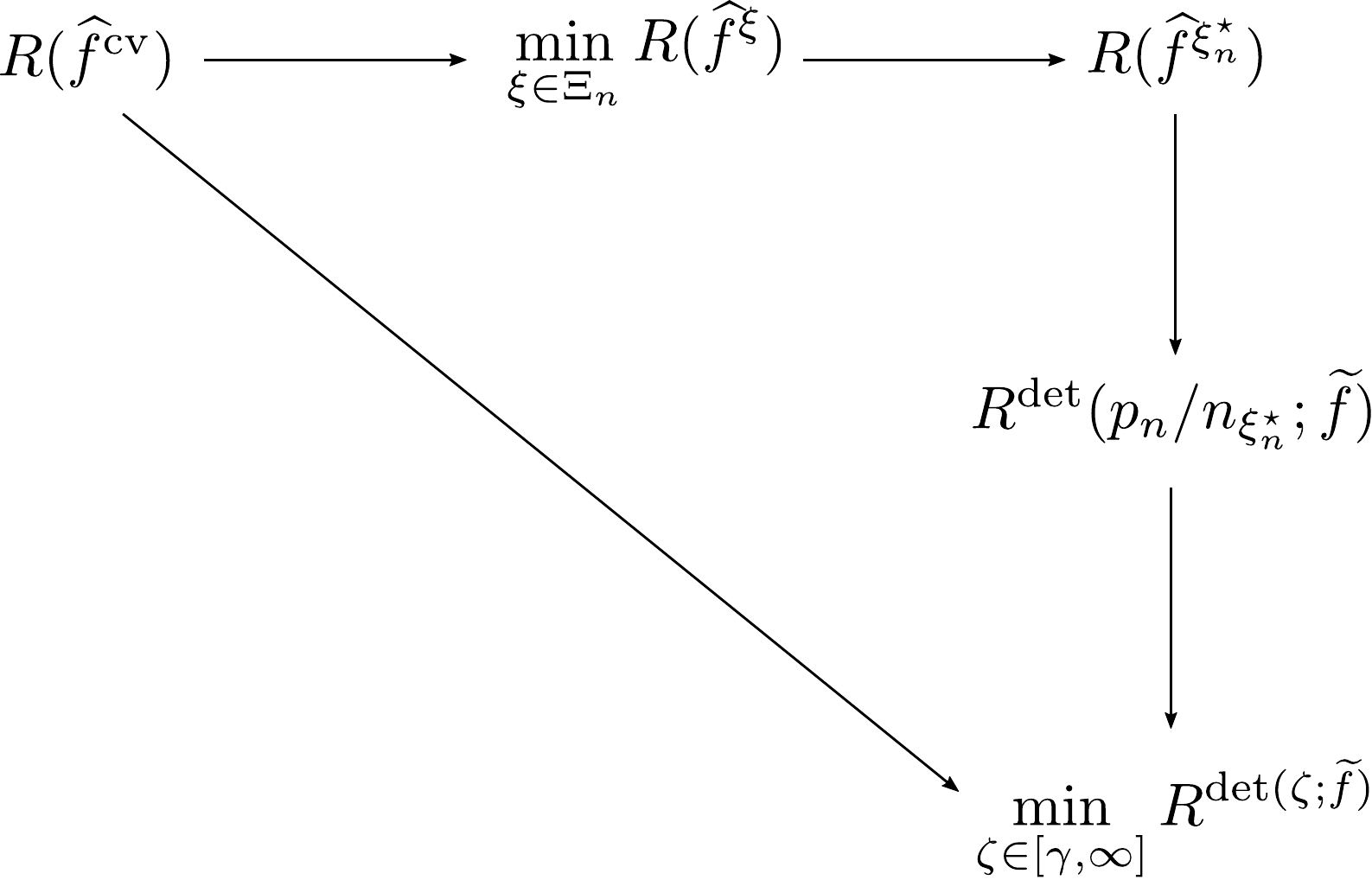}
    \caption{Schematic of the proof of \Cref{thm:asymptotic-risk-tuned-zero-step}.}
    \label{fig:monotonization-theorem-illustration}
\end{figure}

\section{Proofs related to deterministic profile verification for zero-step procedure}
\label{sec:verif-asymp-profile-ridge}

In this section, we verify the assumption 
\eqref{tag:detpar-0}
for the MN2LS and MN1LS prediction procedures.

\subsection
{Proof of \Cref{prop:asymp-bound-ridge-main}}
\label{sec:verify-profile-l2}

Recall $\cD_{k_{m}}$ is a dataset
with $k_m$ observations and $p_m$ features.
Theorem 3 of \cite{hastie_montanari_rosset_tibshirani_2019}
assumes the following 
distributional assumptions on the dataset $\cD_{k_m}$.

\begin{enumerate}[label={\rm($\ell_2$A\arabic*)}]
    \item
    \label{asm:lin-mod}
    The observations $(X_i, Y_i)$, $1 \le i \le k_m$,
    are sampled i.i.d.\ from the model
    $Y_i = X_i^\top \beta_0 + \eps_i$
    for some (deterministic) unknown signal vector $\beta_0 \in \RR^{p_m}$
    and (random) unobserved error $\eps_i$,
    assumed to be independent of $X_i \in \RR^{p_m}$, 
    with 
    mean $0$,
    variance $\sigma^2$,
    and bounded moment of order $4 + \delta$
    for some $\delta > 0$.
    \item
    \label{asm:rmt-feat}
    The feature vector $X_i$, $1 \le i \le k_m$, 
    decomposes as $X_i = \Sigma^{1/2} Z_i$,
    where $\Sigma \in \RR^{p_m \times p_m}$ is a positive semidefinite (covariance) matrix
    and $Z_i \in \RR^{p_m \times 1}$ is a random vector
    containing i.i.d.\ entries with
    mean $0$, variance $1$,
    and bounded moment of order $4 + \delta$
    for some $\delta > 0$.
    \item
    \label{asm:signal-bounded-norm}
    The norm of the signal vector 
    $\| \beta_0 \|_2$
    is uniformly bounded in $p$,
    and $\lim_{p_m \to \infty} \| \beta_0 \|_2^2 = \rho^2 < \infty$.
    \item
    \label{asm:covariance-bounded-eigvals}
    There exist real numbers $r_{\min}$ and $r_{\max}$
    with $0 < r_{\min} \le r_{\max} < \infty$
    such that
    $r_{\min} I_{p_m}~\preceq~\Sigma~\preceq~r_{\max} I_{p_m}$.
    \item
    \label{asm:spectrum-spectrumsignproj-conv}
    Let $\Sigma = W R W^\top$
    denote the eigenvalue decomposition of the covariance matrix $\Sigma$,
    where 
    $R \in \RR^{p_m \times p_m}$ is a diagonal matrix
    containing eigenvalues (in non-increasing order) 
    $r_1 \ge r_2 \ge \dots \ge r_{p_m} \ge 0$,
    and
    $W~\in~\RR^{p_m \times p_m}$ is an orthonormal matrix
    containing the associated eigenvectors 
    $w_1, w_2, \dots, w_{p_m}~\in~\RR^{p_m}$.
    Let $H_{p_m}$ denote the empirical spectral distribution
    of $\Sigma$
    (supposed on $\RR_{> 0}$)
    whose value at any $r \in \RR$ is given by
    \[
        H_{p_m}(r)
        = \frac{1}{p_m} \sum_{i=1}^{p_m} \1_{\{r_i \le r\}}.
    \]
    Let $G_{p_m}$ denote a certain distribution (supported on $\RR_{> 0}$)
    that encodes the components of the signal vector $\beta_0$ in the eigenbasis of $\Sigma$
    via the distribution of (squared) projection of $\beta_0$
    along the eigenvectors $w_j, 1 \le j \le p_m$,
    whose value any $r \in \RR$ is given by
    \[
        G_{p_m}(r)
        = \frac{1}{\| \beta_0 \|_2^2} \sum_{i = 1}^{p_m} (\beta_0^\top w_i)^2 \, \1_{\{ r_i \le r \}}.
    \]
    Assume there exist fixed distributions $H$ and $G$
    (supported on $\RR_{> 0}$)
    such that $H_{p_m} \overset{d}{\to} H$
    and $G_{p_m} \overset{d}{\to} G$
    as $p_m  \to \infty$.
\end{enumerate}

Under assumptions \ref{asm:lin-mod}--\ref{asm:spectrum-spectrumsignproj-conv},
we will verify that,
for the MN2LS base prediction procedure $\tf_\mnls$,
there exists a deterministic risk approximation 
$R^\deter(\cdot; \tf_{\mnls}) : (0, \infty] \to [0, \infty]$
that satisfy the two conditions stated in \Cref{prop:asymp-bound-ridge-main}.
In particular, we will show that the function 
$R^\deter(\cdot; \tf_{\mnls})$ defined below
satisfies the required conditions:
\begin{equation}
    \label{eq:detapprox-formula-zerostep-mn2ls}
    R^\deter(\phi; \tf_{\mnls})
    =
    \begin{dcases}
        \sigma^2 \frac{1}{1 - \phi} 
        & \text{ if } \phi \in (0, 1) \\
        \infty
        & \text{ if } \phi = 1 \\
        \rho^2 (1 + \tv_g(0; \phi)) 
        \int \frac{r}{(1 + v(0; \phi) r)^2} \, \mathrm{d}G(r)  \\
        \quad
        +~ \sigma^2
       \left( 
            \phi
            \tv(0; \phi)
            \int \frac{r^2}{(1 + v(0; \phi) r)^2}
            \, \mathrm{d} H(r)
            +
            1
       \right)
        & \text{ if } \phi = (1, \infty) \\
        \rho^2 \int r \, \mathrm{d}G(r) + \sigma^2
        & \text{ if } \phi = \infty,
    \end{dcases}
\end{equation}
where the scalars $v(0; \phi)$, $\tv(0; \phi)$, and $\tv_g(0; \phi)$,
for $\phi \in (1, \infty)$, are defined as follows:
\begin{itemize}
    \item 
    $v(0; \phi)$
    is the unique solution to
    the fixed-point equation:
    \begin{equation}
        \label{eq:fixed-point-v-mn2ls-1}
        \frac{1}{\phi}
       = \int \frac{v(0; \phi) r}{1 + v(0; \phi) r} \, \mathrm{d}H(r),
    \end{equation}
    \item
    $\tv(0; \phi)$ is defined
    through $v(0; \phi)$ by the equation:
    \begin{equation}
        \label{eq:fixed-point-v'-mn2ls-1}
        \tv(0; \phi)
        = 
        \left(
        \frac{1}{v(0; \phi)^2}
        - \phi \int \frac{r^2}{(1 + v(0; \phi) r)^2} \, \mathrm{d}H(r)
        \right)^{-1},
    \end{equation}
    \item
    $\tv_g(0; \phi)$ is defined through $v(0; \phi)$ and $\tv(0; \phi)$
    by the equation:
    \begin{equation}
        \label{eq:def-tvg-mn2ls-1}
        \tv_g(0; \phi)
        =
        \tv(0; \phi)
            \phi
            \int \frac{r^2}{(1 + v(0; \phi) r)^2} \, \mathrm{d}H(r).
    \end{equation}
\end{itemize}

We will verify the two conditions of \Cref{prop:asymp-bound-ridge-main} below.

The limiting risk for the MN2LS predictor
provided in \eqref{eq:detapprox-formula-zerostep-mn2ls},
although in a different notation,
matches the one obtained in Theorem 3 of \cite{hastie_montanari_rosset_tibshirani_2019}.
We believe our notation makes the subsequent analysis
for the one-step procedure easy to follow for the reader.
It is worth mentioning, however, that
\cite{hastie_montanari_rosset_tibshirani_2019}
only explicitly consider $\phi \in (0, 1) \cup (1, \infty)$.
We extend the analysis to show that
the risk continuously diverges to $\infty$
as $\phi \to 1$
and also continuously converges to the null risk
as $\phi \to \infty$.
In addition,
as mentioned in 
\Cref{rem:prop_asymptotics_risk_examples},
we analyze the prediction risk
conditioned on both $(\bX, \bY)$
as opposed to only on
$\bX$
as done in \cite{hastie_montanari_rosset_tibshirani_2019}.
Furthermore,
we also establish continuity properties
of the 
deterministic risk approximation
in the aspect ratio
that is needed for our analysis.

\subsubsection*{\underline{Condition 1}: Continuous convergence of conditional risk 
over $\phi \in (0, 1) \cup (1, \infty]$.}

Let $\bX \in \RR^{k_m \times p_m}$ denote the design matrix
and $\bY \in \RR^{k_m}$ denote the response vector
associated with the dataset $\cD_{k_{m}}$.
Let $\beps \in \RR^{k_m}$
denote the error vector
containing errors $\eps_i$, $1 \le i \le k_m$.
Write
the data model
from assumption \ref{asm:lin-mod}
as
$\bY = \bX^\top \beta_0 + \beps$,
and the MN2LS estimator \eqref{eq:mn2ls} as
\begin{equation}
    \label{eq:mn2ls-matrix-form}
    \tbeta_{\mnls}(\cD_{k_{m}})
    = (\bX^\top \bX / k_m)^{\dagger} \bX^\top \bY / k_m.
\end{equation} 
The associated predictor $\tf_{\mnls}(\cdot; \cD_{k_m})$
is given by \eqref{eq:mn2ls-predictor}.
Recall the prediction risk $R_{\bX, \bY}(\tf_{\mnls}(\cdot; \cD_{k_m}))$
(where we use the subscripts $\bX, \bY$
to explicitly indicate the dependence
of $R(\tf_{\mnls}(\cdot; \cD_{k_m}))$ 
on the training data $(\bX, \bY)$)
under the squared error loss is given by
\begin{equation}
    \label{eq:mn2ls-squaredrisk}
    R_{\bX, \bY}(\tf_{\mnls}(\cdot; \cD_{k_m}))
    = \EE[(Y_0 - \tf_{\mnls}(X_0; \cD_{k_{m}}))^2 \mid \bX, \bY],
\end{equation}
where $(X_0, Y_0)$ is sampled independently
from the same distribution as the training data $(\bX, \bY)$.

Our goal is to show that as $k_m, p_m \to \infty$,
if $p_m / k_m \to \phi \in (0, 1) \cup (1, \infty]$,
$R_{\bX, \bY}(\tf_{\mnls}(\cdot; \cD_{k_m}))
\asto R^\deter(\phi; \tf_{\mnls})$.
The proof follows by combining
\Cref{prop:cond-conv-mn2ls,prop:limits-risk-functionals-mn2ls,prop:limits-infty-mn2ls}.
Specifically:
\begin{enumerate}
    \item 
    \Cref{prop:cond-conv-mn2ls,prop:limits-risk-functionals-mn2ls}
    combined together imply that
    $R_{\bX, \bY}(\tf_{\mnls}(\cdot; \cD_{k_m})) \asto R^\deter(\phi; \tf_{\mnls})$
    as $p_m, k_m \to \infty$ and $p_m / k_m \to \phi \in (0, 1) \cup (1, \infty)$.
    \item
    \Cref{prop:limits-infty-mn2ls} imply that
    $R_{\bX, \bY}(\tf_{\mnls}(\cdot; \cD_{k_{m}})) \asto R^\deter(\infty; \tf_{\mnls})$
    as $p_m, k_m \to \infty$ and $p_m / k_m \to \infty$.
\end{enumerate}
Below we prove \Crefrange{prop:cond-conv-mn2ls}{prop:limits-infty-mn2ls}.

In preparation for the statements to follow,
denote by $\hSigma := \bX^\top \bX / k_m$ the sample covariance matrix.
Let the singular value decomposition of
$\bX / \sqrt{k_m}$
be $\bX / \sqrt{k_m} = \bU \bS \bV^\top$,
where $\bU \in \RR^{k_m \times k_m}$
and $\bV \in \RR^{p_m \times p_m}$ are orthonormal matrices,
and $\bS \in \RR^{k_m \times p}$
is a diagonal matrix containing singular values
in non-increasing order $s_1 \ge s_2 \ge \dots$.

The proposition below provides conditional convergence
for the prediction risk \eqref{eq:mn2ls-squaredrisk}
when $p_m / k_m \to \phi \in (0, 1) \cup (1, \infty)$
as $p_m, k_m \to \infty$.

\begin{proposition}
    [Conditional convergence of squared prediction risk of MN2LS predictor]
    \label{prop:cond-conv-mn2ls}
    Suppose assumptions \ref{asm:lin-mod}--\ref{asm:covariance-bounded-eigvals} hold.
    Then,
    as $k_m, p_m \to \infty$,
    if $p_m/k_m \to \phi \in (0, 1) \cup (1, \infty)$,
    then
    \begin{equation}
        \label{eq:mn2ls-bias-var-functionals}
        R_{\bX, \bY}(\tf_{\mnls}(\cdot; \cD_{k_{m}}))
        - \beta_0^\top (I_{p_m} - \hSigma^\dagger \hSigma) \Sigma (I_{p_m} - \hSigma^\dagger \hSigma) \beta_0
        - \sigma^2 \tr[\hSigma^{\dagger} \Sigma] / k_{m}
        - \sigma^2
        \asto 0.
    \end{equation}
\end{proposition}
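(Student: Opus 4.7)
The plan is to reduce the statement to the a.s.\ vanishing of two noise-dependent fluctuations around quantities that depend only on $\bX$. Write the linear model as $\bY = \bX\beta_0 + \beps$ with $\beps \perp \bX$, $\EE[\beps]=0$, $\Cov(\beps)=\sigma^2 I_{k_m}$. Since $\eps_0$ is independent of $X_0$ and $(X_0,Y_0)\perp (\bX,\bY)$, one has
\[
R_{\bX,\bY}(\tf_{\mnls})
= \sigma^2 + (\beta_0-\tbeta_{\mnls})^{\top}\Sigma(\beta_0-\tbeta_{\mnls}).
\]
Using $\tbeta_{\mnls} = \hSigma^{\dagger}\hSigma\,\beta_0 + \hSigma^{\dagger}\bX^{\top}\beps/k_m$, I would decompose
\[
(\beta_0-\tbeta_{\mnls})^{\top}\Sigma(\beta_0-\tbeta_{\mnls}) = B(\bX) + V(\bX,\beps) + C(\bX,\beps),
\]
where $B(\bX)=\beta_0^{\top}(I-\hSigma^{\dagger}\hSigma)\Sigma(I-\hSigma^{\dagger}\hSigma)\beta_0$ is the ``bias'' term already appearing in the statement, $V(\bX,\beps) = \beps^{\top}\bX\hSigma^{\dagger}\Sigma\hSigma^{\dagger}\bX^{\top}\beps/k_m^{2}$ is quadratic in $\beps$, and $C(\bX,\beps) = -2\beta_0^{\top}(I-\hSigma^{\dagger}\hSigma)\Sigma\hSigma^{\dagger}\bX^{\top}\beps/k_m$ is linear in $\beps$. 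Using Moore--Penrose identities $\hSigma^{\dagger}\hSigma\hSigma^{\dagger}=\hSigma^{\dagger}$ and $\hSigma^{\dagger}\hSigma\hSigma^{\dagger}\hSigma=\hSigma^{\dagger}\hSigma$, one computes $\EE[V\mid \bX]=\sigma^2\tr(\hSigma^{\dagger}\Sigma)/k_m$, which is precisely the noise term in the proposition. Hence the claim reduces to
\[
V-\EE[V\mid\bX]\ \asto\ 0\qquad\text{and}\qquad C\ \asto\ 0.
\]

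For the quadratic term $V$, I would apply a conditional Hanson--Wright inequality (justified by the $(4+\delta)$-moment assumption on $\eps_i$, cf.\ \ref{asm:lin-mod}, together with truncation if needed) to the matrix $A_{\bX}:=\bX\hSigma^{\dagger}\Sigma\hSigma^{\dagger}\bX^{\top}/k_m^{2}$. This gives a deviation bound of order $\|A_{\bX}\|_{\op}$ and $\|A_{\bX}\|_{F}/\sqrt{k_m}$. The key input is that $\|A_{\bX}\|_{\op}\le r_{\max}\|\hSigma^{\dagger}\|_{\op}\cdot\|\bX/\sqrt{k_m}\|_{\op}^{2}/k_m$, which by \ref{asm:covariance-bounded-eigvals} and standard Bai--Yin-type bounds for sample covariance matrices is a.s.\ $O(1/k_m)$ for $p_m/k_m\to\phi\in(0,1)\cup(1,\infty)$, since the smallest nonzero eigenvalue of $\hSigma$ is bounded away from $0$ away from the interpolation threshold $\phi=1$. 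A Borel--Cantelli argument on the exponential tail of the Hanson--Wright bound then gives a.s.\ convergence. The linear term $C$ is handled analogously: conditionally on $\bX$, $C$ has mean zero and
\[
\Var(C\mid\bX) = 4\sigma^{2}\beta_0^{\top}(I-\hSigma^{\dagger}\hSigma)\Sigma\hSigma^{\dagger}\Sigma(I-\hSigma^{\dagger}\hSigma)\beta_0/k_m,
\]
which by \ref{asm:signal-bounded-norm}, \ref{asm:covariance-bounded-eigvals} and the same operator-norm control on $\hSigma^{\dagger}$ is a.s.\ $O(1/k_m)$, so a Chebyshev plus Borel--Cantelli argument (after a standard truncation to secure subexponential tails under the $(4+\delta)$-moment condition) yields $C\asto 0$.

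The main obstacle is the operator-norm control of $\hSigma^{\dagger}$ uniformly in $m$, which is what prevents one from taking $\phi=1$: if $p_m/k_m\to 1$ the smallest nonzero eigenvalue of $\hSigma$ can approach zero, making the Hanson--Wright and Chebyshev bounds on $V$ and $C$ useless. Restricting to $\phi\in(0,1)\cup(1,\infty)$ lets us invoke the classical a.s.\ bound $\liminf_m\lambda_{\min,+}(\hSigma)\ge r_{\min}(1-\sqrt{\phi})^{2}$ in the underparameterized case, and $\liminf_m\lambda_{\min,+}(\hSigma)\ge r_{\min}(\sqrt{\phi}-1)^{2}$ in the overparameterized case (this follows from the Marchenko--Pastur bulk and the Bai--Yin edge theorem, using \ref{asm:rmt-feat}--\ref{asm:covariance-bounded-eigvals}). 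With this in hand, all operator norms that appear are a.s.\ bounded above along the subsequence $p_m/k_m\to\phi$, and summability of the deviation bounds over $m$ closes the Borel--Cantelli argument, yielding the claimed a.s.\ limit.
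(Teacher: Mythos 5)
Your proposal follows essentially the same route as the paper's proof: you make the same reduction of the risk to $\sigma^{2} + (\tbeta - \beta_0)^{\top}\Sigma(\tbeta - \beta_0)$, the same three-way split into a bias term depending only on $\bX$, a cross term linear in $\beps$, and a quadratic form in $\beps$, the same identification of $\EE[V \mid \bX] = \sigma^{2}\tr[\hSigma^{\dagger}\Sigma]/k_m$, and the same observation that everything hinges on $\|\hSigma^{\dagger}\|_{\op}$ being a.s.\ bounded for $\phi \ne 1$ (via Bai--Yin edge behavior). The difference is only in the concentration machinery. The paper feeds the decomposition into two dedicated almost-sure concentration lemmas for linear and quadratic forms (\Cref{lem:concen-linform} and \Cref{lem:concen-quadform}), which are built on $L^{q}$ moment bounds from Erd\H{o}s--Yau and Bai--Silverstein and apply directly under the $(4+\delta)$-moment condition with no truncation. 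You instead invoke Hanson--Wright for the quadratic term and ``Chebyshev plus Borel--Cantelli'' for the cross term, both of which nominally need sub-Gaussian or sub-exponential noise and therefore require the truncation you allude to. That route is workable, but as written the cross-term step is internally inconsistent: Chebyshev from the variance formula $\Var(C\mid\bX)=O(1/k_m)$ gives a tail of order $1/(k_m t^{2})$, which is not summable in $m$, so Borel--Cantelli does not close. To make it work you either (i) use a higher-order moment inequality (Rosenthal/Marcinkiewicz--Zygmund, giving a $1/k_m^{2+\delta'}$ tail via a fourth-moment Chebyshev — which is precisely what the paper's \Cref{lem:concen-linform} encapsulates), or (ii) truncate and then apply a Bernstein-type bound rather than Chebyshev. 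The paper's choice of pre-packaged moment-bound lemmas avoids this issue and is a cleaner fit for the stated $(4+\delta)$-moment assumptions; otherwise, your decomposition, conditional-expectation calculation, operator-norm controls, and the rationale for excluding $\phi = 1$ are all correct and match the paper.
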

\begin{proof}
    Under assumption \ref{asm:lin-mod},
    the squared prediction risk \eqref{eq:mn2ls-squaredrisk}
    decomposes into
    \begin{equation}
        \label{eq:predrisk_mn2ls}
        R_{\bX, \bY}(\tf_\mnls(\cdot; \cD_{k_{m}}))
        = (\tbeta_\mnls(\cD_{k_{m}}) - \beta_0)^\top \Sigma (\tbeta_\mnls(\cD_{k_{m}}) - \beta_0) + \sigma^2.
    \end{equation}
    Similarly, under assumption \ref{asm:lin-mod}, 
    the estimator \eqref{eq:mn2ls-matrix-form} decomposes into
    \begin{align*}
        \tbeta_\mnls(\cD_{k_{m}})
        = (\bX^\top \bX / k_m)^{\dagger} \bX^\top \bX / k_m \, \beta_0
        + (\bX^\top \bX / k_m)^{\dagger} \bX^\top \beps / k_m.
    \end{align*}
    Consequently, 
    the difference between the estimator and the true parameter decomposes as
    \begin{align}
        \label{eq:estim-diff-mnls-cond-conv}
        \tbeta_\mnls(\cD_{k_{m}}) - \beta_0
        = \big\{ (\bX^\top \bX / k_m)^{\dagger} \bX^\top \bX / k_m - I_{p_m} \big\} \beta_0
        + (\bX^\top \bX / k_m)^{\dagger} \bX^\top \beps / k_m.
    \end{align}
    Substituting \eqref{eq:estim-diff-mnls-cond-conv} into \eqref{eq:predrisk_mn2ls},
    we can split the first term on the right hand side of \eqref{eq:predrisk_mn2ls} into
    three component terms:
    \begin{align*}
        (\tbeta_\mnls(\cD_{k_{m}}) - \beta_0)^\top \Sigma (\tbeta_\mnls(\cD_{k_{m}}) - \beta_0)
        &= \bB_0 + \bV_0 + \bC_0,
    \end{align*}
    where the component terms are given by:
    \begin{align*}
       \bB_{0}
       &=
       \beta_0^\top
       \big\{
       (\bX^\top \bX / k_{m})^{\dagger} \bX^\top \bX / k_{m} - I_{p_m}
       \big\}
       \Sigma
       \big\{
       (\bX^\top \bX / k_{m})^{\dagger} \bX^\top \bX / k_{m} - I_{p_m}
       \big\}
       \beta_0 
       = 
       \beta_0^\top 
       (I_{p_m} - \hSigma^\dagger \hSigma) 
       \Sigma 
       (I_{p_m} - \hSigma^\dagger \hSigma) 
       \beta_0,
      \\
       \bC_{0}
       &=
       \beta_0^\top
       \big\{
       (\bX^\top \bX / k_{m})^{\dagger} \bX^\top \bX / k_{m} - I_{p_m}
       \big\}
       \Sigma
       (\bX^\top \bX / k_{m})^{\dagger}
       \bX^\top \beps / k_{m}  
       = 
       - 
       \beta_0^\top (I_{p_m} - \hSigma^{\dagger} \hSigma) 
       \Sigma 
       \hSigma^{\dagger} \bX^\top \beps / k_{m},
       \\
       \bV_{0}
       &=
       \beps^\top \bX / k_{m}
       (\bX^\top \bX / k_{m})^{\dagger}
       \Sigma
       (\bX^\top \bX / k_{m})^{\dagger}
       \bX^\top \beps / k_{m}
       = \beps^\top (\bX \hSigma^\dagger  \Sigma \hSigma^\dagger \bX^\top / k_{m}) \beps / k_{m}.
    \end{align*}
    To finish the proof,
    we will show concentration of the terms $\bC_0$ and $\bV_0$ below.

    \underline{Term $\bC_0$}:
    We will show that
    $\bC_0 \asto 0$
    as $k_{m}, p_{m}  \to \infty$ such that $p_{m} / k_{m} \to \phi \in (0, 1) \cup (1, \infty)$.
    Note that
    \begin{align}
        \| \bX \hSigma^{\dagger} \Sigma (I_{p_m} - \hSigma^{\dagger} \hSigma) \beta_0 \|_2^2 / k_{m}
        &= 
        \beta_0^\top 
        (I_{p_m} - \hSigma^{\dagger} \hSigma)
        \Sigma
        \hSigma^{\dagger}
        \bX^\top
        \bX
        \hSigma^{\dagger}
        \Sigma
        (I_{p_m} - \hSigma^{\dagger} \hSigma)
        \beta_0 / k_m \nonumber \\
        &\le \| \beta_0 \|_2^2 \| \| (I_{p_m} - \hSigma^{\dagger} \hSigma) \Sigma
        \hSigma^{\dagger} \hSigma \hSigma^{\dagger} \Sigma (I_{p_m} - \hSigma^{\dagger} \hSigma) \|_{\mathrm{op}} \nonumber \\
        &\le 
        \| \beta_0 \|_2^2 \|  
        \cdot
        r_{\max}^2 
        \cdot
        \| \hSigma^{\dagger} \|_{\mathrm{op}},
        \label{eq:bound-ell2normsquare-crossterm-zerostep}
    \end{align}
    where in the last inequality \eqref{eq:bound-ell2normsquare-crossterm-zerostep},
    we used the fact that $\| I_{p_m} - \hSigma^{\dagger} \hSigma \|_{\mathrm{op}} \le 1$,
    $\| \Sigma \|_{\mathrm{op}} \le r_{\max}$,
    and that $\hSigma^{\dagger} \hSigma \hSigma^{\dagger} = \hSigma^{\dagger}$, along with the submultiplicativity of the operator norm.
    Now, note that $\liminf \min_{1 \le i \le p} s_i^2 \ge r_{\min} (1- \sqrt{\phi})^{2}$
    almost surely from \cite{bai_silverstein_2010}
    for $\phi \in (0, 1) \cup (1, \infty)$.
    Therefore, $\limsup \| \hSigma^{\dagger} \|_{\mathrm{op}} \le C$ 
    for some constant $C < \infty$ almost surely.
    Applying \Cref{lem:concen-linform}, we thus have that $\bC_0 \asto 0$.
    
    \underline{Term $\bV_0$}:
    We will show that $\bV_0 - \tr[\hSigma^{+} \Sigma] / k_m \asto 0$
    as $k_m, p_m \to \infty$ such that $p_m / k_m \to \phi \in (0, 1) \cup (1, \infty)$.
    Observe that
    \begin{equation}
        \label{eq:bound-operatornorm-varianceterm-zerostep}
        \| \bX \hSigma^{\dagger} \Sigma \hSigma^{\dagger} \bX^\top / k_{m} \|_{\mathrm{op}}
        \le r_{\max} \| \hSigma \|_{\mathrm{op}} \| \hSigma^{\dagger} \|_{\mathrm{op}}^2.
    \end{equation}
    Now, note that
    $
        \limsup \| \hSigma \|_{\mathrm{op}} 
        \le \limsup \max_{1 \le i \le p} s_i^2
        \le r_{\max} (1 + \sqrt{\phi})^2,
    $
    almost surely for $\phi \in (0, 1) \cup (1, \infty)$
    from \cite{bai_silverstein_2010}.
    In addition, as argued above,
    $\| \hSigma^{\dagger} \|_{\mathrm{op}} \le C$ almost surely
    for some constant $C < \infty$.
    Thus, using \Cref{lem:concen-quadform},
    it follows that
    $
        \bV_0
        -
        \sigma^2
        \tr[\bX \hSigma^+ \Sigma \hSigma^+ \bX^\top] / k_{m}
        \asto 0.
    $
    Finally, since 
    $\tr[\bX \hSigma^{+} \Sigma \hSigma^{+} \bX^\top] / k_m
    = \tr[\hSigma^\dagger \hSigma \hSigma^\dagger \Sigma] / k_m
    = \tr[\hSigma^\dagger \Sigma] / k_m$,
    we obtain that
    $\bV_0 - \sigma^2 \tr[\hSigma^\dagger \Sigma] / k_m \asto 0$.
   
\end{proof}

The next proposition provides deterministic limits of the
conditional risk functionals in \Cref{prop:cond-conv-mn2ls}
when $p_m / k_m \to \phi \in (0, 1) \cup (1, \infty)$
as $k_m, p_m \to \infty$.

\begin{proposition}
    [Limits of conditional risk functionals over $\phi \in (0, 1) \cup (1, \infty)$]
    \label{prop:limits-risk-functionals-mn2ls}
    Suppose assumptions \ref{asm:rmt-feat}--\ref{asm:spectrum-spectrumsignproj-conv} hold.
    Then,
    as $k_m, p_m \to \infty$, 
    and $p_m / k_m \to \phi \in (0, 1) \cup (1, \infty)$,
    the following holds:
    \begin{itemize}
        \item Bias functional:
        \[
            \beta_0^\top
            (I_{p_m} - \hSigma^{\dagger} \hSigma)
            \Sigma
            (I_{p_m} - \hSigma^{\dagger} \hSigma)
            \beta_0
            \asto
            \begin{dcases}
                0
                & \text{ if } \phi \in (0, 1) \\ 
                \rho^2
                (1 + \tv_g(0; \phi))
                \int \frac{r}{(1 + v(0; \phi) r)^2} \, \mathrm{d}G(r)
                & \text{ if } \phi \in (1, \infty),
            \end{dcases}
        \]
        \item Variance functional:
        \[
            \sigma^2
            \tr[\hSigma^{\dagger} \Sigma] / k_{m}
            \asto
            \begin{dcases}
                \sigma^2 \frac{\phi}{1 - \phi} & \text{ if } \phi \in (0, 1) \\
                \sigma^2
                \phi
                \tv(0; \phi)
                \int \frac{r^2}{(1 + v(0; \phi) r)^2} \, \mathrm{d}H(r)
                & \text{ if } \phi \in (1, \infty),
            \end{dcases}
        \]
    \end{itemize}
    where $v(0; \phi)$, $\tv(0; \phi)$, and $\tv_g(0; \phi)$
    are as defined in \eqref{eq:fixed-point-v-mn2ls-1},
    \eqref{eq:fixed-point-v'-mn2ls-1},
    and \eqref{eq:def-tvg-mn2ls-1},
    respectively.
\end{proposition}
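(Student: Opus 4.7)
The plan is to derive both limits from the calculus of anisotropic deterministic equivalents for the ridge resolvent $M(\lambda) := (\hSigma + \lambda I_{p_m})^{-1}$, $\lambda > 0$, together with a ridge regularization trick that handles the pseudoinverse $\hSigma^{\dagger}$ in the overparameterized regime. Under \ref{asm:rmt-feat}--\ref{asm:spectrum-spectrumsignproj-conv}, the generalized Marchenko--Pastur theory (e.g., in the Rubio--Mestre / Knowles--Yin anisotropic form) gives $M(\lambda) \asympequi (c_m(\lambda)\Sigma + I_{p_m})^{-1}$ in an appropriate anisotropic sense, where $c_m(\lambda)$ is a (rescaled) companion Stieltjes transform whose $\lambda \to 0^+$ limit recovers precisely the quantity $v(0;\phi)$ defined in \eqref{eq:fixed-point-v-mn2ls-1}. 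The functionals $\tv(0;\phi)$ and $\tv_g(0;\phi)$ in \eqref{eq:fixed-point-v'-mn2ls-1}--\eqref{eq:def-tvg-mn2ls-1} will then emerge from differentiating the resolvent identity in $\lambda$ once, again evaluated at $\lambda = 0^+$.

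The underparameterized regime $\phi \in (0,1)$ is almost immediate. By the Bai--Yin lower-edge bound under \ref{asm:rmt-feat} and \ref{asm:covariance-bounded-eigvals}, the smallest eigenvalue of $\hSigma$ is almost surely bounded below by $r_{\min}(1-\sqrt{\phi})^2 + o(1)$, so $\hSigma$ is eventually invertible and $\hSigma^{\dagger}\hSigma = I_{p_m}$; hence the bias functional vanishes identically. For the variance, I will write $\sigma^2 \tr[\hSigma^{-1}\Sigma]/k_m = \sigma^2 (p_m/k_m)\cdot p_m^{-1}\tr[\hSigma^{-1}\Sigma]$ and apply the standard generalized Marchenko--Pastur trace formula at $\lambda = 0$ to get $p_m^{-1}\tr[\hSigma^{-1}\Sigma]\to 1/(1-\phi)$, yielding $\sigma^2\phi/(1-\phi)$.

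The overparameterized regime $\phi \in (1,\infty)$ is the substantive case and uses the ridge trick. The key identity $I_{p_m} - \hSigma^{\dagger}\hSigma = \lim_{\lambda \to 0^+}\lambda M(\lambda)$ rewrites the bias as $\lim_{\lambda \to 0^+}\lambda^2\,\beta_0^\top M(\lambda)\Sigma M(\lambda)\beta_0$. Using the product and differentiation rules for deterministic equivalents (from the calculus appendix), $\lambda^2 M(\lambda)\Sigma M(\lambda)$ has a deterministic equivalent that simplifies, upon $\lambda \to 0^+$, to $(1+\tv_g(0;\phi))(v(0;\phi)\Sigma + I_{p_m})^{-2}$; integrating the resulting quadratic form in $\beta_0$ against the limiting signal distribution $G$ from \ref{asm:spectrum-spectrumsignproj-conv} produces $\rho^2(1+\tv_g(0;\phi))\int r/(1+v(0;\phi)r)^2\,\mathrm{d}G(r)$. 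For the variance, a parallel manipulation expresses $\tr[\hSigma^{\dagger}\Sigma]/k_m$ as a $\lambda \to 0^+$ limit of a trace in $M(\lambda)$ and $M(\lambda)^2$; the trace form of the anisotropic equivalent combined with \eqref{eq:fixed-point-v'-mn2ls-1}--\eqref{eq:def-tvg-mn2ls-1} and integration against $H$ then yields the stated $\tv(0;\phi)$ formula.

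The main obstacle will be justifying the interchange of the random-matrix limit with the $\lambda \to 0^+$ limit in the overparameterized regime. This needs a quantitative error bound on the deterministic equivalent that is uniform on a shrinking neighborhood of $\lambda = 0$, which is possible here precisely because $\phi \ne 1$: Bai--Yin provides a spectral gap keeping $\|\hSigma^{\dagger}\|_{\op}$ almost surely bounded on the nonzero eigenspace, so the resolvent norm $\|\lambda M(\lambda)\|_{\op}$ and its derivative are controlled uniformly near $\lambda = 0$. Secondary technical items, which I expect to dispatch routinely, are (i) propagating the weak convergences $H_{p_m} \dto H$ and $G_{p_m} \dto G$ through the bounded continuous integrands $r/(1+v(0;\phi)r)^2$ on $[r_{\min},r_{\max}]$ via the portmanteau theorem, and (ii) verifying that the fixed-point equation \eqref{eq:fixed-point-v-mn2ls-1} admits a unique positive solution for each $\phi > 1$ by a standard monotone iteration argument on $(0,\infty)$.
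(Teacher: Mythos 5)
Your high-level route (express the ridgeless quantities as $\lambda\to 0^+$ limits of ridge resolvent functionals, apply anisotropic deterministic equivalents, then interchange the random-matrix limit with $\lambda\to 0^+$ via a spectral gap) is exactly the route the paper takes, with the deterministic-equivalent machinery factored into \Cref{lem:deter-approx-generalized-ridge}, \Cref{lem:deter-approx-generalized-ridgeless}, and \Cref{cor:limiting-resolvents-mn2ls}. Your treatment of the $\phi\in(0,1)$ case and of the variance functional for $\phi>1$ is correct and matches the paper. However, there is a genuine gap in your treatment of the bias functional for $\phi>1$.

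You claim that the deterministic equivalent of $\lambda^2 M(\lambda)\Sigma M(\lambda)$ (with $M(\lambda)=(\hSigma+\lambda I_{p_m})^{-1}$) follows from ``the product and differentiation rules for deterministic equivalents.'' It does not. The product rule in the calculus of equivalents (\Cref{lem:calculus-detequi}\ref{lem:calculus-detequi-item-product}) only allows multiplying a known equivalence by a \emph{deterministic} matrix with bounded operator norm; it cannot handle $\Sigma$ sandwiched between two copies of the \emph{same random} resolvent. And the $\lambda$-differentiation rule applied to $\lambda M(\lambda)$ produces $M(\lambda)-\lambda M(\lambda)^2 = M(\lambda)\hSigma M(\lambda)$, i.e.\ resolvent squares, not $M(\lambda)\Sigma M(\lambda)$. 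This is precisely why the definitions in the paper separate $\tv(0;\phi)$ (defined via a $\lambda$-derivative, sufficient for the variance) from $\tv_g(0;\phi)$ (which is \emph{not} obtained from a $\lambda$-derivative). The paper's proof of \Cref{lem:deter-approx-generalized-ridge} Part~2 introduces an auxiliary parameter $\rho$, writes
\[
\lambda^2 (\hSigma+\lambda I_p)^{-1}A(\hSigma+\lambda I_p)^{-1}
= -\,\frac{\partial}{\partial\rho}\Bigl[\lambda\,(\hSigma+\lambda I_p+\lambda\rho A)^{-1}\Bigr]\Big|_{\rho=0},
\]
derives a deterministic equivalent for the inner two-parameter resolvent (after a congruence transform reducing it to a standard one-parameter resolvent in a modified covariance $\Sigma_{\rho,A}$), and then differentiates in $\rho$ at $\rho=0$. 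That auxiliary-parameter step is the missing ingredient in your plan, and without it the step ``$\lambda^2 M(\lambda)\Sigma M(\lambda) \asympequi (1+\tv_g(0;\phi))(v(0;\phi)\Sigma+I)^{-1}\Sigma(v(0;\phi)\Sigma+I)^{-1}$'' is not derivable from the rules you invoke. (Also, a small slip: you write the target matrix as $(1+\tv_g)(v\Sigma+I)^{-2}$, omitting the middle $\Sigma$, though your integral formula $\int r/(1+vr)^2\,\mathrm{d}G(r)$ is the correct one, so this looks like a typo rather than a conceptual error.)

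Everything else you flag as a concern (uniformity of the equivalent near $\lambda=0$ enabling the Moore--Osgood interchange, the Bai--Yin spectral gap ensuring $\|\hSigma^\dagger\|_{\mathrm{op}}$ is controlled since $\phi\ne 1$, portmanteau for the bounded continuous integrands against $H$ and $G$, and uniqueness of the fixed point $v(0;\phi)$) is handled in the paper and would dispatch as you expect.
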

\begin{proof}
    We will consider the bias and functionals separately below.
    
    \paragraph{Bias functional.}
    Consider first the bias functional
    $\beta_0^\top (I_{p_m} - \hSigma^{\dagger} \hSigma) \Sigma (I_{p_m} - \hSigma^{\dagger} \hSigma) \beta_0$.
    Since $r_{\min} > 0$,
    the smallest eigenvalue of $\hSigma^{\dagger}$
    is almost surely positive,
    and
    the matrix $\hSigma$ is almost surely invertible
    as $k_m, p_m \to \infty$ and $p_m / k_m \to \phi \in (0, 1)$.
    Therefore, in this case, $\hSigma^{\dagger} \hSigma =  I_{p_{m}}$ almost surely,
    and $\beta_0^\top (I_{p_m} - \hSigma^{\dagger} \hSigma) \Sigma (I_{p_m} - \hSigma^{\dagger} \hSigma) \beta_0 \asto 0$.
    For the case when $k_m, p_m \to \infty$
    and $p_m / k_m \to \phi \in (1, \infty)$,
    from the second part of \Cref{cor:limiting-resolvents-mn2ls}
    by taking $f(\Sigma) = \Sigma$,
    we have
    \[
        (I_{p_m} - \hSigma^{\dagger} \hSigma) 
        \Sigma 
        (I_{p_m} - \hSigma^{\dagger} \hSigma) 
        \asympequi
        (1 + \tv_g(0; \phi))
        (v(0; \phi) \Sigma + I_{p_m})^{-1}
        \Sigma
        (v(0; \phi) \Sigma + I_{p_m})^{-1},
    \]
    where $v(0; \phi)$ and $\tv_g(0)$
    are as defined by
    \eqref{eq:fixed-point-v-mn2ls-1}
    and
    \eqref{eq:def-tvg-mn2ls-1},
    respectively.
    Note that
    from 
    \Cref{lem:fixed-point-v-properties}~\eqref{lem:fixed-point-v-properties-item-v-properties}
    $v(0; \phi)$ 
    is bounded
    for $\phi \in (1, \infty)$,
    and
    the function
    $
        r \mapsto 
        r / (1 + r v(0; \phi))^2
    $
    is continuous.
    Hence, under 
    \ref{asm:signal-bounded-norm}
    and
    \ref{asm:spectrum-spectrumsignproj-conv},
    using 
    \Cref{lem:calculus-detequi}~\eqref{lem:calculus-detequi-item-trace},
    we have
    \begin{align*}
        \beta_0^\top
        (I_{p_m} - \hSigma^{\dagger} \hSigma)
        \Sigma
        (I_{p_m} - \hSigma^{\dagger} \hSigma)
        \beta_0
        &\asto
        \lim_{p_m \to \infty}
        \sum_{i = 1}^{p_m}
        (1 + \tv_g(0; \phi))
        \frac{r_i}{(1 + r_i v(0; \phi))^2}
        (\beta_0^\top w_i)^2 \\
        &= 
        \lim_{p_m \to \infty}
        \| \beta_0 \|_2^2
        (1 + \tv_g(0; \phi))
        \int \frac{r}{(1 + r v(0; \phi))^2} \, \mathrm{d}G_{p_m}(r) \\
        &=
        \rho^2
        (1 + \tv_g(0; \phi))
        \int \frac{r}{ (1 + r v(0; \phi))^2} \, \mathrm{d}G(r),
    \end{align*}
    where in the last line we used the fact
    that $G_{p_m}$ and $G$ have compact supports,
    and $\lim_{p_m \to \infty} \| \beta_0 \|_2^2 = \rho^2$.
    This completes the proof of the first part.
    
    \paragraph{Variance functional.}
    Consider next the variance functional $\tr[\hSigma^{\dagger} \Sigma] / k_m$.
    As $k_m, p_m \to \infty$ and $p_m / k_m \to \phi \in (0, 1)$,
    $\hSigma$ is almost surely invertible as explained above.
    In this case, 
    $\tr[\hSigma^{\dagger} \Sigma] / k_m - \tr[(\bZ^\top \bZ / k_m)^{-1}] / k_m
    \asto 0$, where $\bZ \in \RR^{k_m \times p_m}$ is matrix with rows 
    $Z_i$, $1 \le i \le k_m$.
    From the proof of Proposition 2 of \cite{hastie_montanari_rosset_tibshirani_2019},
    this limit is given by $\phi / (1 - \phi)$.
    In the case when $k_m, p_m \to \infty$ and $p_m / k_m \to \phi \in (1, \infty)$,
    from \Cref{cor:limiting-resolvents-mn2ls},
    we have
    \[
        \hSigma^{\dagger} \Sigma
        \asympequi \tv(0; \phi) (v(0; \phi) \Sigma + I_p)^{-2} \Sigma^2.
    \]
    Along the same lines as above,
    from \Cref{lem:fixed-point-v-properties}~\eqref{lem:fixed-point-v-properties-item-v-properties},
    $v(0; \phi)$ is bounded for $\phi \in (1, \infty)$,
    and the
    the function
    $
        r \mapsto
        {r^2} / {(1 + v(0; \phi) r)^2}
    $
    is continuous.
    Thus, under 
    \ref{asm:spectrum-spectrumsignproj-conv}, 
    using 
    \Cref{lem:calculus-detequi}~\eqref{lem:calculus-detequi-item-trace},
    we have
    \begin{align*}
        \sigma^2
        \tr[\hSigma^{\dagger} \Sigma] / k_m
        &\asto
        \lim_{p_m \to \infty}
        \frac{p_m}{k_m}
        \frac{1}{p_m}
        \tv(0; \phi)
        \sum_{i = 1}^{p_m}
        \frac{r_i^2}{(1 + v(0; \phi) r_i)^2} \\
        &=
        \lim_{p_m \to \infty}
        \frac{p_m}{k_m}
        \tv(0; \phi)
        \int
        \frac{r^2}{(1 + v(0; \phi) r)^2}
        \, \mathrm{d}H(r) \\
        &=
        \phi
        \tv(0; \phi)
        \int
        \frac{r^2}{(1 + v(0; \phi) r)^2}
        \, \mathrm{d}H(r).
    \end{align*}
    This completes the proof of the second part.
\end{proof}

We remark that 
\Cref{cor:limiting-resolvents-mn2ls}
used in the proof of
\Cref{prop:limits-risk-functionals-mn2ls}
assumes existence of moments of order $8 + \alpha$
for some $\alpha > 0$
on the entries of $Z_i$, $1 \le i \le k_m$,
mentioned in assumption \ref{asm:lin-mod}.
As done in the proof
of Theorem 6 of \cite{hastie_montanari_rosset_tibshirani_2019}
(in Appendix A.1.4 therein),
this can be relaxed to only requiring 
existence of moments of order $4 + \alpha$.
This being a simple truncation argument,
we omit the details
and refer the readers to \cite{hastie_montanari_rosset_tibshirani_2019}.

The proposition below covers the case when $p_m / k_m \to \infty$
as $p_m, k_m \to \infty$.

\begin{proposition}
    [Limits of risk and deterministic risk approximation as $\phi \to \infty$]
    \label{prop:limits-infty-mn2ls}
    Suppose assumptions 
    \ref{asm:lin-mod}--\ref{asm:spectrum-spectrumsignproj-conv} hold.
    Then, as $k_m, p_m \to \infty$ and $p_m / k_m \to \infty$,
    we have
    \[
        R_{\bX, \bY}(\tf_\mnls(\cdot; \cD_{k_m}))
        - \beta_0^\top \Sigma \beta_0 - \sigma^2
        \asto 0.
    \]
    In addition,
    \[
        \lim_{\phi \to \infty}
        R^\deter(\cdot; \tf_\mnls)
        = \lim_{p_m \to \infty} \beta_0 \Sigma \beta_0 + \sigma^2
        = \rho^2 \int r \, \mathrm{d}G(r) + \sigma^2.
    \]
\end{proposition}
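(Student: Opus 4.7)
The plan is to reuse the bias--cross--variance decomposition from the proof of \Cref{prop:cond-conv-mn2ls} and show that, in the regime $k_m/p_m \to 0$, the bias converges to $\beta_0^\top \Sigma \beta_0$ while both cross and variance terms vanish almost surely. Writing $P = \hSigma^\dagger \hSigma$ for the orthogonal projection onto the row space of $\bX$, under \ref{asm:lin-mod} we have the same decomposition
\[
R_{\bX,\bY}(\tf_\mnls(\cdot; \cD_{k_m})) - \sigma^2 = \bB_0 + 2\bC_0 + \bV_0,
\]
with $\bB_0 = \beta_0^\top(I_{p_m}-P)\Sigma(I_{p_m}-P)\beta_0$, $\bV_0 = \beps^\top \bX \hSigma^\dagger\Sigma\hSigma^\dagger \bX^\top\beps/k_m^2$, and $\bC_0$ the associated cross term. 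The new ingredient I would establish is a sharp bound on $\|\hSigma^\dagger\|_{\op}$: since $\bX = \bZ\Sigma^{1/2}$ and $\Sigma \succeq r_{\min} I_{p_m}$ by \ref{asm:covariance-bounded-eigvals}, the smallest nonzero singular value of $\bX/\sqrt{k_m}$ satisfies $s_{k_m}^2 \ge (r_{\min}/k_m)\,\lambda_{\min}(\bZ\bZ^\top)$, and in our regime $k_m/p_m \to 0$ the Bai--Yin law $\lambda_{\min}(\bZ\bZ^\top/p_m)\asto 1$ (valid under \ref{asm:rmt-feat}) gives $\|\hSigma^\dagger\|_{\op} \le k_m/(r_{\min} p_m) \cdot (1+o(1))$ almost surely. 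Consequently $\tr[\hSigma^\dagger \Sigma]/k_m \le r_{\max}\|\hSigma^\dagger\|_{\op} = O(k_m/p_m) \to 0$, and applying \Cref{lem:concen-quadform} and \Cref{lem:concen-linform} exactly as in the proof of \Cref{prop:cond-conv-mn2ls} yields $\bV_0 \asto 0$ and $\bC_0 \asto 0$.

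The main obstacle is controlling $\bB_0$. Expanding
\[
\bB_0 = \beta_0^\top\Sigma\beta_0 - 2\beta_0^\top P \Sigma\beta_0 + \beta_0^\top P\Sigma P\beta_0,
\]
both correction terms are bounded in absolute value by a constant multiple of $\sqrt{\|P\beta_0\|^2}$ (via Cauchy--Schwarz together with $\Sigma \preceq r_{\max} I_{p_m}$ and \ref{asm:signal-bounded-norm}), so it suffices to show $\|P\beta_0\|^2\asto 0$. The trivial bound $\|P\|_{\op} = 1$ is useless here; I would instead use that, on the event $\bX$ has full row rank (which holds almost surely in our regime), $P = \bX^\top(\bX\bX^\top)^{-1}\bX$, whence
\[
\|P\beta_0\|^2 \le \|\bX\beta_0\|^2 / \lambda_{\min}(\bX\bX^\top).
\]
The numerator is a sum of $k_m$ i.i.d.\ terms $(X_i^\top\beta_0)^2$ with mean $\beta_0^\top\Sigma\beta_0 \le r_{\max}\|\beta_0\|^2$, bounded by \ref{asm:signal-bounded-norm}--\ref{asm:covariance-bounded-eigvals}, so $\|\bX\beta_0\|^2 = O(k_m)$ almost surely by the strong law; the denominator satisfies $\lambda_{\min}(\bX\bX^\top) \ge r_{\min}\lambda_{\min}(\bZ\bZ^\top) \ge r_{\min} p_m(1-o(1))$ almost surely by the Bai--Yin bound above. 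Combining, $\|P\beta_0\|^2 = O(k_m/p_m)\asto 0$, which gives $\bB_0 \asto \beta_0^\top\Sigma\beta_0$ and thereby the first claim of the proposition.

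For the deterministic profile limit, I would analyze the fixed-point system \eqref{eq:fixed-point-v-mn2ls-1}--\eqref{eq:def-tvg-mn2ls-1} as $\phi \to \infty$. Since $1/\phi \to 0$ forces $v(0;\phi) \to 0$, Taylor-expanding \eqref{eq:fixed-point-v-mn2ls-1} gives $1/\phi = v(0;\phi)\int r\,\mathrm{d}H(r) + O(v(0;\phi)^2)$ and hence $v(0;\phi) = 1/(\phi\bar r)(1+o(1))$ with $\bar r := \int r\,\mathrm{d}H(r)$, bounded away from $0$ by \ref{asm:covariance-bounded-eigvals}. Substituting into \eqref{eq:fixed-point-v'-mn2ls-1} and \eqref{eq:def-tvg-mn2ls-1} yields $\tv(0;\phi) = O(\phi^{-2})$ and $\tv_g(0;\phi) = O(\phi^{-1})$; in particular $\phi\,\tv(0;\phi)\int r^2/(1+v(0;\phi)r)^2\,\mathrm{d}H(r) \to 0$. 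Since $H$ and $G$ have compact supports by \ref{asm:covariance-bounded-eigvals}, dominated convergence shows that the bias integral in \eqref{eq:detapprox-formula-zerostep-mn2ls} tends to $\rho^2\int r\,\mathrm{d}G(r)$ and the variance integral vanishes, giving $\lim_{\phi\to\infty} R^\deter(\phi;\tf_\mnls) = \rho^2\int r\,\mathrm{d}G(r) + \sigma^2$. The matching identity $\lim_{p_m\to\infty}\beta_0^\top\Sigma\beta_0 = \rho^2\int r\,\mathrm{d}G(r)$ follows from $\beta_0^\top\Sigma\beta_0 = \|\beta_0\|^2\int r\,\mathrm{d}G_{p_m}(r)$, \ref{asm:signal-bounded-norm}, and the weak convergence $G_{p_m}\to G$ from \ref{asm:spectrum-spectrumsignproj-conv}.
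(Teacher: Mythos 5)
Your proposal is correct, and for the probabilistic part it takes a genuinely different route from the paper. The paper avoids the bias--cross--variance decomposition altogether: from $R_{\bX,\bY}(\tf_\mnls) - (\|\beta_0\|_\Sigma^2 + \sigma^2) = \|\tbeta_\mnls\|_\Sigma^2 - 2\tbeta_\mnls^\top\Sigma\beta_0$, it suffices to show $\|\tbeta_\mnls(\cD_{k_m})\|_2 \asto 0$, which the paper establishes directly by bounding $\|\tbeta_\mnls\|_2 \le \|(\bX^\top\bX/k_m)^\dagger\bX^\top/\sqrt{k_m}\|_{\op}\|\bY/\sqrt{k_m}\|_2$ and showing the first factor (essentially $\sqrt{k_m}/s_{\min}(\bX)$) vanishes using the same Bai--Yin-type estimate $\lambda_{\min}(\bZ\bZ^\top/p_m)\asto 1$ that you invoke. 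That argument is shorter because the entire risk deviation is a quadratic form in $\tbeta_\mnls$, so nothing needs to be decomposed. Your approach instead runs the same $\bB_0 + \bC_0 + \bV_0$ decomposition used in the $\phi\in(0,1)\cup(1,\infty)$ regime, supplies the additional operator-norm estimate $\|\hSigma^\dagger\|_{\op} = O(k_m/p_m)$, and introduces the new ingredient $\|P\beta_0\|^2 \le \|\bX\beta_0\|^2/\lambda_{\min}(\bX\bX^\top) = O(k_m/p_m)$ to handle $\bB_0$. This is longer but has the merit of making the $\phi\to\infty$ case continuous with the finite-$\phi$ proof and of isolating exactly where the bias converges to $\beta_0^\top\Sigma\beta_0$. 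For the second part (fixed-point asymptotics) your argument expands the same quantities the paper delegates to its Lemma on fixed-point properties; the content is identical.

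One small technical point worth tightening: your invocation of ``the strong law'' to conclude $\|\bX\beta_0\|^2 = O(k_m)$ almost surely is not literally the classical SLLN, since $(X_i^\top\beta_0)^2$ forms a triangular array whose distribution changes with $m$ (both $p_m$ and $\beta_0 = \beta_{0,p_m}$ vary). The conclusion is correct, but it should be justified via a moment bound plus Borel--Cantelli (e.g.\ by applying \Cref{lem:concen-quadform} to $u^\top \bZ^\top \bZ u / k_m$ with $u = \Sigma^{1/2}\beta_0$, whose $L_2$-norm is uniformly bounded by $r_{\max}\rho^2$ eventually, and whose entries satisfy the $(4+\alpha)$-moment assumption), rather than by citing the SLLN directly. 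The paper's toolkit already contains everything needed, so this is a wording gap rather than a substantive one.
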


\begin{proof}

From \eqref{eq:predrisk_mn2ls},
note that
\begin{align*}
    R_{\bX, \bY}(\tf_\mnls(\cdot; \cD_{k_m}))
    - (\| \beta_0 \|_{\Sigma}^2 + \sigma^2)
    &= \| \tbeta_\mnls(\cD_{k_m}) \|_{\Sigma}^2
    - 2 \tbeta_\mnls(\cD_{k_m})^\top \Sigma \beta_0 \\
    &\le r_{\min}^{-1} \| \tbeta_\mnls \|_2^2
    + 2 \| \tbeta_\mnls(\cD_{k_m}) \|_2 \| \Sigma \beta_0 \|_2 \\
    &\le r_{\min}^{-1} \| \tbeta_\mnls(\cD_{k_m}) \|_2^2
    + 2 r_{\max} r \| \tbeta_\mnls(\cD_{k_m}) \|_2,
\end{align*}
where the first inequality follows
by using the lower bound $r_{\min}$ on the smallest eigenvalue of $\Sigma$,
and the Cauchy-Schwarz inequality,
and the second inequality follows
by using the upper bound $r_{\max}$ on the largest eigenvalue of $\Sigma$.
Thus,
for the first part it suffices to show that $\| \tbeta_\mnls \|_2 \to 0$
as $k_{m}, p \to 0$ and $p / k_{m} \to \infty$.
Towards that end,
note that
\begin{align*}
    \| \tbeta_\mnls(\cD_{k_m}) \|_2
    &= \| (\bX^\top \bX / k_{m})^{\dagger} \bX^\top \bY / k_{m} \|_2 \\
    &\le \| (\bX^\top \bX / k_{m})^{\dagger} \bX / \sqrt{k_{m}} \|_{\mathrm{\op}} 
    \| \bY / \sqrt{k_{m}}\|_2 \\
    &\le C \| (\bX^\top \bX / k_{m})^{\dagger} \bX / \sqrt{k_{m}} \|_{\mathrm{op}} 
    \sqrt{\rho^2 + \sigma^2},
\end{align*}
where the last inequality holds eventually almost surely
since \ref{asm:lin-mod} and \ref{asm:signal-bounded-norm}
imply that the entries of $\bY$ have bounded 4-th
moment,
and thus from the strong law of large numbers,
$\| \bY / \sqrt{k_{m}} \|_2$ is eventually almost surely
bounded above by $\sqrt{\EE[Y^2]} = \sqrt{\rho^2 + \sigma^2}$.
Observe that
operator norm of the matrix $(\bX^\top \bX / k_{m})^{\dagger} \bX / \sqrt{k_{m}}$
is upper bounded by the inverse of the 
smallest non-zero singular value $s_{\min}$ of $\bX$.
As $k_{m}, p_m \to \infty$ such that $p_m / k_{m} \to \infty$,
$s_{\min} \to \infty$ almost surely 
(e.g., from results in \cite{bloemendal_knowles_yau_yin_2016})
and therefore, $\| \beta \|_2 \to 0$ almost surely.
This completes the proof of first part.

Now, 
from
\Cref{lem:fixed-point-v-properties}~\eqref{lem:fixed-point-v-properties-item-v-properties}
$\lim_{\phi \to \infty} v(0; \phi) = 0$,
and
from
\Cref{lem:fixed-point-v-properties}~\eqref{lem:fixed-point-v-properties-item-tvg-properties}
$\lim_{\phi \to \infty} \tv_g(0; \phi) = 0$.
Thus,
\[
    \lim_{\phi \to \infty}
    \rho^2
    (1 + \tv_g(0; \phi) )
    \int \frac{r}{(1 + v(0; \phi) r)^2)} \, \mathrm{d}G(r)
    = \rho^2 \int r \, \mathrm{d}G(r).
\]
On the other hand,
from 
\Cref{lem:fixed-point-v-properties}~\eqref{lem:fixed-point-v-properties-item-tvg-properties},
\[
    \lim_{\phi \to \infty}
    \sigma^2 \phi \tv(0; \phi)
    \int \frac{r}{(1 + v(0; \phi) r)^2} \, \mathrm{d}H(r)
    = 0.
\]
This proves the second part,
and finishes the proof.
\end{proof}

\subsubsection*{\underline{Condition 2}: Left and right limits of deterministic risk approximation as $\phi \to 1$.}

Next we verify that
$\lim_{\phi \to 1} R^\deter(\phi; \tf_{\mnls}) = \infty$.
First note that $\lim_{\phi \to 1^{-} }R^\deter(\phi; \tf_{\mnls}) = 
\lim_{\phi \to 1^{-}} 1 / (1 - \phi) = \infty$.
Now, 
from
\Cref{lem:fixed-point-v-properties}~\eqref{lem:fixed-point-v-properties-item-tvg-properties},
observe that
\[
    \lim_{\phi \to 1^{+}}
    \phi
    \tv(0; \phi)
    \int \frac{r^2}{(1 + v(0; \phi) r)^2}
    \, \mathrm{d}H(r)
    = \infty.
\]
Since $\lim_{\phi \to 1^{-} }R^\deter(\phi) 
= \lim_{\phi \to 1^{+}} R^\deter(\phi) = \infty$,
we have that $\lim_{\phi \to 1} R^\deter(\phi) = \infty$,
as claimed.
This finishes the verification.

\subsection
{Proof of \Cref{prop:asymp-verif-mn1ls}}

Recall that $\cD_{k_m}$ is a dataset with $k_m$ observations
and $p_m$ features.
\cite{li_wei_2021} makes the following distributional
assumptions on the dataset $\cD_{k_m}$.
We adapt the scalings of \cite{li_wei_2021} to match the current paper for easy comparisons. 
\begin{enumerate}[label={\rm($\ell_1$A\arabic*)}]
    \item
    \label{asm:lin-mod-mn1ls}
    $(X_i, Y_i)$ for $1 \le i \le k_m$
    are i.i.d.\ observations from the model:
    $Y = X^\top \beta_0 + \eps$
    for some fixed unknown vector $\beta_0 \in \RR^{p_m \times 1}$
    and unobserved error $\eps$ where $\eps_{i}\stackrel{\textsf{i.i.d.}}{\sim} \mathcal{N}(0,\sigma^2)$
    independent of $X$.

    \item 
    \label{asm:iso-gau-feat-mn1ls}
    Each design vector is independently drawn by $X_{i}\stackrel{\textsf{i.i.d.}}{\sim} \mathcal{N}(0, I_p)$. 

    \item 
    \label{asm:sig-gen-mn1ls}
    The signal vector $\beta_0$ is random such that
    the scaled coordinates $\{\sqrt{p_m}\cdot \beta_{0}^{i}\}_{i=1}^{p_m}$ converge weakly to a probability measure $P_{\Theta}$, where $\mathbb{E}[\Theta^2] < \infty$ and $\mathbb{P}(\Theta \neq 0) > 0.$ 
\end{enumerate}

Under these assumptions, Theorem 2 of \cite{li_wei_2021} demonstrates that the prediction risk of the MN1LS estimator obeys \footnote{\cite{li_wei_2021} assumes $p/n = \phi$ for simplicity, but the proof goes through literatim as $p/n \to \phi$.}
\begin{align}
\label{eqn:convergence-mn1ls}
    \lim_{\substack{p/n 
    \to \phi \\ n,\, p\to\infty}} R(\tf_{\mnla}(\cdot; \cD_{k_m}))
    ~=~ \tau^{\star 2},
\end{align}
almost surely with respect to $X$ and $Y.$
Here, $(\tau^\star, \alpha^\star)$ stands for the unique solution to the following system of equations 
\begin{subequations}
    \label{eqn:fix-eqn}
        \begin{align}
        \label{eq:fix-1} \tau^2 & = \sigma^2 + \mathbb{E}\left[ \big( \eta(\Theta+\tau Z; \alpha\tau)-\Theta\big) ^2\right] , \\
        \label{eq:fix-2} \phi^{-1} & =\mathbb{P}\big( |\Theta+\tau Z| > \alpha\tau \big),
        \end{align}
\end{subequations}
where $\Theta \sim P_{\Theta}$, and $Z \sim \mathcal{N}(0,1)$ and is independent of $\Theta$.
Here, $\eta(\cdot; b)$ is the soft-thresholding function at level $b \ge 0$
that maps $x \in \RR$ to
\[
    \eta(x; b)
    = (|x| - b)_+ \sign(x).
\]
The existence and uniqueness of the equation set~\eqref{eqn:fix-eqn} is established in \cite{li_wei_2021}. 
To facilitate accurate characterization of $\tau^\star$ as a function of $\phi$, we make assumption on how the ground true is generated as follows. 
\begin{enumerate}
    \item[\rm($\ell_1$A4)]
    \label{asm:sig-sparse-mn1ls}
    Suppose that each coordinate of $\beta_0=[\beta_0^i]_{1\leq i\leq p}$ is identically and independently drawn as follows 
\begin{equation}\label{eq:theta-distribution}
    \beta_0^i \overset{\mathrm{i.i.d.}}{\sim} \epsilon \mathcal{P}_{M/\sqrt{p_m}} + (1-\epsilon)\mathcal{P}_0,
\end{equation}
where $\mathcal{P}_{c}$ corresponds to the Dirac measure at point $c\in \mathbb{R}$, and $M>0$ is some given scalar that determines the magnitude of a non-zero entry. 

\end{enumerate}

Under the above four assumptions, it is proved in Lemma 2 (p.~50) of 
\cite{li_wei_2021} 
that 
\begin{align}
\label{eqn:profile-l1-brahms}
    \lim_{\phi \to 1^+} \tau^{\star 2}(\phi) = \infty, 
\end{align}
and 
Lemma 1 (p.~51)
of \cite{li_wei_2021}
that
\begin{align*}
    \lim_{\phi \to \infty} \tau^{\star 2}(\phi) 
    =
    \sigma^2 + \mathbb{E}\|\beta_0\|^2_2 =  \sigma^2 + \epsilon M^2.
\end{align*}
We remark that the above results are stated slight differently therein due to a different scaling, where a global $1/\sqrt{k_m}$ is applied to the design matrix and $\sqrt{p_m}$ is applied to the ground truth parameter $\beta_0.$ Here, we adapt a global scaling to allow for convenient comparisons with the MN2LS estimator. 

From the discussion above, it is therefore clear that, one can set  
\begin{align}
\label{eqn:def-R-mn1ls}
R^\deter(\cdot; \tf_{\mnla}) = 
\begin{dcases}
\sigma^2 \frac{1}{1 - \phi}
& \text{ if } \phi \in (0,1)\\
\infty & \text{ if } \phi = 1 \\
\tau^{\star 2} & \text{ if }\phi \in (1,\infty)\\
\sigma^2 + \epsilon M^2 & \text{ if } \phi = \infty
\end{dcases}
\end{align}
which satisfies the conditions of \Cref{prop:asymp-verif-mn1ls}.

In order to see this, first recognizing that the convergence~\eqref{eqn:convergence-mn1ls} holds almost surely, the first condition of \Cref{prop:asymp-verif-mn1ls} is satisfied naturally.
Additionally, as established in Section~\ref{sec:verify-profile-l2}
and in \eqref{eqn:profile-l1-brahms}, one has 
\begin{align}
    \lim_{\phi \to 1^{+}} R^\deter(\phi; \tf_{\mnla}) = \infty,
    \quad
    \text{and}
    \quad
    \lim_{\phi \to 1^{-}} R^\deter(\phi; \tf_{\mnla}) = \infty,
\end{align}
which validates the second condition of \Cref{prop:asymp-verif-mn1ls}. 
Putting everything together completes the proof of \Cref{prop:asymp-verif-mn1ls}.

\section{Proofs related to risk monotonization for one-step procedure}
\label{sec:proofs-riskmonotonization-onestep}

\subsection
{Proof of \Cref{lem:deterministic-approximation-reduction-onestep}}

The idea of the proof is similar to proof of \Cref{lem:rn-deterministic-approximation-4-prop-asymptotics}.
We wish to verify that there exists a deterministic approximation $R^\deter: \RR \times \RR \to \RR$
to the conditional prediction risk of the predictor
$\tf(\cdot; \cD_\train^{\xi_{1,n},j}, \cD_\train^{\xi_{2,n}, j})$,
$1 \le j \le M$ that satisfy
\[
    \left|
    R(\tf(\cdot; \cD_\train^{\xi_{1,n}^\star, j}, \cD_\train^{\xi_{2,n}^\star, j}))
    - R^\deter\left(\frac{p_n}{n_{1, \xi_{1,n}^\star}}, \frac{p_n}{n_{2, \xi_{2,n}^\star}}; \tf\right)
    \right|
    = o_p(1)
    R^\deter\left(\frac{p_n}{n_{1, \xi_{1,n}^\star}}, \frac{p_n}{n_{2, \xi_{2,n}^\star}}; \tf\right)
\]
as $n \to \infty$ under \ref{asm:prop_asymptotics},
where $(\xi_{1,n}^\star, \xi_{2,n}^\star)$ are indices such that
\[
    (\xi_{1,n}^\star, \xi_{2,n}^\star)
    \in \argmin_{(\xi_1, \xi_2) \in \Xi_n}
    R^\deter
    \left(
        \frac{p_n}{n_{1, \xi_1}},
        \frac{p_n}{n_{2, \xi_2}};
        \tf
    \right).
\]
Following the arguments in the proof of
\Cref{lem:rn-deterministic-approximation-4-prop-asymptotics},
using the lower bound on $R(\tf(\cdot; \cD_\train^{\xi_{1,n, j}}, \cD_\train^{\xi_{2,n, j}}))$
and identical distribution across $j$,
it suffices to show that
for all $\epsilon > 0$,
\[
    \PP
    \left(
    \left|
    R(\tf(\cdot; \cD_\train^{\xi_{1,n}^\star}, \cD_\train^{\xi_{2,n}^\star}))
    - R^\deter\left(\frac{p_n}{n_{1, \xi_{1,n}^\star}}, \frac{p_n}{n_{2, \xi_{2,n}^\star}}; \tf\right)
    \right|
    \ge \epsilon
    \right)
    \to 0
\]
as $n \to \infty$ under \ref{asm:prop_asymptotics}.
Note that here we have dropped the superscript $j$ for brevity.
Now we will show that \eqref{eq:rn-deterministic-approximation-reduced-onestep-prop-asymptotics}
along with the assumed continuity behavior of $R^\deter(\cdot, \cdot; \tf)$
implies desired conclusion.
Fix $\eps > 0$
and define a sequence $h_n(\epsilon)$ as follows:
\[
    h_n(\epsilon)
    :=
    \PP
    \left(
    \left|
    R(\tf(\cdot; \cD_\train^{\xi_{1,n}^\star}, \cD_\train^{\xi_{2,n}^\star}))
    - R^\deter\left(\frac{p_n}{n_{1, \xi_{1,n}^\star}}, \frac{p_n}{n_{2, \xi_{2,n}^\star}}; \tf\right)
    \right|
    \ge \epsilon
    \right).
\]
We want to show that $h_n(\epsilon) \to \infty$ as $n \to \infty$
under \ref{asm:prop_asymptotics}.
We first note that using \Cref{lem:subsequence-to-sequence},
it suffices to show that for an arbitrary subsequence $\{ n_k \}_{k \ge 1}$,
there exists further subsequence $\{ n_{k_{l}} \}_{l \ge 1}$ such that
$h_{n_{k_{l}}} \to 0$ as $n \to \infty$.
Also, note that since $n_\train / n \to 1$,
the grid $\Xi_n$ satisfies the space-filling property
from \Cref{lem:spacefilling-grid-onestep-general}
that $\Pi_{\Xi_n}(\zeta_1, \zeta_2) \to (\zeta_1, \zeta_2)$
for any $(\zeta_1, \zeta_2)$ that satisfy $\zeta_1^{-1} + \zeta_2^{-1} \le \gamma^{-1}$
and the set of $(\zeta_1, \zeta_2)$ that satisfy this condition is compact.
Now, we apply \Cref{lem:limit-argmins-metricspace}
on the function $R^\deter(\cdot, \cdot; \tf)$
and the grid $\Xi_n$.
Let sequence $\{ x_n \}_{n \ge 1}$ be such that
$x_n := (p_n/n_{1,\xi_{1,n}^\star} , p_n/n_{2,\xi_{2,n}^\star})$ for $n \ge 1$.
\Cref{lem:limit-argmins-metricspace} guarantees that
for any arbitrary subsequence $\{ x_{n_{k}} \}_{k \ge 1}$,
there exists a further subsequence $\{ x_{n_{k_{l}}} \}_{l \ge 1}$
such that
\begin{equation}
    \label{eq:subsequence-limit-gaurantee-onestep}
    x_{n_{k_{l}}}
    \to
    (\phi_1, \phi_2)
    \in
    \argmin_{\zeta_1^{-1} + \zeta_2^{-1} \le \gamma^{-1}} R^\deter(\zeta_1, \zeta_2; \tf).
\end{equation}
We will now show that $h_{n_{k_{l}}} \to 0$ as $l \to \infty$
if assumption \eqref{eq:rn-deterministic-approximation-reduced-onestep-prop-asymptotics}
\Cref{lem:deterministic-approximation-reduction-onestep}
is satisfied.
It is easy to see that the assumption implies
\[
    R(\tf(\cdot; \cD_{\train}^{\xi_{1,n}^\star}, \cD_{\train}^{\xi_{2,n}^\star}))
    \pto R^\deter(\phi_1, \phi_2; \tf)
\]
as $n, p_n, \xi_{1,n}^\star, \xi_{2,n}^\star \to \infty$, whenever
\[
    (p_n / n_{1, \xi_{1,n}^\star}, p_n / n_{2, \xi_{2,n}^\star})
    \to (\phi_1, \phi_2) \in \argmin_{\zeta_1^{-1} + \zeta_2^{-1} \le \gamma^{-1}} R^\deter(\zeta_1, \zeta_2; \tf).
\]
But using the continuity of $R^\deter(\cdot, \cdot; \tf)$
on the set $\argmin_{\zeta_1^{-1} + \zeta_2^{-1} \le \gamma^{-1}} R^\deter(\zeta_1, \zeta_2; \tf)$
and the fact that the sequence $\{ x_{n_{k_{l}}} \}_{l \ge 1}$ converges
to a point in this minimizing set
from \eqref{eq:subsequence-limit-gaurantee-onestep},
it follows that
that $h_{n_{k_{l}}} \to 0$ as $l \to \infty$ as desired.
This finishes the proof.

\subsection
{Proof of \Cref{prop:semicontinuity-metricspace}}
    Fix $t < \infty$.
    We will verify that
    the set $C_t := \{ x : h(x) \le t \}$ is closed.
    Note that $C_t \subseteq M \setminus C$
    because $h(x) < \infty$ for $x \in C_t$.
    Now consider any converging sequence $\{ x_n \}_{n \ge 1}$
    in $C_t$ with limit point $p$.
    We will argue that $p \in C_t$.
    First note that the function $h$
    is continuous over $C_t$
    because $C_t \subseteq M \setminus C$.
    Note that $p \notin C$,
    because if it does 
    then $h(x_n) \to \infty$ as $n \to \infty$,
    which in turn implies that for infinitely
    many $k \ge 1$, $h(x_k) > t$,
    contradicting $x_n \in C_t$ for all $n \ge 1$.
    Hence, $p \in M \setminus C$
    and $x_n \in M \setminus C$ for all $n \ge 1$.
    Therefore,
    continuity of $h$ on $M \setminus C$
    yields $h(x_n) \to h(p)$.
    Moreover, $h(x_n) \le t$
    implies that $\lim_{n \to \infty} h(x_n) \le t$,
    which in turn implies that $h(p) \le t$.
    Hence $p \in C$, finishing the proof.

\subsection
{Proof of \Cref{prop:continuity-from-continuous-convergence-rdet-onestep}}

The proof uses a similar contradiction strategy
employed in the proof of
\Cref{prop:continuity-from-continuous-convergence-rdet}.
We only sketch the proof, and omit the details.

Suppose $R^\deter(\cdot, \cdot; \tf)$
is discontinuous at some point $(\phi_{1,\infty}, \phi_{2,\infty})$.
This gives us a sequence $\{ (\phi_{1,r}, \phi_{2,r}) \}_{r \ge 1}$
such that for some $\epsilon > 0$
and all $r \ge 1$,
\begin{equation}
    \label{eq:RRdiscontinuity-Rdet-sequence-existence-implication-onestep}
    R^\deter(\phi_{1,r}, \phi_{2,r}; \tf)
    \notin
    [
        R^\deter(\phi_{1, \infty}, \phi_{2, \infty}; \tf) - 2 \epsilon,
        R^\deter(\phi_{1, \infty}, \phi_{2, \infty}; \tf) + 2 \epsilon
    ],
\end{equation}
while $(\phi_{1,r}, \phi_{2,r}) \to (\phi_{1, \infty}, \phi_{2, \infty})$
as $r \to \infty$.
From the continuous convergence hypothesis,
for each $r \ge 1$,
one can then construct a sequence of datasets
$\{(\cD^{\phi_{1,r}}_{k_{1,m}}, \cD^{\phi_{2,r}}_{k_{2,m}})\}_{m \ge 1}$
with $p_m$ features and $(k_{1,m}, k_{2,m})$ observations
for which
\begin{equation}
    \label{eq:contradiction-proof-prob-converge-onestep}
    R(\tf(\cdot; \cD^{\phi_{1,r}}_{k_{1,m}}, \cD^{\phi_{2,r}}_{k_{2,m}}))
    \pto
    R^\deter(\phi_{1,r}, \phi_{2,r}; \tf)
\end{equation}
as $p_m, k_{1,m}, k_{2,m} \to \infty$
and $(p_m / k_{1,m}, p_m / k_{2,m}) \to (\phi_{1,r}, \phi_{2,r})$.
From 
\eqref{eq:RRdiscontinuity-Rdet-sequence-existence-implication-onestep} 
and 
\eqref{eq:contradiction-proof-prob-converge-onestep},
one can obtain a sequence of increasing integers $\{ m_r \}_{r \ge 1}$
such that for each $r \ge 1$,
with probability $0 < p < 1$,
\[
    |
        R(\tf(\cdot; \cD^{\phi_{1,r}}_{k_{1,m}}, \cD^{\phi_{2,r}}_{k_{2,m}}))
        - R^\deter(\phi_{1,\infty}, \phi_{2,\infty}; \tf)
    |
    > \epsilon.
\]
This then lets us construct a sequence of datasets
$\{ (\cD'_{k_{1,m}}, \cD'_{k_{2,m}}) \}_{m \ge 1}$
similar as done in the proof of \Cref{prop:continuity-from-continuous-convergence-rdet}
for which
\[
    R(\tf(\cdot; \cD'_{k_{1,m}}, \cD'_{k_{2,m}}))
    \not\pto
    R^\deter(\phi_{1,\infty}, \phi_{2,\infty}; \tf)
\]
as $p_m, k_{1,m}, k_{2,m} \to \infty$
and $(p_m / k_{1,m}, p_m/k_{2,m}) \to (\phi_{1,\infty}, \phi_{2,\infty})$.
This supplies the required contradiction
to the continuous convergence hypothesis.

\subsection
{Proof of \Cref{thm:asymptotic-risk-tuned-one-step}}
   The idea of the proof is similar to that of
   the proof of \Cref{thm:asymptotic-risk-tuned-zero-step}.
   We will break the proof in two cases.
   \paragraph{Case of $M = 1$.}
   Consider first the case when $m = 1$.
   In this case,
   $\hf^\cv = \tf_1^\xi$,
   which we denote by $\tf^\xi$ for notational simplicity.
   Bound the desired difference as
   \begin{align*}
        &\left|
            R(\hf^\cv)
            - \min_{1/\zeta_1 + 1/\zeta_2 \le n/p}
            R^\deter(\hf; \zeta_1, \zeta_2)
        \right| \\
        &\le
        \left|
            R(\hf^\cv)
            - \min_{\xi \in \Xi} R(\tf^\xi)
        \right|
        +
        \left|
            \min_{\xi \in \Xi} R(\tf^\xi)
            - \min_{\xi \in \Xi}
            R^\deter
            \left(
                \tf;
                \frac{p_n}{n - \xi_1 \lfloor n^\nu \rfloor},
                \frac{p_n}{\xi_2 \lfloor n^\nu \rfloor}
            \right)
        \right| \\
        & \quad + 
        \left|
            \min_{\xi \in \Xi}
            R^\deter
            \left(
                \tf;
                \frac{p_n}{n - \xi_1 \lfloor n^\nu \rfloor},
                \frac{p_n}{\xi_2 \lfloor n^\nu \rfloor}
            \right)
            - \min_{1/\zeta_1 + 1/\zeta_1 \le n/p}
            R^\deter(\tf; \zeta_1, \zeta_2)
        \right|
   \end{align*}
   We show below that each of the terms
   asymptotically go to zero. 
   Observe that
   \[
        \big| \Xi \big|
        = \sum_{\xi_1 = 2}^{\left\lceil n/\lfloor n^\nu \rfloor - 2 \right\rceil}
        (\xi_1 - 1)
        \le n^2.
   \]
   Since $\hsigma_\Xi = \tsigma_\Xi = o_p(\sqrt{n^\nu/\log(n)})$,
   under the setting of
   \Cref{lem:bounded-orlitz-error-control}
   or \Cref{lem:bounded-variance-error-control},
   \Cref{rem:growth-rates-probabilistic-bound} hold
   so that
   \[
        \left|
            R(\hf^\cv)
            - \min_{\xi \in \Xi} R(\tf)
        \right|
        = o_p(1).
   \]
   The assumption on the asymptotic risk profile
    \eqref{eq:rn-deterministic-approximation-onestep-prop-asymptotics}
   leads to
   \[
        \left|
            \min_{\xi \in \Xi}
            R(\tf^\xi)
            - \min_{\xi \in \Xi}  
            R^\deter
            \left(
                \tf;
                \frac{p_n}{n - \xi_1 \lfloor n^\nu \rfloor},
                \frac{p_n}{\xi_2 \lfloor n^\nu \rfloor}
            \right)
        \right|
        = o_p(1).
   \]
   Since the risk profile $R^\deter(\tf; \zeta_1, \zeta_2)$
   is assumed be continuous at its minimizer,
   applying 
    \Cref{lem:spacefilling-grid-onestep-general}
   we get
   \[
        \min_{\xi \in \Xi}     
        R^\deter
        \left(
            \tf;
            \frac{p_n}{n - \xi_1 \lfloor n^\nu \rfloor},
            \frac{p_n}{\xi_2 \lfloor n^\nu \rfloor}
        \right)
        \to
        \min_{1/\zeta_1 + 1/\zeta_2 \le n/p}
        R^\deter(\tf; \zeta_1, \zeta_2).
   \]
   Combining the above three convergences,
   we have the desired conclusion.
  
  \paragraph{Case of $M > 1$.} 
   When $m > 1$,
   we bound the desired difference as
   \begin{align*}
        &\left(
            R(\hf^\cv)
            - \min_{1/\zeta_1 + 1/\zeta_2 \le n/p} R^\deter(\tf; \zeta_1, \zeta_2)
        \right)_+ \\
        &\le
        \left(
            R(\hf^\cv)
            - \min_{\xi \in \Xi} R(\hf^\xi)
        \right)_+
        +
        \left(
            \min_{\xi \in \Xi} R(\hf^\xi)
            - \frac{1}{M} \sum_{j=1}^{M} \min_{\xi \in \Xi} R(\tf_j^\xi)
        \right)_+ \\
        & \quad +
        \left(
            \frac{1}{M} \sum_{j=1}^{M} \min_{\xi \in \Xi} R(\tf_j^\xi)
            - \min_{\xi \in \Xi}
            R^\deter
            \left(
                \tf^\xi;
                \frac{p_n}{n - \xi_1 \lfloor n^\nu \rfloor},
                \frac{p_n}{\xi_2 \lfloor n^\nu \rfloor}
            \right)
        \right)_+ \\
        & \quad +
        \left(
            \min_{\xi \in \Xi}
            R^\deter
            \left(
                \tf;
                \frac{p_n}{n - \xi_1 \lfloor n^\nu \rfloor},
                \frac{p_n}{\xi_2 \lfloor n^\nu \rfloor}
            \right)
            - \min_{1/\zeta_1 + 1/\zeta_2 \le n/p}
            R^\deter(\tf; \zeta_1, \zeta_2)
        \right)_+
   \end{align*}
   As before, we show below that each of the terms
   asymptotically vanish.
   Noting that $\hsigma_\Xi \le \tsigma_\Xi$,
   application of
   \Cref{rem:growth-rates-probabilistic-bound}
   shows that the first term is $o_p(1)$.
   The second term is $0$ exactly as argued
   in the proof of
   \Cref{thm:asymptotic-risk-tuned-zero-step}.
   The third term is $o_p(1)$
   by noting that
    \eqref{eq:rn-deterministic-approximation-onestep-prop-asymptotics}
   holds for all $j = 1, \dots, m$.
   Finally, the fourth term is $0$
   as argued for the case of $m = 1$.

\section{Proofs related to deterministic profile verification for one-step procedure}
\label{sec:verif-riskprofile-mnlsbase-mnlsonestep}

In this section,
we verify the assumption \eqref{eq:rn-deterministic-approximation-reduced-onestep-prop-asymptotics}
for the one-step procedure, 
where the base prediction procedure is
linear,
under some regularity conditions.
We also specifically 
consider the cases of MN2LS and MN1LS base prediction procedures.

\subsection
{Predictor simplifications and risk decompositions}

In this section,
we first provide preparatory lemmas
that will be useful in the proofs of
\Cref{lem:one-step-predrisk-decomposition}
and
\Cref{cor:verif-onestep-program}.

Let $\bX_1 \in \RR^{k_{1,m} \times p_m}$ and $\bY_1 \in \RR^{k_{1,m}}$
denote the feature matrix and response vector corresponding 
to the first split dataset $\cD_{k_{1,m}}$.
Similarly,
let $\bX_2 \in \RR^{k_{2,m} \times p_m}$ and $\bY_2 \in \RR^{k_{2,m}}$
denote the feature matrix and response vector corresponding to the second split
dataset $\cD_{k_{2,m}}$.

The following lemma gives an alternative representation
for the ingredient one-step predictor
assuming that the base prediction procedure is linear.
\begin{lemma}
    [Alternate representation for the ingredient one-step predictor]
    \label{lem:onestep-ingredient-simplifications}
    Suppose the base prediction procedure $\tf$ is linear
    such that $\tf(x; \cD_{k_{1,m}}) = x^\top \tbeta(\cD_{k_{1,m}})$
    for some estimator $\tbeta(\cD_{k_{1,m}})$ trained on $\cD_{k_{1,m}}$.
    Let $\tf(\cdot; \cD_{k_{1,m}}, \cD_{k_{2,m}})$ 
    denote the ingredient one-step predictor \eqref{eq:onestep-ingredient-decomp}.
    Then, $\tf(\cdot; \cD_{k_{1,m}}, \cD_{k_{2,m}})$ is a linear predictor
    such that 
    $\tf(x; \cD_{k_{1,m}}, \cD_{k_{2,m}}) = x^\top \tbeta(\cD_{k_{1,m}}, \cD_{k_{2,m}})$
    with the corresponding ingredient one-step estimator $\tbeta(\cD_{k_{1,m}}, \cD_{2,m})$
    given by
    \begin{equation}
        \label{eq:alternate-representation-onestep-ingredient-estimator}
        \tbeta(\cD_{k_{1,m}}, \cD_{k_{2,m}})
        =
        \big\{I_p - (\bX_2^T \bX_2 / k_{2,m})^{\dagger} (\bX_2^T \bX_2 / k_{2,m})\big\}
        \tbeta(\cD_{k_{1,m}})
        + \tbeta_\mnls(\cD_{k_{2,m}}),
    \end{equation}
    where $\tbeta_\mnls(\cD_{k_{2,m}})$ is the MN2LS estimator fit on $\cD_{k_{2,m}}$.
    Furthermore, suppose assumption \ref{asm:lin-mod} holds true for $\cD_{k_{2,m}}$.
    Then, the error between $\tbeta(\cD_{k_{1,m}}, \cD_{k_{2,m}})$ 
    and 
    $\beta_0$
    can be expressed as
    \begin{align}
        &\tbeta(\cD_{k_{1,m}}, \cD_{k_{2,m}}) - \beta_0 \nonumber \\
        &\quad = \big\{ I_p - (\bX_2^\top \bX_2 / k_{2,m})^\dagger (\bX_2^\top \bX_2 / k_{2,m}) \big\}
        (\tbeta(\cD_{k_{1,m}}) - \beta_0)
        + (\bX_2^\top \bX_2 / k_{2,m})^\dagger \bX_2^\top \beps_2 / k_{2,m}.
        \label{eq:error-onestep-estimator-vs-signal}
    \end{align}
\end{lemma}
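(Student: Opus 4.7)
The lemma is essentially algebraic: both claims follow by unwinding the definition of the one-step ingredient predictor and applying two elementary facts about the Moore–Penrose pseudoinverse, namely $(cA)^{\dagger} = c^{-1} A^{\dagger}$ for any scalar $c > 0$, and consequently $(cA)^{\dagger}(cA) = A^{\dagger} A$. Because the base procedure is assumed linear with $\tf(x;\cD_{k_{1,m}}) = x^{\top}\tbeta(\cD_{k_{1,m}})$, the residual vector computed on $\cD_{k_{2,m}}$ equals $\bY_2 - \bX_2 \tbeta(\cD_{k_{1,m}})$, and substituting into the one-step construction just before \eqref{eq:onestep-ingredient-decomp} identifies the estimator associated with $\tf(\cdot;\cD_{k_{1,m}},\cD_{k_{2,m}})$ as
\[
\tbeta(\cD_{k_{1,m}},\cD_{k_{2,m}})
= \tbeta(\cD_{k_{1,m}}) + \bigl(\bX_2^{\top}\bX_2/k_{2,m}\bigr)^{\dagger}\bigl(\bX_2^{\top}(\bY_2-\bX_2\tbeta(\cD_{k_{1,m}}))/k_{2,m}\bigr).
\]

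For the first claim, I would split this expression into two pieces by linearity: the term involving $\bY_2$ is, after applying the scaling identity $(\bX_2^{\top}\bX_2/k_{2,m})^{\dagger}(\bX_2^{\top}\bY_2/k_{2,m}) = (\bX_2^{\top}\bX_2)^{\dagger}\bX_2^{\top}\bY_2$, precisely $\tbeta_{\mnls}(\cD_{k_{2,m}})$ as defined in \eqref{eq:mn2ls}. The remaining term involves $(\bX_2^{\top}\bX_2/k_{2,m})^{\dagger}(\bX_2^{\top}\bX_2/k_{2,m})\tbeta(\cD_{k_{1,m}})$, which combines with the additive $\tbeta(\cD_{k_{1,m}})$ to produce the projection-type factor $\{I_p - (\bX_2^{\top}\bX_2/k_{2,m})^{\dagger}(\bX_2^{\top}\bX_2/k_{2,m})\}\tbeta(\cD_{k_{1,m}})$. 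This yields \eqref{eq:alternate-representation-onestep-ingredient-estimator}.

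For the second claim, I would substitute the linear model $\bY_2 = \bX_2\beta_0 + \beps_2$ from assumption \ref{asm:lin-mod} into $\tbeta_{\mnls}(\cD_{k_{2,m}})$ to obtain
\[
\tbeta_{\mnls}(\cD_{k_{2,m}}) = \bigl(\bX_2^{\top}\bX_2/k_{2,m}\bigr)^{\dagger}(\bX_2^{\top}\bX_2/k_{2,m})\,\beta_0 + \bigl(\bX_2^{\top}\bX_2/k_{2,m}\bigr)^{\dagger}\bX_2^{\top}\beps_2/k_{2,m}.
\]
Plugging this into \eqref{eq:alternate-representation-onestep-ingredient-estimator}, subtracting $\beta_0$, and grouping terms makes the projector $I_p - (\bX_2^{\top}\bX_2/k_{2,m})^{\dagger}(\bX_2^{\top}\bX_2/k_{2,m})$ factor across $\tbeta(\cD_{k_{1,m}}) - \beta_0$, leaving the noise contribution $(\bX_2^{\top}\bX_2/k_{2,m})^{\dagger}\bX_2^{\top}\beps_2/k_{2,m}$ untouched; this is exactly \eqref{eq:error-onestep-estimator-vs-signal}.

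Since every step is a direct manipulation of matrix identities, there is no real obstacle — the only point that requires a bit of care is keeping track of the $k_{2,m}$ normalizations so that the expressions written with the normalized Gram matrix $\bX_2^{\top}\bX_2/k_{2,m}$ genuinely agree with those written using $\bX_2^{\top}\bX_2$, which is immediate from the scaling property of the pseudoinverse recalled above.
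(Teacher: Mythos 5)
Your proposal is correct and takes essentially the same route as the paper: substitute the linear form of the base predictor, split the residual contribution by linearity to isolate $\tbeta_{\mnls}(\cD_{k_{2,m}})$ and the projector acting on $\tbeta(\cD_{k_{1,m}})$, and then plug in the linear model for $\bY_2$ to obtain the error decomposition. The only minor observation is that the paper works throughout with the normalized Gram matrix $\bX_2^\top\bX_2/k_{2,m}$ rather than invoking the $(cA)^\dagger = c^{-1}A^\dagger$ identity explicitly, but the two bookkeeping choices are equivalent.
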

\begin{proof}
    For the first part,
    start by re-arranging the ingredient one-step predictor \eqref{eq:onestep-ingredient-decomp} 
    as follows:
    \begin{align*}
        \tf(x; \cD_{k_{1,m}}, \cD_{k_{2,m}})
        &= \tf(x; \cD_{k_{1,m}})
        + x^\top (\bX_2^\top \bX_2 / k_{2,m})^{\dagger}
        \bX_2^\top (\bY_2 - \bX_2 \tbeta(\cD_{k_{1,m}})) / k_{2,m} \\
        &= x^\top \tbeta(\cD_{k_{1,m}}) 
        + x^\top (\bX_2^\top \bX_2 / k_{2,m})^{\dagger}
        \bX_2^\top (\bY_2 - \bX_2 \tbeta(\cD_{k_{1,m}})) / k_{2,m} \\
        &= x^\top \big\{ I_p - (\bX_2^\top \bX / k_{2,m})^{\dagger} (\bX_2^\top \bX_2) / k_{2,m}  \big\}
        \tbeta(\cD_{k_{1,m}}) 
        + x^\top (\bX_2^\top \bX_2 / k_{2,m})^\dagger \bX_2^\top \bY_2 / k_{2,m} \\
        &= x^\top \big\{ I_p - (\bX_2^\top \bX / k_{2,m})^{\dagger} (\bX_2^\top \bX_2) / k_{2,m}  \big\}
        \tbeta(\cD_{k_{1,m}}) + x^\top \tbeta_\mnls(\cD_{k_{2,m}}),
    \end{align*}
    where 
    $
        \tbeta_\mnls(\cD_{k_{2,m}})
        = (\bX_2^\top  \bX_2 / k_{2,m})^{\dagger} \bX_2^\top \bY_2 / k_{2,m}
    $ 
    is the MN2LS estimator fit on $\cD_{k_{2,m}}$.
    Thus, $\tf(\cdot; \cD_{k_{1,m}}, \cD_{k_{2,m}})$
    is a linear predictor with the corresponding
    ingredient one-step estimator
    $\tbeta(\cD_{k_{1,m}}, \cD_{2,m})$ given by 
    \eqref{eq:alternate-representation-onestep-ingredient-estimator}.
    This completes the proof of the first part.
    
    For the second part, note that under linear model 
    $\bY_2 = \bX_2 \beta_0 + \beps_2$
    (from \ref{asm:lin-mod} for $\cD_{k_{2,m}}$),
    the ingredient one-step estimator $\tbeta(\cD_{k_{1,m}}, \cD_{k_{2,m}})$ can be further simplified to
    \begin{align*}
        &\tbeta(\cD_{k_{1,m}}, \cD_{k_{2,m}}) \\
        & = \big\{ I_p - (\bX_2^\top \bX_2 / k_{2,m})^\dagger (\bX_2^\top \bX_2 / k_{2,m}) \big\} 
        \tbeta(\cD_{k_{1,m}})
        + (\bX_2^\top \bX_2 / k_{2,m})^\dagger (\bX_2^\top \bX_2 / k_{2,m}) \beta_0
        + (\bX_2^\top \bX_2 / k_{2,m})^\dagger \bX_2^\top \beps_2 / k_{2,m}.
    \end{align*}
    Hence, the error between $\tbeta(\cD_{k_{1,m}}, \cD_{k_{2,m}})$ 
    and $\beta_0$ 
    can be expressed as
    \begin{align*}
        &\tbeta(\cD_{k_{1,m}},\cD_{k_{2,m}}) - \beta_0 \\
        &=
        \big\{ I_p - (\bX_2^\top \bX_2 / k_{2,m})^\dagger (\bX_2^\top \bX_2 / k_{2,m}) \big\}
        \tbeta(\cD_{k_{1,m}})
        + (\bX_2^\top \bX_2 / k_{2,m})^\dagger (\bX_2^\top \bX_2 / k_{2,m}) \beta_0
        + (\bX_2^\top \bX_2 / k_{2,m})^\dagger \bX_2^\top \beps_2 / k_{2,m}
        - \beta_0 \\
        &=
        \big\{ I_p - (\bX_2^\top \bX_2 / k_{2,m})^\dagger (\bX_2^\top \bX_2 / k_{2,m}) \big\}
        \tbeta(\cD_{k_{1,m}})
        + \big\{ (\bX_2^\top \bX_2 / k_{2,m})^\dagger (\bX_2^\top \bX_2 / k_{2,m}) - I_p \big\} \beta_0
        + (\bX_2^\top \bX_2 / k_{2,m})^\dagger \bX_2^\top \beps_2 / k_{2,m} \\
        &=
        \big\{ I_p - (\bX_2^\top \bX_2 / k_{2,m})^\dagger (\bX_2^\top \bX_2 / k_{2,m}) \big\}
        (\tbeta(\cD_{k_{1,m}}) - \beta_0)
        + (\bX_2^\top \bX_2 / k_{2,m})^\dagger \bX_2^\top \beps_2 / k_{2,m}.
    \end{align*}
    This completes the proof of the second part.
\end{proof}

Recall that we are interested in the conditional squared prediction risk
of $\tf(\cdot; \cD_{k_{1,m}}, \cD_{k_{2,m}})$:
\begin{equation}
    \label{eq:squared-risk-onestep-ingredient}
    R_{\bX_1, \bY_1, \bX_2, \bY_2}(\tf(\cdot; \cD_{k_{1,m}}, \cD_{k_{2,m}}))
    = \EE[(Y_0 - \tf(X_0; \cD_{k_{1,m}}, \cD_{k_{2,m}}))^2 \mid \bX_1, \bY_1, \bX_2, \bY_2],
\end{equation}
where $(X_0, Y_0)$ is sampled independently
and from the same distribution
as the training data
$(\bX_1, \bY_1)$ and $(\bX_2, \bY_2)$.
We are being explicit about the dependence
of $R(\tf(\cdot; \cD_{k_{1,m}}, \cD_{k_{2,m}}))$
on $(\bX_1, \bY_1, \bX_2, \bY_2)$
as we will consider concentration
of $R(\tf(\cdot; \cD_{k_{1,m}}, \cD_{k_{2,m}}))$
conditional on $(\bX_1, \bY_1)$
first, followed by that on $(\bX_2, \bY_2)$.
For notational convenience,
let $\hSigma_1 := \bX_1^T \bX_1 / k_{1,m}$ and $\hSigma_2 := \bX_2^T \bX_2 / k_{2,m}$
denote the sample covariance matrices for the two data splits 
$\cD_{k_{1,m}}$ and $\cD_{k_{2,m}}$, respectively.
The next lemma gives conditional concentration of the squared prediction risk
\eqref{eq:squared-risk-onestep-ingredient}
of the one-step ingredient predictor under
the additional assumptions
\ref{asm:rmt-feat}--\ref{asm:covariance-bounded-eigvals} on $\cD_{k_{2,m}}$.

\begin{lemma}
    [Conditional concentration of squared prediction risk of one-step ingredient predictor]
    \label{lem:cond-concen-onestep-risk}
    Assume the setting of \Cref{lem:onestep-ingredient-simplifications}.
    In addition, 
    suppose 
    assumptions \ref{asm:rmt-feat}--\ref{asm:covariance-bounded-eigvals}
    hold
    for $\cD_{k_{2,m}}$.
    Let $k_{1,m}, k_{2,m}, p_{m} \to \infty$
    such that 
    $ p_m / k_{2,m} \to \phi_2 \in (0, 1) \cup (1, \infty)$
    and assume
    $\limsup \| \tbeta(\cD_{k_{1,m}}) - \beta_0 \|_2 < \infty$ almost surely.
    Then,
    we have
    \begin{align*}
        & R_{\bX_1, \bY_1, \bX_2, \bY_2}
        (\tf(\cdot; \cD_{k_{1,m}}, \cD_{k_{2,m}})) \\
        &
        \quad
        -
        (\tbeta(\cD_{k_{1,m}}) - \beta_0)^\top
        (I_p - \hSigma_2^\dagger \hSigma_2)
        \Sigma
        (I_p - \hSigma_2^\dagger \hSigma_2)
        (\tbeta(\cD_{k_{1,m}}) - \beta_0)
        - \sigma^2 \tr[\hSigma_2^\dagger \Sigma] / k_{2,m}
        - \sigma^2
        \asto 0.
    \end{align*}
\end{lemma}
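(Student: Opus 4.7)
\medskip

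\noindent\textbf{Proof proposal.} The plan is to mirror the argument used in the proof of \Cref{prop:cond-conv-mn2ls} for the MN2LS predictor, only now the ``signal'' that feeds into the one-step adjustment is the residual error vector $\tbeta(\cD_{k_{1,m}}) - \beta_0$ rather than $\beta_0$ itself, and the relevant sample covariance is $\hSigma_2$ rather than $\hSigma$. The starting point is to condition on $(\bX_1,\bY_1,\bX_2)$, treating the first-split estimator $\tbeta(\cD_{k_{1,m}})$ and the second-split design $\bX_2$ as fixed, so that the only remaining randomness in \eqref{eq:error-onestep-estimator-vs-signal} comes from the independent error vector $\beps_2$ with $\EE[\beps_2]=0_{k_{2,m}}$ and $\mathrm{Cov}(\beps_2)=\sigma^2 I_{k_{2,m}}$.

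Using the linear-model representation of the test response and the decomposition in \eqref{eq:error-onestep-estimator-vs-signal}, the conditional prediction risk equals
\begin{align*}
R_{\bX_1,\bY_1,\bX_2,\bY_2}(\tf(\cdot;\cD_{k_{1,m}},\cD_{k_{2,m}}))
&= (\tbeta(\cD_{k_{1,m}})-\beta_0)^\top
(I_p-\hSigma_2^\dagger\hSigma_2)\,\Sigma\,(I_p-\hSigma_2^\dagger\hSigma_2)
(\tbeta(\cD_{k_{1,m}})-\beta_0) \\
&\quad + \beps_2^\top \bX_2 \hSigma_2^\dagger \Sigma \hSigma_2^\dagger \bX_2^\top \beps_2 / k_{2,m}^2 \\
&\quad - 2 (\tbeta(\cD_{k_{1,m}})-\beta_0)^\top (I_p-\hSigma_2^\dagger\hSigma_2)\,\Sigma\,\hSigma_2^\dagger \bX_2^\top \beps_2/k_{2,m} \\
&\quad + \sigma^2,
\end{align*}
which splits the excess risk into a deterministic bias-type term, a quadratic-in-$\beps_2$ variance term, and a linear-in-$\beps_2$ cross term. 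So the goal reduces to showing that the cross term is $o(1)$ almost surely and that the variance term concentrates around $\sigma^2\tr[\hSigma_2^\dagger\Sigma]/k_{2,m}$.

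The next step is to establish uniform-in-$m$ operator-norm bounds on the matrices appearing in the cross and variance terms. Because $\phi_2\in(0,1)\cup(1,\infty)$, assumptions \ref{asm:rmt-feat}--\ref{asm:covariance-bounded-eigvals} applied to $\cD_{k_{2,m}}$ together with the Bai--Yin-type extremal singular value results used in the proof of \Cref{prop:cond-conv-mn2ls} yield $\limsup \|\hSigma_2\|_{\op}\le r_{\max}(1+\sqrt{\phi_2})^2$ and $\limsup\|\hSigma_2^\dagger\|_{\op}\le C<\infty$ almost surely. Combined with $\|\Sigma\|_{\op}\le r_{\max}$, $\|I_p-\hSigma_2^\dagger\hSigma_2\|_{\op}\le 1$ and the hypothesis $\limsup \|\tbeta(\cD_{k_{1,m}})-\beta_0\|_2<\infty$ a.s., exactly the computation carried out in \eqref{eq:bound-ell2normsquare-crossterm-zerostep} shows that $\|\bX_2\hSigma_2^\dagger\Sigma(I_p-\hSigma_2^\dagger\hSigma_2)(\tbeta(\cD_{k_{1,m}})-\beta_0)\|_2^2/k_{2,m}$ is eventually bounded, so an application of \Cref{lem:concen-linform} to $\beps_2$ forces the cross term to converge almost surely to zero. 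Analogously, \eqref{eq:bound-operatornorm-varianceterm-zerostep} gives a uniform bound on $\|\bX_2\hSigma_2^\dagger\Sigma\hSigma_2^\dagger\bX_2^\top/k_{2,m}\|_{\op}$, so \Cref{lem:concen-quadform} shows the quadratic form concentrates around its conditional mean $\sigma^2\tr[\bX_2\hSigma_2^\dagger\Sigma\hSigma_2^\dagger\bX_2^\top]/k_{2,m}^2$, which simplifies to $\sigma^2\tr[\hSigma_2^\dagger\Sigma]/k_{2,m}$ using $\hSigma_2^\dagger\hSigma_2\hSigma_2^\dagger=\hSigma_2^\dagger$.

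Combining these three pieces delivers the claimed asymptotic identity. The only real obstacle is that the almost-sure bound $\limsup\|\tbeta(\cD_{k_{1,m}})-\beta_0\|_2<\infty$ must hold on the \emph{joint} probability space and be independent of the $\beps_2$ randomness used to invoke \Cref{lem:concen-linform,lem:concen-quadform}; this is handled by working on the almost-sure event where the first-split estimator is bounded and the extremal eigenvalues of $\hSigma_2$ satisfy the required bounds, and observing that on this event the tail probability bounds for the cross and variance terms can be integrated (via Borel--Cantelli) to pass from conditional to unconditional almost-sure convergence. The rest of the argument is a routine bookkeeping exercise paralleling the MN2LS case, so we do not anticipate any further delicate steps beyond verifying that the operator-norm bounds are preserved when $p_m/k_{2,m}\to\phi_2$ strictly away from the interpolation threshold.
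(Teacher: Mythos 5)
Your proposal is correct and follows essentially the same route as the paper's proof: it invokes the decomposition from \Cref{lem:onestep-ingredient-simplifications}, splits the risk into the bias term, cross term, and variance term, and then reuses the bounding arguments from \eqref{eq:bound-ell2normsquare-crossterm-zerostep} and \eqref{eq:bound-operatornorm-varianceterm-zerostep} together with \Cref{lem:concen-linform} and \Cref{lem:concen-quadform} to kill the cross term and concentrate the variance term, finishing with the trace identity $\tr[\hSigma_2^\dagger\Sigma\hSigma_2^\dagger\hSigma_2]=\tr[\hSigma_2^\dagger\Sigma]$. The only cosmetic difference is that you spell out the conditioning on $(\bX_1,\bY_1,\bX_2)$ and the Borel--Cantelli integration step that the paper leaves implicit.
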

\begin{proof}
    
    The proof follows similar steps as those in the proof of 
    \Cref{prop:cond-conv-mn2ls}.
    We start by decomposing the squared prediction risk:
    \begin{equation}
        \label{eq:squared-predrisk-onestep}
        R_{\bX_1, \bY_1, \bX_2, \bY_2}(\tf(\cdot; \cD_{k_{1,m}}, \cD_{2,m}))
        = (\tbeta(\cD_{k_{1,m}}, \cD_{2,m}) - \beta_0)^\top 
        \Sigma 
        (\tbeta(\cD_{k_{1,m}}, \cD_{k_{2,m}}) - \beta_0) + \sigma^2.
    \end{equation}
    Under \ref{asm:lin-mod}, from \Cref{lem:onestep-ingredient-simplifications},
    we have
    \[
        \tbeta(\cD_{k_{1,m}}, \cD_{k_{2,m}}) - \beta_0
        = 
        (I_p - \hSigma_2^\dagger \hSigma_2) (\tbeta(\cD_{k_{1,m}}) - \beta_0)
        + \hSigma_2^{\dagger} \bX_2^\top \beps_2 / k_{2,m}.
    \]
    Thus, the first term in the squared prediction risk \eqref{eq:squared-predrisk-onestep}
    of $\tf(\cdot; \cD_{k_{1,m}}, \cD_{k_{2,m}})$ can be split into:
    \[
        (\tbeta(\cD_{k_{1,m}}, \cD_{k_{2,m}}) - \beta_0)^\top
        \Sigma
        (\tbeta(\cD_{k_{1,m}}, \cD_{k_{2,m}}) - \beta_0)
        = \bB_1 + \bC_1 + \bV_1,
    \]
    where the terms $\bB_1$, $\bC_1$, and $\bV_1$ are given as follows:
    \begin{align*}
        \bB_1
        &= 
        (\tbeta(\cD_{k_{1,m}}) - \beta_0)^\top
        (I_p - \hSigma_2^\dagger \hSigma_2)
        \Sigma
        (I_p - \hSigma_2^\dagger \hSigma_2)
        (\tbeta(\cD_{k_{1,m}}) - \beta_0),
        \\
        \bC_1
        &= 
        (\tbeta(\cD_{k_{1,m}}) - \beta_0)^\top
        (I_p - \hSigma_2^\dagger \hSigma_2) \hSigma_2^\dagger \bX_2^\top \beps_2 / k_{2,m},
        \\
        \bV_1
        &= 
        \beps_2 
        (\bX_2 \hSigma_2^\dagger \Sigma \hSigma_2^\dagger \bX_2^\top / k_{2,m}) 
        \beps_2 / k_{2,m}.
    \end{align*}
    The rest of the proof shows concentration for the terms $\bC_1$ and $\bV_1$.
    
    As argued in the proof of \Cref{prop:cond-conv-mn2ls},
    appealing to \Cref{lem:concen-linform} we have that $\bC_1 \asto 0$
    as $p_m, k_m \to \infty$ 
    such that $p_m / k_{2,m} \to \phi \in (0, 1) \cup (1, \infty)$,
    assuming $\limsup \| \tbeta(\cD_{k_{1,m}}) - \beta_0 \|_2 < \infty$.
    This is because, 
    from a bounding similar to
    \eqref{eq:bound-ell2normsquare-crossterm-zerostep},
    we have
    \[
        \limsup \| \bX_2 \hSigma_2^\dagger (I_p - \hSigma_2^\dagger \hSigma_2) (\tbeta(\cD_{k_{1,m}}) - \beta_0) \|_2^2 / k_{2,m}
        \le C \limsup \| \tbeta(\cD_{k_{1,m}} - \beta_0) \|_2^2
        \le C,
    \]
    almost surely for a constant $C < \infty$.
    Similarly, for the term $\bV_1$, 
    using 
    \Cref{lem:concen-quadform}
    along with
    the bound from \eqref{eq:bound-operatornorm-varianceterm-zerostep},
    we have $\bV_1 - \sigma^2 \tr[\hSigma_2^\dagger \Sigma] / k_{2,m} \asto 0$.
    This finishes the proof.
\end{proof}

\begin{lemma}
    [Conditional deterministic approximation of squared risk of ingredient one-step predictor]
    \label{lem:conditional-deterministic-convergence-onestep}
    Assume the setting of \Cref{lem:cond-concen-onestep-risk}.
    Let $k_{1,m}, k_{2,m}, p_{m} \to \infty$ such that
    $p_m / k_{2,m} \to \phi_2 \in (0, 1) \cup (1, \infty]$.
    Then, we have
    \[
        R_{\bX_1, \bY_1, \bX_2, \bY_2}(\tf(\cdot; \cD_{k_{1,m}}, \cD_{k_{2,m}}))
        - R^{\mathrm{g}}_{\bX_1, \bY_1}(\tf(\cdot; \cD_{k_{1,m}}))
        \asto 0,
    \]
    where $R^\mathrm{g}_{\bX_1, \bY_1}(\tf(\cdot; \cD_{k_{1,m}}))$
    is a certain generalized squared prediction risk
    of the predictor $\tf(\cdot; \cD_{k_{1,m}})$,
    fit on the first split data $\cD_{k_{1,m}}$, 
    given by
    \begin{equation}
        \label{eq:generalized_prediction_risk_onestep}
        R^{\mathrm{g}}_{\bX_1, \bY_1}(\tf(\cdot; \cD_{k_{1,m}}))
        = 
        \begin{dcases}
        (\tbeta(\cD_{k_{1,m}}) - \beta_0)^\top
        \Sigma
        (\tbeta(\cD_{k_{1,m}}) - \beta_0)
        + \sigma^2
        & \text{ if } \phi_2 = \infty \\
        (\tbeta(\cD_{k_{1,m}}) - \beta_0)^\top
        g(\Sigma)
        (\tbeta(\cD_{k_{1,m}}) - \beta_0)
        + \sigma^2 \tr[h(\Sigma)] / k_{2,m}
        + \sigma^2
        & \text{ if } \phi \in (1, \infty) \\
        \sigma^2 \frac{1}{1 - \phi_2}
        & \text{ if } \phi \in (0, 1),
        \end{dcases}
    \end{equation}
    where $g(\Sigma)$ and $h(\Sigma)$
    are matrix functions of $\Sigma$
    given explicitly as follows:
    \[
        g(\Sigma)
        = 
        (1 + \tv_g(0; \phi_2))
        (v(0; \phi_2) \Sigma + I_{p_m})^{-1}
        \Sigma
        (v(0; \phi_2) \Sigma + I_{p_m})^{-1},
        \quad
        h(\Sigma)
        =
        \tv(0; \phi_2) (v(0; \phi_2) \Sigma + I)^{-2} \Sigma^2,
    \]
    and $v(0; \phi_2)$, $\tv(0; \phi_2)$, and $\tv_g(0; \phi_2)$
    are as defined in
    \eqref{eq:fixed-point-v-onestep-main-phi2},
    \eqref{eq:tv-onestep-main-phi2},
    and
    \eqref{eq:tvg-onestep-main-phi2},
    respectively.
\end{lemma}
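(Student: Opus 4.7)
The plan is to condition on the first split $(\bX_1, \bY_1)$, so that the vector $u_m := \tbeta(\cD_{k_{1,m}}) - \beta_0$ becomes deterministic and independent of the second split $(\bX_2, \bY_2)$, with $\limsup_m \|u_m\|_2 < \infty$ almost surely by hypothesis. \Cref{lem:cond-concen-onestep-risk} already gives
\[
R_{\bX_1, \bY_1, \bX_2, \bY_2}(\tf(\cdot; \cD_{k_{1,m}}, \cD_{k_{2,m}})) - u_m^\top (I - \hSigma_2^\dagger \hSigma_2) \Sigma (I - \hSigma_2^\dagger \hSigma_2) u_m - \sigma^2 \tr[\hSigma_2^\dagger \Sigma]/k_{2,m} - \sigma^2 \asto 0
\]
for $\phi_2 \in (0,1) \cup (1, \infty)$, and the same identity in fact makes sense for $\phi_2 = \infty$ by inspection of its proof. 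So it suffices to compute the almost-sure limits of the bias quadratic form and the variance trace separately, for each range of $\phi_2$.

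For $\phi_2 \in (0,1)$, $\hSigma_2$ is eventually invertible almost surely (\ref{asm:covariance-bounded-eigvals} together with standard smallest-singular-value bounds), so $(I - \hSigma_2^\dagger \hSigma_2) = 0$ and the bias term vanishes, while the variance trace $\sigma^2 \tr[\hSigma_2^{-1} \Sigma]/k_{2,m}$ converges to $\sigma^2 \phi_2/(1-\phi_2)$ by the same Marchenko--Pastur computation used in \Cref{prop:limits-risk-functionals-mn2ls}; adding $\sigma^2$ recovers $\sigma^2/(1-\phi_2)$. For $\phi_2 \in (1, \infty)$, I invoke the matrix deterministic equivalents of \Cref{cor:limiting-resolvents-mn2ls}, exactly as in the proof of \Cref{prop:asymp-bound-ridge-main}:
\begin{align*}
(I - \hSigma_2^\dagger \hSigma_2) \Sigma (I - \hSigma_2^\dagger \hSigma_2) &\asympequi (1 + \tv_g(0;\phi_2))\, (v(0;\phi_2)\Sigma + I)^{-1} \Sigma (v(0;\phi_2)\Sigma + I)^{-1} = g(\Sigma), \\
\hSigma_2^\dagger \Sigma &\asympequi \tv(0;\phi_2)\,(v(0;\phi_2)\Sigma + I)^{-2} \Sigma^2 = h(\Sigma).
\end{align*}
Because $\|u_m\|_2$ is almost surely bounded and $u_m$ is independent of $\bX_2$, the quadratic-form and trace calculus rules collected in \Cref{lem:calculus-detequi} transfer these matrix equivalences to the scalar convergences $u_m^\top[(I - \hSigma_2^\dagger \hSigma_2) \Sigma (I - \hSigma_2^\dagger \hSigma_2) - g(\Sigma)] u_m \asto 0$ and $\tr[\hSigma_2^\dagger \Sigma - h(\Sigma)]/k_{2,m} \asto 0$; this reproduces the middle branch of \eqref{eq:generalized_prediction_risk_onestep}.

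The main obstacle is the boundary case $\phi_2 = \infty$, for which the random-matrix equivalents of \Cref{cor:limiting-resolvents-mn2ls} are not stated directly. Here I would argue ad hoc: write $P_m = \hSigma_2^\dagger \hSigma_2$ for the orthogonal projector onto the $k_{2,m}$-dimensional row space of $\bX_2$, so that $u_m^\top (I-P_m)\Sigma(I-P_m) u_m = u_m^\top \Sigma u_m - 2 u_m^\top P_m \Sigma u_m + u_m^\top P_m \Sigma P_m u_m$. Using $\|\Sigma\|_{\op} \le r_{\max}$, the last two terms are bounded by a constant multiple of $\|P_m u_m\|_2$ and its square; since $u_m$ is independent of $\bX_2$ with bounded norm and $P_m$ has rank $k_{2,m} = o(p_m)$, a direct computation of $\EE[\|P_m u_m\|_2^2 \mid u_m] = u_m^\top \EE[P_m] u_m \lesssim (k_{2,m}/p_m) \|u_m\|_2^2$ (using rotation invariance-type arguments plus the covariance bounds) together with concentration of $\|P_m u_m\|_2^2$ around its conditional mean gives $\|P_m u_m\|_2 \asto 0$, hence the bias tends to $u_m^\top \Sigma u_m$. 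For the variance, $\sigma^2 \tr[\hSigma_2^\dagger \Sigma]/k_{2,m} \le \sigma^2 r_{\max} \tr[\hSigma_2^\dagger]/k_{2,m}$, and the lower bound on the $k_{2,m}$-th non-zero singular value of $\bX_2/\sqrt{k_{2,m}}$ available when $p_m/k_{2,m} \to \infty$ (e.g.\ from \cite{bloemendal_knowles_yau_yin_2016}) forces this trace per sample to vanish. Combining the three regimes yields the claim.
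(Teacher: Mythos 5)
Your handling of the two interior regimes $\phi_2 \in (0,1)$ and $\phi_2 \in (1,\infty)$ is essentially the paper's proof: invert when $\phi_2 < 1$ and recover $\sigma^2/(1-\phi_2)$, and for $\phi_2 > 1$ apply \Cref{cor:limiting-resolvents-mn2ls} with $f(\Sigma)=\Sigma$, then transfer the matrix equivalences to the scalar quadratic form (independence of $u_m$ from $\bX_2$) and to the normalized trace (\Cref{lem:calculus-detequi}~\eqref{lem:calculus-detequi-item-trace}). Nothing to flag there.

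For $\phi_2 = \infty$, you and the paper diverge. The paper does \emph{not} push the bias/variance decomposition to the boundary; instead it returns to the estimator form $\tbeta(\cD_{k_{1,m}}, \cD_{k_{2,m}}) = \tbeta(\cD_{k_{1,m}}) + (\bX_2^\top\bX_2/k_{2,m})^\dagger \bX_2^\top(\bY_2 - \bX_2\tbeta(\cD_{k_{1,m}}))/k_{2,m}$, bounds the $\ell_2$ norm of the correction by $\|(\bX_2^\top\bX_2/k_{2,m})^\dagger \bX_2/\sqrt{k_{2,m}}\|_{\mathrm{op}} \cdot \|(\bY_2 - \bX_2\tbeta(\cD_{k_{1,m}}))/\sqrt{k_{2,m}}\|_2$, and observes the first factor is $1/s_{\min}(\bX_2/\sqrt{k_{2,m}}) \to 0$ (by the bloemendal et al.\ bound) while the second is almost surely bounded, so the correction vanishes and the risk limit is inherited directly via the reasoning of \Cref{prop:limits-infty-mn2ls}. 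This sidesteps entirely the need to control $(I - P_m)\Sigma(I - P_m)$ and $\tr[\hSigma_2^\dagger\Sigma]$ separately at the boundary.

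There is a genuine gap in your $\phi_2 = \infty$ argument as written. The claim $\|\EE[P_m]\|_{\mathrm{op}} \lesssim k_{2,m}/p_m$ ``by rotation invariance-type arguments'' does not hold under \ref{asm:rmt-feat}: $P_m$ projects onto $\Sigma^{1/2}\cdot\mathrm{span}(Z_1,\dots,Z_{k_{2,m}})$, which is not a rotationally invariant random subspace when $\Sigma \ne cI_{p_m}$, so you only know $\tr[\EE P_m] = k_{2,m}$ and cannot conclude the operator-norm bound from invariance alone. The repair is the same singular-value device you already use for the variance term: write $P_m u_m = (\bX_2/\sqrt{k_{2,m}})^\dagger(\bX_2 u_m/\sqrt{k_{2,m}})$, so $\|P_m u_m\|_2 \le s_{\min}^{-1}\,\|\bX_2 u_m/\sqrt{k_{2,m}}\|_2$; the second factor concentrates at $\|u_m\|_\Sigma \le r_{\max}^{1/2}\|u_m\|_2$ (by \Cref{lem:concen-quadform}, using independence of $u_m$ from $\bX_2$ and $\limsup\|u_m\|_2 < \infty$ a.s.), and $s_{\min}\to\infty$ when $p_m/k_{2,m}\to\infty$. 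This gives $\|P_m u_m\|_2 \asto 0$ without any isotropy assumption. With this fix your route also closes, but the paper's formulation — going through $\|\tbeta(\cD_{k_{1,m}},\cD_{k_{2,m}}) - \tbeta(\cD_{k_{1,m}})\|_2 \asto 0$ — is shorter and avoids having to verify that the \Cref{lem:cond-concen-onestep-risk} decomposition extends to $\phi_2=\infty$ (which you asserted by ``inspection'' but should really be checked: the cross term and the quadratic-form concentration use operator-norm bounds on $\hSigma_2^\dagger$ that happen to improve, rather than degrade, as $\phi_2\to\infty$).
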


\begin{proof}
    We will start with the functionals
    derived in \Cref{lem:cond-concen-onestep-risk}
    and obtain corresponding asymptotic deterministic equivalents
    conditioned on $\bX_1$ and $\bY_1$
    as $k_{1,m}, k_{2,m}, p_{m} \to \infty$,
    and $p_{m} / k_{2,m} \to \phi \in (0, 1) \cup (1, \infty]$.
    We will split into three cases depending on where $\phi$ falls.
  
  \begin{itemize}[leftmargin=*]
  \item
    \underline{$\phi_2 \in (0, 1)$}:
    When $k_{1,m}, k_{2,m}, p_{m} \to \infty$ 
    such that $p_m / k_{2,m} \to \phi_2 \in (0, 1)$,
    $(I_p - \hSigma_2^\dagger \hSigma) = 0$ almost surely
    and $\tr[\hSigma^\dagger \Sigma] / k_{2,m} - \phi_2 / (1 - \phi_2) \asto 0$,
    as argued in the proof of \Cref{prop:limits-risk-functionals-mn2ls}.

    \item  
    \underline{$\phi \in (1, \infty)$}:
    Next we consider the case when $k_{1,m}, k_{2,m}, p_m \to \infty$,
    such that $p_m / k_{2,m} \to \phi \in (1, \infty)$.
    Consider the bias functional
    $
        (\tbeta(\cD_{k_{1,m}}) - \beta_0)^\top 
        (I_p - \hSigma_2^\dagger \hSigma_2) 
        \Sigma 
        (I_p - \hSigma_2^\dagger \hSigma_2) 
        (\tbeta(\cD_{k_{1,m}}) - \beta_0).
    $
    Invoking Part 1 of \Cref{cor:limiting-resolvents-mn2ls}
    with $f(\Sigma) = \Sigma$,
    as $k_{2,m}, p_m \to \infty$
    such that $p_m / k_m \to \phi_2 \in (1, \infty)$,
    we have
    \[
        (I_p - \hSigma_2^\dagger \hSigma_2)
        \Sigma
        (I_p - \hSigma_2^\dagger \hSigma_2)
        \asympequi
        (1 + \tv_g(0; \phi_2))
        (v(0; \phi_2) \Sigma + I_{p_m})^{-1}
        \Sigma
        (v(0; \phi_2) \Sigma + I_{p_m})^{-1},
    \]
    where $v(0; \phi_2)$ and $\tv_g(0; \phi_2)$
    are as defined in
    \eqref{eq:fixed-point-v-onestep-main-phi2}
    and \eqref{eq:tvg-onestep-main-phi2},
    respectively.
    Now,
    note that the vector $(\tbeta(\cD_{k_{1,m}}) - \beta_0)$
    is independent of $\hSigma_2^\dagger$.
    Thus,
    from the definition of asymptotic equivalence,
    we have
    \[
        (\tbeta(\cD_{k_{1,m}}) - \beta_0)^\top
        (I_p - \hSigma_2^\dagger \hSigma_2)
        \Sigma
        (I_p - \hSigma_2^\dagger \hSigma_2)
        (\tbeta(\cD_{k_{1,m}}) - \beta_0)
        -
        (\tbeta(\cD_{k_{1,m}}) - \beta_0)^\top
        g(\Sigma)
        (\tbeta(\cD_{k_{1,m}}) - \beta_0)
        \asto 0.
    \]
    Consider now the variance resolvent $\hSigma_2^\dagger \Sigma$.
    From Part 2 of \Cref{cor:limiting-resolvents-mn2ls}
    with $f(\Sigma) = \Sigma$,
    as $k_{2,m}, p_m \to \infty$
    such that $p_m / k_{2,m} \to \phi_2 \in (1, \infty)$,
    we have
    \[
        \hSigma_2^\dagger \Sigma
        \asympequi
        \tv(0; \phi_2)
        (v(0; \phi_2) \Sigma + I_{p_m})^{-2} \Sigma^2.
    \]
    Hence, using 
    \Cref{lem:calculus-detequi}~\eqref{lem:calculus-detequi-item-trace},
    we have
    \[
        \sigma^2 \tr[\hSigma_2^\dagger \Sigma] / k_{2,m}
        - \sigma^2 \tr[\tv(0; \phi_2) (v(0; \phi_2) \Sigma + I_{p_m})^{-2} \Sigma^2] / k_{2,m}
        \asto 0.
    \]
 
     \item
    \underline{$\phi_2 = \infty$}:
    Finally, consider the case when $k_{1,m}, k_{2,m}, p_{m} \to \infty$
    and $p_m / k_{2, m} \to \infty$.
    We start by expressing the ingredient one-step estimator 
    \eqref{eq:onestep-ingredient-decomp} as
    \[
        \tbeta(\cD_{k_{1,m}}, \cD_{k_{2,m}})
        = \tbeta(\cD_{k_{1,m}})
        + (\bX_2^\top \bX_2 / k_{2,m})^{\dagger} \bX_2^\top (\bY_2 - \bX_2 \tbeta(\cD_{k_{1,m}})) / k_{2,m}.
    \]
    Using triangle inequality,
    note that
    \begin{align*}
        \| 
            \tbeta(\cD_{k_{1,m}}, \cD_{k_{2,m}})
            - \tbeta(\cD_{k_{1,m}})
        \|_2
        &= 
        \|
            (\bX_2^\top \bX_2 / k_{2,m})^{\dagger}
            \bX_2^\top (\bY_2 - \bX_2 \tbeta(\cD_{k_{1,m}})) / k_{2,m}
        \|_2 \\
        &\le 
        \| (\bX_2^\top \bX_2 / k_{2,m})^{\dagger} \bX_2 / \sqrt{k_{2,m}} \|_{\mathrm{op}}
        \| \bY_2 - \bX_2 \tbeta(\cD_{k_{1,m}}) / \sqrt{k_{2,m}} \|_{2}.
    \end{align*}
    Under the setting of \Cref{lem:cond-concen-onestep-risk},
    the second term in the display above is almost surely bounded.
    Hence, following the proof of \Cref{prop:limits-infty-mn2ls},
    it follows that
    $\| \tbeta(\cD_{k_{1,m}}, \cD_{k_{2,m}}) - \tbeta(\cD_{k_{1,m}}) \|_2 \asto 0$.
    From the analogous reasoning in the proof of \Cref{prop:limits-infty-mn2ls},
    this in turn implies that
    \[
        R_{\bX_1, \bY_1, \bX_2, \bY_2}(\tf(\cdot; \cD_{k_{1,m}}, \cD_{k_{2,m}}))
        - (\tbeta(\cD_{k_{1,m}}) - \beta_0)^\top \Sigma (\tbeta(\cD_{k_{1,m}}) - \beta_0)
        -  \sigma^2
        \asto 0.
    \]
    \end{itemize}
   
   This completes all three cases and finishes the proof. 
\end{proof}

\subsection
{Proof of \Cref{lem:one-step-predrisk-decomposition}}

The idea of the proof is to use the conditional deterministic risk approximation
derived in \Cref{lem:conditional-deterministic-convergence-onestep}
and obtain a limiting expression for the deterministic approximation
in terms of the assumed limiting distribution \eqref{eq:generalied-predrisk-distribution-baseprocedure}.

We start by noting that
\[
    \| \tbeta(\cD_{k_{1,m}}) - \beta_0 \|_2^2
    \le r_{\min}^{-1} \| \tbeta(\cD_{k_{1,m}}) - \beta_0 \|_{\Sigma}^2.
\]
Thus, under the assumption that
there exists a deterministic approximation
$R^\deter(\phi_1; \tf)$
to the conditional risk of $\tf(\cdot; \cD_{k_{1,m}})$
such that
$R(\tf(\cdot; \cD_{k_{1,m}})) \pto R^\deter(\phi_1; \tf)$
as $k_{1,m}, p_m \to \infty$
and $p_m / k_{1,m} \to \phi_1$,
for $\phi_1$ satisfying
$R^\deter(\phi_1; \tf) < \infty$,
it follows that $\limsup_{} \| \tbeta(\cD_{k_{1,m}}) - \beta_0 \|_2 < \infty$.
We can now invoke
\Cref{lem:conditional-deterministic-convergence-onestep}.
Let $k_{2,m} \to \infty$ such that 
$p_m / k_{2,m} \to \phi_2 \in (0, 1) \cup (1, \infty]$.
We will split into various cases depending on $\phi_2$.

\begin{enumerate}
    \item 
    The limit for $\phi_2 = \infty$ is clear
    from 
    the $\phi_2 = \infty$ case in
    \eqref{eq:generalized_prediction_risk_onestep}.
    \item
    When $\phi_2 \in (1, \infty)$,
    we need to obtain 
    limiting expressions for
    the quantities
    $(\tbeta(\cD_{k_{1,m}}) - \beta_0)^\top g(\Sigma) (\tbeta(\cD_{k_{1,m}}) - \beta_0)$
    and
    $\tr[h(\Sigma)] / k_{2,m}
    = \tr[\tv(0; \phi_2) \Sigma^2 (v(0; \phi_2) \Sigma + I)^{-2}] / k_{2,m}$
    in terms of the limiting distributions $Q$ and $H$.
    
    For the former, we start by expanding the quadratic form:
    \begin{align}
        &
        (\tbeta(\cD_{k_{1,m}}) - \beta_0)^\top
        g(\Sigma)
        (\tbeta(\cD_{k_{1,m}}) - \beta_0) \nonumber \\
        &=
        (\tbeta(\cD_{k_{1,m}}) - \beta_0)^\top
        W g(R) W^\top
        (\tbeta(\cD_{k_{1,m}}) - \beta_0)
        \nonumber \\
        &=
        \sum_{i=i}^{p_m}
        ((\tbeta(\cD_{k_{1,m}}) - \beta_0)^\top w_i)^2
        g(r_i) 
        \nonumber \\
        &=
        \sum_{i=1}^{p_m} ((\tbeta(\cD_{k_{1,m}}) - \beta_0)^\top w_i)^2 r_i
        \sum_{i=1}^{p_m} 
        \frac{((\tbeta(\cD_{k_{1.m}}) - \beta_0)^\top w_i)^2 r_i \cdot g(r_i) / r_i}{\sum_{i=1}^{p_m} ((\tbeta(\cD_{k_{1,m}}) - \beta_0)^\top w_i)^2 r_i} 
        \nonumber \\
        &=
        (R(\tf(\cdot; \cD_{1,m})) - \sigma^2)
        \int  \widetilde{g}(r) \, \mathrm{d}\widehat{Q}_n(r),
        \label{eq:gen_predrisk_lim_1}
    \end{align}
    where $\widetilde{g}(r)$ is given by
    \[
        \widetilde{g}(r)
        = \frac{g(r)}{r}
        = (1 + \tv_g(0; \phi_2))
        \frac{1}{(v(0; \phi_2) r + 1)^2}.
    \]
    Under the assumption that $\widehat{Q}_n \dto Q$ in probability,
    we have
    \begin{equation}
        \label{eq:gen_predrisk_lim_innerintegral}
        \int \widetilde{g}(r) \, \mathrm{d}\widehat{Q}_n(r)
        \pto
        \int \widetilde{g}(r) \, \mathrm{d}Q(r)
        =
        \int
        \frac
        {(1 + \tv_g(0; \phi_2))}
        {(v(0; \phi_2) r + 1)^2}
        \,
        \mathrm{d}Q(r).
    \end{equation}
    Observe that $\widetilde{g}$ is continuous.
    Since $R(\tf(\cdot; \cD_{k_{1,m}})) \asto R^\deter(\psi_1; \tf)$,
    from \eqref{eq:gen_predrisk_lim_1} 
    and \eqref{eq:gen_predrisk_lim_innerintegral}, 
    we have
    \begin{align}
        (\tbeta(\cD_{k_{1,m}}) - \beta_0)^\top
        g(\Sigma)
        (\tbeta(\cD_{k_{1,m}}) - \beta_0)
        &\pto
        (R^\deter(\phi_1; \tf) - \sigma^2)
        (1 + \tv_g(0; \phi_2))
        \int \frac{1}{(v(0; \phi_2) r + 1)^2} \, \mathrm{d}Q(r)
        \nonumber \\
        &= 
        R^\deter(\phi_1; \tf)
        \Upsilon_b(\phi_1, \phi_2)
        - \sigma^2 \Upsilon_b(\phi_1, \phi_2),
        \label{eq:gen_predrisk_bias_lim_onestep}
    \end{align}
    where $\Upsilon_b(\phi_1, \phi_2)$
    is as defined in \eqref{eq:def-upsilonb-upsilonv}.
    
    For the latter,
    using \Cref{lem:calculus-detequi}~\eqref{lem:calculus-detequi-item-trace}
    and noting that the integrand is continuous,
    we have
    \begin{align}
        \label{eq:gen_predrisk_var_lim_onestep}
        \tr[h(\Sigma)] / k_{2,m}
        =
        \frac{p_m}{k_{2,m}}
        \tv(0; \phi_2)
        \int
        \frac{r^2}{(1 + v(0; \phi_2) r)^2}
        \, \mathrm{d}H_{p_m}(r)
        &\asto
        \phi_2
        \tv(0; \phi_2)
        \int \frac{\rho^2}{(v(0; \phi_2) r + 1)^2} \, \mathrm{d}H(r) \nonumber \\
        &= \tv_g(0; \phi_2),
    \end{align}
    where $\tv_g(0; \phi_2)$
    is as defined in \eqref{eq:tvg-onestep-main-phi2}.
    
    Putting 
    \eqref{eq:generalized_prediction_risk_onestep},
    \eqref{eq:gen_predrisk_bias_lim_onestep},
    and 
    \eqref{eq:gen_predrisk_var_lim_onestep}
    together, 
    the result follows for $\phi_2 \in (1, \infty)$.
    \item
    The final case of $\phi_2 \in (0, 1)$
    follows analogous argument 
    as in the proof of \Cref{prop:limits-risk-functionals-mn2ls}.
\end{enumerate}

This completes the proof.

\subsection
{Proof of \Cref{cor:verif-onestep-program}}

We will show that there exists 
a deterministic risk approximation 
$R^\deter(\cdot, \cdot; \tf) 
: (0, \infty] \times (0, \infty] \to [0, \infty]$
to the conditional prediction risk $R(\tf(\cdot; \cD_{k_{1,m}}, \cD_{k_{2,m}}))$ 
of the one-step ingredient predictor $\tf(\cdot; \cD_{k_{1,m}}, \cD_{k_{2,m}})$
that satisfies the three-point program 
\ref{prog:onestep-cont-conv}--\ref{prog:onestep-divergence-closedset}.
In particular, we will show that the following $R^\deter(\cdot, \cdot; \tf)$,
that is a continuation of \eqref{eq:detapprox-onestep-general},
satisfies the required conditions:
\[
    R^\deter(\phi_1, \phi_2; \tf)
    = 
    \begin{dcases}
        R^\deter(\phi_1; \tf)
        & \text{ if } \phi_2 = \infty \\
        (R^\deter(\phi_1; \tf) - \sigma^2) 
        \Upsilon_b(\phi_1, \phi_2) 
        + \sigma^2 (1 - \Upsilon_b(\phi_1, \phi_2))
        + \sigma^2 \tv_g(0; \phi_2)
        & \text{ if } \phi_2 \in (1, \infty) \\
        \infty
        & \text{ if } \phi_2 = 1 \\
        \sigma^2 \frac{\phi_2}{1 - \phi_2}
        & \text{ if } \phi_2 \in (0, 1),
    \end{dcases}
\]
where $R^\deter(\cdot; \tf)$
is the assumed deterministic risk approximation
to the conditional prediction risk $R(\tf(\cdot; \cD_{k_{1,m}}))$
of the base predictor $\tf(\cdot; \cD_{k_{1,m}})$,
and $\Upsilon_b(\cdot; \cdot)$ and $\tv_g(0; \cdot)$ 
are as defined in \eqref{eq:def-upsilonb-upsilonv}.
Below we split the three verifications:

\begin{enumerate}
    \item 
    Let 
    $\Phi_1^\infty := \{ \phi_1 \in (0, \infty] : R^\deter(\phi_1; \tf) = \infty \}$
    denote the set of limiting aspect ratios greater than one, where the deterministic risk approximation
    to the base procedure is $\infty$.
    By the hypothesis of \Cref{lem:one-step-predrisk-decomposition},
    we have $R(\tf(\cdot; \cD_{k_{1,m}})) \pto R^\deter(\phi_1; \tf)$
    as $k_{1,m}, p_m \to \infty$ 
    and $p_m / k_{1,m} \to \phi_1 \in (0, \infty] \setminus \Phi_1^\infty$.
    Now observe that $R^\deter(\phi_1, \phi_2; \tf) = \infty$
    only at $\Phi^\infty := \{ (\phi_1, \phi_2) : \phi_1 \in \Phi_1^\infty \text{ or } \phi_2 =  1 \}$.
    This is because 
    $\Upsilon_b(\phi_1, \phi_2), \tv_g(0; \phi_2) < \infty$
    for $\phi_2 \in (1, \infty)$
    from 
    \Cref{lem:fixed-point-v-properties}~\eqref{lem:fixed-point-v-properties-Upsilonb-properties}.
    Note from the conclusion of \Cref{lem:one-step-predrisk-decomposition}
    that
    $R(\tf(\cdot; \cD_{k_{1,m}}, \cD_{k_{2,m}}))
    \pto R^\deter(\phi_1, \phi_2; \tf)$
    as $k_{1,m}, k_{2,m}, p_{m} \to \infty$
    and $(p_m / k_{1,m}, p_m / k_{2,m}) \to (\phi_1, \phi_2) \in (0, \infty] \times (0, \infty] 
    \setminus \Phi^\infty$,
    or in other words, continuous convergence 
    of the risk to the deterministic approximation 
    holds for all limiting $(\phi_1, \phi_2)$
    for which $R^\deter(\phi_1, \phi_2; \tf) < \infty$.
    This verifies \ref{prog:onestep-cont-conv}.
    \item
    From the argument above,
    we have $R^\deter(\phi_1, \phi_2; \tf) = \infty$
    over $\Phi^\infty$.
    Pick any $(\phi_1, \phi_2) \in \Phi^\infty$.
    We will show that 
    $R^\deter(\phi_1', \phi_2'; \tf) \to \infty$
    as $(\phi_1', \phi_2') \to (\phi_1, \phi_2)$.
    From the definition of $\Phi^\infty$,
    the point $(\phi_1, \phi_2)$ falls into
    either of the following two cases:
    \begin{itemize}
        \item $\phi_2 = 1$:
        In this case,
        observe that
        $R^\deter(\phi_1', \phi_2') \to \infty$
        as $(\phi_1', \phi_2') \to (\phi_1, 1^{+})$
        because $\lim_{\phi_2' \to 1^{-}} \phi_2' / (1 - \phi_2') = \infty$,
        and $R^\deter(\phi_1', \phi_2') \to \infty$
        as $(\phi_1', \phi_2') \to (\phi_1, 1^{+})$
        because, from 
        \Cref{lem:fixed-point-v-properties}~\eqref{lem:fixed-point-v-properties-Upsilonb-properties},
        $\lim_{\phi_2' \to 1^{+}} \tv_g(0; \phi_2') = \infty$.
        Thus, 
        $R^\deter(\phi_1', \phi_2') \to \infty$ 
        as $(\phi_1', \phi_2') \to (\phi_1, \phi_2)$.
        \item $\phi_1 \in \Phi_1^\infty$:
        In this case,
        $R^\deter(\phi_1') \to \infty$
        as $\phi_1' \to \phi_1$
        from the assumption that $R^\deter(\cdot; \tf)$
        satisfies \ref{prog:zerostep-cont-infty}.
        Because $\Upsilon_b(\phi_1', \phi_2'), \tv_g(0; \phi_2') > 0$
        over $(\phi_1', \phi_2') \in (0, \infty] \times (1, \infty]$
        from arguments in
        \Cref{lem:fixed-point-v-properties}~\eqref{lem:fixed-point-v-properties-item-tvg-properties}
        and
        \Cref{lem:fixed-point-v-properties}~\eqref{lem:fixed-point-v-properties-Upsilonb-properties},
        it follows that
        \[
        \lim_{(\phi_1', \phi_2') \to (\phi_1, \phi_2)} 
        R^\deter(\phi_1', \phi_2'; \tf)
        = 
        \lim_{\phi_1' \to \phi_1} 
        R^\deter(\phi_1'; \tf) 
        = \infty.
        \]
        Thus, $R^\deter(\phi_1', \phi_2') \to \infty$
        as $(\phi_1', \phi_2') \to (\phi_1, \phi_2)$.
    \end{itemize}
    Therefore,
    whenever $(\phi_1', \phi_2') \to (\phi_1, \phi_2)$,
    we have $R^\deter(\phi'_1, \phi'_2; \tf) \to \infty$,
    and thus $R^\deter(\cdot, \cdot; \tf)$
    satisfies \ref{prog:onestep-cont-infty}.
    \item
    Finally, the set of $(\phi_1, \phi_2)$
    such that $R^\deter(\phi_1, \phi_2; \tf) = \infty$
    is $\Phi^\infty$.
    Because $\Phi^\infty$
    is product of two sets each of which is closed in $\RR$,
    this set is closed in $\RR^2$.
    Therefore, $R^\deter(\cdot, \cdot; \tf)$
    satisfies \ref{prog:onestep-divergence-closedset}.
\end{enumerate}

Put together,
all of \ref{prog:onestep-cont-conv}--\ref{prog:onestep-divergence-closedset}
hold, and this in turn implies that
$\tf(\cdot; \cD_{k_{1,m}}, \cD_{k_{2,m}})$
satisfies \eqref{eq:rn-deterministic-approximation-reduced-onestep-prop-asymptotics}.
This finishes the proof.

\subsection
{Proof of \Cref{prop:verif-riskprofile-mnlsbase-mnlsonestep}}

It suffices to verify the hypothesis of
\Cref{lem:one-step-predrisk-decomposition}
and then appeal to \Cref{cor:verif-onestep-program}.
We will use \Cref{cor:limiting-resolvents-mn2ls} 
along with the
Portmanteau theorem
to certify existence of a limiting distribution $Q$
assumed in \Cref{lem:one-step-predrisk-decomposition}.
The form of $Q$ is defined through
limiting formulas for the generalized prediction risks
of the base predictor.

Let $f$ be any continuous and bounded function.
We will show that
$
    \int f(r) \, \mathrm{d}\widehat{Q}_n(r)
$
converges to a deterministic limit
that is a function of $H$ and $G$,
and show existence of $Q$ through this limit.
We start by noting that
\begin{equation}
    \label{eq:gen_predrisk_mn2ls_expression}
    \int f(r) \, \mathrm{d}\widehat{Q}_n(r)
    = 
    (\tbeta(\cD_{k_{1,m}}) - \beta_0)^\top
    f(\Sigma)
    (\tbeta(\cD_{k_{1,m}}) - \beta_0),
\end{equation}
where $f(\Sigma) = W f(R) W^\top$,
and $f(R)$ is a matrix obtained by applying $f$ component-wise to the diagonal entries
of $R$.
We will now obtain
a limiting expression for the term on the right hand side of 
\eqref{eq:gen_predrisk_mn2ls_expression},
which has the form of a generalized prediction risk of $\tbeta(\cD_{k_{1,m}})$.
Similar to the proof of
\Cref{prop:asymp-bound-ridge-main},
we will first obtain a deterministic equivalent for the generalized
prediction risk.
Following similar steps as in the proof of \Cref{prop:cond-conv-mn2ls},
we have that
\begin{equation}
    \label{eq:gen_predrisk_condconcen}
    (\tbeta(\cD_{k_{1,m}}) - \beta_0)^\top
    f(\Sigma)
    (\tbeta(\cD_{k_{1,m}}) - \beta_0)
    -
    \beta_0^\top
    (I_p - \hSigma_1^\dagger \hSigma_1)
    f(\Sigma)
    (I_p - \hSigma_1^\dagger \hSigma_1)
    \beta_0
    + \tr[\hSigma_1^\dagger f(\Sigma)] / k_{1,m}
    \asto 0.
\end{equation}
Now, using first part of \Cref{cor:limiting-resolvents-mn2ls},
we can write
\[
    (I_p - \hSigma_1^\dagger \hSigma_1)
    f(\Sigma)
    (I_p - \hSigma_1^\dagger \hSigma_1)
    \asympequi
    (1 + \tv_g(0; \phi_1))
    (v(0; \phi_1) \Sigma + I_{p_m})^{-1}
    \Sigma
    (v(0; \phi_1) \Sigma + I_{p_m})^{-1}.
\]
Using Property 4 of \Cref{sec:calculus_deterministic_equivalents},
this then yields
\begin{equation}
    \label{eq:gen_predrisk_bias_lim}
    \beta_0^\top
    (I_p - \hSigma_1^\dagger \hSigma_1)
    f(\Sigma)
    (I_p - \hSigma_1^\dagger \hSigma_1)
    \beta_0
    \asto
    (1 + \tv_g(0; \phi_1))
    \int \frac{f(r)}{(v(0; \phi_1) r + 1)^2} \, \mathrm{d}G(r).
\end{equation}
Similarly,
using second part of \Cref{cor:limiting-resolvents-mn2ls},
we have
\[
    \hSigma_1^\dagger f(\Sigma)
    \asympequi
    \tv(0; \phi_1)
    (v(0; \phi_1) \Sigma + I_{p_m})^{-2}
    \Sigma
    f(\Sigma).
\]
Hence,
appealing to Property 4 of \Cref{sec:calculus_deterministic_equivalents} again,
we have
\begin{equation}
    \label{eq:gen_predrisk_var_lim}
    \tr[\hSigma_1^\dagger f(\Sigma)] / k_{1,m}
    \asto
    \phi_1
    \tv(0; \phi_1)
    \int \frac{r f(r)}{(v(0; \phi_1) r + 1)^2} \, \mathrm{d}H(r).
\end{equation}
Therefore,
from 
\eqref{eq:gen_predrisk_mn2ls_expression}--\eqref{eq:gen_predrisk_var_lim},
it follows that
\[
    \int f(r) \, \mathrm{d}\widehat{Q}_n(r)
    \asto
    (1 + \tv_g(0; \phi_1))
    \int \frac{f(r)}{(v(0; \phi_1) r + 1)^2} \, \mathrm{d}G(r)
    +
    \phi_1
    \tv(0; \phi_1)
    \int \frac{r f(r)}{(v(0; \phi_1) r + 1)^2} \, \mathrm{d}H(r).
\]
Observe
that this defines a distribution $Q$
because
one can take $f(r) = e^{itr} = \cos(tr) + i \sin(tr)$,
which then implies
convergence of the characteristic function
at all points.
This finishes the proof.
To get more insight 
into the risk behaviour of the ingredient 
one-step predictor,
we can also write out an explicit
formula for the deterministic approximation
$R^\deter(\cdot, \cdot; \tf_{\mnls})$.
We will do so below.

For the particular functional
$R(\tf(\cdot; \cD_{k_{1,m}}, \cD_{k_{2,m}}))$,
we have a specific $f$ given by
\[
    f(r)
    = (1 + \tv_g(0; \phi_2))
    \frac{r}{(v(0; \phi_2) r + 1)^2}.
\]
Thus,
the final expression for $R^\deter(\phi_1, \phi_2)$
can be written explicitly as follows:
\[
    R^\deter(\phi_1, \phi_2)
    =
    \begin{dcases}
        R^\deter(\min\{\phi_1, \phi_2\})
        & \text{ if } 
        \phi_1 = \infty \text{ or } \phi_2 = \infty
        \\
        \rho^2
        (1 + \tv_g(0; \phi_1, \phi_2))
        (1 + \tv_g(0; \phi_2))
        \int \frac{r}{(1 + v(0; \phi_1) r)^2 (1 + v(0; \phi_2) r)^2} \, \mathrm{d}G(r) \\
        \quad + ~ 
        \sigma^2 
            (1 + \tv_g(0; \phi_2))
            \phi_1
            \tv(0; \phi_1)
            \int \frac{r}{(v(0; \phi_1) r + 1)^2 (v(0; \phi_2) r + 1)^2} \, \mathrm{d}H(r) \\
        \qquad
            + ~
            \sigma^2
            \left(
            \phi_2
            \tv(0; \phi_2)
            \int \frac{r}{(1 + v(0; \phi_2) r)^2} \, \mathrm{d}H(r)
            + 1
            \right)
        & \text{ if } (\phi_1, \phi_2) \in (1, \infty) \times (1, \infty) \\
        \sigma^2
        \left(
            \phi_2
            \tv(0; \phi_2)
            \int \frac{r}{(1 + v(0; \phi_2) r)^2}
            \, \mathrm{d}H(r)
            +
            1
        \right)
        & \text{ if } (\phi_1, \phi_2) \in (0, 1) \times (1, \infty) \\
        \sigma^2
        \frac{1}{1 - \phi_2}
        & \text{ if } (\phi_1, \phi_2) \in (0, \infty) \times (0, 1),
    \end{dcases}
    \]
where
$v(0; \phi)$ is as defined in \eqref{eq:fixed-point-v-mn2ls-1},
$\tv(0; \phi)$ is as defined in \eqref{eq:fixed-point-v'-mn2ls-1},
$\tv_g(0; \phi)$ is as defined in \eqref{eq:def-tvg-mn2ls-1},
and
$\tv_g(0; \phi_1, \phi_2)$ is as defined below:
\[
    \tv_g(0; \phi_1, \phi_2)
    =
    \ddfrac
    {(1 + \tv_g(0; \phi_2)) \phi_1 \int \frac{r^2}{(1 + v(0; \phi_2) r)^2 (1 + v(0; \phi_1) r)^2} \, \mathrm{d}H(r) }
    {\frac{1}{v(0; \phi_1)^2}  - \phi_1 \int \frac{r^2}{(1 + v(0; \phi_1) r)^2} \, \mathrm{d}H(r) }.
\]
Here, $R^\deter(\cdot)$ is $R^\deter(\cdot; \tf_{\mnls})$
as defined in \eqref{eq:detapprox-formula-zerostep-mn2ls}.

\subsection
{Proof of \Cref{prop:verif-riskprofile-mn1lsbase-mn2lsonestep}}

Verification of the hypothesis of \Cref{lem:one-step-predrisk-decomposition}
is easy in this case  because $\Sigma = I_p$.
Observe that 
under \ref{asm:iso-gau-feat-mn1ls},
the distribution $\hQ_n$
is simply a point mass at $1$.
Thus,
the hypothesis of \Cref{lem:one-step-predrisk-decomposition}
is trivially satisfied.
Moreover,
we can explicitly write
expressions for the functions
$\tv_g(0; \cdot)$
and
$\Upsilon_b(\cdot; \cdot)$.
Towards that end, we will first obtain
expressions for the ingredient functions
$v(0; \cdot)$ and $\tv(0; \cdot)$.

\begin{itemize}
    \item 
    \underline{$v(0; \phi_2)$}:
    The fixed-point equation
    \eqref{eq:fixed-point-v-onestep-main-phi2}
    can be solved explicitly since $H$ is a point mass at $1$.
    The fixed-point equation 
    in this case simplifies to
    \begin{equation}
        \label{eq:fixed-point-v-iso-phi2}
        \frac{1}{v(0; \phi_2)}
        =
        \phi_2 \frac{1}{v(0; \phi_2) + 1}.
    \end{equation}
    Solving \eqref{eq:fixed-point-v-iso-phi2}
    for $v(0; \phi_2)$, we get
    \begin{equation}
        \label{eq:v-phi2-isotropic}
        v(0; \phi_2) = \frac{1}{\phi_2 - 1},
        \quad
        \text{and}
        \quad
        1 + v(0; \phi_2)
        = \frac{\phi_2}{\phi_2 - 1}.
    \end{equation}
    \item
    \underline{$\tv(0; \phi_2)$}:
    Using \eqref{eq:v-phi2-isotropic},
    we can compute the inverse of $\tv(0; \phi_2)$
    per \eqref{eq:tv-onestep-main-phi2} as
    \[
        \tv(0; \phi_2)^{-1}
        =
        (\phi_2 - 1)^2
        -
        \phi_2
        \frac{(\phi_2 - 1)^2}{\phi_2^2}
        = 
        (\phi_2 - 1)^2
        - \frac{(\phi_2 - 1)^2}{\phi_2}
        = (\phi_2 - 1)^2 \frac{\phi_2 - 1}{\phi_2}
        = \frac{(\phi_2 - 1)^3}{\phi_2}.
    \]
    Thus, we have 
    \begin{equation}
        \label{eq:tv-phi2-isotropic}
        \tv(0; \phi_2) = \frac{\phi_2}{(\phi_2 - 1)^3},
        \quad
        \text{and}
        \quad
        \tv(0; \phi_2) \phi_2
        = \frac{\phi_2^2}{(\phi_2 - 1)^3}.
    \end{equation}
    
\end{itemize}

\noindent Using \eqref{eq:v-phi2-isotropic} and \eqref{eq:tv-phi2-isotropic},
we can explicitly write out
expressions
for 
$\Upsilon_b(\phi_1, \phi_2)$
and
$\tv_g(0; \phi_2)$.
\begin{itemize}
    \item 
    \underline{$\tv_g(0; \phi_2)$}:
    Substituting \eqref{eq:v-phi2-isotropic} and \eqref{eq:tv-phi2-isotropic}
    into
    \eqref{eq:tvg-onestep-main-phi2},
    we obtain
    \begin{equation}
        \label{eq:tvg-phi2-isotropic}
        \tv_g(0; \phi_2)
        =
        \frac{\phi_2^2}{(\phi_2 - 1)^3}
        \frac{(\phi_2 - 1)^2}{\phi_2^2}
        = \frac{1}{\phi_2 - 1},
        \quad
        \text{and}
        \quad
        (1 + \tv_g(0; \phi_2))
        = \frac{\phi_2}{\phi_2 - 1}.
    \end{equation}
    \item
    \underline{$\Upsilon_b(\phi_1, \phi_2)$}:
    Substituting \eqref{eq:v-phi2-isotropic} and \eqref{eq:tv-phi2-isotropic}
    into \eqref{eq:def-upsilonb-upsilonv},
    we get
    \begin{equation}
        \label{eq:Upsilonb-phi2-isotropic}
            \Upsilon_b(\phi_1, \phi_2)
            =
            \frac{\phi_2}{\phi_2 - 1}
            \frac{(\phi_2 - 1)^2}{\phi_2^2}
            =
            \frac{\phi_2 - 1}{\phi_2},
            \quad
            \text{and}
            \quad
            1 - \Upsilon_b(\phi_1, \phi_2)
            = \frac{1}{\phi_2}.
    \end{equation}
    Observe that since the distribution $Q$ does not depend on $\phi_1$ in this case,
    $\Upsilon_b(\phi_1, \phi_2)$ in turn also does not depend on $\phi_1$.
\end{itemize}

Therefore,
using \eqref{eq:tvg-phi2-isotropic} and \eqref{eq:Upsilonb-phi2-isotropic},
the deterministic risk approximation
from \eqref{eq:detapprox-onestep-general}
simplifies in this case as follows:
\begin{equation*}
    R^\deter(\phi_1, \phi_2; \tf)
    \to
    \begin{dcases}
         \rho^2 + \sigma^2
        & \text{ if }
        \phi_1 = \phi_2 = \infty \\
        R^\deter(\phi_1)
        & \text{ if }
        \phi_2 = \infty \\
        \rho^2 \left(1 - \frac{1}{\phi_2}\right) 
        + \sigma^2 \left(\frac{1}{\phi_2 - 1}\right)
        + \sigma^2
        & \text{ if }
        \phi_1 = \infty \\
        R^\deter(\phi_1)
        \left(1 - \frac{1}{\phi_2}\right)
        + \sigma^2 \left(\frac{1}{\phi_2 - 1}\right)
        + \sigma^2
        & \text{ if } (\phi_1, \phi_2) \in (1, \infty) \times (1, \infty) \\
        \sigma^2 \left(\frac{\phi_1}{1 - \phi_1}\right)
        \left(1 - \frac{1}{\phi_2}\right)
        + \sigma^2 \left(\frac{1}{\phi_2 - 1}\right)
        + \sigma^2
        & \text{ if } (\phi_1, \phi_2) \in (0, 1) \times (1, \infty) \\
        \sigma^2 \left(\frac{\phi_2}{1 - \phi_2}\right)
        + \sigma^2
        & \text{ if } (\phi_1, \phi_2) \in (0, \infty) \times (0, 1).
    \end{dcases}
\end{equation*}
Here, $R^\deter(\cdot)$
is $R^\deter(\cdot; \tf_{\mnla})$ as defined in \eqref{eqn:def-R-mn1ls}.

\section{Technical helper lemmas, proofs, and miscellaneous details}
\label{sec:useful-lemmas}

In this section,
we gather various technical lemmas along with their proofs,
and other miscellaneous details.
Specific pointers to which lemmas are used in which proofs
are provided at the start of each section.

\subsection
{Lemmas for verifying space-filling properties of discrete optimization grids}

In this section,
we collect supplementary lemmas that are used in the proofs of 
\Cref{thm:asymptotic-risk-tuned-zero-step,thm:asymptotic-risk-tuned-one-step}
in \Cref{sec:proofs-riskmonotonization-zerostep,sec:proofs-riskmonotonization-onestep},
respectively.

\begin{lemma}
    [Verifying space-filling property of the discrete grid used in the zero-step procedure]
    \label{lem:spacefilling-grid-zerostep-general}
    Let $\{ p_n \}$, $\{ m_{1,n} \}$,
    $\{ m_{2,n} \}$
    are three sequences of positive integers such that
    $m_{2,n} \le m_{1,n}$
    for $n \ge 1$.
    Suppose 
    \[
        \frac{p_n}{m_{1,n}} \to \gamma \in (0, \infty)
        \quad
        \text{and}
        \quad
        \frac{m_{2,n}}{m_{1,n}} \to 0
    \]
    as $n \to \infty$.
    Define
    a sequence of grids $\cG_n$
    as follows:
    \[
        \cG_n
        :=
        \left\{
            \frac{p_n}{m_{1,n} - k m_{2,n}}:
            1 \le k \le
            \left\lceil \frac{m_{1,n}}{m_{2,n}} - 2 \right\rceil
        \right\}.
    \]
    Then,
    for any $\zeta^\star \in [\gamma, \infty]$,
    $\Pi_{\cG_n}(\zeta^\star) \to \zeta^\star$
    as $n \to \infty$,
    where $\Pi_{\cG_n}(y) = \argmin_{x \in \cG_n} |y - x|$
    is the point in the grid $\cG_n$ closest to $y$.
    In particular,
    in the context of
    \Cref{alg:zero-step},
    taking $m_{1,n} = n_\train$
    and $m_{2,n} = \lfloor n^{\nu} \rfloor$ for $\nu \in (0, 1)$,
    we get
    the aspect ratios used in \Cref{alg:zero-step}
    ``converge'' to $[\gamma, \infty]$
    when $n_\train / n \to 1$ under \ref{asm:prop_asymptotics}.
\end{lemma}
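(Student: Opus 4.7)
My approach is to analyze the discrete grid by locating its smallest and largest elements and then controlling the gap between consecutive grid points. Write $g_{n,k} := p_n/(m_{1,n} - k\, m_{2,n})$ for $1 \le k \le K_n := \lceil m_{1,n}/m_{2,n} - 2 \rceil$, so that $\cG_n = \{g_{n,k}\}_{k=1}^{K_n}$. The hypothesis $p_n/m_{1,n}\to\gamma\in(0,\infty)$ together with $m_{2,n}/m_{1,n}\to 0$ also gives $m_{2,n}/p_n = (m_{2,n}/m_{1,n})(m_{1,n}/p_n)\to 0$, which will be the engine of the gap estimate.

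First, I would verify that the extreme grid points converge to the extremes of $[\gamma,\infty]$. The smallest grid point satisfies
\[
g_{n,1} \;=\; \frac{p_n/m_{1,n}}{1 - m_{2,n}/m_{1,n}} \;\longrightarrow\; \gamma,
\]
while the bound $K_n m_{2,n} \ge m_{1,n} - 2 m_{2,n}$ gives $m_{1,n} - K_n m_{2,n} \le 2 m_{2,n}$, hence
\[
g_{n,K_n} \;\ge\; \frac{p_n}{2 m_{2,n}} \;=\; \frac{p_n/m_{1,n}}{2\, m_{2,n}/m_{1,n}} \;\longrightarrow\; \infty.
\]
The latter immediately settles the case $\zeta^\star = \infty$ (under the natural extended-real metric), since $\Pi_{\cG_n}(\infty) \ge g_{n,K_n} \to \infty$; the former settles $\zeta^\star = \gamma$ since $|\Pi_{\cG_n}(\gamma) - \gamma| \le |g_{n,1} - \gamma| \to 0$.

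The main calculation is the consecutive-gap identity
\[
g_{n,k+1} - g_{n,k} \;=\; \frac{p_n\, m_{2,n}}{(m_{1,n}-(k+1)m_{2,n})(m_{1,n}-k\, m_{2,n})} \;=\; \frac{m_{2,n}}{p_n}\, g_{n,k}\, g_{n,k+1}.
\]
For a finite target $\zeta^\star \in (\gamma,\infty)$, the endpoint convergences above imply $g_{n,1} < \zeta^\star < g_{n,K_n}$ for $n$ large, so there is a largest index $k_n$ with $g_{n,k_n}\le \zeta^\star$. Solving the gap identity for $g_{n,k_n+1}$ yields $g_{n,k_n+1} = g_{n,k_n}/(1 - g_{n,k_n}\, m_{2,n}/p_n)$, which together with $g_{n,k_n}\le \zeta^\star$ and $m_{2,n}/p_n\to 0$ gives $g_{n,k_n+1}\le 2\zeta^\star$ for all sufficiently large $n$. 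Plugging this back into the gap identity bounds $g_{n,k_n+1} - g_{n,k_n} \le 2(\zeta^\star)^2\, m_{2,n}/p_n \to 0$. Since $\Pi_{\cG_n}(\zeta^\star)\in\{g_{n,k_n}, g_{n,k_n+1}\}$ and both points are within this gap of $\zeta^\star$, we obtain $|\Pi_{\cG_n}(\zeta^\star) - \zeta^\star|\to 0$, completing the proof.

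I do not anticipate any substantive obstacle; the only care needed is the order of operations in the last paragraph, where one must first bound $g_{n,k_n+1}$ in terms of $g_{n,k_n}$ (via the rewritten gap identity) before bounding the gap itself, so as to avoid a circular use of the gap estimate on an a priori unbounded term.
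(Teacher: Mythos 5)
Your proof is correct, and while it shares the paper's basic strategy of locating $\zeta^\star$ between two consecutive grid points and bounding the gap, the mechanism for bounding the gap is genuinely different and cleaner. The paper's case-3 argument derives the limit $k\,m_{2,n}/m_{1,n}\to 1-\gamma/\zeta^\star$ from a squeeze between two inequalities obtained by manipulating $g_{n,k}\le\zeta^\star\le g_{n,k+1}$, and then substitutes this limit back into the product form of the gap to compute its limit explicitly. You instead take the gap identity $g_{n,k+1}-g_{n,k}=\tfrac{m_{2,n}}{p_n}\,g_{n,k}\,g_{n,k+1}$, use the defining property of $k_n$ to get $g_{n,k_n}\le\zeta^\star$ for free, and then — the key move — rearrange the identity to $g_{n,k_n+1}=g_{n,k_n}/(1-\tfrac{m_{2,n}}{p_n}g_{n,k_n})$ so that $g_{n,k_n+1}$ is bounded by $2\zeta^\star$ eventually, yielding the bound $2(\zeta^\star)^2\,m_{2,n}/p_n\to0$ without ever computing $\lim k_n m_{2,n}/m_{1,n}$. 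This buys a shorter argument with no intermediate limit evaluation, at the cost of the algebraic reshuffling; the paper's route makes the asymptotic location of the bracketing index more transparent, which could be useful in a refinement but is not needed here. Your observation at the end — that one must bound $g_{n,k_n+1}$ \emph{before} invoking the gap bound to avoid circularity — is exactly the right caution. You should also note explicitly (as you flag with the parenthetical about the extended-real metric) that the projection of $\zeta^\star=\infty$ is interpreted as the largest grid point, since $\argmin_x|y-x|$ is not literally well-defined at $y=\infty$; the paper's proof has the same implicit convention.
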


\begin{proof}
    We will consider different cases depending
    on where $\zeta^\star \in [\gamma, \infty]$ lands. See \Cref{fig:zerostep_grid_projection_illustration}.

    \begin{figure}[!ht]
        \centering
        \includegraphics[width=0.9\columnwidth]{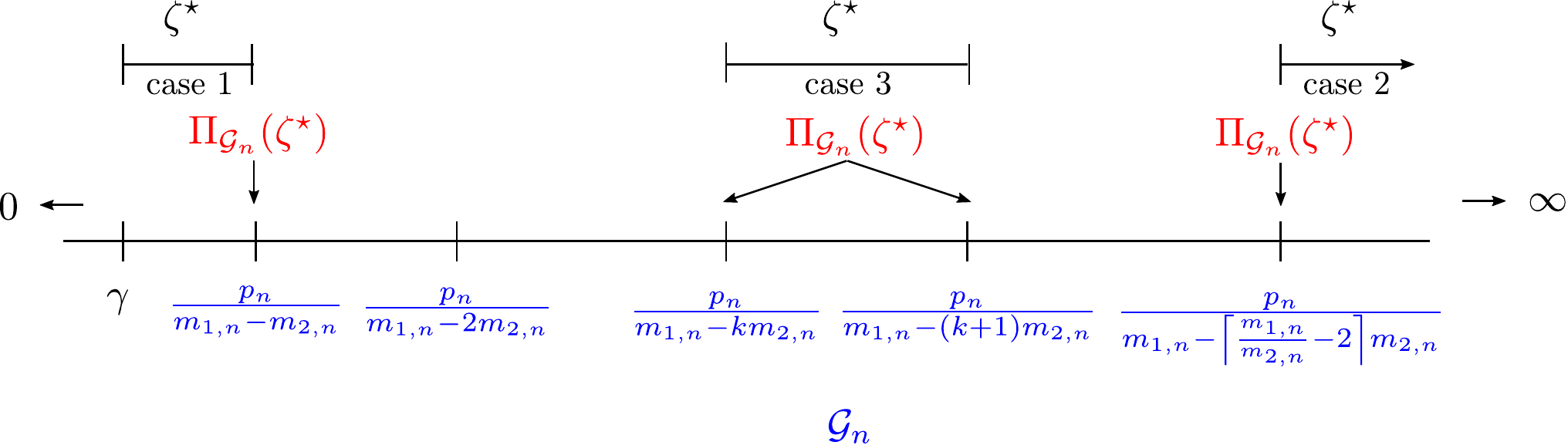}
        \caption{Illustration of different cases of 
        $\zeta \in [\gamma, \infty]$ and the corresponding projection $\Pi_{\cG_n}(\zeta^\star)$.}
        \label{fig:zerostep_grid_projection_illustration}
    \end{figure}    
    
    \begin{enumerate}
    \item
    Consider the first case when
    \[
        \gamma
        \le
        \zeta^\star
        \le
        \frac{p_n}{m_{1,n} - m_{2,n}}.
    \]
    In this case,
    $\Pi_{\cG_n}(\zeta^\star)$
    is simply the first point in the grid.
    Observe that in this case
    \[
        \Pi_{\cG_n}(\zeta^\star)
        - \zeta^\star
        \le
        \frac{p_n}{m_{1,n} - m_{2,n}}
        - \gamma
        =
        \ddfrac
        {\frac{p_n}{m_{1,n}}}
        {1 - \frac{m_{2,n}}{m_{1,n}}}
        - \gamma
        \to
        \gamma - \gamma
        = 0
    \]
    as $n \to \infty$
    under the assumptions that $p_n / m_{1,n} \to \gamma$
    and $m_{2,n} / m_{1,n} \to 0$.
    
    \item
    Consider the second case when
    \[
        \ddfrac{p_n}
        {m_{1,n} - 
        \left\lceil
            \frac{m_{1,n}}{m_{2,n}} - 2
        \right\rceil
        }
        \le
        \zeta^\star 
        \le \infty.
    \]
    In this case,
    $\Pi_{\cG_n}(\zeta^\star)$
    is simply the last point in the grid.
    We will show eventually the only $\zeta^\star$
    in this case is $\zeta^\star = \infty$.
    Note that $p_n/(m_{1,n} - k m_{2,n})$ increases with $k \ge 0$. If $\zeta^\star = \infty$, then $\Pi_{\mathcal{G}_n}(\zeta^\star) = p_n/(m_{1,n} - k^\star  m_{2,n})$ for $k^* = \lceil m_{1,n}/ m_{2,n} - 2\rceil$. Hence, it suffices to prove that $p_n/(m_{1,n} - k^{\star} m_{2,n}) \to \infty$ as $n\to\infty$. This follows from the fact that
    \[
    \frac{m_{1,n}}{ m_{2,n}} - \left\lceil \frac{m_{1,n}}{ m_{2,n}} - 2\right\rceil \le 2,\quad\mbox{and}\quad \frac{p_n}{m_{1,n} - k^{\star} m_{2,n}} = \frac{p_n}{ m_{2,n}(m_{1,n}/ m_{2,n} - \lceil m_{1,n}/ m_{2,n} - 2\rceil)} \ge \frac{p_n}{2 m_{2,n}} \to \infty = \zeta^*,
    \]
    as $n\to\infty$ 
    and
    $p_n / m_{1,n} \to \gamma \in (0, \infty)$.
    
    \item 
    Consider the third case when
    \begin{equation}
        \label{eq:spacefilling-zerostep-general-case3}
        \frac{p_n}{m_{1,n} - k m_{2,n}}
        \le
        \zeta^\star
        \le
        \frac{p_n}{m_{1,n} - (k+1) m_{2,n}}
        \quad
        \text{for some }
        1 \le k \le
        \left\lceil \frac{m_{1,n}}{m_{2,n}} - 2 \right\rceil.
    \end{equation}
    From the first inequality in 
    \eqref{eq:spacefilling-zerostep-general-case3},
    we have
    \begin{equation}
        \label{eq:spacefilling-zerostep-general-case3-lb}
        \frac{p_n}{m_{1,n} - k m_{2,n}} 
        \le \zeta^\star
        \implies
        \frac{p_n}{m_{1,n} \zeta^\star}
        \le
        1 - k \frac{m_{2,n}}{m_{1,n}}
        \implies
        k \frac{m_{2,n}}{m_{1,n}}
        \le
        1 - \frac{p_n}{m_{1,n} \zeta^\star}.
    \end{equation}
    Similarly,
    from the second inequality of 
    \eqref{eq:spacefilling-zerostep-general-case3},
    we have
    \begin{equation}
        \label{eq:spacefilling-zerostep-general-case3-ub}
        \frac{p_n}{m_{1,n} \zeta^\star}
        \ge
        1 - \frac{(k+1) m_{2,n}}{m_{1,n}}
        \implies
        k \frac{m_{2,n}}{m_{1,n}}
        \ge
        1 - \frac{p_n}{m_{1,n} \zeta^\star}
        - \frac{m_{2,n}}{m_{1,n}}.
    \end{equation}
    The upper and lower bounds from
    \eqref{eq:spacefilling-zerostep-general-case3-ub}
    and
    \eqref{eq:spacefilling-zerostep-general-case3-lb}
    together imply that
    \[
        1 - \frac{p_n}{m_{1,n} \zeta^\star} - \frac{m_{2,n}}{m_{1,n}}
        \le
        \frac{k m_{2,n}}{m_{1,n}}
        \le
        1 - \frac{p_n}{m_{1,n} \zeta^\star}.
    \]
    Because $\lim_{n \to \infty} m_{2,n} / m_{1,n} = 0$,
    we conclude that
    \begin{equation}
        \label{eq:spacefilling-zerostep-general-ratio-lim}
        \lim_{n \to \infty}
        \frac{k m_{2,n}}{m_{1,n}}
        = 1 - \frac{\gamma}{\zeta^\star}
        \in (0, 1).
    \end{equation}
    Now, note that
    since $\Pi_{\cG_n}(\zeta^\star)$
    is either of the two points of the grid partition,
    we have
    \begin{align*}
        | \Pi_{\cG_n}(\zeta^\star) - \zeta^\star |
        &\le
        \frac{p_n}{m_{1,n} - (k+1) m_{2,n}}
        - \frac{p_n}{m_{1,n} - k m_{2,n}} \\
        &=
        \frac{p_n}{m_{1,n} - (k+1) m_{2,n}}
        \frac{m_{2,n}}{m_{1,n} - k m_{2,n}} \\
        &=
        \ddfrac
        {\frac{p_n}{m_{1,n}}}
        {1 - \frac{(k+1) m_{2,n}}{m_{1,n}}}
        \ddfrac
        {\frac{m_{2,n}}{m_{1,n}}}
        {1 - \frac{k m_{2,n}}{m_{1,n}}} \\
        &\to
        \ddfrac
        {\gamma}
        {1 - \left( 1 - \frac{\gamma}{\zeta^\star} \right)}
        \ddfrac
        {0}
        {\left(1 - \left( 1 - \frac{\gamma}{\zeta^\star} \right) \right)}
        = 0,
    \end{align*}
    as $n \to \infty$
    and $p_n / m_{1,n} \to \gamma$
    and $m_{2,n} / m_{1,n} \to 0$,
    where the limiting in the convergences on the last line
    follow from \eqref{eq:spacefilling-zerostep-general-ratio-lim}.
    \end{enumerate}
    
    This completes all the cases.
    
    Finally, observe that
    for \Cref{alg:zero-step},
    when $m_{2,n} = \lfloor n^\nu \rfloor$ for some $\nu \in (0, 1)$
    and $m_{1,n} = n_\train$ such that $n_\train / n \to 1$
    as $n \to \infty$,
    $p_n / m_{1,n} \to \gamma \in (0, \infty)$,
    and $m_{2,n} / m_{1,n} \to 0$,
    and hence the statement follows.
    
\end{proof}

\begin{lemma}
    [Verifying space-filling property of the discrete grid 
    used in the one-step procedure]
    \label{lem:spacefilling-grid-onestep-general}
    Let $\{ p_n \}$, $\{ m_{1,n} \}$, $\{ m_{2,n} \}$
    are three sequences of positive integers
    such that $m_{2,n} \le m_{1,n}$
    for $n \ge 1$,
    and $n \to \infty$,
    \[
        \frac{p_n}{m_{1,n}} \to \gamma \in (0, \infty)
        \quad
        \text{and}
        \quad
        \frac{m_{2,n}}{m_{1,n}} \to 0.
    \]
    Define a sequence of grids $\cG_n$ as follows:
    \[
        \cG_n
        :=
        \left\{
            \left( 
                \frac{p_n}{m_{1,n} - k_1 m_{2,n}},
                \frac{p_n}{k_2 m_{2,n}}
            \right)
            :
            k_1 \in
            \left\{
                2, \dots,
                \left\lceil
                    \frac{m_{1,n}}{m_{2,n}}
                    - 2
                \right\rceil
            \right\},
            k_2 \in
            \{0, \dots, k_1 - 1 \}
        \right\}.
    \]
    Let $\zeta_1^\star$
    and $\zeta_2^\star$
    be two non-negative real numbers
    such that
    \[
        \frac{1}{\zeta_1^\star}
        + \frac{1}{\zeta_2^\star}
        \le \frac{1}{\gamma}.
    \]
   Let $\Pi_{\cG_n}(\zeta_1^\star, \zeta_2^\star)
   = (\pi_{1,n}, \pi_{2,n})$ 
   denote the projection of the point $(\zeta_1^\star, \zeta_2^\star)$
   on the grid $\cG_n$ with respect to the $\ell_1$ distance.
    Then,
    $\pi_{1,n} \to \zeta_1^\star$
    and $\pi_{2,n} \to \zeta_2^\star$
    as $n \to \infty$.
    In particular,
    in the context of \Cref{alg:one-step},
    taking $m_{1,n} = n_\train$,
    $m_{2,n} = \lfloor n^\nu \rfloor$ for some $\nu \in (0, 1)$,
    we get the aspect ratios 
    used in \Cref{alg:one-step}
    ``converge'' to
    the set $\{ (\zeta_1, \zeta_2) : \zeta_1^{-1} + \zeta_2^{-1} \le \gamma^{-1} \}$
    when $n_\train / n \to 1$ under \ref{asm:prop_asymptotics}.
\end{lemma}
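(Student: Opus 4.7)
The plan is to reduce this two-dimensional projection problem to coordinate-wise approximations akin to those carried out in \Cref{lem:spacefilling-grid-zerostep-general}, exploiting the fact that the feasibility constraint $1/\zeta_1^\star + 1/\zeta_2^\star \le 1/\gamma$ matches, in the limit, the constraint $k_2 \le k_1 - 1$ imposed on the grid indices. Concretely, for a grid point indexed by $(k_1, k_2)$, the two coordinates are $p_n / (m_{1,n} - k_1 m_{2,n})$ and $p_n / (k_2 m_{2,n})$, and one checks that
\[
    \frac{m_{1,n} - k_1 m_{2,n}}{p_n} + \frac{k_2 m_{2,n}}{p_n}
    = \frac{m_{1,n} - (k_1 - k_2) m_{2,n}}{p_n} \le \frac{m_{1,n}}{p_n},
\]
so every grid point satisfies an analogue of the feasibility constraint with $\gamma$ replaced by $p_n / m_{1,n} \to \gamma$. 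The target of the projection is therefore inside (or on the boundary of) the feasible region determined by the grid.

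First, I would isolate the ``interior'' case in which $1/\zeta_1^\star + 1/\zeta_2^\star < 1/\gamma$ strictly and both $\zeta_1^\star, \zeta_2^\star \in (\gamma, \infty)$. In this case, the natural continuous choices are $k_1^\ast(n) = (1 - \gamma / \zeta_1^\star) m_{1,n} / m_{2,n}$ and $k_2^\ast(n) = (\gamma / \zeta_2^\star) m_{1,n} / m_{2,n}$, driven by matching $p_n / (m_{1,n} - k_1 m_{2,n}) = \zeta_1^\star$ and $p_n / (k_2 m_{2,n}) = \zeta_2^\star$ asymptotically (using $p_n / m_{1,n} \to \gamma$). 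Rounding to the nearest integers produces a candidate grid index $(\widehat{k}_1, \widehat{k}_2)$; the strict inequality $\gamma / \zeta_1^\star + \gamma / \zeta_2^\star < 1$ together with $m_{2,n} / m_{1,n} \to 0$ guarantees that $\widehat{k}_2 \le \widehat{k}_1 - 1$ and $\widehat{k}_1 \le \lceil m_{1,n}/m_{2,n} - 2 \rceil$ for all large $n$, so this candidate belongs to the grid. Then, repeating the coordinate-wise gap estimate from the third case of the proof of \Cref{lem:spacefilling-grid-zerostep-general} (one step in the $k_1$-direction changes the first coordinate by at most $O(m_{2,n} / m_{1,n}) \to 0$, and similarly for the $k_2$-direction) gives convergence of both coordinates of the candidate to $(\zeta_1^\star, \zeta_2^\star)$, and the $\ell_1$-projection can only be closer.

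Next, I would handle the degenerate cases by the same strategy but with small modifications. If $\zeta_2^\star = \infty$, the natural choice is $\widehat{k}_2 = 0$ (which is allowed since $k_2 \in \{0, \dots, k_1 - 1\}$) combined with the one-dimensional construction of \Cref{lem:spacefilling-grid-zerostep-general} for $\zeta_1^\star$. If $\zeta_1^\star = \infty$, the required $\widehat{k}_1$ sits at the right endpoint $\lceil m_{1,n}/m_{2,n} - 2 \rceil$, as in the second case of that lemma, and then $\widehat{k}_2$ is chosen to approximate $\zeta_2^\star$. The boundary case $1/\zeta_1^\star + 1/\zeta_2^\star = 1/\gamma$ with finite coordinates is slightly delicate because the ideal continuous choice gives $\widehat{k}_2 = \widehat{k}_1$, violating $k_2 \le k_1 - 1$ by a single unit; I would perturb $\widehat{k}_2$ down to $\widehat{k}_1 - 1$ (or perturb $\widehat{k}_1$ up by one, whichever keeps both indices inside the allowed ranges), and observe that a one-unit shift translates into an $O(m_{2,n}/m_{1,n}) = o(1)$ shift in the corresponding grid coordinate, so convergence of the candidate to $(\zeta_1^\star, \zeta_2^\star)$ is preserved.

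The main obstacle I anticipate is the bookkeeping for the boundary case $1/\zeta_1^\star + 1/\zeta_2^\star = 1/\gamma$: here the feasibility inequality is tight, and one must check that both the discrete constraint $k_2 \le k_1 - 1$ and the upper bound $k_1 \le \lceil m_{1,n}/m_{2,n} - 2 \rceil$ remain satisfied after the unit perturbation, while keeping the error of order $m_{2,n}/m_{1,n} \to 0$. Once this is verified in each subcase (both coordinates finite, one coordinate infinite, both infinite), the specialization to \Cref{alg:one-step} is immediate: under \ref{asm:prop_asymptotics} with $n_\train / n \to 1$, the choices $m_{1,n} = n_\train$ and $m_{2,n} = \lfloor n^\nu \rfloor$ for $\nu \in (0,1)$ satisfy $p_n / m_{1,n} \to \gamma$ and $m_{2,n} / m_{1,n} \to 0$, so the general statement applies verbatim.
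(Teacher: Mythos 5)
Your overall strategy matches the paper's: pick an explicit grid index pair $(\widehat{k}_1,\widehat{k}_2)$ by inverting the map $(k_1,k_2)\mapsto\bigl(p_n/(m_{1,n}-k_1 m_{2,n}),\,p_n/(k_2 m_{2,n})\bigr)$ at $(\zeta_1^\star,\zeta_2^\star)$, show the resulting grid point converges to $(\zeta_1^\star,\zeta_2^\star)$, and use the fact that the projection can only be at least as close. The paper does this by choosing
$k_1^\star=\lceil (m_{1,n}-p_n/\zeta_1^\star)/m_{2,n}\rceil$, $k_2^\star=\lfloor p_n/(\zeta_2^\star m_{2,n})\rfloor$, invoking \Cref{lem:spacefilling-grid-zerostep-general} for the first coordinate, and verifying feasibility directly; you do the same with additional case splitting (interior, one coordinate infinite, boundary), which is a reasonable and arguably cleaner organization.

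There is one genuine imprecision, in the boundary case $1/\zeta_1^\star + 1/\zeta_2^\star = 1/\gamma$ with both coordinates finite. You claim the ideal continuous choice makes $\widehat{k}_2 = \widehat{k}_1$ so that the constraint $k_2 \le k_1-1$ is violated ``by a single unit.'' That is not correct in general. For the continuous choices one has
\[
    k_1 - k_2
    = \frac{m_{1,n} - p_n\bigl(1/\zeta_1^\star + 1/\zeta_2^\star\bigr)}{m_{2,n}}
    = \frac{m_{1,n} - p_n/\gamma}{m_{2,n}}
    = \frac{m_{1,n}}{m_{2,n}}\Bigl(1 - \frac{p_n}{\gamma m_{1,n}}\Bigr).
\]
Here $1 - p_n/(\gamma m_{1,n}) \to 0$ while $m_{1,n}/m_{2,n}\to\infty$, so the product can take either sign and can diverge in magnitude (e.g.\ $m_{1,n}=n$, $m_{2,n}=\lfloor n^{1/2}\rfloor$, $p_n = \gamma n + \lceil n^{3/4}\rceil$). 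Thus the number of units by which the discrete constraint is violated can grow with $n$, and a single-unit perturbation does not, in general, restore feasibility. Your fix nevertheless survives with a more careful accounting: each unit shift of $k_2$ moves the second coordinate by $O(m_{2,n}/m_{1,n})$, and the number of units needed is $O\bigl(|m_{1,n}-p_n/\gamma|/m_{2,n}\bigr)$, so the cumulative shift is $O\bigl(|1 - p_n/(\gamma m_{1,n})|\bigr) = o(1)$. You should state this total-shift bound explicitly rather than relying on the (incorrect) one-unit claim; once that is done, the argument closes.
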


\begin{proof}
    The proof follows the general strategy employed in the proof \Cref{lem:spacefilling-grid-zerostep-general}
    and uses the result as ingredient.
    
    Fix any point $(\zeta_1^\star, \zeta_2^\star)$
    that satisfies the constraint
    \[
        \frac{1}{\zeta_1^\star}
        + \frac{1}{\zeta_2^\star}
        \le \frac{1}{\gamma}.
    \]
    We will construct a pair $(g_1^\star, g_2^\star)$
    in the grid $\cG_n$ such that
    $(g_1^\star, g_2^\star) \to (\zeta_1^\star, \zeta_2^\star)$.
    Because
    \[
      \| \Pi_{\cG_n}(\zeta_1^\star, \zeta_2^\star)
      - (\zeta_1^\star, \zeta_2^\star) \|_{\ell_1}
     \le \| (g_1^\star, g_2^\star) - (\zeta_1^\star, \zeta_2^\star) \|_{\ell_1},
    \]
    such a choice shows the desired result. 

    Define
      \[
          (k_1^\star, k_2^\star)
          =
          \left(
                \left\lceil \frac{m_{1,n} - p_n / \zeta_1^\star}{m_{2,n}} \right\rceil,
                \left\lfloor \frac{p_n / \zeta_2^\star}{m_{2,n}} \right\rfloor 
          \right),
          \quad
          \text{and}
          \quad
            (g_1^\star, g_2^\star)
            =
            \left(
                \frac{p}{m_{1,n} - k_1^\star m_{2,n}} ,
                \frac{p}{k_2^\star m_{2,n}}
            \right).
      \]
    By appealing to 
    \Cref{lem:spacefilling-grid-zerostep-general},
    it follows that $\pi_{1,n} \to \zeta_1^\star$
    as $n \to \infty$.
    Note that the value of $k_1^\star$
    is exactly the right point of the grid interval in 
    \Cref{fig:zerostep_grid_projection_illustration} in
    the proof of
    \Cref{lem:spacefilling-grid-zerostep-general}.
    Since $\zeta_1^\star \in [\gamma, \infty]$
    and the first coordinate of the grid $\mathcal{G}_n$ is the same
    as that in 
    \Cref{lem:spacefilling-grid-zerostep-general},
    we have that $g_1^\star$ is a feasible choice and $g_1^\star \to \zeta_1^\star$.
    It remains to verify the conditions for $g_2^\star$.

    Note that when $\zeta_2^\star = \infty$, $k_2^\star = 0$,
    which satisfies the desired condition. Assume that $\zeta_2^\star < \infty$.
    We verify below that $k_2^\star < k_1^\star$ so that $k_2^\star$
    is a feasible choice and that
    \[
        \frac{k_2^\star m_{2,n}}{p_n}
        \to \frac{1}{\zeta_2^\star},
    \]
    which implies the desired convergence of the reciprocal.
    
    Observe that
    \begin{align*}
        k_2^\star
        \le \frac{p_n}{\zeta_2^\star m_{2,n}} 
        \le \frac{p_n}{m_{2,n}}
        \left( \frac{m_{1,n}}{p_n} - \frac{1}{\zeta_1^\star} \right) 
        \le \frac{m_{1,n} - p_n / \zeta_1^\star}{m_{2,n}} = k_1^\star.
    \end{align*}
    This verifies the first condition.
    For the second part, consider
    \begin{align*}
        0 \le \left|
            \frac{k_2^\star m_{2,n}}{p_n} - \frac{1}{\zeta_2^\star}
        \right|
        =
        \left|
            \left\lfloor \frac{p_n/\zeta_2^\star}{m_{2,n}} \right\rfloor
            \frac{m_{2,n}}{p_n} - \frac{1}{\zeta_2^\star}
        \right| 
        \le 
        \frac{m_{2,n}}{p_n} 
        \to 0
    \end{align*}
    under \ref{asm:prop_asymptotics} as $n \to \infty$.
    
    Finally, note that
    for \Cref{alg:one-step},
    when $m_{2,n} = \lfloor n^\nu \rfloor$ for some $\nu \in (0, 1)$
    and $m_{1,n} = n_\train$ such that $n_\train / n \to 1$
    as $n \to \infty$,
    $p_n / m_{1,n} \to \gamma \in (0, \infty)$,
    and $m_{2,n} / m_{1,n} \to 0$,
    and therefore the statement follows.
        
\end{proof}

\subsection
[Lemmas for restricting arbitrary sequences to specific convergent sequences]
{Lemmas for restricting arbitrary sequences to specific convergent sequences}

In this section,
we collect supplementary lemmas that are used in the proofs of
\Cref{lem:rn-deterministic-approximation-4-prop-asymptotics,lem:deterministic-approximation-reduction-onestep}
in \Cref{sec:proofs-riskmonotonization-zerostep,sec:proofs-riskmonotonization-onestep},
respectively.

\begin{lemma}
    [From subsequence convergence
    to sequence convergence]
    \label{lem:subsequence-to-sequence}
    Let $\{ a_m \}_{m \ge 1}$ be a sequence in $\RR$.
    Suppose for any subsequence $\{ a_{m_k} \}_{k \ge 1}$,
    there is a further subsequence $\{ a_{m_{k_l}} \}_{l \ge 1}$
    such that $\lim_{m \to \infty} a_{m_{k_l}} = 0$.
    Then $\lim_{m \to \infty} a_m = 0$.
\end{lemma}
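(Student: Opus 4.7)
The plan is to prove the contrapositive via a standard extraction argument. Suppose, for contradiction, that $a_m \not\to 0$. Negating the definition of convergence, there exists some $\varepsilon_0 > 0$ such that for every $N \ge 1$, one can find an index $m \ge N$ with $|a_m| \ge \varepsilon_0$. Using this recursively (choosing indices strictly larger than the previously selected one), I would extract a subsequence $\{a_{m_k}\}_{k \ge 1}$ satisfying $|a_{m_k}| \ge \varepsilon_0$ for all $k$.

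Next I would apply the hypothesis to this particular subsequence: by assumption, there must exist a further subsequence $\{a_{m_{k_l}}\}_{l \ge 1}$ with $a_{m_{k_l}} \to 0$ as $l \to \infty$. However, every term of this further subsequence still satisfies $|a_{m_{k_l}}| \ge \varepsilon_0 > 0$, so taking $l \to \infty$ yields $0 = \lim_{l \to \infty} |a_{m_{k_l}}| \ge \varepsilon_0 > 0$, which is a contradiction. This forces the original supposition to fail, giving $a_m \to 0$.

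There is no real obstacle here; the argument is entirely routine. The only point worth stating cleanly is the recursive construction of $\{m_k\}$, which uses the negation of convergence together with the Archimedean property to guarantee that the chosen indices can be made strictly increasing (otherwise one might pick the same index repeatedly). Since the lemma is used as a bridge between pointwise subsequential behavior and full sequential convergence in the proofs of \Cref{lem:rn-deterministic-approximation-4-prop-asymptotics} and \Cref{lem:deterministic-approximation-reduction-onestep}, the proof should be presented as a self-contained two- or three-sentence argument with the contradiction and the recursive extraction made explicit.
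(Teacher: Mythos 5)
Your proof is correct. It differs in style from the paper's: you argue by contradiction, negating convergence to extract a subsequence $\{a_{m_k}\}$ with $|a_{m_k}| \ge \varepsilon_0 > 0$, then observe that no further subsequence of it can approach zero. The paper instead sets $\alpha = \limsup_m a_m$ and $\beta = \liminf_m a_m$, picks subsequences attaining each, notes that any further subsequence of a convergent sequence shares its limit, and invokes the hypothesis to force $\alpha = \beta = 0$. Both are standard textbook arguments and equally short. A minor advantage of your version is that it is cleanly self-contained over $\RR$, whereas the limsup/liminf route needs a silent word about the case $\alpha$ or $\beta$ being $\pm\infty$ (if, say, $\alpha = +\infty$ one picks a subsequence diverging to $+\infty$, which likewise has no further subsequence tending to $0$); in the paper's application the sequence is a probability and hence bounded, so the point never bites. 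Either proof is appropriate; there is no gap in yours.
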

\begin{proof}
   Let $\alpha := \limsup_{m \to \infty} a_m$
   and $\beta := \liminf_{m \to \infty} a_m$.
   This means that there is subsequence $\{ a_{m_k} \}_{k \ge 1}$
   such that $\lim_{m \to \infty }a_{m_k} = \alpha$.
   Similarly, there is a (different) subsequence $\{ a_{m_l} \}_{l \ge 1}$
   such that $\lim_{m \to \infty} a_{m_l} = \beta$.
   But since every converging sequence has a further subsequence
   that converges to the same limit,
   the lemma follows.
\end{proof}

\begin{lemma}
   [Limit of minimization over finite grids in a metric space]
   \label{lem:grid-minimization-metric-space}
   Let $(M, d)$ be a metric space,
   and $C$ be a subset of $M$.
   Suppose $h : M \to \RR$ is a function
   that attains its infimum over $C$ at $\zeta^\star$.
   Let $\cG$ be a finite set of points in $C$.
   Then,
   the following inequalities hold:
   \begin{equation}\label{eq:grid-minimization-inequalities}
        0
        \le \min_{x \in \cG} h(x) - \inf_{x \in C} h(x)
        \le h(\Pi_\cG(\zeta^\star)) - h(\zeta^\star),
   \end{equation}
   where $\Pi_\cG(y) = \argmin_{x \in \cG} d(x, y)$
   is the point in the grid closest to $y$.
   Consequently,
   if $\cG_n$ is a sequence of grids such that
   $\Pi_{\cG_n}(\zeta^\star) \to \zeta^\star$,
   and $h(\cdot)$ is continuous at $\zeta^\star$,
   then
   \begin{equation}\label{eq:grid-minimization-limit}
        \min_{x \in \cG_n} h(x)
        - \inf_{x \in C} h(x) \to 0.
   \end{equation}
\end{lemma}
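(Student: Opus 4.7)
The plan is to establish both inequalities in~\eqref{eq:grid-minimization-inequalities} by directly exploiting the fact that $\cG \subseteq C$ and that $\zeta^\star$ attains the infimum of $h$ over $C$, and then to deduce the limiting statement~\eqref{eq:grid-minimization-limit} by sandwiching and using continuity of $h$ at $\zeta^\star$.

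First I would prove the lower bound. Since every element of $\cG$ lies in $C$ and $\cG$ is finite, the minimum over $\cG$ is attained and satisfies $\min_{x \in \cG} h(x) \ge \inf_{x \in C} h(x)$, yielding the left inequality. For the upper bound, I would use two observations: (i) $\Pi_{\cG}(\zeta^\star) \in \cG$, so $\min_{x \in \cG} h(x) \le h(\Pi_\cG(\zeta^\star))$; and (ii) by hypothesis $\inf_{x \in C} h(x) = h(\zeta^\star)$. Subtracting these two inequalities gives the right-hand side of~\eqref{eq:grid-minimization-inequalities}. Both steps are essentially definitional, so no obstacle is anticipated here.

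For the asymptotic statement~\eqref{eq:grid-minimization-limit}, I would apply~\eqref{eq:grid-minimization-inequalities} to each grid $\cG_n$ in the sequence, giving
\[
    0 \le \min_{x \in \cG_n} h(x) - \inf_{x \in C} h(x) \le h(\Pi_{\cG_n}(\zeta^\star)) - h(\zeta^\star).
\]
Under the assumption that $\Pi_{\cG_n}(\zeta^\star) \to \zeta^\star$ in the metric $d$, continuity of $h$ at $\zeta^\star$ forces $h(\Pi_{\cG_n}(\zeta^\star)) \to h(\zeta^\star)$, so the right-hand side tends to zero; the squeeze theorem then delivers the claim. The proof is routine and there is no genuinely hard step, since the finiteness of $\cG$ removes any existence question for the minimum and the rest is definitional manipulation combined with sequential continuity.
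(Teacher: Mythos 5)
Your proposal is correct and takes essentially the same approach as the paper: a chain of inequalities using $\cG \subseteq C$ and $\Pi_\cG(\zeta^\star) \in \cG$ to prove~\eqref{eq:grid-minimization-inequalities}, followed by continuity of $h$ at $\zeta^\star$ and the squeeze argument for~\eqref{eq:grid-minimization-limit}. There are no gaps.
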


\begin{proof}
    Since $\cG \subseteq C$ and $\Pi_\cG(\zeta^\star) \in \cG$,
    we have the following chain of inequalities:
    \[
        h(\zeta^\star)
        = \inf_{x \in C} h(x)
        \le \min_{x \in \cG} h(x)
        \le h(\Pi_\cG(\zeta^\star)).
    \]
    Subtracting $h(\zeta^\star)$ throughout,
    we get the desired result
    \eqref{eq:grid-minimization-inequalities}.
    In addition, if $\cG_n$ is a sequence of grids such that
    $\Pi_\cG(\zeta^\star) \to \zeta^\star$,
    then
    continuity of $h(\cdot)$ at $\zeta^\star$ implies
    $h(\Pi_\cG(\zeta^\star)) \to h(\zeta^\star)$
    leading to
    \eqref{eq:grid-minimization-limit}.
\end{proof}

\begin{lemma}
    [Limit points of argmin sequence over space-filling grids]
    \label{lem:limit-argmins-metricspace}
    Let $(M, d)$ be a metric space
    and $C$ be a compact subset of $M$.
    Let $\cG_n$ be a sequence of grids such that
    for any $\zeta \in C$, $\Pi_{\cG_n}(\zeta) \to \zeta$
    as $n \to \infty$
    where $\Pi_{\cG_n}(y) = \argmin_{x \in \cG_n} d(x, y)$
    is the point in the grid $\cG_n$ closest to $y$.
    Let $h : C \to [0, \infty]$ be a lower semicontinuous function,
    and let $x_n \in \argmin_{x \in \cG_n} h(x)$.
    Then,
    for any arbitrary subsequence $\{ x_{n_k} \}_{k \ge 1}$ of $\{ x_n \}_{n \ge 1}$,
    there exists a further subsequence $\{ x_{n_{k_l}} \}_{l \ge 1}$ 
    such that
    $x_{n_{k_l}}$ converges to a point in $\argmin_{\zeta \in C} h(\zeta)$
    as $l \to \infty$.
\end{lemma}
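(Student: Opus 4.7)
The plan is a three-step argument combining compactness, lower semicontinuity, and the space-filling property of the grids. Since $x_n \in \cG_n$ and in the intended applications each $\cG_n$ is a (finite) subset of $C$ so that $\min_{x \in \cG_n} h(x)$ is well-defined, we may view the sequence $\{x_n\}_{n \ge 1}$ as lying in the compact set $C$. Given an arbitrary subsequence $\{x_{n_k}\}_{k \ge 1}$, compactness of $C$ then yields a further subsequence $\{x_{n_{k_l}}\}_{l \ge 1}$ converging to some $x^\star \in C$. The goal is to show that this $x^\star$ belongs to $\argmin_{\zeta \in C} h(\zeta)$.

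Next, I would produce an upper bound on $\liminf_{l} h(x_{n_{k_l}})$ by comparing the grid minimizer to the projection of a true minimizer. Fix any $\zeta^\star \in \argmin_{\zeta \in C} h(\zeta)$ (nonempty because $h$ is proper and lower semicontinuous on the compact set $C$). By the space-filling assumption, $\Pi_{\cG_{n_{k_l}}}(\zeta^\star) \to \zeta^\star$ as $l \to \infty$; since $x_{n_{k_l}}$ minimizes $h$ over $\cG_{n_{k_l}}$,
\[
h(x_{n_{k_l}}) \;\le\; h\bigl(\Pi_{\cG_{n_{k_l}}}(\zeta^\star)\bigr).
\]
Passing to the limit on the right hand side gives $\liminf_{l \to \infty} h(x_{n_{k_l}}) \le h(\zeta^\star) = \inf_{\zeta \in C} h(\zeta)$. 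Finally, lower semicontinuity of $h$ at $x^\star$ yields $h(x^\star) \le \liminf_{l \to \infty} h(x_{n_{k_l}}) \le \inf_{C} h$; since $x^\star \in C$ forces the reverse inequality, we conclude $h(x^\star) = \inf_{C} h$, i.e.\ $x^\star \in \argmin_{\zeta \in C} h(\zeta)$, as desired.

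The main obstacle is in the second step: to pass from $\Pi_{\cG_n}(\zeta^\star) \to \zeta^\star$ to $\limsup_n h\bigl(\Pi_{\cG_n}(\zeta^\star)\bigr) \le h(\zeta^\star)$, lower semicontinuity alone is not enough — it delivers a bound in the wrong direction. The argument therefore requires upper semicontinuity of $h$ at a minimizer, or equivalently continuity of $h$ on (a neighborhood of) the argmin set. This is precisely the additional continuity hypothesis that \Cref{lem:rn-deterministic-approximation-4-prop-asymptotics} and its one-step analogue \Cref{lem:deterministic-approximation-reduction-onestep} impose on $R^\deter$ over the minimizing set $\cM_\gamma^{\zerostep}$ (respectively $\cM_\gamma^{\onestep}$), so in every intended application of this lemma the needed ingredient is available and Step~2 can be closed. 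I would therefore state this continuity-at-argmin property as an implicit standing assumption carried over from the parent context, and invoke it at $\zeta^\star$ to complete the argument.
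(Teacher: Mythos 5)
Your proof takes essentially the same route as the paper's: compactness of $C$ to extract a convergent subsequence with limit $x^\star$, lower semicontinuity to get $h(x^\star) \le \liminf_l h(x_{n_{k_l}})$, and comparison of the grid minimum against the projection $\Pi_{\cG_{n_{k_l}}}(\zeta^\star)$ of a true minimizer. The paper packages the last comparison into \Cref{lem:grid-minimization-metric-space}, which establishes $\min_{x \in \cG_n} h(x) \to \inf_C h$, and then intersects that limit with the lower-semicontinuity bound at $x^\star$; you do the same thing inline.

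You are also correct to flag the obstacle. Note that \Cref{lem:grid-minimization-metric-space}, which the paper's proof invokes, itself explicitly hypothesizes that $h$ is \emph{continuous} at $\zeta^\star$; yet the statement of \Cref{lem:limit-argmins-metricspace} assumes only lower semicontinuity of $h$. So the paper's own proof silently imports continuity at the minimizer without stating it as a hypothesis of the lemma. Lower semicontinuity alone cannot give $h(\Pi_{\cG_n}(\zeta^\star)) \to h(\zeta^\star)$ (indeed, an l.s.c.\ $h$ with a strict local minimum at an isolated discontinuity would make the grid minimum stay bounded away from $\inf_C h$). Your resolution — to treat continuity of $h$ on the argmin set as a standing assumption carried from the parent context, pointing out that both \Cref{lem:rn-deterministic-approximation-4-prop-asymptotics} and \Cref{lem:deterministic-approximation-reduction-onestep} do impose exactly this on $R^\deter$ over $\cM_\gamma^{\zerostep}$ and $\cM_\gamma^{\onestep}$ — is the right fix, and is arguably a cleaner presentation than the paper's, which leaves the reader to notice that the invoked helper lemma quietly requires more than the outer lemma states.
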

\begin{proof}
    Because $h$ is lower semicontinuous
    and $C$ is compact,
    $h$ attains its minimum on $C$
    (see, e.g., Section 1.6 of \cite{pedersen_2012}
    and also see Theorem 1.9 of \citet{rockafellar_wets_2009}
    with the domain $\RR^{n}$ replaced with any metric space.).
    Let $\cM = \argmin_{\zeta \in C} h(\zeta)$,
    which is non-empty.
    Because $C$ is compact,
    for any arbitrary subsequence $\{ x_{n_k} \}_{k \ge 1}$,
    there is a further subsequence $\{ x_{n_{k_{l}}} \}_{l \ge 1}$
    that converges to some point $p \in C$.
    Lower semicontinuity of $h$
    now implies that
    \begin{equation}
        \label{eq:lowersemicontinuity-liminf}
        \liminf_{l \to \infty} \,
        h(x_{n_{k_{l}}})
        \ge h(p).
    \end{equation}
    See, e.g., Section 1.5 of \cite{pedersen_2012}.
    By definition,
    $h(x_{n_{k_{l}}}) = \min_{x \in \cG_{n_{k_{l}}}} h(x)$
    and because
    $\Pi_{\cG_{n_{k_{l}}}}(\zeta) \to \zeta$
    for any $\zeta \in C$,
    \Cref{lem:grid-minimization-metric-space}
    implies that
    \[
        \lim_{l \to \infty}
        h(x_{n_{k_{l}}})
        ~=~
        \min_{\zeta \in C} h(\zeta).
    \]
    Combined with \eqref{eq:lowersemicontinuity-liminf},
    we conclude that $h(p) = \min_{\zeta \in C} h(\zeta)$,
    and hence $p \in \cM = \argmin_{\zeta \in C} h(\zeta)$.
\end{proof}

\subsection
{Lemmas for certifying continuity from continuous convergence}

In this section,
we collect supplementary lemmas that are used in the proofs of
\Cref{prop:continuity-from-continuous-convergence-rdet,prop:continuity-from-continuous-convergence-rdet-onestep}
in \Cref{sec:proofs-riskmonotonization-zerostep}
and \Cref{sec:proofs-riskmonotonization-onestep},
respectively.

\begin{lemma}
    [Deterministic functions; see, e.g., Problem 57, Chapter 4 of \cite{pugh_2002}, 
    converse of Theorem 21.3 in \cite{munkres_2000}]
    \label{lem:continuity-from-continuous-convergence-fixed-functions}
    Suppose $f_n$ and $f$ are (deterministic) functions from $I \subseteq \RR$ to $\RR$.
    For any $x \in I$
    and any arbitrary sequence $\{ x_n \}_{n \ge 1}$ in $I$
    for which $x_n \to x$,
    assume that $f_n(x_n) \to f(x)$ as $n \to \infty$.
    Then, $f$ is continuous on $I$.
\end{lemma}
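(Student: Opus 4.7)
The plan is to prove continuity of $f$ at an arbitrary point $y \in I$ by a contradiction argument, leveraging the hypothesis applied to cleverly constructed sequences converging to $y$.

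First, I would establish that the given hypothesis forces pointwise convergence $f_n(x) \to f(x)$ for every $x \in I$. This follows immediately by applying the assumption to the constant sequence $x_n \equiv x$, which trivially converges to $x$.

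Next, for the main claim, I would fix an arbitrary $y \in I$ and suppose for contradiction that $f$ fails to be continuous at $y$. Then there exist $\epsilon > 0$ and a sequence $\{y_n\}_{n \ge 1}$ in $I$ with $y_n \to y$ such that $|f(y_n) - f(y)| \ge \epsilon$ for all $n \ge 1$. Using the pointwise convergence established above, for each $n$ the sequence $\{f_m(y_n)\}_{m \ge 1}$ converges to $f(y_n)$, so I can choose a strictly increasing sequence of indices $\{m_n\}_{n \ge 1}$ satisfying
\[
    |f_{m_n}(y_n) - f(y_n)| < \epsilon/2
    \quad \text{for all } n \ge 1.
\]

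The key construction is then an auxiliary sequence $\{x_k\}_{k \ge 1}$ in $I$ defined by $x_{m_n} = y_n$ for each $n \ge 1$ and $x_k = y$ otherwise. Since $y_n \to y$ and $m_n \to \infty$, a straightforward check shows $x_k \to y$. By the hypothesis, $f_k(x_k) \to f(y)$, and restricting to the subsequence $k = m_n$ gives $f_{m_n}(y_n) \to f(y)$. Combining this with the triangle inequality
\[
    |f(y_n) - f(y)|
    \le |f(y_n) - f_{m_n}(y_n)| + |f_{m_n}(y_n) - f(y)|
    < \epsilon/2 + |f_{m_n}(y_n) - f(y)|,
\]
and taking $n \to \infty$ yields $\limsup_n |f(y_n) - f(y)| \le \epsilon/2$, contradicting $|f(y_n) - f(y)| \ge \epsilon$ for all $n$. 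Hence $f$ is continuous at $y$, and since $y \in I$ was arbitrary, $f$ is continuous on $I$.

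The only delicate point is verifying that the interleaved sequence $\{x_k\}$ indeed converges to $y$; this boils down to observing that outside the sparse index set $\{m_n\}$ the sequence equals $y$ exactly, while along $\{m_n\}$ it inherits convergence from $y_n \to y$. There is no real obstacle here, but the construction must be stated carefully so that the diagonal extraction is valid.
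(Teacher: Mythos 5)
Your proof is correct and follows essentially the same contradiction-plus-diagonal-sequence strategy as the paper: establish pointwise convergence by constant sequences, assume discontinuity at a point, and build a sequence converging to that point along which the hypothesis forces a contradiction. The only difference is cosmetic — the paper fills the constructed sequence with blocks of repeated $y_n$ values while you fill the off-diagonal positions with the fixed limit point $y$ itself, which makes the convergence of the interleaved sequence slightly more immediate but does not change the substance of the argument.
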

\begin{proof}
    The following is a standard proof by contradiction.
    Assume $f$ is discontinuous at $a \in I$.
    Then, there exists a sequence $x_n \to a$ such that
    \[
        f(x_n) \notin [f(a) - 2 \epsilon, f(a) + 2 \epsilon]
    \]
    for some $\epsilon > 0$.
    Note that $f_n(x) \to f(x)$ for all $x \in I$.
    Now, consider another sequence $y_n$ such that
    \begin{align*}
        y_1 = y_2 = \cdots = y_{N_1} = x_1,
        & \quad
        \text{where} 
        \quad
        | f_{N_1}(x_1) - f(a) | > \epsilon \\
        y_{N_1 + 1} = y_{N_1 + 2} = \cdots = y_{N_2} = x_2,
        & \quad
        \text{where}
        \quad
        | f_{N_2}(x_2) - f(a) | > \epsilon, N_2 > N_1 \\
        \vdots
    \end{align*}
    Observe that $y_n \to a$, however $f_n(y_n) \not \to f(a)$.
    Hence, a contradiction.
\end{proof}

\begin{lemma}
    [Extension of \Cref{lem:continuity-from-continuous-convergence-fixed-functions} to random functions]
    \label{lem:continuity-from-continuous-convergence-random-functions}
    Suppose $f_n$ is a sequence of random real-valued functions from $I \subseteq \RR$ such that, for every deterministic sequence $\{ x_n \}_{n \ge 1}$  in $I$
    such that $x_n \to x \in I$,  $f_n(x_n) \to f(x)$ in probability, for a deterministic function $f$ on $I$.
    Then, $f$ is continuous on $I$.
\end{lemma}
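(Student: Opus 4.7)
The plan is to mirror the contradiction argument used in the proof of Lemma~S.X for deterministic functions (the preceding lemma), but carefully upgrading everything to hold on the ``in probability'' scale. First I would note that, as a special case of the hypothesis (taking the constant sequence $x_n \equiv x$), we have the pointwise convergence $f_n(x) \pto f(x)$ for every $x \in I$. This is the tool I will use to manufacture a bad deterministic sequence.

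Suppose, for contradiction, that $f$ is discontinuous at some $a \in I$. Then there exists $\epsilon > 0$ and a sequence $\{x_k\}_{k \ge 1}$ in $I$ with $x_k \to a$ such that $|f(x_k) - f(a)| \ge 2\epsilon$ for all $k \ge 1$. Using the pointwise in-probability convergence $f_n(x_k) \pto f(x_k)$, for each $k$ I can select a positive integer $N_k$ large enough that
\[
    \PP\bigl(|f_n(x_k) - f(x_k)| \ge \epsilon\bigr) \le \tfrac{1}{k}
    \quad \text{for all } n \ge N_k,
\]
and I may arrange the $N_k$'s to be strictly increasing with $N_1 = 1$. Combined with $|f(x_k) - f(a)| \ge 2\epsilon$ and the triangle inequality, this yields
\[
    \PP\bigl(|f_n(x_k) - f(a)| \ge \epsilon\bigr) \ge 1 - \tfrac{1}{k}
    \quad \text{for all } n \ge N_k.
\]

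Next I would define a deterministic sequence $\{y_n\}_{n \ge 1}$ in $I$ by $y_n := x_k$ whenever $N_k \le n < N_{k+1}$. Since $x_k \to a$ and the blocks $[N_k, N_{k+1})$ exhaust $\NN$, we have $y_n \to a$. By the hypothesis of the lemma applied to this deterministic sequence, $f_n(y_n) \pto f(a)$. On the other hand, by construction, for any $n \in [N_k, N_{k+1})$ we have $\PP(|f_n(y_n) - f(a)| \ge \epsilon) \ge 1 - 1/k$, and the right-hand side tends to $1$ as $n \to \infty$ (since $k \to \infty$). This directly contradicts $f_n(y_n) \pto f(a)$, completing the proof.

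The only mildly delicate step is the construction of $\{y_n\}$: one must ensure simultaneously that $y_n \to a$ (which is automatic from the block-constant definition and $x_k \to a$) and that the probability bound $\PP(|f_n(y_n) - f(a)| \ge \epsilon) \to 1$ holds along the full sequence in $n$, not just along a subsequence. The block-structure with strictly increasing $N_k$ handles both requirements cleanly. No other obstacles arise; the argument is essentially the probabilistic analogue of the deterministic diagonal-style construction in Lemma~S.X.
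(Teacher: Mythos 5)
Your proof is correct and follows essentially the same contradiction-by-bad-deterministic-sequence strategy as the paper's proof. The one minor (and in fact slightly cleaner) refinement is that you exploit the ``for all $n \ge N_k$'' formulation of convergence in probability, which lets you conclude $\PP(|f_n(y_n) - f(a)| \ge \epsilon) \to 1$ along the full sequence, whereas the paper only shows this probability stays above a fixed $p > 0$ along the subsequence $\{N_i\}$; both suffice to contradict $f_n(y_n) \pto f(a)$.
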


\begin{proof}
    The idea of the proof is similar to
    that of an analogous statement for fixed functions;
    see \Cref{lem:continuity-from-continuous-convergence-fixed-functions}.
    We will use proof by contradiction.
    Assume that $f$ is discontinuous at $a \in I$.
    Then, as in the proof of \Cref{lem:continuity-from-continuous-convergence-fixed-functions} for deterministic functions,
    there exists a $\epsilon > 0$ and a sequence $\{x_n\} \subset I$ such that $x_n \to a$ and
    \begin{equation}\label{eq:gg}
        f(x_n) \notin [f(a) - 2 \epsilon, f(a) + 2 \epsilon].
    \end{equation}
    From the hypothesis, we have that, for each $x \in I$, $f_n(x) \to f(x)$ in probability. Let $p \in (0,1)$ be a fixed number. Then, there exists an integer $N_1 \geq 1$ such that the event
    \[
 \Omega_{N_1} = \left\{  |  f_{N_1}(x_1) - f(x_1) | < \epsilon \right\}
    \]
    holds with probability at least $p$. Thus, on $\Omega_{N_1}$, by the triangle inequality,
    \begin{equation}\label{eq:ff}
    |f_{N_1}(x_1) - f(a) | \geq  | f(x_1) - f(a) | - | f_{N_1}(x_1) - f(x_1) |  > \epsilon,
    \end{equation}
    where last inequality stems from \eqref{eq:gg}.
    Next, for $i=2,3,\ldots,$ let $N_i \geq  N_{i-1} + 1$ be an integer such that the event 
    \[
    \Omega_{N_i} = \left\{  |  f_{N_i}(x_i) - f(x_i) | < \epsilon \right\}
    \]
has probability at least $p$. These sequences of numbers $\{ N_i\}$ and events $\{ \Omega_{N_i} \}$ exist  because, by hypothesis, $f_{n}(x_i) \to f(x_i) $ in probability for each $i$. Furthermore $N_i \to \infty$ and, on each $\Omega_{N_i}$, $|f_{N_i}(x_i) - f(a) |  > \epsilon $ by the same argument used in \eqref{eq:ff}.

Consider the sequence $\{y_n\}$ given by
\begin{align*}
        y_1 = y_2 = \cdots = y_{N_1} = x_1
        \\
        y_{N_1 + 1} = y_{N_1 + 2} = \cdots = y_{N_2} = x_2
         \\
        \vdots
    \end{align*}
such that, by construction, $y_n \to a $. We will derive a contradiction by showing that it cannot be the case that  $f_n(y_n) \to a$ in probability, thus violating the hypothesis. Indeed, the sequence of probability values $\{ \mathbb{P} (|f_n(y_n) - f(a)  | > \epsilon) \}$ does not converge to zero since, for each $n$, there exist infinitely many  $N_i > n $ such that
\[
\mathbb{P} ( |f_{N_i}(y_{N_i}) - f(a)  |> \epsilon  ) \geq \mathbb{P} (\Omega_{N_i}) > p > 0. 
\]
Thus, it must be the case that $f$ is continuous at $a$. Continuity of $f$ over $I$ readily follows. 

\end{proof}

\subsection{A lemma for lifting $\QQ$-continuity to $\RR$-continuity}
\label{sec:rational-continuity-to-real-continuity}

The following lemma is used in the proofs of
\Cref{prop:continuity-from-continuous-convergence-rdet,prop:continuity-from-continuous-convergence-rdet-onestep}
in
\Cref{sec:proofs-riskmonotonization-zerostep,sec:proofs-riskmonotonization-onestep},
respectively.

Recall that a function $f : \RR \to \RR$
is continuous at a point $x_{\infty} \in \RR$,
if for all sequences $\{ x_n \}_{n \ge 1}$
in $\RR$
for which $x_n \to x_{\infty}$ as $n \to \infty$,
we have $f(x_n) \to f(x_{\infty})$ as $n \to \infty$.
Call this $\RR$-continuity of $f$ at the point $x_{\infty}$,
and call a function is $\RR$-continuous if it is $\RR$-continuous on its domain.
Define a variant of continuity with respect 
to rational sequences, dubbed $\QQ$-continuity,
as follows.
\begin{definition}
    [$\QQ$-continuity]
    \label{def:q-continuity}
    A function $f : \RR \to \RR$ is $\QQ$-continuous at a point $x_\infty \in \RR$,
    if for all sequences $\{ x_n \}_{n \ge 1}$
    in $\QQ$
    for which $x_n \to x_{\infty}$ as $n \to \infty$,
    we have $f(x_n) \to f(x_{\infty})$ as $n \to \infty$.
    A function is $\QQ$-continuous if it is $\QQ$-continuous over its domain.
\end{definition}

The following lemma shows that $\QQ$-continuity implies $\RR$-continuity. 

\begin{lemma}
    [$\QQ$-continuity implies $\RR$-continuity]
    \label{lem:rational-continuity-implies-real-continuity}
    Suppose $f : \RR \to \RR$ is a $\QQ$ continuous function.
    Then $f$ is $\RR$-continuous.
\end{lemma}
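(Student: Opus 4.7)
The plan is to proceed by contradiction, invoking the hypothesis of $\QQ$-continuity at two different points: once at each element of a hypothetical discontinuity sequence to approximate by rationals, and once at the limit point to finish the bound. Concretely, suppose $f$ fails to be $\RR$-continuous at some $x_\infty \in \RR$. Then there exist $\epsilon > 0$ and a sequence $\{x_n\}_{n \ge 1}$ in $\RR$ (not necessarily rational) with $x_n \to x_\infty$ such that, after passing to a subsequence, $|f(x_n) - f(x_\infty)| > \epsilon$ for every $n$.

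The pivotal step is to construct, for each $n$, a rational $q_n \in \QQ$ satisfying simultaneously $|q_n - x_n| < 1/n$ and $|f(q_n) - f(x_n)| < \epsilon/2$. Such a $q_n$ exists because $\QQ$-continuity, applied at the real point $x_n$, asserts that $f(r_k) \to f(x_n)$ for every rational sequence $r_k \to x_n$; in particular, choosing any rational sequence inside the shrinking neighborhood $(x_n - 1/n,\, x_n + 1/n)$ that converges to $x_n$, its $f$-values are eventually within $\epsilon/2$ of $f(x_n)$, and we pick $q_n$ to be a sufficiently late term of that sequence. By the triangle inequality, $|q_n - x_\infty| \le |q_n - x_n| + |x_n - x_\infty| \to 0$, so $q_n \to x_\infty$ with $\{q_n\} \subset \QQ$.

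Applying $\QQ$-continuity a second time, now at $x_\infty$, to the rational sequence $\{q_n\}$ yields $f(q_n) \to f(x_\infty)$, so $|f(q_n) - f(x_\infty)| < \epsilon/2$ for all large $n$. Combining with the construction bound gives $|f(x_n) - f(x_\infty)| \le |f(x_n) - f(q_n)| + |f(q_n) - f(x_\infty)| < \epsilon$ for all sufficiently large $n$, contradicting the standing lower bound $\epsilon$. The only conceptual subtlety — and what makes the argument non-circular — is that the hypothesis of $\QQ$-continuity is a statement at every real point $x_\infty \in \RR$, not merely at rational points; this is precisely what legitimizes invoking it at the real inputs $x_n$ in the approximation step. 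No further regularity, compactness, or metric structure beyond the definition is needed.
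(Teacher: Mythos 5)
Your argument is correct and follows essentially the same path as the paper's: construct a rational approximation $q_n$ to each sequence point $x_n$ via $\QQ$-continuity at the (generally irrational) point $x_n$, apply $\QQ$-continuity a second time at $x_\infty$ to the diagonal rational sequence $\{q_n\}$, then close with the triangle inequality. The only cosmetic differences are the framing (contradiction vs.\ the paper's direct $\epsilon$-argument) and that your explicit requirement $|q_n - x_n| < 1/n$ makes the convergence $q_n \to x_\infty$ slightly more transparent than the paper's diagonal construction.
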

\begin{proof}
    To prove $\RR$-continuity of $f$,
    fix any $y_\infty \in \RR$,
    and consider 
    any arbitrary sequence $\{ y_n \}_{n \ge 1}$
    in $\RR$
    such that $y_n \to y_\infty$ as $n \to \infty$.
    For any $\epsilon > 0$,
    if we can produce $n_\epsilon$
    such that
    $| f(y_n) - f(y_\infty) | \le \epsilon$
    for all $n \ge n_\epsilon$,
    then $\RR$-continuity of $f$ follows.
    We will produce such $n_\epsilon$ below.
    
    For every $m \ge 1$, construct a sequence $\{ x_{k,m} \}_{k \ge 1}$
    in $\QQ$
    such that $x_{k,m} \to y_m$ as $k \to \infty$;
    see \Cref{fig:rationa-continuity-to-real-continuity}.
    (Note this is possible because $\QQ$ is dense in $\RR$.)
    Now, 
    for every $m \ge 1$,
    using $\QQ$-continuity of $f$ at $y_m$,
    we have $f(x_{k, m}) \to f(y_m)$ as $k \to \infty$.
    Fix $\epsilon > 0$.
    Let $k_0(\epsilon) = 1$
    and for $m \ge 1$,
    define a positive integer $k_m(\epsilon)$ by
    \[
        k_m(\epsilon) 
        = \min \{ k > k_{m-1}(\epsilon) : |f(x_{k, m}) - f(y_m)| \le \epsilon / 2\}.
    \]
    Such a $k_m(\epsilon)$ always exists because
    $x_{k, m} \to y_m$ as $k \to \infty$
    and $f$ is $\QQ$-continuous at $y_m$.
    Note that $k_m(\epsilon) > k_{m-1}(\epsilon)$,
    which in turn implies that $k_m(\epsilon) \ge m$ 
    and thus $k_{m}(\epsilon) \to \infty$ as $m \to \infty$.
    Hence, as $m \to \infty$, $x_{k_m(\epsilon), m} \to y_\infty$.
    Using the $\QQ$-continuity of $f$ at $y_\infty$,
    there exists a positive integer $m_\epsilon$
    such that
    for all $m \ge m_\epsilon$,
    we have $| f(x_{k_m(\epsilon), m}) - f(y_\infty) | \le \epsilon / 2$.
    For all $m \ge m_\epsilon$,
    by the triangle inequality,
    observe that
    \[
        | f(y_m) - f(y_\infty) |
        \le |f(y_m) - f(k_{m}(\epsilon))| + |f(k_{m}(\epsilon)) - f(y_\infty)|
        \le \epsilon.
    \]
    Therefore, choosing $n_\epsilon = m_\epsilon$ completes the proof.
\end{proof}

\begin{figure}[!ht]
    \centering
    \includegraphics[width=0.8\columnwidth]{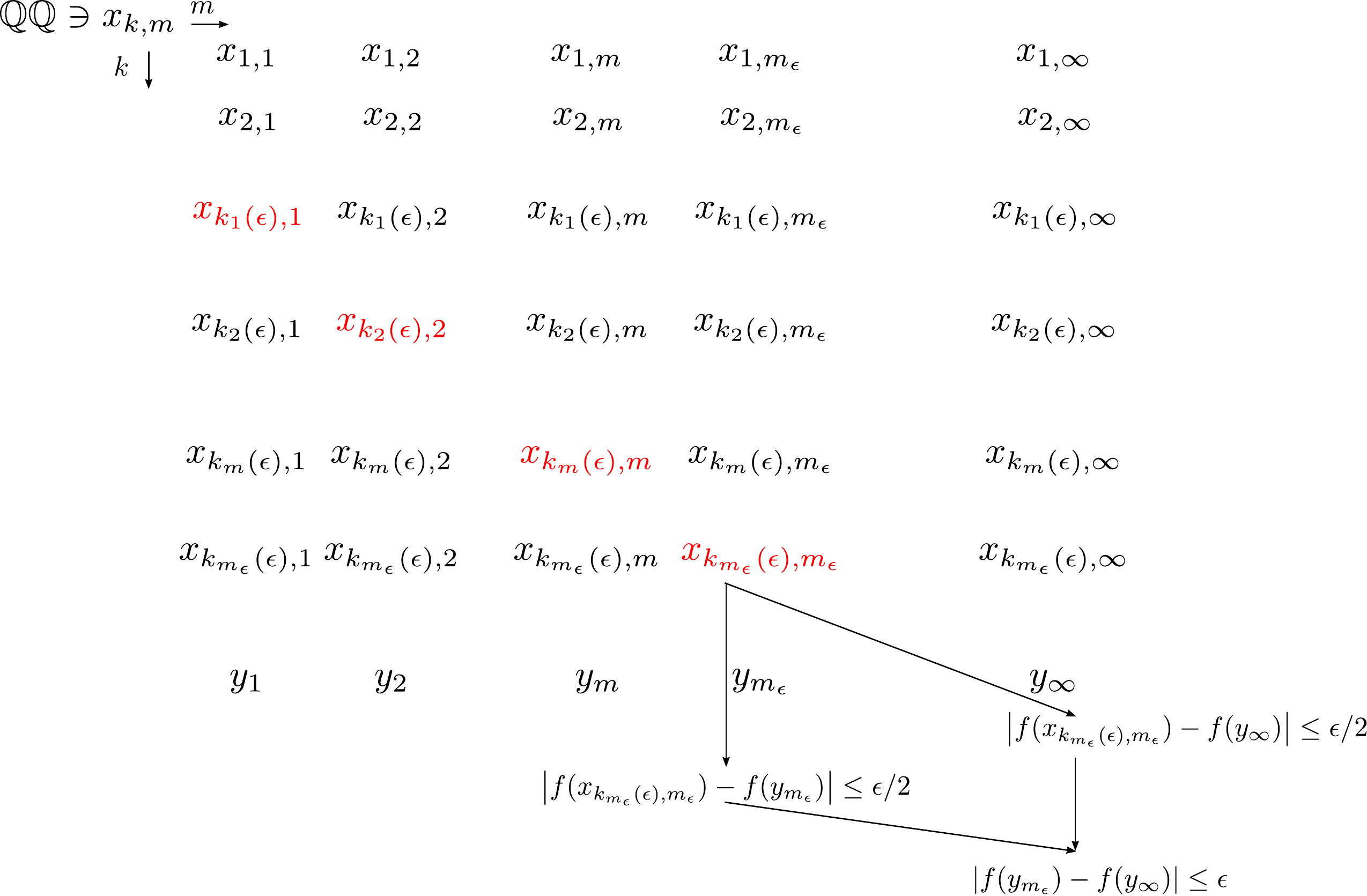}
    \caption{Illustration of the grid of rational sequences used in the proof of \Cref{lem:rational-continuity-implies-real-continuity}.}
    \label{fig:rationa-continuity-to-real-continuity}
\end{figure}

\subsection
[Lemmas on deterministic equivalents for generalized bias and variance resolvents]
{Lemmas on asymptotic deterministic equivalents for generalized bias and variance resolvents}

In this section,
we collect lemmas on asymptotic deterministic equivalents
for generalized bias and variance resolvents
associated with ridge and ridgeless regression
that are used in the proof of
\Cref{prop:asymp-bound-ridge-main}
in
\Cref{sec:verif-asymp-profile-ridge},
and
\Cref{prop:verif-riskprofile-mnlsbase-mnlsonestep}
and
\Cref{lem:one-step-predrisk-decomposition}
in
\Cref{sec:verif-riskprofile-mnlsbase-mnlsonestep}.

\begin{lemma}
    [Deterministic equivalents for generalized bias and variance ridge resolvents]
    \label{lem:deter-approx-generalized-ridge}
    Suppose $X_i \in \RR^{p}$, $1 \le i \le n$, are i.i.d.\ random vectors
    with each $X_i = Z_{i} \Sigma^{1/2}$,
    where $Z_i \in \RR^{p}$ contains i.i.d.\ random variables $Z_{ij}$, $1 \le j \le p$,
    each with $\EE[Z_{ij}] = 0$, $\EE[Z_{ij}^2] = 1$, and $\EE[|Z_{ij}|^{8+\alpha}] \le M_\alpha$
    for some constants $\alpha > 0$ and $M_\alpha < \infty$,
    and $\Sigma \in \RR^{p \times p}$ is a positive semidefinite matrix
    such that $r_{\min} I_p \preceq \Sigma \preceq r_{\max} I_p$
    for some constants $r_{\min} > 0$ and $r_{\max} < \infty$
    (independent of $p$).
    Let $\bX \in \RR^{n \times p}$ be the random matrix with $X_i$, $1 \le i \le n$, as its rows
    and let $\hSigma \in \RR^{p \times p}$ denote the $p \times p$ random matrix $\bX^\top \bX / n$.
    Let $A \in \RR^{p \times p}$ be any deterministic positive semidefinite matrix
    that commutes with $\Sigma$
    such that $ a_{\min} I_p \preceq A \preceq a_{\max} I_p$
    for some constants $a_{\min} > 0$ and $a_{\max} < \infty$
    (independent of $p$).
    Let $\gamma_n := p / n$.
    Then, for $\lambda > 0$,
    as $n, p \to \infty$
    with $0 < \liminf \gamma_n \le \limsup \gamma_n < \infty$,
    the following asymptotic deterministic equivalences hold:
    \begin{enumerate}
        \item Generalized variance of ridge regression:
        \begin{equation}
            \label{eq:detequi-ridge-genbias}
            (\hSigma + \lambda I_p)^{-2} \hSigma A
            \asympequi
            \tv(-\lambda; \gamma_n) (v(-\lambda; \gamma_n) \Sigma + I_p)^{-2} \Sigma A,
        \end{equation}
        where $v(-\lambda; \gamma_n) \ge 0$
        is the unique solution to the fixed-point equation
        \begin{equation}
            \label{eq:fixed-point-v-ridge-statement}
            v(-\lambda; \gamma_n)^{-1}
            = \lambda + \gamma_n \tr[\Sigma (v(-\lambda; \gamma_n) \Sigma + I_p)^{-1}] / p,
        \end{equation}
        and $\tv(-\lambda; \gamma_n)$ is defined via 
        $v(-\lambda; \gamma_n)$ by the equation
        \begin{equation}
            \label{eq:def-v'-ridge}
            \tv(-\lambda; \gamma_n)^{-1}
            = 
                v(-\lambda; \gamma_n)^{-2}
                - 
                \gamma_n
                \tr[\Sigma^2 (v(-\lambda; \gamma_n) \Sigma + I_p)^{-2}] / p.
        \end{equation}
        \item Generalized bias of ridge regression:
        \begin{equation}
            \label{eq:detequi-ridge-genvar}
            \lambda^2
            (\hSigma + \lambda I_p)^{-1} A (\hSigma + \lambda I_p)^{-1}
            \asympequi 
            (v(-\lambda; \gamma_n) \Sigma + I_p)^{-1}
            (\tv_g(-\lambda; \gamma_n) \Sigma + A)
            (v(-\lambda; \gamma_n) \Sigma + I_p)^{-1},
        \end{equation}
        where $v(-\lambda; \gamma_n)$ as defined in \eqref{eq:fixed-point-v-ridge},
        and $\tv_g(-\lambda; \gamma_n)$
        is defined via $v(-\lambda; \gamma_n)$ by the equation
        \begin{equation}
            \label{eq:def-vg'-ridge}
            \tv_g(-\lambda; \gamma_n)
            =
            \ddfrac
            {
                \gamma_n \tr[A \Sigma (v(-\lambda; \gamma_n) \Sigma + I_p)^{-2}] / p
            }
            {
                v(-\lambda; \gamma_n)^{-2}
                - \gamma_n \tr[\Sigma^2 (v(-\lambda; \gamma_n) \Sigma + I_p)^{-2}] / p
            }.
        \end{equation}
    \end{enumerate}
\end{lemma}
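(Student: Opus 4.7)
The proof of \Cref{lem:deter-approx-generalized-ridge} will be built on top of a single foundational result: the anisotropic first-order deterministic equivalent for the ridge resolvent, namely
\[
(\hSigma + \lambda I_p)^{-1} \asympequi v(-\lambda; \gamma_n)\,(v(-\lambda; \gamma_n)\Sigma + I_p)^{-1},
\]
with $v(-\lambda;\gamma_n)$ the unique positive solution to the fixed-point equation \eqref{eq:fixed-point-v-ridge-statement}. This equivalence is by now classical in random matrix theory under the moment and spectral assumptions made in the lemma (e.g., \cite{rubio_mestre_2011,ledoit_peche_2011,knowles_yin_2017,bloemendal_knowles_yau_yin_2016}), so I will import it as a black box. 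The rest of the proof extracts \eqref{eq:detequi-ridge-genbias} and \eqref{eq:detequi-ridge-genvar} from this first-order equivalence by differentiating in suitable parameters and applying the calculus of asymptotic equivalents collected in \Cref{sec:calculus_deterministic_equivalents}.

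For \eqref{eq:detequi-ridge-genbias}, the plan is to use the algebraic identity
\[
(\hSigma + \lambda I_p)^{-2}\hSigma A
= (\hSigma + \lambda I_p)^{-1} A - \lambda (\hSigma + \lambda I_p)^{-2} A
= (\hSigma + \lambda I_p)^{-1} A + \lambda \frac{d}{d\lambda}\bigl[(\hSigma + \lambda I_p)^{-1}\bigr] A,
\]
so that \eqref{eq:detequi-ridge-genbias} reduces to differentiating the first-order deterministic equivalent in $\lambda$. Using the chain rule on $v(-\lambda;\gamma_n)(v(-\lambda;\gamma_n)\Sigma + I_p)^{-1}$ with $v'(-\lambda;\gamma_n) := \partial v(-\lambda;\gamma_n)/\partial\lambda$ obtained by implicit differentiation of \eqref{eq:fixed-point-v-ridge-statement} yields an algebraic relation that, after collecting terms, pairs $v'(-\lambda;\gamma_n)$ to precisely the scalar $\tv(-\lambda;\gamma_n)$ defined in \eqref{eq:def-v'-ridge}. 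The commutativity of $A$ and $\Sigma$ and the spectral bounds on both matrices will let me pass $A$ freely through the rational expressions in $\Sigma$ without producing error terms.

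For \eqref{eq:detequi-ridge-genvar}, I will use a perturbation-differentiation trick: introduce the one-parameter family $\hSigma_\mu := \hSigma + \mu A$ (here the spectral bounds on $A$ and $\Sigma$ ensure that the fixed-point equation for the perturbed resolvent still admits a unique solution for $\mu$ in a neighborhood of $0$) and apply
\[
(\hSigma + \lambda I_p)^{-1} A (\hSigma + \lambda I_p)^{-1}
= -\frac{\partial}{\partial \mu}\bigl[(\hSigma_\mu + \lambda I_p)^{-1}\bigr]\Big|_{\mu = 0}.
\]
The first-order deterministic equivalent applied to $\hSigma_\mu$ yields $v(-\lambda; \gamma_n, \mu)(v(-\lambda; \gamma_n, \mu)\Sigma + I_p + \mu v A/\cdots)^{-1}$; differentiating in $\mu$ at $\mu = 0$, and again using implicit differentiation on the perturbed fixed-point equation, produces a sandwiched expression in which the scalar coefficient of the $\Sigma$ term is exactly $\tv_g(-\lambda;\gamma_n)$ from \eqref{eq:def-vg'-ridge}. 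Multiplying through by $\lambda^2$ on both sides and absorbing $\lambda$-factors via the identity $\lambda v(-\lambda;\gamma_n)(v(-\lambda;\gamma_n)\Sigma + I_p)^{-1} = I_p - v(-\lambda;\gamma_n)\Sigma(v(-\lambda;\gamma_n)\Sigma + I_p)^{-1}$ gives the claimed sandwich form.

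The main obstacle is not the algebra but justifying the differentiation step inside the asymptotic-equivalence relation $\asympequi$. The notion of deterministic equivalence asserts trace-type convergences against a class of test matrices, and differentiating both sides of $\asympequi$ in a parameter generally requires either (i) a uniform version of the anisotropic local law in $\lambda$ and $\mu$ on a complex neighborhood (so that a Cauchy-integral representation lets one trade differentiation for integration against a bounded contour), or (ii) a direct anisotropic estimate for the quadratic/sandwich resolvent, which is available from the entrywise local law of \cite{knowles_yin_2017,bloemendal_knowles_yau_yin_2016}. I plan to proceed via route (i), since the perturbed fixed-point problem is analytic in $\mu$ and $\lambda$ on a disk of controlled radius under the spectral assumptions, and then appeal to the calculus rules in \Cref{sec:calculus_deterministic_equivalents} to exchange the limit with the derivative. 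The bookkeeping of the two scalar fixed-point quantities $\tv$ and $\tv_g$, and verifying that they arise with the precise formulas \eqref{eq:def-v'-ridge} and \eqref{eq:def-vg'-ridge}, is routine but tedious.
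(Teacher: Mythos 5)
Your plan follows essentially the same two-step route as the paper: differentiate the basic ridge-resolvent equivalent in $\lambda$ for part 1, and set up a perturbation--differentiation argument for part 2, invoking the deterministic-equivalents calculus of \Cref{sec:calculus_deterministic_equivalents} (whose differentiation rule is, in effect, your route (i)). Two points, however, need fixing.

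First, the first-order equivalent you state, $(\hSigma + \lambda I_p)^{-1} \asympequi v(-\lambda;\gamma_n)\,(v(-\lambda;\gamma_n)\Sigma + I_p)^{-1}$, has the wrong scalar prefactor. With $v$ solving \eqref{eq:fixed-point-v-ridge-statement}, the correct normalization (see \Cref{cor:basic-ridge-resolvent-equivalent-in-v}) is $\lambda(\hSigma + \lambda I_p)^{-1} \asympequi (v(-\lambda;\gamma_n)\Sigma + I_p)^{-1}$, i.e.\ the prefactor on the right is $1/\lambda$, not $v(-\lambda;\gamma_n)$; one can check in the isotropic case $\Sigma = I_p$ that $v \neq 1/\lambda$. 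Since part 1 proceeds by differentiating this equivalence in $\lambda$, carrying the wrong prefactor would propagate an error into the claimed formula for $\tv(-\lambda;\gamma_n)$.

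Second, and more substantively: in part 2 you propose to "apply the first-order deterministic equivalent to $\hSigma_\mu = \hSigma + \mu A$," but the Rubio--Mestre/anisotropic equivalent covers resolvents of sample covariance matrices of the form $\Sigma^{1/2}\bZ^\top\bZ\,\Sigma^{1/2}/n$ at a scalar shift; it does not apply verbatim to $\hSigma + \mu A$, which is a sample covariance plus a generic deterministic matrix. What makes the argument work is a conjugation/factorization step: writing the perturbation as $\hSigma + \lambda(I_p + \rho A)$ and conjugating by $(I_p + \rho A)^{-1/2}$, one obtains $(I_p + \rho A)^{-1/2}\,(\hSigma_{\rho,A} + \lambda I_p)^{-1}\,(I_p + \rho A)^{-1/2}$ with $\hSigma_{\rho,A}$ a genuine sample covariance matrix with feature covariance $\Sigma_{\rho,A} = (I_p+\rho A)^{-1/2}\Sigma(I_p+\rho A)^{-1/2}$, still at a scalar regularization $\lambda$. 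This is exactly where the commutativity of $A$ and $\Sigma$ is used (so that $\Sigma^{1/2}$ and $(I_p+\rho A)^{\pm 1/2}$ can be interchanged and $\hSigma_{\rho,A}$ again has the required product form), and it is why the paper parameterizes the perturbation as $\lambda\rho A$ rather than as a bare $\mu A$. Without spelling out this reduction, the claim that the first-order result applies to the "perturbed" resolvent and yields the stated fixed-point equation has a gap.
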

\begin{proof}
    The main idea for both the first and second parts is to use 
    \Cref{cor:basic-ridge-resolvent-equivalent-in-v}
    as the starting point,
    and apply the calculus rules for asymptotic deterministic equivalents 
    listed in \Cref{sec:calculus_deterministic_equivalents}
    to manipulate into the desired equivalents.
    
    \paragraph{Part 1.}
    For the first part,
    observe that we can express the resolvent of interest 
    (associated with the generalized variance of ridge regression)
    as a derivative (with respect to $\lambda$) of a certain resolvent:
   \begin{equation}
        \label{eq:deriv-def-genvar-ridge}
        (\hSigma + \lambda I_p)^{-2} \hSigma A
        = (\hSigma + \lambda I_p)^{-1} A - \lambda (\hSigma + \lambda I_p)^{-2} A
        = 
        \frac{\partial}{\partial \lambda}
        [\lambda (\hSigma + \lambda I_p)^{-1} A)].
   \end{equation}
   To find a deterministic equivalent for $(\hSigma + \lambda)^{-2} \hSigma A$,
   it thus suffices to obtain a deterministic equivalent for the resolvent
   $\lambda (\hSigma + \lambda I_p)^{-1} A$
   and take its derivative, 
   thanks to the differentiation rule from
    \Cref{lem:calculus-detequi}~\eqref{lem:calculus-detequi-item-differentiation}.
    Similar derivative trick is used in the proof of Theorem 2.1
    in \cite{liu_dobriban_2019}
    and Theorem 2.1 in \cite{dobriban_wager_2018}
    to compute the standard variance of ridge regression,
    by \cite{dobriban_sheng_2020} in the context of distributed ridge regression,
    and in the earlier works by
    \cite{karoui_kolsters_2011,rubio_mestre_2011,ledoit_peche_2011},
    among others,
    to compute certain limiting trace functionals.
    
    Starting with
    \Cref{cor:basic-ridge-resolvent-equivalent-in-v},
    we have
    \[
        \lambda (\hSigma + \lambda I_p)^{-1}
        \asympequi (v(-\lambda; \gamma_n) \Sigma + I_p)^{-1},
    \]
  where $v(-\lambda; \gamma_n)$ is the unique solution to the fixed point equation
  \begin{equation}
        \label{eq:fixed-point-v-ridge}
        v(-\lambda; \gamma_n)^{-1}
        = \lambda + \gamma_n \tr[\Sigma (v(-\lambda; \gamma_n) \Sigma + I_p)^{-1}] / p.
  \end{equation}
   Since $A$ has bounded operator norm (uniformly in $p$),
   from 
    \Cref{lem:calculus-detequi}~\eqref{lem:calculus-detequi-item-product},
   we have
  \begin{equation}
        \label{eq:variance-base-functional-equivalence-ridge}
        \lambda (\hSigma + \lambda I_p)^{-1} A
        \asympequi (v(-\lambda; \gamma_n) \Sigma + I_p)^{-1} A,
  \end{equation}
  where $v(-\lambda; \gamma_n)$ is as defined by \eqref{eq:fixed-point-v-ridge}.
    It now remains to take the derivative of 
    the right hand side of \eqref{eq:variance-base-functional-equivalence-ridge}
    with respect to $\lambda$.
    Before doing so,
    we will briefly argue that the differentiation rule indeed applies in this case.
    Let $T \in \RR^{p \times p}$
    be a matrix
    with trace norm uniformly bounded in $p$.
    Note that
    \begin{align*}
        \tr[T \lambda (\hSigma + \lambda I_p)^{-1} A]
        &= \tr[T (I_p - \hSigma (\hSigma + \lambda I_p)^{-1}) A] \\
        &\le \| (I_p - \hSigma (\hSigma + \lambda I_p)^{-1}) A \|_{\mathrm{op}} \tr[T] \\
        &\le 
        \| I_p - \hSigma (\hSigma + \lambda I_p)^{-1} \|_{\mathrm{op}}
        \| A \|_{\mathrm{op}}
        \tr[T] \\
        &\le \| A \|_{\mathrm{op}} \tr[T]
        \le C,
    \end{align*}
    for some constant $C < \infty$.
    Here,
    the first inequality follows from Proposition 3.4.10 of \cite{pedersen_2012}
    (see also, Problem III.6.2 of \cite{bhatia_2013}),
    and
    the second inequality follows from the submultiplicativity of the operator norm.
    Similarly,
    note that
    \begin{align*}
        \tr[T (v(-\lambda; \gamma_n) \Sigma + I_p)^{-1} A]
        \le 
        \| (v(-\lambda; \gamma_n) \Sigma + I_p)^{-1} \|_{\mathrm{op}}
        \| A \|_{\mathrm{op}} 
        \tr[T] 
        \le C,
    \end{align*}
    for some constant $C < \infty$.
    Thus, we can safely apply the differentiation rule
    from \Cref{lem:calculus-detequi}~\eqref{lem:calculus-detequi-item-differentiation}
    to get
    \[
       (\hSigma + \lambda I_p)^{-1} \hSigma A
       \asympequi 
       \frac{\partial}{\partial \lambda}
       [(v(-\lambda; \gamma_n) \Sigma + I_p)^{-1} A],
    \]
    we have
    \begin{equation}
        \label{eq:deriv-detequi-genvar-ridge}
        \frac{\partial}{\partial \lambda}
        [(v(-\lambda; \gamma_n) \Sigma + I_p)^{-1} A]
        = - \frac{\partial}{\partial \lambda}[v(-\lambda; \gamma_n)]
        (v(-\lambda; \gamma_n) \Sigma + I_p)^{-2} A.
    \end{equation}
    We can write 
    - $\partial / \partial \lambda [v(-\lambda; \gamma_n)]$
    in terms of $v(-\lambda; \gamma_n)$
    by taking derivative of \eqref{eq:fixed-point-v-ridge}
    with respect to $\lambda$
    and solving for 
    - $\partial / \partial \lambda [v(-\lambda; \gamma_n)]$.
    Taking the derivative of \eqref{eq:fixed-point-v-ridge} yields
    the following equation:
    \begin{equation}
        - \label{eq:deriv-relation-v-ridge}
        \frac{\partial}{\partial \lambda} [v(-\lambda; \gamma_n)]
        v(-\lambda; \gamma_n)^{-2}
        =  1 + \gamma_n - \frac{\partial}{\partial \lambda}[v(-\lambda; \gamma_n)] 
        \tr[\Sigma^2 (v(-\lambda; \gamma_n) \Sigma + I_p)^{-2}] / p.
    \end{equation}
    Denoting 
    - $\partial / \partial \lambda [v(-\lambda; \gamma_n)]$
    by $\tv(-\lambda; \gamma_n)$
    and solving for $\tv(-\lambda; \gamma_n)$
    in \eqref{eq:deriv-relation-v-ridge},
    we get
    \begin{equation}
        \label{eq:def-tv-ridge-in-the-proof}
        \tv(-\lambda; \gamma_n)^{-1}
        = 
        v(-\lambda; \gamma_n)^{-2}
        - \gamma_n \tr[\Sigma^2 (v(-\lambda; \gamma_n) \Sigma + I_p)^{-2}] / p.
    \end{equation}
    Combining 
    \eqref{eq:deriv-def-genvar-ridge},
    \eqref{eq:deriv-detequi-genvar-ridge},
    and 
    \eqref{eq:def-tv-ridge-in-the-proof},
    the statement follows.
    This completes the proof of the first part.

    \paragraph{Part 2.}
    For the second part,
    observe that we can express
    the resolvent of interest 
    (appearing in the generalized bias of ridge regression)
    as a derivative 
    of a certain parameterized resolvent at a fixed value
    of the parameter:
    \begin{equation}
        \label{eq:deriv-def-genbias-ridge}
        \lambda^2 
        (\hSigma + \lambda I_p)^{-1}
        A
        (\hSigma + \lambda I_p)^{-1}
        =
        \lambda^2
        (\hSigma + \lambda I_p + \lambda \rho A)^{-1} 
        A 
        (\hSigma + \lambda I_p + \lambda \rho A)^{-1} |_{\rho = 0}
        =
        -
        \frac{\partial}{\partial \rho}
        [\lambda (\hSigma + \lambda I_p + \lambda \rho A)^{-1}] \mathrel{\Big |}_{\rho = 0}.
    \end{equation}
    It is worth remarking that in contrast to Part 1, we needed
    to introduce another parameter $\rho$ for this part to appropriately
    pull out the matrix $A$ in the middle.
    This trick has been used in the proof
    of Theorem 5 in \cite{hastie_montanari_rosset_tibshirani_2019}
    in the context of standard bias calculation for ridge regression.
    Our strategy henceforth will be to
    obtain a deterministic equivalent
    for the resolvent $\lambda (\hSigma + \lambda I_p + \lambda \rho A)^{-1}$,
    take its derivative with respect to $\rho$, and set $\rho = 0$.
    Towards that end,
    we first massage it
    to make it amenable
    for application of \Cref{lem:basic-ridge-resolvent-deterministic-equivalent}
    as follows:
   \begin{align}
        \lambda
        \big(\hSigma + \lambda I_p + \lambda \rho A\big)^{-1}
        &=
        \lambda
        \big(\hSigma + \lambda (I_p + \rho A)\big)^{-1} 
        \nonumber \\
        &=
        (I_p + \rho A)^{-1/2}
        \lambda\big((I_p + \rho A)^{-1/2} \hSigma (I_p + \rho \Sigma)^{-1/2} + \lambda I_p\big)^{-1}
        (I_p + \rho A)^{-1/2} 
        \nonumber \\
        &=
        (I_p + \rho A)^{-1/2}
        \lambda
        \big(
        \hSigma_{\rho, A} + \lambda I_p
        \big)^{-1}
        (I_p + \rho A)^{-1/2}, \label{eq:resolvent-massage-genbias-ridge}
   \end{align}
   where $\hSigma_{\rho, A} := \Sigma_{\rho, A}^{1/2} (\bZ^\top \bZ / n) \Sigma_{\rho, A}^{1/2}$
   and $\Sigma_{\rho, A} := (I_p + \rho A)^{-1/2} \Sigma (I_p + \rho A)^{-1/2}$.
   We will now obtain a deterministic equivalent
   for $\lambda (\hSigma_{\rho, A} + \lambda I_p)^{-1}$,
   and use the product rule to arrive at the deterministic
   equivalent for $\lambda (\hSigma + \lambda I_p + \lambda \rho A)^{-1}$.
   
   Using 
   \Cref{cor:basic-ridge-resolvent-equivalent-in-v},
   we have
   \begin{equation}
        \label{eq:resolvent-in-v-genbias-ridge}
        \lambda (\hSigma_{\rho, A} + \lambda I_p)^{-1}
        \asympequi (v_g(-\lambda, \rho; \gamma_n) \Sigma_{\rho, A} + I_p)^{-1},
   \end{equation}
   where $v_g(-\lambda, \rho; \gamma_n)$
   is the unique solution to the fixed-point equation
   \begin{equation}
        \label{eq:fixed-point-in-v-genbias-ridge}
        v_g(-\lambda, \rho; \gamma_n)^{-1}
        = \lambda + \gamma_n 
        \tr[\Sigma_{\rho, A}  (v_g(-\lambda, \rho; \gamma_n) \Sigma_{\rho, A} + I_p)^{-1}] / p.
   \end{equation}
   Combining 
   \eqref{eq:resolvent-massage-genbias-ridge}
   with 
   \eqref{eq:resolvent-in-v-genbias-ridge},
   and using the product rule from 
    \Cref{lem:calculus-detequi}~\eqref{lem:calculus-detequi-item-product}
   (which is applicable since $(I_p + \rho A)^{-1/2}$ is a deterministic matrix),
   we get
   \begin{align*}
        \lambda (\hSigma + \lambda I_p + \lambda \rho A)^{-1}
        &= 
        (I_p + \rho A)^{-1/2}
        \lambda
        (\hSigma_{\rho, A} + \lambda I_p)^{-1}
        (I_p + \rho A)^{-1/2}
        \\
        &\asympequi
        (I_p + \rho A)^{-1/2}
        (v_g(-\lambda, \rho; \gamma_n) \Sigma_{\rho, A} + I_p)^{-1}
        (I_p + \rho A)^{-1/2} \\
        &= 
        (I_p + \rho A)^{-1/2}
        (v_g(-\lambda, \rho; \gamma_n) (I_p + \rho A)^{-1/2} \Sigma (I_p + \rho A)^{-1/2}  + I_p)^{-1}
        (I_p + \rho A)^{-1/2} \\
        &=
        (v_g(-\lambda, \rho; \gamma_n) \Sigma + I_p + \rho A)^{-1}.
   \end{align*}
   Similarly, 
   the right hand side of the fixed-point equation 
   \eqref{eq:fixed-point-in-v-genbias-ridge} 
   can be simplified
   by substituting back for $\Sigma_{\rho, A}$ to yield
   \begin{align}
       v_g(-\lambda, \rho; \gamma_n)^{-1}
       &= \lambda + \gamma_n 
       \tr[(I_p + \rho A)^{-1/2} \Sigma (I_p + \rho A)^{-1/2} 
       (v_g(-\lambda, \rho; \gamma_n) \Sigma_{\rho, A} + I_p)^{-1}] / p \nonumber \\
       &=
       \lambda + \gamma_n
       \tr[ \Sigma 
       (v_g(-\lambda, \rho; \gamma_n) 
       (I_p + \rho A)^{1/2} \Sigma_{\rho, A} (I_p + \rho A)^{1/2} 
       + (I_p + \rho A))^{-1}] / p \nonumber \\
       &=
       \lambda + \gamma_n
       \tr[\Sigma (v_g(-\lambda, \rho; \gamma_n) \Sigma + I_p + \rho A)^{-1}] / p. \label{eq:fixed-point-vg-with-rho}
   \end{align}
   Finally, we will now use
  the differentiation rule 
    from \Cref{lem:calculus-detequi}~\eqref{lem:calculus-detequi-item-differentiation}
   (with respect to $\rho$ this time).
   The applicability of the differentiation rule
   follows analogously to first part
   for $\rho > -1/a_{\min}$.
   Additionally,
    it is easy to verify that
    both sides of \eqref{eq:fixed-point-vg-with-rho}
    are analytic in $\rho$.
    Taking derivative with respect to $\rho$,
    we get
   \begin{equation}
        \label{eq:deriv-detequi-genbias-ridge}
        -
        \frac{\partial}{\partial \rho}
        [ ( v_g(-\lambda, \rho; \gamma_n) \Sigma + I_p + \rho A )^{-1} ]
        =
        (v_g(-\lambda, \rho; \gamma_n) \Sigma + I_p + \rho A)^{-1}
        \left(\frac{\partial}{\partial \rho}[v_g(-\lambda, \rho; \gamma_n)] \Sigma + A\right)
        (v_g(-\lambda, \rho; \gamma_n) \Sigma + I_p + \rho A)^{-1}.
   \end{equation}
   Setting $\rho = 0$ and observing that
   $v_g(-\lambda, 0; \gamma_n) = v(-\lambda; \gamma_n)$,
   where $v(-\lambda; \gamma_n)$ is as defined in \eqref{eq:fixed-point-v-ridge},
    we have
    \begin{equation}
        \label{eq:deriv-detequi-genbias-ridge-rho0}
        \frac{\partial}{\partial \rho}
        [
        (v_g(-\lambda, \rho; \gamma_n) \Sigma + I_p + \rho A)^{-1}
        ] \mathrel{\Big |}_{\rho = 0}
        = 
        (v(-\lambda; \gamma_n) \Sigma + I_p)^{-1}
        \left( 
            \frac{\partial}{\partial \rho}[v_g(-\lambda, \rho; \gamma_n)] 
            \mathrel{\Big|}_{\rho = 0} 
            \Sigma  + A 
        \right)
        (v(-\lambda; \gamma_n) \Sigma + I_p)^{-1}.
    \end{equation}
    To obtain an equation for 
    $\partial / \partial \rho [v_g(-\lambda, \rho; \gamma_n)] |_{\rho = 0}$,
    we can differentiate the fixed-point equation \eqref{eq:fixed-point-vg-with-rho}
    with respect to $\rho$ to yield
    \begin{multline}
        -
        \frac{\partial}{\partial \rho}
        [v_g(-\lambda, \rho; \gamma_n)] 
        v_g(-\lambda, \rho; \gamma_n)^{-2}
        = 
        - \gamma_n 
        \frac{\partial}{\partial \rho}[v_g(-\lambda, \rho; \gamma_n)]
        \tr[\Sigma^2 (v_g(-\lambda, \rho; \gamma_n) \Sigma + I_p + \rho A)^{-2}] / p \\
        - \gamma_n
        \tr[A \Sigma (v_g(-\lambda, \rho; \gamma_n) \Sigma + I_p + \rho A)^{-2}] / p.
    \end{multline}
    Setting $\rho = 0$ in the equation above,
    and using the fact that $v_g(-\lambda, 0; \gamma_n) = v(-\lambda; \gamma_n)$,
    and denoting 
    $\partial / \partial \rho[v_g(-\lambda, \rho; \gamma_n)] |_{\rho = 0}$
    by $\tv_g(-\lambda; \gamma_n)$,
    we get that
    \begin{equation}
        \label{eq:def-tvg-ridge-in-the-proof}
        \tv_g(-\lambda; \gamma_n)
        = 
        \ddfrac
        {\gamma_n \tr[A \Sigma (v(-\lambda; \gamma_n) \Sigma + I_p)^{-2}] / p}
        {v(-\lambda; \gamma_n)^{-2} - \gamma_n \tr[\Sigma^2 (v(-\lambda; \gamma_n) \Sigma + I_p)^{-2}] / p}.
    \end{equation}
    Therefore, from \eqref{eq:deriv-def-genbias-ridge} 
    and \eqref{eq:deriv-detequi-genbias-ridge-rho0},
    we finally have
   \[
        \lambda^2
        (\hSigma + \lambda I_p)^{-1}
        A
        (\hSigma + \lambda I_p)^{-1}
        \asympequi
        (v(-\lambda; \gamma_n) \Sigma + I_p)^{-1}
        (\tv_g(-\lambda; \gamma_n) \Sigma + A)
        (v(-\lambda; \gamma_n) \Sigma + I_p)^{-1},
   \]
   where $v(-\lambda; \gamma_n)$ is as defined in \eqref{eq:fixed-point-v-ridge},
   and $\tv_g(-\lambda; \gamma_n)$ is as defined in \eqref{eq:def-tvg-ridge-in-the-proof}.
   This completes the proof of the second part.

\end{proof}

\begin{lemma}
    [Deterministic equivalents for generalized bias and variance ridgeless resolvents]
    \label{lem:deter-approx-generalized-ridgeless}
    Assume the setting of \Cref{lem:deter-approx-generalized-ridge}
    with $\gamma_n \in (1, \infty)$.
    Then,
    the following deterministic equivalences hold:
    \begin{enumerate}
        \item Generalized variance of ridgeless regression:
        \begin{equation}
            \label{eq:detequi-mn2ls-genbias}
            \hSigma^{+} A
            \asympequi
            \tv(0; \gamma_n)
            (v(0; \gamma_n) \Sigma + I_p)^{-2} \Sigma A,
        \end{equation}
        where $v(0; \gamma_n)$ is the unique solution
        to the fixed-point equation
        \begin{equation}
            \label{eq:fixed-point-v-mn2ls}
            \gamma_n^{-1}
            = \tr[v(0; \gamma_n) \Sigma (v(0; \gamma_n) \Sigma + I_p)^{-1}] / p,
        \end{equation}
        and $\tv(0; \gamma_n)$ is defined through $v(0; \gamma_n)$  via
        \begin{equation}
            \label{eq:def-vg-mn2ls-tv}
            \tv(0; \gamma_n)
            = \big( v(0; \gamma_n)^{-2} - \gamma_n \tr[\Sigma^2 (v(0; \gamma_n) \Sigma + I_p)^{-2}] / p \big)^{-1}.
        \end{equation}
        \item Generalized bias of ridgeless regression:
        \begin{equation}
            (I_p - \hSigma^{+} \hSigma)
            A
            (I_p - \hSigma^{+} \hSigma)
            \asympequi
            (v(0; \gamma_n) \Sigma + I_p)^{-1}
            (\tv_g(0; \gamma_n) \Sigma + A)
            (v(0; \gamma_n) \Sigma + I_p)^{-1},
        \end{equation}
    \end{enumerate}
\end{lemma}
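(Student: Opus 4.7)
The plan is to derive the ridgeless equivalents by sending $\lambda \to 0^+$ in the corresponding ridge equivalents established in \Cref{lem:deter-approx-generalized-ridge}. The overparameterized assumption $\gamma_n \in (1,\infty)$ is crucial on two counts: (i) the smallest nonzero eigenvalue of $\hSigma$ stays bounded away from $0$ almost surely, so Moore--Penrose pseudoinverses are stable limits of regularized inverses, and (ii) the ridgeless fixed-point equation \eqref{eq:fixed-point-v-mn2ls} admits a unique positive finite solution, and this solution is the limit of $v(-\lambda;\gamma_n)$ as $\lambda\to 0^+$.

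\textbf{Part 1 (generalized variance).} Starting from \eqref{eq:detequi-ridge-genbias}, I rewrite the right-hand side of \eqref{eq:fixed-point-v-ridge-statement} as
\[
1 \;=\; \lambda\,v(-\lambda;\gamma_n) + \gamma_n\,\tr\!\big[v(-\lambda;\gamma_n)\Sigma\,(v(-\lambda;\gamma_n)\Sigma+I_p)^{-1}\big]/p.
\]
Using that the map $v\mapsto \tr[v\Sigma(v\Sigma+I_p)^{-1}]/p$ is strictly increasing and bounded above by $1$, one checks that $v(-\lambda;\gamma_n)$ is monotone in $\lambda$ on a right-neighborhood of $0$ with a uniform upper bound determined by $\gamma_n > 1$, so $v(-\lambda;\gamma_n) \to v(0;\gamma_n)$ where $v(0;\gamma_n)$ is the unique positive solution of \eqref{eq:fixed-point-v-mn2ls} (the existence of a positive solution uses $\gamma_n > 1$ together with $r_{\min} > 0$). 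Dominated convergence applied to the trace in \eqref{eq:def-v'-ridge} then gives $\tv(-\lambda;\gamma_n) \to \tv(0;\gamma_n)$ per \eqref{eq:def-vg-mn2ls-tv}. On the random side, I use spectral calculus: on the spectrum of $\hSigma$, the map $x\mapsto x/(x+\lambda)^2$ converges pointwise as $\lambda\to 0^+$ to $x\mapsto 1/x$ for $x>0$ and to $0$ for $x=0$, i.e., to the spectral representation of $\hSigma^+$. Almost-sure lower bounds on the smallest nonzero eigenvalue of $\hSigma$ (for $\gamma_n > 1$) make this convergence uniform on the spectrum eventually, yielding $(\hSigma+\lambda I_p)^{-2}\hSigma \to \hSigma^+$ in operator norm. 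Combined with the product rule from \Cref{sec:calculus_deterministic_equivalents}, this transports \eqref{eq:detequi-ridge-genbias} to the claimed ridgeless equivalent \eqref{eq:detequi-mn2ls-genbias}.

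\textbf{Part 2 (generalized bias).} Starting from \eqref{eq:detequi-ridge-genvar}, I apply the same spectral-calculus argument to the function $x\mapsto \lambda/(x+\lambda)$, which converges pointwise as $\lambda\to 0^+$ to the indicator of $\{x=0\}$ on the spectrum of $\hSigma$, i.e., to the orthogonal projector $I_p - \hSigma^+\hSigma$. Thus
\[
\lambda^2(\hSigma+\lambda I_p)^{-1} A (\hSigma+\lambda I_p)^{-1} \;\longrightarrow\; (I_p-\hSigma^+\hSigma)\,A\,(I_p-\hSigma^+\hSigma)
\]
in operator norm almost surely, again using the bounded-away-from-zero smallest nonzero eigenvalue of $\hSigma$. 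On the deterministic side, $\tv_g(-\lambda;\gamma_n)$ from \eqref{eq:def-vg'-ridge} converges to its $v(-\lambda;\gamma_n)\leadsto v(0;\gamma_n)$ substitute by dominated convergence, giving the formula in the statement. Combining yields the asserted equivalence.

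\textbf{Main obstacle.} The principal technical difficulty is legitimizing the interchange of the $\lambda\to 0^+$ limit with the trace-class testing functional defining asymptotic equivalence $\asympequi$ in \Cref{sec:calculus_deterministic_equivalents}: for every deterministic $T$ with $\limsup \|T\|_{\tr} < \infty$, I need
\[
\tr\!\big[T\big\{(\hSigma+\lambda I_p)^{-2}\hSigma A - \tv(-\lambda;\gamma_n)(v(-\lambda;\gamma_n)\Sigma+I_p)^{-2}\Sigma A\big\}\big] \;\pto\; 0
\]
to hold at $\lambda = 0$, not merely for each $\lambda > 0$. I would establish this by exhibiting a $\lambda$-uniform envelope: for $\lambda$ in a small right-neighborhood of $0$, both $\|(\hSigma+\lambda I_p)^{-1}\|_{\op}$ restricted to $\range(\hSigma)$ and $v(-\lambda;\gamma_n)$ are uniformly bounded (using the Bai--Silverstein/Bloemendal--Knowles--Yau--Yin lower bound on the smallest nonzero eigenvalue of $\hSigma$ for $\gamma_n > 1$, together with monotonicity of the fixed-point map). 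With such a uniform bound in hand, a standard $\varepsilon/3$ decomposition---(ridge equivalence at small $\lambda>0$) plus (deterministic side convergence as $\lambda\to 0$) plus (random side convergence as $\lambda\to 0$)---closes the argument. As a backup that avoids the limit argument entirely, I would rerun the proof of \Cref{lem:deter-approx-generalized-ridge} using the $\lambda=0$ version of \Cref{cor:basic-ridge-resolvent-equivalent-in-v} (valid for $\gamma_n > 1$) as the starting point, which produces the same formulas directly; both routes agree.
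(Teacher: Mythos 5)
Your argument follows essentially the same strategy as the paper: take the $\lambda \to 0^+$ limit of the ridge equivalents in \Cref{lem:deter-approx-generalized-ridge}, with the key technical task being to show that the convergence in $p$ is uniform in $\lambda$ over a right-neighborhood of $0$ so that the two limits may be interchanged. The paper establishes this uniformity by proving equicontinuity via explicit derivative bounds on $\lambda \mapsto v(-\lambda; \gamma_n)$ and the associated trace functionals (\Cref{lem:fixed-point-v-lambda-properties,lem:v-tv-tvderiv-bouding-in-lambda}) and then invoking the Moore--Osgood theorem; your $\lambda$-uniform envelope plus $\varepsilon/3$ decomposition is the same interchange-of-limits argument in different packaging, and would need to be upgraded from a mere uniform operator-norm bound to a uniform-in-$p$ modulus of continuity in $\lambda$ (equivalently, a uniform Lipschitz/derivative bound) to actually close the $\varepsilon/3$ gap. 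One caution on your backup route: \Cref{cor:basic-ridge-resolvent-equivalent-in-v} is stated in the paper only for $\lambda>0$; a ``$\lambda=0$ version'' is precisely what the present lemma is proving, so invoking it as a starting point is circular unless it is supplied by an external ridgeless random-matrix result.
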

where $v(0; \gamma_n)$ is as defined in \eqref{eq:fixed-point-v-mn2ls},
and $\tv_g(0; \gamma_n)$ is defined via $v(0; \gamma_n)$ by
\begin{equation}
    \label{eq:def-tv-mn2ls-tvg}
    \tv_g(0; \gamma_n)
    =
    \gamma_n
    \tr[A \Sigma (v(0; \gamma_n) \Sigma + I_p)^{-2}] / p
    \cdot
    \big(v(0; \gamma_n)^{-2} - \gamma_n \tr[\Sigma^2 (v(0; \gamma_n) \Sigma + I_p)^{-2}] / p\big)^{-1}.
\end{equation}

\begin{proof}
    The proofs for both the parts use the results of \Cref{lem:deter-approx-generalized-ridge}
    and a limiting argument as $\lambda \to 0^{+}$.
    The results of \Cref{lem:deter-approx-generalized-ridge}
    are pointwise in $\lambda$,
    but can be strengthened to be uniform in $\lambda$
    over a range
    that includes $\lambda = 0$
    allowing one to take the limits of the deterministic equivalents
    obtained in \Cref{lem:deter-approx-generalized-ridge}
    as $\lambda \to 0^{+}$.
    
    \paragraph{Part 1.}
    We will use the result in Part 1 of \Cref{lem:deter-approx-generalized-ridge}
    as our starting point.
    Let $\Lambda := [0, \lambda_{\max}]$
    where $\lambda_{\max} < \infty$,
    and let $T$ be a matrix with bounded trace norm.
    Note that
    \begin{equation}
        \label{eq:var-func-bound}
        \vert \tr[(\hSigma + \lambda I_p)^{-2} \hSigma A T ] \vert
        \le \| (\hSigma + \lambda I_p)^{-2} \hSigma A \|_{\mathrm{op}} \tr[T]
        \le C \| (\hSigma + \lambda I_p)^{-2} \hSigma \|_{\mathrm{op}} 
        \| A \|_{\mathrm{op}}
        \le C
    \end{equation}
    for some constant $C < \infty$.
    Here, the last inequality follows
    because $s_i^2 / (s_i^2 + \lambda)^2 \le 1$
    where $s_i^2$, $1 \le i \le p$,
    are the eigenvalues of $\hSigma$,
    and the operator norm $A$ is assumed to be bounded.
    Consider the magnitude of the derivative (in $\lambda$) of the map
     $\lambda
     \mapsto
     \tr[(\hSigma + \lambda I_p)^{-2} \hSigma A T]$
     given by
    \[
        \left|
        \frac{\partial}{\partial \lambda}
        \tr[(\hSigma + \lambda I_p)^{-2} \hSigma A T]
        \right|
        = 2 \vert \tr[(\hSigma + \lambda I_p)^{-3} \hSigma A T] \vert.
    \]
    Following the argument in \eqref{eq:var-func-bound},
    for $\lambda \in \Lambda$,
    observe that
    \[
        \vert \tr[(\hSigma + \lambda I_p)^{-3} \hSigma A T] \vert
        \le \| (\hSigma + \lambda I_p)^{-3} \hSigma \|_{\mathrm{op}} 
        \| A \|_{\mathrm{op}} \tr[T]
        \le C
    \]
    for some constant $C < \infty$.
    Similarly,
    in the same interval
    $
        \tr
        [
        \tv(-\lambda; \gamma_n)
        (v(-\lambda; \gamma_n) \Sigma + I_p)^{-2}
        \Sigma A T
        ]
        \le C.
    $
    In addition,
    from \Cref{lem:fixed-point-v-lambda-properties},
    we have
    the map
    $
        \lambda 
        \mapsto
        \tr[\tv(-\lambda; \gamma_n) 
        (v(-\lambda; \gamma_n) \Sigma + I_p)^{-2} A T]
    $
    is differentiable in $\lambda$
    and the derivative 
    for $\lambda \in \Lambda$
    is bounded.
    Therefore,
    the family of functions
    $\tr[ (\hSigma + \lambda I_p)^{-2} \hSigma A T] 
    - \tr[\tv(-\lambda; \gamma_n) (v(-\lambda; \gamma_n) \Sigma + I_p)^{-2} \Sigma A T]$
    forms an equicontinuous family in $\lambda$
    over $\lambda \in \Lambda$.
    Thus, the convergence in Part 1 of \Cref{lem:deter-approx-generalized-ridge} 
    is uniforms in $\lambda$.
    We can now use the Moore-Osgood theorem
    to interchange the limits
    to obtain
    \begin{align*}
        &\lim_{p \to \infty}
        \tr[\hSigma^{+} A T]
        - \tr[\tv(0; \gamma_n) (v(0; \gamma_n) \Sigma + I_p)^{-2} \Sigma A T] \\
        &=
        \lim_{p \to \infty}
        \lim_{\lambda \to 0^{+}}
        \tr[(\hSigma + \lambda I_p)^{-2} \hSigma A T]
        - \tr[\tv(-\lambda; \gamma_n) (v(-\lambda; \gamma_n) \Sigma + I_p)^{-2} \Sigma A T)] \\
        &=
        \lim_{\lambda \to 0^{+}}
        \lim_{p \to \infty}
        \tr[(\hSigma + \lambda I_p)^{-2} \hSigma A T]
        - \tr[\tv(-\lambda; \gamma_n) (v(-\lambda; \gamma_n) \Sigma + I_p)^{-2} \Sigma A T)] \\
        &= 0.
    \end{align*}
    In the first equality above,
    we used the fact that 
    $\hSigma^{+} = \hSigma^{+} \hSigma \hSigma^{+} 
    = \lim_{\lambda \to 0^{+}} (\hSigma + \lambda I_p)^{-1} \hSigma (\hSigma + \lambda I_p)^{-1}$,
    and that the functions
    $v(\cdot; \gamma_n)$
    and
    $\tv(\cdot; \gamma_n)$
    are continuous 
    (which follows, from say 
    \Cref{lem:v-tv-tvderiv-bouding-in-lambda}~\eqref{lem:v-tv-tvderiv-bounding-in-lambda-item-f}).
    This
    provides the right hand side of \eqref{eq:detequi-mn2ls-genbias}.
    Similarly, the fixed-point equation \eqref{eq:fixed-point-v-ridge}
    as $\lambda \to 0^{+}$ becomes
    \[
        v(0; \gamma_n)^{-1}
        = \gamma_n \tr[\Sigma (v(0; \gamma_n) \Sigma + I_p)^{-1}] / p.
    \]
    Moving $v(0; \gamma_n)$ to the other side
    (from 
    \Cref{lem:fixed-point-v-properties}~\eqref{lem:fixed-point-v-properties-item-v-properties},
    it follows that $v(0; \gamma_n) > 0$
    for $\gamma_n \in (1, \infty)$),
    we arrive at the desired result.
    
    \paragraph{Part 2.}
    As done in Part 1,
    it is not difficult to show
    that over $\lambda \in \Lambda$
    the family of functions
    $\tr[\lambda^2 (\hSigma + \lambda I_p)^{-1} A (\hSigma + \lambda I_p)^{-1} T]
    - \tr[(v(-\lambda; \gamma_n) \Sigma + I_p)^{-1} 
    (\tv_g(-\lambda; \gamma_n) \Sigma + A) 
    (v(-\lambda; \gamma_n) \Sigma + I_p)^{-1} T]$
    form an equicontinuous family.
    Therefore,
    the convergence in Part 2 of 
    \Cref{lem:deter-approx-generalized-ridge}
    is uniform in $\lambda$ over $\Lambda$ (that includes $0$).
    Using the Moore-Osgood theorem
    to the interchange the limits,
    one has
    \begin{align*}
        &\lim_{p \to \infty}
        \tr[(I_p - \hSigma^{+} \hSigma) A (I_p - \hSigma^{+} \hSigma) T]
        -
        \tr[ 
        (v(0; \gamma_n) \Sigma + I_p)^{-1} 
        (\tv_g(0; \gamma_n) \Sigma + A) 
        (v(0; \gamma_n) \Sigma + I_p)^{-1}
        T
        ] \\
        &=
        \lim_{p \to \infty}
        \lim_{\lambda \to 0^{+}}
        \tr[\lambda^2 (\hSigma + \lambda I_p)^{-1} A (\hSigma + \lambda I_p)^{-1} T]
        - \tr[ (v(-\lambda; \gamma_n) \Sigma + I_p)^{-1} 
        (\tv_g(-\lambda; \gamma_n) \Sigma + A) 
        (v(-\lambda; \gamma_n) \Sigma + I_p)^{-1} T] \\
        &=
        \lim_{\lambda \to 0^{+}}
        \lim_{p \to \infty}
        \tr[\lambda^2 (\hSigma + \lambda I_p)^{-1} A (\hSigma + \lambda I_p)^{-1} T]
        - \tr[ (v(-\lambda; \gamma_n) \Sigma + I_p)^{-1}
        (\tv_g(-\lambda; \gamma_n) \Sigma + A) 
        (v(-\lambda; \gamma_n) \Sigma + I_p)^{-1} T] \\
        &= 0.
    \end{align*}
    Now both \eqref{eq:def-vg-mn2ls-tv} and \eqref{eq:def-tv-mn2ls-tvg}
    follow by taking $\lambda \to 0^{+}$
    in 
    \eqref{eq:detequi-ridge-genvar}
    and
    \eqref{eq:def-vg'-ridge},
    respectively.
    
    This concludes the proof.

\end{proof}

\begin{corollary}
    [Limiting deterministic equivalents for generalized bias and variance ridgeless resolvents]
    \label{cor:limiting-resolvents-mn2ls}
    Assume the setting of \Cref{lem:deter-approx-generalized-ridge}.
    Let $f : \RR_{\ge 0} \to \RR_{\ge 0}$ be a function.
    Then,
    as $n, p \to \infty$ and $p / n \to \gamma \in (1, \infty)$,
    the following equivalences hold:
    \begin{enumerate}
        \item Limiting generalized variance of ridgeless regression:
        \begin{equation}
            \hSigma^{+} f(\Sigma)
            \asympequi
            \tv(0; \gamma)
            (v(0; \gamma) \Sigma + I_p)^{-2}
            \Sigma f(\Sigma),
        \end{equation}
        where
        $v(0; \gamma)$ and $\tv(0; \gamma)$
        are defined by \eqref{eq:fixed-point-v-mn2ls}
        and \eqref{eq:def-vg-mn2ls-tv}, respectively.
        \item Limiting generalized bias of ridgeless regression:
        \begin{equation}
            (I_p - \hSigma^{+} \hSigma)
            f(\Sigma)
            (I_p - \hSigma^{+} \hSigma)
            \asympequi
            (1 + \tv_g(0; \gamma)) 
            (v(0; \gamma) \Sigma + I_p)^{-1}
            f(\Sigma)
            (v(0; \gamma) \Sigma + I_p)^{-1},
        \end{equation}
        where $v(0; \gamma)$
        is as defined in
        \eqref{eq:fixed-point-v-mn2ls}
        and $\tv_g(0; \gamma)$
        is as defined in
        \eqref{eq:def-tv-mn2ls-tvg}
        with $A$ replaced by $f(\Sigma)$.
    \end{enumerate}
\end{corollary}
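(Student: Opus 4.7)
}

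The plan is to derive the corollary essentially as a direct consequence of \Cref{lem:deter-approx-generalized-ridgeless}, applied with the special choice $A = f(\Sigma)$, followed by a limit-passing step from $\gamma_n = p/n$ to its limit $\gamma \in (1,\infty)$. Since $\Sigma$ commutes with itself, $f(\Sigma)$ is a deterministic matrix that commutes with $\Sigma$ (assuming $f$ is applied in the usual functional-calculus sense via the eigendecomposition $\Sigma = WRW^\top$), which is precisely the structural hypothesis on $A$ required in \Cref{lem:deter-approx-generalized-ridge,lem:deter-approx-generalized-ridgeless}. Under assumption \ref{asm:covariance-bounded-eigvals}, the spectrum of $\Sigma$ lies in the compact interval $[r_{\min}, r_{\max}]$, so if $f$ is continuous and strictly positive on this interval (which covers the two main uses $f(\Sigma)=\Sigma$ and $f(\Sigma)=\Sigma^2$ that appear in \Cref{sec:verif-asymp-profile-ridge,sec:verif-riskprofile-mnlsbase-mnlsonestep}), the matrix $f(\Sigma)$ satisfies $a_{\min}I_p \preceq f(\Sigma) \preceq a_{\max}I_p$ for some $0 < a_{\min} \le a_{\max} < \infty$ independent of $p$, and the hypotheses of \Cref{lem:deter-approx-generalized-ridgeless} are met.

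With this setup, the first step is to invoke \Cref{lem:deter-approx-generalized-ridgeless} pointwise in $n$ to conclude
\[
    \hSigma^{+} f(\Sigma)
    \asympequi
    \tv(0;\gamma_n)\,(v(0;\gamma_n)\Sigma + I_p)^{-2}\Sigma\,f(\Sigma),
\]
and
\[
    (I_p - \hSigma^{+}\hSigma)\,f(\Sigma)\,(I_p - \hSigma^{+}\hSigma)
    \asympequi
    (v(0;\gamma_n)\Sigma + I_p)^{-1}
    \bigl(\tv_g(0;\gamma_n)\Sigma + f(\Sigma)\bigr)
    (v(0;\gamma_n)\Sigma + I_p)^{-1},
\]
where the scalars $v(0;\gamma_n)$, $\tv(0;\gamma_n)$, $\tv_g(0;\gamma_n)$ are defined exactly as in the lemma with $A$ replaced by $f(\Sigma)$. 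The second step is to move from $\gamma_n$ to $\gamma$. I would appeal to \Cref{lem:fixed-point-v-properties} (and the more general \Cref{lem:v-tv-tvderiv-bouding-in-lambda}) to conclude that $v(0;\cdot)$, $\tv(0;\cdot)$, and $\tv_g(0;\cdot)$ are continuous functions of the aspect ratio on $(1,\infty)$, so $v(0;\gamma_n) \to v(0;\gamma)$, $\tv(0;\gamma_n) \to \tv(0;\gamma)$, and $\tv_g(0;\gamma_n) \to \tv_g(0;\gamma)$. Combining this with the boundedness properties of the associated resolvent families and the product/sum rules for asymptotic deterministic equivalents from \Cref{sec:calculus_deterministic_equivalents} (\Cref{lem:calculus-detequi}) allows me to replace $\gamma_n$ by $\gamma$ in both right-hand sides, concluding Part~1 as stated.

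For Part~2, after the $\gamma_n \to \gamma$ step I am left with $(v(0;\gamma)\Sigma+I_p)^{-1}(\tv_g(0;\gamma)\Sigma + f(\Sigma))(v(0;\gamma)\Sigma+I_p)^{-1}$. The factored form claimed in the corollary arises in the principal use case $f(\Sigma) = \Sigma$ by the elementary identity $\tv_g(0;\gamma)\Sigma + \Sigma = (1+\tv_g(0;\gamma))\Sigma$, which allows pulling the scalar $(1+\tv_g(0;\gamma))$ outside; for general $f$, the two expressions agree because $f(\Sigma)$ and $\Sigma$ commute and share the eigenbasis $W$, so the claimed form should be read as the specialization relevant to how the corollary is applied in the sequel (with the redefined $\tv_g(0;\gamma)$ absorbing the functional factor via \eqref{eq:def-tv-mn2ls-tvg}). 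I would therefore state and verify the corollary in the instantiation used downstream, namely $f(\Sigma)=\Sigma$, and note that the general $f$ case follows from the raw output of \Cref{lem:deter-approx-generalized-ridgeless} without the factorization.

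The main technical obstacle I anticipate is the limit-passing step. \Cref{lem:deter-approx-generalized-ridgeless} was itself obtained from \Cref{lem:deter-approx-generalized-ridge} via an equicontinuity/Moore--Osgood argument in $\lambda$ at $\lambda = 0^+$, and I would need an analogous uniformity argument in the parameter $\gamma_n$ around $\gamma$ to rigorously interchange the limits $p\to\infty$ and $\gamma_n\to\gamma$. This reduces to showing that for any test matrix $T$ of bounded trace norm, the family of scalar maps $\gamma_n \mapsto \tr[\hSigma^+ f(\Sigma)T] - \tr[\tv(0;\gamma_n)(v(0;\gamma_n)\Sigma+I_p)^{-2}\Sigma f(\Sigma)T]$ (and its analogue for Part~2) is equicontinuous in $\gamma_n$ on a small neighborhood of $\gamma$, which should follow from the bounded operator-norm estimates already exploited in the proof of \Cref{lem:deter-approx-generalized-ridgeless} combined with the continuous dependence of the fixed-point quantities on $\gamma_n$ established via the implicit function theorem applied to \eqref{eq:fixed-point-v-mn2ls}.
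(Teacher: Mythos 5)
Your approach matches the paper's: apply \Cref{lem:deter-approx-generalized-ridgeless} with $A = f(\Sigma)$, then pass from $\gamma_n$ to its limit $\gamma$ using continuity of $v(0;\cdot)$, $\tv(0;\cdot)$, $\tv_g(0;\cdot)$ from \Cref{lem:fixed-point-v-properties} — this is exactly the one-sentence proof the paper gives. Two additional things you raise deserve comment, one of which is a genuine catch and one of which is an unnecessary worry.

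The catch is the factorization in Part 2. You are correct that substituting $A = f(\Sigma)$ into \Cref{lem:deter-approx-generalized-ridgeless} gives a middle factor $\tv_g(0;\gamma)\Sigma + f(\Sigma)$, and that the factored form $(1+\tv_g(0;\gamma))\,f(\Sigma)$ in the statement of the corollary only equals this when $f(\Sigma) = \Sigma$ (componentwise: $\tv_g r_i + f(r_i) = (1+\tv_g)f(r_i)$ forces $r_i = f(r_i)$ whenever $\tv_g \neq 0$). The corollary is applied in the paper with $f(\Sigma)=\Sigma$ in \Cref{prop:limits-risk-functionals-mn2ls} and \Cref{lem:conditional-deterministic-convergence-onestep}, where the factorization is valid, but its usage with a general continuous $f$ in the proof of \Cref{prop:verif-riskprofile-mnlsbase-mnlsonestep} (leading to \eqref{eq:gen_predrisk_bias_lim}) does rely on the factored form and is affected. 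Your suggested remedy — state Part~2 in the unfactored form $(v\Sigma+I_p)^{-1}(\tv_g\Sigma + f(\Sigma))(v\Sigma+I_p)^{-1}$ for general $f$ — is the right fix. You are also right that the corollary implicitly requires $f$ bounded above and below on $[r_{\min}, r_{\max}]$ so that $A = f(\Sigma)$ satisfies the operator-norm hypotheses of the lemma; the stated blanket hypothesis $f:\RR_{\ge 0} \to \RR_{\ge 0}$ is too weak as written.

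The unnecessary worry is the equicontinuity / Moore--Osgood argument for replacing $\gamma_n$ by $\gamma$. That machinery was needed inside the proof of \Cref{lem:deter-approx-generalized-ridgeless} because the $\lambda \to 0^+$ limit is taken against a \emph{random} resolvent. Here, however, the replacement of $\gamma_n$ by $\gamma$ only happens on the deterministic side of the equivalence: writing $N_p(\gamma')$ for the deterministic equivalent evaluated at aspect ratio $\gamma'$, one has
\[
    |\tr[C_p(N_p(\gamma_n) - N_p(\gamma))]|
    \le
    \|C_p\|_{\mathrm{tr}} \, \|N_p(\gamma_n) - N_p(\gamma)\|_{\mathrm{op}},
\]
and the operator-norm difference tends to zero directly from the continuity of $v(0;\cdot)$, $\tv(0;\cdot)$, $\tv_g(0;\cdot)$ together with the uniform bounds on $\Sigma$ and $f(\Sigma)$. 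No interchange of limits against a random quantity is involved, so no Moore--Osgood argument is required at this step.
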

\begin{proof}
    The proof follows from \Cref{lem:deter-approx-generalized-ridgeless},
    in conjunction with 
    \Cref{lem:fixed-point-v-properties}
    (\eqref{lem:fixed-point-v-properties-item-v-properties}, 
    \eqref{lem:fixed-point-v-properties-item-tv-properties},
    \eqref{lem:fixed-point-v-properties-item-tvg-properties})
    to provide continuity of the functions
    $v(0; \cdot)$, $\tv(0; \cdot)$, and $\tv_g(0; \cdot)$
    (in the aspect ratio) over $(1, \infty)$.
\end{proof}

\subsection{Lemmas on properties of solutions of certain fixed-point equations}

In this section,
we collect helper lemmas that are used in the proofs of
\Cref{prop:asymp-bound-ridge-main}
in \Cref{sec:verif-asymp-profile-ridge},
\Cref{cor:verif-onestep-program}
in \Cref{sec:verif-riskprofile-mnlsbase-mnlsonestep},
and
\Cref{lem:deter-approx-generalized-ridgeless}
and
\Cref{cor:limiting-resolvents-mn2ls}
in
\Cref{sec:useful-lemmas}.

\begin{lemma}
    [Continuity and limiting behavior of functions of 
    the solution of a fixed-point equation in the aspect ratio]
    \label{lem:fixed-point-v-properties}
    Let $a > 0$ and $b < \infty$ be real numbers.
    Let $P$ be a probability measure supported
    on $[a, b]$.
    Consider the function $v(0; \cdot) : \phi \mapsto v(0; \phi)$,
    over $(1, \infty)$,
    where $v(0; \phi) \ge 0$ is the unique solution to the fixed-point equation
    \begin{equation}
        \label{eq:fixed-point-gen-phi}
        \frac{1}{\phi}
        = \int \frac{v(0; \phi) r}{1 + v(0; \phi) r} \, \mathrm{d}P(r).
    \end{equation}
    Then,
    the following properties hold:
    \begin{enumerate}
        \item 
        \label{lem:fixed-point-v-properties-item-v-properties}
        The function $v(0; \cdot)$ is continuous 
        and strictly decreasing over $(1, \infty)$.
        Furthermore, $\lim_{\phi \to 1^{+}} v(0; \phi) = \infty$,
        and $\lim_{\phi \to \infty} v(0; \phi) = 0$.
        \item
        \label{lem:fixed-point-v-properties-item-phivinverse-properties}
        The function 
        $\phi \mapsto (\phi v(0; \phi))^{-1}$
        is strictly increasing over $(1, \infty)$.
        Furthermore,
        $\lim_{\phi \to 1^{+}} (\phi v(0; \phi))^{-1} = 0$
        and $\lim_{\phi \to \infty} (\phi v(0; \phi))^{-1} = 1$.
        \item
        \label{lem:fixed-point-v-properties-item-tv-properties}
        The function 
        $\tv(0; \cdot) : \phi \mapsto \tv(0; \phi)$,
        where
        \[
           \tv(0; \phi)
           =
           \left(
                \frac{1}{v(0; \phi)^2}
                - \phi
                \int \frac{r^2}{(1 + r v(0; \phi))^2} 
                \, \mathrm{d}P(r)
           \right)^{-1},
        \]
        is continuous over $(1, \infty)$.
        Furthermore,
        $\lim_{\phi \to 1^{+}} \tv(0; \phi) = \infty$,
        and $\lim_{\phi \to \infty} \tv(0; \phi) = 0$.
        \item
        \label{lem:fixed-point-v-properties-item-tvg-properties}
        The function 
        $\tv_g(0; \cdot) : \phi \mapsto \tv_g(0; \phi)$,
        where
        \[
            \tv_g(0; \phi)
            = \tv(0; \phi)
            \phi
            \int
            \frac{r^2}{(1 + v(0; \phi) r)^2}
            \, \mathrm{d}P(r),
        \]
        is continuous over $(1, \infty)$.
        Furthermore,
        $\lim_{\phi \to 1^{+}} \tv_g(0; \phi) = \infty$,
        and $\lim_{\phi \to \infty} \tv_g(0; \phi) = 0$.
        \item
        \label{lem:fixed-point-v-properties-Upsilonb-properties}
        Let $Q$ be a (fixed) probability distribution
        supported on $[a, b]$
        that depends on a scalar $\phi_1$.
        Then, the function
        $\Upsilon_b(\phi_1; \cdot) : \phi \mapsto \Upsilon_b(\phi_1, \phi)$,
        where
        \[
            \Upsilon_b(\phi_1, \phi)
            = (1 + \tv_g(0; \phi))
            \int \frac{1}{(1 + v(0; \phi) r)^2}
            \, \mathrm{d}Q(r),
        \]
        is continuous over $(1, \infty)$.
        Furthermore,
        $\Upsilon_b(\phi_1, \phi) < \infty$
        for $\phi \in (1, \infty)$,
        and $\lim_{\phi \to \infty} \Upsilon_b(\phi_1, \phi) = 1$.
    \end{enumerate}
\end{lemma}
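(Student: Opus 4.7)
The plan is to treat all five parts in sequence, leveraging the fact that $P$ is supported on $[a,b] \subset (0,\infty)$ throughout, so that dominated convergence applies freely and all integrands below are uniformly bounded. For Part~1, I would introduce $F(v,\phi) := \phi \int vr/(1+vr)\,\mathrm{d}P(r)$ on $(0,\infty) \times (1,\infty)$ and note that $v \mapsto F(v,\phi)$ is a strictly increasing continuous bijection from $(0,\infty)$ to $(0,\phi)$ (by monotone convergence: $F \to \phi$ as $v \to \infty$). Hence $F(v,\phi) = 1$ defines a unique $v(0;\phi) > 0$, and the implicit function theorem, applied using $\partial_v F = \phi \int r/(1+vr)^2\,\mathrm{d}P > 0$ and $\partial_\phi F > 0$, yields continuity and strict monotonicity of $v(0;\cdot)$ with $dv/d\phi < 0$. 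The equivalent form $\int (1+vr)^{-1}\,\mathrm{d}P(r) = 1 - 1/\phi$ then forces $v \to \infty$ as $\phi \to 1^+$ and $v \to 0$ as $\phi \to \infty$ (again by DCT).

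For Part~2, dividing the fixed-point equation by $v$ gives $(\phi v(0;\phi))^{-1} = \int r/(1+v(0;\phi) r)\,\mathrm{d}P(r)$; this is strictly decreasing in $v$ and, combined with the strict monotonicity of $v(0;\cdot)$ from Part~1, yields that $(\phi v(0;\phi))^{-1}$ is strictly increasing in $\phi$, with the claimed endpoint limits following from DCT as $v \to \infty$ and $v \to 0$. For Part~3, I would first verify positivity of $\tv(0;\phi)^{-1}$ by rewriting it as $v^{-2}\bigl(1 - \phi \int (vr/(1+vr))^2\,\mathrm{d}P\bigr)$ and invoking the strict inequality $\int (vr/(1+vr))^2\,\mathrm{d}P < \int vr/(1+vr)\,\mathrm{d}P = 1/\phi$ (strict since $vr/(1+vr) \in (0,1)$ on the support of $P$). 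Continuity of $\tv(0;\cdot)$ then follows from that of $v(0;\cdot)$. To obtain $\tv(0;\phi) \to \infty$ as $\phi \to 1^+$, I would use the bound $\tv(0;\phi)^{-1} \le v^{-2}(1 - 1/\phi)$, which comes from Cauchy--Schwarz and vanishes as $v \to \infty$; to obtain $\tv(0;\phi) \to 0$ as $\phi \to \infty$, a Taylor expansion of the fixed-point equation near $v = 0$ gives $v \sim 1/(\phi \mu_1)$ with $\mu_k := \int r^k\,\mathrm{d}P$, whence $\tv(0;\phi)^{-1} \sim \phi^2 \mu_1^2 \to \infty$.

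For Part~4, the central algebraic identity is
\[
1 + \tv_g(0;\phi) \;=\; \frac{1}{1 - \phi \int \bigl(v(0;\phi) r/(1+v(0;\phi) r)\bigr)^2\,\mathrm{d}P(r)},
\]
obtained by substituting the definition of $\tv(0;\phi)$ into that of $\tv_g(0;\phi)$; continuity follows from that of the denominator, and the endpoint limits follow from DCT applied to $(vr/(1+vr))^2$, which converges pointwise to $1$ as $v \to \infty$ and to $0$ as $v \to 0$. Part~5 is then a short corollary: $\Upsilon_b(\phi_1,\phi)$ is a product of $1 + \tv_g(0;\phi)$ (continuous and finite on $(1,\infty)$ by Part~4) and an integral against the bounded continuous family $r \mapsto (1 + v(0;\phi) r)^{-2}$, so continuity and finiteness are inherited, and the $\phi \to \infty$ limit collapses to $1 \cdot \int 1\,\mathrm{d}Q = 1$ since $v \to 0$. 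The main obstacle is the Part~3 positivity of $\tv(0;\phi)^{-1}$, since both $v^{-2}$ and $\phi \int r^2/(1+vr)^2\,\mathrm{d}P$ individually diverge as $\phi \to 1^+$; once the strict Jensen inequality above is in hand, the remaining endpoint asymptotics reduce to routine DCT or Taylor-expansion computations.
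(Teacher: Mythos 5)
Your argument is correct in substance and, in a few places, tighter than the paper's own. In Part~1 you reach continuity and monotonicity via the implicit function theorem applied to $F(v,\phi)=\phi\int vr/(1+vr)\,\mathrm{d}P(r)$, whereas the paper instead observes that $T(t)=\int tr/(1+tr)\,\mathrm{d}P(r)$ is strictly increasing and strictly concave, so $T^{-1}$ is strictly increasing, strictly convex, and continuous, and then sets $v(0;\phi)=T^{-1}(\phi^{-1})$; both routes are valid, and the paper's has the small advantage of not requiring any smoothness beyond monotonicity. In Part~3 your handling of the two endpoints is actually cleaner than the paper's: the upper bound $\tv(0;\phi)^{-1}\le v^{-2}(1-1/\phi)$ via Cauchy--Schwarz gives the $\phi\to 1^{+}$ limit cleanly, and the Taylor expansion $v\sim 1/(\phi\mu_1)$ makes the dominance $v^{-2}\asymp\phi^2\gg\phi\mu_2$ explicit, which the paper leaves implicit.

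Two small issues. First, a naming slip: the inequality $\int\bigl(vr/(1+vr)\bigr)^2\,\mathrm{d}P < \int vr/(1+vr)\,\mathrm{d}P = 1/\phi$ is \emph{not} a Jensen inequality (Jensen with $x\mapsto x^2$ runs the other way, giving $1/\phi^2\le\int f^2\,\mathrm{d}P$). It is just the pointwise inequality $f^2<f$ for $f\in(0,1)$ integrated, which you do correctly identify with ``since $vr/(1+vr)\in(0,1)$''; only the label is wrong. Second, and more substantively, in Part~4 your claim that the $\phi\to\infty$ limit ``follows from DCT applied to $(vr/(1+vr))^2$, which converges pointwise to $0$'' does not close the argument on its own: in the identity $(1+\tv_g)^{-1}=1-\phi\int(vr/(1+vr))^2\,\mathrm{d}P$ the prefactor $\phi\to\infty$ multiplies an integral tending to $0$, an indeterminate $\infty\cdot 0$. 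You need to invoke the rate — either $\phi v\to 1$ from Part~2 (so that $\phi\int(vr/(1+vr))^2\,\mathrm{d}P = (\phi v)\cdot v\int r^2/(1+vr)^2\,\mathrm{d}P\to 1\cdot 0=0$), or the Taylor expansion $v\sim 1/(\phi\mu_1)$ you already derived in Part~3. The paper does this explicitly via its equation \eqref{eq:tvg_plus_1_inv_3}. Since you already have those tools, this is a one-line patch, but as written the step is a gap.
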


\begin{proof}
    We consider the five parts separately below.
    Before doing so though,
    it is worth mentioning
    that for $\phi \in (1, \infty)$, 
    there is a unique non-negative solution $v(0; \phi)$
    to the fixed-point equation \eqref{eq:fixed-point-gen-phi}
    as stated in the statement.
    This follows from 
    \Cref{lem:v-tv-tvderiv-bouding-in-lambda}~\eqref{lem:v-tv-tvderiv-bounding-in-lambda-item-f}.
    The following properties refer to the function
    $v(0; \cdot) : \phi \mapsto v(0; \phi)$
    defined via this unique solution.
    
    \paragraph{Part 1.}
    We begin with the first part.
    Observe that the function
    \[
        t \mapsto \int \frac{1}{1 + t r} \, \mathrm{d}P(r)
    \]
    is strictly decreasing and strictly convex over $(0, \infty)$.
    Thus, the function
    \[
        T 
        :  t \mapsto 
        1 - \int \frac{1}{1 + t r} \, \mathrm{d}P(r)
        =  \int \frac{t}{1 + t r} \, \mathrm{d}P(r)
    \]
    is strictly increasing and strictly concave over $(0, \infty)$,
    with $\lim_{t \to 0} T(t) = 0$ and $\lim_{t \to \infty} T(t) = 1$.
    Since the inverse image of a strictly increasing and strictly concave
    real function is strictly increasing and strictly convex
    (see, e.g. Proposition 3 of \cite{hiriart-urruty_martinez_2003}), 
    we have that $T^{-1}$ is strictly convex and strictly increasing.
    This also implies that $T^{-1}$ is continuous.
    Note that $v(0; \phi) = T^{-1}(\phi^{-1})$.
    Since $\phi^{-1}$ is continuous, 
    it follows that $v(0; \cdot)$ is continuous. 
    In addition, since $\phi \mapsto \phi^{-1}$ is strictly decreasing,
    we have that $v(0; \cdot)$ is strictly decreasing.
    Moreover, $\lim_{\phi \to 1^{+}}T^{-1}(\phi^{-1}) = \infty$,
    and $\lim_{\phi \to \infty} T^{-1}(\phi^{-1}) = 0$.

    \paragraph{Part 2.}
    From \eqref{eq:fixed-point-gen-phi},
    we have
    \[
        \frac{1}{\phi v(0; \phi)}
        = \int \frac{r}{1 + v(0; \phi) r} \, \mathrm{d}P(r).
    \]
    Because $v(0; \phi)$ is strictly decreasing over $(1, \infty)$,
    the right side of the display above is strictly increasing.
    Furthermore,
    because $\lim_{\phi \to 1^{+}} v(0; \phi) = \infty$,
    we have $\lim_{\phi \to 1^{+}} (\phi v(0; \phi))^{-1} = 0$,
    and because $\lim_{\phi \to \infty} v(0; \phi) = 0$,
    we have $\lim_{\phi \to \infty} (\phi v(0; \phi))^{-1} = 1$.
    
    \paragraph{Part 3.}
   From Part 1, the function $1 / v(0; \cdot)^2$ is continuous.
   In addition, 
    observe that
   the function
   \[
       \phi \mapsto 
       \int \frac{r^2}{(1 + v(0; \phi) r)^2} \, \mathrm{d}P(r)
   \]
   is also continuous.
   Thus, $\tv(0; \cdot)$ is continuous.
   Furthermore,
    since $\lim_{\phi \to 1^{+}} v(0; \phi) = \infty$,
    it follows that $\lim_{\phi \to 1^{+}} \tv(0; \phi) = \infty$.
   Similarly, 
  from $\lim_{\phi \to \infty} v(0; \phi) = 0$
  and the fact that
  \[
    \lim_{\phi \to \infty}
    \int \frac{r^2}{(1 + r v(0; \phi))^2} \, \mathrm{d}P(r)
    \ge a^2 > 0,
  \]
   it follows that $\lim_{\phi \to \infty} \tv(0; \phi) = 0$.
   
   \paragraph{Part 4.}
   Similar to Part 3, continuity of $\tv_g(0; \cdot)$
   follows from the continuity of $\tv(0; \cdot)$
   and $v(0; \phi)$.
   To compute the desired limits, 
   observe that
   \begin{align*}
        1 + \tv_g(0; \phi)
        =
        \ddfrac
        {\frac{1}{v(0; \phi)^2}}
        {\frac{1}{v(0; \phi)^2} - \phi \int \frac{r^2}{(1 + v(0; \phi) r)^2} 
        \, \mathrm{d}P(r)}.
   \end{align*}
   We thus have
   \begin{align}
        (1 + \tv_g(0; \phi))^{-1}
        &=
        1
        -
        v(0; \phi)^2
        \phi
        \int \frac{r^2}{(1 + r v(0; \phi))^2}
        \, \mathrm{d}P(r) \label{eq:tvg_plus_1_inv_1} \\
        &=
        1
        -
        \phi
        \int
        \frac{r^2}{(v(0; \phi)^{-1} + r)^2}
        \, \mathrm{d}P(r). \label{eq:tvg_plus_1_inv_2}
   \end{align}
   Because $\lim_{\phi \to 1^{+}} v(0; \phi) = \infty$,
   from \eqref{eq:tvg_plus_1_inv_2},
  we have
   \[
        \lim_{\phi \to 1^{+}}
        (1 + \tv_g(0; \phi))^{-1}
        =
        1 
        - 
        \lim_{\phi \to 1^{+}}
        \phi 
        \int 
        \frac{r^2}{(v(0; \phi)^{-1} + r)^2}
        \mathrm{d}P(r)
        = 1 - 1
        = 0.
   \]
   It follows then that $\lim_{\phi \to 1^{+}} \tv_g(0; \phi) = \infty$.
   
   On the other hand,
    observe 
    from \eqref{eq:tvg_plus_1_inv_1}
    that
   \begin{equation}
        \label{eq:tvg_plus_1_inv_3}
        (1 + \tv_g(0; \phi))^{-1}
        = 
        1 - 
        \phi v(0; \phi)
        v(0; \phi)
        \int
        \frac{r^2}{(1 + r v(0; \phi))^2}
        \, \mathrm{d}P(r).
   \end{equation}
   From Part 2, we have $\lim_{\phi \to \infty} \phi v(0; \phi) = 1$,
   and from Part 1, we have $\lim_{\phi \to \infty} v(0; \phi) = 0$.
   Moreover,
   since $P$ is supported on $[a, b]$,
   and $v(0; \phi) > 0$ for $\phi \in (1, \infty)$
   from Part 1,
   for $\phi \in (1, \infty)$,
   note that
   \[
        0
        <
        \int \frac{r^2}{(1 + r v(0; \phi))^2}
        <
        b^2.
   \]
   Thus, from \eqref{eq:tvg_plus_1_inv_3},
   we obtain
   \[
        \lim_{\phi \to \infty}
        (1 + \tv_g(0; \phi))^{-1}
        =
        1 - 0
        = 1.
   \]
   We hence conclude that $\lim_{\phi \to \infty} \tv_g(0; \phi) = 0$.
  
  \paragraph{Part 5.}
  
    The continuity claim follows
    from the continuity of $v(0; \cdot)$
    and $\tv_g(0; \cdot)$
    from Parts 1 and 4, respectively.
  From calculation similar to that in Part 4,
  it follows that
  $(1 + \tv_g(0;\phi)) < \infty$
  for $\phi \in (1, \infty)$.
  Now, since $v(0; \phi) > 0$ for $\phi \in (1, \infty)$
  from Part 1,
  and $Q$ is supported on $[a, b]$,
  observe that
  \[
        \int
        \frac{1}{(1 + v(0; \phi) r)^2}
        \, \mathrm{d}Q(r)
        \le
        1
        < \infty.
  \]
  Hence, $\Upsilon_b(\phi_1, \phi) < \infty$ for $\phi \in (1, \infty)$.
  Moreover, because $\lim_{\phi \to \infty} (1 + \tv_g(0; \phi)) = 1$,
  and $\lim_{\phi \to \infty} v(0; \phi) = 0$,
  we obtain
  \[
        \lim_{\phi \to \infty}
        \Upsilon_b(\phi_1, \phi)
        =
        \lim_{\phi \to \infty}
        (1 + \tv_g(0; \phi))
        \cdot
        \lim_{\phi \to \infty}
        \int
        \frac{1}{(1 + v(0; \phi) r)^2}
        \, \mathrm{d}Q(r)
        = 1.
  \]
  Therefore, $\lim_{\phi \to \infty} \Upsilon_b(\phi_1, \phi) = 1$,
  as desired.

   This completes all the five parts,
   and finishes the proof.
\end{proof}

\begin{lemma}
    [Bounding derivatives of the solution 
    of a fixed-point equation 
    in the regularization parameter]
    \label{lem:fixed-point-v-lambda-properties}
    Let $a > 0$ and $b < 0$ be real numbers.
    Let $P$ be a probability measure supported on $[a, b]$.
    Let $\gamma \in (1, \infty)$ be a real number.
    Let $\Lambda = [0, \lambda_{\max}]$
    for some constant $\lambda_{\max} < \infty$.
    For $\lambda \in \Lambda$,
    let $v(-\lambda; \gamma) \ge 0$
    denote the solution
    to the fixed-point equation
    \[
        \frac{1}{v(-\lambda; \gamma)}
        =  \lambda
        + \gamma \int \frac{r}{v(-\lambda; \gamma) r + 1} \, \mathrm{d}P(r).
    \]
    Then, the function 
    $\lambda \mapsto v(-\lambda; \gamma)$
    is 
    twice differentiable over $\Lambda$.
    Furthermore, 
    over $\Lambda$,
    $v(-\lambda; \gamma)$,
    $\partial / \partial \lambda [v(-\lambda; \gamma)]$,
    and
    $\partial^2 / \partial \lambda^2 [v(-\lambda; \gamma)]$
    are bounded above.
\end{lemma}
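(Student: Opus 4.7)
The plan is to reparameterize via $w = 1/v$, express the fixed-point equation as $F(w) = \lambda$ for a smooth strictly convex function $F$, and then apply the implicit function theorem on a compact branch of $F$. Concretely, multiplying the fixed-point equation by $v$ and rewriting yields
\begin{equation*}
    F(w) \;:=\; w - \gamma \int \frac{wr}{w+r}\,\mathrm{d}P(r) \;=\; \lambda,
    \qquad
    w = 1/v(-\lambda;\gamma).
\end{equation*}
Using the identity $wr/(w+r) = r - r^2/(w+r)$ and the fact that $P$ is supported on $[a,b]$ with $a>0$, I would verify by dominated convergence that $F$ is $C^{\infty}$ on $[0,\infty)$ with $F(0)=0$, $F'(w) = 1 - \gamma\int r^{2}/(w+r)^{2}\,\mathrm{d}P(r)$, and $F''(w) = 2\gamma\int r^{2}/(w+r)^{3}\,\mathrm{d}P(r) > 0$, so $F$ is \emph{strictly convex}. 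Additionally, $F'(0) = 1 - \gamma < 0$ since $\gamma > 1$, $F'(w) \to 1$ as $w \to \infty$, and $F(w) \to \infty$.

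Given these properties, the shape of $F$ is standard: it starts at $0$ with negative slope, dips below zero to a unique minimum, then increases monotonically to $+\infty$, crossing zero again at a unique point $w^\star > 0$, with $F'(w^\star) > 0$ (crossing zero from below). Thus on the branch $[w^\star,\infty)$, $F$ is a $C^\infty$ strictly increasing bijection onto $[0,\infty)$. The solution picked out by the fixed-point equation is $w(-\lambda;\gamma) = F^{-1}(\lambda)$; note $w^\star = 1/v(0;\gamma)$, consistent with \Cref{lem:fixed-point-v-properties}\eqref{lem:fixed-point-v-properties-item-v-properties}. For $\lambda \in \Lambda = [0,\lambda_{\max}]$, the solution lies in the compact interval $[w^\star, W_{\max}]$ where $W_{\max} := F^{-1}(\lambda_{\max}) < \infty$.

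On this compact interval, continuity of $F'$ combined with $F'(w) > 0$ there gives a uniform lower bound $F'(w) \geq c > 0$ and a uniform upper bound, and similarly $F''$ is bounded. By the implicit function theorem, $w(-\lambda;\gamma)$ is $C^2$ on $\Lambda$ with
\begin{equation*}
    w'(-\lambda;\gamma) \;=\; \frac{1}{F'(w(-\lambda;\gamma))},
    \qquad
    w''(-\lambda;\gamma) \;=\; -\,\frac{F''(w(-\lambda;\gamma))\,(w'(-\lambda;\gamma))^{2}}{F'(w(-\lambda;\gamma))},
\end{equation*}
both of which are bounded on $\Lambda$. Translating back via $v = 1/w$ and the chain rule,
\begin{equation*}
    v'(-\lambda;\gamma) \;=\; -\frac{w'(-\lambda;\gamma)}{w(-\lambda;\gamma)^{2}},
    \qquad
    v''(-\lambda;\gamma) \;=\; \frac{2\,(w'(-\lambda;\gamma))^{2} - w(-\lambda;\gamma)\,w''(-\lambda;\gamma)}{w(-\lambda;\gamma)^{3}},
\end{equation*}
and since $w(-\lambda;\gamma) \geq w^\star > 0$ uniformly on $\Lambda$, both $v'$ and $v''$ are bounded; the bound $v(-\lambda;\gamma) \leq 1/w^\star = v(0;\gamma)$ is immediate from monotonicity of $F^{-1}$.

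The only delicate step is verifying the shape of $F$ carefully enough to guarantee $F'(w^\star) > 0$ (rather than $F'(w^\star) = 0$, which would blow up $v'$); this is where strict convexity together with $F(w^\star) = 0 = F(0)$ and $F'(0) = 1-\gamma < 0$ is used---if $F'(w^\star)$ were zero, then strict convexity would force $F(w) > F(w^\star) = 0$ for all $w \neq w^\star$, contradicting $F(0) = 0$. Everything else is routine application of the implicit function theorem on a compact interval plus dominated convergence to differentiate under the integral sign, both of which are justified since the integrand $wr/(w+r)$ and its $w$-derivatives are uniformly bounded by polynomial-type expressions in $r \in [a,b]$ on the compact $w$-range.
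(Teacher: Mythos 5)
Your proof is correct and matches the paper's approach in essence: the paper also expresses $v(-\lambda;\gamma)$ as the inverse of a smooth monotone map of $\lambda$ and then bounds the derivatives of that inverse, working in the variable $x = v$ with $f(x) = 1/x - \gamma\int r/(xr+1)\,\mathrm{d}P(r)$ (deferring the monotonicity and bounds to \Cref{lem:v-tv-tvderiv-bouding-in-lambda}), whereas you pass to $w = 1/v$ and the convex map $F(w) = f(1/w)$. Your strict-convexity-plus-two-zeros argument for $F'(w^\star) > 0$ is a slightly cleaner route to the key nonvanishing-derivative fact than the paper's direct estimate $\gamma\int\big(x_0 r/(x_0 r+1)\big)^2\,\mathrm{d}P(r) < \gamma\int x_0 r/(x_0 r+1)\,\mathrm{d}P(r) = 1$, but the two are equivalent in content.
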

\begin{proof}
    Start by re-writing the fixed-point equation as
    \[
        \lambda
        = \frac{1}{v(-\lambda; \gamma)}
        - \gamma  \int \frac{r}{v(-\lambda; \gamma) r  + 1} \, \mathrm{d}P(r).
    \]
    Define a function $f$ by
    \[
        f(x) = \frac{1}{x} - \gamma \int \frac{r}{x r + 1} \, \mathrm{d}P(r).
    \]
    Observe that $v(-\lambda; \gamma) = f^{-1}(\lambda)$.
    The claim of twice differentiability
    of the function $\lambda \mapsto v(-\lambda; \gamma_n)$
    follows from 
    \Cref{lem:v-tv-tvderiv-bouding-in-lambda}~\eqref{lem:v-tv-tvderiv-bounding-in-lambda-item-finv}.
    The claim of boundedness of the function
    and its first derivatives (with respect to $\lambda$)
    follows from 
    \Cref{lem:v-tv-tvderiv-bouding-in-lambda}
    (\eqref{lem:v-tv-tvderiv-bounding-in-lambda-item-finv},
    \eqref{lem:v-tv-tvderiv-bounding-in-lambda-item-finv'},
    \eqref{lem:v-tv-tvderiv-bounding-in-lambda-item-finv''}).
    
\end{proof}

\begin{lemma}
    [Bounding derivatives of the solution of a fixed-point equation]
    \label{lem:v-tv-tvderiv-bouding-in-lambda}
    Let $a > 0$ and $b < \infty$ be two real numbers.
    Let $P$ be a probability distribution supported on $[a, b]$.
    Let $\gamma \in (1, \infty)$ be a real number.
    Define a function $f$ by
    \begin{equation}
        \label{eq:fixed-point-lambda-generic}
        f(x)
        = \frac{1}{x}
        - \gamma \int \frac{r}{x r + 1} \, \mathrm{d}P(r).
    \end{equation}
    Then, the following properties hold:
    \begin{enumerate}
        \item
        \label{lem:v-tv-tvderiv-bounding-in-lambda-item-f}
        There is a unique $0 < x_0 < \infty$ such that $f(x_0) = 0$.
        The function $f$ is twice differentiable
        and strictly decreasing
        over $(0, x_0)$,
        with $\lim_{x \to 0^{+}} f(x) = \infty$
        and $f(x_0) = 0$.
        \item
        \label{lem:v-tv-tvderiv-bounding-in-lambda-item-f'}
        The derivative $f'$
        is strictly increasing over $(0, x_0)$,
        with $\lim_{x \to 0^{+}} f'(x) = - \infty$
        and $f'(x_0) < 0$.
        \item
        \label{lem:v-tv-tvderiv-bounding-in-lambda-item-f''}
        The second derivative $f''$
        is strictly decreasing over $(0, x_0)$,
        with $\lim_{x \to 0^{+}} f''(x) = \infty$
        and
        $f''(x_0) > 0$.
        \item
        \label{lem:v-tv-tvderiv-bounding-in-lambda-item-finv}
        The inverse function $f^{-1}$
        is twice differentiable,
        bounded over $[0, \infty)$
        by $x_0 < \infty$,
        and strictly decreasing over 
        $(0, \infty)$,
        with
        $f^{-1}(0) = x_0$
        and
        $\lim_{y \to \infty} f^{-1}(y) = 0$.
        \item
        \label{lem:v-tv-tvderiv-bounding-in-lambda-item-finv'}
        The derivative of the inverse function
        $(f^{-1})'$
        is bounded over $[0, \infty)$
        by
        \[
            \ddfrac
            {x_0^2}
            {1 - \gamma \int \left(\frac{x_0 r}{x_0 r + 1}\right)^2
            \, \mathrm{d}P(r)}
            < \infty.
        \]
        \item
        \label{lem:v-tv-tvderiv-bounding-in-lambda-item-finv''}
        The second derivative of the inverse function
        $(f^{-1})''$ is bounded over 
        $[0, \infty)$
        by
        \[
            \ddfrac
            {2 x_0^3}
            {\left( 1 - \gamma \int \left(\frac{x_0 r}{x_0 r + 1}\right)^2 \, \mathrm{d}P(r) \right)^3}
            < \infty.
        \]
    \end{enumerate}
\end{lemma}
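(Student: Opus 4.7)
The plan is to prove all six parts by recognizing a useful factorization of $f$ in terms of an auxiliary function, then reading off the desired inequalities from standard Jensen/power-mean comparisons applied to the probability measure $P$ on $[a,b]$.

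For Part 1, I would write $f(x) = g(x)/x$ where $g(x) = 1 - \gamma \int \tfrac{xr}{xr+1}\,\mathrm{d}P(r)$. Direct differentiation shows $g'(x) = -\gamma \int \tfrac{r}{(xr+1)^2}\,\mathrm{d}P(r) < 0$, so $g$ is strictly decreasing on $(0,\infty)$. Since $g(0)=1$ and $\lim_{x\to\infty}g(x) = 1-\gamma < 0$ (using $\gamma > 1$), there is a unique $x_0 \in (0,\infty)$ with $g(x_0) = 0$, which is also the unique zero of $f$. Computing $f'(x) = (xg'(x)-g(x))/x^2$, on $(0,x_0)$ we have $g(x)>0$ and $g'(x)<0$, yielding $f'(x)<0$; the limit $\lim_{x\to 0^+} f(x) = \infty$ is immediate from $1/x$. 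Twice differentiability is clear by passing derivatives through the integral (the integrand is analytic in $x$ uniformly on $[a,b]$).

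For Parts 2 and 3, I would use the clean factorizations
\begin{align*}
f'(x) &= -\tfrac{1}{x^2}\Big[1 - \gamma\int \big(\tfrac{xr}{xr+1}\big)^2\,\mathrm{d}P(r)\Big], \\
f''(x) &= \tfrac{2}{x^3}\Big[1 - \gamma\int \big(\tfrac{xr}{xr+1}\big)^3\,\mathrm{d}P(r)\Big], \\
f'''(x) &= -\tfrac{6}{x^4}\Big[1 - \gamma\int \big(\tfrac{xr}{xr+1}\big)^4\,\mathrm{d}P(r)\Big].
\end{align*}
Defining $B_k(x) := \gamma \int (xr/(xr+1))^k\,\mathrm{d}P(r)$, one has $B_1(x) = 1-g(x)$, so $B_1(x) \le 1$ on $(0,x_0]$ with equality only at $x_0$. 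Since $xr/(xr+1)\in(0,1)$ on the support of $P$, $B_k(x) < B_1(x) \le 1$ for all $k\ge 2$ and $x\in(0,x_0]$, which immediately yields $f'(x)<0$ (sharper: $f'(x_0)<0$ strictly), $f''(x)>0$, $f'''(x)<0$. This gives monotonicity of $f'$ and $f''$ along with the finiteness/sign statements at $x_0$; the limits as $x\to 0^+$ follow from the $1/x^k$ prefactors dominating the bounded integrals.

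For Parts 4--6, the inverse function theorem applies because $f'<0$ on $(0,x_0]$, so $f^{-1}:[0,\infty)\to (0,x_0]$ is well defined, twice differentiable, and strictly decreasing, with $f^{-1}(0)=x_0$ and $\lim_{y\to\infty}f^{-1}(y)=0$. For the derivative bound in Part 5, since $|f'|$ is decreasing on $(0,x_0]$ (Part 2), $|f'(x)|\ge |f'(x_0)|$ for all $x \in (0,x_0]$, so $|(f^{-1})'(y)| = 1/|f'(f^{-1}(y))| \le 1/|f'(x_0)|$, and the explicit expression of $f'(x_0)$ from the factorization above gives exactly the stated bound. For Part 6, I would write, with $x=f^{-1}(y)$,
\[
|(f^{-1})''(y)| \;=\; \frac{f''(x)}{|f'(x)|^3} \;=\; \frac{2 x^3\,(1-B_3(x))}{(1-B_2(x))^3}.
\]
Since $2x^3 \le 2x_0^3$, $1-B_3(x)\le 1$, and $x\mapsto B_2(x)$ is strictly increasing (so $1-B_2(x)\ge 1-B_2(x_0)>0$), one obtains the stated upper bound directly.

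The main obstacle will be Part 6: one needs both the correct factorization of $f'$ and $f''$ to get the bound into a form where each factor can be compared monotonically to its value at $x_0$. Once that is set up, monotonicity of $B_2$ and the uniform bound $B_3(x)\le B_1(x)\le 1$ close the argument. The one subtle point throughout is that $P$ is supported in $[a,b]$ with $a>0$, which ensures $B_2(x_0)<B_1(x_0)=1$ strictly, so the denominator $1-B_2(x_0)$ is positive and the bound in Part 6 is finite.
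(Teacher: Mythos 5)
Your proposal is correct and follows essentially the same strategy as the paper: factor $f^{(k)}(x)$ into a $\pm k!/x^{k+1}$ prefactor times $1 - B_{k+1}(x)$ where $B_k(x) = \gamma \int \left(\frac{xr}{xr+1}\right)^k \mathrm{d}P(r)$, show $B_k(x) < B_1(x) \le 1$ on $(0,x_0]$ for $k \ge 2$, and then invoke the inverse function theorem for Parts 4--6. Your variant is marginally cleaner in two places: you establish monotonicity of $f'$ and $f''$ by computing $f''$ and $f'''$ directly and reading off their signs, whereas the paper argues via products of positive strictly decreasing functions; and you use the pointwise bound $y^k \le y$ on $(0,1)$, whereas the paper routes through the intermediate factor $\left(\frac{x_0 b}{x_0 b + 1}\right)^{k-1}$. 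In Part 6 you also make explicit that $B_2$ is increasing, which is needed to replace the $x$-dependent denominator $(1-B_2(x))^3$ by $(1-B_2(x_0))^3$ in the final bound; the paper's displayed chain of inequalities is slightly elliptical on this point, so your version is a touch more careful. These are cosmetic differences only, not a different route.
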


\begin{proof}
    We consider different parts separately below.
    
    \paragraph{Part 1.}
        Observe that
        \[
            f(x)
            =
            \frac{1}{x}
            - \gamma \int \frac{r}{xr + 1}
            \, \mathrm{d}P(r)
            =
            \frac{1}{x}
            \left(
                1 - \gamma \int \frac{xr}{xr + 1}
                \, \mathrm{d}P(r)
            \right).
        \]
        The function $g: x \mapsto 1/x$ is positive and strictly decreasing
        over $(0, \infty)$
        with $\lim_{x \to 0^{+}} g(x) = \infty$ and $\lim_{x \to \infty} g(x) = 0$,
        while
        the function
        \[
            h: x \mapsto
            1 - \gamma \int \frac{xr}{xr + 1}
            \, \mathrm{d}P(r)
        \]
        is strictly decreasing
        over $(0, \infty)$
        with $h(0) = 1$ and $\lim_{x \to \infty} h(x) = 1 - \gamma < 0$.
        Thus, there is a unique $0 < x_0 < \infty$
        such that $h(x_0) = 0$,
        and
        consequently
        $f(x_0) = 0$.
        Because
        $h$ is positive over
        $[0, x_0]$,
        $f$, a product of two positive strictly decreasing functions,
        is strictly decreasing over $(0, x_0)$,
        with $\lim_{x \to 0^{+}} f(x) = \infty$
        and $f(x_0) = 0$.
   
   \paragraph{Part 2.}
    The derivative $f'$ at $x$ is given by
    \[
        f'(x)
        = - \frac{1}{x^2}
        + \gamma \int \frac{r^2}{(x r + 1)^2}
        \, \mathrm{d}P(r)
        = 
        -
        \frac{1}{x^2}
        \left(
            1 - \gamma \int \left(\frac{xr}{xr + 1}\right)^2
            \, \mathrm{d}P(r)
        \right).
    \]
    The function
    $g: x \mapsto 1/x^2$
    is positive and strictly decreasing over $(0, \infty)$
    with $\lim_{x \to 0^{+}} g(x) = \infty$
    and $\lim_{x \to \infty} g(x) = 0$.
    On the other hand, 
    the function
    \[
        h: 
        x \mapsto
        1 - \gamma \int \left( \frac{xr}{xr + 1} \right)^2
        \, \mathrm{d}P(r)
    \]
    strictly decreasing over $(0, \infty)$
    with $h(0) = 1$
    and $h(x_0) > 0$.
    This follows because
    for $x \in [0, x_0]$,
    \begin{equation}
        \begin{split}
            \label{eq:bound-deriv-v-in-lambda-part-2}
            \gamma \int \left( \frac{xr}{xr + 1} \right)^2 \, \mathrm{d}P(r)
            &\le
            \left( \frac{x_0 b}{x_0 b + 1} \right)
            \gamma 
            \int \left( \frac{xr}{xr + 1} \right) \, \mathrm{d}P(r) \\
            &<
            \gamma \int \frac{xr}{xr + 1} \, \mathrm{d}P(r)
            \le
            \gamma \int \frac{x_0 r}{x_0 r + 1} \, \mathrm{d}P(r)
            =
            1,
        \end{split}
    \end{equation}
    where the first inequality in the chain above
    follows as the support of $P$ is $[a, b]$,
    and the last inequality follows since $f(x_0) = 0$
    and $x_0 > 0$,
    which implies that
    \[
        \frac{1}{x_0}
        = \gamma \int \frac{r}{x_0 r + 1} \, \mathrm{d}P(r),
        \quad
        \text{ or equivalently that}
        \quad
        1
        = \gamma \int \frac{x_0 r}{x_0 r + 1} \, \mathrm{d}P(r).
    \]
    Thus, $-f'$, a product of two positive strictly decreasing functions,
    is strictly decreasing, and in turn, $f'$ 
    is strictly increasing.
    Moreover, $\lim_{x \to 0^{+}} f'(x) = -\infty$ and $f'(x_0) < 0$.
    
    \paragraph{Part 3.}
    The second derivative $f''$ at $x$
    is given by
    \[
        f''(x)
        = \frac{2}{x^3}
        - 2 \gamma \int \frac{r^3}{(x r + 1)^3}
        \, \mathrm{d}P(r)
        =
        \frac{2}{x^3}
        \left(
            1 - \gamma \int \left(\frac{xr}{xr + 1}\right)^3
            \, \mathrm{d}P(r)
        \right).
    \]
    The rest of the arguments are similar to those in Part 2.
    The function $g : x \mapsto 1/x^3$ is positive and strictly decreasing
    over $(0, \infty)$ with $\lim_{x \to 0^{+}} g(x) = \infty$
    and $\lim_{x \to \infty} g(x) = 0$,
    while the function
    \[
        h:
        x \mapsto
        1 - \gamma \int \left( \frac{xr}{xr + 1} \right)^3
        \, \mathrm{d}P(r)
    \]
    is strictly decreasing over $(0, \infty)$
    with $h(0) = 1$ and $h(x_0) > 0$ as
    \begin{equation}
        \begin{split}
        \label{eq:bound-deriv-v-in-lambda-part-3}
            \gamma \int \left( \frac{xr}{xr + 1} \right)^3 \, \mathrm{d}P(r)
            &\le
            \left( \frac{x_0 b}{x_0 b + 1} \right)^2
            \gamma 
            \int \left( \frac{xr}{xr + 1} \right) \, \mathrm{d}P(r) \\
            &<
            \gamma \int \frac{xr}{xr + 1} \, \mathrm{d}P(r)
            \le
            \gamma \int \frac{x_0 r}{x_0 r + 1} \, \mathrm{d}P(r)
            =
            1.
        \end{split}
    \end{equation}
    It then follows that $f''$ is strictly decreasing,
    with $\lim_{x \to 0^{+}} f''(x) = \infty$
    and $f''(x_0) > 0$.
    
    \paragraph{Part 4.}
   Because 
   $f$ is twice differentiable and strictly monotonic over $(0, x_0)$,
   $f^{-1}$ is twice differentiable and strictly monotonic
   (see, e.g., Problem 2, Chapter 5 of \cite{rudin_1976}).
   Since $f(x_0) = 0$, $f^{-1}(0) = x_0$,
   and since $\lim_{x \to 0^{+}} f(x) = \infty$,
   $\lim_{y \to \infty} f^{-1}(y) = 0$.
   Hence, $f^{-1}$ is bounded above over $[0, \infty)$ by $x_0 < \infty$.
    
    \paragraph{Part 5.}
    Because $f'(x) \neq 0$ over $(0, x_0)$,
    by the inverse function theorem,
    we have
    \[
        \left| (f^{-1})'(f(x)) \right|
        = \left| \frac{1}{f'(x)} \right|
        <
        \left| \frac{1}{f'(x_0)} \right|
        =
        \ddfrac
        {1}
        {\frac{1}{x_0^2} \left( 1 - \gamma \int \left( \frac{xr}{xr + 1} \right)^2 \, \mathrm{d}P(r) \right)}
        < \infty,
    \]
    where the first inequality
    uses the fact that 
    $|f'(x_0)| < |f'(x)|$ for $x \in (0, x_0]$
    from Part 2,
    and the last inequality uses the bound from \eqref{eq:bound-deriv-v-in-lambda-part-2}.
    
    \paragraph{Part 6.}
    Similar to Part 5,
    by inverse function theorem,
    we have
    \[
        \left|
            (f^{-1})''(f(x))
        \right|
        =
        \left|
            \frac{f''(x)}{f'(x)^3}
        \right|
        =
        \ddfrac
        {\frac{2}{x^3} \left( 1 - \gamma \int \left( \frac{xr}{xr + 1} \right)^3 \, \mathrm{d}P(r)  \right)}
        {\frac{1}{x^6} \left( 1 - \gamma \int \left( \frac{xr}{xr + 1} \right)^2 \, \mathrm{d}P(r) \right)^3}
        \le
        \ddfrac
        {2 x_0^3}
        {\left( 1 - \gamma \int \left( \frac{xr}{xr + 1} \right)^2 \, \mathrm{d}P(r) \right)^3}
        < \infty,
    \]
    where the first inequality uses
    the bound from \eqref{eq:bound-deriv-v-in-lambda-part-3},
    and the second inequality uses
    the bound from \eqref{eq:bound-deriv-v-in-lambda-part-2}.
    
    This finishes all the six parts,
    and concludes the proof.
    
\end{proof}

We remark that
the technique of Lemma A.2
of \cite{hastie_montanari_rosset_tibshirani_2019}
can be applied to obtain similar conclusions
as those in \Cref{lem:fixed-point-v-lambda-properties,lem:v-tv-tvderiv-bouding-in-lambda}.
However, since our parameterization
is slightly different,
we make use of the inverse function theorem instead
of the implicit function theorem
employed in \cite{hastie_montanari_rosset_tibshirani_2019}.

\subsection
[Risk characterization of one-step procedure with ridgeless regression]
{Proof of \Cref{thm:esterrlim_optimized_onestep} 
(Risk characterization of one-step procedure with ridgeless regression)
}
\label{sec:onestep-risk-analysis-isotropic-features}

The following theorem characterizes the risk of the one-step procedure
starting with MN2LS base procedure for isotropic features under square error.
Let $R^\deter(\gamma; \tf^\onestep)$
denote the risk of
the one-step predictor
starting with the MN2LS base predictor
on i.i.d.\ data with limiting aspect ratio $\gamma$.

\begin{theorem}
    [Limiting risk of one-step procedure with ridgeless regression]
    \label{thm:esterrlim_optimized_onestep}
    Suppose assumptions
    \ref{asm:lin-mod},
    \ref{asm:rmt-feat} with $\Sigma = I$,
    \ref{asm:signal-bounded-norm}
    hold true.
    Let $\mathrm{SNR} := {\rho^2}/{\sigma^2}$.
    Then, 
    the limiting risk of the one-step predictor
    starting with the MN2LS base predictor
    under \ref{asm:prop_asymptotics}
    is given as follows:
    \begin{itemize}[leftmargin=*]
        \item \underline{When $\mathrm{SNR} \le 1$}:
        \begin{equation*}
        \frac{R^\deter(\gamma; \hf^\onestep)}{\sigma^2}
        - 1
        =
        \begin{dcases}
        \frac{\gamma}{1 - \gamma} 
        & \text{ if } \gamma \le \frac{\mathrm{SNR}}{\mathrm{SNR} + 1}  < 1\\
        \mathrm{SNR} & \text{ otherwise.}
        \end{dcases}
        \end{equation*}
        \item \underline{When $1 < \mathrm{SNR} \le \mathrm{SNR}^{\star} (\approx 10.7041)$}:
        \begin{align*}
            &
            \frac{R^\deter(\gamma; \hf^\onestep)}{\sigma^2}
            -
            1
            = 
            \\
            &\begin{dcases}
            \frac{\gamma}{1 - \gamma}
            & \text{ if } \gamma \le 1 - \frac{1}{2 \sqrt{2 \sqrt{\mathrm{SNR}} - 1}} < 1 \\
            2 \sqrt{2 \sqrt{\mathrm{SNR}} - 1} - 1
            & \text{ if } 1 - \frac{1}{2 \sqrt{2 \sqrt{\mathrm{SNR}} - 1}} < \gamma
            \le \left(2 - \frac{1}{\sqrt{\mathrm{SNR}}} - \frac{1}{\sqrt{2 \sqrt{\mathrm{SNR}} - 1}}\right)^{-1} \\
            \left\{ \mathrm{SNR} \left( 1 - \frac{1}{\zeta_1} \right) + \frac{1}{\zeta_1 - 1} \right\}
            \left( 1 - \frac{1}{\zeta_2} \right) + \frac{1}{\zeta_2 - 1}
            & \text{ otherwise},
            \end{dcases}
        \end{align*}
        where $\mathrm{SNR}^{\star}$ (which is approximately 10.7041)
        is value of $x > 1$ that solves
        \begin{equation}
            \label{eq:def-snr*}
            1
            -
            \frac{1}{2 \sqrt{2 \sqrt{x} - 1}}
            =
            \left(
                2 - \frac{1}{x} - \frac{1}{\sqrt{2 \sqrt{x} - 1}}
            \right)^{-1},
        \end{equation}
     and $\zeta_1, \zeta_2 \ge 1$ 
    are solutions to the equations
    \begin{gather}
        \mathrm{SNR} \left( \frac{1}{\zeta_1} - \frac{1}{\zeta_2} \right)
        = \frac{\zeta_1^2}{(\zeta_1 - 1)^2} - \frac{\zeta_2^2}{(\zeta_2 - 1)^2}
        + \frac{1}{\zeta_1 - 1} \left( 1 - \frac{\zeta_1}{\zeta_2} \frac{\zeta_1}{(\zeta_1 - 1)} \right) 
        \label{eq:onestep-iso-riskopt-lagrance-equations-1}
        \\
        \frac{1}{\zeta_1} + \frac{1}{\zeta_2} = \frac{1}{\gamma}.
        \label{eq:onestep-iso-riskopt-lagrance-equations-2}
    \end{gather}
        \item \underline{When $\mathrm{SNR} > \mathrm{SNR}^{\star}$}:
        \begin{equation*}
            \frac{R^\deter(\gamma; \hf^\onestep)}{\sigma^2}
            -
            1
            =
            \begin{dcases}
            \frac{\gamma}{1 - \gamma}
            & \text{ if } \gamma \le \gamma^\star < 1 \\
            \left\{ \mathrm{SNR} \left( 1 - \frac{1}{\zeta_1} \right) + \frac{1}{\zeta_1 - 1} \right\}
            \left( 1 - \frac{1}{\zeta_2} \right) + \frac{1}{\zeta_2 - 1}
            & \text{ otherwise,}
            \end{dcases}
        \end{equation*}
        where 
        $\mathrm{SNR}^{\star}$ is as defined in \eqref{eq:def-snr*},
        $\gamma^\star$ is given by
        \[
            1 - 
            \left(
            1 + 
            \min_{\gamma \le 1}
            \left\{ \mathrm{SNR} \left( 1 - \frac{1}{\zeta_1} \right) + \frac{1}{\zeta_1 - 1} \right\}
            \left( 1 - \frac{1}{\zeta_2} \right) + \frac{1}{\zeta_2 - 1}
            \right)^{-1},
        \]
        and $\zeta_1, \zeta_2 \ge 1$
        are solutions to the set of equations
        \eqref{eq:onestep-iso-riskopt-lagrance-equations-1}
        and
        \eqref{eq:onestep-iso-riskopt-lagrance-equations-2}.
    \end{itemize}
    Furthermore, in each case, the limiting risk is a non-decreasing function of $\gamma$.
\end{theorem}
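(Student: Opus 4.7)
The plan is to reduce the theorem to a purely analytic constrained optimization. By Theorem~\ref{thm:asymptotic-risk-tuned-one-step} applied to the MN2LS base procedure, whose required hypotheses are verified by Proposition~\ref{prop:verif-riskprofile-mnlsbase-mnlsonestep} (with $\kappa_\Xi = O_p(1)$ ensured by Theorem~\ref{thm:oracle-bound-linear-predictor-squared-error}), the risk satisfies
\[
R^\deter(\gamma; \hf^\onestep) \;=\; \min_{1/\zeta_1 + 1/\zeta_2 \le 1/\gamma} R^\deter_{\mnls}(\zeta_1, \zeta_2; \rho^2, \sigma^2),
\]
and under isotropic features Remark~\ref{rem:zerostep-vs-onestep-isotropic} supplies the closed-form piecewise expression $R^\deter_{\mnls}(\zeta_1,\zeta_2;\rho^2,\sigma^2) = R^\deter_{\mnls}(\zeta_2; R^\deter_{\mnls}(\zeta_1;\rho^2,\sigma^2) - \sigma^2, \sigma^2)$. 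Writing $s=\mathrm{SNR}$ and $g(\zeta) = R^\deter_{\mnls}(\zeta;\rho^2,\sigma^2)/\sigma^2 - 1$, the objective becomes
\[
\psi(\zeta_1,\zeta_2) = \begin{cases} \zeta_2/(1-\zeta_2) & \zeta_2\in(0,1),\\ g(\zeta_1)(1 - 1/\zeta_2) + 1/(\zeta_2-1) & \zeta_2\in(1,\infty],\end{cases}
\]
and the theorem is then the explicit solution of $\min\psi$ subject to $1/\zeta_1 + 1/\zeta_2 \le 1/\gamma$ with $\zeta_1,\zeta_2>0$.

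I would partition the feasible set into four regimes according to the signs of $\zeta_1-1$ and $\zeta_2-1$ and determine the best candidate in each. In Regime I ($\zeta_2<1$), $\psi$ depends only on $\zeta_2$ and is monotone, so the unique candidate is $\zeta_2=\gamma$ (feasible only when $\gamma<1$), yielding value $\gamma/(1-\gamma)$. In Regime II ($\zeta_1=\infty$, $\zeta_2>1$) the ingredient reduces to the zero-step predictor; the unconstrained minimizer of $g$ on $(1,\infty)$ is $\zeta^\dagger = \sqrt{s}/(\sqrt{s}-1)$ with value $2\sqrt{s}-1$ when $s>1$, and is $\zeta=\infty$ with value $s$ when $s\le1$. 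In Regime III ($\zeta_1,\zeta_2\in(1,\infty)$) the objective separates: the inner $\zeta_1$-minimum of $g$ is $2\sqrt{s}-1$ (attained at $\zeta_1^\star=\sqrt{s}/(\sqrt{s}-1)$, provided $s>1$), and the outer problem is then an MN2LS optimization with effective SNR $2\sqrt{s}-1$, whose interior minimizer is $\zeta_2^\star=\sqrt{2\sqrt{s}-1}/(\sqrt{2\sqrt{s}-1}-1)$ with value $2\sqrt{2\sqrt{s}-1}-1$; this unconstrained pair is feasible iff $\gamma \le (2 - 1/\sqrt{s} - 1/\sqrt{2\sqrt{s}-1})^{-1}$, otherwise the constraint is active and a standard Lagrange analysis gives the stationarity system \eqref{eq:onestep-iso-riskopt-lagrance-equations-1}--\eqref{eq:onestep-iso-riskopt-lagrance-equations-2}. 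Regime IV ($\zeta_1<1$, $\zeta_2>1$) can be shown to be dominated: for any admissible $(\zeta_1,\zeta_2)$ there, replacing $\zeta_1$ by $\infty$ (shifting the budget to $\zeta_2$) does not increase $\psi$ because $g(\zeta_1) = \zeta_1/(1-\zeta_1)$ exceeds the corresponding Regime II contribution whenever the constraint is active, and the constraint can always be tightened towards Regime III when it is not.

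Comparing the optimal values across regimes as functions of $\gamma$ and $s$ produces the three-way case split stated in the theorem. The boundary $\gamma = 1 - 1/(2\sqrt{2\sqrt{s}-1})$ arises from equating the Regime I value $\gamma/(1-\gamma)$ to the Regime III plateau $2\sqrt{2\sqrt{s}-1}-1$, while $\gamma = (2 - 1/\sqrt{s} - 1/\sqrt{2\sqrt{s}-1})^{-1}$ is exactly the feasibility threshold for the Regime III unconstrained minimizer. The critical value $\mathrm{SNR}^\star$ is defined by \eqref{eq:def-snr*} as the SNR at which these two boundaries coincide: below $\mathrm{SNR}^\star$ the middle plateau is nontrivial (three regimes in $\gamma$), while above it the plateau collapses and the risk transitions directly from the Regime I branch to the Lagrange branch (two regimes in $\gamma$), with $\gamma^\star$ determined by equating $\gamma/(1-\gamma)$ to the constrained Lagrange value. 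The $\mathrm{SNR}\le 1$ case is handled separately since $g$ is monotone and the only admissible candidates are Regime I and the $\zeta_1,\zeta_2\to\infty$ limit of Regime II, giving value $\min\{\gamma/(1-\gamma), s\}$ with crossover at $\gamma = s/(s+1)$. Finally, monotonicity of $R^\deter(\gamma;\hf^\onestep)$ in $\gamma$ is checked piecewise: each individual branch is clearly non-decreasing in $\gamma$ (directly for $\gamma/(1-\gamma)$, constant on the plateaus, and from the envelope theorem on the Lagrange branch), and continuity at the transition points follows from the very equations defining those thresholds.

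The main obstacle will be the Lagrange branch: the stationarity system \eqref{eq:onestep-iso-riskopt-lagrance-equations-1}--\eqref{eq:onestep-iso-riskopt-lagrance-equations-2} does not admit a fully explicit closed form, so one must verify (i) existence and uniqueness of a solution $(\zeta_1,\zeta_2)$ with $\zeta_1,\zeta_2>1$ satisfying both the first-order conditions and the active constraint, (ii) that this interior stationary point is indeed the minimizer rather than a saddle or corner (by checking the bordered Hessian, or by convexity of $\psi$ restricted to the boundary), and (iii) that this Lagrange value, as a function of $\gamma$, is monotone and connects continuously with the Regime I and Regime III plateau values at the stated thresholds. Definition \eqref{eq:def-snr*} of $\mathrm{SNR}^\star$ must also be checked to yield a unique solution in $(1,\infty)$, which will follow from a monotonicity/intermediate-value argument on the implicit equation.
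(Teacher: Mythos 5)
Your overall blueprint — reduce via Theorem~\ref{thm:asymptotic-risk-tuned-one-step} and Proposition~\ref{prop:verif-riskprofile-mnlsbase-mnlsonestep} to the analytic optimization $\min_{1/\zeta_1+1/\zeta_2\le 1/\gamma} R^\deter(\zeta_1,\zeta_2)$, then split by regimes of $(\zeta_1-1,\zeta_2-1)$ and work out the unconstrained minimizer and the Lagrange boundary system — tracks the paper's proof in \Cref{sec:onestep-risk-analysis-isotropic-features} closely, and your identification of the thresholds $\gamma = 1 - \frac{1}{2\sqrt{2\sqrt{s}-1}}$, $\gamma = (2 - 1/\sqrt{s} - 1/\sqrt{2\sqrt{s}-1})^{-1}$, and the defining equation for $\mathrm{SNR}^\star$ is correct.

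However, there is a genuine gap in your elimination of what you call Regime IV ($\zeta_1 < 1$, $\zeta_2 > 1$). You claim it is dominated because ``$g(\zeta_1) = \zeta_1/(1-\zeta_1)$ exceeds the corresponding Regime~II contribution whenever the constraint is active.'' This is false precisely at the candidate that matters: with $\gamma < 1$ and, say, $\mathrm{SNR} = 4$, the point $(\zeta_1,\zeta_2) = (\gamma,\infty)$ has the constraint active and $g(\gamma) = \gamma/(1-\gamma)$, which for $\gamma < s/(s+1)$ is \emph{strictly less} than the Regime~II value $g(\infty) = s$. Replacing $\zeta_1$ by $\infty$ there strictly increases the objective (while relaxing the constraint, but not enough to recover), so Regime~IV is not dominated. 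The paper does the opposite: it eliminates the $\zeta_2<1$ piece as redundant (any such $(\zeta_1,\zeta_2)$ maps to $(\zeta_2,\infty)$ in the $\zeta_1<1,\zeta_2>1$ piece with the same objective value and the same feasibility), and then carefully optimizes over $\zeta_1<1,\zeta_2>1$ — a nontrivial sub-analysis with cases $\gamma\le 1/2$, $4/7 < \gamma < 1$, $1/2 < \gamma < 4/7$ — to obtain $\gamma/(1-\gamma)$. Your final theorem statement is unscathed only because Regime~I and Regime~IV happen to produce the same infimum, but as written your argument never establishes that Regime~IV cannot do better, so the proof has a hole that must be closed either by actually optimizing over $\zeta_1<1,\zeta_2>1$ (the paper's route) or by supplying a correct dominance argument in the other direction (showing $\zeta_2<1$ is a redundant copy of the $\zeta_1<1$ branch). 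Beyond that, the items you flag as remaining obstacles — existence/uniqueness for the Lagrange system, the second-order check, and piecewise continuity/monotonicity — are indeed what remains to be supplied; the paper handles the last of these essentially as you propose.
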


\begin{proof}
From \Cref{prop:verif-riskprofile-mnlsbase-mnlsonestep},
it follows that
that the limiting risk
of the ingredient one-step predictor
for various limiting split proportions $(\zeta_1, \zeta_2)$ 
under isotropic features
is given by
\begin{equation*}
    R^\deter(\zeta_1, \zeta_2; \tf) - 1 = 
    \begin{cases}
        \left\{
        \rho^2 \left(1 - \frac{1}{\zeta_1}\right)
        + \sigma^2 \left(\frac{1}{\zeta_1 - 1}\right)
        \right\}
        \left(1 - \frac{1}{\zeta_2}\right)
        + \sigma^2 \left(\frac{1}{\zeta_2 - 1}\right)
        & \text{ when } \zeta_1 > 1, \zeta_2 > 1 \\
        \left\{
        \sigma^2 \left(\frac{\zeta_1}{1 - \zeta_1}\right)
        \right\}
        \left(1 - \frac{1}{\zeta_2}\right)
        + \sigma^2 \left(\frac{1}{\zeta_2 - 1}\right)
        & \text{ when } \zeta_1 < 1, \zeta_2 > 1 \\
        \sigma^2 \left(\frac{\zeta_2}{1 - \zeta_2}\right)
        & \text{ when } \zeta_2 < 1.
    \end{cases}
\end{equation*}
Note that the last case covers both $\zeta_1 > 1$ and $\zeta_1 < 1$.
Given a fixed $\gamma$, our goal is to minimize $R^\deter(\zeta_1, \zeta_2; \tf)$
with the constraint $\frac{1}{\zeta_1} + \frac{1}{\zeta_2} \le \frac{1}{\gamma}$.

To simplify the calculations below,
we first scale out the factor of $\sigma^2$
and express the risk in terms of $\mathrm{SNR} := \frac{\rho^2}{\sigma^2}$
to write
    \begin{equation*}
    \frac{R^\deter(\zeta_1, \zeta_2; \tf)}{\sigma^2} - 1
    = 
    \begin{cases}
        \left\{
        \mathrm{SNR} \left(1 - \frac{1}{\zeta_1}\right)
        + \left(\frac{1}{\zeta_1 - 1}\right)
        \right\}
        \left(1 - \frac{1}{\zeta_2}\right)
        + \left(\frac{1}{\zeta_2 - 1}\right)
        & \text{ when } \zeta_1 > 1, \zeta_2 > 1 \\
        \left\{
        \frac{\zeta_1}{1 - \zeta_1}
        \right\}
        \left(1 - \frac{1}{\zeta_2}\right)
        + \left(\frac{1}{\zeta_2 - 1}\right)
        & \text{ when } \zeta_1 < 1, \zeta_2 > 1 \\
        \left(\frac{\zeta_2}{1 - \zeta_2}\right)
        & \text{ when } \zeta_2 < 1.
    \end{cases}
\end{equation*}
The problem of minimizing $R(\hbeta^{\onestep})$ can now be broken into
three separate minimization problems, one for each of the cases above.
The final allocation is then the one that gives the minimum among the three cases.

We next notice a simple observation that lets us eliminate the third case.
Any feasible allocation of $\zeta_1$ and $\zeta_2$ in the third case
is also a feasible allocation for the second case.
This can be seen by making $\zeta_1$ for the second case
equal to $\zeta_2$ in the third case
and letting $\zeta_2$ for the second case tend to $\infty$.
Moreover, this gives the same objective value for both the cases.
Hence, the minimum of the second case is no larger
than the minimum of the third case
and we can ignore the minimization of the third case.

Overall we are thus left with two minimization problems:
\begin{equation}
    \label{prob:overparam}
    \begin{array}{ll}
    \mbox{minimize} 
            &
            \left\{
            \mathrm{SNR}
            \left(1 - \frac{1}{\zeta_1}\right)
            + \left(\frac{1}{\zeta_1 - 1}\right)
            \right\}
            \left(1 - \frac{1}{\zeta_2}\right)
            + \left(\frac{1}{\zeta_2 - 1}\right)  \\
    \mbox{subject to} & \frac{1}{\zeta_1} + \frac{1}{\zeta_2} \le \frac{1}{\gamma} \\
    & \zeta_1 > 1 \\
    & \zeta_2 > 1
    \end{array}
\end{equation}
from the first case, and
\begin{equation}
    \label{prob:underparam}
    \begin{array}{ll}
    \mbox{minimize} 
            &
            \left\{
            \frac{\zeta_1}{1 - \zeta_1}
            \right\}
            \left(1 - \frac{1}{\zeta_2}\right)
            + \left(\frac{1}{\zeta_2 - 1}\right)  \\
    \mbox{subject to} & \frac{1}{\zeta_1} + \frac{1}{\zeta_2} \le \frac{1}{\gamma} \\
    & \zeta_1 < 1 \\
    & \zeta_2 > 1
    \end{array}
\end{equation}
from the second case.
We now in turn analyze both of these optimization problems.

\subsubsection*{Optimization problem~\eqref{prob:underparam}}

Let's start with the problem~\eqref{prob:underparam}.
Note that the objective function of the optimization problem \eqref{prob:underparam}
does not depend on $\mathrm{SNR}$.
Hence the optimal value will only be a function of $\gamma$.
In addition, the constraint $\zeta_1 < 1$ is only satisfied when $\gamma < 1$.
Thus, when $\gamma > 1$, the problem is infeasible.
We divide the remaining range of $\gamma$ into two main cases of
$0 < \gamma < 0.5$ and $0.5 < \gamma < 1$.
In each of the cases,
we show that the minimum value of the problem is
$\frac{\gamma}{1 - \gamma}$, which is achieved by setting
$\zeta_1 = \gamma$ and $\zeta_2 = \infty$.

\paragraph{When $\gamma \le 0.5$.}
We first note that any allocation $\zeta_1 > 0.5$ is suboptimal
because when $\zeta_1 > 0.5$, we have $\frac{\zeta_1}{1 - \zeta_1} > 1$
by \Cref{lem:riskprofile_underparam}~\eqref{lem:riskprofile_underparam-item--greater-than-one}.
Thus using 
\Cref{lem:riskprofile_overparam}~\eqref{lem:riskprofile_overparam-item-snr-greater-than-one-risk},
the objective function in this case is always larger than $1$ for such $\zeta_1$.
However, we can achieve $1$ by setting $\zeta_1 = 0.5$ and $\zeta_2 \to \infty$.
Therefore we only need to consider $\zeta_1 \le 0.5$.
For such $\zeta_1$, we have $\frac{\zeta_1}{1 - \zeta_1} \le 1$
by \Cref{lem:riskprofile_underparam}~\eqref{lem:riskprofile_underparam-item-increasing}.
Now using 
\Cref{lem:riskprofile_overparam}~\eqref{lem:riskprofile_overparam-item-snr-less-than-one},
the optimal allocation is obtained by setting $\zeta_2 \to \infty$
and choosing the least $\zeta_1$, which is $\gamma$,
and the corresponding optimal value is $\frac{\gamma}{1 - \gamma}$.

\paragraph{When $0.5 < \gamma < 1$.}
We claim that the optimum value is still $\frac{\gamma}{1 - \gamma}$,
which is achieved by setting $\zeta_1 = \gamma$ and $\zeta_2 \to \infty$.
This is a slightly more involved argument than the previous case
because now $\frac{\zeta_1}{1 - \zeta_1}$ will be larger than $1$
since $\zeta_1 > \gamma > 0.5$,
and hence there is a possibility of optimal allocation other than
$\zeta_1 = \gamma$ and $\zeta_2 = \infty$.
We proceed as follows.

Consider any feasible $\zeta_1 < 1$.
On one hand, using 
\Cref{lem:riskprofile_overparam}~\eqref{lem:riskprofile_overparam-item-snr-greater-than-one-min},
we note that the unconstrained optimal $\zeta_2^\star$ for this $\zeta_1$ is
$\frac{\sqrt{\frac{\zeta_1}{1 - \zeta_1}}}{\sqrt{\frac{\zeta_1}{1 - \zeta_1}} - 1}$.
On the other hand, from the constraint $\frac{1}{\zeta_2} \le \frac{1}{\gamma} - \frac{1}{\zeta_1}$,
we know that we need to satisfy $\zeta_2 \ge \frac{1}{\frac{1}{\gamma} - \frac{1}{\zeta_1}}$.
There are now two possible scenarios.

\begin{itemize}
    \item When $\frac{4}{7} < \gamma < 1$.
    
    In this case, we verify that any feasible $\zeta_1$ (such that $\gamma \le \zeta_1 < 1$) satisfies
    \begin{equation*}
    \frac{\sqrt{\frac{\zeta_1}{1 - \zeta_1}}}{\sqrt{\frac{\zeta_1}{1 - \zeta_1}} - 1}
    <
    \frac{1}{\frac{1}{\gamma} - \frac{1}{\zeta_1}}.
    \end{equation*}
    To see this, the above inequality after separating components of $\gamma$ and $\zeta_1$ reads
    \begin{equation*}
        \frac{1}{\gamma} < \frac{1}{\zeta_1} + 1 - \sqrt{\frac{1}{\zeta_1} - 1}.
    \end{equation*}
    It is easy to check
    that the function $x \mapsto 1 + \frac{1}{x} - \sqrt{\frac{1}{x} - 1}$
    attains minimum value of $\frac{7}{4}$ (at $x = \frac{4}{5}$)
    on the interval $0.5 < x < 1$.
    Thus whenever $\gamma > \frac{4}{7}$, this condition will be satisfied
    for all feasible $\zeta_1$.
    In this case, from 
    \Cref{lem:riskprofile_overparam}~\eqref{lem:riskprofile_overparam-item-snr-greater-than-one-min},
    the optimal $\zeta_2$ that satisfy the constraint is $\frac{1}{\frac{1}{\gamma} - \frac{1}{\zeta_1}}$.
    Plugging this value into the objective function,
    we arrive at the objective function
    \begin{equation*}
        \left\{ \frac{\zeta_1}{1 - \zeta_1} \right\}
        \left( 1 - \frac{1}{\gamma} + \frac{1}{\zeta_1} \right)
        +
        \frac{\frac{1}{\gamma} - \frac{1}{\zeta_1}}{1 - \frac{1}{\gamma} + \frac{1}{\zeta_1}}
    \end{equation*}
    and the overall optimization problem reduces to
    \begin{equation}
        \begin{array}{ll}
        \mbox{minimize} 
        &
        \left\{ \frac{\zeta_1}{1 - \zeta_1} \right\}
        \left( 1 - \frac{1}{\gamma} + \frac{1}{\zeta_1} \right)
        +
        \frac{\frac{1}{\gamma} - \frac{1}{\zeta_1}}{1 - \frac{1}{\gamma} + \frac{1}{\zeta_1}}  \\
        \mbox{subject to} & \zeta_1 \ge \gamma \ge \frac{4}{7} \\
        & \zeta_1 < 1.
        \end{array}
    \end{equation}
    We can verify that the objective function is increasing in the constraint set
    and achieves the minimum at $\zeta_1 = \gamma$.
    The corresponding $\zeta_2$ then tends to $\infty$ as desired.
    \item When $0.5 < \gamma < \frac{4}{7}$,
    or equivalently $\frac{7}{4} < \frac{1}{\gamma} < 2$.
    
    In this case, we can check that when
    \begin{equation}
        \label{eq:gamma1_interval_lower}
        \frac{\frac{2}{\gamma} - \sqrt{\frac{4}{\gamma} - 7} - 1}
        {2 \left( \frac{1}{\gamma^2} - \frac{2}{\gamma} + 2 \right)}
        \le
        \zeta_1
        \le
        \frac{\frac{2}{\gamma} + \sqrt{\frac{4}{\gamma} - 7} - 1}
        {2 \left( \frac{1}{\gamma^2} - \frac{2}{\gamma} + 2 \right)},
    \end{equation}
    we have
    \begin{equation*}
        \frac{1}{\gamma} >
        \frac{1}{\zeta_1} + 1 - \sqrt{\frac{1}{\zeta_1} - 1}
    \end{equation*}
    which leads to
    \begin{equation*}
      \frac{1}{\frac{1}{\gamma} - \frac{1}{\zeta_1}}
        <
        \frac{\sqrt{\frac{\zeta_1}{1 - \zeta_1}}}{\sqrt{\frac{\zeta_1}{1 - \zeta_1}} - 1}
    \end{equation*}
    Thus
    $\zeta_2^\star = \frac{\sqrt{\frac{\zeta_1}{1 - \zeta_1}}}{\sqrt{\frac{\zeta_1}{1 - \zeta_1}} - 1}$
    is feasible.
    The objective at this $\zeta_2$ is $2 \sqrt{\frac{\zeta_1}{1 - \zeta_1}} - 1$.
    Now note that the function $x \mapsto 2 \sqrt{\frac{x}{1 - x}} - 1$
    is increasing for $0 < x < 1$
    and thus the optimal $\zeta_1$ in this case is the lower point of the above interval \eqref{eq:gamma1_interval_lower}.
    The optimal value for this case is thus given by 
    \begin{equation*}
      2 \sqrt{\frac{\frac{2}{\gamma} - \sqrt{\frac{4}{\gamma} - 7} - 1}{\frac{2}{\gamma^2} - \frac{4}{\gamma} + 4 - \frac{2}{\gamma} + \sqrt{\frac{4}{\gamma} - 7} + 1}} - 1.
    \end{equation*} 
    While when
    \begin{equation*}
        \gamma
        <
        \zeta_1
        <
        \frac{\frac{2}{\gamma} - \sqrt{\frac{4}{\gamma} - 7} - 1}
        {2\left( \frac{1}{\gamma^2} - \frac{2}{\gamma} + 2 \right)},
        \quad
        \text{ or }
        \quad
        \frac{\frac{2}{\gamma} + \sqrt{\frac{4}{\gamma} - 7} - 1}
        {2\left( \frac{1}{\gamma^2} - \frac{2}{\gamma} + 2 \right)}
        <
        \zeta_1
        <
        1,
    \end{equation*}
    we have
    \begin{equation*}
        \frac{1}{\gamma} <
        \frac{1}{\zeta_1} + 1 - \sqrt{\frac{1}{\zeta_1} - 1}.
    \end{equation*}
    As argued before, in this case, the optimal $\zeta_2$ is
    $\frac{1}{\frac{1}{\gamma} - \frac{1}{\zeta_1}}$
    and the objective function at this value is given by
    \begin{equation*}
        \left\{ \frac{\zeta_1}{1 - \zeta_1} \right\}
        \left( 1 - \frac{1}{\gamma} + \frac{1}{\zeta_1} \right)
        +
        \frac{\frac{1}{\gamma} - \frac{1}{\zeta_1}}{1 - \frac{1}{\gamma} + \frac{1}{\zeta_1}}.
    \end{equation*}
    This function is again increasing in $\zeta_1$ in the constrained set
    and hence the optimal value of $\zeta_1$ is the lower point when $\zeta_1 = \gamma$
    leading to the optimal value $\frac{\gamma}{1 - \gamma}$.
    Now, we have
    \begin{equation*}
        \frac{\gamma}{1 - \gamma}
        <
      2 \sqrt{\frac{\frac{2}{\gamma} - \sqrt{\frac{4}{\gamma} - 7} - 1}{\frac{2}{\gamma^2} - \frac{4}{\gamma} + 4 - \frac{2}{\gamma} + \sqrt{\frac{4}{\gamma} - 7} + 1}} - 1
    \end{equation*}
    for $0.5 < \gamma < \frac{4}{7}$.
    Thus overall, even in this case,
    the optimal allocation is $\zeta_1 = \gamma$ and $\zeta_2 \to \infty$.
\end{itemize}

\subsubsection*{Optimization problem~\eqref{prob:overparam}}

We now turn to problem~\eqref{prob:overparam}.
In this case, the solution depends on both $\mathrm{SNR}$ and $\gamma$. 
Note that the objective function can be written more compactly as
$h(\zeta_2; h(\zeta_1; \mathrm{SNR}))$
where $h(\gamma; \mathrm{SNR})$ is defined as
\begin{equation*}
    h(\gamma; \mathrm{SNR})
    =
    \mathrm{SNR} \left(1 - \frac{1}{\gamma}\right) + \frac{1}{\gamma - 1}.
\end{equation*}

We first consider the case when $\mathrm{SNR} \le 1$.
We argue that the optimum value in this case is $\mathrm{SNR}$ itself
and it is achieved by setting both $\zeta_1 \to \infty$ and $\zeta_2 \to \infty$.
This can be seen as follows.
For any feasible $\zeta_1 > 1$,
the minimum value of $h(\gamma; \mathrm{SNR})$ is $\mathrm{SNR}$
and it is achieved as $\zeta_1 \to \infty$ from 
\Cref{lem:riskprofile_overparam}~\eqref{lem:riskprofile_overparam-item-snr-less-than-one}.
Since this minimum value is less than $1$,
$h(\zeta_2; \mathrm{SNR})$ is again minimized as $\zeta_2 \to \infty$
and overall minimum is $\mathrm{SNR}$.

Let us consider the case when $\mathrm{SNR} > 1$.
For ease of notation, we denote $\mathrm{SNR}$ by $s$.

We first claim that we 
can restrict to
$\zeta_1 \ge \frac{\sqrt{s}}{\sqrt{s} - 1}$
without loss of generality.
This is because for any $1 < \zeta_1 < \frac{\sqrt{s}}{\sqrt{s} - 1}$,
there is a corresponding $\zeta_1 \ge \frac{\sqrt{s}}{\sqrt{s} - 1}$
that gives either the same or smaller objective value
while enlarging the constraint set for $\zeta_2$.
This claim follows from 
\Cref{lem:riskprofile-onestep-ridgless-overparam}~\eqref{lem:riskprofile-onestep-ridgless-overparam-fixed-first-ratio}.

Next observe that the minimum without the constraint
$\frac{1}{\zeta_1} + \frac{1}{\zeta_2} \le \frac{1}{\gamma}$ is
\begin{equation*}
    2 \sqrt{2 \sqrt{s} - 1} - 1,
\end{equation*}
which is achieved by setting $\zeta_1 = \frac{\sqrt{s}}{\sqrt{s} - 1}$
and $\zeta_2 = \frac{\sqrt{2 \sqrt{s} - 1}}{\sqrt{2 \sqrt{s} - 1} - 1}$.
The values of $\gamma$ for which this value is achievable are:
\begin{equation}
    \label{eq:gammaopt_unconstrained_overparam}
    \gamma
    \le
    \left(
    1 - \frac{1}{\sqrt{s}}
    +
    1 - \frac{1}{\sqrt{2 \sqrt{s} - 1}}
    \right)^{-1}.
\end{equation}
In other words,
the optimum value of problem \eqref{prob:overparam} is
$2 \sqrt{2 \sqrt{s} - 1} - 1$ for $\gamma$ satisfying \eqref{eq:gammaopt_unconstrained_overparam}
achieved by setting
$\zeta_1 = \frac{\sqrt{s}}{\sqrt{s} - 1}$ and
$\zeta_2 = \frac{\sqrt{2 \sqrt{s} - 1}}{\sqrt{2 \sqrt{s} - 1} - 1}$.

Now we consider $\gamma$ bigger than \eqref{eq:gammaopt_unconstrained_overparam}.
For such $\gamma$, we need to move either (or both)
of $\zeta_1$ and $\zeta_2$ from their unconstrained optimum values above.
We claim that the constraint 
$\frac{1}{\zeta_1} + \frac{1}{\zeta_2} \le \frac{1}{\gamma}$
need to be satisfied with equality in this case.
This can be seen as follows.
By way of contradiction,
suppose the optimal allocation is $(\zeta_1^\star, \zeta_2^\star)$,
and $\smash{\frac{1}{\zeta_1^\star} + \frac{1}{\zeta_2^\star} < \frac{1}{\gamma}}$.
We now argue that we can strictly decrease the objective function
while satisfying the constraint
by producing a feasible allocation $(\zeta_1^{\star \star}, \zeta_2^{\star \star})$
that strictly dominates the assumed allocation.
We have two cases to consider.

\begin{enumerate}
    \item $\zeta_1^\star \ge \frac{\sqrt{s}}{\sqrt{s} - 1}$ 
    and 
    $\zeta_2^\star > \frac{\sqrt{2 \sqrt{s} - 1}}{\sqrt{2 \sqrt{s} - 1} - 1}$.
    In this case,
    observe that we can keep $\zeta_1^{\star \star} = \zeta_1^\star$ 
    and decrease $\zeta_2^\star$
    so that $\zeta_2^{\star \star} = \frac{1}{\gamma} - \frac{1}{\zeta_1^\star}$.
    This is feasible.
    Now note that
    \[
        h(\zeta_2^{\star \star}; h(\zeta_1^{\star \star}; s))
        =
        h(\zeta_2^{\star \star}; h(\zeta_1^{\star}; s))
        <
        h(\zeta_2^{\star}; h(\zeta_1^{\star}; s))
    \]
    where the inequality follows from 
    \Cref{lem:riskprofile-onestep-ridgless-overparam}~\eqref{lem:riskprofile-onestep-ridgless-overparam-fixed-second-ratio}.
    Thus, the new allocation strictly decreases the objective value.
    
    \item 
    $\zeta_1^\star > \frac{\sqrt{s}}{\sqrt{s} - 1}$
    and
    $\zeta_2^\star = \frac{\sqrt{2 \sqrt{s} - 1}}{\sqrt{2 \sqrt{s} - 1} - 1}$.
    In this case,
    we can decrease $\zeta_1^\star$ first 
    so that $\zeta_1^{\star \star} = \frac{1}{\gamma} - \frac{1}{\zeta_2^\star}$,
    and keep $\zeta_2^{\star \star} = \zeta_2^{\star}$.
    Observe that this modification keeps us in the feasible region.
    Now note that
    \[
        h(\zeta_2^{\star \star}; h(\zeta_1^{\star \star}; s))
        =
        h(\zeta_2^\star; h(\zeta_1^{\star \star}; s))
        <
        h(\zeta_2^\star; h(\zeta_1^\star; s))
    \]
    where the inequality follows from
    \Cref{lem:riskprofile-onestep-ridgless-overparam}~\eqref{lem:riskprofile-onestep-ridgless-overparam-fixed-first-ratio}.
    Thus, the objective value is again strictly smaller.
\end{enumerate}

Hence, in both the cases,
the objective value can be strictly improved
while staying within the feasible constraint.
Therefore, we must hit the constraint with equality.

With the equality constraint, we can now use the method of Lagrange multipliers.
The Lagrangian is given by
\begin{equation*}
    \cL(\zeta_1, \zeta_2, \mu)
    = h(\zeta_2; h(\zeta_1; s))
    + \mu \left( \frac{1}{\zeta_1} +\frac{1}{\zeta_2} - \frac{1}{\gamma} \right).
\end{equation*}
The optimality conditions are given by the following system of equations in $(\zeta_1, \zeta_2, \mu)$
\begin{gather*}
    \left\{
    s \left( 1 - \frac{1}{\zeta_1} \right) + \frac{1}{\zeta_1 - 1}
    \right\}
    \frac{1}{\zeta_2^2}
    - \frac{1}{(\zeta_2 - 1)^2}
    - \frac{\mu}{\zeta_2^2}
    = 0 \\
    \left( 1 - \frac{1}{\zeta_2} \right)
    \left\{
    \frac{s}{\zeta_1^2} - \frac{1}{(\zeta_1 - 1)^2}
    \right\}
    - \frac{\mu}{\zeta_1^2}
    = 0 \\
    \frac{1}{\zeta_1} + \frac{1}{\zeta_2} = \frac{1}{\gamma}.
\end{gather*}
After minor simplifications, these lead to
\begin{gather*}
    s \left( 1 - \frac{1}{\zeta_1} \right) - \mu
    = \frac{\zeta_1^2}{(\zeta_1 - 1)^2} - \frac{1}{\zeta_1 - 1} \\
    s \left( 1 - \frac{1}{\zeta_2} \right) - \mu
    = \frac{\zeta_1^2}{(\zeta_1 - 1)^2} \left( 1 - \frac{1}{\zeta_2} \right) \\
    \frac{1}{\zeta_1} + \frac{1}{\zeta_2} = \frac{1}{\gamma}.
\end{gather*}
Eliminating $\mu$, we get two equations in two unknowns $(\zeta_1, \zeta_2)$:
\begin{gather*}
    s \left( \frac{1}{\zeta_1} - \frac{1}{\zeta_2} \right)
    = \frac{\zeta_1^2}{(\zeta_1 - 1)^2} - \frac{\zeta_2^2}{(\zeta_2 - 1)^2}
    + \frac{1}{\zeta_1 - 1} \left( 1 - \frac{\zeta_1}{\zeta_2} \frac{\zeta_1}{(\zeta_1 - 1)} \right) \\
    \frac{1}{\zeta_1} + \frac{1}{\zeta_2} = \frac{1}{\gamma},
\end{gather*}
as claimed.

Finally, to obtain various boundary cutoff points for $\gamma$
and $\mathrm{SNR}$
in each of the cases, note that:
\begin{itemize}
    \item 
    When $x = \frac{\mathrm{SNR}}{\mathrm{SNR} + 1}$,
    we have $\frac{x}{1 - x} = \mathrm{SNR}$.
    \item
    When $x = 1 - \frac{1}{2 \sqrt{2 \sqrt{\mathrm{SNR}} - 1}}$,
    we have $\frac{x}{x - \gamma} = 2 \sqrt{2 \sqrt{\mathrm{SNR}} - 1} - 1$.
    In addition, from a short calculation it follows that,
    when $\mathrm{SNR} \approx 10.704$,
    we have
    $1 - \frac{1}{2 \sqrt{2 \sqrt{\mathrm{SNR}} - 1}}
    = \left(2 - \frac{1}{\sqrt{\mathrm{SNR}}} - \frac{1}{\sqrt{2 \sqrt{\mathrm{SNR}} - 1}} \right)^{-1}$.
    \item
    When $x = \gamma^\star$,
    we have $\frac{x}{1 - x}
    = \min_{\gamma \le 1} h(\gamma_2; h(\gamma_1; \mathrm{SNR}))$.
\end{itemize}

This finishes the proof.
See
\Cref{fig:risk-monotonization-onestep-isotropic-risk-optimization-split-illustration}
for an illustration of the optimal splitting of the aspect ratios
$(\zeta_1^\star(\gamma), \zeta_2^\star(\gamma))$
for a given $\gamma$ for two different $\mathrm{SNR}$ values.
\end{proof}

\begin{figure}[!ht]
    \centering
    \includegraphics[width=0.45\columnwidth]{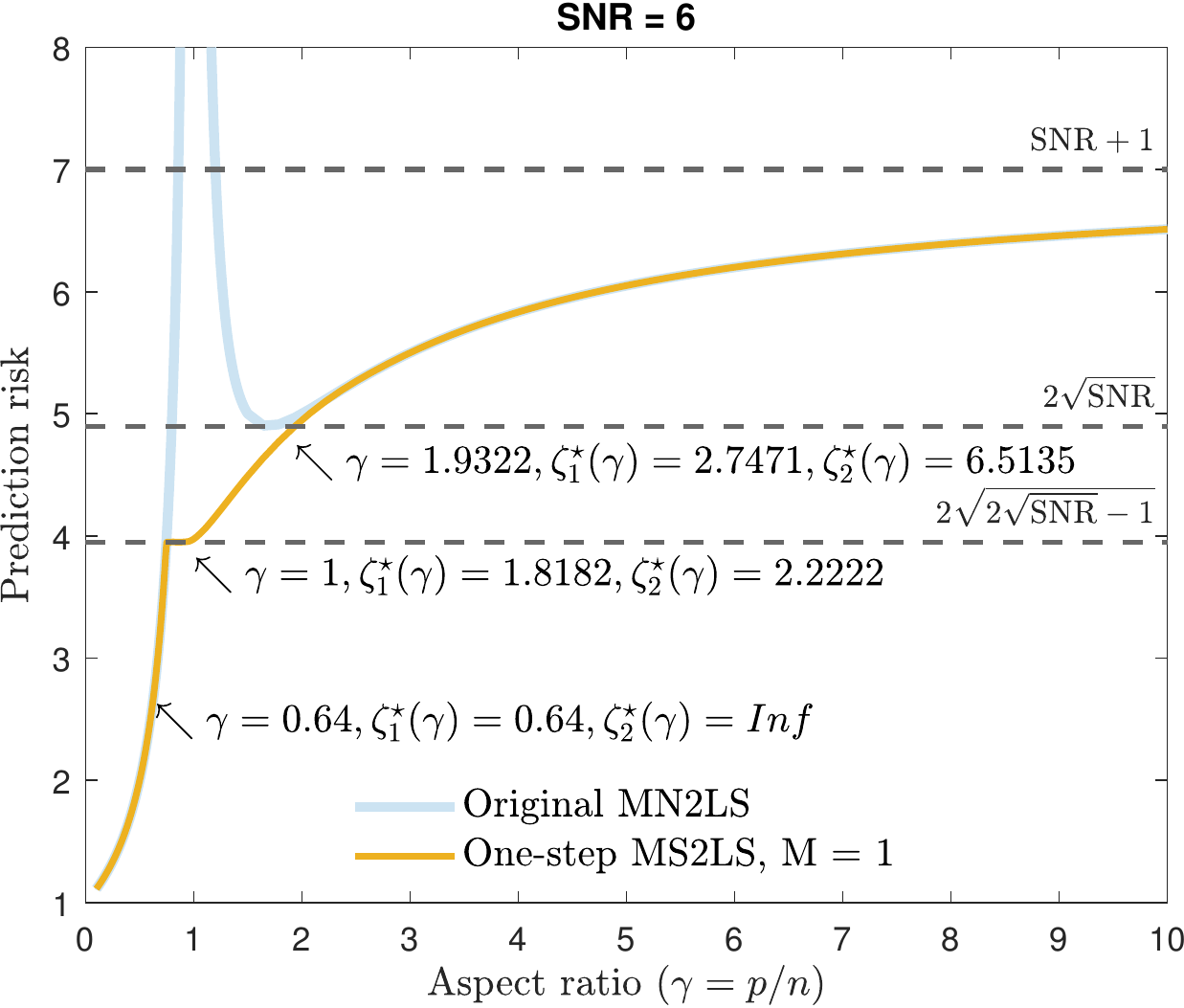}
    \quad
    \includegraphics[width=0.45\columnwidth]{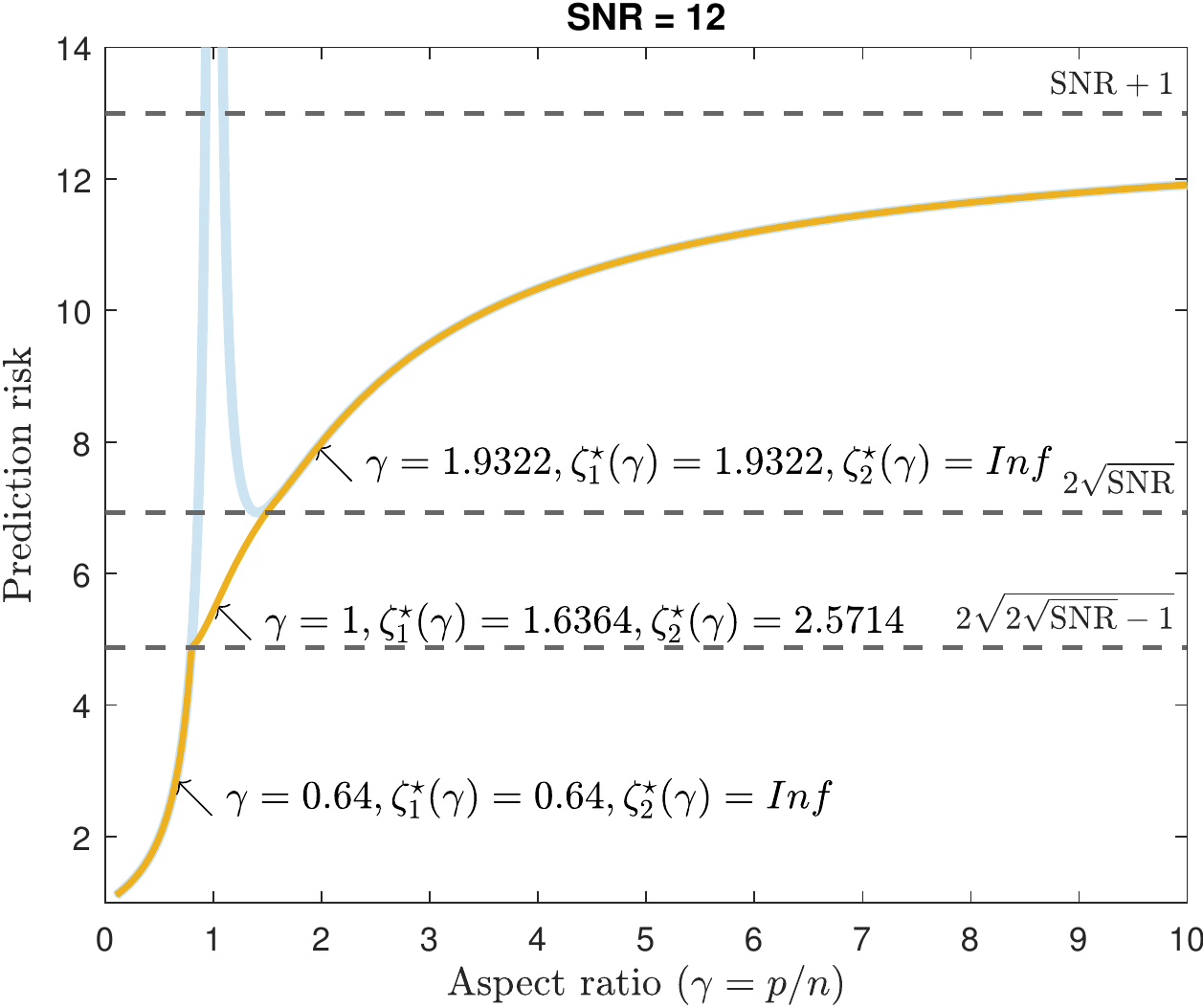}
    \caption{Illustration of the optimal splitting of the aspect ratios
    for the one-step optimization with MN2LS base prediction procedure.
    Here, $(\zeta_1^\star(\gamma), \zeta_2^\star(\gamma))$
    indicates the optimal splitting of the aspect ratio $\gamma$
    for the first and second splits.}
    \label{fig:risk-monotonization-onestep-isotropic-risk-optimization-split-illustration}
\end{figure}

\subsection{Lemmas on properties of risk profile of ridgeless regression}

In this section,
we collect helper lemmas used in the proof of
\Cref{thm:esterrlim_optimized_onestep}.
All the lemmas in this section
are quite elementary,
and only abstracted out for ease
of repeated use in the proof of \Cref{thm:esterrlim_optimized_onestep}. 

\begin{lemma}
    [Properties of ridgeless risk profile in the underparameterized regime]
    \label{lem:riskprofile_underparam}
    The function $g:  x \mapsto \frac{x}{1 - x}$ 
    over the domain 
    $(0, 1)$
    has the following properties:
    \begin{enumerate}
        \item 
        \label{lem:riskprofile_underparam-item-increasing}
        The function $g$ is increasing in $x$.
        \item 
        \label{lem:riskprofile_underparam-item-less-than-one}
        When $x \le 0.5$, $g(x) \le 1$.
        \item 
        \label{lem:riskprofile_underparam-item--greater-than-one}
        When $x > 0.5$, $g(x) > 1$.
    \end{enumerate}
\end{lemma}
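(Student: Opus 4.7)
The plan is to establish all three properties by computing the derivative of $g$ once and then invoking monotonicity together with the single evaluation $g(0.5) = 1$. All three parts are essentially corollaries of the same one-line calculation, so I will organize the proof to do the work once and read off the three consequences.

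First, I would compute $g'(x)$ by the quotient rule. Writing $g(x) = x/(1-x)$, we have
\[
g'(x) = \frac{(1-x) \cdot 1 - x \cdot (-1)}{(1-x)^2} = \frac{1}{(1-x)^2}.
\]
Since $(1-x)^2 > 0$ for every $x \in (0,1)$, this gives $g'(x) > 0$ throughout the domain, which establishes Part~\ref{lem:riskprofile_underparam-item-increasing}: $g$ is strictly increasing (hence increasing) on $(0,1)$.

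Next, for Parts \ref{lem:riskprofile_underparam-item-less-than-one} and \ref{lem:riskprofile_underparam-item--greater-than-one}, I would evaluate $g$ at the threshold $x = 0.5$, obtaining $g(0.5) = 0.5/0.5 = 1$. Combined with the strict monotonicity established in the previous step, this immediately yields $g(x) \le g(0.5) = 1$ for $x \le 0.5$ (with equality precisely at $x = 0.5$), and $g(x) > g(0.5) = 1$ for $x > 0.5$. There is no real obstacle here — the lemma is entirely elementary and is only being abstracted for clean reuse in the proof of \Cref{thm:esterrlim_optimized_onestep}, where $g$ arises as the ridgeless risk profile in the underparameterized regime.
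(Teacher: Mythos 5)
Your proof is correct, and the paper itself simply remarks that "the claims are easy to check" (deferring to a figure for illustration), so your elementary derivative-plus-evaluation argument is exactly the verification the paper has in mind but does not spell out.
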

\begin{proof}
    The claims are easy to check.
    See \Cref{fig:isotropic_risk} 
    (the $x < 1$ segment)
    for illustration.
\end{proof}

\begin{lemma}
    [Properties of ridgeless risk profile in the overparameterized regime]
    \label{lem:riskprofile_overparam}
    Let $h(\cdot; s) : x \mapsto s \left(1 - \frac{1}{x} \right) + \frac{1}{x - 1}$
    be a function defined on the domain $x > 1$, parametrized by $s \ge 0$.
    The function $h$ has the following properties:
    \begin{enumerate}
        \item 
        \label{lem:riskprofile_overparam-item-snr-less-than-one}
        When $s \le 1$, the function is decreasing in $x$
        and approaches the minimum value of $s$ as $x \to \infty$.
        \item 
        \label{lem:riskprofile_overparam-item-snr-greater-than-one-min}
        When $s > 1$, the function attains
        the minimum value of $2 \sqrt{s} - 1$ at $x = \frac{\sqrt{s}}{\sqrt{s} - 1}$.
        \item 
        \label{lem:riskprofile_overparam-item-snr-greater-than-one-risk}
        When $s > 1$, $h(x; s) > 1$ for all $x > 1$.
        \item 
        \label{lem:riskprofile_overparam-item-after-min-increasing}
        For $x > \frac{\sqrt{s}}{\sqrt{s} - 1}$,
        the function is increasing in $x$.
        \item
        \label{lem:riskprofile_overparam-item-fixed-gamma-increasing-in-snr}
        The function $s \mapsto h(x; s)$
        is increasing in $s$ for $s \ge 0$
        for any fixed $x > 1$.
    \end{enumerate}
\end{lemma}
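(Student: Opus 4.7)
The proof is a routine calculus exercise based on computing the partial derivatives of $h(x; s) = s(1 - 1/x) + 1/(x-1)$ with respect to $x$ and $s$, and analyzing their signs. The plan is to establish a single master formula for the critical point structure of $h(\cdot; s)$, and then read off items (1)--(5) from it; I will handle items (2) and (4) simultaneously since they share the same derivative analysis.

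First I would compute $\partial h/\partial x = s/x^2 - 1/(x-1)^2$, so that the equation $\partial h/\partial x = 0$ is equivalent to $s(x-1)^2 = x^2$. Viewing this as the quadratic $(s-1)x^2 - 2sx + s = 0$ in $x$, its two roots are $x_- = \sqrt{s}/(\sqrt{s}+1)$ and $x_+ = \sqrt{s}/(\sqrt{s}-1)$ when $s > 1$ (only $x_-$ when $s \le 1$), and $x_- < 1 < x_+$. Since the quadratic $(s-1)x^2 - 2sx + s$ opens upward for $s > 1$, its sign is negative on $(x_-, x_+)$ and positive on $(x_+, \infty)$; equivalently, $\partial h/\partial x < 0$ on $(1, x_+)$ and $\partial h/\partial x > 0$ on $(x_+, \infty)$. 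This immediately yields items (2) and (4): the unique minimizer on $(1, \infty)$ is $x^\star = x_+ = \sqrt{s}/(\sqrt{s}-1)$, and using $x^\star - 1 = 1/(\sqrt{s}-1)$ and $1 - 1/x^\star = 1/\sqrt{s}$ one computes $h(x^\star; s) = \sqrt{s} + (\sqrt{s} - 1) = 2\sqrt{s} - 1$.

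For item (1), when $s \le 1$, the inequality $(x-1)^2 < x^2$ for $x > 1$ gives $1/(x-1)^2 > 1/x^2 \ge s/x^2$, so $\partial h/\partial x < 0$ throughout $(1, \infty)$; the limit $\lim_{x\to\infty} h(x; s) = s$ follows by inspection. Item (3) is an immediate consequence of item (2): when $s > 1$, the global minimum $2\sqrt{s} - 1$ strictly exceeds $2\sqrt{1} - 1 = 1$, hence $h(x; s) > 1$ on $(1, \infty)$. Item (5) follows from $\partial h/\partial s = 1 - 1/x$, which is strictly positive for $x > 1$.

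There is no real obstacle here; the only point that requires minor care is matching the sign of the quadratic $(s-1)x^2 - 2sx + s$ on the correct interval to deduce strict monotonicity on either side of $x^\star$ (needed for item (4)). One could alternatively verify the second-order condition $\partial^2 h/\partial x^2(x^\star; s) > 0$ using the identity $s/(x^\star)^3 = 1/[x^\star (x^\star - 1)^2]$, but the quadratic factorization is cleaner since it simultaneously proves both the minimality at $x^\star$ and the monotonicity on $(x^\star, \infty)$.
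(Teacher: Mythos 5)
Your proof is correct and proceeds exactly along the lines the paper gestures at ("easy to check," "elementary calculus," "inspecting the derivative"), but you have actually carried out the computations. The quadratic factorization $(s-1)x^2 - 2sx + s$ with roots $\sqrt{s}/(\sqrt{s}\pm 1)$ cleanly settles items (2) and (4) simultaneously, and all five items check out.
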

\begin{proof}
    The first property is easy to check.
    The second property follows elementary calculus.
    The third property follows from the second property.
    The fourth property follows by inspecting
    the derivative of $h(\cdot; s)$ for $x > \frac{\sqrt{s}}{\sqrt{s} - 1}$.
    The fifth property is easy to check.
    See Figure \ref{fig:isotropic_risk} (the $x > 1$ segment) for illustration.
    
    \begin{figure}[!ht]
        \centering
        \includegraphics[width=0.5\columnwidth]{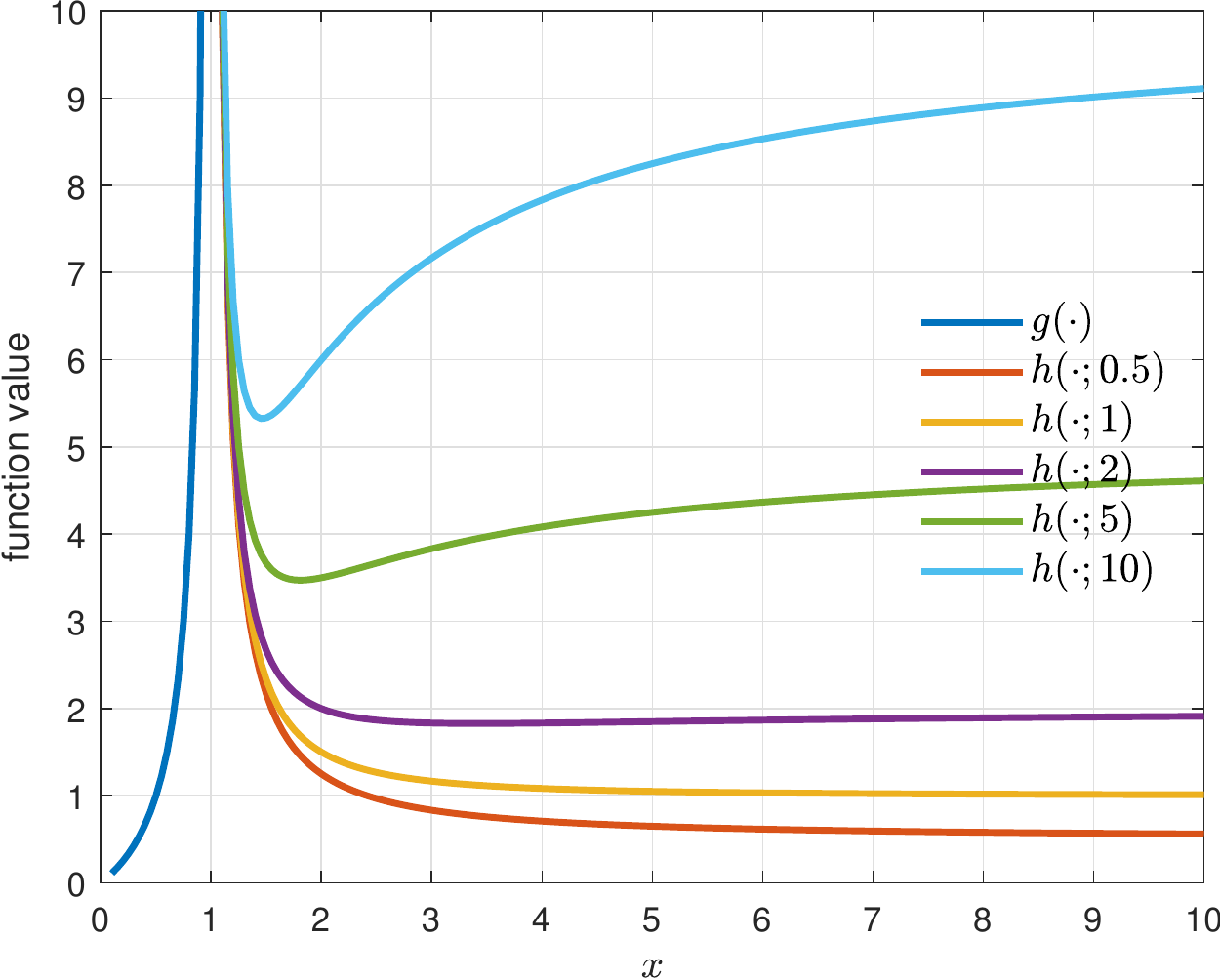}
        \caption{Illustration of ridgeless risk profile with varying SNR.}
        \label{fig:isotropic_risk}
    \end{figure}
\end{proof}

\begin{lemma}
    [Properties of ridgeless one-step ingredient risk profile
    in the overparameterized regime]
    \label{lem:riskprofile-onestep-ridgless-overparam}
    Let $h(x; s) : x \mapsto s \left(1 - \frac{1}{x}\right) + \frac{1}{x - 1}$
    be a function defined on the domain $x > 1$,
    parameterized by $s \ge 1$.
    Let $g: (x, y) \mapsto h(y; h(x; s))$
    be a function defined on the domain $x > 1$ and $y > 1$,
    parameterized by $s \ge 1$.
    The function $g$ has the following properties:
    \begin{enumerate}
        \item
        \label{lem:riskprofile-onestep-ridgless-overparam-fixed-first-ratio}
        For any fixed $y > 1$,
        the function $g$
        is minimized at $x = \frac{\sqrt{s}}{\sqrt{s} - 1}$
        and increasing in $x$ for $x \ge \frac{\sqrt{s}}{\sqrt{s} - 1}$.
        \item 
        \label{lem:riskprofile-onestep-ridgless-overparam-fixed-second-ratio}
        For any fixed $x > 1$,
        $g(x, y)$
        is increasing over
        $y \ge \frac{\sqrt{h(x; s)}}{\sqrt{h(x; s)} - 1}$.
    \end{enumerate}
\end{lemma}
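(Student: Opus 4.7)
My plan is to reduce both parts to the elementary properties of $h(\cdot; s)$ already established in \Cref{lem:riskprofile_overparam}. The key observation is that $g(x,y) = h(y; h(x;s))$ has a natural composition structure: thinking of $g$ as a function of $x$ with $y$ fixed, we are composing the monotone map $s' \mapsto h(y; s')$ with the inner map $x \mapsto h(x; s)$; thinking of $g$ as a function of $y$ with $x$ fixed, we simply obtain the single function $y \mapsto h(y; s')$ for the constant $s' := h(x;s)$.

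For Part 1, I would fix $y > 1$ and apply the general fact that if $F$ is strictly increasing on the range of $u$, then $F \circ u$ attains its minimum exactly where $u$ does and inherits $u$'s monotonicity on intervals where $u$ is monotone. Here $u(x) = h(x;s)$ and $F(s') = h(y; s')$. Parts 2 and 4 of \Cref{lem:riskprofile_overparam} give that (for $s > 1$) $u$ is minimized at $x^\star = \sqrt{s}/(\sqrt{s}-1)$ and is strictly increasing on $[x^\star, \infty)$; Part 5 of the same lemma gives that $F$ is strictly increasing in $s'$ on $s' \ge 0$. The composition conclusion then follows immediately.

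For Part 2, fix $x > 1$ and set $s' := h(x;s)$, so that $g(x,\cdot) = h(\cdot; s')$. By Part 4 of \Cref{lem:riskprofile_overparam}, $h(\cdot; s')$ is increasing on $[\sqrt{s'}/(\sqrt{s'}-1), \infty)$ provided $s' > 1$. Thus the only remaining obstacle is to verify that $h(x;s) > 1$ for all $x > 1$ whenever $s \ge 1$. This is a short calculation: Part 3 of \Cref{lem:riskprofile_overparam} handles the case $s > 1$, and for $s = 1$ one checks directly that
\[
h(x;1) - 1 = \Bigl(1 - \tfrac{1}{x}\Bigr) + \tfrac{1}{x-1} - 1 = \tfrac{1}{x(x-1)} > 0
\]
for $x > 1$. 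With $s' > 1$ in hand, Part 4 of \Cref{lem:riskprofile_overparam} closes the argument.

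I do not anticipate a serious obstacle: both statements are essentially one-line consequences of the monotonicity facts already proved for the single-variable function $h(\cdot; \cdot)$. The only subtlety is the boundary check $s' > 1$ needed in Part 2, which the elementary calculation above resolves; one could alternatively sharpen the hypothesis to $s > 1$ and invoke Part 3 alone, at no real cost since the lemma is only applied in the overparameterized regime with $\mathrm{SNR} > 1$ in the proof of \Cref{thm:esterrlim_optimized_onestep}.
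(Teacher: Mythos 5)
Your proof is correct and takes the same route as the paper: both parts reduce to items (2), (4), and (5) of \Cref{lem:riskprofile_overparam} via the composition structure $g(x,y)=h(y;h(x;s))$. You additionally make explicit the check that $s' := h(x;s) > 1$ (needed so that item (4) of \Cref{lem:riskprofile_overparam} applies in Part~2), which the paper's one-line proof leaves implicit; your direct computation $h(x;1)-1 = 1/(x(x-1)) > 0$ cleanly covers the boundary case $s=1$ that item (3) omits.
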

\begin{proof}
    The first claim follows from
    \Cref{lem:riskprofile_overparam}~\eqref{lem:riskprofile_overparam-item-snr-greater-than-one-min},
    \eqref{lem:riskprofile_overparam-item-after-min-increasing},
    \eqref{lem:riskprofile_overparam-item-fixed-gamma-increasing-in-snr}.
    The second claim follows
    from 
    \Cref{lem:riskprofile_overparam}~\eqref{lem:riskprofile_overparam-item-after-min-increasing}.
\end{proof}

\subsection{Control of additive error term in expectation}
\label{sec:growth-rates-expectation-bound}

The following remark 
complements \Cref{rem:growth-rates-probabilistic-bound}
and specifies
the growth allowed conditions on $\hsigma_{\Xi}$
to ensure that $\EE[\Delta_n^\add] = o(1)$.

\begin{remark}
    [Tolerable growth rates on $\hsigma_{\Xi}$
    for $\mathbb{E}\Delta_n^{\add} = o(1)$]\label{rem:growth-rates-expectation-bound}
    Suppose $|\Xi| \le n^{S}$ for some $S < \infty$.
    Under the setting of
    \Cref{lem:bounded-orlitz-error-control},
    if for some $t \ge 1$,
    \[
        \max_{\xi \in \Xi}\,
        \| \hsigma_\xi \|_{L_t}
        = o\left(\frac{n_\test^{1/2}}{n^{-A  + (A + S)/t}}\right),
    \]
    then $\EE[\Delta_n^{\add}] = o(1)$.
    On the other hand,
    under the setting of
    \Cref{lem:bounded-variance-error-control},
    if
    \[
        \max_{\xi \in \Xi}\,
        \| \hsigma_\xi \|_{L_2}
        = o\left(\frac{n_\test^{1/2}}{n^{(S-A)/2}}\right) 
    \]
    then $\EE[\Delta_n^{\add}] = o(1)$.
    The remark follows simply by observing that
    the first term in the expectation bounds
    \eqref{eq:bounded-orlicz-expectation-bound}
    and \eqref{eq:bounded-variance-expectation-bound}
    for both
    \Cref{lem:bounded-orlitz-error-control,lem:bounded-variance-error-control}
    are $o(1)$,
    while the second term in
    \Cref{lem:bounded-orlitz-error-control}
    is of order
    \[
      O\left(\frac{n^{-A/r + S/t}}{n_\test^{1/2}}\right)
      \max_{\xi \in \Xi} \| \hsigma_\xi \|_{L_t},
    \]
    for $r, t \ge 1$ and $1/r + 1/t = 1$,
    and the second term in
    \Cref{lem:bounded-variance-error-control}
    is of order
    \[
        O\left(\frac{n^{-A/2  + S/2}}{n_\test^{1/2}}\right)
        \max_{\xi \in \Xi} \| \hsigma_\xi \|_{L_2}.
    \]
\end{remark}

It is worth mentioning that
one can also derive suitable
growth rates on $\hkappa_{\Xi}$
that yield conditions for $\EE[\Delta_n^\mul] = o(1)$.
However, this does not directly
lead to control of $\EE[R(\hf^\cv(\cdot; \cD_n))]$
in the multiplicative form \eqref{eq:oracle-risk-inequality-multiplicative-form}.
This is because of the denominator $(1 - \Delta_n^\mul)_{+}$
appearing in \eqref{eq:oracle-risk-inequality-multiplicative-form}.
For every $n$,
there is a non-zero probability that
the denominator $(1 - \Delta_n^\mul)_+$ is zero. 
Hence,
the right hand side of \eqref{eq:oracle-risk-inequality-multiplicative-form}
may not have a finite expectation in general.
However,
assuming
$\EE[R(\hf^\xi(\cdot; \cD_n))] < C$ for some $C < \infty$
for all $\xi \in \Xi$,
one can control $\EE[R(\hf^\cv(\cdot; \cD_n))]$
by explicitly analyzing
$\PP(\Delta^\mul_n > 1/2)$,
and using the bound
\begin{align*}
    R(\hf^\cv(\cdot; \cD_n))
    &\le
    \frac{1 + \Delta_n^\mul}{(1 - \Delta_n^\mul)_{+}}
    \cdot \min_{\xi \in \Xi} R(\hf^\xi(\cdot; \cD_\train).
    \1_{\Delta_n^\mul \le 1/2}
    +
    \sum_{\xi \in \Xi}
    R(\hf^\xi(\cdot; \cD_n))
    \1_{\Delta_n^\mul > 1/2}.
\end{align*}

\subsection{A lemma on norm equivalence implications}
\label{sec:norm-equivalence-implications}

The following lemma formalizes
various norm equivalence implications
mentioned in \Cref{rem:psi2l2-l4l2,rem:psi1l1-l2l1}.

\begin{proposition}
    [Norm equivalence implications]
    \label{prop:norm-equivalences}
    The following statements hold.
    \begin{enumerate}
        \item
    Suppose a random $X$ satisfies $L_4-L_2$ equivalence,
    i.e., there exists a constant $C$ such that $\EE[X^4] \le C \EE[X^2]$,
    then the random variable satisfies $L_2-L_1$ equivalence,
    i.e., there exists a constant $C$ such that $\EE[X^2] \le C \EE[| X |]$.
    \item A random variable $W$ satisfying $\psi_2-L_2$ equivalence
    also satisfies $\psi_1-L_1$ equivalence.
    \end{enumerate}
\end{proposition}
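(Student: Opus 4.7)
The plan is to derive both implications from log-convexity of the map $r \mapsto \log \EE[|X|^r]$ on $[1, \infty)$ (a standard consequence of H\"older's inequality, as cited in \Cref{rem:psi1l1-l2l1}), first establishing Part 1 by a direct convex-combination argument, and then bootstrapping Part 2 via the standard characterizations of the Orlicz norms in terms of the $L_r$ norms.

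For Part 1, I will set $f(r) := \log \EE[|X|^r]$ for $r \ge 1$. The hypothesis $\|X\|_{L_4} \le C \|X\|_{L_2}$ translates to the inequality $f(4) \le 2 f(2) + 4 \log C$. Since $2 = \tfrac{2}{3} \cdot 1 + \tfrac{1}{3} \cdot 4$, convexity of $f$ yields $f(2) \le \tfrac{2}{3} f(1) + \tfrac{1}{3} f(4)$. Substituting the upper bound on $f(4)$ into this and isolating $f(2)$ gives $f(2) \le 2 f(1) + 4 \log C$, which is precisely $\|X\|_{L_2} \le C^{2} \|X\|_{L_1}$. This establishes the $L_2$--$L_1$ equivalence with constant $C^2$.

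For Part 2, I will invoke the well-known relations $\|W\|_{\psi_2} \asymp \sup_{r \ge 1} r^{-1/2} \|W\|_{L_r}$ and $\|W\|_{\psi_1} \asymp \sup_{r \ge 1} r^{-1} \|W\|_{L_r}$ (Propositions 2.5.2 and 2.7.1 of \cite{vershynin_2018}, as already used in the paper). From these one immediately reads off the universal bound $\|W\|_{\psi_1} \le C_1 \|W\|_{\psi_2}$ and, specializing the first to $r = 4$, the bound $\|W\|_{L_4} \le 2 \|W\|_{\psi_2}$. The $\psi_2$--$L_2$ hypothesis then gives $\|W\|_{L_4} \le C_2 \|W\|_{L_2}$, so Part 1 applies and yields $\|W\|_{L_2} \le C_3 \|W\|_{L_1}$. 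Chaining these bounds produces
\[
\|W\|_{\psi_1} \le C_1 \|W\|_{\psi_2} \le C_1 C \|W\|_{L_2} \le C_1 C C_3 \|W\|_{L_1},
\]
which is the desired $\psi_1$--$L_1$ equivalence.

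I do not anticipate any substantive obstacle; the only care required is tracking the absolute constants in the Orlicz-to-$L_r$ equivalences, and verifying that the coefficient on $f(1)$ in the convex combination chosen in Part 1 is strictly positive so that $f(2)$ can indeed be isolated after substitution (which is what forces the choice of $2$ as an interior point of $[1,4]$ rather than an endpoint).
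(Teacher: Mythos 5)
Your proof is correct and matches the paper's argument: Part 1 uses the same log-convexity of $r\mapsto\log\EE[|X|^r]$ with the identical convex combination $2=\tfrac{2}{3}\cdot 1+\tfrac{1}{3}\cdot 4$, and Part 2 follows the paper's alternative chain $\|W\|_{\psi_1}\lesssim\|W\|_{\psi_2}\lesssim\|W\|_{L_2}\lesssim\|W\|_{L_1}$ (the paper also records a slightly longer primary route that first establishes $\|W\|_{L_p}\lesssim p\,\|W\|_{L_1}$ for all $p\ge 1$ and then invokes the $\sup_{p}p^{-1}\|\cdot\|_{L_p}$ characterization of $\psi_1$, but your version is the cleaner of the two). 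No gaps.
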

\begin{proof}
    We will use the fact 
    that the map
    $p \mapsto \log \EE[|X|^p]$ ($p \ge 1$) is convex.
    In other words,
    for $\lambda \in (0, 1)$,
    we have
    \begin{equation}
        \label{eq:liapunov-convexity-condition}
        \log \EE[|X|^{\lambda r + (1 - \lambda) s}]
        \le \lambda \log \EE[|X|^r]
        + (1 - \lambda) \log \EE[|X|^s]
    \end{equation}
    We now use $r = 4$ and $s = 1$,
    and $\lambda = 1/3$ so that $\lambda r + (1 - \lambda) s = 2$.
    Plugging these choices in \eqref{eq:liapunov-convexity-condition} yields
    \[
        \log \EE[X^2]
        \le 
        \frac{1}{3} \log \EE[X^4] + \frac{2}{3} \log \EE[|X|].
    \]
    In terms of norms the inequality then becomes
    \[
        2 \log \| X \|_{L_2}
        \le \frac{4}{3} \log \| X \|_{L_4} + \frac{2}{3} \log \| X \|_{L_1}.
    \]
    This yields
    \[
        \frac{2}{3} \log \frac{\| X \|_{L_2}}{\| X \|_{L_1}}
        \le \frac{4}{3} \log \frac{\| X \|_{L_4}}{\| X \|_{L_2}}.
    \]
    Manipulating both sides, we end up with
    \[
        \frac{\| X \|_{L_2}}{\| X \|_{L_1}}
        \le \left( \frac{\| X \|_{L_4}}{\| X \|_{L_2}} \right)^2
    \]
    as desired.
    
    The second facts follows
    because $\psi_2-L_2$ equivalence
    implies $L_p-L_2$ equivalence for each $p \ge 1$,
    i.e., for each $p \ge 1$,
    we have that
    \[
        \| W \|_{L_p}
        \le C \sqrt{p} \| W \|_{L_2},
    \]
    for an universal constant $C$;
    see \citet[Proposition 2.5.2]{vershynin_2018}, for example.
    This in particular implies,
    $L_4-L_2$ equivalence,
    and by the first fact implies $L_2-L_1$.
    Thus, there exists a universal constant $C$
    such that
    \[
        \| W \|_{L_2} \le \| W \|_{L_1}.
    \]
    Combining with the inequality above,
    we then get for $p \ge 1$,
    \[
        \| W \|_{L_p}
        \le C \sqrt{p} \| W \|_{L_1}
        \le C p \| W \|_{L_1}.
    \]
    Now, using \citet[Proposition 2.7.1]{vershynin_2018},
    this implies $\psi_1-L_1$ equivalence.
   
   Alternatively,
   assuming $\psi_2-L_2$ equivalence,
   observe the following chain of inequalities:
   \[
        C \| X \|_{L_4}
        \overset{(a)}{\le} \| X \|_{\psi_1}
        \overset{(b)}{\le} (\log 2)^{1/2} \| X \|_{\psi_2} 
        \overset{(c)}{\le} C \| X \|_{L_2}
   \]
   where $(a)$ follows from \citet[Proposition 2.5.2]{vershynin_2018},
   $(b)$ follows from \citet[Problem 2.2.5]{wellner_vandervaart_2013},
   $(c)$ follows from the assumed $\psi_2-L_2$ equivalence.
   Finally, since $\psi_2-L_2$ equivalence 
   implies $L_4-L_2$ equivalence,
   and from the fact this implies $L_2-L_1$
   equivalence concludes the proof.
   
   \Cref{fig:norm-equivalences-implications}
   visually summarizes the norm equivalence implications.
\end{proof}

\begin{figure}[!ht]
    \centering
    \includegraphics[width=0.5\columnwidth]{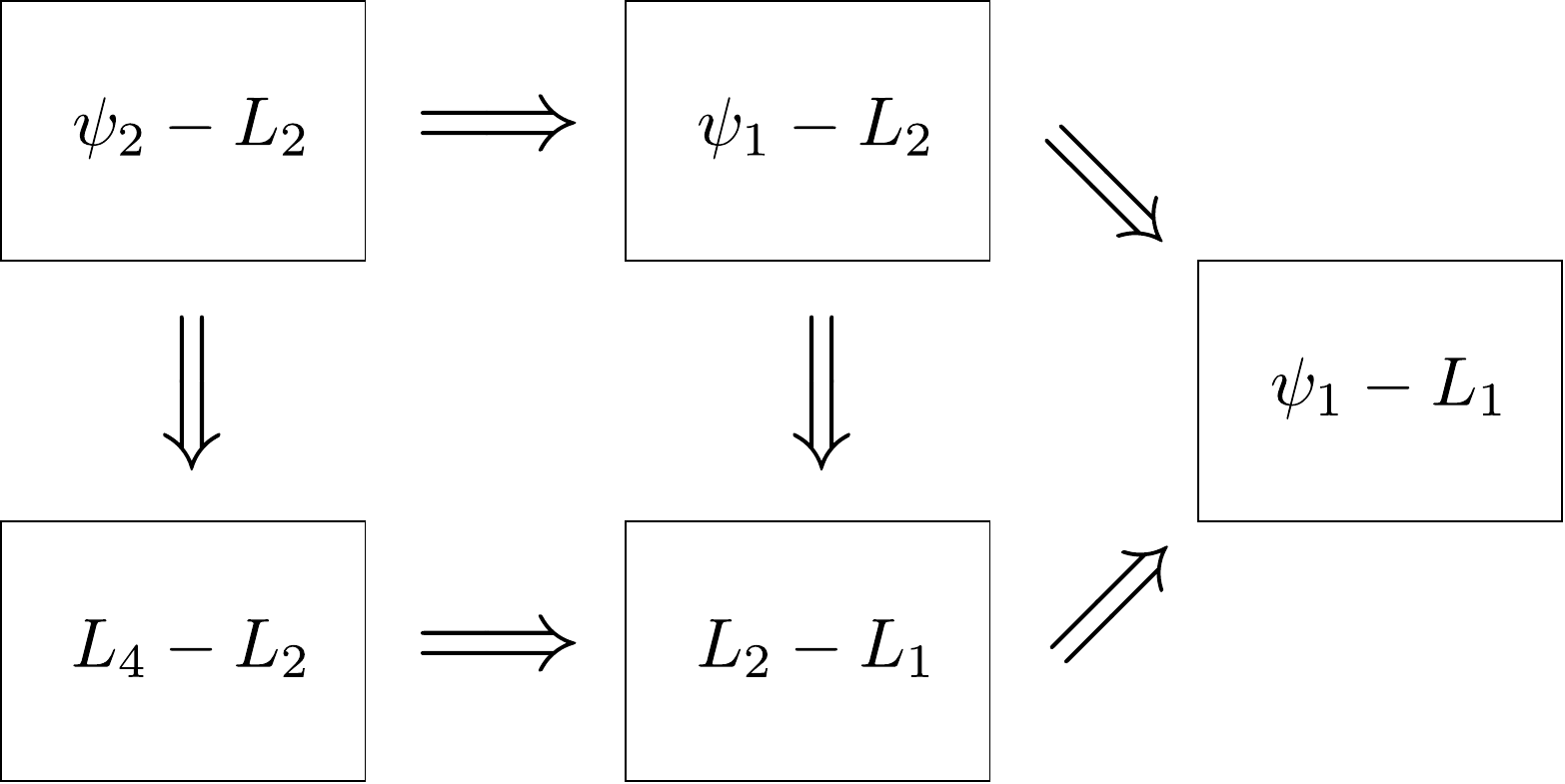}
    \caption{Visual illustration of norm equivalence implications
    discussed in \Cref{rem:psi2l2-l4l2,rem:psi1l1-l2l1},
    and in the proof of \Cref{prop:norm-equivalences}.
    In the figure, 
    $\fbox{A} \Rightarrow \fbox{B}$ indicates
    that equivalence $A$ implies equivalence $B$.
    }
    \label{fig:norm-equivalences-implications}
\end{figure}

\subsection
[Squared risk decomposition for ingredient zero-step predictor]
{Proof of \eqref{eq:sqauredrisk_decomposition_zerostep_bagged}}
\label{sec:squaredrisk_decomposition_zerostep_bagged}

Below we prove the risk decomposition \eqref{eq:sqauredrisk_decomposition_zerostep_bagged} 
for the ingredient zero-step predictor under squared error loss.
The proof follows from the following iterated bias-variance decomposition.

\begin{align*}
    &\EE
    \big[ 
        (Y_0 - \tf_M(X_0; \cD_\train))^2 
        \mathrel{| } 
        \cD_\train 
    \big] \\
    &=  
    \EE
    \Big[ 
        \EE
        \big[ 
            (Y_0 - \hf_M(X_0; \cD_\train))^2 
            \mathrel{| } 
            \cD_\train, (X_0, Y_0) 
        \big] 
    \mid \cD_\train 
    \Big] 
    \\
    &= 
    \EE 
    \Big[ 
        \Big(
            Y_0
            -
            \EE
            \big[
                \tf_M(X_0; \cD_\train)
                \mathrel{| }
                \cD_\train, (X_0, Y_0)
            \big]
        \Big)^2
        \mathrel{\big |}
        \cD_\train
    \Big]
    + 
    \EE 
    \Big[ 
        \mathrm{Var}
        \big(
            \tf_M(X_0; \cD_\train)
            \mathrel{| }
            \cD_\train, (X_0, Y_0)
        \big)
        \mathrel{\big |}
        \cD_\train
    \Big] \\
    &= 
    \EE
    \left[ 
    \left(
        Y_0
        -
        \frac{1}{\binom{n}{k_n}}
        \sum_{i_1, \dots, i_{k_n}}
        \tf\big(X_0; \{ (X_{i_j}, Y_{i_j}) : 1 \le j \le k_n \}\big)
    \right)^2
    \mathrel{\Bigg |}
    \cD_\train
    \right]
    + 
    \EE 
    \left[ 
    \frac{1}{M}
    \mathrm{Var}
    \left(
        \tf(X_0; \cD_{\train, 1})
        \mathrel{\big |}
        \cD_\train, (X_0, Y_0)
    \right)
    \mathrel{\bigg |}
    \cD_\train
    \right] 
    \\
    &= R(\tf_\infty(\cdot; \cD_\train))
    + 
    \frac{1}{M} 
    \EE 
    \left[ 
        \frac{1}{\binom{n}{k_n}}
        \sum_{i_1, \dots, i_{k_n}}
        \left(
            \tf\big(X_0; \{ (X_0, Y_0) : 1 \le j \le k_n \}\big)
            -
            \tf_\infty(X_0; \cD_\train)
        \right)^2
        \mathrel{\bigg |}
        \cD_\train
    \right],
\end{align*}
where in the last line $f_\infty(\cdot; \cD_\train) : \RR^{p} \to \RR$ is defined
such that for any $x \in \RR^{p}$
\[
\tf_{\infty}(x; \mathcal{D}_{\train}) 
= \frac{1}{\binom{n}{k_n}}\sum_{1\le i_1 < \ldots < i_{k_n} \le n_{\train}} \tf(x; \{(X_{i_j}, Y_{i_j}): 1\le j\le k_n\}).
\]

\section{Calculus of deterministic equivalents}
\label{sec:calculus_deterministic_equivalents}

We use the language of deterministic equivalents
in the proofs of
\Cref{prop:asymp-bound-ridge-main} 
and \Cref{prop:verif-riskprofile-mn1lsbase-mn2lsonestep}
in 
\Cref{sec:verif-asymp-profile-ridge}
and
\Cref{sec:verif-riskprofile-mnlsbase-mnlsonestep},
respectively.
In this section, we provide a basic review of the definitions
and useful calculus rules.
For more details, see \cite{dobriban_sheng_2021}.

\begin{definition}
    \label{def:deterministic-equivalence}
    Consider sequences $\{ A_p \}_{p \ge 1}$ and $\{ B_p \}_{p \ge 1}$
    of (random or deterministic) matrices of growing dimension.
    We say that $A_p$ and $B_p$ are equivalent and write
    $A_p \asympequi B_p$ if
    $\lim_{p \to \infty} | \tr[C_p (A_p - B_p)] | = 0$ almost surely
    for any sequence $C_p$ matrices with bounded trace norm
    such that $\limsup \| C_p \|_{\mathrm{tr}} < \infty$
    as $p \to \infty$.
\end{definition}

An observant reader will notice that
\cite{dobriban_sheng_2021}
use the notation $A_p \asymp B_p$
to denote deterministic asymptotic equivalence.
In this paper, we instead prefer to use the notation
$A_p \asympequi B_p$ for such equivalence
to stress the fact that this equivalence
is exact in the limit rather than up to constants
as the ``standard'' use of the asymptotic notation
$\asymp$ would hint at.

\begin{lemma}
    [Calculus of deterministic equivalents, \cite{dobriban_wager_2018}, \cite{dobriban_sheng_2021}]
    \label{lem:calculus-detequi}
    Let $A_p$, $B_p$, and $C_p$ be sequences of (random or deterministic) matrices.
    The calculus of deterministic equivalents satisfy the following properties:
    \begin{enumerate}
        \item 
        \label{lem:calculus-detequi-item-equivalence}
        Equivalence:
        The relation $\asympequi$ is an equivalence relation.
        \item 
        \label{lem:calculus-detequi-item-sum}
        Sum:
        If $A_p \asympequi B_p$ and $C_p \asympequi D_p$, then $A_p + C_p \asympequi B_p + D_p$.
        \item 
        \label{lem:calculus-detequi-item-product}
        Product:
        If $A_p$ a sequence of matrices with bounded operator norms, i.e., $\| A_p \|_{\op} < \infty$,
        and $B_p \asympequi C_p$, then $A_p B_p \asympequi A_p C_p$.
        \item 
        \label{lem:calculus-detequi-item-trace}
        Trace:
        If $A_p \asympequi B_p$, then $\tr[A_p] / p - \tr[B_p] / p \to 0$ almost surely.
        \item 
        \label{lem:calculus-detequi-item-differentiation}
        Differentiation:
        Suppose $f(z, A_p) \asympequi g(z, B_p)$ where the entries of $f$ and $g$
        are analytic functions in $z \in S$ and $S$ is an open connected subset of $\CC$.
        Suppose for any sequence $C_p$ of deterministic matrices with bounded trace norm
        we have $| \tr[C_p (f(z, A_p) - g(z, B_p))] | \le M$ for every $p$ and $z \in S$.
        Then we have $D_{z}(f(z, A_p)) \asympequi D_{z}(g(z, B_p))$ for every $z \in S$,
        where the derivatives are taken entry-wise with respect to $z$.
    \end{enumerate}
\end{lemma}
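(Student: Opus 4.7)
The plan is to prove the five properties in order, with the first four being direct manipulations of the definition of $\asympequi$ and the fifth (differentiation) requiring a classical complex-analytic argument via Vitali's convergence theorem, together with a small measurability bookkeeping to handle the almost-sure qualifier.

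For the equivalence, sum, product, and trace properties, I would work directly from the characterization that $A_p \asympequi B_p$ iff $\tr[C_p(A_p - B_p)] \to 0$ almost surely for every deterministic sequence $\{C_p\}$ with $\limsup \|C_p\|_{\mathrm{tr}} < \infty$. Reflexivity is immediate; symmetry follows by sign-flipping inside the trace; transitivity follows from the decomposition $\tr[C_p(A_p - C_p)] = \tr[C_p(A_p - B_p)] + \tr[C_p(B_p - C_p)]$ combined with the fact that the a.s.\ limit of a sum is the sum of a.s.\ limits. The sum rule is an analogous additive splitting. For the product rule, given any test sequence $\{E_p\}$ with bounded trace norm, I would set $\widetilde{E}_p := E_p A_p$ and use the submultiplicativity inequality $\|\widetilde{E}_p\|_{\mathrm{tr}} \le \|E_p\|_{\mathrm{tr}} \|A_p\|_{\op}$, so that $\limsup \|\widetilde{E}_p\|_{\mathrm{tr}} < \infty$, and then $\tr[E_p A_p(B_p - C_p)] = \tr[\widetilde{E}_p(B_p - C_p)] \to 0$ by the assumed $B_p \asympequi C_p$. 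For the trace rule, I would simply instantiate $C_p = p^{-1} I_p$, which has trace norm exactly one, so that $|\tr[A_p]/p - \tr[B_p]/p| = |\tr[C_p(A_p - B_p)]| \to 0$ almost surely.

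The main obstacle is the differentiation property. Fix a deterministic test sequence $\{C_p\}$ with $\limsup \|C_p\|_{\mathrm{tr}} < \infty$ and define, for $z \in S$,
\[
h_p(z) := \tr[C_p(f(z, A_p) - g(z, B_p))].
\]
Since the entries of $f$ and $g$ are analytic in $z$ on the open connected set $S$ and $h_p$ is a finite linear combination of those entries weighted by entries of $C_p$, the function $h_p$ is analytic on $S$. By the standing hypothesis $|h_p(z)| \le M$ uniformly in $p$ and $z \in S$, and by the assumed equivalence $f(z, A_p) \asympequi g(z, B_p)$ applied with the test sequence $\{C_p\}$, $h_p(z) \to 0$ almost surely for each fixed $z \in S$.

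To upgrade pointwise almost-sure convergence to convergence of the derivatives, I would first pick a countable dense subset $\{z_k\}_{k \ge 1} \subseteq S$ and let $\Omega_0$ be the intersection over $k$ of the probability-one events on which $h_p(z_k) \to 0$; by countable subadditivity, $\PP(\Omega_0) = 1$. On $\Omega_0$ the family $\{h_p\}$ is a uniformly bounded sequence of analytic functions on $S$ that converges to $0$ on a set with accumulation points in $S$; Vitali's convergence theorem (equivalently, Montel's theorem combined with uniqueness of analytic limits) then gives locally uniform convergence $h_p \to 0$ on $S$, and Weierstrass's theorem upgrades this to locally uniform convergence of derivatives $h_p'(z) \to 0$ for every $z \in S$. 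Since the trace is a linear functional and entry-wise differentiation of scalar analytic functions commutes with taking finite linear combinations, we have $h_p'(z) = \tr[C_p(D_z f(z, A_p) - D_z g(z, B_p))]$, so the almost-sure limit statement translates verbatim into the claimed $D_z f(z, A_p) \asympequi D_z g(z, B_p)$ for every $z \in S$, completing the proof.
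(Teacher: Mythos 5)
The paper does not prove this lemma at all: it is imported verbatim from \cite{dobriban_wager_2018} and \cite{dobriban_sheng_2021}, so there is no in-paper argument to compare against. Your proof is correct and is, in substance, the standard argument from those references. Parts 1, 2, and 4 are exactly the routine trace manipulations one expects (the choice $C_p = p^{-1} I_p$ with $\|p^{-1}I_p\|_{\mathrm{tr}} = 1$ for the trace rule is the right one), and your treatment of the differentiation rule is the canonical complex-analytic route: form the scalar functions $h_p(z) = \tr[C_p(f(z,A_p) - g(z,B_p))]$, use the uniform bound $|h_p| \le M$ plus pointwise a.s.\ convergence on a countable dense subset of $S$ (intersecting the countably many probability-one events is the right way to get a single full-measure event), invoke Vitali/Montel for locally uniform convergence $h_p \to 0$, and Weierstrass to pass to derivatives; linearity of the trace then identifies $h_p'(z)$ with the test functional of $D_z f - D_z g$.

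One caveat worth making explicit in the product rule. You fix at the outset that the test sequences $\{C_p\}$ in the definition of $\asympequi$ are deterministic, and your argument sets $\widetilde{E}_p = E_p A_p$ and treats it as a new test sequence via $\|\widetilde E_p\|_{\mathrm{tr}} \le \|E_p\|_{\mathrm{tr}}\|A_p\|_{\op}$. This is airtight only when $A_p$ is deterministic; if $A_p$ is random, $\widetilde E_p$ is a random test sequence and falls outside the class you quantified over. The lemma statement is silent on this, but the paper itself only ever invokes the product rule with deterministic $A_p$ (e.g., it explicitly remarks in the proof of \Cref{lem:deter-approx-generalized-ridge} that the rule "is applicable since $(I_p + \rho A)^{-1/2}$ is a deterministic matrix"), so you should either restrict the product rule to deterministic $A_p$ or enlarge the class of test sequences in \Cref{def:deterministic-equivalence} to random matrices with a.s.\ bounded trace norm and check that the a.s.\ events compose. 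With that clarification, the proof is complete.
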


We record deterministic equivalent for the standard ridge resolvent.

\begin{lemma}
    [Deterministic equivalent for basic ridge resolvent,
    adapted from 
    Theorem 1 of \cite{rubio_mestre_2011};
    see also Theorem 3.1 of \cite{dobriban_sheng_2021}]
    \label{lem:basic-ridge-resolvent-deterministic-equivalent}
    Suppose $X_i \in \RR^{p}$, $1 \le i \le n$, are i.i.d.\ random vectors
    where each $X_i = Z_{i} \Sigma^{1/2}$,
    where $Z_i$ contains i.i.d.\ entries $Z_{ij}$, $1 \le j \le p$,
    with $\EE[Z_{ij}] = 0$, $\EE[Z_{ij}^2] = 1$, and $\EE[|Z_{ij}|^{8+\alpha}] \le M_\alpha$
    for some $\alpha > 0$ and $M_\alpha < \infty$,
    and $\Sigma \in \RR^{p \times p}$ is a positive semidefinite matrix
    such that $0 \preceq \Sigma \preceq r_{\max} I_p$
    for some constant
    (independent of $p$)
    $r_{\max} < \infty$.
    Let $\bX \in \RR^{n \times p}$ the matrix with $X_i$, $1 \le i \le n$ as rows
    and $\hSigma \in \RR^{p \times p}$ denote the random matrix $\bX^\top \bX / n$.
    Define $\gamma_n = p / n$.
    Then, 
    for $z \in \CC^{> 0}$,
    as $n, p \to \infty$ such that $0 < \liminf \gamma_n \le \limsup \gamma_n < \infty$,
    we have
    \begin{equation}
        (\hSigma - z I_p)^{-1}
        \asympequi
        (c(e(z; \gamma_n)) \Sigma - z I_p)^{-1},
    \end{equation}
    where
    $c(e(z; \gamma_n))$ is defined as
    \begin{equation}
        \label{eq:basic-ridge-equivalence-c-e-relation}
        c(e(z; \gamma_n))
        = \frac{1}{ 1 + \gamma_n e(z; \gamma_n)},
    \end{equation}
    and $e(z; \gamma_n)$ is the unique solution in $\CC^{> 0}$ to the fixed-point equation
    \begin{equation}
        \label{eq:basic-ridge-equivalence-e-fixed-point}
        e(z; \gamma_n)
        = 
        \tr[ \Sigma (c(e(z; \gamma_n)) \Sigma  - z I_p)^{-1} ] / p.
    \end{equation}
    Furthermore,
    $e(z; \gamma_n)$
    is the Stieltjes transform
    of a certain positive measure on $\RR_{\ge 0}$
    with total mass $\tr[\Sigma] / p$.
\end{lemma}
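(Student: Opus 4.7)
The plan is to derive this statement as a direct consequence of Theorem 1 of \cite{rubio_mestre_2011}, whose hypotheses are met under our $(8+\alpha)$-moment bound on the entries of $Z_i$ and the uniform upper bound $\Sigma \preceq r_{\max} I_p$. The main work is therefore bookkeeping: verifying that the parameterization in Rubio--Mestre matches our $(c(e(z;\gamma_n)), e(z;\gamma_n))$ scheme, and then separately establishing the Stieltjes-transform claim. I would proceed in three steps.

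First, I would record the leave-one-out decomposition that underlies the Rubio--Mestre result. Writing $\hSigma = n^{-1}\sum_{i=1}^n X_i X_i^\top$ and applying Sherman--Morrison to each rank-one summand inside the resolvent $(\hSigma - zI_p)^{-1}$, each removal produces a scalar factor $\big(1 + n^{-1} X_i^\top(\hSigma_{(i)}-zI_p)^{-1} X_i\big)^{-1}$, where $\hSigma_{(i)}$ denotes the sample covariance with the $i$-th sample omitted. The key quadratic-form concentration then gives $n^{-1} X_i^\top(\hSigma_{(i)}-zI_p)^{-1}X_i \asympequi n^{-1}\tr[\Sigma(\hSigma-zI_p)^{-1}]$, which motivates defining $e(z;\gamma_n) := \lim_{p\to\infty} \tr[\Sigma(\hSigma-zI_p)^{-1}]/p$ and the companion scalar $c(e) = (1+\gamma_n e)^{-1}$ as in \eqref{eq:basic-ridge-equivalence-c-e-relation}. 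Substituting the deterministic scalar back yields the fixed-point equation \eqref{eq:basic-ridge-equivalence-e-fixed-point} and the deterministic equivalent $(\hSigma - zI_p)^{-1} \asympequi (c(e(z;\gamma_n))\Sigma - zI_p)^{-1}$.

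Second, I would verify uniqueness of $e(z;\gamma_n)$ in $\CC^{>0}$ for $z \in \CC^{>0}$. The standard argument uses the fact that the map
\[
    \Phi_z : e \mapsto \tr\!\big[\Sigma\,(c(e)\Sigma - zI_p)^{-1}\big]/p
\]
sends $\CC^{>0}$ into itself and is a strict contraction in the hyperbolic (Poincar\'e) metric on the upper half-plane whenever $\Im z > 0$. This is the same monotonicity argument employed in \cite{silverstein_1995} for the Mar\v{c}enko--Pastur equation and extended to anisotropic populations by \cite{rubio_mestre_2011}. For the Stieltjes-transform claim, note that for each $p$ the random function $z \mapsto \tr[\Sigma(\hSigma-zI_p)^{-1}]/p$ is itself the Stieltjes transform of the positive measure $\mu_p(dr) = p^{-1}\sum_{i=1}^p \langle w_i, \Sigma w_i\rangle \,\delta_{s_i^2}(dr)$ supported on $\RR_{\ge 0}$, where $s_i^2, w_i$ are the eigenvalues and eigenvectors of $\hSigma$; this measure has total mass $\tr[\Sigma]/p$. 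Passing to the limit, $e(z;\gamma_n)$ is the pointwise limit of Stieltjes transforms of measures with a fixed total mass $\tr[\Sigma]/p$ supported on $\RR_{\ge 0}$, and a standard Helly-selection / Stieltjes-continuity argument (as in, e.g., \cite{bai_silverstein_2010}) shows that the limit is itself the Stieltjes transform of a positive measure on $\RR_{\ge 0}$ with the same total mass.

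The principal obstacle I anticipate is the quadratic-form concentration step under only an $(8+\alpha)$-moment assumption, since the textbook Hanson--Wright inequality presumes sub-exponential entries. I would handle this via a truncation-and-centering argument at level $n^{1/4}$ (or similar): the moment bound controls the probability and bias of the truncation, and Chebyshev-type inequalities applied to the truncated variables suffice to obtain the required $o(1)$ control of $n^{-1} X_i^\top(\hSigma_{(i)}-zI_p)^{-1}X_i - n^{-1}\tr[\Sigma(\hSigma_{(i)}-zI_p)^{-1}]$. This is the same device used in Appendix A.1.4 of \cite{hastie_montanari_rosset_tibshirani_2019} and in the original Rubio--Mestre argument, and once in place it closes the loop on both the deterministic equivalence and the analyticity properties needed for the Stieltjes-transform conclusion.
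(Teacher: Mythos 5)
The paper records this lemma as a known result from \cite{rubio_mestre_2011} and \cite{dobriban_sheng_2021} and does not prove it, so there is no paper argument to mirror; what you have supplied is a reconstruction of the Rubio--Mestre proof. Your first two steps --- the leave-one-out / Sherman--Morrison decomposition, the quadratic-form concentration, the fixed-point contraction in the Poincar\'e metric, and the truncation-and-centering device to cope with the $(8+\alpha)$-moment hypothesis --- faithfully summarize the ingredients of that argument and are consistent with how the paper itself handles the moment relaxation elsewhere (e.g., its remark after Proposition \ref{prop:limits-risk-functionals-mn2ls}).

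The one place your reasoning goes off track is the Stieltjes-transform conclusion. You argue that $e(z;\gamma_n)$ is the pointwise limit of the random Stieltjes transforms $\tr[\Sigma(\hSigma - zI_p)^{-1}]/p$ and invoke a Helly-selection / Stieltjes-continuity argument. But $e(z;\gamma_n)$ is a deterministic, $p$-dependent scalar defined for each $(n,p)$ as the unique solution in $\CC^{>0}$ of the fixed-point equation \eqref{eq:basic-ridge-equivalence-e-fixed-point}, and the lemma asserts that this finite-$p$ object is a Stieltjes transform of a measure with total mass $\tr[\Sigma]/p$; the total mass itself changes with $p$, so the statement cannot be about a $p\to\infty$ limit. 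A Helly argument can certify the Stieltjes property of a subsequential limit of $\tr[\Sigma(\hSigma - zI_p)^{-1}]/p$, not of $e(z;\gamma_n)$ at finite $p$. The standard fix uses only the fixed-point structure: show that the map $\Phi_z$ from your step two sends $\CC^{>0}$ into $\CC^{>0}$ whenever $\Im(z)>0$, so the fixed point lies in $\CC^{>0}$; analyticity of $z\mapsto e(z;\gamma_n)$ follows from the implicit function theorem together with the strict-contraction property you already invoked for uniqueness; the normalization $z\,e(z;\gamma_n)\to -\tr[\Sigma]/p$ as $\Im(z)\to\infty$ is read off \eqref{eq:basic-ridge-equivalence-e-fixed-point} directly, since $e\to 0$ forces $c(e)\to 1$ and $(c(e)\Sigma - zI_p)^{-1}\to -z^{-1}I_p$; and then the Nevanlinna/Herglotz representation theorem yields the positive measure on $\RR_{\ge 0}$ with the stated total mass. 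This is the argument actually underlying the cited theorems, and it replaces the limiting step that does not apply here.
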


We note that in defining $e(\lambda; \gamma_n)$,
it is also implicitly a parameterized by $\Sigma$.
We suppress this dependence for notational simplicity,
and only explicitly indicate dependence on 
$z$
and $\gamma_n$
that will be useful for our purposes.

\begin{corollary}
    \label{cor:basic-ridge-resolvent-equivalent-in-v}
    Assume the setting of 
    \Cref{lem:basic-ridge-resolvent-deterministic-equivalent}.
    For $\lambda > 0$,
    we have
    \[
        \lambda (\hSigma + \lambda I_p)^{-1}
        \asympequi
        (v(-\lambda; \gamma_n) \Sigma + I_p)^{-1},
    \]
    where $v(-\lambda; \gamma_n)$
    is the unique solution to the fixed-point equation
    \[
        \frac{1}{v(-\lambda; \gamma_n)}
        =
        \lambda
        + \gamma_n \tr[\Sigma (v(-\lambda; \gamma_n) \Sigma + I_p)^{-1}] / p.
    \]
\end{corollary}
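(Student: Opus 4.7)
The plan is to obtain the corollary by a direct algebraic specialization of \Cref{lem:basic-ridge-resolvent-deterministic-equivalent} to the value $z = -\lambda$, followed by a change of variable. First I would set $z = -\lambda$ in the lemma, yielding
\[
(\hSigma + \lambda I_p)^{-1} \asympequi (c(e(-\lambda; \gamma_n)) \Sigma + \lambda I_p)^{-1}.
\]
A short justification is needed that the equivalence, stated in the lemma for $z$ in the complex upper half-plane, continues to hold at the real point $z = -\lambda$; this follows because for $\lambda > 0$ both resolvents are analytic in a neighborhood of $-\lambda$ in $\CC \setminus \RR_{\ge 0}$ (since $\hSigma \succeq 0$), and one can pass to the limit from the upper half-plane, as is standard in the Rubio--Mestre framework.

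Next I would multiply both sides by the scalar $\lambda > 0$, invoking \Cref{lem:calculus-detequi}~\eqref{lem:calculus-detequi-item-product} to preserve the equivalence, which gives
\[
\lambda (\hSigma + \lambda I_p)^{-1}
\asympequi
\lambda (c(e(-\lambda; \gamma_n)) \Sigma + \lambda I_p)^{-1}
= \left( \frac{c(e(-\lambda; \gamma_n))}{\lambda} \Sigma + I_p \right)^{-1}.
\]
Defining $v(-\lambda; \gamma_n) := c(e(-\lambda; \gamma_n))/\lambda$ then recasts the right-hand side in the form claimed by the corollary.

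The final step is to translate the fixed-point equation \eqref{eq:basic-ridge-equivalence-e-fixed-point} for $e$ into one for $v$. Substituting $c = \lambda v$ into the relation $c = 1/(1 + \gamma_n e)$ yields $\gamma_n e = (1 - \lambda v)/(\lambda v)$, while substituting $c \Sigma + \lambda I_p = \lambda (v\Sigma + I_p)$ into the right-hand side of \eqref{eq:basic-ridge-equivalence-e-fixed-point} gives $e = \lambda^{-1} \tr[\Sigma (v \Sigma + I_p)^{-1}]/p$. Equating the two expressions for $e$ and rearranging produces exactly
\[
\frac{1}{v(-\lambda; \gamma_n)} = \lambda + \gamma_n \tr[\Sigma (v(-\lambda; \gamma_n) \Sigma + I_p)^{-1}]/p.
\]
Uniqueness of the nonnegative solution $v(-\lambda; \gamma_n)$ is inherited from the uniqueness of $e(-\lambda; \gamma_n)$ asserted in \Cref{lem:basic-ridge-resolvent-deterministic-equivalent}, and can alternatively be re-derived from the strict monotonicity argument used in \Cref{lem:v-tv-tvderiv-bouding-in-lambda}~\eqref{lem:v-tv-tvderiv-bounding-in-lambda-item-f}.

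The only nontrivial point, and thus the main obstacle, is justifying that the Rubio--Mestre equivalence applies at the real argument $z = -\lambda$; everything after that is algebra and change of variable, so the proof will be quite short.
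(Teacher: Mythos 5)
Your proposal is correct and follows essentially the same route as the paper's proof: both substitute $z = -\lambda$ into the Rubio--Mestre equivalence from \Cref{lem:basic-ridge-resolvent-deterministic-equivalent}, perform the change of variable $v = c/(-z)$ (you set $z = -\lambda$ at the outset while the paper keeps $z$ general until the end, a purely cosmetic difference), and derive the fixed-point equation for $v$ by substituting $c = \lambda v$ into the defining relation for $c$ and the fixed-point equation for $e$. Both you and the paper acknowledge the same subtlety about extending the equivalence from $z$ in the upper half-plane to real $z = -\lambda$, and both handle it by a brief appeal to standard limiting/analyticity arguments (the paper invokes the Stieltjes-transform property of $v$, you invoke analyticity of the resolvents on $\CC \setminus \RR_{\ge 0}$; these are equivalent views of the same argument), so your proof matches the paper's in rigor as well as in structure.
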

\begin{proof}
    From \Cref{lem:basic-ridge-resolvent-deterministic-equivalent},
    for $z \in \CC^{> 0}$,
    we have
    the basic equivalence for ridge resolvent
    \begin{equation}
        \label{eq:basic-ridge-equivalence-in-z-v1}
        (\hSigma - z I_p)^{-1}
        \asympequi
        (c(e(z; \gamma_n)) \Sigma  - z I_p)^{-1},
    \end{equation}
    where $c(e(z; \gamma_n))$
    is defined by 
    \eqref{eq:basic-ridge-equivalence-c-e-relation}
    and 
    and $e(z; \gamma_n)$
    is the unqiue solution in $\CC^{> 0}$
    to the fixed-point equation
    \eqref{eq:basic-ridge-equivalence-e-fixed-point}.
    Substituting for $e(z; \gamma_n)$
    from
    \eqref{eq:basic-ridge-equivalence-c-e-relation}
    into
    \eqref{eq:basic-ridge-equivalence-e-fixed-point},
    we can write
    the fixed-point equation for $c(e(z; \gamma_n))$
    as
    \begin{equation}
        \label{eq:fixed-point-for-c-v1}
        \frac{1}{c(e(z; \gamma_n)) \gamma_n}
        - \frac{1}{\gamma_n}
        = \tr[\Sigma (c(e(z; \gamma_n)) \Sigma - z I_p)^{-1}] / p.
    \end{equation}
    Manipulating
    \eqref{eq:fixed-point-for-c-v1},
    we can write
    \begin{equation}
        \label{eq:fixed-point-for-c-v2}
        \frac{1}{c(e(z; \gamma_n))}
        - 1
        = \gamma_n \tr[\Sigma (c(e(z; \gamma_n)) \Sigma - z I_p)^{-1}] / p
        = \frac{\gamma_n}{(-z)}
        \tr[\Sigma ( c(e(z; \gamma_n)) / (-z) \Sigma + I_p)^{-1}] / p.
    \end{equation}
    Moving $(-z)$ across in \eqref{eq:fixed-point-for-c-v2},
    we have equivalently
    the following equation for $c(e(z; \gamma_n))$:
    \begin{equation}
        \label{eq:fixed-point-for-c-v3}
        \frac{(-z)}{c(e(z; \gamma_n))}
        + z
        = 
        \gamma_n
        \tr[\Sigma (c(e(z; \gamma_n)) / (-z))^{-1}] / p.
    \end{equation}
    Now defining $c(e(z; \gamma_n)) / (-z)$
    by $v(z; \gamma_n)$,
    the fixed-point equation \eqref{eq:fixed-point-for-c-v3} becomes
    \begin{equation}
        \label{eq:fixed-point-for-v-in-z}
        \frac{1}{v(z; \gamma_n)}
        = - z + \gamma_n \tr[\Sigma (v(z; \gamma_n) \Sigma + I_p)^{-1}] / p.
    \end{equation}
    Note that \eqref{eq:fixed-point-for-v-in-z}
    is also known as the Silverstein equation \citep{silverstein_1995},
    and $v(z; \gamma_n)$ as the companion Stieltjes transform.
    Along the same lines,
    from \eqref{eq:basic-ridge-equivalence-in-z-v1},
    we have
    \begin{equation}
        \label{eq:basic-ridge-equivalence-in-z-v2}
        (-z) (\hSigma - z I_{p})^{-1}
        \asympequi (-z) (c(e(z; \gamma_n)) \Sigma - z I_p)^{-1}
        = (c(e(z; \gamma_n))/(-z) \Sigma + I_p)^{-1}.
    \end{equation}
    Substituting for $v(z; \gamma_n)$,
    we can thus write
    \begin{equation}
        \label{eq:basic-ridge-equivalence-in-v}
        (-z) (\hSigma - z I_p)^{-1}
        \asympequi
        (v(z; \gamma_n) \Sigma + I_p)^{-1}.
    \end{equation}
    Now, taking $z = -\lambda$
    in 
    \eqref{eq:fixed-point-for-v-in-z}
    and
    \eqref{eq:basic-ridge-equivalence-in-v}
    yields
    the equivalence
    \[
        \lambda (\hSigma + \lambda I_p)^{-1}
        \asympequi (v(-\lambda; \gamma_n) \Sigma + I_p)^{-1},
    \]
    where $v(-\lambda; \gamma_n)$ 
    is the unique solution to the fixed point equation
    \[
        \frac{1}{v(-\lambda; \gamma_n)}
        = \lambda 
        + \gamma_n \tr[\Sigma (v(-\lambda; \gamma_n) \Sigma + I_p)^{-1}] / p.
    \]
    Finally, since $v(-\lambda; \gamma_n)$
    is a Stieltjes transform of a probability measure 
    (with support on $\RR_{\ge 0}$),
    we have that
    for $\Re(\lambda) > 0$, by taking $\Im(\lambda) \to 0$,
    we have that $\Im(v(-\lambda; \gamma_n)) \to 0$,
    and thus the statement 
    follows.
\end{proof}

We remark that we will directly apply 
\Cref{cor:basic-ridge-resolvent-equivalent-in-v}
for a real $\lambda > 0$
(in particular, in \Cref{lem:deter-approx-generalized-ridge}).
The limiting argument to go from
a complex $\lambda$ to a real $\lambda$
follow as done in the proof of 
\Cref{cor:basic-ridge-resolvent-equivalent-in-v}.
See, for example, proof of Theorem 5 in \cite{hastie_montanari_rosset_tibshirani_2019}
(that uses
Lemma 2.2 of
\cite{knowles_yin_2017})
for more details.

\section{Useful concentration results}
\label{sec:useful-concentration-results}

In this section,
we gather statements of concentration results
available in the literature
that are used 
in the proofs in 
\Cref{sec:proofs-crossvalidation-modelselection,sec:verif-asymp-profile-ridge,sec:verif-riskprofile-mnlsbase-mnlsonestep}.

\subsection*{Non-asymptotic statements}

\paragraph{Tail bounds.}

The following two tail bounds
are used in the proofs of
\Cref{lem:bounded-orlitz-error-control,lem:bounded-variance-error-control,lem:bounded-orlitz-error-control-mul-form,lem:bounded-variance-error-control-mul-form}
in \Cref{sec:proofs-crossvalidation-modelselection}.

\begin{lemma}
    [Bernstein's inequality, adapted from Theorem 2.8.1 of \cite{vershynin_2018}]
    \label{lem:bernstein-ineq}
    Let $Z_1, \dots, Z_n$ be independent mean-zero sub-exponential random variables.
    Then, for every $t \ge 0$, we have
    \[
    \PP\left\{ \left| \sum_{i=1}^{n} Z_i \right| \ge t \right\}
    \le
    2 \exp
    \left(
        -c \min
        \left\{
            \frac{t^2}{\sum_{i=1}^{n}\|Z_i\|_{\psi_1}^2},
            \frac{t}{\max_{1 \le i \le n} \| Z_i \|_{\psi_1}}
        \right\}
    \right),
    \]
    where $c > 0$ is an absolute constant.
    In other words,
    with probability at least $1 - \eta$,
    we have
    \[
    \left| \sum_{i=1}^{n} Z_i \right|
    \le
    \max
    \left\{
    \sqrt{\frac{1}{c} \sum_{i=1}^{n} \| Z_i \|_{\psi_1}^2 \log\left(\frac{2}{\eta}\right)},
    \frac{1}{c} \max_{1 \le i \le n} \| Z_i \|_{\psi_1} \log\left(\frac{2}{\eta}\right)
    \right\}.
    \]
\end{lemma}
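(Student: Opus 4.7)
The plan is to follow the classical Chernoff-type approach tailored to the sub-exponential tail regime. The starting point is the standard MGF estimate for centered sub-exponential random variables: if $Z$ is mean-zero with $\|Z\|_{\psi_1} = K$, then there exist absolute constants $c_0, C_0 > 0$ such that $\EE[e^{\lambda Z}] \le \exp(C_0 \lambda^2 K^2)$ for all $|\lambda| \le c_0/K$. This is a one-line consequence of the $\psi_1$-Orlicz definition combined with a Taylor expansion of the exponential and the bound $\EE[|Z|^k] \le k! \, K^k$ that $\psi_1$ norms entail; I would quote it directly from, e.g., Proposition 2.7.1 of Vershynin (2018) rather than re-derive it.

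With this MGF bound in hand, the proof of the upper tail is a textbook Chernoff argument. First apply Markov's inequality to $e^{\lambda S_n}$ where $S_n = \sum_{i=1}^{n} Z_i$, then use independence to factorize the MGF and plug in the bound above (valid provided $|\lambda| \le c_0/\max_i \|Z_i\|_{\psi_1}$), yielding
\[
\PP(S_n \ge t) \;\le\; \exp\!\Big( -\lambda t + C_0 \lambda^2 \textstyle\sum_{i=1}^n \|Z_i\|_{\psi_1}^2 \Big).
\]
Next I would optimize over $\lambda$ subject to the constraint $\lambda \le c_0/\max_i \|Z_i\|_{\psi_1}$. The unconstrained minimizer is $\lambda^\star = t / (2 C_0 \sum_i \|Z_i\|_{\psi_1}^2)$, giving a sub-Gaussian tail $\exp(-t^2/(4 C_0 \sum_i \|Z_i\|_{\psi_1}^2))$ whenever $\lambda^\star$ lies in the feasible interval (the ``small-$t$'' regime). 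Otherwise one uses the boundary value $\lambda = c_0/\max_i \|Z_i\|_{\psi_1}$, which gives a purely exponential tail $\exp(-c_0 t/(2 \max_i \|Z_i\|_{\psi_1}))$ (the ``large-$t$'' regime, where the linear term $-\lambda t$ dominates the quadratic correction). Combining the two regimes into one bound via the $\min$ and absorbing constants yields the stated form. The lower tail is handled identically by applying the argument to $-Z_i$, and a union bound accounts for the factor of $2$.

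Finally, the quantile restatement with probability $1-\eta$ is just algebraic inversion: solving for the $t$ at which the right-hand side equals $\eta$ gives $t = \max\{\sqrt{c^{-1} \sum \|Z_i\|_{\psi_1}^2 \log(2/\eta)}, \; c^{-1} \max_i \|Z_i\|_{\psi_1} \log(2/\eta)\}$, matching the two-regime structure.

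There is no real obstacle here; the lemma is an off-the-shelf concentration inequality, and the only care required is bookkeeping of the absolute constants so that the two regimes merge cleanly into the single constant $c$ stated in the lemma. In practice I would simply cite Theorem 2.8.1 of Vershynin (2018) and note that the quantile form is a direct inversion.
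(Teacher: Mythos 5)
Your argument is correct and follows exactly the standard MGF/Chernoff route of Theorem 2.8.1 in Vershynin (2018), which is the reference the paper itself cites for this lemma; the paper supplies no independent proof, and your quantile inversion (taking the max of the two $t$ values that force each branch of the $\min$ past $\tfrac{1}{c}\log(2/\eta)$) is likewise correct.
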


\begin{lemma}
   [Concentration for median-of-means (MOM) estimator,
   adapted from Theorem 2 of~\cite{lugosi2019mean}]
   \label{lem:mom-concentration}
   Let $W_1, \dots, W_n$ be i.i.d.\ random variables
   with mean $\mu$ and variance bounded by $\sigma^2$.
   Suppose we split the data $\{ W_1, \dots, W_n \}$
   into $B$ batches $\cT_1, \dots, \cT_B$.
   Let $\hmu_b$ be sample mean computed on $\cT_b$ for $b = 1, \dots, B$.
   Define
   \[ \hmu_B^\MOM := \mathrm{median}(\hmu_1, \dots, \hmu_B). \]
   Then, we have
   \[
        \PP
        \left\{
            \left| \hmu_B^\MOM - \mu \right|
            > \sigma \sqrt{4 B / n } 
        \right\}
        \le
        \exp(- B / 8).
   \]
   Thus, letting $0 < \eta < 1$ be a real number,
   $B = \lceil 8 \log(1/\eta) \rceil$,
   with probability at least $1 - \eta$,
   \[
       \left| \hmu_B^\MOM - \mu \right|
       \le \sigma\sqrt{\frac{32 \log(1/\eta)}{n}}.
   \]
\end{lemma}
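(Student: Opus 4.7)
The plan is to follow the classical two-stage argument for median-of-means concentration: first control each batch mean by Chebyshev, then control how many batch means can simultaneously go bad by a binomial tail bound.

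First, I would analyze a single batch. Each batch $\cT_b$ contains $n/B$ (or essentially so) i.i.d.\ copies of $W$, so the sample mean $\hmu_b$ is unbiased with variance at most $\sigma^2 B / n$. Setting the threshold $t := \sigma \sqrt{4B/n}$, Chebyshev's inequality gives
\[
\PP\bigl( |\hmu_b - \mu| > t \bigr) \le \frac{\sigma^2 B/n}{t^2} = \frac{1}{4}.
\]
This is the key reason the constant $4$ appears under the square root.

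Second, I would pass from batch-level deviations to the median. The deterministic observation is that if the median of $\hmu_1, \dots, \hmu_B$ deviates from $\mu$ by more than $t$, then at least $B/2$ of the individual batch means must deviate from $\mu$ by more than $t$. Define $Z_b := \mathbf{1}\{|\hmu_b - \mu| > t\}$; the $Z_b$ are independent Bernoulli variables with $\EE[Z_b] \le 1/4$ by the previous step. Hence
\[
\PP\bigl( |\hmu_B^{\MOM} - \mu| > t \bigr)
\le \PP\Bigl( \sum_{b=1}^B Z_b \ge B/2 \Bigr).
\]

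Third, I would invoke Hoeffding's inequality on the bounded independent variables $Z_b - \EE[Z_b]$: since $\sum_b \EE[Z_b] \le B/4$, the event $\{\sum Z_b \ge B/2\}$ requires a deviation of at least $B/4$ above the mean, so
\[
\PP\Bigl( \sum_{b=1}^B Z_b \ge B/2 \Bigr) \le \exp\Bigl( -2B \cdot (1/4)^2 \Bigr) = \exp(-B/8),
\]
which yields the first displayed bound. The second inequality in the statement is then immediate: choosing $B = \lceil 8\log(1/\eta) \rceil$ makes $\exp(-B/8) \le \eta$, and substituting this $B$ into $t = \sigma\sqrt{4B/n}$ gives $|\hmu_B^{\MOM} - \mu| \le \sigma \sqrt{32 \log(1/\eta)/n}$ on the complementary event.

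There is no real obstacle here; the argument is standard. The only minor bookkeeping point is that $n/B$ need not be an integer, so one should implicitly work with batches of size $\lfloor n/B \rfloor$ and absorb the rounding into the constants (Chebyshev's bound becomes at most $\sigma^2 B / \lfloor n/B \rfloor \cdot t^{-2}$, which for $B \le n/2$ is still bounded by $1/4$ after a trivial adjustment of constants). Everything else is a direct application of Chebyshev and Hoeffding, so I would keep the write-up short and mainly reference Theorem~2 of \cite{lugosi2019mean} for completeness.
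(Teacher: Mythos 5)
Your proof is correct and follows exactly the standard two-stage Chebyshev-plus-Hoeffding argument that underlies Theorem~2 of Lugosi and Mendelson (2019), which is the source the paper cites for this lemma (the paper itself does not include an independent proof). The only loose end, which the paper's own statement shares, is that substituting $B = \lceil 8\log(1/\eta)\rceil$ into $\sigma\sqrt{4B/n}$ yields $\sigma\sqrt{4\lceil 8\log(1/\eta)\rceil/n}$, which can slightly exceed $\sigma\sqrt{32\log(1/\eta)/n}$ when $8\log(1/\eta)$ is not an integer; this ceiling discrepancy is harmless and usually absorbed into the constant.
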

With $B = \lceil 8\log(1/\eta)\rceil$, we use the notation $\MOM(\{W_1, \ldots, W_n\}, \eta)$ for $\hmu_B^\MOM$, that is,
\begin{equation}\label{eq:definition-MOM-rvs-eta}
\MOM(\{W_1, \ldots, W_n\}, \eta) ~:=~ \widehat{\mu}^{\MOM}_{\lceil 8\log(1/\eta)\rceil}.
\end{equation}

\paragraph{Moment bounds.}

The following two moment bounds imply
\Cref{lem:concen-linform,lem:concen-quadform}
that are used in the proofs of
\Cref{prop:asymp-bound-ridge-main}
and \Cref{cor:verif-onestep-program}
in \Cref{sec:verif-asymp-profile-ridge} and \Cref{sec:verif-riskprofile-mnlsbase-mnlsonestep},
respectively.

\begin{lemma}
    [Moment bound on centered linear form,
    adapted from Lemma 7.8 of \cite{erdos_yau_2017}]
    \label{lem:mom_linform}
    Let $\bZ \in \RR^{p}$ be a random vector containing
    i.i.d.\ entries $Z_i$, $i = 1, \dots, n$, such that
    for each i, $\EE[Z_i] = 0$, $\EE[Z_i^2] = 1$,
    and $\EE[|Z_i|^k] \le M_k$.
    Let $a \in \RR^{p}$ be a deterministic vector.
    Then,
    \[
        \EE[|a^\top \bZ|^q]
        \le C_q M_q \| a \|_2^q
    \]
    for a constant $C_q$ that only depends on $q$.
\end{lemma}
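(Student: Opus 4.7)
The plan is to reduce the statement to a classical moment inequality for sums of independent mean-zero random variables. Write $S = a^\top \bZ = \sum_{i=1}^{p} a_i Z_i$ and observe that the summands $X_i := a_i Z_i$ are independent and mean-zero, with $\EE[X_i^2] = a_i^2$ and $\EE[|X_i|^q] = |a_i|^q \EE[|Z_i|^q] \le M_q |a_i|^q$. From here the argument splits cleanly into the cases $q \ge 2$ and $q < 2$.

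For $q \ge 2$, I would invoke Rosenthal's inequality, which asserts the existence of a constant $C_q$ depending only on $q$ such that
\[
    \EE\Big[ \Big| \sum_{i=1}^{p} X_i \Big|^q \Big]
    \;\le\;
    C_q \Bigg[ \Big( \sum_{i=1}^{p} \EE[X_i^2] \Big)^{q/2}
    + \sum_{i=1}^{p} \EE[|X_i|^q] \Bigg].
\]
Plugging in the two per-coordinate bounds above, the first term contributes $\big(\sum_i a_i^2\big)^{q/2} = \|a\|_2^q$ and the second term contributes $M_q \sum_i |a_i|^q = M_q \|a\|_q^q$. The embedding $\|a\|_q \le \|a\|_2$ (valid for $q \ge 2$ by monotonicity of $\ell^p$-norms on $\RR^{p}$) absorbs the second term into the first, yielding $\EE[|S|^q] \le 2 C_q M_q \|a\|_2^q$ after noting $M_q \ge 1$ (since $M_2 = 1$ and Lyapunov's inequality gives $M_q \ge M_2^{q/2} = 1$ for $q \ge 2$).

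For $q \in [1,2)$, Jensen's inequality with respect to the convex function $x \mapsto x^{2/q}$ gives $\EE[|S|^q] \le (\EE[S^2])^{q/2} = \|a\|_2^q$, which is already of the desired form (and the constant $C_q M_q$ can be taken as $1$ up to a harmless enlargement). This covers every $q \ge 1$, and combining both cases delivers a uniform constant depending only on $q$.

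The only non-routine step is the appeal to Rosenthal's inequality; everything else is bookkeeping of moments and the $\ell^q$--$\ell^2$ norm comparison. I do not anticipate a genuine obstacle here, since the inequality is standard (see, e.g., Lemma 7.8 of Erdos--Yau, or Theorem 15.10 in standard references on moment inequalities), but one should be careful to state and cite it with the max (or equivalently sum) of the two terms rather than just one, to avoid losing the $M_q$ factor on the heavy-tailed branch.
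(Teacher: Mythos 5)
The paper does not actually prove this lemma: it appears in \Cref{sec:useful-concentration-results}, a section that ``gathers statements of concentration results available in the literature,'' and is cited directly from Lemma 7.8 of \cite{erdos_yau_2017}. So there is no in-paper proof to compare against; the question is whether your self-contained argument is valid, and for the range of $q$ the paper actually uses, it is.

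For $q \ge 2$, your appeal to Rosenthal's inequality is the standard route and the bookkeeping is correct: the variance term gives $\|a\|_2^q$, the heavy-tail term gives $M_q \|a\|_q^q \le M_q \|a\|_2^q$, and the observation that $M_q \ge M_2^{q/2} = 1$ by Lyapunov is exactly what lets you absorb both pieces into a constant depending only on $q$.

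The $q < 2$ branch, however, does not close as written, and in fact the lemma is false in that range as literally stated. Lyapunov runs in the opposite direction there: $\EE[|Z_i|^q] \le (\EE[Z_i^2])^{q/2} = 1$, so the sharpest admissible $M_q$ can be arbitrarily small, not $\ge 1$. Concretely, let $Z_i$ take values $\pm M$ with probability $1/(2M^2)$ each and $0$ otherwise, so that $\EE[Z_i^2] = 1$ and $\EE[|Z_i|^q] = M^{q-2}$, and take $a = p^{-1/2}(1,\dots,1)$, so $\|a\|_2 = 1$. By the CLT together with uniform integrability of $\{|a^\top \bZ|^q\}_p$ (guaranteed since $q < 2 = $ the order of the uniformly bounded second moment), $\EE[|a^\top \bZ|^q] \to \EE[|\cN(0,1)|^q] > 0$ as $p \to \infty$, while $M_q\|a\|_2^q = M^{q-2} \to 0$ as $M \to \infty$; no constant $C_q$ depending only on $q$ can bridge that gap. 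Your Jensen bound $\EE[|a^\top \bZ|^q] \le \|a\|_2^q$ is of course correct, but upgrading it to $C_q M_q \|a\|_2^q$ requires $C_q \ge 1/M_q$, which is not allowed. None of this affects the paper, which invokes the lemma only through \Cref{lem:concen-linform} with $q = 4 + \alpha > 2$, where $M_q \ge 1$ is automatic and your Rosenthal argument is exactly what is needed; but the clean fix is to state the lemma with the restriction $q \ge 2$, which is also the regime that \cite{erdos_yau_2017} effectively work in.
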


\begin{lemma}
    [Moment bound on centered quadratic form,
    adapted from Lemma B.26 of \cite{bai_silverstein_2010}]
    \label{lem:mom_quadform}
    Let $\bZ \in \RR^{n}$ be a random vector with i.i.d.\ entries
    $Z_i$, $i = 1, \dots, n$, such that for each $i$,
    $\EE[Z_i] = 0$, $\EE[Z_i^2] = 1$, and $\EE[|Z_i|^k] \le M_k$ for $k > 2$ and some constant $M_k$.
    Let $A \in \RR^{p \times p}$ be a deterministic matrix.
    Then, for $q \ge 1$,
    \[
        \EE
        \big[
            |\bZ^\top A \bZ - \tr[A]|^{q}
        \big]
        \le
        C_q
        \big\{
            (M_4 \tr[A A^\top])^{q/2}
            + M_{2q} \tr[(A A^\top)^{q/2}]
        \big\}
    \]
    for a constant $C_q$ that only depends on $q$.
\end{lemma}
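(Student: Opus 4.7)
The plan is to decompose the centered quadratic form into a diagonal sum of independent random variables and an off-diagonal bilinear sum, then apply the appropriate moment inequalities to each. Specifically, writing
\[
    \bZ^\top A \bZ - \tr[A]
    ~=~ \underbrace{\sum_{i=1}^{n} A_{ii}(Z_i^2 - 1)}_{=:\, S_d}
    + \underbrace{\sum_{i \ne j} A_{ij} Z_i Z_j}_{=:\, S_o},
\]
the triangle inequality in $L^q$ reduces the problem to bounding $\EE|S_d|^q$ and $\EE|S_o|^q$ separately.

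For the diagonal term $S_d$, the summands are independent, mean-zero, and have moments controlled via $\EE|Z_i^2 - 1|^r \le 2^{r-1}(M_{2r} + 1)$. Rosenthal's inequality then yields
\[
    \EE|S_d|^q
    ~\le~ C_q \Big(M_4 \sum_i A_{ii}^2\Big)^{q/2}
    + C_q M_{2q} \sum_i |A_{ii}|^q,
\]
and I would bound $\sum_i A_{ii}^2 \le \tr[AA^\top]$ and $\sum_i |A_{ii}|^q \le \tr[(AA^\top)^{q/2}]$ using Schatten $\ell_p$--$\ell_q$ comparisons (the $|A_{ii}|$'s are dominated by the singular values through the Schur--Horn-type diagonal majorization). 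This reproduces the claimed shape for the diagonal contribution.

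For the off-diagonal term $S_o$, I would use a martingale-difference decomposition along the filtration $\cF_k = \sigma(Z_1, \dots, Z_k)$. A direct calculation shows
\[
    D_k ~:=~ \EE[S_o \mid \cF_k] - \EE[S_o \mid \cF_{k-1}]
    ~=~ Z_k \sum_{j<k} (A_{kj} + A_{jk}) Z_j,
\]
and $S_o = \sum_k D_k$ with each $D_k$ having conditional mean zero given $\cF_{k-1}$. Burkholder's inequality then gives
\[
    \EE|S_o|^q
    ~\le~ C_q \, \EE\Big(\sum_k D_k^2\Big)^{q/2}
    + C_q \sum_k \EE|D_k|^q.
\]
For the quadratic variation, I would apply Jensen (when $q \ge 2$) to pull the expectation inside and compute $\EE[D_k^2] \le 2 \sum_{j<k}(A_{kj}^2 + A_{jk}^2)$, so the sum is controlled by $(M_4 \tr[AA^\top])^{q/2}$. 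For the jumps, \Cref{lem:mom_linform} applied conditionally to the linear form in $(Z_j)_{j<k}$ gives $\EE|D_k|^q \le C_q M_{2q} (\sum_{j<k}(A_{kj}+A_{jk})^2)^{q/2}$, and summing and invoking Schatten inequalities converts this into $M_{2q}\tr[(AA^\top)^{q/2}]$ up to a constant.

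The routine pieces are the applications of Rosenthal's and Burkholder's inequalities; the main obstacle I anticipate is the bookkeeping that converts the naive $\ell_2$ and $\ell_q$ sums of entries/rows of $A$ into the stated Schatten-norm quantities $\tr[AA^\top]$ and $\tr[(AA^\top)^{q/2}]$ with absolute constants independent of $p$ and $n$. This requires careful use of the inequality $\sum_i (\sum_j A_{ij}^2)^{q/2} \le \tr[(AA^\top)^{q/2}]$, which holds by comparing $\ell_q$ norms of the row-norm vector with Schatten $q$-norms via majorization, and similarly for the diagonal entries.
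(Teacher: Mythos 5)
Your decomposition of $\bZ^\top A\bZ - \tr[A]$ into the diagonal sum $S_d$ and the off-diagonal sum $S_o$, together with Rosenthal for $S_d$ and a martingale-difference decomposition plus a Burkholder-type inequality for $S_o$, is exactly the architecture of the Bai--Silverstein proof of their Lemma B.26. The diagonal part is correct, including the pinching inequalities $\sum_i A_{ii}^2 \le \tr[AA^\top]$ and $\sum_i |A_{ii}|^q \le \tr[(AA^\top)^{q/2}]$, and your treatment of the jump term $\sum_k \EE|D_k|^q$ via \Cref{lem:mom_linform} conditionally and Schur--Horn majorization of the row norms of $A$ by its singular values is also sound.

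The gap is in the quadratic-variation term of $S_o$. You propose to bound $\EE\big(\sum_k D_k^2\big)^{q/2}$ by $\big(\sum_k \EE D_k^2\big)^{q/2}$ ``by Jensen,'' but for $q \ge 2$ the map $x \mapsto x^{q/2}$ is convex, so Jensen gives the \emph{reverse} inequality $\EE\big(\sum_k D_k^2\big)^{q/2} \ge \big(\sum_k \EE D_k^2\big)^{q/2}$. Your proposed step is valid only for $q \le 2$; for $q > 2$ this is precisely the term that needs the real work. The standard fix (and what Bai--Silverstein actually do) is recursive: note that $\sum_k \EE[D_k^2 \mid \cF_{k-1}] = \sum_k \big(\sum_{j<k}(A_{kj}+A_{jk})Z_j\big)^2 = \bZ^\top B \bZ$ for a deterministic PSD matrix $B$ with $\tr[B] \le 4\tr[AA^\top]$; split $\bZ^\top B\bZ \le |\bZ^\top B\bZ - \tr B| + \tr B$, apply the inequality you are proving at the smaller exponent $q/2$ to the first term, and close the argument by induction on $\lceil q\rceil$ with a base case $q \le 2$ handled by a direct second-moment calculation (this is also where the $M_4$ factor that you assert but never produce actually enters). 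As written, your argument leaves $\EE\big(\sum_k D_k^2\big)^{q/2}$ unbounded for $q > 2$.
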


\subsection*{Asymptotic statements}

As a consequence of \Cref{lem:mom_linform} and \Cref{lem:borel-cantelli-moment},
we have the following concentration of a linear form with independent components.
\begin{lemma}
    [Concentration of linear form with independent components]
    \label{lem:concen-linform}
    Let $\bZ \in \RR^{p}$ be a random vector with i.i.d.\ entries $Z_i$, $i = 1, \dots, p$
    such that for each $i$, $\EE[Z_i] = 0$, $\EE[|Z_i|^{4+\alpha}] \le M_\alpha$
    for some constant $M_\alpha < \infty$.
    Let $\bA \in \RR^{p}$ be a random vector independent of $\bZ$
    such that $\limsup_{p} \| \bA_p \|^2 / p \le M_n$ almost surely
    for a constant $M_n < \infty$.
    Then, $\bA^\top \bZ / p \to 0$ almost surely as $p \to \infty$.
\end{lemma}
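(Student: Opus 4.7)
The plan is to condition on $\bA$, apply the moment bound of Lemma~\ref{lem:mom_linform} to the conditionally i.i.d.\ linear form $\bA^\top \bZ$, and then invoke Borel--Cantelli. Independence of $\bA$ and $\bZ$ is the key structural ingredient that makes the conditioning clean: given $\bA$, the vector $\bZ$ retains its i.i.d.\ distribution with the prescribed moment bound.

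Concretely, I would fix $\epsilon>0$ and an arbitrary $K>M_n$ and split
\[
\PP\!\left(\left|\bA^\top\bZ/p\right|>\epsilon\right)
\le \PP\!\left(\|\bA\|_2^2/p>K\right)
+ \PP\!\left(\left|\bA^\top\bZ/p\right|>\epsilon,\ \|\bA\|_2^2/p\le K\right).
\]
For the second term, Lemma~\ref{lem:mom_linform} applied conditionally with $q=4+\alpha$ gives
\[
\EE\!\left[\left|\bA^\top\bZ\right|^{4+\alpha}\,\middle|\,\bA\right]
\le C_{4+\alpha}\,M_\alpha\,\|\bA\|_2^{\,4+\alpha},
\]
so that by Markov's inequality on the event $\{\|\bA\|_2^2/p\le K\}$,
\[
\PP\!\left(\left|\bA^\top\bZ/p\right|>\epsilon,\ \|\bA\|_2^2/p\le K\right)
\le \frac{C_{4+\alpha}\,M_\alpha\,K^{(4+\alpha)/2}}{\epsilon^{4+\alpha}\,p^{(4+\alpha)/2}}.
\]
Since $(4+\alpha)/2>1$, this bound is summable in $p$, so Borel--Cantelli yields
\[
\PP\!\left(\left|\bA^\top\bZ/p\right|>\epsilon,\ \|\bA\|_2^2/p\le K\ \text{i.o.}\right)=0.
\]

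For the first term, the almost sure limsup assumption $\limsup_p\|\bA_p\|_2^2/p\le M_n<K$ directly implies $\PP(\|\bA\|_2^2/p>K\ \text{i.o.})=0$ without any summability. Combining the two via the inclusion
\[
\{|\bA^\top\bZ/p|>\epsilon\ \text{i.o.}\}
\subseteq \{\|\bA\|_2^2/p>K\ \text{i.o.}\}\cup\{|\bA^\top\bZ/p|>\epsilon,\ \|\bA\|_2^2/p\le K\ \text{i.o.}\}
\]
shows the left-hand side has probability zero. Taking a countable sequence $\epsilon\downarrow 0$ finishes the proof. The only subtle point is that one cannot directly Markov-bound $\EE[|\bA^\top\bZ/p|^{4+\alpha}]$ because no moment control of $\|\bA\|_2$ is available---only the almost sure bound on $\|\bA\|_2^2/p$; the truncation by the event $\{\|\bA\|_2^2/p\le K\}$ is exactly the device that bypasses this obstacle.
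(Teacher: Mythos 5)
Your proof is correct, and it fills in a genuine subtlety that the paper's terse statement ``As a consequence of \Cref{lem:mom_linform} and \Cref{lem:borel-cantelli-moment}'' glosses over: since only an almost-sure $\limsup$ bound on $\|\bA_p\|_2^2/p$ is assumed, one cannot directly control $\EE\big[\lvert\bA^\top\bZ/p\rvert^{4+\alpha}\big]$ unconditionally, so some device is needed to pass from a.s.\ control of $\|\bA\|_2$ to the moment machinery. Your truncation on the event $\{\|\bA\|_2^2/p\le K\}$ together with a union of i.o.\ events is a clean and valid way to do this, and the rest of the argument (conditional application of \Cref{lem:mom_linform}, Markov, summability with exponent $(4+\alpha)/2>1$, Borel--Cantelli, countable $\epsilon\downarrow 0$) is all correct.

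A slightly more direct route that also matches the paper's phrasing, which you may wish to compare with: condition on the entire sequence $\bA=(\bA_p)_p$. For almost every realization, $\sup_p \|\bA_p\|_2^2/p < \infty$ (each term is finite and the $\limsup$ is finite), so the conditional moment $\EE\big[\lvert\bA_p^\top\bZ_p/p\rvert^{4+\alpha}\,\big\vert\,\bA\big]\le C_{4+\alpha}M_\alpha\|\bA_p\|_2^{4+\alpha}/p^{4+\alpha}$ is summable in $p$; apply \Cref{lem:borel-cantelli-moment} conditionally, then integrate over $\bA$. This route uses the moment version of Borel--Cantelli literally, whereas yours combines Markov with the classical first Borel--Cantelli lemma and truncation. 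Both are valid; the conditional-Fubini argument is marginally shorter, while yours makes the truncation device (the ``obstacle'' you flag at the end) fully explicit and avoids any discussion of regular conditional distributions.
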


\noindent
As a consequence of \Cref{lem:mom_quadform} and \Cref{lem:borel-cantelli-moment},
we have the following concentration of a quadratic form with independent components.
\begin{lemma}
    [Concentration of quadratic form with independent components]
    \label{lem:concen-quadform}
    Let $\bZ \in \RR^{p}$ be a random vector with i.i.d.\ entries $Z_i$, $i = 1, \dots, p$
    such that for each i, $\EE[Z_i] = 0$, $\EE[Z_i^2] = 1$, $\EE[|Z_i|^{4+\alpha}] \le M_\alpha$
    for some $\alpha > 0$ and constant $M_\alpha < \infty$.
    Let $\bD \in \RR^{p \times p}$ be a random matrix such that $\limsup \| \bD \|_{\textrm{op}} \le M_o$
    almost surely as $p \to \infty$ for some constant $M_o < \infty$.
    Then, $\bZ^\top \bD \bZ / p - \tr[\bD] / p \to 0$
    almost surely as $p \to \infty$.
\end{lemma}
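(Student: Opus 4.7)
The plan is to condition on $\bD$, reduce to the deterministic-matrix moment bound in Lemma~\ref{lem:mom_quadform}, and then apply a moment-based Borel--Cantelli argument (Lemma~\ref{lem:borel-cantelli-moment}). Since the statement only assumes $\limsup_p \|\bD\|_{\op} \le M_o$ almost surely and it is natural (as in the usage sites of this lemma, e.g.\ in the proofs of Propositions~\ref{prop:cond-conv-mn2ls} and \ref{lem:cond-concen-onestep-risk}) that $\bD$ be independent of $\bZ$, I would work under that independence assumption and reduce everything to an event of the form $\Omega_{M,p_0}=\{\sup_{p\ge p_0}\|\bD\|_{\op}\le M\}$ for some $M>M_o$ and $p_0$ large; by hypothesis $\PP(\bigcup_{p_0}\Omega_{M,p_0})=1$, so it suffices to prove the a.s.\ limit on each $\Omega_{M,p_0}$.

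First, I would fix $q\in(2,2+\alpha/2]$, so that $2q\le 4+\alpha$ and Lemma~\ref{lem:mom_quadform} is applicable with constants $M_4,M_{2q}$ bounded by the hypothesis $\EE[|Z_i|^{4+\alpha}]\le M_\alpha$. Conditioning on $\bD$ and applying Lemma~\ref{lem:mom_quadform} to the (now deterministic) matrix gives
\[
\EE\!\left[|\bZ^\top \bD\bZ-\tr[\bD]|^{q}\,\bigm|\,\bD\right]
~\le~ C_q\Big\{(M_4\tr[\bD\bD^\top])^{q/2}+M_{2q}\tr[(\bD\bD^\top)^{q/2}]\Big\}.
\]
Second, on $\Omega_{M,p_0}$ the operator-norm bound yields $\tr[\bD\bD^\top]\le p\,M^2$ and $\tr[(\bD\bD^\top)^{q/2}]\le p\,M^{q}$, so for all $p\ge p_0$,
\[
\EE\!\left[|\bZ^\top \bD\bZ-\tr[\bD]|^{q}\,\1_{\Omega_{M,p_0}}\right]~\le~ C\,p^{q/2},
\]
for a constant $C$ depending only on $q,M,M_\alpha$. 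Dividing by $p^{q}$ and applying Markov's inequality gives, for any $\eps>0$,
\[
\PP\!\left(\tfrac{1}{p}\,|\bZ^\top \bD\bZ-\tr[\bD]|>\eps,\ \Omega_{M,p_0}\right)\ ~\le~ \frac{C}{\eps^{q}\,p^{q/2}}.
\]
Since $q/2>1$, these tail probabilities are summable in $p$, and the Borel--Cantelli lemma yields $(\bZ^\top \bD\bZ-\tr[\bD])/p\to 0$ almost surely on $\Omega_{M,p_0}$. Taking $p_0\to\infty$ (so that $\PP(\Omega_{M,p_0})\uparrow 1$) then gives the conclusion almost surely.

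The two points requiring care are: (i) justifying the conditioning step, which needs $\bD$ and $\bZ$ to be independent (as holds in all applications in the paper), so that Lemma~\ref{lem:mom_quadform} is applied to $\bD(\omega)$ regarded as a fixed matrix; and (ii) the interaction between the almost-sure bound $\limsup\|\bD\|_{\op}\le M_o$ and the expectation step. The cleanest way to handle (ii) is the truncation onto $\Omega_{M,p_0}$ described above, avoiding any attempt to bound unconditional moments of $\|\bD\|_{\op}$ (which are not assumed). The rest of the argument is then a routine Markov-plus-Borel--Cantelli combination, entirely analogous to the proof sketch needed for the linear-form companion Lemma~\ref{lem:concen-linform} via Lemma~\ref{lem:mom_linform}.
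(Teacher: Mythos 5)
Your proof is correct and follows essentially the route the paper intends: condition on $\bD$ (which requires the independence of $\bD$ and $\bZ$ -- implicit in the lemma statement and satisfied at every use site), apply the deterministic-matrix moment bound of Lemma~\ref{lem:mom_quadform}, and conclude by a Markov-plus-Borel--Cantelli argument in the spirit of Lemma~\ref{lem:borel-cantelli-moment}. Your truncation onto the events $\Omega_{M,p_0}=\{\sup_{p\ge p_0}\|\bD\|_{\textrm{op}}\le M\}$ is a genuine and necessary refinement rather than a cosmetic one, since the hypothesis $\limsup_p\|\bD\|_{\textrm{op}}\le M_o$ a.s.\ gives no unconditional moment control on $\tr[\bD\bD^\top]$, so the raw summability condition of Lemma~\ref{lem:borel-cantelli-moment} cannot be invoked without first localizing to such an event.
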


\begin{lemma}
    [Moment version of the Borel-Cantelli lemma]
    \label{lem:borel-cantelli-moment}
    Let $\{ Z_n \}_{n \ge 1}$ be a sequence of real-valued random variables such that
    the sequence $\{ \EE|Z_n|^{q} \}_{n \ge 1}$ is summable for some $q > 0$.
    Then, $Z_n \to 0$ almost surely as $n \to \infty$.
\end{lemma}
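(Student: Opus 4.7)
The plan is to combine Markov's inequality with the classical first Borel-Cantelli lemma in a standard way. First, I would fix an arbitrary $\epsilon > 0$ and apply Markov's inequality to the nonnegative random variable $|Z_n|^q$ to obtain
\[
    \PP(|Z_n| > \epsilon)
    ~=~ \PP(|Z_n|^q > \epsilon^q)
    ~\le~ \frac{\EE|Z_n|^q}{\epsilon^q}.
\]
Summing over $n \ge 1$ and using the hypothesized summability of $\{\EE|Z_n|^q\}_{n \ge 1}$, I would conclude that $\sum_{n \ge 1} \PP(|Z_n| > \epsilon) < \infty$. The first Borel-Cantelli lemma then yields $\PP(|Z_n| > \epsilon \text{ i.o.}) = 0$, or equivalently, $\PP(\limsup_{n \to \infty} |Z_n| \le \epsilon) = 1$.

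To upgrade this to almost sure convergence to zero, I would specialize $\epsilon$ to a countable sequence, taking $\epsilon = 1/k$ for $k \in \NN$. Let $\Omega_k$ denote the event $\{\limsup_{n \to \infty} |Z_n| \le 1/k\}$, which has probability one. Since a countable intersection of probability-one events has probability one, the event $\Omega_\infty := \bigcap_{k \ge 1} \Omega_k$ also has probability one. On $\Omega_\infty$, we have $\limsup_{n \to \infty} |Z_n| \le 1/k$ for every $k \ge 1$, which forces $\limsup_{n \to \infty} |Z_n| = 0$ and therefore $Z_n \to 0$. This establishes the claim.

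There is no substantive obstacle here; the argument is entirely routine. The only cosmetic care needed is the reduction from ``$\limsup |Z_n| \le \epsilon$ a.s.\ for each $\epsilon$'' to ``$Z_n \to 0$ a.s.,'' which requires choosing a countable $\epsilon$-sequence so that the null sets can be unioned, as above.
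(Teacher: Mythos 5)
Your proof is correct and is the standard argument: Markov's inequality on $|Z_n|^q$, summability to invoke the first Borel--Cantelli lemma, and then the routine countable-$\epsilon$ reduction to upgrade ``$\limsup|Z_n|\le\epsilon$ a.s.\ for each $\epsilon$'' to ``$Z_n\to0$ a.s.'' The paper states this lemma without proof (it is recorded as a known auxiliary fact in the section of useful concentration results), so there is no paper proof to compare against, but this is precisely the intended argument and there are no gaps.
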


\section{Notation}
\label{sec:notation}

Below we list general notation used in this paper. 
\Cref{tab:notation-summary-1} 
at the end of the manuscript provides 
a comprehensive list of some of the specific notation used throughout.

\begin{itemize}
\item
We denote scalar random variables in regular upper case (e.g., $X$),
and vector and matrix random variables in bold upper case (e.g., $\bX$).
We use calligraphic letters to denote sets (e.g., $\cD$),
and blackboard letters to denote some specials sets listed next.
\item 
We use $\NN$ to denote the set of natural numbers.
We use 
$\QQ$ to denote the set of rational numbers,
$\QQ_{> 0}$ to denote the set of positive rational numbers;
$\RR$ to denote the set of real numbers,
$\RR_{\ge 0}$ to denote the set of non-negative real numbers,
$\RR_{> 0}$ to denote the set of positive real numbers;
$\CC$ to denote the set of complex numbers,
$\CC^{> 0}$ to denote the upper half of the complex plane,
i.e., $\CC^{> 0} = \{ z \in \CC : \Im(z) > 0 \}$.

\item For a real number $a$,
$(a)_+$ denotes its positive part,
$\lfloor a \rfloor$ denotes its floor, 
$\lceil a \rceil$ denotes its ceiling,
$\sign(a)$ denotes its sign.
For a complex number $z$,
$\Re(z)$ denotes its real part,
$\Im(z)$ denotes its imaginary part,
$\overline{z}$ denote its conjugate,
$| z |$ denotes its absolute value.

\item For a set $\cA$,
$|\cA|$ denotes its cardinality,
$\cA^{\complement}$ denotes its complement,
$\1_{\cA}$ denotes its indicator function.
For a function $f$,
$\partial / \partial x [f]$
denotes
its partial derivative 
with respect to variable $x$.
We also use $f'$ to denote
derivative of $f$ when it is clear from the context.

\item 
For an event $A$,
$\PP(A)$ denotes its probability,
and $\1_{A}$ its indicator random variable.
For a random variable $X$,
$\EE[X]$ denotes its expectation,
$\Var(X) = \EE[(X - \EE[X])^2]$ denotes its variance;
$\EE[X^r]$ denotes its $r$-th moment,
$\EE[|X|^r]$ denotes its $r$-th absolute moment,
$\| X \|_{L_r} = (\EE[|X|^r])^{1/r}$ denotes its $L_r$ norm,
for a real number $r \ge 1$;
$\| X \|_{\psi}$ denotes its $\psi$ norm
for an Orlicz function $\psi$;
see \Cref{subsec:control-of-error-terms} for more details.

\item For a vector $a \in \RR^{p}$, 
$\| a \|_{r}$ denotes its $\ell_r$ norm for $r \ge 1$,
$\| a \|_A = \sqrt{a^\top A a}$ denotes its norm with respect
to a positive semidefinite matrix $A \in \RR^{p \times p}$.

\item For a matrix $A \in \RR^{n \times p}$,
$A^\top \in \RR^{p \times n}$ denote its transpose,
$A^\dagger \in \RR^{p \times n}$ denotes the its Moore-Penrose inverse,
$\| A \|_{\op}$ denotes its operator norm,
$\| A \|_{\tr}$ denotes its trace norm or nuclear norm
($\| A \|_{\tr} = \tr[(A^\top A)^{1/2}] = \sum_i \sigma_i(A))$,
where $\sigma_1(A) \ge \sigma_2(A) \ge \dots $
denote its singular values in non-increasing order.
For a square matrix $A \in \RR^{p \times p}$,
$\tr[A] = \sum_{i=1}^{p} A_{ii}$ denotes its trace.
A $p$-dimensional identity matrix is denoted as $I_p$
or simply $I$ when it is clear from the context.

\item For a $p \times p$ positive semidefinite matrix $A$
with eigenvalue decomposition $A = V R V^\top$
for an orthonormal matrix $V$ and a diagonal matrix $R$,
and a function $f : \RR_{\ge 0} \to \RR_{\ge 0}$,
we denote by $f(A)$ the $p \times p$ positive semidefinite matrix $V f(R) V^\top$,
where $f(R)$ is a $p \times p$ diagonal matrix
obtained by applying the function $f$ to each diagonal entry of $R$.

\item For two sequences of matrices $A_n$ and $B_n$,
we use the notation $A_n \asympequi B_n$
to denote a certain notion of asymptotic equivalence;
see \Cref{sec:calculus_deterministic_equivalents} for more details.
For symmetric matrices $A$ and $B$, 
$A \preceq B$ denotes the Loewner ordering
to mean that the matrix $B - A$ is positive semidefinite.

\item 
We write $a \asymp b$ when there exist absolute constants
$C_l$ and $C_u$ such that $C_l \le a / b \le C_u$.
We write $a \lesssim b$ when there exists an absolute constant
$C$ such that $a \le C b$.

\item 
We use $O$ and $o$ to denote the big-$O$ and little-$o$ asymptotic notation, respectively.
We use $O_p$ and $o_p$ to denote the probabilistic big-$O$ and little-$o$ asymptotic notation, respectively.
We denote 
convergence in probability by $\pto$, 
almost sure convergence by $\asto$,
weak convergence by $\dto$.

\item Finally, we use generic letters $C, C_1, C_2, \dots$ to denote constants
whose value may change from line to line.
\end{itemize}

\newgeometry{left=0.25cm, right=0.25cm,top=0.5cm,bottom=0.5cm,includefoot}

\begin{table}[!ht]
    \centering
    \begin{tabular}{ll}
        \toprule
        Notation & Meaning (Location in the paper) \\
        \midrule
        $(X, Y)$
        & feature vector $X \in \RR^{p}$
        and response variable $Y \in \RR$ (\Cref{sec:oracle-risk-inequalities}) \\
        $\cD_n = \{ (X_i, Y_i) \}_{i=1}^{n}$
        & dataset with $n$ observations
        $(X_i, Y_i)$, $1 \le i \le n$ (\Cref{sec:oracle-risk-inequalities}) \\
        $\hf(\cdot; \cD_n) : \RR^{p} \to \RR$
        & predictor fitted on dataset $\cD_n$ using prediction procedure $\hf$ 
        (\Cref{sec:oracle-risk-inequalities})
        \\
        $\ell : \RR \times \RR \to \RR_{\ge 0}$
        & non-negative loss function 
        (\Cref{sec:oracle-risk-inequalities}) \\
        $\ell(Y_0, \hf(X_0; \cD_n))$
        & prediction loss of 
        predictor $\hf(\cdot; \cD_n)$
        evaluated at test point $(X_0, Y_0)$ 
        (\Cref{sec:oracle-risk-inequalities}) \\
        $R(\hf(\cdot; \cD_n))$
        & prediction risk of predictor $\hf(\cdot; \cD_n)$ 
        \eqref{eq:prediction-risk}
        \\
        $\hR(\hf(\cdot; \cD_n))$
        & estimator of prediction risk of $\hf(\cdot; \cD_n)$ 
        (\Cref{sec:oracle-risk-inequalities})
        \\
        \midrule
        $\hf^\cv(\cdot; \cD_n)$
        & cross-validated predictor fitted using dataset $\cD_n$ 
        (\Cref{alg:general-cross-validation-model-selection})
        \\
        $\hf^\xi$, $\xi \in \Xi$
        & collection of prediction procedures indexed by set $\Xi$ 
        (\Cref{alg:general-cross-validation-model-selection})
        \\
        $n_\train$, $n_\test$
        & number of train and test observations 
        (\Cref{alg:general-cross-validation-model-selection})
        \\
        $\cD_\train, \cD_\test$
        & random split of $\cD_n$ into train and test datasets
        with $n_\train$ and $n_\test$ observations 
        (\Cref{alg:general-cross-validation-model-selection})
        \\
        $\cI_\train, \cI_\test$
        & disjoint subsets of $\cI_n := \{ 1, \dots, n \}$ that are index sets for $\cD_\train$ and $\cD_\test$ 
        (\Cref{alg:general-cross-validation-model-selection})
        \\
        $\CEN$, $\AVG$, $\MOM$
        & centering procedure, averaging, median-of-means
        (\ref{eqn:avg-risk-pre}, \ref{eqn:mom-risk-pre})
        \\
        $\eta$ 
        & parameter in median-of-means
        \eqref{eq:definition-MOM-rvs-eta}
        \\
        \midrule
        $\Delta_n^\add$, $\Delta_n^\mul$
        & error terms in the additive and multiplicative oracle risk inequalities 
        (\ref{eq:Delta_n_add}, \ref{eq:Delta_n_mul}) \\
        $\hsigma_\xi$, $\hsigma_\Xi$ 
        & conditional second moment of loss and their max over $\Xi$ 
        (\Cref{lem:bounded-orlitz-error-control,lem:bounded-variance-error-control})
        \\
        $\hkappa_\xi$, $\hkappa_\Xi$
        & conditional kurtosis-like parameter of loss 
        and their max over $\Xi$ 
        (\Cref{lem:bounded-orlitz-error-control-mul-form,lem:bounded-variance-error-control-mul-form})
        \\
        $\| \ell(Y_0, \hf(X_0; \cD_n)) \|_{\psi_1 \mid \cD_n}$
        & conditional $\psi_1$ norm of prediction loss \eqref{eq:conditional-Orlicz} \\
        $\| \ell(Y_0, \hf(X_0; \cD_n)) \|_{L_r \mid \cD_n}$
        & conditional $L_r$ norm of prediction loss ($r \ge 1)$ \eqref{eq:conditional-Lp} \\
        \midrule
        $\tbeta_\ridge$, $\tbeta_\lasso$, $\tbeta_\mnls$, $\tbeta_\mnla$
        & ridge, lasso, min $\ell_2$, $\ell_1$-norm least squares estimation procedures 
        (\ref{eq:mn2ls}--\ref{eq:lasso}) 
        \\
        $\tf_\mnls$, $\tf_\mnla$
        & 
        min $\ell_2$, $\ell_1$-norm least squares prediction procedures 
        (\ref{eq:mn2ls-predictor}, \ref{eq:mn1ls-predictor}) \\
        \midrule
        $\hf^\zerostep(\cdot; \cD_n)$
        & zero-step predictor fitted on dataset $\cD_n$ (\Cref{alg:zero-step}) \\
        $\nu \in (0, 1)$
        & exponent for block sizes $\lfloor n^\nu \rfloor$ in zero-step prediction procedure (\Cref{alg:zero-step}) \\
        $n_{\xi}$ & 
        $n - \xi \lfloor n^\nu \rfloor$ (\Cref{alg:zero-step}) \\
        $M$
        & number of sub-samples for averaging for zero-step ingredient predictor \eqref{eq:zerostep-averaged} \\
        $\cD_{\train}^{\xi, j}$, $1 \le j \le M$
        & random subset of $\cD_\train$ of size $n_{\xi}$ (\Cref{alg:zero-step}) \\
        $\tf(\cdot; \cD_{\train}^{\xi, j})$
        & zero-step ingredient predictor fitted on dataset $\cD_{\train}^{\xi, j}$ 
        using base prediction procedure $\tf$
        \eqref{eq:zerostep-averaged}
        \\
        $R^\deter(m; \tf)$
        & deterministic approximation to $R(\tf(\cdot; \cD_m))$ 
        (\Cref{def:rn_nonstochastic-def}) \\
        $R^\deter_\nearrow(n; \tf)$
        & monotonized deterministic approximation at sample size $n$ 
        under general asymptotics \eqref{eq:minimizer-rdeter-def}  \\
        PA($\gamma$)
        & proportional asymptotics regime \ref{asm:prop_asymptotics} \\
        DETPA-0
        & assumption of deterministic risk approximation to conditional risk 
        under PA \eqref{eq:rn-deterministic-approximation-2-prop-asymptotics} \\
        DETPAR-0
        & reduction of assumption 
        \ref{eq:rn-deterministic-approximation-2-prop-asymptotics}
        (\Cref{lem:rn-deterministic-approximation-4-prop-asymptotics}, 
        \ref{tag:detpar-0})\\
        $R^\deter(p_m/m; \tf)$ & 
        deterministic risk 
        approximation 
        at aspect ratio $p_m / m$
        under 
        PA
        (\Cref{sec:zerostep-overparameterized})
        \\
        $\xi_n^\star$ & 
        optimal sequence of $\xi$ for zero-step monotonized risk approximation 
        (\ref{eq:minimizer-rdeter-def}, 
        \ref{eq:rn-deterministic-approximation-2-prop-asymptotics})
        \\
        PRG-0-C1,C2
        & deterministic risk approximation program for zero-step
        \ref{prog:zerostep-cont-conv}--\ref{prog:zerostep-cont-infty} \\
        $k_m, p_m$ &
        sample size and feature size when verifying zero-step profile assumption 
        (\Cref{lem:rn-deterministic-approximation-4-prop-asymptotics})
        \\
        $\rho^2$, $\sigma^2$, $\mathrm{SNR}$
        & signal energy, noise energy, signal-to-noise ratio ($\rho^2$/$\sigma^2$)
        (\Cref{sec:zerostep-illustration}) \\
        $R_{\mnls}^\deter(\phi; \rho^2, \sigma^2)$
        & MN2LS risk approximation at aspect ratio $\phi$,
        signal energy $\rho^2$, noise energy $\sigma^2$ 
        \eqref{eq:rdet_mnls_onestep_iterated_formula}
        \\
        $\tf_{\infty}(\cdot; \cD_{\train})$
        & zero-step ingredient predictor fitted on $\cD_n$ 
        with $M = \infty$ \eqref{eq:predictor_M_infty} \\
        \midrule
        $\hf^\onestep(\cdot; \cD_n)$
        & one-step predictor fitted on dataset $\cD_n$ (\Cref{alg:one-step}) \\
        $(n_{1, \xi_1}, n_{2, \xi_2})$
        & $(n - \xi_1 \lfloor n^\nu \rfloor, \xi_2 \lfloor n^\nu \rfloor)$ 
        (\Cref{alg:one-step}) \\
        $(\cD_\train^{\xi_1, j}, \cD_\train^{\xi_2, j})$, $1 \le j \le M$
        & random pairs of disjoint subsets of $\cD_\train$ of sizes 
        $(n_{1, \xi_1}, n_{2, \xi_2})$
        (\Cref{alg:one-step}) \\
        $\tf(\cdot; \cD_{\train}^{\xi_1, j}, \cD_{\train}^{\xi_2, j})$
        & one-step ingredient predictor fitted on datasets 
        $(\cD_{\train}^{\xi_1, j}, \cD_{\train}^{\xi_2, j})$ 
        \eqref{eq:onestep-average}
        \\
        DETPA-1, DETPA-1*
        & assumption of deterministic risk approximation to conditional risk under PA 
        (\ref{eq:rn-deterministic-approximation-onestep-prop-asymptotics}) \\
        DETPAR-1
        & reduction of assumption
        \ref{eq:rn-deterministic-approximation-onestep-prop-asymptotics}
        (\Cref{lem:deterministic-approximation-reduction-onestep},
        \ref{eq:rn-deterministic-approximation-reduced-onestep-prop-asymptotics})\\
        $R^\deter(p/n_1, p/n_2; \tf)$ & 
        risk approximation
        of ingredient one-step predictor
        at aspect ratios $(p / n_1, p / n_2)$
        (\Cref{sec:onestep-overparameterized}) \\
        $(\xi_{1,n}^\star, \xi_{2,n}^\star)$
        & optimal pair of sequence of $\xi$ 
        for one-step monotonized risk approximation 
        \eqref{eq:minimizer-sequence-onestep}
        \\
        PRG-1-C1,C2,C3
        & deterministic risk approximation program for one-step
        \ref{prog:onestep-cont-conv}--\ref{prog:onestep-divergence-closedset} \\
        $k_{1,m}, k_{2,m}, p_m$ &
        sample size and feature sizes when verifying one-step profile assumption 
        (\Cref{lem:deterministic-approximation-reduction-onestep})
        \\
        $w_i$, $r_i$, $1 \le i \le p_m$ 
        & eigenvectors and eigenvalues of feature covariance matrix 
        $\Sigma \in \RR^{p_m \times p_m}$ 
        (\Cref{sec:verifying-deterministicprofiles-onestep})
        \\
        $\widehat{Q}_n$, $Q$
        & a certain random distribution and its weak limit \eqref{eq:generalized_prediction_risk_onestep} \\
        $H_{p_m}$, $H$
        & empirical distribution of eigenvalues of $\Sigma$ and limiting spectral distribution 
        \eqref{eq:empirical_distribution_eigenvalues_Sigma}
        \\
        $v(0; \phi_2), \tv(0; \phi_2), \tv_g(0; \phi_2), \Upsilon_b(\phi_1, \phi_2)$
        & scalars in risk approximation of one-step procedure with linear base procedure 
        (\ref{eq:fixed-point-v-onestep-main-phi2}--\ref{eq:def-upsilonb-upsilonv}) \\
        $R^\deter_{\mnls}(\phi_1, \phi_2; \rho^2, \sigma^2)$
        & MN2LS one-step risk approx at aspect ratios $(\phi_1, \phi_2)$,
        signal energy $\rho^2$, noise energy $\sigma^2$ 
        \eqref{eq:rdet_mnls_onestep_iterated_formula} \\
        \bottomrule
    \end{tabular}
    \caption{Summary of some of the main notation used in the paper.}
    \label{tab:notation-summary-1}
\end{table}

\restoregeometry

\end{document}